\newif\ifdraft
\patchcmd{\@citex}{\if@filesw}{\getcitekey\@citeb \if@filesw}%
    {\typeout{*** SUCCESS ***}}{\typeout{*** FAIL ***}}
\patchcmd{\nocite}{\if@filesw}{\getcitekey\@citeb \if@filesw}%
    {\typeout{*** SUCCESS ***}}{\typeout{*** FAIL ***}}
\newenvironment{aenumerate}{%
	\begin{enumerate}[label=(\alph{*}), ref=(\alph{*})]
}{%
	\end{enumerate}%
}
\definecolor{labelkey}{gray}{0.5}
\tikzset{commutative diagrams/arrow style=Latin Modern}
\newcommand{\Lie}[1]{\mathcal{L}_{#1}}
\newcommand{\norm}[1]{\lVert#1\rVert}
\newcommand{\bignorm}[1]{\bigl\lVert#1\bigr\rVert}
\newcommand{\abs}[1]{\lvert #1 \rvert}
\newcommand{\bigabs}[1]{\bigl\lvert #1 \bigr\rvert}
\newcommand{\eps}{\varepsilon}
\newcommand{\tensor}{\otimes}
\newcommand{\del}{\partial}
\newcommand{\dbar}{\bar{\partial}}
\newcommand{\dz}{\mathit{dz}}
\newcommand{\dzb}{d\bar{z}}
\newcommand{\NN}{\mathbb{N}}
\newcommand{\ZZ}{\mathbb{Z}}
\newcommand{\RR}{\mathbb{R}}
\newcommand{\CC}{\mathbb{C}}
\newcommand{\HH}{\mathbb{H}}
\newcommand{\menge}[2]{\bigl\{ \thinspace #1 \thinspace\thinspace \big\vert%
\thinspace\thinspace #2 \thinspace \bigr\}}
\newcommand{\MENGE}[2]{\left\{ \thinspace #1 \thinspace\thinspace \middle\vert%
\thinspace\thinspace #2 \thinspace \right\}}
\DeclareMathOperator{\rk}{rk}
\DeclareMathOperator{\ad}{ad}
\DeclareMathOperator{\tr}{tr}
\DeclareMathOperator{\Res}{Res}
\DeclareMathOperator{\id}{id}
\renewcommand{\Im}{\operatorname{Im}}
\renewcommand{\Re}{\operatorname{Re}}
\DeclareMathOperator{\Sym}{Sym}
\DeclareMathOperator{\gr}{gr}
\DeclareMathOperator{\End}{End}
\DeclareMathOperator{\Hom}{Hom}
\DeclareMathOperator{\GL}{GL}
\DeclareMathOperator{\SL}{SL}
\newcommand{\define}[1]{{\boldmath\textbf{#1}}}
\newcommand{\glie}{\mathfrak{g}}
\newcommand{\sltwo}{\mathfrak{sl}_2(\CC)}
\newcommand{\shf}[1]{\mathscr{#1}}
\newcommand{\OX}{\shf{O}_X}
\newcommand{\OmX}{\Omega_X}
\newcommand{\restr}[1]{\big\vert_{#1}}
\newcommand{\argbl}{-}
\def\overbar#1#2#3{{%
	\setbox0=\hbox{\displaystyle{#1}}%
	\dimen0=\wd0
	\advance\dimen0 by -#2 
	\vbox {\nointerlineskip \moveright #3 \vbox{\hrule height 0.3pt width \dimen0}%
		\nointerlineskip \vskip 1.5pt \box0}%
}}
\newcommand{\dst}{\Delta^{\ast}}
\newcommand{\into}{\hookrightarrow}
\newcommand{\inner}[2]{\langle #1, #2 \rangle}
\newcommand{\biginner}[2]{\bigl\langle #1, #2 \bigr\rangle}
\newcommand{\tl}{t_{\ast}}
\newcommand{\shE}{\shf{E}}
\newcommand{\shO}{\shf{O}}
\newtheorem*{thm*}{Theorem}
\newtheorem*{lem*}{Lemma}
\newtheorem*{prop*}{Proposition}
\newtheorem*{cor*}{Corollary}
\declaretheoremstyle[numbered=yes,headformat=\NAME\NOTE,numberwithin=paragraph,%
	bodyfont=\normalfont\itshape,postheadspace={ },%
	spaceabove=\topsep,spacebelow=\topsep]{par-thm}
\declaretheoremstyle[numbered=yes,headformat=\NAME\NOTE,numberwithin=paragraph,%
bodyfont=\normalfont,postheadspace={ },%
	spaceabove=\topsep,spacebelow=\topsep]{par-def}
\declaretheoremstyle[numbered=yes,headformat=\NAME\NOTE,numberwithin=paragraph,%
	headfont=\normalfont\itshape,bodyfont=\normalfont,postheadspace={ },
	spaceabove=\topsep,spacebelow=\topsep]{par-exa}
\declaretheorem[name=Theorem,style=par-thm,preheadhook={},%
	postheadhook={\leavevmode}]{pthm} 
\declaretheorem[name=Lemma,style=par-thm,preheadhook={},%
	postheadhook={\leavevmode}]{plem} 
\declaretheorem[name=Corollary,style=par-thm,preheadhook={},%
	postheadhook={\leavevmode}]{pcor} 
\declaretheorem[name=Proposition,style=par-thm,preheadhook={},%
	postheadhook={\leavevmode}]{pprop} 
\declaretheorem[name=Definition,style=par-def,preheadhook={},%
	postheadhook={\leavevmode}]{pdfn} 
\declaretheorem[name=Example,style=par-exa,preheadhook={},%
	postheadhook={\leavevmode}]{pexa} 
\theoremstyle{definition}
\newtheorem*{dfn*}{Definition}
\newtheorem*{conj*}{Conjecture}
\theoremstyle{remark}
\newtheorem*{exa*}{Example}
\newtheorem*{problem*}{Problem}
\newtheorem*{note}{Note}
\theoremstyle{plain}
\newcommand{\newpar}{\paragraph{\hspace{-1em}.}}
\renewcommand\paragraph{\@startsection{paragraph}{4}{\z@}%
	{1.25ex \@plus.2ex \@minus.2ex}{-0.5em}%
	{\normalfont\normalsize\bfseries}} 
\let\old@caption\caption
\renewcommand*{\caption}[1]{%
	\setcounter{figure}{\value{equation}}%
	\stepcounter{equation}%
	\old@caption{#1}\relax%
}
\newcounter{intro}
\newtheorem{intro-conjecture}[intro]{Conjecture}
\newtheorem{intro-corollary}[intro]{Corollary}
\newtheorem{intro-theorem}[intro]{Theorem}
\newcommand{\parref}[1]{\hyperref[#1]{\S\ref*{#1}}}
\newcommand{\chapref}[1]{\hyperref[#1]{Chapter~\ref*{#1}}}
\newcommand*\if@single[3]{%
  \setbox0\hbox{${\mathaccent"0362{#1}}^H$}%
  \setbox2\hbox{${\mathaccent"0362{\kern0pt#1}}^H$}%
  \ifdim\ht0=\ht2 #3\else #2\fi
  }
\newcommand*\rel@kern[1]{\kern#1\dimexpr\macc@kerna}
\newcommand*\widebar[1]{\@ifnextchar^{{\wide@bar{#1}{0}}}{\wide@bar{#1}{1}}}
\newcommand*\wide@bar[2]{\if@single{#1}{\wide@bar@{#1}{#2}{1}}{\wide@bar@{#1}{#2}{2}}}
\newcommand*\wide@bar@[3]{%
  \begingroup
  \def\mathaccent##1##2{%
%If there's more than a single symbol, use the first character instead (see below):
    \if#32 \let\macc@nucleus\first@char \fi
%Determine the italic correction:
    \setbox\z@\hbox{$\macc@style{\macc@nucleus}_{}$}%
    \setbox\tw@\hbox{$\macc@style{\macc@nucleus}{}_{}$}%
    \dimen@\wd\tw@
    \advance\dimen@-\wd\z@
%Now \dimen@ is the italic correction of the symbol.
    \divide\dimen@ 3
    \@tempdima\wd\tw@
    \advance\@tempdima-\scriptspace
%Now \@tempdima is the width of the symbol.
    \divide\@tempdima 10
    \advance\dimen@-\@tempdima
%Now \dimen@ = (italic correction / 3) - (Breite / 10)
    \ifdim\dimen@>\z@ \dimen@0pt\fi
%The bar will be shortened in the case \dimen@<0 !
    \rel@kern{0.6}\kern-\dimen@
    \if#31
      \overline{\rel@kern{-0.6}\kern\dimen@\macc@nucleus\rel@kern{0.4}\kern\dimen@}%
      \advance\dimen@0.4\dimexpr\macc@kerna
%Place the combined final kern (-\dimen@) if it is >0 or if a superscript follows:
      \let\final@kern#2%
      \ifdim\dimen@<\z@ \let\final@kern1\fi
      \if\final@kern1 \kern-\dimen@\fi
    \else
      \overline{\rel@kern{-0.6}\kern\dimen@#1}%
    \fi
  }%
  \macc@depth\@ne
  \let\math@bgroup\@empty \let\math@egroup\macc@set@skewchar
  \mathsurround\z@ \frozen@everymath{\mathgroup\macc@group\relax}%
  \macc@set@skewchar\relax
  \let\mathaccentV\macc@nested@a
%The following initialises \macc@kerna and calls \mathaccent:
  \if#31
    \macc@nested@a\relax111{#1}%
  \else
%If the argument consists of more than one symbol, and if the first token is
%a letter, use that letter for the computations:
    \def\gobble@till@marker##1\endmarker{}%
    \futurelet\first@char\gobble@till@marker#1\endmarker
    \ifcat\noexpand\first@char A\else
      \def\first@char{}%
    \fi
    \macc@nested@a\relax111{\first@char}%
  \fi
  \endgroup
}
\newcommand{\wbar}[1]{\mathpalette\dowidebar{#1}}
\newcommand{\dowidebar}[2]{\widebar{#1#2}}
\newcommand{\Vb}{\wbar{V}}
\newcommand{\thetast}{\theta^{\ast}}
\newcommand{\delb}{\wbar{\del}}
\newcommand{\zb}{\wbar{z}}
\newcommand{\Ast}{A^{\ast}}
\newcommand{\Higgst}{\Higg^{\ast}}
\newcommand{\Higg}{A}
\newcommand{\ABS}[1]{\left\lvert #1 \right\rvert}
\newcommand{\Abs}[1]{\bigl\lvert #1 \bigr\rvert}
\DeclareMathOperator{\Aut}{Aut}
\newcommand{\dy}{\, \mathit{dy}}
\newcommand{\half}{\frac{1}{2}}
\newcommand{\Dch}{\check{D}}
\newcommand{\mlie}{\mathfrak{m}}
\newcommand{\dDch}{d_{\Dch}}
\DeclareMathOperator{\Ad}{Ad}
\renewcommand{\dy}{\mathit{dy}}
\newcommand{\alphamax}{\alpha_{\mathrm{max}}}
\newcommand{\alphamin}{\alpha_{\mathrm{min}}}
\newcommand{\Flim}{F_{\mathrm{lim}}}
\newcommand{\Fblim}{\Fb_{\mathrm{lim}}}
\newcommand{\dmu}{\,d\mu}
\newcommand{\vb}{\bar{v}}
\renewcommand{\sltwo}{\mathfrak{sl}_2}
\newcommand{\Ilim}{I_{\mathrm{lim}}}
\newcommand{\Hsl}{\mathsf{H}}
\newcommand{\Xsl}{\mathsf{X}}
\newcommand{\Ysl}{\mathsf{Y}}
\newcommand{\wsl}{\mathsf{w}}
\newcommand{\Fb}{\overline{F}\vphantom{F}}
\renewcommand{\Lie}{\operatorname{Lie}}
\newcommand{\Fsh}{F_{\sharp}}
\newcommand{\Csh}{C_{\sharp}}
\newcommand{\tb}{\bar{t}}
\newcommand{\dtb}{d\tb}
\newcommand{\dt}{\mathit{dt}}
\newcommand{\hphi}{h^{\varphi}}
\newcommand{\Thetaphi}{\Theta^{\varphi}}
\newcommand{\Phinil}{\Phi_{\mathrm{nil}}}
\newcommand{\PhiSH}{\hat{\Phi}_{S,H}}
\newcommand{\PhiH}{\hat{\Phi}_H}
\newcommand{\qlie}{\mathfrak{q}}
\newcommand{\Ib}{\bar{I}}
\newcommand{\Hb}{\bar{H}}
\newcommand{\VR}{V_{\RR}}
\newcommand{\Gammat}{\tilde{\Gamma}}
\begin{document}

%========================================================
\title{Degenerating complex variations of Hodge structure in dimension one}

\author{%
 Claude Sabbah \\
 Centre de Math\'ematiques Laurent Schwartz, \'Ecole Polytechnique \\
 \texttt{claude.sabbah@polytechnique.edu}
 \and 
 Christian Schnell \\
 Department of Mathematics, Stony Brook University \\
 \texttt{christian.schnell@stonybrook.edu}
}

\date{\today}
\maketitle
%========================================================

\section{Introduction}

\subsection{Summary of the paper}

\newpar
In this paper, we analyze the behavior of polarized complex variations
of Hodge structure on the punctured unit disk. For \emph{integral} variations of
Hodge structure, this analysis was carried out by Wilfried Schmid \cite{Schmid} in
his famous article \emph{Variation of Hodge structure: the singularities of the period
mapping}, one of the central works in modern Hodge theory. The restriction to
integral variations of Hodge structure is natural from the point of view of geometry;
it also implies that the eigenvalues of the monodromy transformation
are roots of unity, and this fact makes things technically easier.
Nevertheless, there are two reasons why we want to remove this restriction now. First, we
need the results for complex variations of Hodge structure to set up the theory of
complex mixed Hodge modules, the topic of our ``Mixed Hodge Module Project''. Second,
it turns out that working with complex variations clarifies many aspects of the
theory that were somewhat obscured by the presence of an integral structure,
especially the central role played by the Hodge metric.

\newpar
Analytically, the behavior of a polarized variation of Hodge structure on the
punctured disk turns out to be surprisingly simple. Before getting into any details,
let us therefore quickly highlight the two results that are, conceptually, the most important.
A complex variation of Hodge structure is a smooth vector bundle $E$ with a flat
connection $d$, and a decomposition into smooth subbundles $E^{p,q}$ that interacts
with the connection in a certain way. The polarization defines a smooth metric on the
bundle $E$, called the Hodge metric. In order to compare different fibers of $E$, we
use the vector space $V$ of multi-valued flat sections as a reference frame; together
with the monodromy transformation $T$, it contains the same information as the flat
bundle $(E,d)$. The \emph{first result} is that, in this reference frame, the Hodge
metric degenerates in a very simple manner: near the origin, the Hodge metric of any
nonzero multi-valued flat section grows or decays like a power of $\abs{\log
\abs{t}}$, where $t$ is the coordinate on the disk; the exponents are determined by
the monodromy transformation $T$. This means that, up to a constant, the asymptotic
behavior of the Hodge metric only depends on the underlying flat bundle.

\newpar 
The fact that the Hodge metric grows and decays at different rates of course
prevents the Hodge structures from having any sort of limit as $t \to 0$. We then
take the natural step of ``rescaling'' the Hodge metric, in order to even out these
different rates. The rescaling can be done by moving the Hodge structures by
elements of the symmetry group of the period domain; it involves a choice of
splitting for the filtration by order of growth of the Hodge metric.
The \emph{second result} is that the rescaled Hodge structures converge to a limit, which
is again a polarized Hodge structure of the same type. In particular, the asymptotic
behavior of the Hodge metric also controls the asymptotic behavior of the Hodge
structures in the variation. Without rescaling, one gets a ``limiting'' mixed
Hodge structure on the vector space $V$, whose weight filtration is, up to a shift, the
filtration by order of growth of the Hodge metric. Each subquotient of the weight
filtration is again a polarized Hodge structure; the general principle at work,
coming from analysis, is that one can only expect to get a meaningful limit when
everything has the same order of growth.

\subsection{What is new?}

\newpar
For the benefit of those readers who are already familiar with Schmid's results, we
briefly summarize the new features of our approach. We mentioned already that we
treat arbitrary polarized complex variations of Hodge structure on the punctured
disk, without assuming that the eigenvalues of the monodromy transformation are roots
of unity. In this generality, we give new -- and, we think, more conceptual -- proofs
for all the major results in Schmid's paper, such as the estimates for the rate of
growth of the Hodge norm; the existence of a limiting mixed Hodge structure; the
nilpotent orbit theorem; and a simplified (but still sufficiently powerful) version
of the $\SL(2)$-orbit theorem. The results as such are of course just special cases
of Mochizuki's monumental work on tame harmonic bundles \cite{MochizukiI}, but we
think that it is worthwhile to have a self-contained and simple
treatment.\footnote{In several places, for example \cite[p.~453]{Zucker}, it is claimed
	that Schmid's results carry over to polarized variations of Hodge structure whose
monodromy is not quasi-unipotent; but despite considerable effort, we have not been
able to adapt Schmid's proof of the nilpotent orbit theorem to this setting.}

\newpar
Our starting point is a direct proof for the Hodge norm estimates, based on
calculations with harmonic bundles and on a comparison with certain model variations
of Hodge structure. Technically, the crucial point is Simpson's ``basic estimate''
for the Hodge norm of the Higgs field, which is proved using Ahlfors' lemma. In
Schmid's paper, the Hodge
norm estimates are deduced from the orbit theorems, and therefore appear towards the
end; here, they stand at the beginning.  We feel that this gives a
better conceptual explanation for the appearance of the monodromy weight filtration. 
Next, we give a different proof for the nilpotent orbit theorem, based on curvature
properties of the Hodge metric and $L^2$-extension theorems; analytically, the key point is
that one can make the curvature of the Hodge metric either positive or negative by
multiplying by a suitable power of $\abs{\log \abs{t}}$; this general idea is due to
Cornalba-Griffiths \cite{CG} and Simpson \cite{SimpsonVHS}. The parameter dependence
of various constants, important for extending the theory to more than one dimension,
is handled by using the maximum principle. We then use the Hodge norm estimates and
the nilpotent orbit theorem to prove that the rescaled period mapping has a
well-defined limit inside the period domain. 

\newpar
From the convergence of the rescaled period mapping, we deduce the
existence of a limiting mixed Hodge structure; the argument is a pleasant mix of
linear algebra and representation theory, and unlike in Schmid's paper, does not rely
on the $\SL(2)$-orbit theorem. (This is useful because there is a simplified proof
for the $\SL(2)$-orbit theorem by Cattani-Kaplan-Schmid \cite[\S6]{CKS}, whose input
is the existence of the limiting mixed Hodge structure.) We also show that, after a
choice of splitting, the space of multi-valued flat sections becomes what we call a
``polarized $\sltwo$-Hodge structure''; this is the same kind of structure that one
has on the cohomology of a compact K\"ahler manifold. Finally, we prove a cheap version
of the $\SL(2)$-orbit theorem that applies to an arbitrary splitting of the monodromy
weight filtration (instead of the $\SL(2)$-splitting). The degree of approximation is
slightly worse than in Schmid's version, but in return, the proof is much easier. As
an application, we describe the asymptotic behavior of the Hodge metric in the frame
of multi-valued flat sections, something that is only implicit in Schmid's paper. 

\subsection{Acknowledgements}

\newpar
This paper has its immediate origins in a graduate course about variations of Hodge
structure that Ch.S.~taught at Stony Brook University in Fall 2019. We take this
opportunity to thank all the students in the course for their interest, and for
their patience while the instructor worked through the details of Schmid's paper.
Ch.S.~thanks Wilfried Schmid for answering some questions about
the nilpotent orbit theorem; Takuro Mochizuki for a useful discussion about the
definition of the limiting Hodge filtration; Ding\-xing Zhang for organizing a small
reading seminar
about Schmid's paper back in 2014; Ruijie Yang for helping with the calculations for
harmonic bundles; and Bruno Klingler for several conversations about variations of Hodge
structure and harmonic bundles. He also thanks Ya Deng for a very useful email exchange in
2020 about the nilpotent orbit theorem, and especially for sharing an early version
of a preprint \cite{Deng} that suggested the use of H\"ormander's $L^2$-estimates in
this context.

\newpar
During the preparation of this paper, Ch.S.~was partially supported by NSF grant
DMS-1551677 and by a Simons Fellowship. He thanks the National Science Foundation and
the Simons Foundation for their financial support. Large parts of this paper were
written during a sabbatical stay at the Max-Planck-Institute for Mathematics and at
the Kavli Institute for the Physics and Mathematics of the Universe, and Ch.S.~thanks
both of these institutions for providing him with excellent working conditions.

\section{Overview of the results}

\subsection{Complex variations of Hodge structure}

\newpar \label{par:HS}
Since we are going to be working with complex Hodge structures, let us briefly recall the
definition. For the purposes of this text, a \define{Hodge structure} of weight $n$
on a complex vector space $V$ is a decomposition
\[
	V = \bigoplus_{p+q=n} V^{p,q},
\]
and a \define{polarization} is a hermitian pairing $Q \colon V \tensor_{\CC} \Vb \to
\CC$ such that the decomposition is orthogonal with respect to $Q$, and such that
$(-1)^q Q$ is positive definite on the subspace $V^{p,q}$. The \define{Hodge norm} of
a vector $v \in V$ is 
\[
	\norm{v}^2 = \sum_{p+q=n} (-1)^q \, Q(v^{p,q}, v^{p,q}).
\]
Unlike in the case of real Hodge structures, it takes \emph{two} filtrations to
describe an arbitrary complex Hodge structure: the usual Hodge filtration
\[
	F^p = F^p V = \bigoplus_{i \geq p} V^{i,n-i}
\]
and the conjugate Hodge filtration
\[
	\Fb^q = \Fb^q V = \bigoplus_{j \geq q} V^{n-j,j},
\]
because $V^{p,q} = F^p \cap \Fb^q$. When a polarization is given, the Hodge
structure is still determined by the Hodge filtration $F = F^{\bullet} V$ alone: the
reason is that
\[
	\Fb^q = \menge{v \in V}{\text{$Q(v,x) = 0$ for all $x \in F^{n-q+1}$}}.
\]
We are going to use the notation $\norm{v}_F^2$ whenever we want to emphasize the
dependence of the Hodge norm on the Hodge filtration.

\begin{pexa}
A Hodge structure is called \define{real} if $V = \VR \tensor_{\RR} \CC$ for an
$\RR$-vector space $\VR$, and if $\Fb = \sigma(F)$, where $\sigma \in \End_{\RR}(V)$
is the conjugation operator $\sigma(v \tensor z) = v \tensor \zb$. In the real case,
one has the familiar Hodge symmetry $V^{q,p} = \sigma \bigl( V^{p,q} \bigr)$ for
every $p+q=n$.
\end{pexa}

\newpar
The paper contains a certain amount of analysis, and for that reason, it will be
convenient to describe variations of Hodge structure in the language of smooth vector
bundles. Let $E$ be a smooth vector bundle on a complex manifold $X$. We denote
by $A^k(X, E)$ the space of smooth $k$-forms with coefficients in $E$, and by
$A^{i,j}(X, E)$ the space of smooth $(i,j)$-forms with coefficients in $E$. With this
notation, a \define{variation of Hodge structure} of weight $n$ on $E$ is decomposition
\[
	E = \bigoplus_{p+q=n} E^{p,q}
\]
into smooth subbundles, together with a flat connection $d \colon A^0(X, E) \to
A^1(X, E)$ that maps each $A^0(X, E^{p,q})$ into the direct sum of the subspaces
\[
	A^{1,0}(X, E^{p,q}) \oplus A^{1,0}(X, E^{p-1,q+1}) \oplus 
	A^{0,1}(X, E^{p,q}) \oplus A^{0,1}(X, E^{p+1,q-1}).
\]
In the analysis that follows, the most important component of the connection turns
out to be the \define{Higgs field}, which is the linear operator
\[
	\theta \colon A^0(X, E^{p,q}) \to A^{1,0}(X, E^{p-1,q+1}).
\]
A \define{polarization} is a hermitian pairing 
\[
	Q \colon A^0(X, E) \tensor_{A^0(X)} \wbar{A^0(X, E)} \to A^0(X)
\]
that is flat with respect to $d$, such that the expression
\[
	h(v,w) = \sum_{p+q=n} (-1)^q \, Q(v^{p,q}, w^{p,q}) 
		\quad \text{for $v,w \in A^0(X,E)$}
\]
defines a smooth hermitian metric on the bundle $E$, and the different subbundles
$E^{p,q}$ are orthogonal to each other with respect to $h$. This metric is called the
\define{Hodge metric} on the bundle $E$; for the sake of clarity, we sometimes denote
it by the symbol $h_E$.

\newpar
The relation with the more familiar holomorphic description comes from
decomposing $d = d' + d''$ into its $(1,0)$-component $d' \colon A^0(X, E) \to
A^{1,0}(X, E)$ and its $(0,1)$-component $d'' \colon A^0(X, E) \to A^{0,1}(X, E)$.
Then $d''$ gives $E$ the structure of a holomorphic vector bundle, which we denote by
the symbol $\shE$; and $d'$ defines an integrable holomorphic connection $\nabla
\colon \shE \to \OmX^1 \tensor_{\OX} \shE$ on this bundle. The condition on $d$ is 
saying that the Hodge bundles 
\[
	F^p E = \bigoplus_{i \geq p} E^{i,n-i}
\]
come from holomorphic subbundles $F^p \shE$, and that the connection satisfies Griffiths'
transversality relation $\nabla(F^p \shE) \subseteq \OmX^1 \tensor_{\OX} F^{p-1} \shE$.
From this point of view, the Higgs field is simply the holomorphic operator
\[
	\theta \colon F^p \shE/F^{p+1} \shE \to \OmX^1 \tensor_{\OX} F^{p-1} \shE / F^p \shE
\]
induced by the action of the connection $\nabla$.

\subsection{Multi-valued flat sections and monodromy}

\newpar
Our main concern in this paper is the asymptotic behavior of a polarized variation
of Hodge structure on the punctured disk. Let
\[
	\dst = \menge{t \in \CC}{0 < \abs{t} < 1}
\]
be the punctured unit disk, and let $E$ be a polarized variation of Hodge structure
on $\dst$. In order to compare the Hodge structures on different fibers of the bundle
$E$, we need a common reference frame; the most natural choice is the space of all
multi-valued flat sections of the flat bundle $(E,d)$.

\newpar
Since the fundamental group of $\dst$ is cyclic, a flat bundle $(E,d)$ on the
punctured disk is uniquely determined by the pair $(V,
T)$, where $V$ is the complex vector space of all multi-valued flat sections,
and $T \in \GL(V)$ is the monodromy transformation. 
To define $V$ and $T$ more precisely, recall that the universal covering space of
$\dst$ is naturally the \emph{left} half-plane
\[
	\HH = \menge{z \in \CC}{\Re z < 0},
\]
which maps to $\dst$ via the exponential function $\exp \colon \HH \to \dst$. Let $V$
be the complex vector space of all \define{multi-valued flat sections} of $E$;
concretely, $V$ is defined to be the space of $d$-flat sections of the pullback
bundle $\exp^{\ast} \! E$. The polarization on $E$ induces a nondegenerate hermitian form 
\[
	Q \colon V \tensor_{\CC} \bar{V} \to \CC, 
\]
and translation by $2 \pi i$ induces the \define{monodromy operator}
\[
	T = T_s \cdot T_u = T_s \cdot e^{2 \pi i \, N} \in \GL(V).
\]
Here $T_s$ is diagonalizable and $N$ is nilpotent, and one has
\[
	Q(T_s v, T_s w) = Q(v,w)
	\quad \text{and} \quad
	Q(N v, w) = Q(v, N w).
\]
Note that our definition of the nilpotent operator $N$ differs from Schmid's
definition \cite[(4.6)]{Schmid} by a factor of $2 \pi i$, because $\log T_u = 2 \pi i
N$. The advantage is that this makes $N$ independent of the choice of $i =
\sqrt{-1}$.

\newpar
The fact that the flat bundle $(E,d)$ is part of a polarized variation of Hodge
structure puts the following restriction on the monodromy transformation (see
\Cref{prop:monodromy-theorem}).

\begin{pprop}[Monodromy theorem]  
	If $\lambda \in \CC$ is an eigenvalue of $T$, then $\abs{\lambda} = 1$.
\end{pprop}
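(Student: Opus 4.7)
The plan is to deduce the bound from Simpson's basic estimate for the Higgs field, transplanting the problem to the universal cover $\HH$ and comparing the Hodge norm of a $T$-eigenvector at $z$ and at $z+2\pi i$.

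First, I would produce, for each eigenvalue $\lambda$ of $T$, a nonzero $v\in V$ with $Tv=\lambda v$. Since $T_s$ and $N$ commute, $N$ preserves the eigenspace $V^{\lambda}=\ker(T_s-\lambda I)$, and the nilpotence of $N$ guarantees $\ker N|_{V^{\lambda}}\neq 0$; any $v$ in this kernel satisfies $T v=\lambda\, e^{2\pi i N} v=\lambda v$. Lifting $v$ to a flat section of $\exp^{\ast}\! E$ on $\HH$ gives a section, still called $v$, such that $v(z+2\pi i)=\lambda v(z)$ for all $z\in\HH$.

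Next, I would obtain a pointwise estimate for the derivative of $\log\|v\|_h^2$. Decompose $d=D+\theta+\theta^{\ast}$, where $D$ preserves $h$ and the Hodge decomposition. Since $dv=0$ forces $Dv=-(\theta+\theta^{\ast})v$, and $D$ is $h$-unitary,
\[
	d\|v\|_h^2 \;=\; -h\bigl((\theta+\theta^{\ast})v,v\bigr)
		\;-\;h\bigl(v,(\theta+\theta^{\ast})v\bigr),
\]
so Cauchy--Schwarz, together with the fact that $\theta^{\ast}$ is the pointwise $h$-adjoint of $\theta$, yields
\[
	\Bigabs{d\log\|v\|_h^2(w)} \;\leq\; 4\,\bigabs{\theta(w)}_h
\]
for every real tangent vector $w$ on $\HH$. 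Writing $z=x+iy$, the pulled-back Poincar\'e metric on $\HH$ is $(dx^2+dy^2)/x^2$, so the $(1,0)$-component of $\partial/\partial y$ has Poincar\'e norm $1/|x|$. Simpson's basic estimate $\bigabs{\theta}_{h,P}\leq C$, already established by way of Ahlfors' lemma, then gives
\[
	\Bigabs{\partial_y\log\|v\|_h^2} \;\leq\; \frac{C'}{|x|}
\]
with $C'$ independent of $z$.

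Finally, I would integrate this inequality over $y\in[0,2\pi]$ at fixed $x<0$. On one hand,
\[
	\int_0^{2\pi}\partial_y\log\|v(x+iy)\|_h^2\,\dy
	\;=\; \log\|v(x+2\pi i)\|_h^2-\log\|v(x)\|_h^2,
\]
which equals $\log|\lambda|^2$ by the transformation rule $v(z+2\pi i)=\lambda v(z)$. On the other hand, its modulus is at most $2\pi C'/|x|$. Letting $x\to-\infty$ then forces $\log|\lambda|^2=0$, i.e.\ $|\lambda|=1$.

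The only non-routine step is the second one: converting the bound on $\theta\in A^{1,0}(\End E)$ into a bound on the \emph{real} partial derivative $\partial_y\log\|v\|^2_h$. This requires tracking the $(1,0)$- and $(0,1)$-components of the real vector field $\partial/\partial y$, and using the identity $\bigabs{\theta^{\ast}(w)}_h=\bigabs{\theta(w)}_h$ for real $w$ to absorb the contribution of $\theta^{\ast}$; everything else is bookkeeping once Simpson's basic estimate is in hand.
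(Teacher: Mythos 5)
Your proposal is correct and is essentially the paper's own proof: both arguments combine the derivative bound $\bigabs{\partial \log h(v,v)} \leq 2\,h_{\End(E)}(\theta_{\partial/\partial z},\theta_{\partial/\partial z})^{1/2} \leq 2C_0/\abs{\Re z}$ (harmonic-bundle identity plus the Ahlfors-lemma basic estimate) with the quasi-periodicity of $\log h(v,v)$ under $z \mapsto z+2\pi i$ for a $T$-eigenvector, then integrate vertically over one period and let $\Re z \to -\infty$. The only immaterial deviations are your redundant construction of an eigenvector killed by $N$ (any eigenvector of $T$ suffices) and the sign convention $v(z+2\pi i)=\lambda v(z)$ versus the paper's $\lambda^{-1}$, neither of which affects the conclusion.
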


For \emph{integral} variations of Hodge structure, it follows (because of Kronecker's
theorem) that all eigenvalues of $T$ are roots of unity. This fact is
traditionally called the ``monodromy theorem''; Schmid \cite[Lem.~4.5]{Schmid}
attributes it to Borel.

\subsection{The Hodge norm estimates}

\newpar
Conceptually, the most important piece of the theory are the Hodge norm estimates,
which describe the rate of growth of the pointwise Hodge norm for multi-valued
flat sections. This result is the starting point of our treatment of the theory; in
Schmid's work, it comes towards the end \cite[Thm.~6.6]{Schmid}. For a multi-valued flat
section $v \in V$, the Hodge metric gives us a smooth function
\[
	h(v,v) = \sum_{p+q=n} (-1)^q \, Q(v^{p,q}, v^{p,q}) \colon \HH \to [0, \infty).
\]
The asymptotic behavior of this function as $\abs{\Re z} \to \infty$ turns out to be
surprisingly simple -- in fact, it is completely controlled by the following elementary
construction. The nilpotent operator $N \in \End(V)$ determines an increasing
filtration $W_{\bullet} = W_{\bullet} V$, called the \define{monodromy weight
filtration}. This filtration is uniquely characterized by the following two properties:
\begin{aenumerate}
\item For every $k \in \ZZ$, one has $N W_k \subseteq W_{k-2}$.
\item For every $k \in \NN$, the induced operator $N^k \colon \gr_k^W \to \gr_{-k}^W$
	is an isomorphism.
\end{aenumerate}
An explicit formula for the monodromy weight filtration is
\[
	W_k = \sum_{j \in \NN} N^j \bigl( \ker N^{k+2j+1} \bigr) \quad
	\text{for $k \in \ZZ$.}
\]
The content of the Hodge norm estimates is that the monodromy weight filtration
precisely captures the asymptotic behavior of the Hodge norm.

\begin{pthm}[Hodge norm estimates]  \label{thm:Hodge-norm-intro}
	For $v \in V$ any multi-valued flat section,
	\[
		v \in W_k \setminus W_{k-1} \quad \Longleftrightarrow \quad
		h(v,v) \sim \abs{\Re z}^k,
	\]
	as long as $\Im z$ stays in a fixed interval.
\end{pthm}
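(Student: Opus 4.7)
My plan is to combine Simpson's basic estimate for the Higgs field (via Ahlfors' lemma) with a comparison to an explicit model variation constructed from $(V, N, Q)$ and the monodromy weight filtration.

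\textbf{Reduction to unipotent monodromy.} First, decompose $V$ into generalized eigenspaces of $T_s$. By the monodromy theorem each eigenvalue is of the form $e^{-2\pi i \alpha}$ for some $\alpha \in \RR$; tensoring the corresponding eigenspace with the rank-one Hodge variation $\CC \cdot t^\alpha$ leaves $N$ and hence $W_\bullet$ unchanged, while multiplying the Hodge norm by the factor $e^{2\alpha \Re z}$, which is bounded away from $0$ and $\infty$ on any strip $\Im z \in [a,b]$ after a constant rescaling. So it suffices to treat $T = e^{2\pi i N}$.

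\textbf{Basic estimate and polynomial growth.} Decompose the flat connection as $d = D + \theta + \theta^{\ast}$, with $D$ unitary for the Hodge metric. Ahlfors' lemma, applied by comparing the Poincar\'e metric on $\dst$ with a metric built from the curvature contribution of the Higgs field, yields Simpson's estimate $\|\theta(\partial_z)\|_h \leq C/\lvert\Re z\rvert$ uniformly on strips $\Im z \in [a,b]$. For a flat section $v$ of $\exp^{\ast} E$ on $\HH$, the relation $D'v = -\theta v$ forced by $dv = 0$ gives $\lvert\partial_z \log h(v,v)\rvert \leq 2 \|\theta(\partial_z)\|_h \leq 2C/\lvert\Re z\rvert$. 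Integrating along horizontal lines sandwiches $h(v,v)$ between two powers of $\lvert\Re z\rvert$, giving polynomial growth and decay but with a possibly non-sharp exponent.

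\textbf{Sharpening via a model variation.} Fix an $\sltwo$-triple $(N, H, N^+)$ splitting $W_\bullet$ (via Jacobson--Morozov) and an $H$-adapted splitting $F_0$ of a candidate limit Hodge filtration. The resulting model variation on $\dst$ with Hodge filtration $F(z) = e^{zN/(2\pi i)} F_0$ has an explicit Hodge norm: reading off the $H$-grading of a flat section, $v \in W_k \setminus W_{k-1}$ grows under the model metric precisely as $\lvert\Re z\rvert^k$. The actual and model Higgs fields differ, in a frame respecting $W_\bullet$, by a term which decays strictly faster than the basic estimate; applying the integration argument from Step~2 to the ratio of the actual and model metrics bounds it above and below by positive constants, yielding the asserted equivalence $h(v,v) \sim \lvert\Re z\rvert^k$.

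\textbf{Main obstacle.} The delicate step is the sharpening: the basic estimate alone yields polynomial bounds of unknown degree, and promoting these to the sharp exponent requires showing that the model variation genuinely captures the leading asymptotics. The critical ingredient is the positivity of the polarization pairing $Q(\cdot, N^k \cdot)$ on the primitive part of $\gr_k^W$, which both certifies the model as an honest polarized VHS for $\lvert\Re z\rvert$ large and, via Cauchy--Schwarz applied to $Q$, supplies the lower bound on the true Hodge norm that the basic estimate cannot produce on its own.
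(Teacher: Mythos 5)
Your overall strategy (Ahlfors/Simpson basic estimate plus comparison with an explicit $\sltwo$-model) is the right one, but the step that carries all the weight — the ``sharpening'' — has a genuine gap. You assert that the actual and model Higgs fields differ, in a weight-adapted frame, by a term decaying strictly faster than $\abs{\Re z}^{-1}$, and then integrate. That assertion is essentially the effective estimate $\norm{\theta_{\partial/\partial z}-N^{-1,1}}_{\Phi(z)}=O\bigl(\abs{\Re z}^{m}e^{-\delta(T)\abs{\Re z}}\bigr)$, which in this theory is a consequence of the nilpotent orbit theorem and is proved \emph{after} (and using) the Hodge norm estimates; nothing available at this stage — certainly not the basic estimate, which only gives $O(\abs{\Re z}^{-1})$ — produces it, so as written the argument is circular. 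Moreover, even granting some decay, integrating a derivative bound of size $O(\abs{\Re z}^{-1})$ only yields polynomial (not bounded) control of the ratio of the two metrics. The missing idea is a \emph{boundedness} statement rather than a growth statement: if a flat section satisfies $Tv=\lambda v$, then $\log h(v,v)$ stays bounded above as $\Re z\to-\infty$. This is proved not by integration alone but by combining the derivative bound with the subharmonicity of $\log h(v,v)$ and its periodicity in $\Im z$, via a convexity argument on vertical averages. Applying it to the flat isomorphism between the model and the actual variation, regarded as a single-valued (hence monodromy-invariant) flat section of $\Hom(E_{\mathrm{model}},E)$ and of its inverse, gives the two-sided comparison of Hodge metrics with no information at all about how fast the two Higgs fields approach each other. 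Without this comparison mechanism, your passage from polynomial bounds to $h(v,v)\sim\abs{\Re z}^{k}$ does not go through.

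Your reduction to unipotent monodromy is also not legitimate. The generalized eigenspaces of $T_s$ are flat subbundles but not sub-variations of Hodge structure: in the rank-two example with $N=0$ and $V^{1,0}_{\Phi(z)}=\CC(1,e^{\alpha z})$, the eigenspaces $\CC(1,0)$ and $\CC(0,1)$ meet the Hodge decomposition trivially at every finite $z$, so one cannot twist each eigenspace separately inside the category of polarized variations. In addition, the claimed factor $e^{2\alpha\Re z}$ is not bounded away from $0$ and $\infty$ on a strip — it decays or grows exponentially as $\Re z\to-\infty$ — so even formally the twist would destroy the polynomial growth class. The correct device is to build the semisimple part into the model itself: choose a semisimple $S$ with $S^{\dagger}=S$ commuting with the $\sltwo$-action, so the model variation has full monodromy $T_s e^{2\pi i N}$; then the comparison argument applies to the entire flat bundle and no reduction to unipotent monodromy is needed.
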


On the punctured disk, the statement is that $h(v,v) \sim \abs{\log{\abs t}}^k$ on
each angular sector.

\newpar
The main ingredient of the proof is the following basic estimate
\cite[\S2]{Simpson} for the pointwise Hodge norm of the Higgs field $\theta$, viewed
as a smooth section of the induced variation of Hodge structure on the bundle
$\End(E)$. It is analogous to the distance-decreasing property of period mappings
used in Schmid's paper \cite[Cor.~3.17]{Schmid}.

\begin{pthm}[Basic estimate] \label{thm:basic-estimate}
	One has the inequality
	\[
		h_{\End(E)} \bigl( \theta_{\partial/\partial t}, \theta_{\partial/\partial_t}
		\bigr) \leq \frac{C_0^2}{\abs{t}^2 (\log \abs{t})^2},
	\]
	where $C_0 = \frac{1}{2} \sqrt{\binom{r+1}{3}}$ and $r = \rk E$.
\end{pthm}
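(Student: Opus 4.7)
The plan is to interpret the Higgs field $\theta$ as (essentially) the derivative of the period map into the classifying space of polarized Hodge structures of the given type, and then to apply Ahlfors' generalized Schwarz lemma.

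First I would observe that $h_{\End(E)}(\theta_{\del/\del t}, \theta_{\del/\del t})$ is precisely the squared Hodge norm of the horizontal differential of the period mapping at $t$, applied to the tangent vector $\del/\del t$. Under the decomposition $\theta = \sum_p \theta_p$ into graded pieces $\theta_p \colon E^{p,q} \to E^{p-1,q+1}$, the induced metric on $\End(E)$ expresses this norm as a sum of Frobenius-type norms of the $\theta_p$ taken with respect to the Hodge metrics on the $E^{p,q}$. So the quantity we want to bound is literally the squared length of the derivative of the period map measured in the horizontal Hodge metric of the classifying space.

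Next I would establish a Liouville-type differential inequality for $u = h_{\End(E)}(\theta_{\del/\del t}, \theta_{\del/\del t})$ on $\dst$. For a polarized variation of Hodge structure on a Riemann surface, the harmonic-bundle identities (most importantly the formula relating the Chern curvature of the Hodge metric $h$ to $[\theta,\thetast]$, together with the holomorphicity $\dbar\theta=0$) combined with the curvature properties of the Griffiths--Schmid classifying space yield, after a direct calculation,
\[
    \del_t \del_{\bar t} \log u \; \geq \; \frac{4}{\binom{r+1}{3}} \cdot u
\]
wherever $u > 0$. The combinatorial factor $\binom{r+1}{3}$ is exactly the Griffiths--Schmid bound for the holomorphic sectional curvatures of the classifying space along horizontal tangent directions: it counts the pairs of Hodge summands that can be coupled by a horizontal displacement of an $r$-step filtration.

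Finally I would invoke Ahlfors' lemma in its standard analytic form: any nonnegative smooth function $u$ on $\dst$ satisfying such a Liouville inequality is automatically dominated by the reciprocal Poincar\'e density, which yields precisely the stated bound $u \leq C_0^2 / \bigl(\abs{t}^2 (\log\abs{t})^2\bigr)$ with $C_0^2 = \binom{r+1}{3}/4$. The standard proof compares $u$ to the Poincar\'e metrics on annuli $\{\rho < \abs{t} < 1\}$ and lets $\rho \to 0$, using the maximum principle; a mild regularization handles the zero set of $u$ and is routine.

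The hard part is the differential inequality with the sharp constant. Expanding $\del_t\del_{\bar t}\log u$ requires careful manipulation of the harmonic-bundle curvature identities on the induced VHS on $\End(E)$, coupled with a matrix-norm estimate that tracks how $\theta_{\del/\del t}$ interacts with the \emph{full} Hodge decomposition rather than just two adjacent summands; this is in essence the Griffiths--Schmid curvature computation on the period domain, transported to the endomorphism bundle. Once the inequality is in hand with the correct constant, Ahlfors' lemma does the rest mechanically.
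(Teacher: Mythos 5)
Your outer strategy---a Liouville-type differential inequality for the Hodge norm of the Higgs field followed by Ahlfors' lemma---is exactly the paper's, so everything hinges on whether your sketch actually produces that inequality with the right constant, and there the proposal has a genuine gap. The paper obtains it in two concrete steps: first, Simpson's harmonic-bundle computation shows that $A = \theta_{\partial/\partial z}$, viewed as a \emph{holomorphic section of the Higgs bundle $\End(E)$ annihilated by its Higgs field}, satisfies $\partial_z\partial_{\bar z}\log h(A,A) \geq h\bigl([\Ast,A],[\Ast,A]\bigr)/h(A,A)$; second, a purely linear-algebraic lemma about nilpotent endomorphisms of an $r$-dimensional inner-product space gives $2\norm{A}^4 \leq \binom{r+1}{3}\,\bigl\lVert[\Ast,A]\bigr\rVert^2$, and combining the two yields $\partial_z\partial_{\bar z}\log h(A,A) \geq \tfrac{2}{\binom{r+1}{3}}\,h(A,A)$. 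Your proposal replaces this second step by an appeal to ``the curvature properties of the Griffiths--Schmid classifying space,'' with the gloss that $\binom{r+1}{3}$ ``counts the pairs of Hodge summands that can be coupled by a horizontal displacement.'' That is not where the constant comes from: the classical horizontal sectional-curvature bounds depend on the Hodge numbers and do not hand you a constant depending only on $r$; in the paper the constant comes from the \emph{nilpotency} of the Higgs field via the matrix inequality above, whose extremal case is the irreducible $\sltwo(\CC)$-representation of dimension $r$. Without a substitute for that estimate, the ``direct calculation'' you invoke is precisely the part of the theorem that needs proving.

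There is also a quantitative error in the step you assert. With $u = h_{\End(E)}(\theta_{\partial/\partial t},\theta_{\partial/\partial t})$, the sharp inequality is $\partial_t\partial_{\bar t}\log u \geq \tfrac{2}{\binom{r+1}{3}}\,u$, not $\tfrac{4}{\binom{r+1}{3}}\,u$: for the model variations built from the irreducible $\sltwo(\CC)$-representation one has $u = C_0^2\,\abs{t}^{-2}(\log\abs{t})^{-2}$ exactly, which attains equality in the $\tfrac{2}{\binom{r+1}{3}}$ version (this is why $C_0$ is optimal), so your stronger inequality is false. Correspondingly, the Ahlfors normalization used in the paper converts $\partial_t\partial_{\bar t}\log u \geq K u$ into $u \leq (2K)^{-1}\abs{t}^{-2}(\log\abs{t})^{-2}$, so the correct $K = \tfrac{2}{\binom{r+1}{3}}$ gives exactly $C_0^2 = \tfrac14\binom{r+1}{3}$, whereas your $K$ would output $\tfrac18\binom{r+1}{3}$, which cannot hold. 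The final Ahlfors step and the interpretation of $h_{\End(E)}(\theta,\theta)$ as the squared derivative of the period map are fine; what is missing is the pair of lemmas (Simpson's inequality on $\End(E)$ and the nilpotent-matrix estimate) that actually produce the differential inequality with its sharp, rank-only constant.
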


We use some elementary calculations with harmonic bundles, already done by Simpson
\cite[\S2]{Simpson}, together with the fact that $\theta$ is nilpotent, to show that 
the Laplacian of the smooth function $f = h_{\End(E)} \bigl(
\theta_{\partial/\partial t}, \theta_{\partial/\partial_t} \bigr)$ satisfies the
differential inequality
\[
	\Delta \log f \geq 8 f / C_0^2.
\]
The basic estimate then follows by applying Ahlfors' lemma \cite{Ahlfors}. The 
value of the constant $C_0$ is optimal; more important than the exact value,
however, is the fact that $C_0$ depends on nothing but the rank of the bundle $E$.

\newpar
To prove the Hodge norm estimates, we first establish a special case.

\begin{plem}
	If $v \in V$ is a multi-valued flat section with $Tv = \lambda v$ for some $\lambda
	\in \CC$, then the function $\varphi_v = \log h(v,v)$ remains bounded from above
	as $\abs{\Re z} \to \infty$.
\end{plem}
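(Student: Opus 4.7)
The plan is to combine the basic estimate with a subharmonicity property of $\log h(v,v)$ and a circle-averaging argument that exploits the monodromy eigenvector condition.

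\textbf{Step 1 (Descent).} By the monodromy theorem, $|\lambda| = 1$, so
\[
h(v,v)(z+2\pi i) = h(Tv, Tv)(z) = |\lambda|^2 h(v,v)(z) = h(v,v)(z),
\]
meaning that $\varphi_v$ is $2\pi i$-periodic in $\Im z$ and descends to a smooth positive function on $\dst$, which I also denote by $\varphi_v$.

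\textbf{Step 2 (Subharmonicity and gradient bound).} Decomposing the flat connection as $d = D + \theta + \theta^{\ast}$ with $D$ preserving the Hodge decomposition, the condition $dv = 0$ forces $D^{1,0} v = -\theta v$ and $D^{0,1} v = -\theta^{\ast} v$; in particular $v$ is a holomorphic section of $\shE$, and the Chern connection of $h$ acts on $v$ as $d - 2\theta$. A direct calculation using the holomorphicity of $\theta$ then yields
\[
\partial h(v,v) = -2\, h(\theta v, v) \qquad \text{and} \qquad \partial \bar\partial h(v,v) = 4 \|\theta_{\partial_t} v\|^2 \, dt \wedge d\bar t.
\]
Combined with the Cauchy--Schwarz inequality $|h(\theta_{\partial_t} v, v)|^2 \le \|\theta_{\partial_t} v\|^2 \cdot h(v,v)$, this produces $\partial \bar\partial \varphi_v \ge 0$, so $\varphi_v$ is subharmonic. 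The same formula for $\partial h(v,v)$, together with the basic estimate (\Cref{thm:basic-estimate}), provides the gradient bound
\[
|\partial_z \varphi_v| \le 2 \|\theta_{\partial_z}\|_{\End(E)} \le \frac{2 C_0}{|\Re z|}
\]
on $\HH$.

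\textbf{Step 3 (Averaging).} Write $\varphi_v(z) = u(x,y)$ for $z = x+iy \in \HH$ and set
\[
\bar u(x) = \frac{1}{2\pi} \int_0^{2\pi} u(x,y) \, dy.
\]
The $2\pi$-periodicity from Step 1 kills the $u_{yy}$ contribution in $\int_0^{2\pi} \Delta u \, dy$, so subharmonicity gives $\bar u''(x) \ge 0$: the averaged function $\bar u$ is convex on $(-\infty, 0)$. The gradient bound forces $|\bar u'(x)| \le 4 C_0/|x| \to 0$ as $x \to -\infty$, so the nondecreasing function $\bar u'$ has limit $0$ at $-\infty$; hence $\bar u'(x) \ge 0$ for all $x$, and $\bar u$ is monotone nondecreasing. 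In particular $\bar u(x) \le \bar u(-1)$ for all $x \le -1$. Finally, the gradient bound also controls the oscillation $|u(x,y) - \bar u(x)| \le 8 \pi C_0 / |x|$, yielding
\[
\varphi_v(z) \le \bar u(-1) + \frac{8\pi C_0}{|\Re z|}
\]
as $|\Re z| \to \infty$.

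The main obstacle is Step 3: the bare gradient bound of Step 2 only gives logarithmic growth of $\varphi_v$, and it is only the interaction between the periodicity (from the eigenvector hypothesis) and the subharmonicity that allows the convex averaged function $\bar u$ to be pinned down and bounded. The computations in Step 2, by contrast, are essentially routine once one uses the holomorphicity of $\theta$ and the flatness of $v$ to see that the cross term in $\bar\partial h(\theta v, v)$ vanishes, leaving only the clean $\|\theta_{\partial_t} v\|^2$ contribution.
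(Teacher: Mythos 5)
Your overall strategy is precisely the paper's proof of \Cref{prop:kerN}: periodicity of $\varphi_v$ from $\abs{\lambda}=1$ (the monodromy theorem, which is indeed proved beforehand from the same derivative bound), subharmonicity of $\log h(v,v)$, the gradient bound $\abs{\partial\varphi_v/\partial z}\leq 2C_0/\abs{\Re z}$ coming from the basic estimate, vertical averaging to produce a convex function $\bar u(x)$ whose derivative tends to $0$ as $x\to-\infty$ and which is therefore nondecreasing and bounded above on $(-\infty,x_0]$, and finally the oscillation bound transferring this to $\varphi_v$ itself. So there is nothing new in the route; the only issue is a computational claim in your Step 2.

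The identity $\partial\dbar\,h(v,v)=4\norm{\theta_{\partial/\partial t}v}^2\,dt\wedge d\bar t$ is not correct, and the cross term you discard does not vanish: using $\dbar\theta+\theta\dbar=0$ and $\delb v=-\thetast v$, the term $h(v,\dbar\theta v)$ survives and contributes the norm of $\thetast v$, so that the correct pointwise identity is
\[
	\frac{\partial^2}{\partial z\,\partial\zb}\,h(v,v)
	= 2\,h\bigl(\theta_{\partial/\partial z}v,\,\theta_{\partial/\partial z}v\bigr)
	+ 2\,h\bigl(\thetast_{\partial/\partial\zb}v,\,\thetast_{\partial/\partial\zb}v\bigr).
\]
Your formula happens to agree with this on the symmetric model examples, but it fails in general; for instance, in the weight-one rank-two model, take a flat section whose two Hodge components at the base point have different norms, so that $h(\theta_{\partial/\partial z}v,\theta_{\partial/\partial z}v)\neq h(\thetast_{\partial/\partial\zb}v,\thetast_{\partial/\partial\zb}v)$. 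This does not endanger the proof: since $\thetast$ is the adjoint of $\theta$, one has $h(\theta_{\partial/\partial z}v,v)=h(v,\thetast_{\partial/\partial\zb}v)$, so applying Cauchy--Schwarz to both expressions and averaging gives $\abs{\partial_z h(v,v)}^2 \leq 2\,h(v,v)\bigl(h(\theta_{\partial/\partial z}v,\theta_{\partial/\partial z}v)+h(\thetast_{\partial/\partial\zb}v,\thetast_{\partial/\partial\zb}v)\bigr)$, and subharmonicity of $\varphi_v$ follows exactly as in \Cref{lem:metric-subharmonic}. With that correction, your Step 3 is the paper's argument verbatim.
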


This is indeed a special case of the Hodge norm estimates, because $Tv = \lambda v$
implies $Nv = 0$, and therefore $v \in W_0$. Some further calculations with harmonic
bundles, together with the basic estimate, show that:
\begin{enumerate}
	\item The function $\varphi_v$ is subharmonic on $\HH$.
	\item The first derivatives of $\varphi_v$ are bounded by a constant times
		$\abs{\Re z}^{-1}$.
	\item One has $\varphi_v(z + 2 \pi i) = \varphi_v(z)$ for every $z \in \HH$.
\end{enumerate}
By a simple calculus argument, these three properties are enough to conclude that
$\varphi_v$ is bounded from above as $\abs{\Re z} \to \infty$.

\newpar
This special case is already enough to prove that the asymptotic behavior of
the Hodge norm only depends on the underlying flat bundle $(E,d)$. 

\begin{pthm}[Comparison theorem] 
	Let $E_1$ and $E_2$ be two polarized variations of Hodge structure on the
	punctured disk.  If $(E_1,d_1) \cong (E_2,d_2)$ as flat bundles, then the Hodge
	metrics $h_1$ and $h_2$ are mutually bounded, up to a constant, as $t \to 0$.
\end{pthm}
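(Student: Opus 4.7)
The plan is to apply the preceding lemma to the polarized variation of Hodge structure that lives on $\Hom(E_1, E_2)$. Given a flat isomorphism $\phi \colon (E_1, d_1) \to (E_2, d_2)$, one may view $\phi$ as a globally defined, and hence monodromy-invariant, multi-valued flat section of $\Hom(E_1, E_2)$. Since $T\phi = \phi$, the eigenvalue case $\lambda = 1$ of the lemma applies and produces a constant $C > 0$ with
$$ h_{\Hom(E_1, E_2)}(\phi, \phi) \leq C \quad \text{on all of } \HH. $$

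The next ingredient is a piece of linear algebra: the bundle $\Hom(E_1, E_2)$ inherits the structure of a polarized complex variation of Hodge structure of weight $n_2 - n_1$, with Hodge decomposition
$$ \Hom(E_1, E_2)^{p,q} = \bigoplus_{\substack{p_2 - p_1 = p \\ q_2 - q_1 = q}} \Hom(E_1^{p_1, q_1}, E_2^{p_2, q_2}), $$
and with polarization built in the natural way from those of $E_1$ and $E_2$. A direct computation in Hodge-orthonormal bases shows that, with the correct sign conventions, the resulting Hodge metric is nothing but the Hilbert--Schmidt metric $h_{\Hom}(\phi, \psi) = \tr_{E_1}(\psi^{\ast} \phi)$, where the adjoint is taken with respect to $h_1$ and $h_2$. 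Because the Hilbert--Schmidt norm dominates the operator norm, the uniform bound above translates into
$$ h_2 \bigl( \phi(v), \phi(v) \bigr) \leq C \cdot h_1(v, v) \quad \text{for every smooth section } v \text{ of } E_1. $$

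To get the opposite inequality, repeat the argument with the inverse flat isomorphism $\phi^{-1} \colon (E_2, d_2) \to (E_1, d_1)$, viewed as a $T$-fixed multi-valued flat section of $\Hom(E_2, E_1)$. This yields a constant $C' > 0$ with $h_1(\phi^{-1} w, \phi^{-1} w) \leq C' \cdot h_2(w, w)$, and combined with the previous inequality this produces the asserted mutual boundedness of $h_1$ and $h_2$ (identified via $\phi$) as $t \to 0$.

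The only step that requires genuine attention is the identification of the induced Hodge metric on $\Hom(E_1, E_2)$ with the Hilbert--Schmidt metric: one has to track the signs $(-1)^q$ coming from the polarization conventions through the functors $E_1^{\vee}$ and $E_1^{\vee} \otimes E_2$ and confirm that the outcome is positive definite and given by the trace formula above. Once that is verified, the rest of the proof is a formal consequence of the previous lemma and the fact that $\phi$ is a global flat section.
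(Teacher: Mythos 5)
Your proposal is correct and follows essentially the same route as the paper: view the flat isomorphism as a (single-valued, hence $T$-invariant) flat section of the induced polarized variation on $\Hom(E_1,E_2)$, apply the boundedness lemma for monodromy-eigenvector flat sections to bound its Hodge norm near $t=0$, use the fact that the Hodge (Hilbert--Schmidt) norm dominates the operator norm, and repeat with the inverse isomorphism. The only cosmetic difference is that the paper states the bound on a region $\abs{t} \leq r$ with an explicit constant $C_r$ rather than on all of $\HH$, which is all the statement requires.
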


This follows by considering the induced variation of Hodge structure on the flat
bundle $\Hom(E_1, E_2)$. To prove the Hodge norm estimates in general, we then
proceed as follows. Using representation theory, we construct a ``model''
variation of Hodge structure on the given flat bundle $(E,d)$, whose Hodge norm has
the correct asymptotic behavior; the comparison theorem guarantees that, up to a
constant, the Hodge norm on $E$ has the same behavior. The construction of the models
is done by decomposing $V$ into irreducible representations of the Lie algebra
$\sltwo(\CC)$, and then explicitly constructing a model variation on each irreducible
representation. The construction uses the theory of $\sltwo$-Hodge
structures, which we study in some depth in \Cref{sec:sltwo}. (See also
\Cref{par:sltwo} in this chapter.)

\begin{pexa}
	In the model variation of Hodge structure coming from the standard representation of
	$\sltwo(\CC)$ on $\CC^2$, the Hodge metric is given by the formula
	\[
		\begin{pmatrix}
			\abs{x} + y^2 \abs{x}^{-1} & -iy \abs{x}^{-1} \\
			i y \abs{x}^{-1} & \abs{x}^{-1}
		\end{pmatrix},
	\]
	where $z = x + i y$. Since $(1,0) \in W_1$ and $(0,1) \in W_{-1}$, this is in
	complete agreement with the Hodge norm estimates.
\end{pexa}

\subsection{Period domains and period mappings}

\newpar
Once we understand the behavior of the Hodge metric, the next question is what
happens to the Hodge structures as $t \to 0$. The answer is best stated in the
language of period domains and period mappings. The space of multivalued flat
sections gives us a trivialization of the bundle $\exp^{\ast} E$, and so we can
consider all the Hodge structures in the variation as living on the same vector space
$V$. From our polarized variation of Hodge structure $E$, we then get a holomorphic
\define{period mapping}
\[
	\Phi \colon \HH \to D.
\]
The \define{period domain} $D$ parametrizes all Hodge structures of weight $n$ on
$V$, with fixed Hodge numbers $\dim V^{p,q}$, that are polarized by the pairing $Q$;
a basic fact is that $D$ is a homogeneous space for the real Lie group
\[
	G = \menge{g \in \GL(V)}{\text{$Q(gv,gw) = Q(v,w)$ for all $v,w \in V$}}.
\]
The period domain is an open subset of the so-called \define{compact dual} $\Dch$,
which parametrizes all decreasing filtrations $F^{\bullet}$ on $V$ such that $\dim
F^p = \dim V^{p,q} + \dim V^{p+1,q-1} + \dotsb$ for all $p \in \ZZ$. The compact dual
is a projective complex manifold, and a homogeneous space for the complex Lie group
$\GL(V)$; the complex structure on the period domain $D$ comes from the embedding $D
\subseteq \Dch$.

\newpar
The period mapping is holomorphic, and its differential is represented by the Higgs
field $\theta$; see \Cref{lem:dPhi} for the precise statement. Moreover, the
monodromy transformation $T \in G$ is defined in such a way that 
\[
	\Phi(z + 2 \pi i) = T \cdot \Phi(z).
\]
The eigenvalues of $T = T_s T_u$ lie on the unit circle, and so we can write $T_s =
e^{2 \pi i \, S}$, where $S \in \End(V)$ is a semisimple operator with
real eigenvalues in a fixed half-open interval of length $1$; then $T = e^{2 \pi
i(S+N)}$. One can interpret the operator $S+N$ as the residue of the connection
$\nabla$ on the canonical extension of the holomorphic
vector bundle $\shE$; see \Cref{par:canonical-extension}.

\subsection{Convergence of the rescaled period mapping}

\newpar \label{par:splitting-intro}
Now we come to a crucial point in the argument. The Hodge norm estimates 
\[ 
	v \in W_k \setminus W_{k-1} \quad \Longleftrightarrow \quad
	\norm{v}_{\Phi(z)}^2 \sim \abs{\Re z}^k
\]
suggest ``rescaling'' the period mapping, in order to even out the behavior of the
Hodge norm on different parts of the weight filtration. To do this
efficiently, we choose a \define{splitting} $H \in \End(V)$ for the weight
filtration, with the following three properties: 
\begin{enumerate}
	\item $H$ is semisimple, with integer eigenvalues.
	\item $W_k = E_k(H) \oplus W_{k-1}$ and $[H,N] = -2N$.
	\item $[H,T_s] = 0$ and $Q(H v, w) + Q(v, H w) = 0$ for all $v,w \in V$.
\end{enumerate}
It is not hard to show that such a splitting always exists (see
\Cref{prop:splitting}); in general, it is far from
unique. For a nonzero multi-valued flat section $v \in E_k(H)$, we then have
\[
	\norm{v}_{\Phi(z)}^2 \sim \abs{\Re z}^k
	\qquad \text{and} \qquad
	e^{-\half \log \abs{\Re z} \, H} v = \abs{\Re z}^{-\frac{k}{2}} v
\]
Consequently, the Hodge norm of $e^{-\half \log \abs{\Re z} \, H} v$ stays bounded as
$\abs{\Re z} \to \infty$. We can achieve the same effect by moving the Hodge
structures by the operator $e^{\half \log \abs{\Re z} \, H}$. Either way, the
conclusion is that the \define{rescaled period mapping}
\begin{equation} \label{eq:PhiSH-intro}
	\hat{\Phi}_{S, H} \colon \HH \to D, \quad 
	\hat{\Phi}_{S, H}(z) 
	= e^{\half \log \abs{\Re z} \, H} e^{-\half(z-\zb)(S+N)} \Phi(z),
\end{equation}
is invariant under the substitution $z \mapsto z + 2 \pi i$, and the Hodge norm 
of every multi-valued flat section is uniformly bounded as $\Re z \to -\infty$. The
two exponential factors are elements of the real Lie group $G$; this is the
reason why the rescaled period mapping stays in $D$.

\newpar
The second key result is that, after rescaling, the polarized Hodge structures
now converge to a well-defined limit. As indicated by the notation, the limit
does depend on the choice of splitting: different splittings lead to different limits.

\begin{pthm}[Convergence of the rescaled period mapping]
	\label{thm:convergence-intro}
	For any choice of splitting $H \in \End(V)$ of the monodromy weight filtration as
	above, the limit
	\[
		e^{-N} F_H = \lim_{\Re z \to -\infty} \PhiSH(z) \in D
	\]
	exists in the period domain. Moreover, the filtration $F_H \in \Dch$
	satisfies
	\[
		T_s(F_H^{\bullet}) \subseteq F_H^{\bullet}, \quad
		H(F_H^{\bullet}) \subseteq F_H^{\bullet}, \quad 
		N(F_H^{\bullet}) \subseteq F_H^{\bullet-1}.
	\]
\end{pthm}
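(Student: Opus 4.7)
The plan is to use the nilpotent orbit theorem (already established earlier in the paper) as the main input, then exploit the commutation $[H,N]=-2N$ to collapse the rescaling into a form whose limit is transparent. I would first verify that $\PhiSH(z+2\pi i)=\PhiSH(z)$: the shift preserves $|\Re z|$, multiplies $\Phi(z)$ by $T=e^{2\pi i(S+N)}$ (using $[S,N]=0$), and multiplies $e^{-\frac12(z-\bar z)(S+N)}$ by $e^{-2\pi i(S+N)}$, so the two factors cancel. It then suffices to study $r=\Re z\to-\infty$ with $\Im z$ bounded. The nilpotent orbit theorem provides a holomorphic $\Psi\colon\Delta\to\Dch$ with $\Psi(0)=\Flim$ and $\Phi(z)=e^{z(S+N)}\Psi(e^z)$, and substituting gives
\[
  \PhiSH(z)=e^{\frac12\log|r|\,H}\,e^{rN}\,e^{rS}\,\Psi(t),\qquad t=e^z.
\]

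The key computation uses $[H,N]=-2N$: conjugation yields $e^{sH}e^{rN}e^{-sH}=e^{re^{-2s}N}$, and the choice $s=\frac12\log|r|$ with $r<0$ gives $re^{-2s}=-1$, so $e^{\frac12\log|r|H}e^{rN}=e^{-N}\,e^{\frac12\log|r|H}$. Using in addition that $[H,T_s]=0$ forces $[H,S]=0$ by the spectral characterization of $S$, and that $S$ preserves $\Flim$ so that $e^{rS}\Flim=\Flim$, I would write $\Psi(t)=\exp(X(t))\Flim$ with $X$ holomorphic and $X(0)=0$; a direct manipulation then yields
\[
  \PhiSH(z)=e^{-N}\,\exp\bigl(\Ad(e^{\frac12\log|r|H})\Ad(e^{rS})X(t)\bigr)\cdot e^{\frac12\log|r|H}\Flim.
\]
The second factor converges to $F_H:=\lim_{s\to\infty}e^{sH}\Flim$, whose existence in $\Dch$ follows from Bia\l ynicki--Birula (or the explicit description $F_H^p=\bigoplus_k\gr^H_k(\Flim^p)$). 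For the first factor, I would decompose $X(t)=\sum_{\mu,\lambda}X_{\mu,\lambda}(t)$ into joint eigenvectors for $\ad H$ (weight $\mu$) and $\ad S$ (weight $\lambda$); since the eigenvalues of $S$ lie in a half-open interval of length $1$, one has $|\lambda|<1$. Combined with $\|X_{\mu,\lambda}(t)\|\le C|t|=Ce^r$, this yields
\[
  \bignorm{\Ad(e^{\frac12\log|r|H})\Ad(e^{rS})X_{\mu,\lambda}(t)}\le C|r|^{\mu/2}e^{r(1+\lambda)}\to 0
\]
as $r\to-\infty$, since $1+\lambda>0$. Hence the $\exp$-factor tends to the identity and $\PhiSH(z)\to e^{-N}F_H$.

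The three properties of $F_H$ then follow quickly: $H$-stability because $F_H$ is a fixed point of $e^{sH}$; $T_sF_H=F_H$ from $[H,T_s]=0$ together with $T_s\Flim=\Flim$; and $NF_H^\bullet\subseteq F_H^{\bullet-1}$ from $[H,N]=-2N$ combined with $N\Flim^\bullet\subseteq\Flim^{\bullet-1}$. That the limit lies in $D$ (and not merely in $\Dch$) follows from the Hodge norm estimates in \Cref{thm:Hodge-norm-intro}: evaluated on a basis of $V$ adapted to the $H$-splitting, the Hodge norms along $\PhiSH(z)$ stay uniformly bounded above and below, so the limiting hermitian form is positive-definite. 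The principal obstacle is the norm estimate above: the polynomial growth $|r|^{\mu/2}$ from the $H$-rescaling must be beaten by the exponential decay $e^{r(1+\lambda)}$ coming from the order-one vanishing of $\Psi(t)-\Flim$ at $t=0$. This relies crucially both on the choice of branch making $|\lambda|<1$ and on the strength of the nilpotent orbit theorem.
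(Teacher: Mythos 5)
Your overall skeleton matches the paper's argument: collapse $e^{\half \log \abs{\Re z}\, H} e^{\Re z\, N}$ into $e^{-N} e^{\half \log \abs{\Re z}\, H}$ via $[H,N]=-2N$, and then beat the polynomially growing distortion of $\Ad e^{\half \log \abs{\Re z}\, H}$ with the exponential decay supplied by the nilpotent orbit theorem. But there is a genuine gap at the starting point: you assert that the nilpotent orbit theorem gives $\Psi(0)=\Flim$, and you then write $\Psi(t)=\exp(X(t))\Flim$ with $X(0)=0$. This is false in general. The filtration $\Psi_S(0)$ depends on the choice of $S$, is in general not compatible with the eigenspace decomposition of $T_s$, and differs from the limiting Hodge filtration; see the Note after \Cref{thm:nilpotent-approximation-intro} and \Cref{ex:PsiS}, where $N=0$, $\Flim^1=\CC(1,0)$, but $\Psi_S^1(0)=\CC(1,1)$. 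In the paper, $\Flim$ is produced by the \emph{additional} limit $\Flim=\lim_{x\to\infty}e^{-xS}\Psi_S(0)$ (\Cref{prop:Flim}), and the heart of the convergence proof is to control this extra limit quantitatively: $e^{-\abs{\Re z}\,S}\Psi_S(e^z)$ approaches $\Flim$ at the rate $e^{-\delta(T)\abs{\Re z}}$ (\Cref{lem:Ad-S}, \Cref{lem:exponential}), which is what survives the subsequent distortion by $\Ad e^{\half\log\abs{\Re z}\,H}$. Your eigen-decomposition of $X(t)$ under $\ad S$ and $\ad H$ correctly handles the error $\Psi_S(t)-\Psi_S(0)$, but it cannot substitute for the missing limit $e^{-xS}\Psi_S(0)\to\Flim$: with only $\Psi_S(0)$ in hand, the factor $e^{\Re z\, S}$ acting on it does not stabilize, so neither the existence of the limit nor the $T_s$-stability of $F_H$ (which you deduce from $T_s\Flim=\Flim$, itself a consequence of that extra limit) is established. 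One can force $\Psi(0)=\Flim$ by first passing to a cover $t\mapsto t^m$ with $m\,\delta(T)\geq 1$ as in \Cref{prop:nilpotent-orbit-theorem-covering}, but you did not invoke this, and the rate bookkeeping would still have to be redone with $e^{-\abs{\Re z}/m}$ in place of $\abs{t}$.

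A secondary gap is the final positivity step. The Hodge norm estimates give, for each fixed $v\in E_k(H)$, that $\norm{v}_{\PhiSH(z)}^2$ is bounded above and below; but per-vector bounds on a basis do not by themselves prevent the rescaled Gram matrix from degenerating in the limit (off-diagonal inner products could make the limiting hermitian form only semi-definite). The paper closes this by a determinant trick: $v_1\wedge\dotsb\wedge v_r$ is flat in $\det E$, so the Gram determinant is a nonzero constant, whence the inverse Gram matrix is bounded and one gets a lower bound $\tfrac1C\norm{v}^2\leq\norm{v}_{\PhiSH(z)}^2$ uniformly in $v\in V$; a compactness lemma then shows that a family in $D$ converging in $\Dch$ with uniformly comparable norms has its limit in $D$. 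You need some version of this uniform lower bound to conclude $e^{-N}F_H\in D$.
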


Together with the Hodge norm estimates, this is saying that a polarized variation of
Hodge structure on the punctured disk behaves in a very simple way. Namely, as $t \to
0$, the Hodge metric grows (and decays) at several different rates, and this of course
prevents the Hodge structures from approaching any sort of limit. But once we rescale
to even out the behavior of the Hodge metric, \emph{both} the Hodge metric and the
Hodge structures converge.

\subsection{The nilpotent orbit theorem}

\newpar
The most difficult step in proving the convergence of the rescaled period mapping is the
\define{nilpotent orbit theorem}, an important result in its own right. Recall that
the period mapping satisfies
\[
	\Phi(z + 2 \pi i) = T \cdot \Phi(z),
\]
and that we have $T = e^{2 \pi i(S+N)}$, where $S \in \End(V)$ is semisimple with
real eigenvalues in a fixed interval of length $<1$. Consequently, the mapping
\[
	\HH \to \Dch, \quad z \mapsto e^{-z(S+N)} \Phi(z),
\]
is invariant under $z \mapsto z + 2 \pi i$, and therefore descends
to a holomorphic mapping
\[
	\Psi_S \colon \dst \to \Dch, \quad \Psi_S(e^z) = e^{-z(S+N)} \Phi(z),
\]
sometimes called the \define{untwisted period mapping}. The exponential factor
$e^{-z(S+N)}$ is now an element of the complex Lie group $\GL(V)$, and so the
untwisted period mapping takes values in the compact dual $\Dch$.

\newpar
The nilpotent orbit theorem has two parts. The first half is a convergence
statement: it says that $\Psi_S$ extends holomorphically over the origin. 

\begin{pthm}[Nilpotent orbit theorem, convergence]
	\label{thm:nilpotent-convergence-intro}
	The holomorphic mapping
	\[
		\Psi_S \colon \dst \to \Dch, \quad \Psi_S(e^z) = e^{-z(S+N)} \Phi(z),
	\]
	extends holomorphically across the origin, and the limit filtration $\Psi_S(0) \in
	\Dch$ has the property that $(S+N) \Psi_S^{\bullet}(0) \subseteq
	\Psi_S^{\bullet-1}(0)$.
\end{pthm}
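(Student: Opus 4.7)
The plan is to reformulate the extension statement as a question about Deligne's canonical extension of $\shE$, and to construct the extended Hodge filtration using the Hodge norm estimates together with an $L^2$-extension theorem of Demailly--H\"ormander type.

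First, I would set up the framework. The frame of $\exp^{\ast} \shE$ given by $v \mapsto e^{z(S+N)} v$ for $v \in V$ is invariant under $z \mapsto z + 2 \pi i$, using $[S,N] = 0$ and $T = e^{2 \pi i(S+N)}$, and therefore descends to a holomorphic frame of a holomorphic extension $\wbar{\shE}$ of $\shE$ across the origin. This is Deligne's canonical extension, and in this frame the connection $\nabla$ acquires a logarithmic pole at $t = 0$ with residue $S + N$. Moreover, the Hodge bundles $F^p \shE \subseteq \shE$ correspond on $\dst$ precisely to the subspaces $\Psi_S^p(t) \subseteq V$, so $\Psi_S$ extends holomorphically across $t = 0$ if and only if each $F^p \shE$ extends to a holomorphic subbundle of $\wbar{\shE}$.

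Second, I would use the Hodge norm estimates to control growth in this frame. Fix a basis $v_1, \ldots, v_r$ of $V$ adapted to the monodromy weight filtration, with $v_i \in W_{k_i} \setminus W_{k_i - 1}$; the Hodge norm estimates give $h(v_i, v_i) \sim \abs{\log \abs{t}}^{k_i}$. Hence the canonical extension frame has Hodge-metric Gram matrix with entries of polynomial growth in $\abs{\log \abs{t}}$, and since the Hodge filtration is recovered by orthogonal projection in the Hodge metric, one obtains polynomial bounds in $\abs{\log \abs{t}}$ for the Pl\"ucker coordinates of $\Psi_S^p$ in the trivialization given by the $v_i$.

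Third, and this is the analytic heart of the argument, I would promote these polynomial bounds to a holomorphic extension. Following the Cornalba--Griffiths and Simpson strategy alluded to in the introduction, I would twist the induced Hodge metric on the line bundle $\det F^p \shE$ by a suitable positive power of $\abs{\log \abs{t}}^{-1}$; the Hodge norm estimates applied to the induced variation on $\det F^p \shE$ show that, once the exponent is chosen correctly, the weighted metric is bounded and its curvature current is strictly positive near the origin. A Demailly--H\"ormander type $L^2$-extension theorem then produces a holomorphic extension $\det F^p \wbar{\shE}$, from which one recovers the extended subbundle $F^p \wbar{\shE}$, and hence the limit $\Psi_S(0) \in \Dch$. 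Finally, the residue condition $(S+N) \Psi_S^p(0) \subseteq \Psi_S^{p-1}(0)$ follows by taking limits in Griffiths transversality: the relation $\nabla F^p \shE \subseteq \Omega^1_{\dst} \tensor F^{p-1} \shE$ on $\dst$ extends to a logarithmic transversality relation on $\wbar{\shE}$, and the residue at $t = 0$, which equals $S + N$ in the canonical frame, delivers the inclusion. The main obstacle is the $L^2$-extension step: since the Hodge norm estimates give only polynomial growth in $\abs{\log \abs{t}}$ and not genuine boundedness, the twisting exponent must be chosen delicately so that the relevant sections lie in $L^2$ while the weighted metric retains enough positive curvature to apply the extension theorem.
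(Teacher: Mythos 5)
Your overall route—extend the Hodge bundles directly into Deligne's canonical extension and deduce the extension of $\Psi_S$—is a viable alternative to the paper's argument (it is essentially the Cornalba--Griffiths/Simpson line, close to Deng's $L^2$ proof that the paper cites), and your residue argument via logarithmic Griffiths transversality is a perfectly good substitute for the paper's way of getting $(S+N)\Psi_S^{\bullet}(0)\subseteq\Psi_S^{\bullet-1}(0)$. But the analytic heart has genuine gaps. In your second step, ``polynomial bounds for the Pl\"ucker coordinates of $\Psi_S^p$'' is not something the Hodge norm estimates can deliver: the Pl\"ucker coordinates of a holomorphic map $\dst\to\Dch$ are only defined up to a nowhere-vanishing holomorphic factor, and an arbitrary holomorphic frame of $\det F^p\shE$ on $\dst$ may differ from a controlled one by a unit such as $e^{1/t}$. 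Producing a \emph{specific} holomorphic lift with moderate growth is exactly the difficulty; knowing the Hodge norms of multi-valued flat sections does not bound it. Relatedly, $\det F^p\shE$ is not flat and carries no induced variation of Hodge structure, so ``the Hodge norm estimates applied to the induced variation on $\det F^p\shE$'' is not available—what one can control is its curvature, via the basic estimate for $\theta$. Also, in the canonical-extension frame the Gram matrix of the Hodge metric involves genuine powers $\abs{t}^{\alpha}$ coming from $e^{zS}$, not only powers of $\abs{\log\abs{t}}$.

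Your third step also misidentifies the tool. An Ohsawa--Takegoshi/Demailly extension theorem extends sections of a bundle already given on the whole disk; here the bundle, and the inclusion $\det F^p\shE\subseteq\wedge^{d}\shE$, exist only on $\dst$, and what must be proved is precisely that a holomorphic section of $\det F^p\shE$ over $\dst$ with controlled weighted $L^2$ norm exists—for instance by taking a smooth frame $s$ and solving $\dbar u=\dbar s$ by H\"ormander against a twisted Hodge metric with \emph{positive} curvature, then converting the $L^2$ bound plus the frame comparison into meromorphy of the coordinates, so that $\Psi_S$ extends because $\Dch$ is projective. Note the sign: by \Cref{lem:weight-functions}, multiplying the Hodge metric by a positive power of $\abs{\log\abs{t}}^{-1}$ makes the curvature \emph{more negative}; positivity requires a large positive power of $\abs{\log\abs{t}}$ (possibly combined with a factor $\abs{t}^a$). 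For comparison, the paper never extends the Hodge bundles directly: it lifts $t\theta_{\partial/\partial t}$ to a holomorphic section $\vartheta$ of $F^{-1}\End(E)$ by exactly this H\"ormander device, untwists to a holomorphic $B\colon\Delta\to\End(V)$ (this is where the length-$<1$ condition on the eigenvalues of $S$ enters), and then uses that $\Psi_S(e^z)=g(z)\cdot\Psi_S(e^{-1})$ with $g'=\bigl(B(e^z)-(S+N)\bigr)g$ a system with a regular singular point, so that $\Psi_S$ is meromorphic, hence holomorphic, at the origin; the residue statement then follows from the decay of the derivative of $\Psi_S(e^z)$. To make your route work you would have to carry out the $\dbar$-construction and growth comparison for the determinant lines yourself; as written, that step is asserted rather than proved.
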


The limit $\Psi_S(0)$ in the nilpotent orbit theorem depends on the choice of $S$,
but one can show that the \define{limiting Hodge filtration} 
\[
	\Flim = \lim_{\Re z \to -\infty} e^{-zN} \Phi(z) 
	= \lim_{\Re z \to -\infty} e^{-\abs{\Re z} \, S} \Psi_S(0) \in \Dch
\]
is independent of $S$. It satisfies 
\[
	T_s(\Flim^{\bullet}) \subseteq \Flim^{\bullet} \quad \text{and} \quad
	N(\Flim^{\bullet}) \subseteq \Flim^{\bullet-1}.
\]
The limiting Hodge filtration does not come from a polarized Hodge structure on $V$
(except when $N = 0$); instead, it is part of a \emph{mixed Hodge structure}, whose
weight filtration is given by the monodromy weight filtration $W_{\bullet-n}$ from
above. We are going to discuss this point in more detail in \Cref{sec:MHS-intro}
below.

\newpar \label{par:convergence-proof}
We can now explain why the rescaled period mapping $\PhiSH \colon \HH \to D$
converges to a limit in $D$. The argument needs both the Hodge norm estimates
(in \Cref{thm:Hodge-norm-intro}) and the nilpotent orbit theorem
(in \Cref{thm:nilpotent-convergence-intro}). The identities $[H,N]
= -2N$ and $[H,S] = 0$ can be used
to write
\[
	\PhiSH(z) = e^{-N} \cdot e^{\half \log \abs{\Re z} \, H} \, 
	e^{-\abs{\Re z} \, S} \, \Psi_S(e^z).
\]
The nilpotent orbit theorem implies that $\Psi_S(e^z) \to \Psi_S(0)$ at a rate
proportional to $\abs{e^z} = e^{-\abs{\Re z}}$. By analyzing the effect of the two
exponential factors, and by using the fact that the eigenvalues of $S$ lie in an
interval of length $<1$, one deduces the existence of the limit
\[
	F_H = \lim_{\Re z \to -\infty} e^{\half \log \abs{\Re z} \, H} \, 
	e^{-\abs{\Re z} \, S} \, \Psi_S(e^z) \in \Dch.
\]
The filtration $F_H \in \Dch$ is derived from the limit $\Psi_S(0) \in \Dch$ in the
nilpotent orbit theorem in two stages:
\begin{enumerate}
	\item The effect of the first limit
		\[
			\Flim = \lim_{\Re z \to -\infty} e^{-\abs{\Re z} \, S} \Psi_S(0)
		\]
		is to make the filtration $\Psi_S(0)$ compatible with the eigenspace
		decomposition of $T_s$, by projecting to the subquotients of the filtration by
		decreasing eigenvalues of $S$. (Decreasing because of the minus sign in the
		exponent.)
	\item The effect of the second limit
		\[
			F_H = \lim_{\Re z \to -\infty} e^{\half \log \abs{\Re z} \, H} \Flim
		\]
		is to make the filtration $\Flim$ compatible with the eigenspace decomposition
		of $H$, by projecting to the subquotients of the weight filtration
		$W_{\bullet}$ (which is the filtration by increasing eigenvalues of $H$).
\end{enumerate}
It follows that $\PhiSH(z)$ converges to $e^{-N} F_H \in \Dch$ as $\Re z \to
-\infty$.  At the same time, the Hodge norm estimates tell us that the rescaled
Hodge metric is comparable to a constant metric as $\Re z \to -\infty$. These two
facts together imply quite easily that $e^{-N} F_H \in D$.

\newpar
The nilpotent orbit theorem also has a second half, which says that the original period
mapping is very closely approximated by the \define{nilpotent orbit}
\[
	\Phinil \colon \HH \to \Dch, \quad \Phinil(z) = e^{zN} \Flim.
\]
In fact, the nilpotent orbit is itself the period mapping of a polarized variation of
Hodge structure (on a punctured disk of smaller radius).

\begin{pthm}[Nilpotent orbit theorem, approximation]
	\label{thm:nilpotent-approximation-intro} There are constants $C > 0$, $x_0 < 0$,
	and $m \in \NN$, such that
	\[
		\Phinil(z) \in D \quad \text{and} \quad
		d_D \bigl( \Phi(z), \Phinil(z) \bigr) \leq C \abs{\Re z}^m e^{-\delta(T)
		\abs{\Re z}}
	\]
	for every $z \in \HH$ with $\Re z \leq x_0$. The constants $C, x_0$ only depend on
	the base point $o \in D$ and on the minimal polynomial of $T \in
	\GL(V)$; the integer $m$ only depends on $r = \rk E$.
\end{pthm}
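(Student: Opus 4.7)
Plan. The starting identity is
\[
e^{-zN}\Phi(z) = e^{zS}\Psi_S(e^z),
\]
valid because $[S,N]=0$ (the semisimple and unipotent parts of $T$ commute). Since $\Phinil(z) = e^{zN}\Flim$, bounding $d_D(\Phi(z),\Phinil(z))$ reduces to controlling the discrepancy between $e^{zS}\Psi_S(e^z)$ and $\Flim$ inside $\Dch$, and then transporting the estimate by $e^{zN}$. I would work in a local chart on $\Dch$ at $\Flim$ given by the exponential of a subspace $\mathfrak{m}\subset\End(V)$ complementary to the Lie algebra of the stabilizer of $\Flim$, chosen to be $\ad S$-stable (possible because $S$ preserves $\Flim$ and $\ad S$ is semisimple).

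By the convergence part of the nilpotent orbit theorem (\Cref{thm:nilpotent-convergence-intro}), $\Psi_S$ extends holomorphically across the origin, so in this chart one has
\[
\Psi_S(t) = \exp\bigl(X(t)\bigr)\Flim, \qquad X(t) = X_0 + tX_1 + t^2X_2 + \cdots,
\]
a convergent $\mathfrak{m}$-valued power series. Decompose each $X_k = \sum_\mu X_{k,\mu}$ into $\ad S$-isotypic pieces. The key structural observation is that $X_0$ has strictly positive $\ad S$-eigenvalues: since $S$ preserves $\Flim$, one computes $e^{xS}\Psi_S(0) = \exp(e^{x\,\ad S}X_0)\Flim$, and the construction of $\Flim$ recalled in \Cref{par:convergence-proof} gives $\lim_{x\to -\infty}e^{xS}\Psi_S(0) = \Flim$, forcing $X_{0,\mu}=0$ for every $\mu\le 0$. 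Applying $e^{zS}$ similarly,
\[
e^{zS}\Psi_S(e^z) = \exp\bigl(Y(z)\bigr)\Flim, \qquad Y(z) = e^{z\,\ad S}X(e^z) = \sum_{k\ge 0,\,\mu}e^{z(k+\mu)}X_{k,\mu}.
\]
For every term with $X_{k,\mu}\ne 0$, one checks $k+\mu>0$: the case $k=0$ is the positivity just established, and for $k\ge 1$ one has $k+\mu > 1 - L > 0$, where $L<1$ bounds the spread of $\ad S$-eigenvalues on $\End(V)$ (because the eigenvalues of $S$ lie in a half-open interval of length one, so their pairwise differences lie strictly inside $(-1,1)$). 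Setting $\delta(T) := \min_{(k,\mu):X_{k,\mu}\ne 0}(k+\mu)>0$, one concludes
\[
\bigabs{Y(z)}\le C_1\, e^{-\delta(T)\abs{\Re z}},
\]
uniformly in $\Im z$, for $\abs{\Re z}$ larger than some threshold $x_0$.

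To transport this estimate to $\Phinil(z)$, use the intertwining $e^{zN}\exp(Y)=\exp(e^{z\,\ad N}Y)\,e^{zN}$ to get
\[
\Phi(z) = \exp\bigl(e^{z\,\ad N}Y(z)\bigr)\Phinil(z).
\]
Since $\ad N$ is nilpotent of order at most $2r-1$ with $r=\rk E$, the operator $e^{z\,\ad N}$ is polynomial in $z$ of degree at most $m:=2r-1$, so $\abs{e^{z\,\ad N}Y(z)}\le C_2 \abs{z}^m e^{-\delta(T)\abs{\Re z}}$. To replace $\abs{z}^m$ by $\abs{\Re z}^m$, note that $T\in G$ acts by isometries on $D$ and that $T\Phinil(z)=\Phinil(z+2\pi i)$: indeed $T_s\Flim=\Flim$ implies $T_s\Phinil(z)=\Phinil(z)$, and $T_u e^{zN}=e^{(z+2\pi i)N}$ handles the unipotent part. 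Thus $d_D(\Phi(z),\Phinil(z))$ is $2\pi i$-periodic in $z$, and restricting to the strip $\Im z\in[0,2\pi)$ gives $\abs{z}^m\le C_3\abs{\Re z}^m$ for $\abs{\Re z}$ large. Finally, the smallness of the coordinate displacement forces $\Phinil(z)\in D$ by the openness of $D\subset\Dch$, and the $G$-invariant Hodge metric on $D$ is locally comparable to the coordinate norm, yielding $d_D(\Phi(z),\Phinil(z))\le C\abs{\Re z}^m e^{-\delta(T)\abs{\Re z}}$.

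The main obstacle is the careful identification of the decay rate $\delta(T)>0$: the positivity of the $\ad S$-spectrum of $X_0$ rests on the precise definition of $\Flim$ recalled in \Cref{par:convergence-proof}, while the uniform lower bound $k+\mu>1-L>0$ for $k\ge 1$ depends crucially on the length-one (half-open) constraint on the eigenvalues of $S$. A secondary technical point is that the constants $C$ and $x_0$ should depend only on the base point $o\in D$ and the minimal polynomial of $T$, which follows from a uniform choice of chart at $\Flim$ and a uniform estimate on the Taylor coefficients of $X(t)$; the integer $m$ depends only on $r=\rk E$ through the nilpotency order of $\ad N$.
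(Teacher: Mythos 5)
Your argument correctly reproduces the \emph{qualitative} statement that $e^{-zN}\Phi(z)$ approaches $\Flim$ like $e^{-\delta(T)\abs{\Re z}}$ up to a factor polynomial in $z$ (this is essentially the content of \Cref{par:ineffective-estimates}, and of the explicit formula for the period mapping proved later via the subalgebra $\qlie$), but it does not prove the theorem as stated, because the whole point of the statement is the dependence of the constants. Your constant $C_1$ bounding $\abs{Y(z)}$ comes from the Taylor coefficients of $X(t)$, i.e.\ from the holomorphic extension of $\Psi_S$ supplied by \Cref{thm:nilpotent-convergence-intro}; these coefficients depend on the particular variation of Hodge structure, and nothing in your argument bounds them in terms of the base point $o$ and the minimal polynomial of $T$ alone. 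You acknowledge this at the end but assert that a ``uniform estimate on the Taylor coefficients of $X(t)$'' follows from a uniform choice of chart --- it does not; making the estimate effective is exactly the hard part. The paper achieves it by a different mechanism: the ineffective estimate near $t=0$ is propagated by the negative curvature of the twisted metric $h_{\End(E)}\cdot(-\log\abs{t})^{-b}$ together with the maximum principle (\Cref{thm:effective-estimates}), with the boundary values controlled by the universal bounds of \Cref{prop:polynomial-bound} and \Cref{prop:N-norm}. Some substitute for this step is indispensable, and your proposal contains none.

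A second gap is the final step, ``smallness of the coordinate displacement forces $\Phinil(z)\in D$ by openness, and $d_D$ is locally comparable to the coordinate norm.'' Neither claim is uniform in $z$: the Hodge norms $\norm{v}_{\Phi(z)}$ grow and decay polynomially in $\abs{\Re z}$, so both the size of the $\dDch$-ball around $\Phi(z)$ contained in $D$ and the ratio between $d_D$ near $\Phi(z)$ and the fixed reference norm degenerate as $\Re z\to-\infty$. The paper quantifies this in \Cref{lem:D-criterion}: a point $p$ with $\dDch\bigl(p,\Phi(z)\bigr)\leq\eps\,\abs{\Re z}^{-4C_0}$ lies in $D$, and $d_D\bigl(p,\Phi(z)\bigr)$ exceeds $\dDch\bigl(p,\Phi(z)\bigr)$ by at most a factor polynomial in $\abs{\Re z}$; these polynomial losses are harmless against your exponential decay, but they must be proved (again via \Cref{prop:polynomial-bound}), not asserted. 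The parts of your argument that are fine are the bookkeeping showing the decay rate is at least $\delta(T)$, the use of $2\pi i$-periodicity (valid since $T_s\Flim=\Flim$) to replace $\abs{z}$ by $\abs{\Re z}$, and the fact that $m$ depends only on $r$ through the nilpotency order of $\ad N$.
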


Here $d_D$ is the $G$-invariant distance function on the period domain, defined by
the Hodge metric (see \eqref{eq:D-distance}). The distance between the period mapping
$\Phi(z)$ and the nilpotent orbit $\Phinil(z)$ is exponentially small; the constant
$\delta(T)$ in the exponent is the minimal distance between consecutive eigenvalues
of $T$ (on the unit circle), divided by $2 \pi$. For developing the theory in more
than one variable, it is very important that the values of the constants are
essentially independent of the variation of Hodge structure.

\newpar
When $N = 0$, the nilpotent orbit theorem is asserting that $\Flim \in D$; in other
words, when the weight filtration is trivial, $\Flim$ is the Hodge filtration of a
polarized Hodge structure of weight $n$. When the weight filtration is not trivial,
what we get instead is a limiting mixed Hodge structure (see \Cref{thm:MHS-intro}).

\begin{note}
	This is not true for $\Psi_S(0)$: even when $N = 0$, the filtration
	$\Psi_S(0)$ is not necessarily the Hodge filtration of a polarized Hodge
	structure (see \Cref{ex:PsiS}).
\end{note}

\newpar
The geometric meaning of the nilpotent orbit theorem is the following. Recall that
the operator $S \in \End(V)$ depended on a choice of half-open interval $I$ of length
$1$. The holomorphic vector bundle $\shE$ has a \define{canonical extension} to a
holomorphic vector bundle $\shE_I$ on the disk. Up to isomorphism, it is
characterized by the fact that the holomorphic connection $\nabla$ becomes a
logarithmic connection 
\[
	\nabla \colon \shE_I \to \Omega_{\Delta}^1(\log 0) \tensor_{\shO_{\Delta}} \shE_I
\]
whose residue $\Res_0 \nabla$ has real eigenvalues in the interval $I$. There is a
distinguished trivialization $\shE_I \cong \shO_{\Delta} \tensor_{\CC} V$ in which
\[
	\nabla(v \tensor 1) = \frac{\dt}{t} \tensor (S+N) v.
\]
The content of \Cref{thm:nilpotent-convergence-intro} is that the Hodge bundles $F^p
\shE$ extends to holomorphic subbundles $F^p \shE_I$ of the canonical extension; and
that the Higgs field
\[
	\theta \colon F^p \shE_I / F^{p+1} \shE_I \to \Omega_{\Delta}^1(\log 0) 
	\tensor_{\shO_{\Delta}} F^{p-1} \shE_I / F^p \shE_I
\]
has a logarithmic pole at $t=0$, with residue $S+N$. The content of
\Cref{thm:nilpotent-approximation-intro} is that if we replace the extended Hodge
bundles $F^p \shE_I$ by trivial subbundles with fiber $\Psi_S^p(0) \subseteq V$,
then we still get a polarized variation of Hodge structure of weight $n$, at least on
a punctured disk of a fixed smaller radius.

\newpar
Our proof of the nilpotent orbit theorem is quite different from Schmid's original
argument \cite[\S8]{Schmid}. It uses in a crucial way the curvature properties of the
Hodge metric. A short calculation (see \Cref{prop:curvature}) shows that the
curvature tensor of the Hodge metric $h_E$ on the bundle $E^{p,q}$ is given by
$\Theta = -(\theta \thetast + \thetast \theta)$, where 
\begin{align*}
	\theta &\colon A^0(\dst, E^{p,q}) \to A^{1,0}(\dst, E^{p-1,q+1}) \\
	\thetast &\colon A^0(\dst, E^{p,q}) \to A^{0,1}(\dst, E^{p+1,q-1}) 
\end{align*}
are the Higgs field and its adjoint. (There are similar formulas for the curvature
tensor on the Hodge bundles $F^p E$ and on the quotient bundles $E/F^p E$.)
Consequently, the expression
\[
	h_E \bigl( \Theta_{\partial/\partial t \wedge \partial/\partial \tb} \, u, u \bigr) = 
	h_E \bigl( \theta_{\partial/\partial t} u, \theta_{\partial/\partial t} u \bigr)
	- h_E \bigl( \thetast_{\partial/\partial \tb} u, \thetast_{\partial/\partial \tb} u \bigr)
\]
is in general neither positive nor negative definite. However, because of the basic
estimate (in \Cref{thm:basic-estimate}), any metric of the form
\[
	h_E \cdot \abs{t}^a (-\log \abs{t})^b \quad \text{with $a \in \RR$ and $b \in \ZZ$}
\]
will have positive curvature for $b \gg 0$, and negative curvature for $b \ll 0$.
This insight already appears in Simpson's work \cite[\S10]{SimpsonVHS}, who
attributes it to the important paper by Cornalba and Griffiths \cite[p.~29]{CG}. We
use both of these properties in our proof.

\newpar
Let us now outline the steps in our proof of the nilpotent orbit theorem. 
The nilpotent orbit theorem can also be deduced from Mochizuki's work on harmonic bundles
\cite{Mochizuki-nilpotent}; another analytic proof using $L^2$-estimates has recently
been given by Deng \cite{Deng}.

\begin{enumerate}
	\item The bundle $\End(E)$ inherits a polarized variation of Hodge structure of
		weight $0$. The Higgs field $\theta_{\partial/\partial t}$ is a holomorphic section
		of $\End(E)^{-1,1}$, and the basic estimate tells us that its
		pointwise Hodge norm is bounded by a constant multiple of $\abs{t}^{-2} (\log
		\abs{t})^{-2}$. The first step is to construct a lifting of $t
		\theta_{\partial/\partial t}$ to a holomorphic section $\vartheta$ of the
		Hodge bundle $F^{-1} \End(E)$, whose $L^2$-norm is controlled by an inequality
		of the form
		\[
			\int_{\dst} h_{\End(E)}(\vartheta, \vartheta) 
			\abs{t}^a (-\log \abs{t})^b \dmu \leq C
		\]
		where $a > -2$ and $b \gg 0$. This uses H\"ormander's $L^2$-estimates, for
		which we include an elementary proof (in \Cref{sec:L2-estimates}); the key point
		is that the metric $h_{\End(E)} \cdot \abs{t}^a (-\log \abs{t})^b$ has positive
		curvature for $b \gg 0$. Using $L^2$-estimates to extend holomorphic bundles
		goes back to Cornalba-Griffiths \cite{CG}, and in the setting of harmonic
		bundles, to Simpson \cite{SimpsonVHS}.
	\item Pulling back to $\HH$ gives us a holomorphic mapping $\vartheta \colon \HH
		\to \End(V)$ that satisfies $\vartheta(z + 2 \pi i) = T \vartheta(z) T^{-1}$.
		After untwisting, we get
		\[
			B \colon \dst \to \End(V), \quad
			B(e^z) = e^{-z(S+N)} \vartheta(z) e^{z(S+N)}.
		\]
		Since the eigenvalues of $S$ lie in an interval of length $<1$, the
		$L^2$-estimate from the first step can be used to show that $B$ is
		square-integrable on a punctured neighborhood of the origin, and therefore extends
		holomorphically to the entire disk.
	\item We can now describe the differential of the untwisted period mapping
		$\Psi_S$ using holomorphic data.
		The holomorphic tangent space to the complex manifold $\Dch$ at the point
		$\Phi(z)$ is isomorphic to $\End(V)/F^0 \End(V)_{\Phi(z)}$, and the
		differential of the period mapping $\Phi \colon \HH \to \Dch$ is equal to 
		\[
			\theta_{\partial/\partial z} \mod F^0 \End(V)_{\Phi(z)}.  
		\]
		A short calculation with derivatives shows that the differential of the mapping
		$z \mapsto \Psi_S(e^z)$ is therefore equal to
		\begin{align*}
			e^{-z(S+N)} \theta_{\partial/\partial z} e^{z(S+N)} &- (S+N) \\
			\equiv B(e^z) &- (S+N) \mod F^0 \End(V)_{\Psi_S(e^z)}.
		\end{align*}
		The point is that the operator on the right-hand side is holomorphic.
	\item Let $g \colon \HH \to \GL(V)$ be the unique holomorphic solution of the
		ordinary differential equation
		\[
			g'(z) = \Bigl( B(e^z) - (S+N) \Bigr) g(z),
		\]
		subject to the initial condition $g(-1) = \id$. A short calculation shows that
		$g(z)^{-1} \Psi_S(e^z)$ must be constant, which means that
		\[
			\Psi_S(e^z) = g(z) \cdot \Psi_S(e^{-1}).
		\]
		The differential equation has a \emph{regular} singular point at $t = 0$, and by the
		basic theory of Fuchsian differential equations, one has
		\[
			g(z) = M(e^z) \cdot e^{z A},
		\]
		with $M \colon \dst \to \GL(V)$ meromorphic and $A \in \End(V)$ constant. Since
		the untwisted period mapping $\Psi_S(e^z)$ is single-valued, it follows that 
		\[
			\Psi_S(t) = M(t) \cdot \Psi_S(e^{-1}),
		\]
		and because $\Dch$ is a projective complex manifold, this is enough to conclude
		that $\Psi_S$ extends holomorphically across the origin. (The close
		relationship between the nilpotent orbit theorem and regularity was already
		observed by Griffiths and Schmid, see \cite[\S9a]{GS} and the references cited
		there.)
	\item The remainder of the argument consists in deriving effective estimates
		for the rate of convergence. Let $P_{\lambda}$ denote the projection to the
		eigenspace $E_{\lambda}(T_s)$. The fact that $\Psi_S(t)$ converges as $t \to 0$
		can be used to show that 
		\begin{align*}
			\norm{\theta_{\partial/\partial z} - N^{-1,1}}_{\Phi(z)}^2 + 
			\sum_{k \leq -2} \norm{N^{k,-k}}_{\Phi(z)}^2
			&\leq C \abs{\Re z}^{2m} e^{-2\delta(T) \abs{\Re z}} \\
			\sum_{k \leq -1} \norm{P_{\lambda}^{k,-k}}_{\Phi(z)}^2
			&\leq C \abs{\Re z}^{2m} e^{-2\delta(T) \abs{\Re z}}
		\end{align*}
		for $\abs{\Re z} \gg 0$; the exponent $m$ only depends on $r = \rk E$, but we
		have no control over the value of the constant $C$. We then use the
		fact that the metric $h_{\End(E)} \cdot (-\log \abs{t})^b$ has negative
		curvature for $b \ll
		0$, together with the maximum principle for subharmonic functions, to deduce
		that the above estimates actually hold on any half-plane of the form $\Re z \leq
		x < 0$, with a constant $C$ that only depends on $x$, $r$, and on the
		minimal polynomial of $T \in \GL(V)$. 
	\item Finally, we consider the curve
		\[
			[0, \infty) \to \Dch, \quad x \mapsto e^{xN} \Phi(z-x),
		\]
		joining the two points $\Phi(z)$ and $\Phinil(z)$. Its derivative is
		controlled by the inequalities in the previous step.
		After integration, we obtain a bound for the distance in $\Dch$ between these
		two points, and together with the Hodge norm estimates, this gives us the
		desired bound for their distance in $D$, provided that $\abs{\Re z}$ is
		sufficiently large.
\end{enumerate}

\newpar
The estimates in the fifth step are saying that, modulo $F^0 \End(V)_{\Phi(z)}$, the
Higgs field $\theta_{\partial/\partial z}$ converges to the
nilpotent operator $N$ at a rate that is exponential in $\abs{\Re z}$. Moreover, the
eigenspace decomposition
\[
	V = \bigoplus_{\abs{\lambda} = 1} E_{\lambda}(T_s)
\]
becomes orthogonal in the limit, at a rate that is exponential in $\abs{\Re
z}$. Since the limiting Hodge filtration $\Flim$ is compatible with this
decomposition, both of these facts are of course also contained in the statement of
the nilpotent orbit theorem itself.

\subsection{The limiting mixed Hodge structure}
\label{sec:MHS-intro}

\newpar
Probably the most widely known result from Schmid's paper \cite[\S6]{Schmid} is
that, in the limit, a polarized variation of Hodge structure on the punctured disk
gives rise to a mixed Hodge structure on $V$. The weight filtration of this
``limiting'' mixed Hodge structure is the shifted monodromy weight filtration
$W_{\bullet-n}$, and the Hodge filtration is the limiting Hodge filtration $\Flim$. 
Schmid obtained this result as a consequence of his $\SL(2)$-orbit theorem.

\newpar
We take a somewhat different approach here, and deduce the existence of the limiting
mixed Hodge structure directly from the convergence of the rescaled period mapping
$\PhiSH \colon \HH \to D$. In fact, we prove that the vector space $V$ is an example
of a \define{polarized $\sltwo$-Hodge structure}; this is the same kind of structure
that one finds on the cohomology of a compact K\"ahler manifold. The proof is
elementary and only uses linear algebra and some basic representation theory of the
Lie algebra $\sltwo(\CC)$, but no further analysis.

\newpar
Let us briefly recall how $\sltwo(\CC)$ enters into the picture. Recall that this Lie
algebra is $3$-dimensional, and is generated by the three matrices
\[
	\Hsl = \begin{pmatrix} 1 & 0 \\ 0 & -1 \end{pmatrix}, \quad
	\Xsl = \begin{pmatrix} 0 & 1 \\ 0 & 0 \end{pmatrix}, \quad
	\Ysl = \begin{pmatrix} 0 & 0 \\ 1 & 0 \end{pmatrix},
\]
subject to the relations $[\Hsl, \Xsl] = 2 \Xsl$, $[\Hsl, \Ysl] = -2\Ysl$, and
$[\Xsl, \Ysl] = \Hsl$. In any finite-dimensional representation, the operator $\Hsl$
acts semisimply with integer eigenvalues, and the eigenspace $E_k(\Hsl)$ are called
the \define{weight spaces}. The splitting $H$ that we chose in \cref{par:splitting-intro}
uniquely determines a representation 
\[
	\rho \colon \sltwo(\CC) \to \End(V)
\]
of this Lie algebra such that $\rho(\Hsl) = H$ and $\rho(\Ysl) = -N$; the minus
sign is dictated by our sign conventions for Hodge structures. It lifts to a
representation of the Lie group $\SL_2(\CC)$; in particular, we have the \define{Weil
element} 
\[
	\wsl = \begin{pmatrix} 0 & 1 \\ -1 & 0 \end{pmatrix} \in \SL_2(\CC),
\]
which can also be written as $\wsl = e^{\Xsl} e^{-\Ysl} e^{\Xsl}$. The Weil element
is a sort of linear algebra analogue of the Hodge $\ast$-operator on a compact
K\"ahler manifold: it satisfies $\wsl^2 = (-1)^{\Hsl}$, and induces isomorphisms
between the two weight spaces $E_{\pm k}(\Hsl)$. Using a dagger to denote the adjoint
of an operator with respect to the nondegenerate pairing $Q$, it is easy to see that
$\Hsl^{\dag} = -\Hsl$, $\Ysl^{\dag} = \Ysl$, $\Xsl^{\dag} = \Xsl$, and hence also
$\wsl^{\dag} = \wsl$.

\newpar \label{par:sltwo}
We can now define $\sltwo$-Hodge structures and their polarizations. The 
definition is modeled on the cohomology $H^{n + \bullet}(X, \CC)$ of
an $n$-dimensional compact K\"ahler manifold, but perhaps surprisingly, it is equally 
relevant in our context. An \define{$\sltwo$-Hodge structure} on a finite-dimensional
complex vector space $V$ is a representation of $\sltwo(\CC)$ on $V$ with the
following properties:
\begin{aenumerate}
\item Each weight space $V_k = E_k(\Hsl)$ has a Hodge structure of weight $n+k$;
	the integer $n$ is called the \define{weight} of the $\sltwo$-Hodge structure.
\item The two operators
	\[
		\Xsl \colon V_k \to V_{k+2}(1) \quad \text{and} \quad
		\Ysl \colon V_k \to V_{k-2}(-1)
	\]
	are morphisms of Hodge structures.
\end{aenumerate}
In the literature, this kind of structure is also called a ``Hodge-Lefschetz
structure'' \cite{Cattani} or an ``$\RR$-split mixed Hodge structure'' \cite{CKS},
but we feel that the name ``$\sltwo$-Hodge structure'' is more descriptive.

\newpar
The polarization on the cohomology of a compact K\"ahler manifold is usually
discussed only for the primitive cohomology (which is the kernel of $\Ysl$, in our
notation). The standard definition is that, for every $k \geq 0$, the hermitian
pairing
\[
	Q \circ \bigl( \id \tensor \Xsl^k \bigr) \colon V_{-k} \tensor_{\CC} \wbar{V_{-k}} \to \CC
\]
should polarize the Hodge structure of weight $n-k$ on the primitive subspace $\ker
\Ysl \colon V_{-k} \to V_{-(k+2)}$. Since $\wsl$ acts on the primitive subspace as
$\frac{1}{k!} \Xsl^k$, one can define polarizations more compactly with the help of
the Weil element. Following Deligne \cite{Deligne-signs}, we say that a hermitian
form
\[
	Q \colon V \tensor_{\CC} \Vb \to \CC
\]
is a \define{polarization} of the $\sltwo$-Hodge structure $V$ if it satisfies the four
identities
\begin{align*}
	Q \circ (\Hsl \tensor \id) &= - Q \circ (\id \tensor \Hsl), &
	Q \circ (\Xsl \tensor \id) &= Q \circ (\id \tensor \Xsl), \\
	Q \circ (\Ysl \tensor \id) &= Q \circ (\id \tensor \Ysl), &
	Q \circ (\wsl \tensor \id) &= Q \circ (\id \tensor \wsl),
\end{align*}
and if $Q \circ (\id \tensor \wsl)$ polarizes the Hodge structure of weight $n+k$
on each $V_k$. 

\newpar
Because the Weil element $\wsl$ acts as $\frac{1}{k!} (-\Ysl)^k$ on the ``other''
primitive subspace $\ker \Xsl \colon V_k \to V_{k+2}$ for $k \geq 0$, an equivalent
formulation is that
\[
	Q \circ \bigl( \id \tensor (-\Ysl)^k \bigr) \colon V_k \tensor_{\CC} \wbar{V_k} \to \CC
\]
should polarize the Hodge structure of weight $n+k$ on $\ker \Xsl$. Since $\rho(-\Ysl)
= N$ in our case, this matches Schmid's definition \cite[Lem.~6.4]{Schmid}. 

\newpar
We now describe how the limiting $\sltwo$-Hodge structure on $V$ comes about.
The convergence of the rescaled period mapping (in \Cref{thm:convergence-intro})
gives us a filtration $F_H \in \Dch$ with the following properties: 
\begin{aenumerate}
\item The filtration $e^{-N} F_H \in D$ is the Hodge filtration of a Hodge structure
	of weight $n$, polarized by the pairing $Q$.
\item One has $N(F_H^{\bullet}) \subseteq F_H^{\bullet-1}$, 
	$H(F_H^{\bullet}) \subseteq F_H^{\bullet}$, and $T_s(F_H^{\bullet})
	\subseteq F_H^{\bullet}$.
\end{aenumerate}
From these properties, we deduce by a mix of representation theory and linear algebra
that $F_H$ is the ``total'' Hodge filtration of an $\sltwo$-Hodge structure of weight $n$
on $V$.

\begin{pthm}[Limiting $\sltwo$-Hodge structure] 
	The vector space $V$ has a polarized $\sltwo$-Hodge structure of weight $n$, whose
	Hodge filtration is $F_H$. Concretely, this means the following:
	\begin{enumerate}
		\item Each weight space $V_k = E_k(H)$ has a Hodge structure of
			weight $n + k$, whose Hodge filtration is $V_k \cap F_H$.
		\item The operator $N \colon V_k \to V_{k-2}(-1)$ is a morphism of
			Hodge structures.
		\item The Hodge structure on each primitive subspace $\ker N^{k+1} \colon V_k \to
			V_{-k-2}$ is polarized by the hermitian pairing $Q(\argbl, N^k \argbl)$.
%		\item The Hodge structure on each weight space $V_k$ is polarized by the
%			pairing $Q(\argbl, \wsl \argbl)$, where $\wsl = e^{\Xsl} e^{-\Ysl} e^{\Xsl}$ is
%			the Weil element and $\Ysl = -N$.
		\item The operator $T_s$ is an endomorphism of the polarized $\sltwo$-Hodge
			structure.
	\end{enumerate}
\end{pthm}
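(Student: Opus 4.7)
The plan is to extract the polarized $\sltwo$-Hodge structure from the three defining properties of $F_H$ listed in \Cref{par:splitting-intro} together with $\sltwo(\CC)$-representation theory applied to the pair $(H, N)$. I first complete $(H, -N)$ to an $\sltwo(\CC)$-triple: since $H$ is semisimple and $N$ is nilpotent with $[H, N] = -2N$, there is a unique $N^+ \in \End(V)$ satisfying $[H, N^+] = 2 N^+$ and $[N^+, N] = H$, giving a representation $\rho \colon \sltwo(\CC) \to \End(V)$ whose weight spaces are $V_k = E_k(H)$. Because $H$ preserves $F_H$ and is $Q$-skew, the $Q$-conjugate filtration $\bar F_H$ of $F_H$ is also $H$-stable: for $v \in \bar F_H^q$ and $x \in F_H^{n-q+1}$, one has $Hx \in F_H^{n-q+1}$, so $Q(Hv, x) = -Q(v, Hx) = 0$. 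Semisimplicity of $H$ then gives $F_H^p = \bigoplus_k F_k^p$ and $\bar F_H^q = \bigoplus_k \bar F_k^q$, where $F_k^p := F_H^p \cap V_k$ and $\bar F_k^q := \bar F_H^q \cap V_k$. I set $V_k^{p, q} := F_k^p \cap \bar F_k^q$ as the candidate Hodge bigrading.

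The main step is to prove the decomposition $V_k = \bigoplus_{p + q = n + k} V_k^{p, q}$ together with the polarization by $Q(\argbl, N^k \argbl)$ on $P_k := \ker N^{k+1} \cap V_k$. I would exploit the fact that $\tilde F := e^{-N} F_H$ polarizes a Hodge structure of weight $n$ on $V$. The $Q$-self-adjointness identity $Q(Nv, w) = Q(v, Nw)$ gives $\overline{\tilde F} = e^N \bar F_H$, hence $N$ is a morphism of type $(-1, -1)$ for the weight-$n$ Hodge bigrading $\tilde V^{a, b} := \tilde F^a \cap \overline{\tilde F}^b$. Applying the Lefschetz decomposition $V_k = \bigoplus_{j \geq 0} N^j P_{k + 2j}$ then reduces everything to the primitive subspaces. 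On each $\sltwo$-irreducible summand $U \subseteq V$ of highest weight $\ell$, one selects a generating primitive inside a single Hodge piece $\tilde V^{a, b}$ and tracks how the string $v, Nv, \dots, N^\ell v$ interacts with the two filtrations $F_H = e^N \tilde F$ and $\bar F_H = e^{-N} \overline{\tilde F}$. Because $N$ shifts Hodge type by $(-1, -1)$, each element $N^j v$ of the string lies at a distinct bidegree of the candidate bigrading, producing the desired Hodge decomposition of $V_k$ and identifying $F_k^\bullet$ as its Hodge filtration of weight $n + k$.

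For the polarization, one computes $Q(v, N^k \bar v)$ for $v \in P_k$ of Hodge type $(p, n + k - p)$ directly in terms of the original polarization on $\tilde V^{a, b}$; the required positivity of $(-1)^{n + k - p} Q(v, N^k \bar v)$ on $V_k^{p, n+k-p}$ follows from the positivity of $(-1)^{n - b} Q$ on $\tilde V^{a, b}$ after accounting for the $k$-fold application of $N$, which by $Q$-self-adjointness can be absorbed into either argument of the pairing. I expect the hardest step to be precisely this sign bookkeeping: one must verify that each $\sltwo$-irreducible component of $V$ carries a Hodge-theoretic structure compatible with the $\tilde F$-bigrading in the specific sense required for the primitive polarization identity to come out positive-definite, and extract the correct exponent of $(-1)$ from the interplay between the Weil-element conventions for $\sltwo$-Hodge structures and Deligne's $(-1)^q$ convention for ordinary polarized Hodge structures. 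Finally, the $T_s$-equivariance in part (4) is immediate: $[T_s, H] = [T_s, N] = 0$ (the latter from $T = T_s \cdot e^{2 \pi i N}$ and $[T_s, T_u] = 0$) imply that $T_s$ preserves each $V_k$ and commutes with $N$ and $N^+$, while $T_s$ being a $Q$-isometry preserving $F_H$ forces it to preserve $\bar F_H$, hence every $V_k^{p, q}$.
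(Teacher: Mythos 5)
Your treatment of part (4) is fine, and the preliminary observations (existence of the $\sltwo$-triple, $H$- and $T_s$-stability of the conjugate filtration $\Fb_H$) are correct. But the pivotal claim of your argument is false. From $N(F_H^{\bullet}) \subseteq F_H^{\bullet-1}$ and $N^{\dagger}=N$ you may only conclude that $N$ shifts both $\tilde F = e^{-N}F_H$ and its $Q$-conjugate down by one step, i.e.\ that $N \in F^{-1}\End(V) \cap \Fb^{-1}\End(V)$ for the weight-zero Hodge structure on $\End(V)$ induced by $\tilde F$; this means the Hodge components of $N$ have types $(-1,1)$, $(0,0)$ and $(1,-1)$, not that $N$ is of pure type, and ``type $(-1,-1)$'' does not exist inside a single pure structure of weight $n$. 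The claim fails already in the model case: for the standard representation $S_1$ with Hodge decomposition $\CC(a+b)\oplus\CC(a-b)$ one has $N(a+b)=-b$, which has nonzero components in both Hodge pieces — compare \Cref{lem:Hodge-HXY}, which shows that in any $\sltwo$-Hodge structure $\Ysl=\Ysl_{-1}+\Ysl_0+\Ysl_1$ with all three components generally nonzero. Consequently the assertion that each $N^j v$ lies in a distinct bidegree of the candidate bigrading is unsupported, and with it the decomposition $V_k=\bigoplus_{p+q=n+k}V_k^{p,q}$, which is exactly the nontrivial content of parts (1)--(3). Likewise, choosing a highest-weight vector of an irreducible summand ``inside a single Hodge piece $\tilde V^{a,b}$'' presupposes the compatibility of the $\sltwo$-action with the Hodge decomposition that you are trying to prove.

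What is missing is an argument that kills the unwanted components $\Ysl_0,\Ysl_1$ acting on the relevant vectors — equivalently, that $F_H$ and $\Fb_H$ induce opposed filtrations of weight $n+k$ on each $V_k$ — and this cannot be pure bookkeeping, since the statement genuinely uses the positivity of $Q$. The paper's route is \Cref{thm:sltwo-filtration}: reduce, via the isotypical decomposition $V\cong\bigoplus_m S_m\tensor_{\CC}\Hom_{\CC}(S_m,V)^{\sltwo(\CC)}$ and the modified trace pairing, to the subspace of $\sltwo(\CC)$-invariants, where a positivity argument combining $[\Hsl,\Ysl]=-2\Ysl$, $\Ysl^{\dagger}=\Ysl$, $\Hsl^{\dagger}=-\Hsl$ and the polarization forces $\Ysl_0 v=\Ysl_1 v=0$ (\Cref{prop:invariants}); the theorem then follows, with (4) given by \Cref{cor:sltwo-endomorphism}. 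Your sketch contains no substitute for this step; the ``sign bookkeeping'' you single out as the hardest point is not where the difficulty lies.
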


\newpar
Recall from \cref{par:convergence-proof} that the filtration $F_H$ is obtained from
the limiting Hodge filtration $\Flim$ by the following procedure: $\Flim$ induces a
filtration on each subquotient $W_k/W_{k-1}$, and $V_k \cap F_H$ is gotten by pulling
back along the isomorphism $V_k \cong W_k / W_{k-1}$. The fact that each
weight space $V_k$ has a Hodge structure of weight $n+k$ therefore means that
$W_{\bullet-n}$ is the weight filtration of a mixed Hodge structure
\cite[Thm.~6.16]{Schmid}.

\begin{pthm}[Limiting mixed Hodge structure]  \label{thm:MHS-intro}
The vector space $V$ has a mixed Hodge structure, with weight filtration
$W_{\bullet-n}$ and Hodge filtration $\Flim$; the associated graded of this mixed
Hodge structure is a polarized $\sltwo$-Hodge structure of weight $n$ with $\Ysl =
-N$ and polarization $Q$. The two operators
\[
	N \colon V \to V(-1) \quad \text{and} \quad T_s \colon V \to V
\]
are morphisms of mixed Hodge structures.
\end{pthm}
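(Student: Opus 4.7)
The plan is to leverage the immediately preceding Limiting $\sltwo$-Hodge structure theorem, which endows each weight space $V_k = E_k(H)$ with a Hodge structure of weight $n+k$, Hodge filtration $V_k \cap F_H$, and polarization $Q(\argbl, N^k \argbl)$ on primitive subspaces. The splitting $H$ supplies a canonical isomorphism $V_k \xrightarrow{\sim} \gr_k^W = W_k/W_{k-1}$, and my strategy is to transport these pure Hodge structures across this isomorphism and to identify the resulting filtration on $\gr_k^W$ with the one induced by $\Flim$. Once that identification is in hand, $(W_{\bullet-n}, \Flim)$ is automatically a mixed Hodge structure whose associated graded is the desired polarized $\sltwo$-HS of weight $n$.

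The main computational step is to show that the Hodge filtration on $V_k$ provided by the $\sltwo$-HS theorem matches the filtration that $\Flim$ induces on $\gr_k^W$. Using the two-stage formula recalled in \cref{par:convergence-proof},
\[
F_H = \lim_{x \to \infty} e^{\frac{1}{2} \log x \cdot H} \Flim,
\]
I decompose any $v \in \Flim^p$ according to $H$-eigenspaces as $v = v_k + v_{k-1} + \cdots$ with $v_j \in V_j$ and top weight $k$, so that $x^{-k/2} e^{\frac{1}{2} \log x \cdot H} v = v_k + O(x^{-1/2}) \to v_k$. A standard Grassmannian limit argument then identifies $F_H^p \cap V_k$ with the image of $\Flim^p \cap W_k$ in $V_k \cong \gr_k^W$. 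Under this identification, the Hodge structure of weight $n+k$ on $V_k$ produced by the $\sltwo$-HS theorem is precisely the Hodge structure induced by $\Flim$ on $\gr_k^W$, so $(W_{\bullet-n}, \Flim)$ is a mixed Hodge structure on $V$ whose associated graded is the claimed polarized $\sltwo$-HS.

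For the morphism statements, the weight-filtration side is immediate: the relation $[H, N] = -2N$ gives $N V_k \subseteq V_{k-2}$, hence $N W_\bullet \subseteq W_{\bullet-2}$, and $[H, T_s] = 0$ gives $T_s W_\bullet = W_\bullet$. For the Hodge filtration, the identity $\Phi(z + 2\pi i) = T \Phi(z)$ combined with $[T_s, N] = 0$ yields
\[
e^{-(z + 2\pi i) N} \Phi(z + 2\pi i) = T_s \cdot e^{-z N} \Phi(z),
\]
and letting $\Re z \to -\infty$ forces $T_s \Flim = \Flim$. To see $N \Flim^\bullet \subseteq \Flim^{\bullet-1}$, I would start from the nilpotent orbit inclusion $(S + N) \Psi_S^\bullet(0) \subseteq \Psi_S^{\bullet-1}(0)$; since $[S, N] = 0$, the operator $S+N$ commutes with $e^{-\abs{\Re z} S}$, so the inclusion passes to the limit $\Flim = \lim e^{-\abs{\Re z} S} \Psi_S(0)$; the same limit makes $\Flim$ compatible with the $S$-eigenspace decomposition, hence $S \Flim^\bullet \subseteq \Flim^\bullet$, and the desired inclusion for $N$ follows. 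I expect the principal subtlety to be the Grassmannian limit argument in the second paragraph, which is the key bridge from the $\sltwo$-picture at $F_H$ to the mixed-Hodge picture at $\Flim$; everything else is bookkeeping with the algebraic identities supplied by the earlier theorems.
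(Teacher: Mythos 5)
Your route is the same as the paper's: invoke the limiting $\sltwo$-Hodge structure on $V$ with total Hodge filtration $F_H$, and then use the limit $F_H = \lim_{x\to\infty} e^{\half \log x\, H}\Flim$ to identify the filtration $V_k \cap F_H$ with the filtration induced by $\Flim$ on $\gr_k^W$ under the splitting isomorphism $V_k \cong W_k/W_{k-1}$. The ``Grassmannian limit argument'' you single out as the main subtlety is exactly \Cref{lem:exponential} (together with the remark following it and \Cref{par:FH}), applied to the semisimple operator $H$ whose filtration by increasing eigenvalues is $W_{\bullet}$; it is already proved in the paper, so you can cite it rather than redo it, and your re-derivations of $T_s\Flim = \Flim$ and $N\Flim^{\bullet} \subseteq \Flim^{\bullet-1}$ reproduce \Cref{prop:Flim}.

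There is, however, a genuine omission: you never address the conjugate filtration. In this paper Hodge structures are complex, so a mixed Hodge structure consists of \emph{three} filtrations $W_{\bullet}$, $F$, $\Fb$, and the axiom to verify is that the pair of filtrations induced by $\Flim$ \emph{and} $\Fblim$ on each $\gr_{n+k}^W$ is a Hodge structure of weight $n+k$; a complex Hodge structure on $\gr_k^W$ cannot be ``induced by $\Flim$'' alone, so matching only the $F$-side, as your second paragraph does, does not yet give the mixed Hodge structure axioms. What is missing is the companion statement $\Fb_H = \lim_{x\to\infty} e^{xH}\Fblim$, i.e.\ that the conjugate total Hodge filtration of the $\sltwo$-Hodge structure is induced by $\Fblim$ on the graded pieces; this follows by the same limit argument because $H \in \glie$, so $e^{xH}$ preserves $Q$ and hence carries the $Q$-orthogonal $\Fblim$ of $\Flim$ to the $Q$-orthogonal of $e^{xH}\Flim$, whose limit is $\Fb_H$. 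Likewise, ``morphism of mixed Hodge structures'' for $N \colon V \to V(-1)$ and $T_s$ requires compatibility with $\Fblim$ as well; this is quick (it follows from $N^{\dagger} = N$ and $T_s \in G$ together with the compatibilities with $\Flim$ you did prove), but it has to be said. With these additions your argument coincides with the paper's proof of \Cref{cor:limitingMHS}.
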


The $\sltwo$-Hodge structure depends on the choice of splitting $H$; the limiting
mixed Hodge structure is independent of it. Without choosing a splitting, one gets a
canonical polarized $\sltwo$-Hodge structure on the associated graded
$\gr_{\bullet}^W$ with respect to the monodromy weight filtration. In the terminology
of \cite[Def.~2.26]{CKS}, the limiting mixed Hodge structure is therefore 
``polarized'' by the pairing $Q$ and the nilpotent operator $N$.

\subsection{Asymptotic behavior of the Hodge metric}

\newpar
Finally, we describe the asymptotic behavior of the Hodge metric $h(v,v')$ where
$v,v' \in V$ are two multi-valued flat sections. The Hodge norm estimates cover the
case $v = v'$, but something new happens when $v \neq v'$, namely that the Hodge
metric $h(v,v')$ grows more slowly than expected as $\abs{\Re z} \to \infty$. This
result does not appear as such in Schmid's paper, but it is implicitly contained in
the statement about Siegel sets in \cite[Cor.~5.29]{Schmid}.

\begin{pthm}[Asymptotic behavior of the Hodge metric]  \label{thm:asymptotic-intro}
	For any two multi-valued flat sections $v, v' \in V$, there is a
	constant $C(v,v') > 0$ such that
	\[
		\bigabs{\inner{v}{v'}_{\Phi(z)}} \leq C(v,v') \cdot
		\min \Bigl( \norm{v}_{\Phi(z)}^2, \norm{v'}_{\Phi(z)}^2 \Bigr)
	\]
	for all sufficiently large values of $\abs{\Re z}$.
\end{pthm}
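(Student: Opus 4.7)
The strategy is to pass to the rescaled period mapping of \cref{par:splitting-intro}, where distinct $H$-weight spaces become asymptotically orthogonal, and to use this orthogonality to improve on the naive Cauchy--Schwarz bound by a factor of $\abs{\Re z}^{(k'-k)/2}$.

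First, decompose $v = \sum_j v_j$ and $v' = \sum_\ell v'_\ell$ with $v_j, v'_\ell \in V_j = E_j(H)$, and let $k$, $k'$ denote the top $H$-weights that occur. By the Hodge norm estimates (\Cref{thm:Hodge-norm-intro}), $\norm{v}_{\Phi(z)}^2 \sim \abs{\Re z}^k$ and $\norm{v'}_{\Phi(z)}^2 \sim \abs{\Re z}^{k'}$, so, after swapping and using sesquilinearity, the task reduces to showing $\abs{\inner{v}{v'}_{\Phi(z)}} = O(\abs{\Re z}^k)$ for pure-weight $v \in V_k$, $v' \in V_{k'}$ with $k \leq k'$. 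Convert to the rescaled picture via $\inner{v}{v'}_{\Phi(z)} = \inner{g(z) v}{g(z) v'}_{\PhiSH(z)}$, where $g(z) = e^{\half \log \abs{\Re z}\, H}\, e^{-\half(z-\bar z)(S+N)} \in G$ is the element from \eqref{eq:PhiSH-intro}. Using $[H,S]=0$, $[S,N]=0$ and $[H,N]=-2N$, one obtains
\[
g(z) v = \sum_{i \geq 0} \abs{\Re z}^{(k - 2i)/2}\, \frac{(-iy)^i}{i!}\, e^{-iy S}\, N^i v,
\]
whose $i$-th summand lies in $V_{k-2i}$, and analogously for $g(z) v'$.

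The key structural input is the orthogonality
\[
\inner{V_a}{V_b}_{e^{-N} F_H} = 0 \quad \text{for } a \neq b,
\]
which reflects the block-diagonal structure of the limiting polarized $\sltwo$-Hodge structure. Concretely, one checks that the Weil operator $C$ at the pure Hodge filtration $e^{-N} F_H$ satisfies $\{C, H\} = 0$ (equivalently, $C$ carries each $V_a$ into $V_{-a}$), which is a consequence of the explicit relation between $C$ and the Weil element $\rho(\wsl)$ of the $\sltwo$-representation in the limit; hence $H$ is self-adjoint for $\hat h_\infty := \inner{\cdot}{\cdot}_{e^{-N} F_H}$, and its distinct real integer eigenspaces are pairwise orthogonal. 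By \Cref{thm:convergence-intro}, $\inner{\cdot}{\cdot}_{\PhiSH(z)}$ converges to $\hat h_\infty$, so $\inner{u}{u'}_{\PhiSH(z)} \to 0$ whenever $u, u'$ lie in distinct weight spaces.

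Substituting the expansions into $\inner{g(z) v}{g(z) v'}_{\PhiSH(z)}$, the ``diagonal'' pairs with $k - 2i = k' - 2j$ (which forces $j = i + (k'-k)/2$) assemble into a sum bounded by $O(\abs{\Re z}^k)$. The ``off-diagonal'' pairs carry polynomial prefactors $\abs{\Re z}^{(k + k' - 2(i+j))/2}$, but they pair vectors in distinct weight spaces, so their Hodge inner product tends to zero; combining \Cref{thm:nilpotent-approximation-intro} (which gives exponential closeness of $\Phi(z)$ and $\Phinil(z)$, preserved under the isometric rescaling by $g(z) \in G$) with an explicit computation on the rescaled nilpotent orbit $\PhiSH^{\mathrm{nil}}(z) = e^{-N}\, e^{\half \log \abs{\Re z}\, H} \Flim$, I would extract a rate of off-diagonal decay sufficient to keep the total contribution $O(\abs{\Re z}^k)$. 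The main obstacle is precisely this quantitative off-diagonal estimate: the orthogonality itself is a clean algebraic consequence of the limiting $\sltwo$-Hodge structure, but converting asymptotic orthogonality into a decay rate strong enough to dominate the polynomial prefactors from the rescaling is the delicate step.
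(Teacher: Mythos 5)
Your skeleton is the paper's: rescale by $g(z)\in G$, use orthogonality of the $H$-weight spaces in the limit Hodge structure $\Fsh = e^{-N}F_H$, and reduce to the nilpotent orbit via \Cref{thm:nilpotent-approximation-intro} (whose error is indeed harmless because $d_D$ is $G$-invariant, and is converted into an inner-product bound by \Cref{lem:distance-inner-products}). But the step you flag as ``the delicate step'' is not a technicality to be extracted later -- it is the entire content of the theorem, and your reduction is essentially circular without it. After your double expansion, the off-diagonal terms carry prefactors up to $\abs{\Re z}^{(k+k')/2}$, so you need $\inner{u}{u'}_{\PhiSH(z)} = O\bigl(\abs{\Re z}^{-\abs{a-b}/2}\bigr)$ for pure-weight $u \in V_a$, $u' \in V_b$; this is exactly \eqref{eq:Hodge-metric-PhiSH}, i.e.\ the statement being proved, in rescaled form. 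The convergence $\PhiSH(z)\to\Fsh$ buys you only $O\bigl(\abs{\Re z}^{-1/2}\bigr)$ decay (that is the actual rate of convergence of the rescaled period mapping), which already fails whenever the weight gap is at least $2$ -- and when $k$ and $k'$ have the same parity, every nonzero gap is at least $2$. An ``explicit computation on the rescaled nilpotent orbit'' is also not routine as stated, because $\Flim$ is not obtained from $F_H$ by an element of the real group $G$, so the inner product at $e^{\half\log\abs{\Re z}\,H}\Flim$ cannot be computed by naively transporting the one at $\Fsh$.

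What fills the gap in the paper is the cheap $\SL(2)$-orbit theorem. One first proves (\Cref{lem:Flim-FH}) that $\Flim = h\,F_H$ with $h = \id + h_{-1} + h_{-2} + \dotsb$, $h_{-k} \in E_{-k}(\ad H)\cap\ker(\ad N)$, by matching Deligne's $I^{p,q}$-decomposition of the limiting mixed Hodge structure against the bigrading of the $\sltwo$-Hodge structure. Setting $u = \abs{\Re z}^{-1/2}$ and refactoring $\id + \sum_k u^k h_{-k}$ through the decomposition $\End(V) = \mlie \oplus F^0\End(V)$ at $\Fsh$, one obtains a convergent series $B(u) = \sum_k u^k B_{-k}$ with values in the \emph{real} Lie algebra $\glie$ such that $e^{\half\log\abs{\Re z}\,H}e^{\Re z\,N}\Flim = e^{B(u)}\Fsh$, and with each $B_{-k}$ confined to the $S_k\oplus S_{k-2}\oplus\dotsb$-isotypical part of $\End(V)$ (\Cref{thm:SL2-cheap}, \Cref{prop:isotypical}; the isotypical control uses $S_k\tensor S_\ell \cong \bigoplus_i S_{k+\ell-2i}$). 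Because $e^{B(u)}\in G$, one has $\inner{u}{u'}_{e^{B(u)}\Fsh} = \inner{e^{-B(u)}u}{e^{-B(u)}u'}_{\Fsh}$, an honest power series in $u$ whose degree-$m$ coefficients shift $H$-weight by at most $m$; orthogonality of distinct weight spaces at $\Fsh$ then forces the leading term to be divisible by $u^{\abs{a-b}}$, which is precisely the quantitative off-diagonal decay you need. With that in hand, your remaining bookkeeping (nilpotent-orbit approximation, expansion of $g(z)v$, summation over diagonal and off-diagonal pairs) does go through and matches the paper's conclusion.
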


An equivalent formulation is that if $v \in V_k$ and $v' \in V_{k'}$, then
\[
	\inner{v}{v'}_{\Phi(z)} = O \bigl( \abs{\Re z}^{\min(k,k')} \bigr)
\]
which is typically much smaller than the $\abs{\Re z}^{\half (k+k')}$ one
would expect based on the Hodge norm estimates. We can also restate the theorem in
terms of the rescaled period mapping from \eqref{eq:PhiSH-intro}: the result is that
if $v \in V_k$ and $v' \in V_{k'}$ belong to different weight spaces, then
\begin{equation} \label{eq:Hodge-metric-PhiSH}
	\inner{v}{v'}_{\PhiSH(z)} = O \bigl( \abs{\Re z}^{-\half \abs{k-k'}} \bigr).
\end{equation}
Since different weight spaces are orthogonal to each other in the Hodge structure
$\Fsh = e^{-N} F_H \in D$, this is not unexpected; but not only are the weight spaces
becoming orthogonal in the limit, they are doing so at a rate that depends on the
difference between their weights.

\newpar
We end the introduction with a brief sketch of the proof. Up to an error of size
$e^{-\eps \abs{\Re z}}$, which is negligible in this context, the nilpotent orbit
theorem allows us to assume that the period mapping has the form $\Phi(z) = e^{zN}
\Flim$. The rescaled period mapping then looks like
\[
	\PhiH(z) = e^{-N} \cdot e^{\half \log \abs{\Re z} \, H} \Flim.
\]
This expression only depends on $\abs{\Re z}$; since we are interested in its
behavior as $\abs{\Re z} \to \infty$, we make the substitution $u = \abs{\Re
z}^{-\half}$, so that
\[
	\PhiH \bigl( 1/u^2 \bigr) = e^{-N} \cdot e^{-(\log u) H} \Flim.
\]
Recall from \cref{par:convergence-proof} that we have
\begin{equation} \label{eq:FH-limit}
	\lim_{u \to 0} e^{-(\log u) H} \Flim = F_H,
\end{equation}
and hence that $\PhiH \bigl( 1/u^2 \bigr)$ converges to $\Fsh = e^{-N} F_H$
as $u \to 0$.

\newpar
To get a handle on the rate of convergence, we prove the following cheap
version of the \define{$\SL(2)$-orbit theorem}. Schmid's original result
\cite[Thm.~5.13]{Schmid} has two functions: it provides a distinguished splitting
$H$ for the weight filtration, called the ``$\SL(2)$-splitting'' (or ``canonical
splitting''); and it gives a very precise description, using power series with values
in the real group $G$, for the rate of convergence of \eqref{eq:FH-limit}. Here we
focus on the second aspect, using an arbitrary splitting $H$ chosen as in
\Cref{par:splitting}.

\begin{pthm}[Cheap $\SL(2)$-orbit theorem]  \label{thm:SL2-intro}
	There is a convergent power series
	\[
		g \colon (-\eps, \eps) \to G, \quad
		g(u) = \id + u g_1 + u^2 g_2 + \dotsb,
	\]
	defined for some $\eps > 0$, such that for $0 < \abs{u} < \eps$, one has
	\[
		e^{-N} \cdot e^{-(\log u) H} \Flim = g(u)^{-1} \cdot e^{-N} F_H.
	\]
	The coefficients $g_n \in \End(V)$ of the series are $(S_n \oplus S_{n-2} \oplus
	\dotsb)$-isotypical with respect to the action of $\sltwo(\CC)$ on $\End(V)$.
\end{pthm}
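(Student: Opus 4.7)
The plan is to construct $g(u)$ explicitly from the $\sltwo(\CC)$-structure on $\End(V)$ induced by $\rho$, then verify that it lands in $G$ and has the asserted isotypic structure.

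First, because the convergence theorem identifies $F_H$ as the filtration on $V$ obtained from $\Flim$ by passing to $\gr_\bullet^W$, the two filtrations induce the same graded filtration. There therefore exists $\xi\in\End(V)$ with $\xi(W_\bullet)\subseteq W_{\bullet-1}$ and $\Flim=e^\xi F_H$. Decomposing $\xi$ into $\ad H$-eigenvectors yields $\xi=\sum_{m\geq 1}\xi_{-m}$ with $[H,\xi_{-m}]=-m\,\xi_{-m}$, a \emph{finite} sum because $\End(V)$ is finite-dimensional. Since $H\cdot F_H^\bullet\subseteq F_H^\bullet$ (so $u^{-H}$ fixes $F_H$) and $\Ad(u^{-H})\xi_{-m}=u^m\xi_{-m}$, the direct computation
\[
u^{-H}\Flim=u^{-H}e^\xi u^H\cdot u^{-H}F_H=\exp\Bigl(\sum_{m\geq 1}u^m\xi_{-m}\Bigr)F_H
\]
is valid, and convergence of the resulting series on all of $(-\eps,\eps)$ is automatic.

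Next, I would refine the choice of $\xi$ using the $\sltwo$-action on $\End(V)$ coming from the polarized $\sltwo$-Hodge structure on $V$. The element $\xi$ is defined only modulo the stabilizer $F^0\End(V)_{F_H}$, which is a graded $\sltwo$-subrepresentation of $\End(V)$; this gives the freedom to choose each $\xi_{-m}$ in the $S_m$-isotypic component of the weight-$(-m)$ eigenspace --- equivalently, as an $\sltwo$-lowest-weight vector, so that $\ad N(\xi_{-m})=0$. The crucial payoff is then $\Ad(e^{-N})\xi_{-m}=\xi_{-m}$, and hence
\[
e^{-N}\cdot u^{-H}\Flim=\exp\Bigl(\sum_{m\geq 1}u^m\xi_{-m}\Bigr)\Fsh,\qquad \Fsh:=e^{-N}F_H.
\]
Setting $g(u)^{-1}:=\exp(\sum_{m\geq 1}u^m\xi_{-m})$ produces the desired identity. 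Because each $\xi_{-m}$ is $S_m$-isotypical, Clebsch--Gordan implies that every product $\xi_{-m_1}\cdots\xi_{-m_k}$ with $m_1+\dotsb+m_k=n$ lies in $S_{m_1}\otimes\dotsb\otimes S_{m_k}=\bigoplus_{j\geq 0}S_{n-2j}$, so expanding the exponential gives $g_n\in S_n\oplus S_{n-2}\oplus\dotsb$ as required.

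Finally, I would verify that $g(u)\in G$. The $Q$-adjoint involution $A\mapsto -A^\dagger$ of $\End(V)$ acts on the $\sltwo$-triple $(H,\rho(\Xsl),-N)$ as the Cartan involution of $\sltwo(\CC)$, using $H^\dagger=-H$, $N^\dagger=N$, and $\rho(\Xsl)^\dagger=\rho(\Xsl)$ (from the sign identities established earlier). Consequently each $S_m$-isotypic subspace of $\End(V)$ is stable under this involution, and its $(-1)$-eigenspace is a real form sitting inside $\mathfrak{g}$. Using the principal $K_\sharp$-bundle structure of $G\to D=G/K_\sharp$, where $K_\sharp$ is the compact stabilizer of $\Fsh$, the refinement in the previous paragraph can be carried out with $\xi_{-m}\in\mathfrak{g}$, placing $g(u)\in G$. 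I expect the main obstacle to be this last compatibility --- simultaneously imposing ``solves $\Flim=e^\xi F_H$'', ``$\sltwo$-lowest-weight in $S_m$'', and ``lies in $\mathfrak{g}$'' --- which requires organizing the freedom in $\xi$ using the Deligne-type splitting of the limiting mixed Hodge structure and the interplay between the $\sltwo$-isotypic and $\dagger$-antisymmetric decompositions of $\End(V)$.
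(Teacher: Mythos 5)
Your first two paragraphs are essentially sound and run parallel to the paper: there is indeed an element $h=e^{\xi}$ with $\Flim=hF_H$, $h-\id\in W_{-1}\End(V)$ and $[N,h]=0$, and since $\ad N$ shifts $\ad H$-weight by $-2$, each weight component $\xi_{-m}\in E_{-m}(\ad H)$ is then automatically killed by $\ad N$, hence purely $S_m$-isotypical; conjugating by $u^{-H}$ and sliding past $e^{-N}$ then gives $e^{-N}u^{-H}\Flim=\exp\bigl(\sum_m u^m\xi_{-m}\bigr)\Fsh$, and Clebsch--Gordan gives the isotypic bound on the coefficients. (One repair: your justification of primitivity is off, because $F^0\End(V)_{F_H}$ is \emph{not} an $\sltwo$-subrepresentation -- it is stable under $\ad\Hsl$ and $\ad\Xsl$ but not under $\ad\Ysl=-\ad N$. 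The correct source of primitivity, as in the paper, is that the canonical comparison between Deligne's decomposition $\Ilim^{i,j}$ of the limiting mixed Hodge structure and the bigrading $V_k^{i,j}$ of the $\sltwo$-Hodge structure commutes with $N$.)

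The last step, however, is a genuine gap, and it is exactly the obstacle you flag: you cannot in general choose $\xi_{-m}\in\glie$. If $\xi\in\glie$ has only negative $\ad H$-weights and $e^{\xi}F_H=\Flim$, then $e^{\xi}\in G$ preserves $Q$, hence carries $\Fb_H$ to $\Fblim$ (both conjugate filtrations are defined by $Q$-orthogonality), and it preserves $W_{\bullet}$; so $e^{\xi}$ would be an isomorphism of mixed Hodge structures from the split structure $(W_{\bullet-n},F_H,\Fb_H)$ onto the limiting mixed Hodge structure, forcing the latter to be split -- which fails in general (the whole point of the $\SL(2)$-orbit theory is this non-splitness). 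Note also that already the identity for a single small $u>0$, with $[\xi_{-m},N]=0$, recovers $\Flim=e^{\xi}F_H$, so there is no extra freedom to exploit. The paper resolves this differently: it keeps the non-real comparison $h$ with purely $S_k$-isotypical primitive components, and then uses the real-analytic inverse function theorem relative to the decomposition $\End(V)=\mlie\oplus F^0\End(V)_{\Fsh}$ to factor $\bigl(\id+\sum_k u^k h_{-k}\bigr)e^{C(u)}=e^{B(u)}$ with $B(u)\in\mlie\subseteq\glie$ and $C(u)\in F^0\End(V)_{\Fsh}$; then $g(u)^{-1}=e^{B(u)}\in G$ works, and an induction using the universal exponential polynomials together with $S_k\tensor S_{\ell}\cong\bigoplus_i S_{k+\ell-2i}$ shows that $B_{-k}$ and $C_{-k}$ lie in $S_k\oplus S_{k-2}\oplus\dotsb$. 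This correction step is precisely where the lower isotypical pieces $S_{n-2},S_{n-4},\dotsc$ enter, and it is why the theorem only asserts the weaker $(S_n\oplus S_{n-2}\oplus\dotsb)$-isotypicity rather than the pure $S_n$-isotypicity your construction would produce.
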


The point is that we can then write the rescaled period mapping
\[
	\PhiH \bigl( 1/u^2 \bigr) = g(u)^{-1} \cdot \Fsh
\]
in terms of the real group $G$. Now suppose that $v \in V_k$ and $v' \in V_{k'}$.
Since different weight spaces are orthogonal in the Hodge structure $\Fsh$, and since
\Cref{thm:SL2-intro} gives us some control on how much the coefficients $g_n$ can
change the weight of a given vector, it is an easy matter to deduce that
\begin{align*}
	\inner{v}{v'}_{\PhiH(1/u^2)}
	&= \bigl\langle g(u) \cdot v, \, g(u) \cdot v'\bigr\rangle_{\Fsh}  \\
	&= \sum_{m+n \geq \abs{k-k'}} u^{m+n} \,
	\bigl\langle g_m v, \, g_n v' \bigr\rangle_{\Fsh}
	= O \bigl( u^{\abs{k-k'}} \bigr),
\end{align*}
which is equivalent to \eqref{eq:Hodge-metric-PhiSH}.

\newpar
The original proof of the $\SL(2)$-orbit theorem in \cite[\S9]{Schmid} is very difficult;
and even the simplified argument in \cite[\S6]{CKS} (assuming the limiting mixed Hodge
structure) is quite involved. By contrast, our proof of the cheap $\SL(2)$-orbit
theorem is short and elementary. The crucial point is that 
\[
	\Flim = h \cdot F_H,
\]
where $h = \id + h_1 + h_2 + \dotsb \in \GL(V)$ has the property that
\[
	h_n \in E_{-n}(\ad H) \cap \ker(\ad N).
\]
Such an element $h$ can be constructed by comparing Deligne's decomposition of the
limiting mixed Hodge structure (whose Hodge filtration is $\Flim$) and the Hodge
decomposition of the $\sltwo$-Hodge structure (whose Hodge filtration in $F_H)$, and
using the fact that, up to a Tate twist, $N$ is a morphism in both structures.
\Cref{thm:SL2-intro} follows from this by a formal argument. The crucial fact from
the representation theory of the Lie algebra $\sltwo(\CC)$ is that the tensor product
\[
	S_k \tensor S_{\ell} \cong \bigoplus_{i=0}^{\min(k,\ell)} S_{k+\ell-2i}
\]
of two irreducible representation decomposes in a very simple way.

\section{Variations of Hodge structure and harmonic bundles}
\label{sec:VHS}

\subsection{Harmonic bundles and harmonic metrics}

\newpar
Since we are going to do a certain amount of analysis, it will be convenient to
describe variations of Hodge structure in terms of smooth
vector bundles. Let $E$ be a smooth vector bundle on a complex manifold $X$. We denote
by $A^k(X, E)$ the space of smooth $k$-forms with coefficients in $E$, and by
$A^{i,j}(X, E)$ the space of smooth $(i,j)$-forms with coefficients in $E$. 

\begin{pdfn} \label{def:VHS}
	Let $E$ be a smooth vector bundle on a complex manifold $X$.
A \define{variation of Hodge structure} of weight $n$ on $E$ is a decomposition
\[
	E = \bigoplus_{p+q=n} E^{p,q}
\]
into smooth subbundles, together with a flat connection $d \colon A^0(X, E) \to
A^1(X, E)$ that maps each $A^0(X, E^{p,q})$ into the direct sum of the subspaces
\[
	A^{1,0}(X, E^{p,q}) \oplus A^{1,0}(X, E^{p-1,q+1}) \oplus 
	A^{0,1}(X, E^{p,q}) \oplus A^{0,1}(X, E^{p+1,q-1}).
\]
\end{pdfn}

\newpar
The definition gives us a decomposition $d = \del + \theta + \delb + \thetast$ into
four operators
\begin{align*}
	\del &\colon A^0(X, E^{p,q}) \to A^{1,0}(X, E^{p,q}) \\
	\theta &\colon A^0(X, E^{p,q}) \to A^{1,0}(X, E^{p-1,q+1}) \\
	\delb &\colon A^0(X, E^{p,q}) \to A^{0,1}(X, E^{p,q}) \\
	\thetast &\colon A^0(X, E^{p,q}) \to A^{1,0}(X, E^{p+1,q-1}).
\end{align*}
It is easy to see that $\del$ and $\delb$ are again connections of type $(1,0)$ and
$(0,1)$, whereas $\theta$ and $\thetast$ are linear over smooth functions. As usual,
we can extend all four operators to the space of all $E$-valued forms
\[
	A^{\ast}(X, E) = \bigoplus_{i,j,p,q} A^{i,j}(X, E^{p,q})
\]
by enforcing the Leibniz rule. With respect to the $\ZZ^2 \tensor \ZZ^2$-grading on
this space, $\partial$ is an operator of bidegree $(1,0) \tensor (0, 0)$, because it maps
$A^{i,j}(X, E^{p,q})$ into $A^{i+1,j}(X, E^{p,q})$; similarly, $\theta$ has bidegree
$(1,0) \tensor (-1,1)$, $\delb$ has bidegree $(0,1) \tensor
(0,0)$, and $\thetast$ has bidegree $(0,1) \tensor (1,-1)$. By decomposing the
identity $d^2 = 0$ according to bidegree, one can deduce the following set of
identities for these four operators.

\begin{plem} \label{lem:VHS-identities}
	The four operators $\partial$, $\theta$, $\delb$, and $\thetast$ satisfy the
	following identities:
	\begin{align*}
		\partial^2 = \theta^2 = \delb^2 = (\thetast)^2 &= 0 \\
		\partial \theta + \theta \partial = \delb \thetast + \thetast \delb &= 0 \\
		\delb \theta + \theta \delb = \partial \thetast + \thetast \partial &= 0 \\
		\partial \delb + \delb \partial + \theta \thetast + \thetast \theta &= 0
	\end{align*}
\end{plem}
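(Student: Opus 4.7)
The plan is to decompose the flatness identity $d^2=0$ according to the $(\ZZ\times\ZZ)\tensor(\ZZ\times\ZZ)$-grading on $A^{\ast}(X,E)$ and read off each identity from the vanishing of a single graded piece. First I would tabulate the bidegrees of the four summands exactly as the paper records them: $\partial$ has bidegree $\bigl((1,0),(0,0)\bigr)$, $\theta$ has bidegree $\bigl((1,0),(-1,1)\bigr)$, $\delb$ has bidegree $\bigl((0,1),(0,0)\bigr)$, and $\thetast$ has bidegree $\bigl((0,1),(1,-1)\bigr)$. Since the operators have been extended to $A^{\ast}(X,E)$ by the Leibniz rule, the compositions respect these bigradings, and in particular the composition of any two summands lies in a single homogeneous component.

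Next I would expand
\[
0 \;=\; d^{2} \;=\; (\partial + \theta + \delb + \thetast)^{2}
\]
and sort the sixteen resulting terms by total bidegree. Each of the nine distinct bidegrees that appear corresponds to exactly one line in the statement of the lemma. For example, pieces whose Hodge shift is $(-2,2)$ contain only $\theta^{2}$, forcing $\theta^{2}=0$; the pieces of form-bidegree $(2,0)$ and Hodge shift $(-1,1)$ contain only $\partial\theta+\theta\partial$, forcing the first anti-commutation; the bidegree-$(1,1)$, Hodge-shift $(0,0)$ piece is the only one that mixes four terms, namely $\partial\delb+\delb\partial+\theta\thetast+\thetast\theta$, which yields the last identity. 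Going through all nine buckets recovers the full list.

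The only bookkeeping subtlety is sign conventions in the Leibniz extensions; because we are only separating $d^{2}=0$ into homogeneous components with respect to a fixed grading rather than reshuffling terms across components, no signs are accumulated beyond those already implicit in $d^{2}$, so the routine check goes through without incident. I would therefore expect no real obstacle: the argument is entirely a matter of bookkeeping once one has the bidegree table in hand, and the lemma follows because a graded element of a graded module vanishes if and only if each homogeneous component does.
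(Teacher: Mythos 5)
Your proposal is correct and is exactly the paper's argument: the paper likewise extends the four operators to $A^{\ast}(X,E)$ by the Leibniz rule and obtains all the identities by decomposing $d^{2}=0$ into its homogeneous pieces for the $\ZZ^2\tensor\ZZ^2$-bigrading, with the nine buckets you list matching the nine identities (grouped into four display lines). No gap; the bookkeeping you describe is all that is needed.
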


\newpar
In order to define polarizations, we consider a hermitian pairing
\[
	Q \colon A^0(X, E) \tensor_{A^0(X)} \wbar{A^0(X, E)} \to A^0(X).
\]
It takes as input two smooth sections of the bundle $E$, and outputs a smooth
function on $X$. We require that $Q$ is hermitian symmetric and
$A^0(X)$-linear in its first argument, meaning that for $v,w \in A^0(X, E)$
and $f \in A^0(X)$, one has
\[
	Q(w,v) = \wbar{Q(v,w)} \quad \text{and} \quad
	Q(f \cdot v, w) = f \cdot Q(v,w).
\]
Of course, it follows that $Q$ is also conjugate $A^0(X)$-linear in its second argument.
We say that $Q$ is \define{flat} with respect to the connection $d$ if 
\[
	d Q(v,w) = Q(dv, w) + Q(v,dw) \quad \text{for $v,w \in A^0(X, E)$.}
\]

\begin{pdfn}
	Let $E$ be a variation of Hodge structure of weight $n$ on a complex manifold $X$.
	A \define{polarization} is a flat hermitian pairing
	\[
		Q \colon A^0(X, E) \tensor_{A^0(X)} \wbar{A^0(X, E)} \to A^0(X),
	\]
	such that the formula
	\[
		h(v,w) = \sum_{p+q=n} (-1)^q \, Q(v^{p,q}, w^{p,q})
		\quad \text{for $v,w \in A^0(X,E)$}
	\]
	defines a smooth hermitian metric on the bundle $E$, and the subbundles
	$E^{p,q}$ are orthogonal to each other with respect to this metric.
\end{pdfn}

The hermitian metric in the definition is called the \define{Hodge metric}; it is the
most important object in the theory. For the sake of clarity, we sometimes denote the
Hodge metric on $E$ by the symbol $h_E$. 

\newpar
The fact that $Q$ is flat with respect to $d$ gives us the following additional
information about the operators $\theta$ and $\thetast$.

\begin{plem} \label{lem:VHS-adjoints}
	With respect to the Hodge metric on $E$, the operator $\thetast$ is the adjoint of
	the operator $\theta$, in the sense that
	\[
		h(\theta v, w) = h(v, \thetast w) \quad \text{for $v,w \in A^0(X, E)$.}
	\]
\end{plem}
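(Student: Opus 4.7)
The plan is to derive the adjointness identity from the flatness of $Q$ by decomposing according to bidegree.

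First, by the sesquilinearity of both $h$ and $\theta$, $\theta^{\ast}$, it suffices to verify the identity when $v \in A^0(X, E^{p,q})$ and $w \in A^0(X, E^{p',q'})$ for fixed pairs $(p,q)$ and $(p',q')$. The Higgs field and its adjoint shift type by $(-1,+1)$ and $(+1,-1)$ respectively, and $h$ is orthogonal with respect to the Hodge decomposition. So $h(\theta v, w)$ vanishes unless $(p', q') = (p-1, q+1)$, and $h(v, \theta^{\ast} w)$ vanishes under exactly the same condition. Thus I may assume $w \in A^0(X, E^{p-1, q+1})$, and must show the two sides (which are both $(1,0)$-forms on $X$) are equal.

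Next, I apply flatness to the pair $(v,w)$. Because $v$ and $w$ lie in different pieces of the Hodge decomposition, $Q(v,w) = 0$, so the flatness identity reads $0 = Q(dv, w) + Q(v, dw)$. I expand $dv = \partial v + \theta v + \bar\partial v + \theta^{\ast} v$ and $dw$ similarly, and extract the $(1,0)$-part. Using the convention that $Q$ on $E$-valued forms is $A^0(X)$-linear in the first slot and conjugate linear in the second, so that a $(1,0)$-form in the first slot or a $(0,1)$-form in the second slot contributes a $(1,0)$-form to the output, the $(1,0)$-part of the flatness equation is
\[
	0 = Q(\partial v, w) + Q(\theta v, w) + Q(v, \bar\partial w) + Q(v, \theta^{\ast} w).
\]
Of these four terms, orthogonality of $Q$ with respect to the Hodge decomposition kills all but $Q(\theta v, w)$ (both arguments now in $E^{p-1,q+1}$) and $Q(v, \theta^{\ast} w)$ (both arguments now in $E^{p, q}$). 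Hence $Q(\theta v, w) = -Q(v, \theta^{\ast} w)$.

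Finally, I convert back to $h$ using the sign convention in the definition of the Hodge metric. Since $\theta v$ and $w$ lie in $E^{p-1, q+1}$, we have $h(\theta v, w) = (-1)^{q+1} Q(\theta v, w)$, while $v$ and $\theta^{\ast} w$ lie in $E^{p,q}$, so $h(v, \theta^{\ast} w) = (-1)^q Q(v, \theta^{\ast} w)$. Substituting the identity from the previous step gives $h(\theta v, w) = h(v, \theta^{\ast} w)$, as required. There is no real obstacle here; the only subtlety is bookkeeping of bidegrees and the sign $(-1)^q$ relating $Q$ and $h$, and one should simply note that the $(0,1)$-part of the flatness equation gives the analogous identity involving $\partial$ and $\bar\partial$ (namely the compatibility of $\bar\partial$ with $\partial$ under $h$), which is not needed here.
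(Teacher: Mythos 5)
Your proof is correct and is essentially the paper's own argument: apply flatness of $Q$ to the pair $v \in A^0(X,E^{p,q})$, $w \in A^0(X,E^{p-1,q+1})$ (where $Q(v,w)=0$), observe that all terms except $Q(\theta v, w)$ and $Q(v,\theta^{\ast}w)$ die for type reasons, and convert to $h$ via the signs $(-1)^{q+1}$ and $(-1)^q$; your extra bookkeeping (reduction to fixed bidegrees, explicit extraction of the $(1,0)$-part) just spells out what the paper summarizes as ``all other terms have the wrong type'' and the final orthogonality remark. The only blemish is the closing parenthetical: for this particular pair the $(0,1)$-part of the flatness identity is vacuous by type, not the $\partial$/$\bar{\partial}$ compatibility (that comes from applying flatness to sections in the same summand), but this aside plays no role in the argument.
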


\begin{proof}
	Let $v \in A^0(X, E^{p,q})$ and $w \in A^0(X, E^{p-1,q+1})$ be two arbitrary sections.
	Then $Q(v,w) = 0$, and therefore
	\[
		0 = d Q(v,w) = Q(dv, w) + Q(v, d w) 
		= Q(\theta v, w) + Q(v, \thetast w)
	\]
	because all other terms have the wrong type. But then
	\[
		h(\theta v, w) = (-1)^{q+1} Q(\theta v, w) = (-1)^q Q(v, \thetast w) = h(v,
		\thetast w).
	\]
	Since the decomposition of $E$ is orthogonal with respect to $h$, the result
	follows.
\end{proof}

\newpar
Let us briefly recall the relation with the (more familiar) holomorphic
description of variations of Hodge structure. As usual, we decompose the flat
connection into $d = d' + d''$, where $d' \colon A^0(X, E) \to
A^{1,0}(X, E)$ is a connection of type $(1,0)$, and $d'' \colon A^0(X, E) \to
A^{0,1}(X, E)$ is a connection of type $(0,1)$. The identity $d^2 = 0$ then becomes
$(d')^2 = d' d'' + d'' d' = (d'')^2 = 0$. The operator $d''$ gives $E$ the structure
of a holomorphic vector bundle, which we denote by the symbol $\shE$, and the
operator $d'$ defines a flat holomorphic connection
\[
	\nabla \colon \shE \to \OmX^1 \tensor_{\OX} \shE.
\]
Moreover, for each $p \in \ZZ$, the Hodge bundle
\[
	F^p E = E^{p,q} \oplus E^{p+1,q-1} \oplus E^{p+2,q-2} \oplus \dotsb
\]
is preserved by the operator $d''$, and therefore defines a holomorphic subbundle
$F^p \shE$. Since $d' = \partial + \theta$, we also get Griffiths' transversality condition
\[
	\nabla(F^p \shE) \subseteq \OmX^1 \tensor_{\OX} F^{p-1} \shE.
\]
From this perspective, we can see the other operators
by looking at the quotient bundles $F^p \shE / F^{p+1} \shE$. The operator $\dbar$
gives each smooth bundle $E^{p,q}$ the structure of a holomorphic vector bundle
$\shE^{p,q}$, and because $d'' = \dbar + \thetast$,
\[
	\shE^{p,q} \cong F^p \shE / F^{p+1} \shE
\]
are isomorphic as holomorphic vector bundles. Since $\dbar \theta + \theta \dbar =
0$, the operator $\theta$ defines a morphism of holomorphic vector bundles
\[
	\theta \colon \shE^{p,q} \to \OmX^1 \tensor_{\OX} \shE^{p-1,q+1},
\]
and one checks that the following diagram is commutative:
\[
	\begin{tikzcd}
		\shE^{p,q} \rar{\theta} \dar{\cong} & 
		\OmX^1 \tensor_{\OX} \shE^{p-1,q+1} \dar{\cong} \\
		F^p \shE / F^{p+1} \shE \rar{\nabla} & 
		\OmX^1 \tensor_{\OX} F^{p-1} \shE / F^p \shE
	\end{tikzcd}
\]
The two remaining operators $\thetast$ and $\partial$ in the decomposition of $d$
have the following interpretation: $\thetast$ is the adjoint of the Higgs field with
respect to the Hodge metric (\Cref{lem:VHS-adjoints}), and $\partial$ is the Chern
connection on the bundle $E^{p,q}$.

\newpar \label{par:harmonic}
We are now going to do some computations with the Hodge metric; these become
easier if we use the language of harmonic bundles. In fact, a polarized variation of Hodge
structure is an example of a harmonic bundle, with the harmonic metric being the
Hodge metric. We briefly recall the general definition. Let $E$ be a smooth vector
bundle on a complex manifold $X$, let $d =
d' + d''$ be a flat connection on $E$, and let $h$ be a hermitian metric on $E$. As
in the construction of the Chern connection, there are unique operators
\begin{align*}
	\delta' &\colon A^0(X, E) \to A^{1,0}(X, E) \\
	\delta'' &\colon A^0(X, E) \to A^{0,1}(X, E)
\end{align*}
with the property that $d' + \delta''$ and $\delta' + d''$ are metric connections on
$E$. Then
\[
	\del = \half(d' + \delta') \quad \text{and} \quad
	\delb = \half(d'' + \delta'')
\]
are again connections of type $(1,0)$ and $(0,1)$, and 
\[
	\theta = \half(d' - \delta') \quad \text{and} \quad
	\thetast = \half(d'' - \delta'')
\]
are linear over smooth functions. The metric $h$ is called a \define{harmonic
metric}, and the triple $(E,d,h)$ is called a \define{harmonic bundle}, if the
identity $(\delb + \theta)^2 = 0$ holds. This is equivalent to the identities in
\Cref{lem:VHS-identities}. The associated Higgs bundle is $E$, with the
holomorphic structure defined by $\delb$, and with Higgs field $\theta$. As suggested
by the notation, in the case of a polarized variation of Hodge structure, $\del$, $\delb$,
$\theta$, and $\thetast$ agree with the operators that we had defined above.

\begin{plem} 
	Let $E$ be a variation of Hodge structure with polarization $Q$. Then
	\[
		d' = \del + \theta, \quad
		d'' = \delb + \thetast, \quad
		\delta' = \del - \theta, \quad
		\delta'' = \delb - \thetast,
	\]
	and so $E$ is a harmonic bundle and the Hodge metric $h$ is a harmonic metric. The
	associated Higgs bundle is the graded holomorphic vector bundle 
	\[
		\bigoplus_{p+q=n} \shE^{p,q}
	\]
	with Higgs field $\theta \colon \shE^{p,q} \to \OmX^1 \tensor \shE^{p-1,q+1}$.
\end{plem}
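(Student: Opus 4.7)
The plan is to verify the four identities by using uniqueness of the operators $\delta'$, $\delta''$ from the harmonic bundle construction: it suffices to check that the proposed connections preserve the Hodge metric $h$. The last assertion then follows because \Cref{lem:VHS-identities} already packages every identity that goes into $(\delb + \theta)^2 = 0$.

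First, since $\partial, \theta$ are of type $(1,0)$ and $\delb, \thetast$ are of type $(0,1)$, the decomposition $d = \partial + \theta + \delb + \thetast$ already yields $d' = \partial + \theta$ and $d'' = \delb + \thetast$ by bidegree. To obtain the formulas for $\delta'$ and $\delta''$ it is therefore enough to show that
\[
	(d' + \delta'')_{\text{test}} \defeq (\partial + \theta) + (\delb - \thetast)
	\qquad \text{and} \qquad
	(\delta' + d'')_{\text{test}} \defeq (\partial - \theta) + (\delb + \thetast)
\]
are both metric connections for $h$; by uniqueness of $\delta'$ and $\delta''$ this forces $\delta'' = \delb - \thetast$ and $\delta' = \partial - \theta$, and then $\partial = \half(d' + \delta')$, $\theta = \half(d' - \delta')$, $\delb = \half(d'' + \delta'')$, $\thetast = \half(d'' - \delta'')$ reduce to tautologies.

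The verification is a bidegree calculation. Take homogeneous sections $v \in A^0(X, E^{p,q})$ and $w \in A^0(X, E^{p',q'})$, and apply the flatness identity $d Q(v,w) = Q(dv, w) + Q(v, dw)$. Since $Q$ pairs $E^{r,s}$ nontrivially only with $E^{r,s}$, one checks that the right-hand side contains at most three nonzero blocks, corresponding to the cases $(p',q') = (p,q)$, $(p',q') = (p-1,q+1)$, and $(p',q') = (p+1,q-1)$. In the diagonal case, only $\partial$ and $\delb$ contribute and the identity gives $d \, h(v,w) = h(\partial v + \delb v, w) + h(v, \partial w + \delb w)$, which says $\partial + \delb$ is a metric connection on each $E^{p,q}$. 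In the case $(p',q') = (p-1,q+1)$, the identity reduces to $Q(\theta v, w) + Q(v, \thetast w) = 0$; combined with the sign $(-1)^{q+1} = -(-1)^q$ coming from the definition of $h$, this is precisely the statement that $\theta v$ pairs under $h$ with $w$ in the same way that $v$ pairs with $-\thetast w$ \emph{with opposite sign}, which is exactly what is needed to make the off-diagonal contribution of $\theta - \thetast$ vanish against $h(v,w) = 0$. The case $(p',q') = (p+1,q-1)$ is symmetric, with the roles of $\theta$ and $\thetast$ interchanged. Consequently both test connections preserve $h$, and the four formulas for $\partial, \theta, \delb, \thetast$ follow.

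Finally, \Cref{lem:VHS-identities} gives $\delb^2 = 0$, $\theta^2 = 0$, and $\delb \theta + \theta \delb = 0$, which together say $(\delb + \theta)^2 = 0$; hence $(E, d, h)$ is a harmonic bundle with the prescribed associated Higgs bundle $\bigoplus_{p+q=n} \shE^{p,q}$ and Higgs field $\theta$. The only real bookkeeping hurdle is keeping the bidegrees and the sign $(-1)^q$ in the definition of $h$ straight in the off-diagonal cases; once those are lined up, the polarization flatness of $Q$ does all the work.
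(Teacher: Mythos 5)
Your proof is correct and follows essentially the same route as the paper: show that $(\partial+\delb)\pm(\theta-\thetast)$ are metric connections — the diagonal blocks of the flatness identity for $Q$ give that $\partial+\delb$ is metric, and the off-diagonal blocks give the adjointness of $\theta$ and $\thetast$ (which the paper quotes as \Cref{lem:VHS-adjoints} and you re-derive inline) — then conclude harmonicity from $(\delb+\theta)^2=0$ via \Cref{lem:VHS-identities}. Your explicit appeal to the uniqueness of $\delta',\delta''$ is only a more verbose form of what the paper leaves implicit in the definition of a harmonic bundle.
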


\begin{proof}
	The claim is that the two operators $(\partial + \dbar) \pm (\theta - \thetast)$
	are metric connections. For any two sections $v, w \in A^0(X, E^{p,q})$, we have
	\[
		d h(v,w) = (-1)^q Q(d v, w) + (-1)^q Q(v, dw) 
		= h \bigl( (\partial + \dbar) v, w \bigr) + h \bigl( v, (\partial + \dbar) w
		\bigr),
	\]
	because all other terms have the wrong type. Therefore $\partial + \dbar$ is a
	metric connection on $E$. Since $\thetast$ is the adjoint of $\theta$ with respect
	to $h$, it follows that $(\partial + \dbar) \pm (\theta - \thetast)$ are also
	metric connections. The identities in \Cref{lem:VHS-identities} imply that
	$(\dbar + \theta)^2 = 0$, and so $E$ is a harmonic bundle with harmonic metric
	$h$. The remaining assertions are clear from the discussion in the previous
	paragraph.
\end{proof}

\newpar
The Hodge metric $h$ gives us a hermitian metric on the holomorphic vector bundle
$\shE$ and on the subbundles $F^p \shE$. It is not hard to derive a formula for the
curvature of this metric, using the identities among the operators $\partial$,
$\dbar$, $\theta$, and $\thetast$; Schmid does this, in a more cumbersome
way, in \cite[\S7]{Schmid}. The curvature of the Hodge bundles will
be important later, in \Cref{sec:nilpotent}, when we prove the nilpotent orbit
theorem. We need one more piece of notation to state the result: we denote by
\[
	\pi^{p,q} \colon E \to E^{p,q}
\]
the projection operator to the direct summand $E^{p,q}$; of course,
$\pi^{p,q}$ is also orthogonal projection with respect to the Hodge metric.

\begin{pprop} \label{prop:curvature}
	Let $E$ be a variation of Hodge structure with polarization $Q$.
	\begin{aenumerate}
	\item On the holomorphic vector bundle $\shE$, the curvature operator of the Hodge
		metric $h$ is equal to $-2(\theta \thetast + \thetast \theta)$.
	\item On the holomorphic vector bundle $\shE^{p,q}$, the curvature operator of the Hodge
		metric $h$ is equal to $-(\theta \thetast + \thetast \theta)$.
	\item On the holomorphic quotient bundle $\shE/F^p \shE$, the curvature
		operator of the induced metric is equal to $-2(\theta \thetast +
		\thetast \theta) + \theta \thetast \pi^{p-1,q+1}$.
	\end{aenumerate}
\end{pprop}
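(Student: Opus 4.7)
The plan is to identify the Chern connection on each bundle and compute its square, using the identities from \Cref{lem:VHS-identities} to simplify. In each case, the holomorphic structure is already visible from the discussion preceding the proposition: $\shE$ has $d'' = \delb + \thetast$, each $\shE^{p,q}$ has $\delb$, and $\shE/F^p\shE$ inherits its structure from the projection $\shE \to \shE/F^p\shE$. The Hodge metric induces hermitian metrics on all three, so we just need the unique metric connection whose $(0,1)$-part matches the holomorphic structure.

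First I would handle part (b), which is the cleanest. The operator $\partial + \delb$ is a metric connection on $E$ (being the average of the two metric connections $d' + \delta'' = (\partial + \theta) + (\delb - \thetast)$ and $\delta' + d'' = (\partial - \theta) + (\delb + \thetast)$); restricting to $E^{p,q}$ via the projection $\pi^{p,q}$, the off-diagonal pieces involving $\theta$ and $\thetast$ disappear because they land in $E^{p \mp 1, q \pm 1}$. So $\partial + \delb$ is the Chern connection on $\shE^{p,q}$, and its curvature is $\partial \delb + \delb \partial = -(\theta \thetast + \thetast \theta)$ directly from the last identity in \Cref{lem:VHS-identities}.

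For part (a), the Chern connection on $\shE$ is $D = \delta' + d'' = (\partial - \theta) + (\delb + \thetast)$. Squaring and grouping terms, $(\partial - \theta)^2$ and $(\delb + \thetast)^2$ each vanish by the identities $\partial \theta + \theta \partial = 0$ and $\delb \thetast + \thetast \delb = 0$, together with $\partial^2 = \theta^2 = \delb^2 = (\thetast)^2 = 0$. The remaining cross-terms split into four pieces: $\partial \delb + \delb \partial$, $\partial \thetast + \thetast \partial$, $-(\theta \delb + \delb \theta)$, and $-(\theta \thetast + \thetast \theta)$. The middle two vanish by \Cref{lem:VHS-identities}, while the first equals $-(\theta \thetast + \thetast \theta)$, and combining with the last gives $-2(\theta \thetast + \thetast \theta)$.

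Part (c) is the most involved, and I expect it to be the main obstacle. I would use the orthogonal splitting to identify the smooth bundle $\shE/F^p\shE$ with $F^{\perp} = \bigoplus_{i<p} E^{i,n-i}$ and note that the Chern connection on the quotient is $\bar D = \pi_{\perp} \circ D|_{F^{\perp}}$: this is metric because $\pi_{\perp}$ is orthogonal, and its $(0,1)$-part agrees with the induced $\delb$-operator on the quotient. Writing $\bar D^2 v = \pi_{\perp} D^2 v - \pi_{\perp} D \pi^c D v$ with $\pi^c = 1 - \pi_{\perp}$, the first term equals $R_E v = -2(\theta \thetast + \thetast \theta) v$ (which already lies in $F^{\perp}$, so the projection is a no-op, since $\theta \thetast$ and $\thetast \theta$ preserve each $E^{i,n-i}$). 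The key bookkeeping is in the correction term: tracking types shows that $\pi^c D v = \thetast \pi^{p-1,q+1} v$, because only the $\thetast$-image of the $E^{p-1,q+1}$-component of $v$ escapes into $E^{p,q} \subset F^p E$. Applying $D$ to this $(0,1)$-form valued in $E^{p,q}$ and projecting back, only the $-\theta$ part of $\delta'$ contributes to $F^{\perp}$, yielding $\pi_{\perp} D \pi^c D v = -\theta \thetast \pi^{p-1,q+1} v$. Subtracting gives the claimed formula. The main obstacle is exactly this type-tracking, which has to be done carefully to see that all the other components stay inside $F^p E$ and are killed by $\pi_{\perp}$.
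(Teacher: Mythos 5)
Your proposal is correct and follows essentially the same route as the paper: identify the Chern connection in each case and square it using the identities of \Cref{lem:VHS-identities}. Your treatment of (c) via $\bar D = \pi_{\perp}\circ D$ and the correction term $\pi_{\perp}D\,\pi^{c}D$ is just a repackaging of the paper's computation, since $\pi_{\perp}D\restr{F^{\perp}}$ coincides with the modified connection $\delta' + d'' - \thetast\pi^{p-1,q+1}$ that the paper writes down and squares directly.
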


\begin{proof}
	Since the holomorphic structure on $\shE$ is defined by the operator $d''$, the
	Chern connection of $h$ is nothing but the metric connection $\delta' + d'' =
	\partial + \dbar - \theta + \thetast$. A short computation shows that the
	curvature operator is
	\[
		(\delta' + d'')^2 = (\partial + \dbar - \theta + \thetast)^2 
		= (\partial \dbar + \dbar \partial) - (\theta \thetast + \thetast \theta)
		= -2(\theta \thetast + \thetast \theta),
	\]
	using the identities in \Cref{lem:VHS-identities}. This proves (a). The proof
	of (b) is very similar. The holomorphic structure on $\shE^{p,q}$ is defined by
	the operator $\dbar$, and so the Chern connection of $h$ is the metric connection
	$\partial + \dbar$, and the curvature operator is
	\[
		(\partial + \dbar)^2 = (\partial \dbar + \dbar \partial) = 
		-(\theta \thetast + \thetast \theta).
	\]
	The derivation of (c) is slightly more complicated, because the operator $d'' =
	\dbar + \thetast$ does not preserve the individual summands $E^{p,q}$. We can
	correct this with the help of the projection $\pi^{p,q} \colon E \to E^{p,q}$. 
	Note that $\pi^{p,q}$ commutes with $\partial$ and $\dbar$, whereas
	\[
		\pi^{p,q} \theta = \theta \pi^{p+1,q-1} \quad \text{and} \quad
		\pi^{p,q} \thetast = \thetast \pi^{p-1,q+1}.
	\]
	Now for the proof. The Hodge decomposition is orthogonal with respect to the Hodge
	metric, and so, as smooth vector bundles with metric, 
	\begin{equation} \label{eq:bundle}
		E/F^p E \cong E^{p-1,q+1} \oplus E^{p-2,q+2} \oplus E^{p-3,q+3} \oplus \dotsb
	\end{equation}
	Under this isomorphism, the holomorphic structure on $E/F^p E$ is represented by
	the operator $d'' - \thetast \pi^{p-1,q+1}$, which preserves the
	bundle on the right-hand side. A short computation shows that the modified
	operator
	\[
		\delta' + d'' - \thetast \pi^{p-1,q+1}
		= \partial + \dbar - \theta + \thetast - \thetast \pi^{p-1,q+1}
	\]
	is a metric connection, and therefore equal to the Chern connection.
	It follows that the curvature operator of the induced metric is
	\begin{align*}
		\bigl( \delta' + d'' &- \thetast \pi^{p-1,q+1} \bigr)^2 \\
		&= (\delta' + d'')^2 - (\partial + \dbar - \theta + \thetast) \thetast
		\pi^{p-1,q+1} - \thetast \pi^{p-1,q+1} (\partial + \dbar - \theta + \thetast) \\
		&= -2(\theta \thetast + \thetast \theta) + \theta \thetast \pi^{p-1,q+1},
	\end{align*}
	using the identities in \Cref{lem:VHS-identities} and the fact that
	$\pi^{p-1,q+1} \theta = \theta \pi^{p,q} = 0$ on the vector bundle in
	\eqref{eq:bundle}.
\end{proof}

\subsection{Multivalued flat sections and the Hodge metric}

\newpar
Now we are ready to start doing some computations specifically for polarized
variations of Hodge structure over the punctured disk. As usual, let
\[
	\dst = \menge{t \in \CC}{0 < \abs{t} < 1}
\]
be the punctured unit disk, with coordinate $t$. Let $E$ be a polarized variation of
Hodge structure of weight $n$ on $\dst$. We are going to denote by $V$ the space of
all multi-valued flat sections of $(E, d)$. More precisely, we use the letter
\[
	\HH = \menge{z \in \CC}{\Re z < 0}
\]
for the \emph{left} half-plane; the exponential function $\exp \colon \HH \to \dst$ makes it
into the universal covering space of $\dst$, independently of the choice of $i =
\sqrt{-1}$. To keep the notation simple, let us denote the pullbacks of $E$, $d$,
and $h$ by the same symbols; then $V \subseteq A^0(\HH, E)$ is exactly the kernel of the
operator $d$. 

\newpar
If $v \in V$ is a nontrivial multi-valued flat section, the pointwise Hodge norm
$h(v,v)$ is a smooth function on $\HH$ that is everywhere nonzero. The first
observation is that the Higgs field $\theta_{\partial/\partial z}$ controls the first
derivatives of $h(v,v)$.

\begin{plem} \label{lem:metric-derivative}
	For every nonzero $v \in V$, one has $\partial h(v,v) = -2 h(\theta v, v)$.
\end{plem}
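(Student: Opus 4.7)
The plan is to unpack flatness into its bigraded components, then use the orthogonality of $Q$ to reduce everything to terms involving the Higgs field.

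First I would decompose $v = \sum_{p+q=n} v^{p,q}$ according to the Hodge decomposition. Since the subbundles $E^{p,q}$ are orthogonal with respect to $Q$, and since $Q$ is flat, we have
\[
h(v,v) = \sum_{p+q=n} (-1)^q Q(v^{p,q}, v^{p,q}),
\]
and applying $\partial$ through the flatness identity $dQ(v,w) = Q(dv,w) + Q(v,dw)$ (separated into types) gives
\[
\partial Q(v^{p,q}, v^{p,q}) = Q(\partial v^{p,q}, v^{p,q}) + Q(v^{p,q}, \dbar v^{p,q}),
\]
because the two additional terms $Q(\theta v^{p,q}, v^{p,q})$ and $Q(v^{p,q}, \thetast v^{p,q})$ pair elements of $E^{p-1,q+1}$ or $E^{p+1,q-1}$ with elements of $E^{p,q}$ and hence vanish by orthogonality of $Q$.

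Next I would exploit $dv = 0$. Splitting this by bidegree in the form decomposition, the $(1,0)$ part reads $(\partial + \theta) v = 0$, and then splitting once more by Hodge type gives $\partial v^{p,q} = -\theta v^{p+1,q-1}$ in $A^{1,0}(\dst, E^{p,q})$; likewise $\dbar v^{p,q} = -\thetast v^{p-1,q+1}$. Substituting,
\[
\partial Q(v^{p,q}, v^{p,q}) = -Q(\theta v^{p+1,q-1}, v^{p,q}) - Q(v^{p,q}, \thetast v^{p-1,q+1}).
\]
Each of these terms pairs two elements of the same bundle $E^{p,q}$, so converting back to $h$ multiplies by $(-1)^q$, and after summing we obtain
\[
\partial h(v,v) = -\sum_{p,q} h(\theta v^{p+1,q-1}, v^{p,q}) - \sum_{p,q} h(v^{p,q}, \thetast v^{p-1,q+1}).
\]
Applying the adjointness relation $h(v^{p,q}, \thetast v^{p-1,q+1}) = h(\theta v^{p,q}, v^{p-1,q+1})$ from \Cref{lem:VHS-adjoints} to the second sum, and reindexing the first sum $(p,q) \mapsto (p-1, q+1)$, shows that both sums equal $\sum_{p,q} h(\theta v^{p,q}, v^{p-1,q+1})$.

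Finally, orthogonality of the Hodge decomposition with respect to $h$ implies
\[
h(\theta v, v) = \sum_{p,q} h(\theta v^{p,q}, v^{p-1,q+1}),
\]
since the only non-vanishing pairings occur between $\theta v^{p,q} \in E^{p-1,q+1}$ and $v^{p-1,q+1}$. Combining gives $\partial h(v,v) = -2 h(\theta v, v)$. The only real subtlety is the careful bookkeeping of bidegrees and signs, especially correctly identifying which summands the flatness condition forces to cancel; everything else is bookkeeping.
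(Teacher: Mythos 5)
Your proof is correct: the componentwise bookkeeping (flatness of $Q$ split by form type, the relations $\partial v^{p,q} = -\theta v^{p+1,q-1}$ and $\dbar v^{p,q} = -\thetast v^{p-1,q+1}$ from $dv=0$, orthogonality of the $E^{p,q}$, and \Cref{lem:VHS-adjoints}) all checks out, and the two sums do combine to give $-2h(\theta v, v)$. The paper takes a shorter and slightly different route: it never decomposes $v$ into Hodge components, but instead invokes the harmonic-bundle structure, namely that $\delta' + d'' = \partial + \dbar - \theta + \thetast$ is a metric connection, so that $\partial h(v,v) = h(\delta' v, v) + h(v, d'' v)$; since $d'v = d''v = 0$ gives $\partial v = -\theta v$, the identity follows in one line. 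That argument has the extra benefit of working for an arbitrary harmonic bundle, not just a polarized variation of Hodge structure, whereas yours leans on the polarization $Q$ and the Hodge decomposition explicitly. On the other hand, your computation is self-contained at the level of the definitions of a polarized variation and the adjointness lemma, and does not require first identifying $\delta' = \partial - \theta$, so it is a reasonable elementary alternative — just longer than necessary.
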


\begin{proof}
	This is true for arbitrary harmonic bundles. By assumption, we have $d' v = d'' v
	= 0$, and therefore $\partial v = -\theta v$. Since $\delta' + d''$ is a metric
	connection, 
	\[
		\partial \, h(v,v) = h(\delta' v, v) + h(v, d'' v) = h(\partial v - \theta v, v) 
		= -2h(\theta v, v),
	\]
	as claimed.
\end{proof}

\newpar
Another important property of the function $h(v,v)$, discovered by
Griffiths and Schmid \cite[Lem~7.19]{Schmid}, is that the logarithm of $h(v,v)$ is
subharmonic.

\begin{plem} \label{lem:metric-subharmonic}
	If $v \in V$ is nonzero, the function $\log h(v,v)$ is subharmonic
	on $\HH$.
\end{plem}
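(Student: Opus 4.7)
My approach is to reduce subharmonicity of $\log f$, where $f = h(v, v) > 0$, to the pointwise inequality
\[
	f \cdot \partial_z \partial_{\bar z} f \geq \abs{\partial_z f}^2,
\]
which is equivalent to $\Delta \log f \geq 0$. The first derivatives are immediate from \Cref{lem:metric-derivative}: we get $\partial_z f = -2\, h(\theta_z v, v)$ and, by complex conjugation, $\partial_{\bar z} f = -2\, h(v, \theta_z v)$, so $\abs{\partial_z f}^2 = 4\, \abs{h(\theta_z v, v)}^2$.

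The substantive step is to compute $\partial_z \partial_{\bar z} f = -2\, \partial_z h(v, \theta_z v)$. I would differentiate using the metric compatibility of the Chern connection $D = \delta' + d''$ on $(\shE, h)$, exploiting that $v$ is flat: splitting $dv = 0$ by type gives $\del v = -\theta v$ and $\dbar v = -\thetast v$, from which $D v = \delta' v = -2 \theta v$ while $d'' v = 0$. Then applying $d''$ to $\theta v$ and using the identities $\dbar \theta + \theta \dbar = 0$ and $\del \thetast + \thetast \del = 0$ from \Cref{lem:VHS-identities}, together with the curvature identity $\del \dbar + \dbar \del + \theta \thetast + \thetast \theta = 0$, I expect to reduce the calculation to
\[
	\partial_z \partial_{\bar z} f = 2\, h(\theta_z v, \theta_z v) + 2\, h(\thetast_{\bar z} v, \thetast_{\bar z} v).
\]
This derivation is the main obstacle: it requires careful bookkeeping of bidegrees (both in the $(p,q)$-grading and in the form grading) and of the sign conventions from $h$ being conjugate-linear in its second argument, especially since $v$ is not homogeneous of a single Hodge type and various flatness consequences only hold componentwise.

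Granted this formula, the conclusion follows from a ``doubled'' Cauchy--Schwarz argument that hinges on \Cref{lem:VHS-adjoints}. The adjointness $h(\theta_z v, v) = h(v, \thetast_{\bar z} v)$ lets one bound the same scalar in two different ways,
\[
	\abs{h(\theta_z v, v)}^2 \leq f \cdot h(\theta_z v, \theta_z v), \qquad
	\abs{h(\theta_z v, v)}^2 \leq f \cdot h(\thetast_{\bar z} v, \thetast_{\bar z} v),
\]
and adding them yields
\[
	\abs{\partial_z f}^2 = 4\, \abs{h(\theta_z v, v)}^2 \leq 2 f \bigl( h(\theta_z v, \theta_z v) + h(\thetast_{\bar z} v, \thetast_{\bar z} v) \bigr) = f \cdot \partial_z \partial_{\bar z} f.
\]
It is essential that both bounds be used, since either one alone leaves a term of indefinite sign in $f \partial_z \partial_{\bar z} f - \abs{\partial_z f}^2$; the adjointness of $\theta$ and $\thetast$ is what allows the single Cauchy--Schwarz gap to be cashed in twice.
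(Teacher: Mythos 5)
Your proposal is correct and follows essentially the same route as the paper: the paper likewise computes $\partial_z\partial_{\zb} h(v,v) = 2\bigl(h(\theta_{\partial/\partial z} v,\theta_{\partial/\partial z} v)+h(\thetast_{\partial/\partial \zb} v,\thetast_{\partial/\partial \zb} v)\bigr)$ from $\del v=-\theta v$, $\delb v=-\thetast v$ and the identities of \Cref{lem:VHS-identities}, and then applies exactly the same ``doubled'' Cauchy--Schwarz argument via \Cref{lem:VHS-adjoints}. The only cosmetic difference is that the paper organizes the second-derivative computation using the metric connection $\del+\delb$ rather than the Chern connection $\delta'+d''$.
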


\begin{proof}
	This is again true for arbitrary harmonic bundles. From $d' v = d'' v = 0$, we get
	$\partial v = -\theta v$ and $\delb v = - \thetast v$. For the same reason as in
	\Cref{lem:metric-derivative}, we have
	\[
		\delb \, h(v,v) = -2 h(v, \theta v).
	\]
	Since $\partial + \dbar$ is a metric connection, we then get
	\begin{align*}
		\del\delb \, h(v,v) = -2 h(\partial v, \theta v) - 2 h(v, \dbar \theta v)
		&= 2 h(\theta v, \theta v) + 2 h(v, \theta \dbar v) \\
		&= 2 h(\theta v, \theta v) - 2 h(\thetast v, \thetast v),
	\end{align*}
	using the identity $\dbar \theta + \theta \dbar = 0$ and
	\Cref{lem:VHS-adjoints}. To shorten the next couple of formulas, we introduce the
	two operators 
	\[
		\Higg = \theta_{\partial/\partial z} \quad \text{and} \quad
		\Higgst = \thetast_{\partial/\partial \zb}, 
	\]
	which are both smooth sections of the bundle $\End(E)$. The notation is justified
	because $\Higg$ and $\Higgst$ are adjoints under the Hodge metric
	$h$, according to \Cref{lem:VHS-adjoints}. We then get
	\[
		\frac{\partial}{\partial z} \frac{\partial}{\partial \zb} \, h(v,v)
		= 2 \Bigl( h(\Higg v, \Higg v) + h(\Higgst v, \Higgst v) \Bigr).
	\]
	We also know from \Cref{lem:metric-derivative} that
	\[
		\frac{\partial}{\partial z} \, h(v,v) = -2h(v, \Higg v) = -2h(\Higgst v,v).
	\]
	An application of the Cauchy-Schwarz inequality now gives
	\[
		\ABS{\frac{\partial}{\partial z} \, h(v,v)}^2 \leq
		\half \Bigl( 4 h(v,v) h(\Higg v, \Higg v) 
		+ 4 h(v,v) h(\Higgst v, \Higgst v) \Bigr) 
		= h(v,v) \cdot \frac{\partial}{\partial z} \frac{\partial}{\partial \zb} \,
		h(v,v).
	\]
	Since the Laplacian of $\varphi = \log h(v,v)$ satisfies
	\[
		\frac{1}{4} \Delta \varphi 
		= \frac{\partial}{\partial z} \frac{\partial}{\partial \zb} \log h(v,v)
		= \frac{1}{h(v,v)} \cdot \frac{\partial}{\partial z} \frac{\partial}{\partial \zb} \,
		h(v,v) - \frac{1}{h(v,v)^2} \ABS{\frac{\partial}{\partial z} \, h(v,v)}^2 
	\]
	this inequality is what we need to conclude that $\Delta \varphi \geq 0$.
\end{proof}

\newpar
The argument above also shows that the first derivatives of $\log h_E(v,v)$
are controlled by the operator $\Higg = \theta_{\partial/\partial z}$, in the
following sense. From \Cref{lem:metric-derivative} and the triangle inequality, we get
\begin{align*}
	\ABS{\frac{\partial}{\partial z} \, h_E(v,v)} 
	= 2 \Abs{h_E(Av,v)}
	&\leq 2 h_E(v,v)^{\half} h_E(\Higg v, \Higg v)^{\half}  \\
	&\leq 2 h_E(v,v) \, h_{\End(E)}(\Higg, \Higg)^{\half},
\end{align*}
where $h_{\End(E)}(\Higg, \Higg)$ is the Hodge norm of $\Higg$ under the induced
hermitian metric on the bundle $\End(E)$ (which is an upper bound for the operator
norm of $\Higg$ with respect to the Hodge metric $h_E$). After dividing by
$h_E(v,v)$, this says that
\begin{equation} \label{eq:derivatives}
	\ABS{\frac{\partial}{\partial z} \log h_E(v,v)} 
	= \ABS{\frac{\partial}{\partial \zb} \log h_E(v,v)}
	\leq 2 h_{\End(E)} \bigl( \theta_{\partial/\partial z}, 
		\theta_{\partial/\partial z} \bigr)^{\half}.
\end{equation}
The Hodge norm of the Higgs field therefore gives us an upper bound for the derivatives of
the function $\log h_E(v,v)$.

\subsection{A universal upper bound for the Higgs field}

\newpar
Clearly, our next task is to bound the quantity
$h_{\End(E)}(\theta_{\partial/\partial z}, \theta_{\partial/\partial z})$. This
is analogous to the distance-decreasing property of period mappings, and the proof
naturally uses \define{Ahlfors' lemma}, a generalization of
the Schwarz-Pick lemma from complex analysis \cite{Ahlfors}. The result is usually
stated for the open unit disk; for the sake of completeness, we include a proof that
covers both cases.

\begin{plem}[Ahlfors] \label{lem:Ahlfors}
	Let $f \colon \HH \to (0,+\infty)$ be a positive smooth function such that
	\[
		\frac{\partial^2}{\partial z \partial \zb} \log f \geq C f
	\]
	for a positive constant $C > 0$. Then
	\[
		f(z) \leq \frac{(2C)^{-1}}{\abs{\Re z}^2} \quad \text{for all $z \in \HH$.}
	\]
\end{plem}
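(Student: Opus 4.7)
The plan is to first establish the inequality on an arbitrary open disk by a standard maximum-principle argument with a family of extremal model solutions, and then deduce the half-plane case by pulling back along a M\"obius isomorphism from the unit disk onto $\HH$. This yields a single argument that covers both the disk and the half-plane in a unified way.

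For the disk step, I would fix $z_0 \in \CC$ and $r > 0$, and introduce the extremal model $g_r(z) = \frac{2r^2/C}{(r^2 - |z - z_0|^2)^2}$ on $D_r(z_0) = \{|z - z_0| < r\}$; a direct computation gives $\frac{\partial^2}{\partial z \partial \bar z} \log g_r = C g_r$ with equality, and $g_r \to +\infty$ as $|z - z_0| \to r$. Assuming that $\overline{D_r(z_0)}$ lies in the domain of $f$, set $w = \log f - \log g_r$; the hypothesis on $f$ and the equality for $g_r$ then give $\frac{\partial^2 w}{\partial z \partial \bar z} \geq C(f - g_r) = C g_r \bigl( e^w - 1 \bigr)$. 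Since $g_r \to +\infty$ on $\partial D_r(z_0)$ while $f$ is continuous on $\overline{D_r(z_0)}$, we have $w \to -\infty$ on $\partial D_r(z_0)$, so $w$ attains its supremum at some interior point $z^* \in D_r(z_0)$. At $z^*$ one has $\frac{\partial^2 w}{\partial z \partial \bar z}(z^*) \leq 0$, which forces $e^{w(z^*)} \leq 1$, whence $w \leq 0$ on $D_r(z_0)$, that is, $f \leq g_r$. Letting $r$ increase toward the distance from $z_0$ to the boundary of the ambient domain gives, in particular, the bound $f(\zeta) \leq \frac{2/C}{(1 - |\zeta|^2)^2}$ whenever $f$ is defined on the unit disk.

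To deduce the half-plane statement, fix $z_0 \in \HH$ and set $a = |\Re z_0| > 0$. The M\"obius map $\phi(\zeta) = i \, \Im z_0 - a \, \frac{1 - \zeta}{1 + \zeta}$ is a biholomorphism from the unit disk onto $\HH$ with $\phi(0) = z_0$ and $|\phi'(0)| = 2a$. The pullback $\tilde f(\zeta) = f(\phi(\zeta)) \, |\phi'(\zeta)|^2$ satisfies $\frac{\partial^2}{\partial \zeta \partial \bar \zeta} \log \tilde f \geq C \tilde f$ on the unit disk: since $\phi'$ is holomorphic and nowhere zero, $\log|\phi'|^2$ is locally the real part of a holomorphic function, hence harmonic and killed by $\partial \bar\partial$; while by the chain rule $\frac{\partial^2}{\partial \zeta \partial \bar\zeta} \log(f \circ \phi) = \bigl( \frac{\partial^2}{\partial z \partial \bar z} \log f \bigr)(\phi(\zeta)) \cdot |\phi'(\zeta)|^2$. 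Applying the disk bound to $\tilde f$ at $\zeta = 0$ then gives $\tilde f(0) = 4 a^2 f(z_0) \leq 2/C$, i.e.\ $f(z_0) \leq (2C)^{-1}/|\Re z_0|^2$, as required. The main technical subtlety is the maximum-principle step, which requires $f$ to be continuous up to $\partial D_r(z_0)$; one handles this by working with an exhaustion of the ambient domain by smaller disks $D_{r'}(z_0)$ with $r' < r$ and then letting $r' \to r$. The sharp constant $(2C)^{-1}$ emerges from the arithmetic of the conformal transfer: the extremal density $\frac{2/C}{(1-|\zeta|^2)^2}$ on the unit disk corresponds under $\phi$ exactly to $(2C)^{-1}/|\Re z|^2$ on $\HH$.
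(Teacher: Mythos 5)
Your argument is correct and takes essentially the same route as the paper: a maximum-principle comparison against the extremal model $\frac{2r^2/C}{(r^2-\abs{z-z_0}^2)^2}$ on an exhaustion by smaller disks, followed by conformal transfer to $\HH$ using that the Jacobian factor $\abs{\phi'}^2$ has harmonic logarithm. The only cosmetic differences are that you keep the constant $C$ throughout and evaluate the disk bound at the center of a point-specific M\"obius map, while the paper normalizes $C=1$ on the disk and transfers the whole function along $t \mapsto (t+1)/(t-1)$.
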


\begin{proof}
	We start by proving the result on the open unit disk $\Delta$, where the technique
	is easier to understand. Here the statement of Ahlfors' lemma is the following:
	for any positive smooth function $g \colon \Delta \to (0,+\infty)$, we have the
	implication
	\[
		\frac{\partial^2}{\partial t \partial \tb} \log g \geq g
		\quad \Longrightarrow \quad
		g(t) \leq \frac{2}{(1 - \abs{t}^2)^2} \quad \text{for $t \in \Delta$.}
	\]
	Let $\Delta_R$ denote the open disk of radius $R > 0$. We are going to argue that
	\begin{equation} \label{eq:Ahlfors}
		g(t) \leq \frac{2R^2}{(R^2 - \abs{t}^2)^2} \quad \text{for $t \in \Delta_R$}
	\end{equation}
	for every $R < 1$; this is enough, because we can then let $R \to 1$ to get the
	result. Define an auxiliary function $u \colon \Delta_r \to (0,+\infty)$ by the formula
	\[
		g = u \cdot \frac{2R^2}{(R^2 - \abs{t}^2)^2}.
	\]
	We observe that $u$ goes to zero near the boundary of $\Delta_R$, because 
	$g$ is bounded on $\Delta_R$, whereas $2R^2/(R^2-\abs{t}^2)^2$ goes to
	infinity near the boundary. Therefore $u$ must achieve its maximum at some
	interior point $t_0 \in \Delta_R$. In particular, the function $\log u$ 
	has a local maximum at $t_0$, and by looking at the second derivatives, we get
	\[
		\frac{\partial^2}{\partial t \partial \tb} \log u
		= \frac{1}{4} \left( \frac{\partial^2}{\partial x \partial x} +
		\frac{\partial^2}{\partial y \partial y} \right) \log u \leq 0
	\]
	for $t = t_0$, where $t = x + iy$. It follows that
	\[
		0 \geq \frac{\partial^2}{\partial t \partial \tb} \log u 
		= \frac{\partial^2}{\partial t \partial \tb} \log g 
		- \frac{\partial^2}{\partial t \partial \tb} \left( \log
		\frac{2r^2}{(r^2-\abs{t}^2)^2} \right)
		\geq g - \frac{2r^2}{(r^2-\abs{t}^2)^2}
	\]
	for $t = t_0$, which says exactly that $u(t_0) \leq 1$. But $u$ had a maximum at
	$t_0$, and therefore $u \leq 1$ on the entire disk $\Delta_R$, which is the
	content of \eqref{eq:Ahlfors}.

	It remains to deduce the analogous result for a smooth function $f \colon \HH \to
	(0, +\infty)$. Let us write the hypothesis in the symbolic form $\partial \delb
	\log f \geq f \dz \wedge \dzb$. After pulling back both $2$-forms along the isomorphism
	\[
		\Delta \to \HH, \quad t \mapsto \frac{t+1}{t-1},
	\]
	this becomes the condition that
	\[
		\partial \delb \log f \left( \frac{t+1}{t-1} \right) 
		\geq \frac{4}{\abs{t-1}^4} \cdot f \left( \frac{t+1}{t-1} \right) \dt \wedge
		\dtb.
	\]
	If we now set $g(t) = 4 \abs{t-1}^{-4} \cdot f \bigl( (t+1)/(t-1) \bigr)$, then we
	get
	\[
		\frac{\partial^2}{\partial t \partial \tb} \log g \geq g,
	\]
	and therefore, according to the discussion above, the inequality
	\[
		g(t) \leq \frac{2}{(1 - \abs{t}^2)^2} \quad \text{for $t \in \Delta$.}
	\]
	If we convert this back into a statement about $f$, the result follows.
\end{proof}

\newpar
The following lemma establishes the fundamental inequality that we need in order to
apply Ahlfors' lemma to our setting \cite[Thm.~1]{Simpson}.

\begin{plem} \label{lem:A-inequality}
	With the notation $\Higg = \theta_{\partial/\partial z}$ and $\Higgst =
	\thetast_{\partial/\partial \zb}$, we have
	\[
		\frac{\partial^2}{\partial z \partial \zb} \log h_{\End(E)}(\Higg, \Higg) \geq 
		\frac{h_{\End(E)} \bigl( [\Higgst, \Higg], [\Higgst, \Higg]
		\bigr)}{h_{\End(E)}(\Higg, \Higg)}.
	\]
\end{plem}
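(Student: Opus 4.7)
The strategy is to observe that $\Higg = \theta_{\partial/\partial z}$ is a nowhere-vanishing holomorphic section of the Hodge subbundle $\End(E)^{-1,1}$ equipped with its $\dbar$-holomorphic structure (the Higgs-bundle structure); indeed, the identity $\dbar \theta + \theta \dbar = 0$ from \Cref{lem:VHS-identities} is precisely the assertion $\dbar_{\End(E)} \Higg = 0$. I would then invoke the standard lower bound for the logarithm of the Hodge norm of a holomorphic section, obtained from Cauchy--Schwarz combined with the curvature identity, and reduce the problem to computing the Chern curvature of this bundle.

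The curvature of the Hodge metric on $\End(E)^{-1,1}$ is provided by \Cref{prop:curvature}(b), applied to $\End(E)$ viewed as a polarized variation of Hodge structure of weight zero: it equals $-(\theta_{\End(E)} \theta^{\ast}_{\End(E)} + \theta^{\ast}_{\End(E)} \theta_{\End(E)})$, where the induced Higgs field is $\theta_{\End(E)}(X) = [\Higg, X] \dz$ and, by \Cref{lem:VHS-adjoints} applied to $\End(E)$, its adjoint is $\theta^{\ast}_{\End(E)}(Y) = [\Higgst, Y] \dzb$. Using $[\Higg, \Higg] = 0$, the curvature evaluated at $(\partial/\partial z, \partial/\partial \bar z)$ and applied to $\Higg$ simplifies to $-[\Higg, [\Higgst, \Higg]]$, and the same adjoint property then gives $-h_{\End(E)}(\Theta \cdot \Higg, \Higg) = h_{\End(E)}([\Higgst, \Higg], [\Higgst, \Higg])$.

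With the curvature in hand, I would conclude by mimicking the proof of \Cref{lem:metric-subharmonic}. Since $\partial_{\End(E)} + \dbar_{\End(E)}$ is a metric connection on $\End(E)$ and $\dbar_{\End(E)} \Higg = 0$, one has $\partial_z h_{\End(E)}(\Higg, \Higg) = h_{\End(E)}\bigl((\partial_{\End(E)} \Higg)_{\partial/\partial z}, \Higg\bigr)$; differentiating once more and using $\dbar_{\End(E)} \partial_{\End(E)} \Higg = \Theta_{\End(E)} \Higg$ for a holomorphic section yields
\[
\partial_z \partial_{\bar z} h_{\End(E)}(\Higg, \Higg) = \bigabs{(\partial_{\End(E)} \Higg)_{\partial/\partial z}}^2_{h_{\End(E)}} + h_{\End(E)}\bigl([\Higgst, \Higg], [\Higgst, \Higg]\bigr).
\]
Applying Cauchy--Schwarz to $\abs{\partial_z h_{\End(E)}(\Higg, \Higg)}^2$ in the pointwise identity $\partial_z \partial_{\bar z} \log f = (\partial_z \partial_{\bar z} f)/f - \abs{\partial_z f}^2/f^2$ cancels the first term on the right-hand side and leaves the desired lower bound.

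The main obstacle is the careful sign bookkeeping: verifying that the adjoint of $\ad \Higg$ under the induced Hodge metric on $\End(E)$ is precisely $\ad \Higgst$, and that \Cref{prop:curvature}(b) transfers verbatim to the subbundle $\End(E)^{-1,1}$ of the weight $0$ variation $\End(E)$. Once these conventions are pinned down, the actual computation is short and essentially formal.
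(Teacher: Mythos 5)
Your proof is correct and follows essentially the same route as the paper: there, $\Higg$ is viewed as a section of the harmonic bundle $\End(E)$ annihilated by $\delb$ and by the induced Higgs field (since $[\Higg,\Higg]=0$), and the identity $\partial\dbar+\dbar\partial+\theta\thetast+\thetast\theta=0$ together with the metric connection $\partial+\dbar$ and Cauchy--Schwarz yields exactly the computation you perform via \Cref{prop:curvature}(b) for the $\dbar$-holomorphic section $\Higg$ of $\End(E)^{-1,1}$. The difference is only packaging (the paper's general inequality for sections in the kernel of the Higgs field versus your appeal to the Chern curvature of the Hodge summand); both rest on the same identity and on the adjointness of $\ad\Higg$ and $\ad\Higgst$ under the Hodge metric.
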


\begin{proof}
	It is best to view this as a special case of a more general result. Let $(E, d,
	h)$ be a harmonic bundle on $\HH$, and let $s$ be a smooth section of $E$ that
	satisfies $\delb s = 0$ and $\theta s = 0$; in other words, $s$ is a holomorphic
	section of the Higgs bundle $E$ that lies in the kernel of the Higgs field. Then
	we claim that
	\begin{equation} \label{eq:Higgs-inequality}
		\frac{\partial^2}{\partial z \partial \zb} \log h(s,s) 
		\geq \frac{h(\Higgst s, \Higgst s)}{h(s,s)},
	\end{equation}
	where again $\Higgst = \thetast_{\partial/\partial \zb}$. The calculations are very
	similar to those in the proof of \Cref{lem:metric-subharmonic}. The identity
	$\partial \dbar + \dbar \partial + \theta \thetast + \thetast \theta = 0$ in
	\Cref{lem:VHS-identities} gives us $\dbar \partial s = - \theta \thetast s$.
	Since $\partial + \dbar$ is a metric connection, we have
	\[
		\dbar \, h(s,s) = h(\dbar s, s) + h(s, \partial s) = h(s, \partial s).
	\]
	Applying the same reasoning again, we get
	\[
		\partial \dbar \, h(s,s) = h(\partial s, \partial s) + h(s, \dbar \partial s)
		= h(\partial s, \partial s) - h(s, \theta \thetast s)
		= h(\partial s, \partial s) - h(\thetast s, \thetast s).
	\]
	As before, we can now use the Cauchy-Schwarz inequality to conclude that
	\[
		\frac{\partial}{\partial z} \frac{\partial}{\partial \zb}
		\log h(s,s) \geq \frac{h(\Higgst s, \Higgst s)}{h(s,s)},
	\]
	which is of course just \eqref{eq:Higgs-inequality}.

	Now let us return to our specific situation. The vector bundle $\End(E)$, with the
	flat connection induced by $d$ and the metric induced by $h$, is again a harmonic
	bundle; moreover, it is easy to see that $\theta_{\partial/\partial z}$ acts on
	$\End(E)$ as the commutator with $\Higg$, and $\thetast_{\partial/\partial \zb}$ acts
	as the commutator with $\Higgst$. If we consider $\Higg$ as a section of
	$\End(E)$, it is holomorphic and trivially commutes with $\Higg$, hence belongs to the
	kernel of both $\delb$ and $\theta$. As such, the calculation from above
	applies to it, and so
	\[
		\frac{\partial^2}{\partial z \partial \zb} 
		\log h(\Higg,\Higg) \geq \frac{h \bigl( [\Higgst, \Higg],
		[\Higgst, \Higg] \bigr)}{h(\Higg, \Higg)}.
	\]
	This is the desired inequality.
\end{proof}

\newpar
At each point of $\HH$, the fiber of $E$ is a vector space of dimension $r = \rk
E$ with a hermitian inner product, and $\theta_{\partial/\partial z}$ is a nilpotent
endomorphism. The following lemma bounds the pointwise norm
of the commutator.

\begin{plem} \label{lem:nilpotent}
	Let $V$ be a complex vector space of finite dimension $r \geq 1$, with
	a hermitian inner product. If $A \in \End(V)$ is a nilpotent endomorphism, and if
	$\Ast \in \End(V)$ denotes its adjoint with respect to the inner product, then 
	\[
		\bigl\lVert [\Ast, A] \bigr\rVert^2 \leq 
		2 \norm{A}^4 \leq \binom{r+1}{3} \bigl\lVert [\Ast, A] \bigr\rVert^2.
	\]
\end{plem}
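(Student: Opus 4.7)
The first inequality does not use nilpotence. Since $[\Ast, A]$ is self-adjoint, $\norm{[\Ast, A]}^2 = \tr\bigl([\Ast, A]^2\bigr)$. Expanding the square and applying cyclicity of the trace reduces this to $2 \norm{\Ast A}^2 - 2 \norm{A^2}^2$; dropping the nonnegative second term and using $\norm{\Ast A}^2 = \sum_i \sigma_i^4 \leq \bigl(\sum_i \sigma_i^2\bigr)^2 = \norm{A}^4$ for the singular values $\sigma_i$ of $A$ gives $\norm{[\Ast, A]}^2 \leq 2 \norm{A}^4$.

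For the second inequality, the plan is to exploit the monodromy weight filtration $W_\bullet$ associated to the nilpotent operator $A$, recalled in the introduction. I would choose the \emph{orthogonal} splitting $V = \bigoplus_k V_k$ with $V_k = W_k \cap W_{k-1}^{\perp}$, and let $H \in \End(V)$ be the self-adjoint operator acting as multiplication by $k$ on $V_k$. Since $A W_k \subseteq W_{k-2}$, the operator $A$ decomposes according to weights as $A = \sum_{j \geq 2} A_{-j}$, where $A_{-j}$ sends $V_k$ into $V_{k-j}$; equivalently $[H, A_{-j}] = -j A_{-j}$, and taking adjoints gives $[H, A_{-j}^\ast] = +j \, A_{-j}^\ast$.

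The key step combines cyclicity of the trace with orthogonality of the weight spaces. On the one hand,
\[
\tr\bigl( H [\Ast, A] \bigr) = \tr\bigl( [H, \Ast]\, A \bigr) = \sum_{j \geq 2} j \, \tr(A_{-j}^\ast A) = \sum_{j \geq 2} j \, \norm{A_{-j}}^2 \geq 2 \norm{A}^2,
\]
where the third equality uses $\tr(A_{-j}^\ast A_{-k}) = \delta_{jk} \norm{A_{-j}}^2$ (by orthogonality of the $V_k$) together with $\norm{A}^2 = \sum_j \norm{A_{-j}}^2$, and the inequality uses $j \geq 2$. On the other hand, Cauchy--Schwarz for the Hilbert--Schmidt inner product gives $\bigl( \tr(H[\Ast, A]) \bigr)^2 \leq \norm{H}^2 \cdot \norm{[\Ast, A]}^2$. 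Combining these yields $4 \norm{A}^4 \leq \norm{H}^2 \cdot \norm{[\Ast, A]}^2$. I expect this trace identity to be the main technical point: it is essential that the splitting be orthogonal, so that $H$ is self-adjoint and the different components $A_{-j}$ become orthogonal in the Hilbert--Schmidt inner product.

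Finally, I would bound $\norm{H}^2 = \sum_k k^2 \dim V_k$ using the Jordan structure of $A$: each Jordan block of size $m_\alpha$ contributes weights $m_\alpha - 1, m_\alpha - 3, \dots, -(m_\alpha-1)$, whose sum of squares equals $\tfrac{1}{3} m_\alpha (m_\alpha^2 - 1)$. Setting $r = \sum_\alpha m_\alpha$, the elementary inequality $\sum_\alpha m_\alpha^3 \leq \bigl( \sum_\alpha m_\alpha \bigr)^3 = r^3$ (obtained by expanding the cube and noting all cross terms are nonnegative) gives $\norm{H}^2 \leq \tfrac{1}{3}(r^3 - r) = 2 \binom{r+1}{3}$, from which the desired $2 \norm{A}^4 \leq \binom{r+1}{3} \norm{[\Ast, A]}^2$ follows.
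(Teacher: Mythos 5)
Your proof is correct, but it takes a different route from the paper's, most visibly in the second inequality. The paper argues by bare hands: it unitarily triangularizes $A$ (nilpotence entering only through the vanishing diagonal), keeps only the diagonal entries $d_{k,k}$ of $[\Ast,A]$, observes that $\sum_k (x+k)\,d_{k,k} = \sum_{i<j}(j-i)\abs{a_{i,j}}^2 \geq \norm{A}^2$ for every real $x$, and then applies Cauchy--Schwarz in $\RR^r$ and optimizes the shift $x = -\tfrac{1}{2}(r+1)$, which produces the constant $\tfrac{1}{12}r(r^2-1)$ directly. You instead pair $[\Ast,A]$ against the self-adjoint grading operator $H$ of the orthogonal splitting $V_k = W_k \cap W_{k-1}^{\perp}$ of the monodromy weight filtration, prove $\tr\bigl(H[\Ast,A]\bigr) = \sum_{j\geq 2} j\,\norm{A_{-j}}^2 \geq 2\norm{A}^2$, use Hilbert--Schmidt Cauchy--Schwarz, and bound $\norm{H}^2 = \sum_k k^2 \dim V_k \leq \tfrac{1}{3}(r^3-r) = 2\binom{r+1}{3}$ from the Jordan type; the two computations are structurally parallel (both test $[\Ast,A]$ against an operator with linearly spread spectrum), and in fact the paper's optimal shifted progression is, up to a factor $2$, exactly your weight grading when $A$ is a single Jordan block, so both give the same sharp constant. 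The paper's version is more self-contained (no weight filtration or Jordan-form input, just Schur triangularization) and exhibits the equality case through the optimization; yours makes the $\sltwo$/weight-theoretic origin of the constant explicit, at the price of invoking the standard facts that $A W_k \subseteq W_{k-2}$ and that a size-$m$ block contributes weights $m-1, m-3, \dotsc, -(m-1)$. For the first inequality your exact identity $\norm{[\Ast,A]}^2 = 2\norm{\Ast A}^2 - 2\norm{A^2}^2$, followed by $\norm{\Ast A}^2 = \sum_i \sigma_i^4 \leq \norm{A}^4$, is a clean alternative to the paper's spectral-theorem estimate, and like the paper's argument it uses no nilpotence.
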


\begin{proof}
	Choose an orthonormal basis $e_1, \dotsc, e_r \in V$ such that the matrix
	representing $A$ is upper triangular; the matrix representing $\Ast$ is
	the conjugate transpose, hence lower triangular. Setting
	$a_{i,j} = \inner{Ae_i}{e_j}$, we get
	\[
		\norm{A}^2 = \sum_{i < j} \abs{a_{i,j}}^2.
	\]
	The $k$-th diagonal entry of the matrix representing $[\Ast, A]$ is easily
	seen to be
	\[
		d_{k,k} = \bigl( \abs{a_{1,k}}^2 + \dotsb + \abs{a_{k-1,k}}^2 \bigr)
		- \bigl( \abs{a_{k,k+1}}^2 + \dotsb + \abs{a_{k,r}}^2 \bigr),
	\]
	and this gives us the rather simple-minded inequality
	\[
		\bigl\lVert [\Ast, A] \bigr\rVert 
		\geq \sqrt{\abs{d_{1,1}}^2 + \dotsb + \abs{d_{r,r}}^2}.
	\]
	Let $x \in \RR$ be arbitrary. It is easy to see that
	\[
		\sum_{k=1}^r (x + k) d_{k,k} 
		= \sum_{i<j} \bigl( (x+j) - (x+i) \bigr) \abs{a_{i,j}}^2
		= \sum_{i<j} (j-i) \abs{a_{i,j}}^2,
	\]
	and from the trivial lower bound $j-i \geq 1$, we therefore get
	\[
		\sum_{i<j} \abs{a_{i,j}}^2 \leq 
		\sum_{k=1}^r (x+k) \norm{d_{k,k}} \leq 
		\left( \sum_{k=1}^r (x+k)^2 \right)^{\frac{1}{2}}
		\left( \sum_{k=1}^r \abs{d_{k,k}}^2 \right)^{\frac{1}{2}}.
	\]
	The right-hand side becomes minimal for $x = -\half (r+1)$, with the result that
	\[
		\norm{A}^2 \leq \sqrt{\frac{r(r+1)(r-1)}{12}} 
		\bigl\lVert [\Ast, A] \bigr\rVert.
	\]
	This inequality is actually sharp: equality is achieved for the $r \times r$-matrix with
	\[
		\abs{a_{k,k+1}}^2 = k(r-k) \quad \text{for $k=1, \dotsc, r-1$,}
	\]
	and all other entries zero. Interestingly, this is exactly the case where $V$ is
	an irreducible representation of the Lie algebra $\sltwo(\CC)$, with $\rho(\Ysl) =
	A$, $\rho(\Xsl) = \Ast$, and $\rho(\Hsl) = [\Ast, A]$. We leave the verification
	of this claim to the reader.

	The proof of the other inequality is easier. Without loss of generality, we can
	assume that $\norm{A} = \norm{\Ast} = 1$. The operator $[\Ast, A]$ is
	self-adjoint; by the spectral theorem, there is an orthonormal basis $e_1',
	\dotsc, e_r'$ such that $[\Ast, A] e_i' = \lambda_i e_i'$ with $\lambda_1,
	\dotsc, \lambda_n \in \RR$. Therefore
	\[
		\bigl\lVert [\Ast, A] \bigr\rVert^2 = \sum_{i=1}^r \lambda_i^2
		= \sum_{i=1}^r \bigl( \norm{A e_i'}^2 - \norm{\Ast e_i'}^2 \bigr)^2
		\leq \sum_{i=1}^r \norm{A e_i'}^4 + \sum_{i=1}^r \norm{\Ast e_i'}^4,
	\]
	because $\lambda_i = \biginner{(\Ast A - A \Ast) e_i'}{e_i'} = \norm{A e_i'}^2 -
	\norm{\Ast e_i'}^2$. At the same time,
	\[
		1 = \norm{A}^2 = \sum_{i=1}^r \norm{A e_i'}^2,
	\]
	and so both sums on the right-hand side are $\leq 1$.
\end{proof}

\newpar
We can now apply the version of Ahlfors' lemma in \Cref{lem:Ahlfors}, and
deduce the following very striking upper bound for the Hodge norm of the Higgs field.

\begin{pcor} \label{cor:Higgs-bound}
	Let $E$ be a polarized variation of Hodge structure of rank $r \geq 1$ on the left
	half-plane $\HH$. Then the Higgs field $\theta_{\partial/\partial z}$ satisfies
	the inequality
	\[
		h_{\End(E)} \bigl( \theta_{\partial/\partial z}, \theta_{\partial/\partial z}
		\bigr) \leq \frac{C_0^2}{(\Re z)^2},
	\]
	where $C_0 = \frac{1}{2} \sqrt{\binom{r+1}{3}}$.
\end{pcor}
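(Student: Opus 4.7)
The plan is to string together the three preceding lemmas in a direct way; no new ideas are needed, only a careful bookkeeping of constants. Set $f = h_{\End(E)}(\Higg, \Higg)$, where as above $\Higg = \theta_{\partial/\partial z}$. This is a smooth nonnegative function on $\HH$; at points where $\Higg = 0$ the desired inequality is trivial, so I may assume $f > 0$ on an open set where the following estimates make sense (and then conclude on all of $\HH$ by density, or more cleanly by running the Ahlfors argument with $f + \eps$ and letting $\eps \to 0$).

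First I would apply \Cref{lem:A-inequality} to get
\[
    \frac{\partial^2}{\partial z \partial \zb} \log f \geq \frac{h_{\End(E)}\bigl([\Higgst, \Higg], [\Higgst, \Higg]\bigr)}{f}.
\]
Next I would feed in the pointwise algebraic inequality from \Cref{lem:nilpotent}. The key observation justifying this step is that at every point of $\HH$, the endomorphism $\Higg$ is nilpotent on the fiber of $E$, since $\theta$ shifts the Hodge type $(p,q) \mapsto (p-1,q+1)$ and there are only finitely many Hodge types; the ambient hermitian inner product is the one induced by the Hodge metric $h_E$ on the fiber, and the induced metric on $\End(E)$ is precisely the associated Hilbert–Schmidt norm, so Lemma \ref{lem:nilpotent} applies verbatim with $A = \Higg$ and $\Ast = \Higgst$. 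It gives
\[
    h_{\End(E)}\bigl([\Higgst, \Higg], [\Higgst, \Higg]\bigr) \;\geq\; \frac{2}{\binom{r+1}{3}} \, f^2,
\]
and combining with the previous display yields
\[
    \frac{\partial^2}{\partial z \partial \zb} \log f \;\geq\; \frac{2}{\binom{r+1}{3}} \, f.
\]

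Finally I would invoke Ahlfors' lemma (\Cref{lem:Ahlfors}) with constant $C = 2/\binom{r+1}{3}$, which gives
\[
    f(z) \leq \frac{(2C)^{-1}}{\abs{\Re z}^2} = \frac{1}{4}\binom{r+1}{3} \cdot \frac{1}{(\Re z)^2} = \frac{C_0^2}{(\Re z)^2},
\]
exactly the claimed bound. There is really no obstacle: every ingredient (the curvature-style inequality, the nilpotent estimate with the sharp constant $\binom{r+1}{3}$, and Ahlfors' lemma calibrated to $\HH$) has already been put into place, and the constants match up precisely. The only thing I would be slightly careful about is the degenerate locus $\{\Higg = 0\}$, which I would handle either by the $f + \eps$ trick mentioned above or by noting that $\log f$ is well-defined and smooth on the (open) complement and that $f$ is continuous, so the pointwise bound extends by continuity to all of $\HH$.
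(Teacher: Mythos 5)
Your proposal is correct and takes essentially the same route as the paper: combine \Cref{lem:A-inequality} with the lower bound from \Cref{lem:nilpotent} to get $\frac{\partial^2}{\partial z \partial \zb} \log h_{\End(E)}(\Higg,\Higg) \geq \frac{12}{r(r+1)(r-1)}\, h_{\End(E)}(\Higg,\Higg)$, then apply \Cref{lem:Ahlfors}, and the constants match exactly. The only difference is cosmetic: the paper handles the degenerate situation by noting that the Higgs field is trivial when $r=1$ (so one may assume $r \geq 2$), which at once takes care of the division by $\binom{r+1}{3}$ and of the vanishing locus you treat with the $f+\eps$/continuity remark.
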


\begin{proof}
	Since the Higgs field is trivial when $r = 1$, we can assume that $r \geq 2$.
	Let us again use the abbreviations $A = \theta_{\partial/\partial z}$ and $\Ast =
	\theta_{\partial/\partial \zb}$. If we put the inequalities in
	\Cref{lem:A-inequality} and \Cref{lem:nilpotent} together, we get
	\[
		\frac{\partial^2}{\partial z \partial \zb} \log h_{\End(E)}(A, A)
		\geq \frac{12}{r(r+1)(r-1)} h_{\End(E)}(A,A).
	\]
	Now apply \Cref{lem:Ahlfors} to reach the desired conclusion.
\end{proof}

From an analytic point of view, this is really the central fact about
polarized variations of Hodge structure. The amazing thing is that the constant
depends on nothing but the rank of the variation of Hodge structure. This fact turns
out to be especially useful for the theory in several variables.

\newpar
The induced variation of Hodge structure on the bundle $\End(E)$ has rank $r^2$, but
in that case, there is a bound on the Higgs field that is much better than simply
replacing $r$ by $r^2$ is the result above.

\begin{pcor} \label{cor:Higgs-bound-End}
	Let $E$ be a polarized variation of Hodge structure of rank $r \geq 1$ on the left
	half-plane $\HH$. Then the Higgs field $\theta_{\partial/\partial z}$ of the
	induced variation of Hodge structure on $\End(E)$ satisfies the inequality
	\[
		h_{\End(\End(E))} \bigl( \theta_{\partial/\partial z}, \theta_{\partial/\partial z}
		\bigr) \leq \frac{2r C_0^2}{(\Re z)^2},
	\]
	where again $C_0 = \frac{1}{2} \sqrt{\binom{r+1}{3}}$.
\end{pcor}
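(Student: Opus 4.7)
The plan is to reduce the claim to \Cref{cor:Higgs-bound} by a pointwise linear-algebra computation. Writing $A = \theta_{\partial/\partial z}$ for the Higgs field of $E$, the first observation---already used in the proof of \Cref{lem:A-inequality}---is that under the natural identification $\End(E) \cong E \tensor E^*$, the induced Higgs field on $\End(E)$ acts pointwise as the adjoint action $\ad A = [A, \argbl]$. Moreover, the Hodge metric $h_{\End(E)}$ is nothing but the Hilbert--Schmidt inner product on the fiber induced by $h_E$, and similarly $h_{\End(\End(E))}$ is the Hilbert--Schmidt inner product on operators $\End(V) \to \End(V)$. Thus the left-hand side of the desired inequality equals the pointwise Hilbert--Schmidt norm squared $\lVert \ad A \rVert_{\mathrm{HS}}^2$.

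The core step is the general identity
\[
	\lVert \ad A \rVert_{\mathrm{HS}}^2
	= 2r \lVert A \rVert_{\mathrm{HS}}^2 - 2 \abs{\tr A}^2,
\]
valid for any endomorphism $A$ of an $r$-dimensional hermitian vector space $V$. My plan is to prove it by writing $\ad A = L_A - R_A$, where $L_A(X) = AX$ and $R_A(X) = XA$, and expanding $\lVert L_A - R_A \rVert_{\mathrm{HS}}^2$. A short calculation in any orthonormal basis of $V$ shows that $\lVert L_A \rVert_{\mathrm{HS}}^2 = \lVert R_A \rVert_{\mathrm{HS}}^2 = r \lVert A \rVert_{\mathrm{HS}}^2$, while the cross term works out to $\langle L_A, R_A \rangle_{\mathrm{HS}} = \abs{\tr A}^2$, yielding the identity.

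Since the Higgs field $\theta$ shifts the Hodge grading of $E$ by $(-1, +1)$, it is nilpotent on every fiber, so $\tr A = 0$. Combining the identity with \Cref{cor:Higgs-bound} then gives
\[
	h_{\End(\End(E))}\bigl( \theta_{\partial/\partial z}, \theta_{\partial/\partial z} \bigr)
	= 2r \cdot h_{\End(E)}\bigl( \theta_{\partial/\partial z}, \theta_{\partial/\partial z} \bigr)
	\leq \frac{2r C_0^2}{(\Re z)^2},
\]
which is the claim. I do not anticipate any serious obstacle: the only point worth flagging is the role of nilpotency, which kills the $\abs{\tr A}^2$ term and is precisely what turns a naive application of \Cref{cor:Higgs-bound} to the rank-$r^2$ bundle $\End(E)$ (which would produce a constant of order $r^6$) into the much sharper bound of order $r^4$ asserted above.
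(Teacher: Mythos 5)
Your proposal is correct and follows essentially the same route as the paper: identify the induced Higgs field on $\End(E)$ with $\ad A$ for $A = \theta_{\partial/\partial z}$, note that the Hodge metrics are the induced Hilbert--Schmidt inner products, reduce to the pointwise bound $\norm{\ad A}^2 \leq 2r \norm{A}^2$ for nilpotent $A$, and combine with \Cref{cor:Higgs-bound}. The only difference is cosmetic: you derive the exact identity $\norm{\ad A}^2 = 2r\norm{A}^2 - 2\abs{\tr A}^2$ by expanding $L_A - R_A$, whereas the paper computes $\ad A$ directly in an orthonormal basis in which $A$ is strictly upper triangular; both yield the same bound once nilpotency forces $\tr A = 0$.
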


\begin{proof}
	Let $A = \theta_{\partial/\partial z}$ be the Higgs field acting on $V$.
	The induced Higgs field on $\End(V)$ is then equal to the commutator $(\ad A)(X) =
	[A,X]$. It is therefore enough to prove that for any nilpotent matrix $A \in
	\End(V)$, one has
	\[
		\norm{\ad A}^2 \leq 2r \norm{A}^2.
	\]
	Let $e_1, \dotsc, e_r \in V$ be an orthonormal basis such that the matrix for $A$
	is strictly upper triangular. Then the matrices $E_{i,j}$, whose only nonzero entry
	is a $1$ in the position $(i,j)$, form an orthonormal basis for $\End(V)$, and
	a short calculation gives
	\[
		(\ad A)(E_{i,j}) = \sum_{k<i} a_{k,i} E_{k,j} - \sum_{j < k} a_{j,k} E_{i,k}.
	\]
	We therefore get
	\begin{align*}
		\norm{\ad A}^2 &= \sum_{i,j} \abs{(\ad A)(E_{i,j})}^2 \\
			&= \sum_{i,j} \Bigl( \abs{a_{1,i}}^2 + \dotsb + \abs{a_{i-1,i}}^2 +
			\abs{a_{j,j+1}}^2 + \dotsb + \abs{a_{j,r}}^2 \Bigr) 
			\leq 2r \norm{A}^2,
	\end{align*}
	as claimed.
\end{proof}

\subsection{Proof of the monodromy theorem}

\newpar
Let us summarize the results so far. We have shown that if $v \in V$ is a nontrivial
multi-valued flat section of $E$, then the function $\varphi = \log h(v,v)$ is smooth
and subharmonic on $\HH$, and its first derivatives are bounded by
\begin{equation} \label{eq:derivatives-final}
	\ABS{\frac{\partial \varphi}{\partial z}}
	= \ABS{\frac{\partial \varphi}{\partial \zb}}
	\leq \frac{2C_0}{\abs{\Re z}},
\end{equation}
where $C_0 = \half \sqrt{\binom{r+1}{3}}$ and $r = \rk E$. Everything up to this point,
maybe except for the optimal value of the constant, was known to Simpson
\cite[\S2]{Simpson} back in the 1990s. Now we add a small -- but crucial! -- new
insight, namely that the uniform bound on the first derivatives can be used directly
to control the behavior of the function $\varphi$ for $\Re z \ll 0$.

\newpar
Before we can explain how this works, we first need to introduce the \define{monodromy
operator} $T \in \Aut(V)$. Following Schmid \cite[(4.4)]{Schmid}, we define this by
the formula
\[
	(Tv)(z) = v(z-2 \pi i),
\]
where $i = \sqrt{-1}$. That is to say, for any $v \in V$, the function $z \mapsto
v(z-2 \pi i)$ is again a flat section of the bundle $E$ on $\HH$, and we define $Tv
\in V$ to be this section. The flat pairing on $E$ induces a hermitian pairing 
\[
	Q \colon V \tensor_{\CC} \Vb \to \CC,
\]
and one checks easily that $T$ preserves $Q$, in the sense that $Q(Tv, Tw) =
Q(v,w)$ for every $v,w \in V$. We write the Jordan decomposition of $T$ in the
form
\[
	T = T_s \cdot e^{2 \pi i N},
\]
with $T_s \in \Aut(V)$ semisimple and $N \in \End(V)$ nilpotent. Then
\[
	Q(T_s v, T_s w) = Q(v, w) \quad \text{and} \quad
	Q(Nv, w) = Q(v,Nw),
\]
for every $v,w \in V$, because $Q$ is conjugate-linear in the second argument. Note
that $T$ depends on the choice of $i = \sqrt{-1}$, but $N$ is independent of it.

\newpar
The space $V$ of multi-valued flat sections gives us a trivialization of the vector
bundle $E$ on the left half-plane $\HH$. We can therefore consider the polarized
variation of Hodge structure as a family of Hodge structures on $V$; as usual, we
denote by the letter $\Phi(z)$ the Hodge filtration of the Hodge structure at the
point $z \in \HH$. The definition of the monodromy operator has the following
consequence.

\begin{plem} \label{lem:Phi-monodromy}
	For every $z \in \HH$, we have the equality
	\[
		\Phi(z+2 \pi i) = T \, \Phi(z),
	\]
	where both sides are filtrations on $V$. 
\end{plem}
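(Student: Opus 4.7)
The plan is simply to unwind definitions; no analysis is needed here. First I would make precise what it means to view $\Phi(z)$ as a filtration on $V$. For any $z \in \HH$, evaluation at $z$ gives a $\CC$-linear isomorphism $\mathrm{ev}_z \colon V \to (\exp^{\ast} E)_z$, since a flat section on a simply connected domain is determined by its value at any single point. Under this identification, $\Phi(z)^p \subseteq V$ is by definition $\mathrm{ev}_z^{-1} \bigl( F^p (\exp^{\ast} E)_z \bigr)$; concretely, $v \in \Phi(z)^p$ if and only if $v(z) \in F^p E_{\exp(z)}$.

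Next I would exploit the tautological identity $\exp(z + 2 \pi i) = \exp(z)$, which says that the fibers $(\exp^{\ast} E)_z$ and $(\exp^{\ast} E)_{z + 2 \pi i}$ are literally the same vector space $E_{\exp(z)}$, equipped with the same Hodge filtration. Consequently, $v \in \Phi(z + 2 \pi i)^p$ is equivalent to $v(z + 2 \pi i) \in F^p E_{\exp(z)}$.

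Finally, I would compare this with the definition of $T$. Since $(Tv)(z) = v(z - 2 \pi i)$, we have $(T^{-1} v)(z) = v(z + 2 \pi i)$, and therefore $v \in T \cdot \Phi(z)^p$ is equivalent to $T^{-1} v \in \Phi(z)^p$, which in turn is equivalent to $v(z + 2 \pi i) \in F^p E_{\exp(z)}$. Matching the two characterizations gives $\Phi(z + 2 \pi i)^p = T \cdot \Phi(z)^p$ for every $p$, which is the claim.

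There is no real obstacle: the statement is tautological once one is careful about the meaning of ``Hodge filtration on $V$''. The only bookkeeping point worth double-checking is that the convention $(Tv)(z) = v(z-2\pi i)$ from \Cref{par:HS}, which uses a shift by $-2\pi i$, translates via $T^{-1}$ into the shift $z \mapsto z + 2\pi i$ appearing in $\Phi(z + 2 \pi i)$; this is exactly why $T$ (and not $T^{-1}$) shows up on the right-hand side.
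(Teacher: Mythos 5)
Your argument is correct and is essentially the paper's own proof: both simply unwind the definitions via the evaluation isomorphism $v \mapsto v(z)$ from $V$ to the fiber $E_{e^z}$ and the relation $v(z+2\pi i) = (T^{-1}v)(z)$, so that $\Phi(z+2\pi i)^p = T\,\Phi(z)^p$ follows by composing with $T^{-1}$. The only quibble is a reference slip: the convention $(Tv)(z) = v(z-2\pi i)$ is introduced in the paragraph on the monodromy operator preceding the lemma, not in the paragraph on Hodge structures.
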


\begin{proof}
	This may be a bit confusing, so let us write out the argument. The
	trivialization of the pullback of the bundle $E$ along $\exp \colon \HH \to \dst$
	gives us, for every point $z \in \HH$, an isomorphism of complex vector spaces
	\[
		\phi_z \colon V \to (\exp^{\ast} E) \restr{z} \cong E_t, \quad
		\phi_z(v) = v(z).
	\]
	Here $t = e^z$. Since $v(z+2 \pi i) = (T^{-1}v)(z)$, the diagram
	\[
		\begin{tikzcd}[row sep=tiny,column sep=huge]
			V \arrow{dd}{T^{-1}} \drar[bend left=15]{\phi_{z+2 \pi i}} \\
			& E_t \\
			V \urar[swap,bend right=15]{\phi_z}
		\end{tikzcd}
	\]
	commutes. The way the period mapping is constructed, we have
	\[
		F_{\Phi(z+2 \pi i)}^p = \phi_{z+2 \pi i}^{-1} \bigl( F^p E_t \bigr)
		= (T^{-1})^{-1} \phi_z^{-1} \bigl( F^p E_t \bigr) = T F_{\Phi(z)}^p,
	\]
	which is what we wanted to prove.
\end{proof}

\newpar
Given a multi-valued flat section $v \in V$, we can write
the value of the function $h(v,v)$ at the point $z \in \HH$ in the equivalent form
\[
	h(v,v)(z) = \norm{v}_{\Phi(z)}^2,
\]
using the notation for the Hodge norm from \Cref{par:HS}. Since $T
\in \Aut(V, Q)$, we have
\begin{equation} \label{eq:translation}
	h(v,v)(z+2 \pi i) = \norm{v}_{T \Phi(z)}^2 = \norm{T^{-1} v}_{\Phi(z)}^2
	= h(T^{-1} v, T^{-1} v)(z).
\end{equation}
It is this relation that connects the asymptotic behavior of the Hodge norm to the
monodromy transformation $T$.

\newpar
To illustrate the power of the derivative bound in \eqref{eq:derivatives-final}, we
are now going to prove the monodromy theorem \cite[Lem.~4.5]{Schmid}.

\begin{pprop} \label{prop:monodromy-theorem}
	If $\lambda \in \CC$ is an eigenvalue of $T$, then $\abs{\lambda} = 1$.
\end{pprop}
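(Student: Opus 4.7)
The plan is to exploit the derivative bound \eqref{eq:derivatives-final} for $\varphi = \log h(v,v)$ together with the translation identity \eqref{eq:translation}. The key observation is that an eigenvector converts the translation identity from a statement about flat sections into a statement about scalars, and the derivative bound then forces the scalar to have modulus one.

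First I would pick an honest eigenvector: if $\lambda$ is an eigenvalue of $T$, choose a nonzero $v \in V$ with $Tv = \lambda v$, so that $T^{-1} v = \lambda^{-1} v$. Applying \eqref{eq:translation} to this $v$ yields
\[
h(v,v)(z + 2\pi i) = h(T^{-1} v, T^{-1} v)(z) = \abs{\lambda}^{-2} h(v,v)(z),
\]
so that the smooth function $\varphi = \log h(v,v)$ satisfies the exact shift relation $\varphi(z + 2\pi i) - \varphi(z) = -2 \log \abs{\lambda}$ for every $z \in \HH$.

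Second I would use \eqref{eq:derivatives-final} to estimate $\partial \varphi / \partial y$ along a vertical line $\Re z = x_0 < 0$. Since $\partial/\partial y = i(\partial/\partial z - \partial/\partial \zb)$, the triangle inequality combined with \eqref{eq:derivatives-final} gives
\[
\ABS{\pder{\varphi}{y}} \leq \ABS{\pder{\varphi}{z}} + \ABS{\pder{\varphi}{\zb}} \leq \frac{4 C_0}{\abs{x_0}}.
\]
The mean value theorem applied to $y \mapsto \varphi(x_0 + i y)$ on the interval of length $2\pi$ then yields
\[
\bigabs{\varphi(x_0 + i(y + 2\pi)) - \varphi(x_0 + iy)} \leq \frac{8 \pi C_0}{\abs{x_0}}.
\]

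Third I would combine the two: the left-hand side equals $\abs{2 \log \abs{\lambda}}$, which is independent of $x_0$, while the right-hand side tends to zero as $x_0 \to -\infty$. Hence $\log \abs{\lambda} = 0$, i.e.~$\abs{\lambda} = 1$. There is no serious obstacle here — the argument is essentially a three-line consequence of the material already developed. The only point worth stressing is that one must work with a genuine eigenvector rather than a generalized one, so that $T^{-1} v$ is a scalar multiple of $v$ and the translation identity \eqref{eq:translation} collapses to a multiplicative change in $\varphi$; this is automatic since any eigenvalue of a linear operator admits an honest eigenvector.
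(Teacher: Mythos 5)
Your argument is correct and is essentially the paper's own proof: both take a genuine eigenvector, use the translation identity \eqref{eq:translation} to turn the monodromy into the exact shift $\varphi(z+2\pi i)-\varphi(z) = -2\log\abs{\lambda}$, and then bound that shift by $8\pi C_0/\abs{\Re z}$ via the derivative estimate \eqref{eq:derivatives-final} integrated over a vertical segment, letting $\abs{\Re z}\to\infty$. The only cosmetic difference is that you phrase the vertical estimate through the mean value theorem while the paper writes it as an integral of $\partial\varphi/\partial z - \partial\varphi/\partial\zb$; the content is identical.
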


\begin{proof}
	Let $v \in V$ be a nonzero eigenvector with $T v = \lambda v$, and consider the
	smooth function $\varphi = \log h(v,v)$. The relation in \eqref{eq:translation}
	implies that
	\[
		\varphi(z + 2 \pi i) = \log \abs{\lambda}^{-2} + \varphi(z)
	\]
	for every $z \in \HH$. On the other hand, we clearly have
	\[
		\varphi(z + 2 \pi i) - \varphi(z) = 
		\int_0^{2 \pi} \left( i \frac{\partial \varphi}{\partial z}(z + i y) 
		- i \frac{\partial \varphi}{\partial \zb}(z + i y) \right) \dy,
	\]
	and on account of \eqref{eq:derivatives-final}, this leads to the inequality
	\[
		\Abs{\varphi(z + 2 \pi i) - \varphi(z)} \leq \frac{8 \pi C_0}{\abs{\Re z}},
	\]
	where $C_0 = \frac{1}{2} \sqrt{\binom{r+1}{3}}$. Letting $\abs{\Re z} \to
	\infty$, we conclude that $\log \abs{\lambda} = 0$, whence $\abs{\lambda} = 1$.
\end{proof}

\subsection{Effective bounds for the monodromy transformation}

\newpar
We can use the derivative bound in \eqref{eq:derivatives-final} to control the
Hodge norms of the nilpotent operator $N$ and of the projections $P_{\lambda} \colon
V \to E_{\lambda}(T_s)$ to the eigenspaces of $T_s$. The fact that $N$ and
$\theta_{\partial/\partial z}$ are both of order $\abs{\Re z}^{-1}$ is no accident:
during the proof of the nilpotent orbit theorem, we will see that the difference
between these two operators, modulo $F^0 \End(V)_{\Phi(z)}$, is of order $e^{-\eps
\abs{\Re z}}$. (The precise statement is in \Cref{thm:effective-estimates}.)

\begin{pprop} \label{prop:N-norm}
	There is a constant $C > 0$, whose value only depends on $\dim V$ and on the
	minimal polynomial of the monodromy transformation $T \in \GL(V)$, such that:
	\begin{aenumerate}
	\item For every $v \in V$ and every $z \in \HH$ with $\Re z \leq -1$, one has
		\[
			\norm{Nv}_{\Phi(z)} \leq \frac{C}{\abs{\Re z}} \norm{v}_{\Phi(z)}.
		\]
	\item For every $v \in V$ and every $z \in \HH$ with $\Re z \leq -1$, one has
		\[
			\norm{P_{\lambda} v}_{\Phi(z)} \leq C \norm{v}_{\Phi(z)}.
		\]
	\end{aenumerate}
\end{pprop}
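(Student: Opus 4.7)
The analytic input is the derivative bound \eqref{eq:derivatives-final}, which applies to every multi-valued flat section, not just to eigenvectors. For any $v \in V$ and any $k \in \ZZ$, integrating $\partial \varphi_v / \partial y$ along the vertical segment from $z$ to $z + 2\pi i k$ and applying \eqref{eq:translation} gives, exactly as in the proof of \Cref{prop:monodromy-theorem}, the key inequality
\[
	\norm{T^k v}_{\Phi(z)} \leq e^{4 \pi C_0 \abs{k}/\abs{\Re z}} \norm{v}_{\Phi(z)}.
\]
Everything else in the proof is linear algebra applied to the pair $(V, T)$.

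For part (b), the plan is to express $P_\lambda$ as a polynomial in $T$ whose degree and coefficients depend only on the minimal polynomial of $T$, and then apply the key inequality term by term. Jordan--Chevalley gives $T_s = p(T)$ for an explicit polynomial $p$ determined by the distinct eigenvalues of $T_s$ and the sizes of the Jordan blocks of $T$, and Lagrange interpolation writes $P_\lambda = \prod_{\mu \neq \lambda}(T_s - \mu \id)/(\lambda - \mu)$ as a polynomial in $T_s$, hence in $T$, of total degree $D$ controlled by $\dim V$ and the minimal polynomial of $T$. For $\abs{\Re z} \geq 1$ the key inequality yields $\norm{T^k v}_{\Phi(z)} \leq e^{4 \pi C_0 D} \norm{v}_{\Phi(z)}$ for every $k \leq D$, and (b) then follows from the triangle inequality.

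For part (a), the plan is to reduce to eigenvectors of $T_s$ using (b), and then extract $Nv_\lambda$ from sample values $T^{kL} v_\lambda$ by Lagrange interpolation at a scale $L$ proportional to $\abs{\Re z}$; this is where the gain of a factor $1/\abs{\Re z}$ comes from. Since $N$ commutes with $T_s$, for $v_\lambda \in E_\lambda(T_s)$ we have $T^k v_\lambda = \lambda^k \, e^{2\pi i k N} v_\lambda$, so
\[
	u(k) \defeq \lambda^{-k} T^k v_\lambda
	= \sum_{j=0}^{d-1} \frac{(2 \pi i k)^j}{j!} N^j v_\lambda, \qquad d = \dim V,
\]
is a polynomial of degree $< d$ in $k$ whose coefficient of $k^1$ is $2 \pi i \, N v_\lambda$. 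Sampling at the $d$ points $0, L, 2L, \dotsc, (d-1)L$ and inverting the Vandermonde system for the polynomial $i \mapsto u(iL)$ produces an identity
\[
	L \cdot (2 \pi i \, N v_\lambda)
	= \sum_{i=0}^{d-1} \gamma_i \, u(iL)
	= \sum_{i=0}^{d-1} \gamma_i \, \lambda^{-iL} \, T^{iL} v_\lambda
\]
with coefficients $\gamma_i$ depending only on $d$. Choosing $L = \lfloor \abs{\Re z}/(8 \pi C_0 d) \rfloor$, every sample point satisfies $iL \leq \abs{\Re z}/(8 \pi C_0)$, so $\norm{u(iL)}_{\Phi(z)} \leq e^{1/2} \norm{v_\lambda}_{\Phi(z)}$ by the key inequality (together with $\abs{\lambda} = 1$), and taking Hodge norms of both sides yields $\norm{N v_\lambda}_{\Phi(z)} \leq C'/\abs{\Re z} \cdot \norm{v_\lambda}_{\Phi(z)}$. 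Summing over $\lambda$ and applying (b) finishes the argument when $\abs{\Re z}$ exceeds the threshold needed to make $L \geq 1$; for smaller $\abs{\Re z}$, the estimate $\norm{N v}_{\Phi(z)} \leq C \norm{v}_{\Phi(z)}$ follows from writing $N$ itself as a polynomial in $T$, just as in (b), and absorbs into the final constant.

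The main obstacle is purely technical: verifying that the Vandermonde coefficients $\gamma_i$ depend only on $d$, and that the scale $L \sim \abs{\Re z}$ simultaneously keeps all sample points within the range of the key inequality while producing the factor $1/\abs{\Re z}$ after dividing by $L$. The conceptual point is that this scale converts the slow logarithmic drift of $\norm{T^k v}_{\Phi(z)}$ --- of order $e^{O(\abs{k}/\abs{\Re z})}$ --- into the sharp pointwise bound on the ``infinitesimal'' monodromy $N v_\lambda$.
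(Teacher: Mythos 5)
Your proposal is correct, and for part (b) it is the same as the paper's argument: both write $P_{\lambda}$ (and, on a bounded range of $\abs{\Re z}$, also $N$) as a polynomial in $T$ determined by the minimal polynomial and feed in the bound on Hodge norms of powers of $T$ coming from \eqref{eq:derivatives-final} and \eqref{eq:translation}. For part (a), however, you take a genuinely different route. The paper works at a single step: it records \eqref{eq:inequality-T}, i.e.\ that $T$ is $\eps$-close to unitary with $\eps \sim 1/\abs{\Re z}$, and then invokes the purely linear-algebraic \Cref{lem:unitary-nilpotent} --- triangularize, write $T = U(\id + B)$ with $U$ unitary diagonal, bound the entries of $B$ by $\eps$, and use $N = P(T)\prod_{\lambda}(T - \lambda \id)$ together with $\prod_{\lambda}(U - \lambda \id) = 0$ --- to convert closeness-to-unitary directly into $\norm{Nv} \leq C\eps\norm{v}$. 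You instead iterate the one-step estimate to control the whole orbit, $\norm{T^k v}_{\Phi(z)} \leq e^{4\pi C_0 \abs{k}/\abs{\Re z}} \norm{v}_{\Phi(z)}$, reduce to $T_s$-eigenvectors via (b), and extract $Nv_{\lambda}$ as the degree-one coefficient of the polynomial $k \mapsto \lambda^{-k} T^k v_{\lambda}$ by Vandermonde interpolation at nodes spaced $L \sim \abs{\Re z}$, the division by $L$ producing the factor $1/\abs{\Re z}$; your constants (inverse Vandermonde weights depending only on $d$, the constant from (b), and the separate treatment of $1 \leq \abs{\Re z} \leq 8\pi C_0 d$) indeed depend only on $\dim V$ and the minimal polynomial of $T$, as required. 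The paper's lemma buys a self-contained statement (``unit eigenvalues and approximately unitary implies small $N$'') that needs no eigenspace decomposition and is reused verbatim for $P_{\lambda}$; your sampling argument buys an argument free of the triangularizing basis and matrix-entry estimates, and it makes transparent where the gain comes from: differencing the slowly drifting orbit at scale $\abs{\Re z}$. One cosmetic point: the Jordan block sizes are not determined by the minimal polynomial, but the polynomial giving $T_s$ (or $P_{\lambda}$ directly, via the Chinese remainder theorem) only needs the multiplicities occurring in the minimal polynomial, so your stated dependence of the constant is unaffected.
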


\newpar 
Given the bound on the derivative of the function $\log \norm{v}_{\Phi(z)}^2$, 
the proof is just linear algebra. By the same argument as in the proof of
\Cref{prop:monodromy-theorem}, we have
	\[
		\Bigl\lvert \log \norm{Tv}_{\Phi(z)}^2 - \log \norm{v}_{\Phi(z)}^2 \Bigr\rvert 
		= \Bigl\lvert \log \norm{v}_{\Phi(z - 2 \pi i)}^2 
		- \log \norm{v}_{\Phi(z)}^2 \Bigr\rvert \leq \frac{8 \pi C_0}{\abs{\Re z}};
	\]
	after exponentiating, this becomes
	\[
		e^{-8 \pi C_0 / \abs{\Re z}} \leq
			\frac{\norm{Tv}_{\Phi(z)}^2}{\norm{v}_{\Phi(z)}^2}
			\leq e^{8 \pi C_0 / \abs{\Re z}}.
	\]
	If we restrict to $\Re z \leq -1$ for simplicity, it follows that
	\begin{equation} \label{eq:inequality-T}
		\Bigl\lvert \norm{Tv}_{\Phi(z)}^2 - \norm{v}_{\Phi(z)}^2 \Bigr\rvert
		\leq \frac{C'}{\abs{\Re z}} \norm{v}_{\Phi(z)}^2,
	\end{equation}
	where $C' = (e^{8 \pi C_0}-1)/(8 \pi C_0)$. 

\newpar
The following elementary lemma, applied to the inner products
$\inner{\argbl}{\argbl}_{\Phi(z)}$, turns the inequality in \eqref{eq:inequality-T}
into a bound for the operator norm of the nilpotent operator $N$.

\begin{plem} \label{lem:unitary-nilpotent}
	Let $V$ be a finite-dimensional complex vector space with an inner product
	$\inner{\argbl}{\argbl}$. Let $T \in \GL(V)$ be an endomorphism whose eigenvalues
	have absolute value $1$. Suppose that $T$ is close to unitary, in the sense that
	there is a constant $\eps > 0$ such that
	\[
		\Bigl\lvert \norm{Tv}^2 - \norm{v}^2 \Bigr\rvert \leq \eps \norm{v}^2
		\quad \text{for all $v \in V$.}
	\]
	Then there are two constants $C > 0$ and $m \in \NN$, whose values only depends on
	the integer $\dim V$ and on the minimal polynomial of the operator $T$, such that
	\[
		\norm{Nv} \leq C \eps (1 + \eps^m) \cdot \norm{v} \quad \text{for all $v \in V$,}
	\]
	where $N = (2 \pi i)^{-1} \log T_u$ is the logarithm of the unipotent part of $T$.
\end{plem}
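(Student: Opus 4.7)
The plan is to reduce the bound on $N$ to the single-eigenvalue case, and then to handle that case via Schur's theorem and a row-by-row estimate. Iterating the hypothesis gives $\|T^k v\| \leq (1+\eps)^{k/2}\|v\|$ for every $k \geq 0$, so $\|T^k\|_{\mathrm{op}}$ is dominated by a polynomial in $\eps$ of degree depending only on $k$. Let $\lambda_1, \dots, \lambda_r$ be the distinct eigenvalues of $T$ and $P_j$ the spectral projection onto the generalized eigenspace $V_j$. Each $P_j$ is a polynomial in $T$ of degree less than $n = \dim V$, with coefficients determined by the minimal polynomial of $T$ alone, and combining this with the previous bound yields $\|P_j\|_{\mathrm{op}} \leq C_1(1 + \eps^{m_1})$ with $C_1, m_1$ depending only on $\dim V$ and the minimal polynomial. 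Since $N$ preserves each $V_j$, we have $Nv = \sum_j (N|_{V_j})(P_j v)$, and it is enough to prove $\|N|_{V_j}\|_{\mathrm{op}} \leq C_2\eps(1 + \eps^{m_2})$ on each $V_j$ endowed with the restricted inner product.

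The restriction $T|_{V_j}$ inherits the hypothesis on $V_j$ with the same $\eps$, so I may reduce to the case that $T$ has a unique eigenvalue $\lambda$ on the unit circle. Write $T = \lambda I + L$ with $L$ nilpotent of index $d \leq n$; the bound I want becomes $\|L\|_{\mathrm{op}} \leq C\eps(1+\eps^m)$, from which $\|N\|_{\mathrm{op}} \leq C'\eps(1+\eps^{m'})$ follows via the finite Taylor series $2\pi i N = \sum_{k=1}^{d-1}(-1)^{k+1}k^{-1}\lambda^{-k}L^k$. By polarization of the hypothesis, $\|T^*T - I\|_{\mathrm{op}} \leq \eps$; setting $A = \bar\lambda L$, which is still nilpotent of index $d$, the condition reads $\|A + A^* + A^*A\|_{\mathrm{op}} \leq \eps$.

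The key step is to extract $\|A\|_{\mathrm{op}} \leq C\eps(1+\eps^m)$ from this inequality. By Schur's theorem, pick an orthonormal basis in which $A$ is strictly upper triangular with entries $a_{ij}$ for $i < j$. A direct calculation shows that, for $i < j$, the $(i,j)$-entry of $A + A^* + A^*A$ equals $a_{ij} + \sum_{k<i}\overline{a_{ki}}\,a_{kj}$, and this entry has absolute value at most $\eps$ since it is an entry of an operator of norm at most $\eps$. Letting $b_i = \max_{j>i}|a_{ij}|$, we obtain the recursion $b_i \leq \eps + \sum_{k<i} b_k^2$ with $b_1 \leq \eps$, and a routine induction then gives $b_i \leq C_i\eps(1+\eps^{m_i})$ for each $i$, with constants depending only on $i \leq n$. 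Since $\|A\|_{\mathrm{op}} \leq n\cdot\max_{i,j}|a_{ij}|$, this completes the single-eigenvalue case, and combining with the first paragraph finishes the proof.

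The main obstacle is securing a genuinely linear dependence on $\eps$ rather than a weaker $\sqrt\eps$. A naive trace bound, $\|A\|_{\mathrm{HS}}^2 = \tr(A^*A) = \tr(A + A^* + A^*A) \leq n\eps$ (using $\tr A = 0$ because $A$ is nilpotent), yields only $\|A\|_{\mathrm{op}} \leq \sqrt{n\eps}$, which is insufficient. The extra leverage needed to improve this to a linear bound comes from exploiting the strictly-upper-triangular form supplied by Schur's decomposition, via the row-by-row induction on $b_i$.
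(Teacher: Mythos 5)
Your proof is correct, and it differs in architecture from the one in the paper, so a comparison is worthwhile. The paper also works in a Schur (orthonormal, triangularizing) basis, but instead of splitting off generalized eigenspaces it factors $T = U(\id + B)$ globally, with $U$ unitary diagonal and $B$ strictly upper triangular; testing the hypothesis on vectors $e_i + z e_j$ with $\abs{z}=1$ and discarding the nonnegative quadratic term gives $\abs{b_{i,j}} \leq \eps$ for \emph{every} entry at once, with no recursion. It then writes $N = P(T) \cdot \prod_{\lambda} (T - \lambda \id)$, where $P$ depends only on the minimal polynomial, expands $\prod_{\lambda}\bigl( (U - \lambda \id) + UB \bigr)$, and uses $\prod_{\lambda}(U - \lambda \id) = 0$ to see that every surviving term contains a factor $B$, which supplies the factor of $\eps$; the multi-eigenvalue structure is thus handled without spectral projections and without the logarithm series. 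Your route — numerical-radius bound $\norm{T^{\ast}T - \id} \leq \eps$, reduction to a single eigenvalue via spectral projections $P_j$ (which, as you say, are polynomials in $T$ determined by the minimal polynomial), entrywise estimates, and the finite $\log$ series — is sound, and all your constants have the allowed dependence. The one inefficiency is the row-by-row recursion $b_i \leq \eps + \sum_{k<i} b_k^2$: it is not needed, because in your single-eigenvalue setting you can test $\bigl\lvert \inner{(A + A^{\ast} + A^{\ast}A)v}{v} \bigr\rvert \leq \eps \norm{v}^2$ on $v = e_i + z e_j$ and use $\inner{A^{\ast}A v}{v} = \norm{Av}^2 \geq 0$ to get $\abs{a_{i,j}} \leq \eps$ directly, exactly as in the paper; this shortens the argument and improves the constants, though your recursion does yield bounds of the required shape $C\eps(1+\eps^m)$, so nothing is wrong as written.
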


\begin{proof}
	Set $r = \dim V$, and choose an orthonormal basis $e_1, \dotsc, e_r \in V$ such
	that the matrix representing $T$ is upper triangular. Write $T = U(\id + B)$,
	where $U$ is a diagonal matrix and $B$ is strictly upper triangular. The entries
	of $U$ are the eigenvalues of $T$, hence of absolute value $1$, and so $U$ is
	unitary. Pick any two integers $1 \leq i < j \leq r$, and let $b_{i,j} = \inner{B
	e_j}{e_i}$ denote the corresponding entry of the matrix representing $B$. By
	applying the hypothesis to vectors of the form $v = e_i + z e_j$ with $\abs{z} =
	1$, we get
	\[
		2 \Re(z b_{i,j}) = 2 \Re \biginner{B(e_i + z e_j)}{e_i + z e_j}
		\leq \eps \norm{e_i + z e_j}^2 = 2 \eps,
	\]
	and by choosing $z$ appropriately, this gives $\abs{b_{i,j}} \leq \eps$.
	Consequently, the $L^2$-norm of the operator $B \in \End(V)$ is bounded by
	$\norm{B}^2 \leq \binom{r}{2} \eps^2$, and therefore
	\[
		\norm{Bv} \leq r \eps \norm{v} \quad \text{for all $v \in V$.}
	\]
	Now it is a basic fact from linear algebra that the nilpotent operator $N = (2 \pi
	i)^{-1} \log T_u$ can be written as a polynomial in $T$, of the form
	\[
		N = P(T) \cdot \prod_{\lambda} (T - \lambda \id),
	\]
	where the product runs over the distinct eigenvalues of $T$; the polynomial $P(t) \in
	\CC[t]$ is uniquely determined by the minimal polynomial of $T$. Consequently,
	\[
		N = P(T) \cdot \prod_{\lambda} \bigl( (U - \lambda \id) + UB \bigr),
	\]
	and because the operator norm of $P(T)$ is easily bounded using the inequality
	$\norm{Tv}^2 \leq (1 + \eps) \norm{v}^2$, it suffices to estimate the
	operator norm of the product. Given our bound $r \eps$ for the operator norm of
	$B$, this comes down to 
	\[
		\prod_{\lambda} (U - \lambda \id) = 0,
	\]
	which holds because $U$ is a diagonal matrix.
\end{proof}

\newpar
The same argument also proves that the projection operators
\[
	P_{\lambda} \colon V \to E_{\lambda}(T_s)
\]
are uniformly bounded, meaning that there is a constant $C > 0$, whose value again
only depends on the integer $\dim V$ and on the minimal polynomial of $T \in \GL(V)$,
such that
\begin{equation} 
	\norm{P_{\lambda} v}_{\Phi(z)} \leq C \norm{v}_{\Phi(z)}
	\quad \text{for $v \in V$ and $\Re z \leq -1$.}
\end{equation}
The reason is that $P_{\lambda}$ can again be expressed as a certain
polynomial in $T$, whose operator norm can then be bounded using
\eqref{eq:inequality-T}. This finishes the proof of \Cref{prop:N-norm}.

\section{Period domains and period mappings}

\newpar
In this chapter, we briefly review the basic properties of period domains and period
mappings, in the setting of \emph{complex} Hodge structures. The theory is actually
simpler than in the case of rational Hodge structures \cite[\S3]{Schmid}, because the
Lie groups that are involved are just real forms of the general linear group
$\GL(V)$.

\subsection{The period domain and its compact dual}

\newpar
Fix a finite-dimensional complex vector space $V$ and a hermitian
pairing $Q \colon V \tensor_{\CC} \Vb \to \CC$. Let $D$ be the \define{period domain}
parametrizing Hodge structures of the type we are interested in: each point $o \in D$
corresponds to a Hodge structure 
\[
	V = \bigoplus_{p+q=n} V_o^{p,q}
\]
on the vector space $V$, with fixed Hodge numbers $\dim V_o^{p,q}$, and polarized by
the hermitian pairing $Q$. We assume that there is at least one such Hodge
structure; in particular, $Q$ is nondegenerate and of the correct signature. In this
setting, the real Lie group 
\[
	G = \menge{g \in \GL(V)}{\text{$Q(gv,gw) = Q(v,w)$ for all $v,w \in V$}}
\]
acts transitively on $D$, according to the rule $V_{g \cdot o}^{p,q} = g(V_o^{p,q})$.
(This is easily proved by chosing an orthonormal basis adapted to the Hodge
decomposition, and then mapping one such orthonormal basis to another.)
The inner products at the two points $o$ and $g \cdot o$ are related by the
formula
\[
	\inner{gv}{gw}_{g \cdot o} = \inner{v}{w}_o,
\]
which follows from the fact that $g \in G$. 

\newpar \label{par:glie}
The Lie algebra of $G$ is easily seen to be
\[
	\glie = \menge{A \in \End(V)}{\text{$Q(Av,w) + Q(v,Aw) = 0$ 
			for all $v,w \in V$}}.
\]
Note that $\glie \tensor_{\RR} \CC = \End(V)$, which means that $G$ is a
non-compact real form of the complex Lie group $\GL(V)$. This also gives us a real
structure on $\End(V)$; the ``complex conjugate'' of an endomorphism $A \in
\End(V)$ is $-A^{\dagger}$, where the adjoint $A^{\dagger}$ with respect to the
hermitian pairing $Q$ is defined by the formula
\[
	Q(Av,w) = Q(v,A^{\dagger} w) \quad \text{for all $v,w \in V$.}
\]
This makes sense because $Q$, being a polarization, is nondegenerate.

\newpar
The stabilizer of a point $o \in D$ is the compact subgroup
\[
	H_o = \menge{g \in G}{\text{$g(V_o^{p,q}) = V_o^{p,q}$ for all $p,q$}}
	\subseteq G,
\]
and this gives us an isomorphism $D \cong G/H_o$. We can use it to
describe the real tangent space $T_o D$ in terms of Hodge structures. The Hodge
structure on $V$ induces a Hodge structure of weight $0$ on $\End(V)$, by setting
\[
	\End(V)_o^{j,-j} = \menge{A \in \End(V)}%
		{\text{$A(V_o^{p,q}) \subseteq V_o^{p+j,q-j}$ for all $p,q$}}.
\]
It is an easy exercise to check that the decomposition
\begin{equation} \label{eq:HS-End}
	\End(V) = \bigoplus_{j \in \ZZ} \End(V)_o^{j,-j}
\end{equation}
is an $\RR$-Hodge structure (with respect to the real structure $\glie$), and that
the induced polarization on $\End(V)$ is equal to the trace pairing
\[
	(A, B) \mapsto \tr(A B^{\dagger}).
\]
Since the conjugate of $B$ is $-B^{\dagger}$, we can view the polarization on
$\End(V)$ as coming from the $\RR$-valued bilinear pairing $(A, B) \mapsto -\tr(AB)$
on $\glie$. 

\newpar
Here is the proof that the trace pairing is indeed a polarization for the induced Hodge
structure on $\End(V)$. The same argument is needed in \Cref{sec:sltwo}, and so we
give some details.

\begin{plem} \label{lem:tr-polarization}
	The pairing
	\[
		\End(V) \tensor_{\CC} \wbar{\End(V)} \to \CC, \quad
		(A, B) \mapsto \tr(A B^{\dagger}),
	\]
	polarizes the Hodge structure on $\End(V)$.
\end{plem}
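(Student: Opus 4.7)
The plan is to verify the three defining properties of a polarization on the weight-zero Hodge structure \eqref{eq:HS-End}: Hermitian symmetry, orthogonality of the Hodge decomposition, and definiteness with the correct sign on each summand. Hermitian symmetry is essentially formal (cyclicity of the trace together with $(A^{\dagger})^{\dagger} = A$); the content lies in the other two, which I would extract from the way the $Q$-adjoint interacts with the Hodge decomposition.

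The first key observation is that the $Q$-adjoint respects bidegrees: if $A \in \End(V)^{j, -j}$, then $A^{\dagger} \in \End(V)^{-j, j}$. This falls out of $Q(Av, w) = Q(v, A^{\dagger} w)$ together with the $Q$-orthogonality of the Hodge decomposition of $V$ (both sides of the identity vanish unless the bidegrees match up in the prescribed way). Orthogonality of the Hodge decomposition of $\End(V)$ with respect to $(A, B) \mapsto \tr(A B^{\dagger})$ is then immediate: for $A \in \End(V)^{j, -j}$ and $B \in \End(V)^{j', -j'}$ with $j \neq j'$, the composition $A B^{\dagger}$ strictly shifts each $V^{p, q}$ to a different Hodge summand, so its diagonal part vanishes in any basis adapted to the decomposition, and hence $\tr(A B^{\dagger}) = 0$.

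For definiteness, the cleanest route is to compare the $Q$-adjoint $A^{\dagger}$ with the adjoint $A^{\ast}$ taken with respect to the Hodge metric $h$ on $V$. Working on each $V^{p, q}$, where $h$ agrees with $(-1)^q Q$, a short sign computation using $Q(Av, w) = Q(v, A^{\dagger} w)$ yields $A^{\ast} = (-1)^j A^{\dagger}$ for $A \in \End(V)^{j, -j}$. Since $(A, B) \mapsto \tr(A B^{\ast})$ is the Hilbert--Schmidt inner product on $\End(V)$ induced by $h$, it is positive definite, and so
\[
  (-1)^{-j} \tr(A A^{\dagger}) = \tr(A A^{\ast}) > 0
\]
for every nonzero $A \in \End(V)^{j, -j}$. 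This is exactly the positivity required for $\tr(A B^{\dagger})$ to polarize the Hodge structure of weight $0$ on the summand of bidegree $(p, q) = (j, -j)$.

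There is no genuine obstacle in this argument -- it is essentially a sign-tracking exercise, and the only step that needs care is pinning down the factor $(-1)^j$ that relates $A^{\ast}$ and $A^{\dagger}$ on $\End(V)^{j, -j}$; once that is in hand, both orthogonality and positivity fall out by a single trace computation.
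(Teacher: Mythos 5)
Your proposal is correct and follows essentially the same route as the paper: orthogonality via the fact that $A B^{\dagger}$ strictly shifts the bidegree when $A$ and $B$ lie in different summands, and positivity via the identification $A^{\ast} = (-1)^j A^{\dagger}$ of the $h$-adjoint with a signed $Q$-adjoint on $\End(V)^{j,-j}$, which is exactly the paper's key computation. The only cosmetic difference is that you phrase the positivity through the Hilbert--Schmidt inner product induced by $h$, while the paper argues that $(-1)^j A^{\dagger} A$ is a positive self-adjoint operator; these are the same statement.
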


\begin{proof}
	It is easy to see that $\tr A^{\dagger} = \wbar{\tr A}$ for every $A \in \End(V)$.
	Therefore
	\[
		\tr(B A^{\dagger}) = \tr \bigl( (A B^{\dagger})^{\dagger} \bigr)
		= \wbar{\tr(A B^{\dagger})},
	\]
	and so the pairing is hermitian symmetric. The Hodge decomposition is 
	orthogonal with respect to the trace pairing. Indeed, if $A \in \End(V)_o^{i,-i}$
	and $B \in \End(V)_o^{j,-j}$, with $i \neq j$, then $A B^{\dagger} \in
	\End(V)_o^{i-j,j-i}$ is a nilpotent endomorphism, and so $\tr(A B^{\dagger}) = 0$. 

	Finally, we need to explain why $(-1)^j \tr(A A^{\dagger}) > 0$ if $A \in
	\End(V)_o^{j,-j}$ is nonzero. The point is that $(-1)^j A^{\dagger}$
	is exactly the adjoint of $A$ with respect to the inner product on $V$.
	Indeed, for $v \in V^{p,q}$ and $w \in V^{p+j,q-j}$, we have
	\[
		\inner{Av}{w}_o = (-1)^{q-j} Q(Av,w) = (-1)^{q-j} Q(v, A^{\dagger} w)
		= (-1)^j \inner{v}{A^{\dagger} w}_o.
	\]
	Consequently, the endomorphism $(-1)^j A^{\dagger} A$ is self-adjoint with respect to
	the inner product, and also positive definite, because
	\[
		\bigl\langle (-1)^j A^{\dagger} A v, v \bigr\rangle_o
		= \inner{Av}{Av}_o = \norm{Av}_o^2.
	\]
	This clearly implies that $(-1)^j \tr(A A^{\dagger}) > 0$. In fact, by choosing an
	orthonormal basis in $V$, one can easily see that this expression is nothing but the
	operator norm of $A$ with respect to the inner product.
\end{proof}

\newpar
Using the notation from \eqref{eq:HS-End}, the Lie algebra of the stabilizer subgroup
$H_o$ is exactly
the intersection $\glie_o^{0,0} = \glie \cap \End(V)_o^{0,0}$, and so we conclude that
the real tangent space to the period domain at a point $o \in D$ is
\begin{equation} \label{eq:D-tangent-space}
	T_o D \cong \glie / \glie_o^{0,0}.
\end{equation}
From the polarized Hodge structure on $\End(V)$, we get a positive-definite inner
product on $\glie$. It induces an inner product on the quotient
$\glie/\glie_o^{0,0}$, hence on each tangent space $T_o D$. More precisely, define
\[
	\mlie_o = \glie \cap \bigoplus_{k \neq 0} \End(V)_o^{k,-k},
\]
so that $\glie = \mlie_o \oplus \glie_o^{0,0}$. Note that $\mlie_o$ is just a linear
subspace of $\glie$, not a Lie subalgebra. Then $T_o D \cong \mlie_o$,
and the inner product on $T_o D$ is simply the inner product on $\mlie_o$ coming 
from the Hodge structure on $\End(V)$. In this way, we obtain a Riemannian metric on
the period domain $D$; one shows without trouble that this metric is
$G$-invariant. We denote by 
\begin{equation} \label{eq:D-distance}
	d_D \colon D \times D \to [0, +\infty]
\end{equation}
the resulting $G$-invariant distance function on the period domain. (The value
$+\infty$ is included because $D$ does not have to be connected.)

\begin{pexa} \label{ex:period-domain}
	Consider Hodge structures of the form
	\[
		V = V^{1,0} \oplus V^{0,1}
	\]
	on the vector space $V = \CC^2$ that are polarized by the indefinite hermitian pairing 
	\[
		Q = \begin{pmatrix}
			1 & 0 \\
			0 & -1 
		\end{pmatrix}.
	\]
	Writing $V^{1,0} = \CC(1,t)$, we must have $1 - \abs{t}^2 > 0$, hence $t
	\in \Delta$. The period domain is therefore the unit disk $\Delta$; the Hodge
	structure corresponding to the point $t \in \Delta$ is
	\[
		\CC^2 = \CC(1,t) \oplus \CC(\tb, 1).
	\]
	The $G$-invariant Riemannian metric on the period domain is the usual Poincar\'e
	metric on $\Delta$. Indeed, $G$ is just the automorphism group of $\Delta$, and so
	it is enough to compute the metric on the tangent space $T_0 \Delta$. In the Hodge
	structure at the point $0 \in \Delta$, the two standard basis vectors form an
	orthonormal basis. A short computation shows that
	\[
		\mlie_0 = \MENGE{\begin{pmatrix} 0 & \bar{w} \\ w & 0 \end{pmatrix}}{w \in \CC},
	\]
	and that the $\RR$-linear mapping $\mlie_0 \to T_0 \Delta$ takes the matrix
	$\begin{pmatrix} 0 & \bar{w} \\ w & 0 \end{pmatrix}$ to the complex number $w$.
	The inner product on the $\RR$-vector space $\CC = T_0 \Delta$
	is therefore just the usual Euclidean inner product.
\end{pexa}

\newpar
If two points $o,p \in D$ are close to each other, then of course the corresponding
inner products $\inner{v}{w}_o$ and $\inner{v}{w}_p$ must also be close. The
following lemma makes this precise.

\begin{plem} \label{lem:distance-inner-products}
	There are two constants $\delta > 0$ and $C > 0$, such that if $o,p \in D$ are two
	points with $d_D(p,o) < \delta$, then one has
	\[
		\bigl\lvert \inner{v}{w}_p - \inner{v}{w}_o \bigr\rvert
		\leq C \norm{v}_o \norm{w}_o \cdot d_D(p,o)
	\]
	for every $v,w \in V$.
\end{plem}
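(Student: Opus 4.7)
By the $G$-invariance of $d_D$ and of the assignment $o \mapsto \inner{\cdot}{\cdot}_o$, any constants $\delta, C$ that work near a single base point automatically work near every base point. I therefore fix $o \in D$ and study the situation locally using the orbit map $G \to D$, $g \mapsto g \cdot o$.

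From \eqref{eq:D-tangent-space}, the real tangent space $T_o D$ is canonically identified with $\mlie_o \subseteq \glie$, and by construction the Riemannian metric on $T_o D$ is exactly the inner product on $\mlie_o$ inherited from the Hodge structure on $\End(V)$. Consequently, the exponential-orbit map
\[
\Phi_o \colon \mlie_o \to D, \qquad X \mapsto e^X \cdot o,
\]
has differential at $0$ equal to this canonical isometry, hence is a local diffeomorphism near the origin. I can therefore choose $\delta_0 > 0$ such that for every $X \in \mlie_o$ with $\norm{X}_o < \delta_0$,
\[
\tfrac{1}{2} \norm{X}_o \leq d_D(e^X \cdot o, o) \leq 2 \norm{X}_o.
\]
Set $\delta := \delta_0/2$. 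Given any $p$ with $d_D(p,o) < \delta$, write $p = e^X \cdot o$ with $X \in \mlie_o$ and $\norm{X}_o \leq 2 d_D(p,o)$. Since $e^X \in G$, the transformation rule $\inner{gv}{gw}_{g \cdot o} = \inner{v}{w}_o$ yields
\[
\inner{v}{w}_p = \inner{e^{-X} v}{e^{-X} w}_o,
\]
and hence
\[
\inner{v}{w}_p - \inner{v}{w}_o = \inner{(e^{-X} - \id) v}{e^{-X} w}_o + \inner{v}{(e^{-X} - \id) w}_o.
\]

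The remaining step is purely linear algebra. Write $\norm{A}_{\mathrm{op},o}$ for the operator norm of $A \in \End(V)$ with respect to the inner product $\inner{\cdot}{\cdot}_o$. As observed in the proof of \Cref{lem:tr-polarization}, the norm on $\End(V)$ coming from the trace polarization is the Hilbert--Schmidt norm built from $\inner{\cdot}{\cdot}_o$, so in particular $\norm{X}_{\mathrm{op},o} \leq \norm{X}_o$. The power series for $e^{-X}$ then gives
\[
\norm{e^{-X} - \id}_{\mathrm{op},o} \leq \norm{X}_o \cdot e^{\norm{X}_o}, \qquad \norm{e^{-X}}_{\mathrm{op},o} \leq e^{\norm{X}_o},
\]
so after shrinking $\delta_0$ to ensure $\norm{X}_o \leq 1$, applying Cauchy--Schwarz to the two terms above produces an absolute constant $C > 0$ with
\[
\bigabs{\inner{v}{w}_p - \inner{v}{w}_o} \leq C \norm{X}_o \norm{v}_o \norm{w}_o \leq 2C \, d_D(p,o) \norm{v}_o \norm{w}_o.
\]

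\textbf{Main obstacle.} There is no real obstacle; the only content is matching the Riemannian metric on $T_o D$ with the Hilbert--Schmidt norm on $\mlie_o$ (already built into the definitions) and invoking the standard inequality $\norm{\cdot}_{\mathrm{op}} \leq \norm{\cdot}_{\mathrm{HS}}$. The rest is a one-line exponential estimate.
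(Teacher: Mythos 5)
Your proof is correct and follows essentially the same route as the paper: reduce to a fixed base point by $G$-invariance, write $p = e^X \cdot o$ with $X \in \mlie_o$ and $\norm{X}_o \leq 2\,d_D(p,o)$ via the exponential chart, and then bound $\inner{e^{-X}v}{e^{-X}w}_o - \inner{v}{w}_o$ linearly in $\norm{X}_o$. The only difference is that you make the final constant explicit through the operator-norm/Hilbert--Schmidt estimate, where the paper simply cites the existence of such a constant $C'$.
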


\begin{proof}
	Because of the $G$-invariance of the metric on $D$, it suffices to prove this
	when $o \in D$ is a fixed choice of base point. The exponential mapping
	\[
		\mlie_o \to D, \quad A \mapsto e^A \cdot o,
	\]
	is a real-analytic local diffeomorphism. It follows that there is a constant
	$\delta > 0$ such that every point $p \in D$ with $d_D(p,o) < \delta$ can be
	uniquely written in the form $p = e^A \cdot o$, where $A \in \mlie_o$ satisfies
	$\norm{A}_o \leq 2 d_D(p,o)$. Clearly,
	\[
		\bigl\lvert \inner{v}{w}_p - \inner{v}{w}_o \bigr\rvert
		= \bigl\lvert \inner{e^{-A} v}{e^{-A} w}_o - \inner{v}{w}_o \bigr\rvert
		\leq C' \norm{v}_o \norm{w}_o \cdot \norm{A}_o
	\]
	for some constant $C' > 0$ that is independent of $v,w \in V$. This gives the
	desired result (with $C = 2C'$).
\end{proof}

\newpar
The period domain $D$ is an open subset of the \define{compact dual} $\Dch$, which is
a closed algebraic subvariety of a product of Grassmannians, and therefore a
projective complex manifold. The points of $\Dch$ parametrize all decreasing
filtrations $F = F^{\bullet} V$ on the vector space $V$, subject only to the
constraint that 
\[
	\dim F^p = \dim V^{p,q} + \dim V^{p+1,q-1} + \dim V^{p+2,q-2} + \dotsb \quad
	\text{for all $p \in \ZZ$.}
\]
Since we are working with complex Hodge structures, there are no further
restrictions on $F$, unlike in Schmid's paper \cite[\S3]{Schmid}. The complex
Lie group $\GL(V)$ acts transitively on $\Dch$, by the rule $(gF)^p = g(F^p)$. The
holomorphic tangent space at a point $F \in \Dch$ is therefore 
\begin{equation} \label{eq:T-Dch}
	T_F^{1,0} \Dch \cong \End(V) / F^0 \End(V),
\end{equation}
where we are using the notation
\[
	F^0 \End(V) = \menge{A \in \End(V)}%
		{\text{$A(F^p) \subseteq F^p$ for all $p \in \ZZ$}}.
\]
If we go to a point $o \in D$, and denote by $F_o$ the Hodge filtration in the Hodge
structure on $V$, the natural isomorphism of $\RR$-vector spaces
\[
	T_o D \cong T_{F_o}^{1,0} \Dch
\]
becomes, in terms of Hodge structures, the isomorphism
\begin{equation} \label{eq:iso-tangent-spaces}
	\mlie_o \cong \glie / \glie_o^{0,0} \cong \End(V) / F_o^0 \End(V),
\end{equation}
which is of course valid in any $\RR$-Hodge structure of weight $0$. Via the
embedding $D \subseteq \Dch$, the period domain $D$ inherits the structure of a
complex manifold.

\newpar
We can also define a hermitian metric on the compact dual $\Dch$, which is
however not $\GL(V)$-invariant. For that, choose a base point $o \in D$, and
denote by $\inner{v}{w} = \inner{v}{w}_o$ the resulting inner product on the vector space
$V$. It induces an inner product on $\End(V)$, and therefore on each holomorphic
tangent space
\[
	T_F^{1,0} \Dch \cong \End(V) / F^0 \End(V).
\]
This gives us a hermitian metric on the compact dual $\Dch$. Note that, unlike the
$G$-invariant metric on the period domain $D$, this metric is \emph{not} invariant
under the full symmetry group $\GL(V)$, only under the (much smaller) unitary group
$U_o \subseteq \GL(V)$ of the reference inner product $\inner{v}{w}$. The resulting
distance function is
\[
	\dDch \colon \Dch \times \Dch \to [0, +\infty);
\]
note that $\Dch$ has finite diameter, because it is compact and connected. It is
easy to see that the unitary group $U_o$ acts transitively on $\Dch$: choose a
basis adapted to a given filtration, and convert it into an orthonormal basis adapted
to the filtration with the help of the Gram-Schmid process.  

\newpar \label{par:local-coordinates-Dch}
We can use the exponential mapping to get local coordinates on $\Dch$. 

\begin{plem} \label{lem:local-coordinates-Dch}
	There is a constant $\delta > 0$ such that for every point $p \in \Dch$, the set
	\[
		B_{\delta}(p) = \menge{x \in \Dch}{\dDch(p, x) < \delta}
	\]
	is biholomorphic to an open set in Euclidean space, and the function
	$\dDch(p, \argbl)$ is comparable, on $B_{\delta}(p)$, to the Euclidean distance, up
	to a factor of $2$.  
\end{plem}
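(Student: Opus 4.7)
The plan is to construct local coordinates at a single base point using the Hodge decomposition of $\End(V)$, and then propagate them to every $p \in \Dch$ using the transitive action of the unitary group $U_o$.

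First, fix the base Hodge structure $o \in D$ used to define the reference inner product. The Hodge decomposition of $\End(V)$ gives the direct sum
\[
  \End(V) = F_o^0 \End(V) \oplus \qlie_o, \qquad
  \qlie_o = \bigoplus_{k<0} \End(V)_o^{k,-k},
\]
so that $\qlie_o$ is identified with the holomorphic tangent space $T_{F_o}^{1,0} \Dch$ via \eqref{eq:T-Dch}. Because the trace pairing makes the Hodge decomposition orthogonal (\Cref{lem:tr-polarization}), this identification is an isometry for the reference hermitian metric on $\Dch$. I would then consider the holomorphic map
\[
  \exp_{F_o} \colon \qlie_o \to \Dch, \qquad A \mapsto e^A \cdot F_o,
\]
whose differential at $0 \in \qlie_o$ is the identity on $T_{F_o}^{1,0} \Dch$. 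By the inverse function theorem together with continuity of the metric, there is an $\eta > 0$ such that $\exp_{F_o}$ is a biholomorphism from $\{A \in \qlie_o : \norm{A}_o < \eta\}$ onto an open neighborhood of $F_o$, and on that neighborhood the Riemannian distance $\dDch(F_o, e^A \cdot F_o)$ differs from the Euclidean norm $\norm{A}_o$ by a multiplicative factor of at most $2$. A suitable $\delta > 0$ is then any value small enough that $B_\delta(F_o)$ lies in the image, which proves the lemma at the base point.

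For a general $p \in \Dch$, the unitary group $U_o$ acts transitively on $\Dch$ and preserves the hermitian metric, as observed just before the statement of the lemma. Choosing any $g \in U_o$ with $g \cdot F_o = p$, the left-multiplication map $x \mapsto g \cdot x$ is a biholomorphic isometry of $\Dch$ that carries $B_\delta(F_o)$ onto $B_\delta(p)$. This transfers both the Euclidean coordinate chart and the comparison of distances, and because $g$ is an isometry, the \emph{same} value of $\delta$ works uniformly over all $p \in \Dch$.

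The main obstacle is the second step: extracting, from the infinitesimal isometry property of $\exp_{F_o}$ at $0$, a concrete ball on which the distance and the coordinate norm agree up to a factor of exactly $2$. This is a routine Riemannian-geometry argument — expanding the metric tensor in normal coordinates around $F_o$ and shrinking the radius accordingly — but it is the only place where one must do analysis, rather than simply invoke the symmetry provided by $U_o$.
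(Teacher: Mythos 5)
Your proof is correct and follows essentially the same route as the paper: reduce to the base point via the transitive, $\dDch$-isometric action of $U_o$, use the orthogonal complement $\bigoplus_{j<0} \End(V)_o^{j,-j}$ of $F_o^0 \End(V)$ and the exponential map $A \mapsto e^A \cdot o$ as a local biholomorphism, and shrink the radius by continuity of the metric to get the factor-$2$ comparison.
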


\begin{proof}
	Fix a base point $o \in D$. Since $\Dch$ is a homogeneous space for the unitary
	group $U_o \subseteq \GL(V)$, and since the distance function $\dDch$ is
	$U_o$-invariant, it suffices to prove the statement when $p = o$. From the induced
	Hodge structure on $\End(V)$, we have a direct sum decomposition $\End(V) = W
	\oplus F_o^0 \End(V)$, where 
	\[
		W = \bigoplus_{j < 0} \End(V)_o^{j,-j},
	\]
	By general theory, the exponential mapping 
	\[
		W \to \Dch, \quad A \mapsto e^A \cdot o,
	\]
	is a local diffeomorphism; it is also holomorphic, and therefore a local isomorphism
	of complex manifolds. It follows that there is a constant $\delta > 0$ such that
	the exponential mapping induces an isomorphism between $B_{\delta}(o) \subseteq
	\Dch$ and an open neighborhood of the origin in the vector space $W$.
	Since the distance function $\dDch$ is continuous, we can arrange moreover that
	\[
		\frac{1}{2} \norm{A}_o \leq \dDch \bigl( e^A \cdot o, o \bigr) \leq 2 \norm{A}_o
	\]
	on the open ball in question. 
\end{proof}

\newpar \label{par:Adg}
We record one additional fact about the distance function $\dDch$ that we are going to
need towards the end of the proof of the nilpotent orbit theorem (in
\Cref{lem:D-criterion}). Fix an operator $g \in \GL(V)$, and consider the mapping
\[
	g \colon \Dch \to \Dch, \quad F \mapsto gF.
\]
What is the effect of this on tangent vectors? Let $\Ad g \colon \End(V) \to
\End(V)$ be the linear mapping $A \mapsto g A g^{-1}$.  Then one checks that the diagram 
\[
	\begin{tikzcd}
		\End(V) \dar \rar{\Ad g} & \End(V) \dar \\
		T_F^{1,0} \Dch \rar{dg} & T_{gF}^{1,0} \Dch
	\end{tikzcd}
\]
is commutative. In particular, left translation by $g$ changes the length of
holomorphic tangent vectors, computed using the hermitian metric on $\Dch$, at most
by the operator norm of $\Ad g$, relative to our fixed inner product on $\End(V)$.
After integrating this, we arrive at the useful formula
\begin{equation} \label{eq:Dch-translation}
	\dDch(g \cdot p, g \cdot q) \leq \max_{\norm{A} = 1} \norm{(\Ad g) A} \cdot \dDch(p, q),
\end{equation}
valid for any two points $p,q \in \Dch$.

\subsection{The period mapping}

\newpar
We return to our variation of Hodge structure $E$ on the punctured disk. At each
point $z \in \HH$, we have a polarized Hodge structure on $V$, whose Hodge filtration
we denoted by the symbol $\Phi(z)$. This gives us the \define{period mapping}
\[
	\Phi \colon \HH \to D;
\]
we shall argue in a moment that it is holomorphic (with respect to the complex
structure on the period domain induced by $D \subseteq \Dch$.) The monodromy
transformation satisfies $T \in G$, hence also $T_s \in G$ and $2 \pi i \,
N \in \glie$, and we have
\[
	\Phi(z + 2 \pi i) = T \cdot \Phi(z).
\]
From now on, we use the subscript $\Phi(z)$ to refer to the Hodge
structure at the point $z \in \HH$, to avoid confusion. So the Hodge decomposition is
\[
	V = \bigoplus_{p+q=n} V_{\Phi(z)}^{p,q}, 
\]
the Hodge norm is $\norm{v}_{\Phi(z)}$, and so on.

\begin{plem} \label{lem:dPhi}
	The derivative of the period mapping is given by
	\[
		d \Phi \big\vert_z = (\theta + \thetast) \big\vert_z + \glie_{\Phi(z)}^{0,0},
	\]
	where we consider both sides as $\RR$-linear mappings from $T_z \HH$ to
	the quotient $\glie/\glie_{\Phi(z)}^{0,0}$.
\end{plem}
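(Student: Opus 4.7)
The plan is to compute the holomorphic derivative $d\Phi(\partial/\partial z)$ viewed as an element of $T^{1,0}_{\Phi(z)} \Dch \cong \End(V)/F^0 \End(V)_{\Phi(z)}$ via \eqref{eq:T-Dch}, show directly that it agrees with $\theta_{\partial/\partial z}$ modulo $F^0$, and then use the $\RR$-linear isomorphism \eqref{eq:iso-tangent-spaces} to lift this to a statement in $\glie/\glie^{0,0}_{\Phi(z)}$. Holomorphicity of $\Phi$, combined with the $(1,0)$-bidegree of $\theta$ and the $(0,1)$-bidegree of $\thetast$, then extends the formula from $\partial/\partial z$ to arbitrary real tangent vectors.

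First, I would fix $z_0 \in \HH$, set $t_0 = e^{z_0}$, and choose a local holomorphic frame $s_1, \dotsc, s_r$ of $\shE$ in a neighborhood of $t_0$, adapted to the Hodge filtration so that $s_1, \dotsc, s_{r_p}$ span $F^p \shE$ for $r_p = \dim F^p_{\Phi(z_0)} V$, and arranged so that each $s_i(t_0)$ is of pure Hodge bidegree $(p_i, q_i)$. Next, I pick the multi-valued flat frame $v_1, \dotsc, v_r \in V$ with $\phi_{z_0}(v_i) = s_i(t_0)$. Pulling back to $\HH$ via $\exp$, the flat sections $v_j(z)$ form a $d''$-holomorphic frame of $\exp^{\ast} \shE$, so the expansion $s_i(z) = \sum_j g_{ij}(z) v_j(z)$ has holomorphic coefficients with $g_{ij}(z_0) = \delta_{ij}$. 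The adaptation of $\{s_i\}$ to $F^{\bullet} \shE$ then gives $\Phi(z) = g(z) \cdot \Phi(z_0)$ in $\Dch$ for $g(z) = (g_{ij}(z))$, so that $d\Phi|_{z_0}(\partial/\partial z) = g'(z_0) \mod F^0 \End(V)_{\Phi(z_0)}$.

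The central step identifies $g'(z_0)$ with $\theta_{\partial/\partial z}|_{z_0}$ (transported to $\End(V)$ via $\phi_{z_0}$) in that quotient. Differentiating $s_i = \sum_j g_{ij} v_j$ using $dv_j = 0$ gives $(ds_i)(\partial/\partial z) = \sum_j g'_{ij}(z) v_j(z)$, while holomorphicity $d''s_i = 0$ rewrites the left-hand side as $(d' s_i)(\partial/\partial z) = \partial_{\partial/\partial z} s_i + \theta_{\partial/\partial z} s_i$ via the decomposition in \Cref{def:VHS}. Because $s_i \in F^{p_i} \shE$ in a neighborhood of $t_0$, the component $s_i^{p,q}$ vanishes identically near $t_0$ for $p < p_i$; since $\partial$ preserves Hodge bidegree, this forces $\partial_{\partial/\partial z} s_i|_{z_0} \in F^{p_i}$. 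Meanwhile, the pure bidegree of $s_i(z_0) = v_i(z_0)$ yields $\theta_{\partial/\partial z} s_i|_{z_0} = \theta_{\partial/\partial z} v_i(z_0) \in V^{p_i-1,q_i+1}_{\Phi(z_0)}$. Combining gives $g'(z_0) \equiv \theta_{\partial/\partial z}|_{z_0}$ in $\End(V)/F^0 \End(V)_{\Phi(z_0)}$.

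Finally, the flatness of $Q$ produces (by the argument of \Cref{lem:VHS-adjoints}) the identity $\thetast_{\partial/\partial \zb} = -\theta^{\dagger}_{\partial/\partial z}$ for the $Q$-adjoint $\dagger$ of \parref{par:glie}; hence $\theta_{\partial/\partial z} + \thetast_{\partial/\partial \zb}$ lies in $\glie$, has trivial $(0,0)$-component (so lies in $\mlie_{\Phi(z_0)}$), and maps to $\theta_{\partial/\partial z}$ in $\End(V)/F^0$, matching the calculation above. For an arbitrary real $\xi \in T_z \HH$ with $\xi = \xi^{1,0} + \xi^{0,1}$, the bidegrees of $\theta$ and $\thetast$ give $\theta_\xi = \theta_{\xi^{1,0}}$ and $\thetast_\xi = \thetast_{\xi^{0,1}}$, and the full $\RR$-linear formula $d\Phi|_z = (\theta + \thetast)|_z + \glie^{0,0}_{\Phi(z)}$ follows from holomorphicity of $\Phi$. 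I do not foresee a genuine obstacle; the main difficulty is purely organizational, namely the simultaneous juggling of the flat trivialization $\phi_z$, the local holomorphic frame $\{s_i\}$ adapted to $F^{\bullet} \shE$, and the Hodge decomposition at the reference point.
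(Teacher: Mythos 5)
Your proposal is correct, but it runs in the opposite direction from the paper's proof. The paper chooses a \emph{smooth orthonormal frame} $e_1,\dotsc,e_r$ adapted to the Hodge decomposition (with $\inner{e_i}{e_j}_{\Phi(z)}=\delta_{i,j}$ and each $e_i$ of pure type), so that the resulting lift $g\colon\HH\to\GL(V)$ with $e_j=g\cdot v_j$ automatically lands in the real group $G$; differentiating $e_j=g\,v_j$ with $d=\partial+\dbar+\theta+\thetast$ and discarding the bidegree-preserving part $\partial+\dbar$ gives the $\RR$-linear formula $dg\equiv\theta+\thetast \bmod \glie^{0,0}_{\Phi(z_0)}$ directly, with holomorphicity of $\Phi$ and horizontality read off afterwards (this is how the paper gets \Cref{cor:Phi-holomorphic}). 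You instead take a \emph{holomorphic frame} of $\shE$ adapted to the filtration $F^{\bullet}\shE$ with pure-type values at $t_0$, obtain a holomorphic lift into $\GL(V)$, compute the holomorphic differential as $\theta_{\partial/\partial z}\bmod F^0\End(V)_{\Phi(z)}$ (your argument that $\partial_{\partial/\partial z}s_i|_{z_0}\in F^{p_i}$ and hence $g'(z_0)-\theta\in F^0\End(V)$ is sound, since the $v_i$ with $p_i\geq p$ span $F^p_{\Phi(z_0)}$), and then transfer back to the real statement via $\thetast_{\partial/\partial\zb}=-\theta^{\dagger}_{\partial/\partial z}$ and the identification \eqref{eq:iso-tangent-spaces}. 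Your route in effect proves \Cref{cor:Phi-holomorphic} first (holomorphicity is built in through the holomorphic $g_{ij}$, so there is no circularity with the paper's deduction of that corollary from this lemma) and obtains the real formula as a consequence; the cost is the extra bookkeeping you acknowledge -- the adapted holomorphic frame, the $h$-adjoint versus $Q$-adjoint sign ($\theta$ has type $(-1,1)$, so the $h$-adjoint is $-\theta^{\dagger}$, which is what makes $\theta+\thetast$ land in $\glie\cap\bigoplus_{j\neq 0}\End(V)^{j,-j}=\mlie_{\Phi(z)}$), and the compatibility of $T_{\Phi(z)}D\cong\glie/\glie^{0,0}_{\Phi(z)}$ with $T^{1,0}_{\Phi(z)}\Dch\cong\End(V)/F^0\End(V)_{\Phi(z)}$, which you correctly delegate to \eqref{eq:iso-tangent-spaces}. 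The paper's unitary-frame argument is the more economical way to the stated $\RR$-linear identity; yours packages the holomorphic information up front, which some readers may find more natural.
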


\begin{proof}
	Fix a point $z_0 \in \HH$. Choose an orthonormal frame for the bundle $E$,
	consisting of smooth sections $e_1, \dotsc, e_r \in A^0(\HH, E)$ with the property that
	\[
		\inner{e_i}{e_j}_{\Phi(z)} = \delta_{i,j} \quad \text{for all $z \in \HH$,}
	\]
	and such that $e_i \in A^0(\HH, E^{p_i,q_i})$ for certain $p_i,q_i \in \ZZ$. Denote by
	$v_1, \dotsc, v_r \in V$ the unique multi-valued flat sections such that $v_i(z_0)
	= e_i(z_0)$. We can define a smooth function $g \colon \HH \to \GL(V)$ by the
	condition that
	\[
		e_j(z) = g(z) \cdot v_j \quad \text{for $j=1, \dotsc, r$ and $z \in \HH$}.
	\]
	Clearly, $g(z_0) = \id$, and since
	\[
		(-1)^{q_i} Q \Bigl( g(z) v_i, g(z) v_j \Bigr) =
		\inner{e_i}{e_j}_{\Phi(z)} = \inner{v_i}{v_j}_{\Phi(z_0)} = (-1)^{q_i} Q(v_i,v_j),
	\]
	we have $g(z) \in G$ for all $z \in \HH$. The function $g \colon \HH \to
	G$ is a smooth lifting of the period mapping, which can now be described very
	concretely as
	\[
		\Phi(z) = g(z) \cdot o \quad \text{for $z \in \HH$.}
	\]
	The derivative $d\Phi$ is easily computed from this formula. Writing
	\[
		e_j = \sum_{i=1}^r g_{i,j} v_i
	\]
	with smooth functions $g_{i,j} \colon \HH \to \CC$, our matrix-valued function $g
	\colon \HH \to G$ is represented by the $r \times r$-matrix with entries
	$g_{i,j}$. At the point $z_0$, the derivative
	\[
		dg \big\vert_{z_0} \colon T_{z_0} \HH \to \glie
	\]
	is a lifting for the derivative of the period mapping
	\[
		d\Phi \big\vert_{z_0} \colon T_{z_0} \HH \to T_{\Phi(z_0)} D.
	\]
	It is represented by the $r \times r$-matrix with entries $d g_{i,j}$. Recall that
	the flat connection on $E$ is $d = \partial + \dbar + \theta + \thetast$.
	Since $v_1, \dotsc, v_r$ are flat sections, we get
	\[
		(\partial + \dbar + \theta + \thetast) e_j = d e_j 
		= \sum_{i=1}^r d g_{i,j} \tensor v_i.
	\]
	Now the operator $\partial + \dbar$ preserves the subbundle $E^{p,q}$, and so this
	term disappears when we project into $\glie/\glie_{\Phi(z_0)}^{0,0}$. The result
	is that 
	\[
		dg \big\vert_{z_0} \equiv (\theta + \thetast) \big\vert_{z_0} 
			\mod \glie_{\Phi(z_0)}^{0,0},
	\]
	and so the right-hand side indeed represents the derivative of the period mapping
	at the point $z_0 \in \HH$. Note that
	\[
		\theta \big\vert_{z_0} \in \End(V)_{\Phi(z_0)}^{-1,1},
		\quad \text{and} \quad
		\thetast \big\vert_{z_0} \in \End(V)_{\Phi(z_0)}^{1,-1},
	\]
	which is the horizontality condition for the period mapping.
\end{proof}

\newpar
We can now deduce quite easily that the period mapping is holomorphic.

\begin{pcor} \label{cor:Phi-holomorphic}
	The period mapping $\Phi \colon \HH \to D$ is holomorphic.
\end{pcor}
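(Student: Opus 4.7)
The plan is to deduce holomorphy from the derivative formula in \Cref{lem:dPhi} by checking the Cauchy-Riemann condition against the complex structure on $\Dch$. Recall from \eqref{eq:T-Dch} that the holomorphic tangent space at $F \in \Dch$ is $T_F^{1,0}\Dch \cong \End(V)/F^0 \End(V)$, and from \eqref{eq:iso-tangent-spaces} that the $\RR$-linear isomorphism $\mlie_{\Phi(z)} \cong \glie/\glie_{\Phi(z)}^{0,0} \cong \End(V)/F_{\Phi(z)}^0 \End(V)$ is precisely the one used to identify $T_{\Phi(z)}D$ with the holomorphic tangent space. So to prove holomorphy it suffices, after composing with the open embedding $D \subseteq \Dch$, to show that the $\CC$-linear extension of $d\Phi|_z$ annihilates the antiholomorphic vector $\partial/\partial \zb$ modulo $F_{\Phi(z)}^0 \End(V)$.

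First, I would extend the $\RR$-linear formula $d\Phi|_z = (\theta + \thetast)|_z \bmod \glie_{\Phi(z)}^{0,0}$ from \Cref{lem:dPhi} to a $\CC$-linear statement on the complexified tangent space $T_z \HH \tensor_{\RR} \CC$, using the same smooth lifting $g \colon \HH \to G$ constructed in the proof of that lemma. Then I would evaluate on $\partial/\partial \zb$: the Higgs field $\theta$ is a $(1,0)$-form with respect to $z$, so $\theta_{\partial/\partial \zb} = 0$, while $\thetast$ is a $(0,1)$-form, so $d\Phi|_z(\partial/\partial \zb) \equiv \thetast_{\partial/\partial \zb}|_z$ modulo $\glie_{\Phi(z)}^{0,0}$.

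The crucial observation is the type of $\thetast_{\partial/\partial \zb}$: as noted at the end of the proof of \Cref{lem:dPhi}, it lies in $\End(V)_{\Phi(z)}^{1,-1}$, which is a summand of $F_{\Phi(z)}^0 \End(V) = \bigoplus_{j \geq 0} \End(V)_{\Phi(z)}^{j,-j}$. Hence $\thetast_{\partial/\partial \zb}|_z$ vanishes in the quotient $\End(V)/F_{\Phi(z)}^0 \End(V)$, which is exactly $T_{\Phi(z)}^{1,0}\Dch$. This gives $d\Phi(\partial/\partial \zb) = 0$ in the holomorphic tangent space of $\Dch$, i.e.\ the Cauchy-Riemann equations for $\Phi$ viewed as a smooth map to $\Dch$, and hence $\Phi \colon \HH \to D$ is holomorphic.

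There is no real obstacle here once \Cref{lem:dPhi} is in hand; the only thing to be careful about is the book-keeping between the two pieces of bidegree data (form type versus Hodge type on $\End(V)$) and the explicit form of $F^0 \End(V)$. The argument also recovers, as a bonus, \emph{Griffiths transversality}: the holomorphic derivative $d\Phi(\partial/\partial z)$ is represented by $\theta_{\partial/\partial z} \in \End(V)_{\Phi(z)}^{-1,1}$, which lands in $F_{\Phi(z)}^{-1}\End(V)/F_{\Phi(z)}^0 \End(V)$.
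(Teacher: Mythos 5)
Your proposal is correct and follows essentially the same route as the paper: both deduce holomorphy from \Cref{lem:dPhi} by identifying $T^{1,0}_{\Phi(z)}\Dch$ with $\End(V)/F^0\End(V)_{\Phi(z)}$ and observing that $\thetast$, having Hodge type $(1,-1)$, dies in that quotient, so the differential is represented by the $\CC$-linear operator $\theta\vert_z$. You merely spell out the Cauchy--Riemann check (evaluation on $\partial/\partial\zb$) that the paper leaves implicit in the phrase ``since this is $\CC$-linear.''
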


\begin{proof}
	It is enough to prove that $\Phi \colon \HH \to \Dch$ is holomorphic. Recall from
	\eqref{eq:T-Dch} that the holomorphic tangent space at $\Phi(z) \in \Dch$ is
	\[
		T_{\Phi(z)}^{1,0} \Dch \cong \End(V) / F^0 \End(V)_{\Phi(z)}.
	\]
	Using \Cref{lem:dPhi}, the differential of $\Phi$ is therefore equal to
	\[
		\theta \big\vert_z \colon T_z^{1,0} \HH \to \End(V) / F^0 \End(V)_{\Phi(z)}, 
	\]
	and since this is $\CC$-linear, $\Phi$ is indeed holomorphic.
\end{proof}

\section{\boldmath Results about $\sltwo$-Hodge structures}
\label{sec:sltwo}

\newpar
This section contains some background on $\sltwo$-Hodge structures and their
polarizations. Roughly speaking, an $\sltwo$-Hodge structure is a representation of
the Lie algebra $\sltwo(\CC)$, but in the category of Hodge structures. The
cohomology of a compact K\"ahler manifold is a typical example, but $\sltwo$-Hodge
structures also appear naturally in the study of degenerating variations of Hodge
structure.

\subsection{The Weil element and polarizations} 

\newpar
Following one of several competing conventions, we will denote the three generators
of the Lie algebra $\sltwo(\CC)$ by the letters $\Hsl, \Xsl, \Ysl$.  Concretely,
\[
	\Hsl = \begin{pmatrix} 1 & 0 \\ 0 & -1 \end{pmatrix}, \quad
	\Xsl = \begin{pmatrix} 0 & 1 \\ 0 & 0 \end{pmatrix}, \quad
	\Ysl = \begin{pmatrix} 0 & 0 \\ 1 & 0 \end{pmatrix},
\]
and the relations among the three generators are
\[
	[\Hsl,\Xsl] = 2\Xsl, \quad [\Hsl,\Ysl] = -2\Ysl, \quad [\Xsl,\Ysl] = \Hsl.
\]
In any finite-dimensional representation $V$ of $\sltwo(\CC)$, the element $\Hsl$ acts
semisimply with integer eigenvalues, and so the underlying vector space is the direct
sum of the eigenspaces $V_k = E_k(\Hsl)$, for $k \in \ZZ$.

\newpar
The following definition is of central importance in Hodge theory.

\begin{pdfn}
	An \define{$\sltwo$-Hodge structure} on a finite-dimensional complex vector space
	$V$ is a representation of $\sltwo(\CC)$ on $V$ with the following
	properties:
	\begin{aenumerate}
	\item Each weight space $V_k = E_k(\Hsl)$ has a Hodge structure of weight $n+k$;
		the integer $n$ is called the \define{weight} of the $\sltwo$-Hodge
		structure.
	\item The two operators
		\[
			\Xsl \colon V_k \to V_{k+2}(1) \quad \text{and} \quad
			\Ysl \colon V_k \to V_{k-2}(-1)
		\]
		are morphisms of Hodge structures.
	\end{aenumerate}
\end{pdfn}

\begin{pexa} Here are two useful examples.
	\begin{enumerate}
		\item The prototypical example of a $\sltwo$-Hodge structure is the total
			cohomology 
			\[
				\bigoplus_{k \in \ZZ} H^{n+k}(X, \CC)
			\]
			of an $n$-dimensional compact K\"ahler manifold $(X, \omega)$; in this case,
			$\Xsl = 2 \pi i \, L_{\omega}$ and $\Ysl = (2 \pi i)^{-1} \Lambda_{\omega}$,
			and the weight is $n = \dim X$.
		\item Any Hodge structure $V$ of weight $n$ may be viewed as an $\sltwo$-Hodge
			structure of weight $n$ by letting the Lie algebra $\sltwo(\CC)$ act in a
			trivial way. 
	\end{enumerate}
\end{pexa}

\newpar
Equivalently, an $\sltwo$-Hodge structure of weight $n$ is a bigraded vector space
\[
	V = \bigoplus_{i,j \in \ZZ} V^{i,j}
\]
that is simultaneously a representation of $\sltwo(\CC)$, in a way that is
compatible with the bigrading. This means that 
\[
	\Xsl \colon V^{i,j} \to V^{i+1,j+1} \quad \text{and} \quad
	\Ysl \colon V^{i,j} \to V^{i-1,j-1},
\]
and that $\Hsl$ acts on the subspace $V^{i,j}$ as multiplication by the integer $(i+j)-n$.
This makes each of the weight spaces
\[
	V_k = \bigoplus_{i+j=n+k} V^{i,j}
\]
into a Hodge structure of weight $n+k$. In what follows, we are always going to use
the notation $V_k^{i,j}$ for the individual subspaces, to avoid confusion with the
Hodge decomposition in a polarized Hodge structure on $V$.

\newpar
Let $V$ be an $\sltwo$-Hodge structure. From general principles, it follows that
\[
	\Xsl^k \colon V_{-k} \to V_k(k) \quad \text{and} \quad
	\Ysl^k \colon V_k \to V_{-k}(-k)
\]
are isomorphisms of Hodge structures. Another way to understand this symmetry is
through the action of the Lie group $\SL_2(\CC)$. Every finite-dimensional
representation of the Lie algebra $\sltwo(\CC)$ lifts to a representation of the Lie
group $\SL_2(\CC)$, in a way that is compatible with the exponential map:
for a matrix $M \in \sltwo(\CC)$, the element
\[
	\exp(M) = \sum_{n=0}^{\infty} \frac{M^n}{n!} \in \SL_2(\CC),
\]
acts on the representation as the exponential series $\id + M + \half M^2 + \dotsb$.
Now consider the \define{Weil element}
\[
	\wsl = \begin{pmatrix} 0 & 1 \\ -1 & 0 \end{pmatrix} \in \SL_2(\CC),
\]
which plays a similar role as the Hodge $\ast$-operator in classical Hodge theory. A brief
computation shows that $\wsl = \exp(\Xsl) \exp(-\Ysl) \exp(\Xsl)$, and so one can use the
exponential series to see how $\wsl$ acts on representations of $\sltwo(\CC)$.

\newpar
The point of introducing $\wsl$ is that it explains the symmetry between the weight
spaces $V_k$ and $V_{-k}$. Namely, the Lie group $\SL_2(\CC)$ acts on its Lie algebra
$\sltwo(\CC)$ by conjugation, and under this action, one has
\[
	\wsl \Xsl \wsl^{-1} = -\Ysl, \quad
	\wsl \Ysl \wsl^{-1} = -\Xsl, \quad
	\wsl \Hsl \wsl^{-1} = -\Hsl.
\]
This can be checked by direct computation: for example,
\[
	\begin{pmatrix} 0 & 1 \\ -1 & 0 \end{pmatrix}
	\begin{pmatrix} 0 & 1 \\ 0 & 0 \end{pmatrix}
	\begin{pmatrix} 0 & -1 \\ 1 & 0 \end{pmatrix}
	= \begin{pmatrix} 0 & 0 \\ -1 & 0 \end{pmatrix}.
\]
The identity $\wsl \Hsl \wsl^{-1} = -\Hsl$ means that the action of $\wsl$
interchanges the two weight spaces $V_k$ and $V_{-k}$, which must therefore be of the
same dimension. 

\begin{note}
	The square of the Weil element
	\[
		\wsl^2 = \begin{pmatrix} -1 & 0 \\ 0 & -1 \end{pmatrix} \in \SL_2(\CC)
	\]
	acts on the weight space $V_k$ as $(-1)^k$, and \emph{not} as multiplication by
	$-1$. This is easiest to remember with the help of the symbolic identity $\wsl^2 =
	(-1)^{\Hsl}$, whose right-hand side is a convenient abbreviation for the power series
	$e^{i \pi \Hsl}$. 
\end{note}

\newpar
We know that $\wsl \colon V_k \to V_{-k}$ is an isomorphism for every $k \in \ZZ$.
But since $\Xsl$ and $\Ysl$ are only morphisms up to a Tate twist, each term in the series
expansion of $\wsl = e^{\Xsl} e^{-\Ysl} e^{\Xsl}$ needs a different Tate twist, and
so it is not immediately clear that $\wsl$ is a morphism of Hodge structures. 

\begin{plem} \label{lem:w}
If $V$ is an $\sltwo$-Hodge structure, then $\wsl \colon V_k \to V_{-k}(-k)$ is an
isomorphism of Hodge structures (of weight $n+k$).
\end{plem}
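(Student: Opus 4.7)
The plan is to reduce the assertion to an elementary calculation on each $\sltwo(\CC)$-irreducible summand, via the Lefschetz decomposition of $V$.

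First, I would introduce, for each $m \geq 0$, the primitive subspace $P_m = \ker\bigl(\Xsl \colon V_m \to V_{m+2}\bigr)$. Since $\Xsl \colon V_m \to V_{m+2}(1)$ is a morphism of Hodge structures by hypothesis, $P_m$ is a Hodge substructure of $V_m$ of weight $n+m$. The Lefschetz decomposition of the $\sltwo(\CC)$-representation, restricted to the $k$-th weight space, reads
\begin{equation*}
V_k = \bigoplus_{\substack{m \geq \abs{k} \\ m - k \text{ even}}} \Ysl^{(m-k)/2}\, P_m.
\end{equation*}
Each summand is the image of $P_m$ under the morphism $\Ysl^{(m-k)/2} \colon V_m \to V_k(-(m-k)/2)$ of Hodge structures, which is injective on $P_m$ by standard $\sltwo(\CC)$-representation theory. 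Hence each $\Ysl^{(m-k)/2} P_m$ is a Hodge substructure of $V_k$, and $\Ysl^{(m-k)/2}$ induces an isomorphism $P_m((m-k)/2) \cong \Ysl^{(m-k)/2} P_m$ of Hodge structures of weight $n+k$.

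Next, I would compute $\wsl$ on each summand. The central computational input is the identity
\begin{equation*}
\wsl v = \frac{(-1)^m}{m!}\,\Ysl^m v \qquad \text{for } v \in P_m,
\end{equation*}
which follows from $\wsl = e^{\Xsl} e^{-\Ysl} e^{\Xsl}$, the relation $\Xsl v = 0$, and an elementary telescoping calculation in the $(m+1)$-dimensional irreducible representation of $\sltwo(\CC)$ generated by $v$. Combining this with the conjugation relation $\wsl \Ysl^j \wsl^{-1} = (-\Xsl)^j$ (obtained by iterating $\wsl \Ysl \wsl^{-1} = -\Xsl$) and the commutator identity $\Xsl^j \Ysl^m v = \tfrac{j!\,m!}{(m-j)!}\,\Ysl^{m-j} v$ (valid for primitive $v$ of weight $m$ and proved by induction using $[\Xsl, \Ysl] = \Hsl$), one obtains
\begin{equation*}
\wsl\bigl(\Ysl^j v\bigr) = (-1)^{m+j}\,\frac{j!}{(m-j)!}\,\Ysl^{m-j} v
\end{equation*}
for all $0 \leq j \leq m$ and all $v \in P_m$.

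Setting $j = (m-k)/2$ (so that $\Ysl^j v$ lies in $V_k$), the right-hand side is a nonzero scalar multiple of $\Ysl^{(m+k)/2} v \in V_{-k}$. The map $v \mapsto \Ysl^{(m+k)/2} v$ is a morphism $P_m \to V_{-k}(-(m+k)/2)$ of Hodge structures of weight $n+m$; Tate-twisting both sides by $(m-k)/2$ turns this into a morphism $P_m((m-k)/2) \to V_{-k}(-k)$ of Hodge structures of weight $n+k$. Composing with the isomorphism $\Ysl^{(m-k)/2} P_m \cong P_m((m-k)/2)$ recovers $\wsl$ restricted to $\Ysl^{(m-k)/2} P_m$, so that restriction is a morphism into $V_{-k}(-k)$. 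Summing over $m$ yields that $\wsl \colon V_k \to V_{-k}(-k)$ is a morphism of Hodge structures of weight $n+k$, and bijectivity is automatic from $\wsl \in \SL_2(\CC)$. The main obstacle is the Tate-twist bookkeeping and the explicit evaluation of $\wsl v$ for a primitive vector $v$; both are routine but require care.
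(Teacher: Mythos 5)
Your proof is correct and follows essentially the same route as the paper: a Lefschetz decomposition of each weight space into primitive strings, combined with the explicit action of $\wsl$ on primitives extracted from $\wsl = e^{\Xsl} e^{-\Ysl} e^{\Xsl}$. The only difference is cosmetic: you work with the $\Xsl$-primitives $\ker \Xsl$ and the identity $\wsl v = \frac{(-1)^m}{m!} \Ysl^m v$, packaging the Hodge-type bookkeeping via kernels, images and Tate twists of morphisms, whereas the paper uses the $\Ysl$-primitives with the mirror identity $\wsl \frac{\Xsl^j}{j!} b = (-1)^j \frac{\Xsl^{\ell-j}}{(\ell-j)!} b$ and checks the compatibility of the Lefschetz decomposition with the bigrading by a component-wise descending induction.
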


\begin{proof}
We first prove an auxiliary formula. Suppose that $b \in V_{-\ell}$ is primitive, in
the sense that $\Ysl b = 0$ (and therefore $\ell \geq 0$). From $\wsl e^{-\Xsl} =
e^{\Xsl} e^{-\Ysl}$, we get $\wsl e^{-\Xsl} b = e^{\Xsl} b$, and after expanding and
comparing terms in degree $\ell-2j$, also
\begin{equation} \label{eq:primitive}
	\wsl \frac{\Xsl^j}{j!} b = (-1)^j \frac{\Xsl^{\ell-j}}{(\ell-j)!} b.
\end{equation}
Now any $a \in V_k$ has a unique Lefschetz decomposition
\[
	a = \sum_{j \geq \max(k,0)} \frac{\Xsl^j}{j!} a_j,
\]
where $a_j \in V_{k-2j}$ satisfies $Ya_j = 0$. Here we only need to consider $j \geq
k$ in the sum because $\Xsl^{2j-k+1} a_j = 0$, which implies that $\Xsl^j a_j = 0$ for $j <
k$. Suppose further that $a \in V^{p,q}$, where $p+q = n+k$. Then $\Xsl^i a_j \in
V^{p+i,q+i}$, and by descending induction on $j \geq \max(k,0)$, we deduce that 
\[
	a_j \in V^{p-j,q-j}. 
\]
In other words, the Lefschetz decomposition holds in the category of Hodge
structures. We can now check what happens when we apply $w$. Using
\eqref{eq:primitive}, 
\[
	\wsl a = \sum_{j \geq \max(k,0)} w \frac{\Xsl^j}{j!} a_j
		= \sum_{j \geq \max(k,0)} (-1)^j \frac{\Xsl^{j-k}}{(j-k)!} a_j
		\in V^{p-k,q-k},
\]
and so $\wsl$ is a morphism of Hodge structures. Since $\wsl$ is bijective, it must be an
isomorphism of Hodge structures, as claimed.
\end{proof}

\newpar
Using the Weil element, one can give a very concise definition of polarizations for
$\sltwo$-Hodge structures. This idea is due to Deligne \cite{Deligne-signs}.

\begin{pdfn} \label{def:sltwo-polarization}
A \define{polarization} of an $\sltwo$-Hodge structure $V$ is a hermitian form
\[
	Q \colon V \tensor_{\CC} \Vb \to \CC
\]
that satisfies the four identities $H^{\dagger} = -H$, $X^{\dagger} = X$,
$Y^{\dagger} = Y$, and $w^{\dagger} = w$, such that $Q \circ (\id \tensor \wsl)$
polarizes the Hodge structure of weight $n+k$ on each $V_k$. 
\end{pdfn}

Here the dagger again means the adjoint of an operator with respect to the
nondegenerate hermitian pairing $Q$.

\newpar
The relation $Q \circ (\Hsl \tensor \id) = -Q \circ (\id \tensor \Hsl)$ implies that 
\[
	Q \restr{V_k \tensor_{\CC} \wbar{V_{\ell}}} = 
	\begin{cases} 
		Q_k & \text{if $\ell = -k$,} \\
		0 & \text{otherwise}.
	\end{cases}
\]
and so $Q$ is really a family of sesquilinear pairings $Q_k \colon V_k
\tensor_{\CC} \wbar{V_{-k}} \to \CC$. The second condition in
\Cref{def:sltwo-polarization} is then saying that the hermitian pairing
\[
	Q_k \circ (\id \tensor \wsl) \colon V_k \tensor_{\CC} \wbar{V_k} \to \CC
\]
polarizes the Hodge structure of weight $n+k$ on $V_k$. In particular, this means
that the Hodge structure on the \define{primitive} subspace $V_{-k} \cap \ker \Ysl$
is polarized by the hermitian pairing $Q_k \circ (\id \tensor \Xsl^k)$; this is the
usual way to describe the polarization on the cohomology of a compact K\"ahler
manifold. Deligne's definition has the advantage of giving a concise formula for the
polarization on all of $V$, not just on the primitive subspaces.

\newpar
	With the exception of positivity, all the conditions in the definition have
	a nice functorial interpretation. The conjugate complex vector space $\Vb$
	is again an $\sltwo$-Hodge structure of weight $n$: the action of $\Hsl$
	is unchanged, but $\Xsl$ and $\Ysl$ act with an extra minus sign. This sign change is
	dictated by the geometric case, where $\Xsl = 2 \pi i \, L_{\omega}$ and $\Ysl = (2 \pi
	i)^{-1} \Lambda_{\omega}$. Likewise, if $V'$ and $V''$ are $\sltwo$-Hodge
	structures of weights $n'$ and $n''$, then the tensor product $V'
	\tensor_{\CC} V''$ is naturally an $\sltwo$-Hodge structure of weight $n' + n''$:
	to be precise, 
	\[
		\bigl( V' \tensor_{\CC} V'' \bigr)_k = \bigoplus_{i+j=k} V'_i \tensor_{\CC} V''_j,
	\]
	and the $\sltwo(\CC)$-action is given by the usual formulas
	\begin{align*}
		\Xsl(v' \tensor v'') &= \Xsl v' \tensor v'' + v' \tensor \Xsl v'', \\
		\Ysl(v' \tensor v'') &= \Ysl v' \tensor v'' + v' \tensor \Ysl v'', \\
		\Hsl(v' \tensor v'') &= \Hsl v' \tensor v'' + v' \tensor \Hsl v''.
	\end{align*}
	Lastly, we can turn $\CC(-n)$ into an $\sltwo$-Hodge structure of weight $2n$ by
	letting $\sltwo(\CC)$ act trivially. Then all the identities in
	\Cref{def:sltwo-polarization} can be summarized in one line by saying
	that the hermitian form
	\[
		Q \colon V \tensor_{\CC} \Vb \to \CC(-n)
	\]
	is a morphism of $\sltwo$-Hodge structures of weight $2n$. 

\newpar
From a polarization $Q$, we obtain a hermitian inner product on the vector
space $V$ as follows. The individual Hodge structures on the weight spaces $V_k$ give
us a direct sum decomposition
\[
	V = \bigoplus_{p,q \in \ZZ} V_{p+q-n}^{p,q},
\]
and we denote by $v = \sum_{p,q} v^{p,q}$ the components of a given vector. The
formula
\begin{equation} \label{eq:inner-product-sltwo}
	\inner{v}{w} = \sum_{p,q \in \ZZ} (-1)^q Q \bigl( v^{p,q}, \wsl(w^{p,q}) \bigr)
\end{equation}
then defines a positive definite hermitian inner product on $V$. By construction, the
above decomposition is orthogonal with respect to this inner product. We will
see in \Cref{thm:sltwo-orbit} how this inner product relates to polarized Hodge structures.

\subsection{The irreducible representations}

\newpar
Recall that $\sltwo(\CC)$ has, up to isomorphism, a unique irreducible representation
of dimension $m+1$. We denote this representation by $S_m$. Clearly, $S_0$
is the trivial representation, and $S_1 = \CC^2$ is the standard representation; for
$m \geq 2$, one has $S_m = \Sym^m(S_1)$. Since finite-dimensional
representations of $\sltwo(\CC)$ are completely reducible, one has by Schur's lemma a
canonical decomposition
\begin{equation} \label{eq:decomposition}
	V \cong \bigoplus_{m \in \NN} 
	S_m \tensor_{\CC} \Hom_{\CC}(S_m, V)^{\sltwo(\CC)}.
\end{equation}
We can turn this into a decomposition in the category of $\sltwo$-Hodge structures,
by the following construction.

\newpar
First, we argue that each irreducible representation $S_m$ comes with a canonical
$\sltwo$-Hodge structure of weight $m$. For the trivial representation $S_0 =
\CC$, this is the trivial Hodge structure of type $(0,0)$. The \define{standard
representation} $S_1$ also has a canonical $\sltwo$-Hodge structure. Denote by
$a = (1,0)$ and $b = (0,1)$ the two standard basis vectors, so that $\Ysl a = b$ and $\Xsl b =
a$. We get an $\sltwo$-Hodge structure of weight $1$ by considering
\[
	E_1(\Hsl) = \CC a \quad \text{and} \quad E_{-1}(\Hsl) = \CC b
\]
as Hodge structures of type $(1,1)$ and $(0,0)$, respectively. It is polarized by the
unique hermitian pairing with $Q(a,b) = 1$ and $Q(a,a) = Q(b,b) = 0$; indeed, 
\begin{align*}
	(-1)^1 Q(a,\wsl a) &= -Q(a,-b) = Q(a,b) = 1, \\
	(-1)^0 Q(b, \wsl b) &= Q(b,a) = 1,
\end{align*}
since $\wsl a = -b$ and $\wsl b = a$. In particular, we have
\[
	\inner{a}{a} = \inner{b}{b} = 1 \quad \text{and} \quad
	\inner{a}{b} = 0,
\]
which means that $a,b \in S_1$ form an orthonormal basis with respect to
\eqref{eq:inner-product-sltwo}.

\newpar \label{par:Sm}
Since everything is compatible with taking symmetric powers, the irreducible
representation $S_m = \Sym^m(S_1)$ inherits an $\sltwo$-Hodge structure of 
weight $m$. We can describe this concretely as follows. The $m+1$ elements 
\[
	v_0 = a^m, v_1 = a^{m-1} b, \dotsc, v_k = a^{m-k} b^k, \dotsc, v_m = b^m 
\]
form a basis for the vector space $S_m$. It is easy to see that
\[
	\Hsl v_k = (m-2k) v_k, \quad \Xsl v_k = k v_{k-1}, \quad
	\Ysl v_k = (m-k) v_{k+1},
\]
if we declare that $v_{-1} = v_{m+1} = 0$. They are morphisms up to the required Tate
twists if we consider each weight space
\[
	E_{m-2k}(\Hsl) = \CC v_k
\]
as a Hodge structure of type $(m-k,m-k)$. The induced hermitian pairing $Q$ on $S_m$
has the property that $Q(v_j, v_k) = 0$ if $k+j \neq m$, while
\[
	Q(v_k, v_{m-k}) = \frac{k!(m-k)!}{m!}.
\]
Since $\wsl v_k = (\wsl a)^{m-k} (\wsl b)^k = (-1)^{m-k} v_{m-k}$, we get
\[
	(-1)^{m-k} Q(v_k, \wsl v_k) = Q(v_k, v_{m-k}) = \frac{k!(m-k)!}{m!} > 0,
\]
as required by \Cref{def:sltwo-polarization}. With respect to the hermitian
inner product in \eqref{eq:inner-product-sltwo}, the vectors $v_0, \dotsc, v_m \in
S_m$ are again orthogonal, and
\[
	\inner{v_k}{v_k} = \frac{k!(m-k)!}{m!}.
\]

\begin{note}
	Recall that a hermitian pairing $Q \colon V \tensor_{\CC} \Vb \to \CC$ induces a 
	hermitian pairing on the symmetric product $\Sym^m V$ by the formula
	\[
		Q \bigl( v_1 \dotsm v_m, w_1 \dotsm w_m \bigr)
		= \frac{1}{m!} \sum_{\sigma \in \mathfrak{S}_m}
			\prod_{j=1}^m Q \bigl( v_j, w_{\sigma(j)} \bigr),
	\]
	where $\mathfrak{S}_m$ is the group of permutations of the set $\{1, \dotsc,
	m\}$. This comes about by viewing the symmetric product as a subspace of
	$V^{\tensor m}$, via the embedding
	\[
		\Sym^m V \into V^{\tensor m}, \quad 
		v_1 \dotsm v_m \mapsto \frac{1}{m!} \sum_{\sigma \in \mathfrak{S}_m}
		v_{\sigma(1)} \tensor \dotsb \tensor v_{\sigma(m)}.
	\]
	This explains the term $m!$ in the formulas above.
\end{note}

\newpar
We continue working on the decomposition in \eqref{eq:decomposition}. Next, we note
that in any $\sltwo$-Hodge structure $V$ of weight $n$, the subspace of
$\sltwo(\CC)$-invariants
\[
	V^{\sltwo(\CC)} = \menge{v \in V_0}{\Xsl v = \Ysl v = 0} = 
		\ker \bigl( \Ysl \colon V_0 \to V_{-2}(-1) \bigr)
\]
is a sub-Hodge structure of $V_0$, and hence a Hodge structure of weight $n$. If $V$
is polarized by $Q$, then $V^{\sltwo(\CC)}$ is polarized by the restriction of $Q$.

\newpar
Now let $V$ be an $\sltwo$-Hodge structure of weight $n$. For each $m \in
\NN$, the vector space $\Hom_{\CC}(S_m, V)$ inherits an $\sltwo$-Hodge structure of
weight $n-m$. According to the preceding paragraph, the subspace of
$\sltwo(\CC)$-invariants
\[
	\Hom_{\CC}(S_m, V)^{\sltwo(\CC)}
\]
is therefore a Hodge structure of weight $n-m$. If we view it as an $\sltwo$-Hodge
structure by letting $\sltwo(\CC)$ act trivially, the evaluation mapping
\[
	S_m \tensor_{\CC} \Hom_{\CC}(S_m, V)^{\sltwo(\CC)} \to V
\]
becomes a morphism of $\sltwo$-Hodge structures of weight $n$, and the
isomorphism in \eqref{eq:decomposition} becomes an isomorphism of $\sltwo$-Hodge
structures.

\newpar
The following lemma shows that the decomposition in \eqref{eq:decomposition} is also
compatible with polarizations.

\begin{plem} \label{lem:sltwo-polarizations}
	Let $V$ be an $\sltwo$-Hodge structure of weight $n$. Then $\Hom_{\CC}(S_m, V)$
	in\-herits an $\sltwo$-Hodge structure of weight $n-m$,
	such that the evaluation morphism
	\[
		S_m \tensor_{\CC} \Hom_{\CC}(S_m, V)^{\sltwo(\CC)} \to V, \quad
		s \tensor f \mapsto f(s),
	\]
	is a morphism of $\sltwo$-Hodge structures. If $V$ is polarized, then
	$\Hom_{\CC}(S_m, V)$ is also polarized, and the evaluation morphism respects the
	polarizations.
\end{plem}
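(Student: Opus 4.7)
The plan is to use the internal-Hom construction for $\sltwo$-Hodge structures in general. First, I would equip $\Hom_{\CC}(S_m, V)$ with the bidegree decomposition
\[
\Hom_{\CC}(S_m, V)^{i,j} \;=\; \bigoplus_{a,b \in \ZZ} \Hom_{\CC}\bigl( S_m^{a,b},\, V^{a+i,\, b+j} \bigr)
\]
and with the $\sltwo(\CC)$-action by commutators, $A \cdot f = A f - f A$. A routine check shows that the $k$-th weight space $E_k(\ad \Hsl)$ is a Hodge structure of weight $(n-m)+k$, and that $\ad \Xsl$, $\ad \Ysl$ shift the bidegree by $(1,1)$ and $(-1,-1)$ respectively; hence $\Hom_{\CC}(S_m, V)$ becomes an $\sltwo$-Hodge structure of weight $n-m$. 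The evaluation statement is then formal: an $\sltwo(\CC)$-invariant $f \in \Hom_{\CC}(S_m, V)$ is by definition an $\sltwo(\CC)$-equivariant map $S_m \to V$, so the map $(s, f) \mapsto f(s)$ is tautologically both $\sltwo(\CC)$-equivariant and bidegree-preserving, hence a morphism of $\sltwo$-Hodge structures.

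For the polarization, I would first use the four identities of \cref{def:sltwo-polarization} to observe that the inner product $\inner{v}{w} = Q(v, \wsl w)$ on $V$ satisfies $\Hsl^* = \Hsl$ and $\Xsl^* = -\Ysl$. This is the standard $\mathfrak{su}(2)$-compact-form presentation, under which distinct $\sltwo(\CC)$-isotypic components of $V$ are automatically orthogonal. Combined with the canonical isomorphism \eqref{eq:decomposition} in the category of $\sltwo$-Hodge structures, this shows that $Q$ decomposes as $\bigoplus_{m' \geq 0} Q_{m'} \tensor Q_{W_{m'}}$ for unique hermitian pairings $Q_{W_{m'}}$ on $W_{m'} = \Hom_{\CC}(S_{m'}, V)^{\sltwo(\CC)}$, and one checks that each $Q_{W_{m'}}$ is a polarization of $W_{m'}$ as a Hodge structure of weight $n-m'$.

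The polarization $Q_m$ on $S_m$ now induces an $\sltwo$-Hodge-structure isomorphism $(S_m)^* \cong \wbar{S_m}(m)$, and hence a decomposition
\[
\Hom_{\CC}(S_m, V) \;\cong\; \bigoplus_{m' \geq 0} \wbar{S_m}(m) \tensor_{\CC} S_{m'} \tensor_{\CC} W_{m'},
\]
which I polarize summand-by-summand as a tensor product of polarizations of $\sltwo$-Hodge structures. The main obstacle is the bookkeeping: the four identities of \cref{def:sltwo-polarization} and positivity of $Q' \circ (\id \tensor \wsl)$ must be verified for the tensor-product polarization. Both reduce to the base cases $\wbar{S_m}$ and $S_{m'}$, whose polarizations are explicit from \cref{par:Sm}, together with the general fact that $\wsl$ acts diagonally on tensor products, so that the inner product on a tensor product factors as a tensor product of the inner products on the factors. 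Finally, the only summand contributing to $\Hom_{\CC}(S_m, V)^{\sltwo(\CC)}$ is $m' = m$ (by Schur), and on this summand the tensor-product polarization matches the restriction of $Q$ to $S_m \tensor W_m \subseteq V$ by construction; hence the evaluation morphism respects the polarizations.
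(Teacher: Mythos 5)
Your argument is correct in substance, but it takes a more decomposition-heavy route than the paper. The paper's proof is short and direct: it defines the adjoint $f^{\dagger} \colon V \to S_m$ by $Q_V(v, f(s)) = Q_{S_m}(f^{\dagger}(v), s)$, takes the normalized trace pairing $(f,g) \mapsto \tr(g^{\dagger} \circ f)/\dim S_m$ as the polarization on $\Hom_{\CC}(S_m, V)$ (verified by the same computation as \Cref{lem:tr-polarization}), and then obtains compatibility with the evaluation morphism from Schur's lemma applied to $g^{\dagger} \circ f$ for invariant $f,g$. You instead factor $Q$ over the isotypic decomposition of $V$, identify $\Hom_{\CC}(S_m,V) \cong \wbar{S_m}(m) \tensor V$, and polarize it summand-by-summand as a tensor product, pushing all positivity checks down to the explicit base cases $S_{m'}$, $\wbar{S_m}$ of \Cref{par:Sm} and the invariant factors $W_{m'}$. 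Since the trace pairing on $\Hom_{\CC}(S_m,V) = S_m^{\ast} \tensor V$ is exactly the tensor product of the dual pairing with $Q_V$, the two constructions produce the same pairing up to normalization; your version makes the positivity reduction completely explicit at the cost of more bookkeeping, while the paper's version buys brevity by leaning on \Cref{lem:tr-polarization} and a single application of Schur's lemma.

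Two points should be tightened. First, $Q(v, \wsl w)$ is not the inner product of \eqref{eq:inner-product-sltwo}: the latter carries the signs $(-1)^q$ and is the positive definite one, and with respect to it one has $\Ysl^{\ast} = \Xsl$ (see \Cref{thm:sltwo-orbit}), not $\Xsl^{\ast} = -\Ysl$. Your relation $\Xsl^{\ast} = -\Ysl$ does hold for the unsigned hermitian form $Q(\argbl, \wsl\, \argbl)$, and the orthogonality of distinct isotypic components that you actually use is true in either setting (for instance because the Casimir operator is self-adjoint, or because the isotypic components are sub-Hodge structures, \Cref{par:Casimir}); just do not call the unsigned form an inner product. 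Second, ``matches the restriction of $Q$ by construction'' hides a normalization: under the natural identification of $\Hom_{\CC}(S_m,V)^{\sltwo(\CC)}$ with the invariant line in $\wbar{S_m}(m) \tensor S_m$ tensored with $W_m$, the canonical invariant vector has squared norm $\dim S_m$ for the product pairing, so your tensor-product polarization restricted to the invariants is $\dim S_m$ times the factor $Q_{W_m}$ of $Q$. One must divide by $\dim S_m$ -- precisely the factor the paper builds into its trace pairing -- for the evaluation morphism to respect the polarizations on the nose rather than up to a positive constant.
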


\begin{proof}
	The first assertion is obvious, and so we focus on the second one. For clarity,
	let us write $Q_V$ for the polarization on $V$, and $Q_{S_m}$ for the polarization
	on $S_m$. Given $f \colon S_m \to V$, let $f^{\dagger} \colon V \to S_m$ be
	the adjoint, defined by the rule
	\[
		Q_V \bigl( v, f(s) \bigr) = Q_{S_m} \bigl( f^{\dagger}(v), s \bigr).
	\]
	A short calculation, similar to \Cref{lem:tr-polarization}, shows that the trace
	pairing
	\[
		(f, g) \mapsto \frac{\tr(g^{\dagger} \circ f)}{\dim S_m}
	\]
	polarizes the $\sltwo$-Hodge structure on $\Hom_{\CC}(S_m, V)$. The induced
	polarization on the tensor product is therefore
	\[
		(f \tensor s, g \tensor t) \mapsto 
		Q_{S_m}(s,t) \cdot \frac{\tr(g^{\dagger} \circ f)}{\dim S_m}.
	\]
	Now suppose that $f,g \in \Hom_{\CC}(S_m, V)^{\sltwo(\CC)}$. By Schur's lemma, we have
	\[
		g^{\dagger} \circ f = c(f,g) \id_{S_m}
	\]
	for a constant $c(f,g) \in \CC$. Consequently,
	\[
		Q_{S_m}(s,t) \cdot \frac{\tr(g^{\dagger} \circ f)}{\dim S_m}
		= c(f,g) Q_{S_m}(s, t)
		= Q_{S_m} \bigl( g^{\dagger}(f(s)), t \bigr)
		= Q_V \bigl( f(s), g(t) \bigr),
	\]
	and so the induced polarization on the tensor product is indeed compatible with
	the given polarization on $V$.
\end{proof}

\subsection{The associated Hodge structure}

\newpar
One justification for the name ``$\sltwo$-Hodge structure'' is that 
every (polarized) $\sltwo$-Hodge structure has canonically associated to it a
(polarized) Hodge structure of the same weight. We now describe this construction.
Suppose $V$ is an $\sltwo$-Hodge structure of weight $n$. We denote by
\[
	V = \bigoplus_{i,j \in \ZZ} V_{i+j-n}^{i,j}
\]
the ``total'' Hodge decomposition, and by 
\[
	F^p = \bigoplus_{i \geq p,j} V_{i+j-n}^{i,j} \quad \text{and} \quad
	\Fb^q = \bigoplus_{j \geq q, i} V_{i+j-n}^{i,j}
\]
the total Hodge filtration and its conjugate. Clearly, $\Xsl(F^p) \subseteq F^{p+1}$
and $\Ysl(F^p) \subseteq F^{p-1}$, and likewise for the other filtration
$\Fb$. If $Q$ is a polarization of the $\sltwo$-Hodge structure, one can recover
the conjugate Hodge filtration $\Fb$ from the Hodge filtration $F$ because
\begin{equation} \label{eq:Fb-sltwo}
	\Fb^q = \menge{v \in V}{\text{$Q(v,x) = 0$ for all $x \in F^{n-q+1}$}}.
\end{equation}

\begin{note}
In fact, a polarized $\sltwo$-Hodge structure can be described completely by
specifying the representation $V$, the weight $n$, the Hodge filtration $F$,
and the hermitian pairing $Q$. This data determines the conjugate Hodge filtration
$\Fb$ according to \eqref{eq:Fb-sltwo}, and therefore the subspaces
\[
	V_k^{i,j} = E_k(\Hsl) \cap F^i \cap \Fb^j
\]
in the Hodge decomposition of each weight space. Note that the weight is \emph{not}
determined by $F$ and $Q$ alone.
\end{note}

\newpar
The following theorem explains how to pass from a (polarized) $\sltwo$-Hodge
structure to a (polarized) Hodge structure of the same weight. Later in this chapter,
we are going to prove a sort of converse to this result, which is the key to the
existence of limiting mixed Hodge structures.

\begin{pthm} \label{thm:sltwo-orbit}
	Let $V$ be an $\sltwo$-Hodge structure of weight $n$, and let $Q$
	be a polarization. Then the following is true:
	\begin{aenumerate}
	\item $e^{\Ysl} F$ and $e^{-\Ysl} \Fb$ are the two Hodge filtrations of a polarized Hodge
		structure of weight $n$, with polarization $Q$.  
	\item The inner product determined by this polarized Hodge structure is equal to
		the one determined by the $\sltwo$-Hodge structure in \eqref{eq:inner-product-sltwo}.
	\item We have $\Hsl^{\ast} = \Hsl$ and $\Ysl^{\ast} = \Xsl$, where $\ast$ denotes
		the adjoint with respect to the inner product determined by the polarized Hodge
		structure.
	\end{aenumerate}
\end{pthm}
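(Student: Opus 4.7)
The strategy is to reduce to the irreducible case $V = S_m$ via the canonical decomposition from \Cref{lem:sltwo-polarizations}: $V \cong \bigoplus_{m \geq 0} S_m \tensor_{\CC} \Hom_{\CC}(S_m, V)^{\sltwo(\CC)}$ as polarized $\sltwo$-Hodge structures, with each $\Hom_{\CC}(S_m, V)^{\sltwo(\CC)}$ a pure Hodge structure of weight $n-m$ carrying the trivial $\sltwo$-action. All three claims (a), (b), (c) are additive over direct sums and behave well under tensor products with trivial-action factors: the operator $e^{\Ysl}$ acts only through the $S_m$-factor, the total Hodge filtrations on a tensor product are the convolutions of the factor filtrations, and the polarization is the tensor product polarization. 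Hence it suffices to establish the theorem for $V = S_m$, viewed as a polarized $\sltwo$-Hodge structure of weight $n = m$.

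For $V = S_m$, we have the explicit basis $v_0, \dotsc, v_m$ from \Cref{par:Sm}, with $v_k$ spanning $V_{m-2k}^{m-k, m-k}$ and $\Ysl v_k = (m-k) v_{k+1}$. A direct computation gives $e^{\Ysl} v_k = \sum_{r=0}^{m-k} \binom{m-k}{r} v_{k+r}$ and a signed analogue for $e^{-\Ysl} v_k$. Writing down explicit bases for $e^{\Ysl} F^p$ and $e^{-\Ysl} \Fb^q$ as spans of these elements, one verifies by finite-dimensional linear algebra that $V = \bigoplus_{p+q=m}(e^{\Ysl} F^p \cap e^{-\Ysl} \Fb^q)$ with each summand having the correct Hodge dimension. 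A crucial auxiliary observation is that the identity $\Ysl^{\dagger} = \Ysl$ from \Cref{def:sltwo-polarization} implies $(e^{\Ysl})^{\dagger} = e^{\Ysl}$, and therefore that $e^{-\Ysl} \Fb^q$ is precisely the $Q$-conjugate filtration of $e^{\Ysl} F^p$; this makes orthogonality of the resulting Hodge decomposition automatic. Positivity of $(-1)^q Q$ on each piece then reduces to a binomial computation using the formula $Q(v_j, v_{m-j}) = j!(m-j)!/m!$ from \Cref{par:Sm}, completing (a).

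For part (b), two hermitian forms on $V$ that agree on the diagonal must coincide (by the polarization identity), so it suffices to check $\inner{v}{v}_{\tilde F} = \inner{v}{v}_{\sltwo}$ on basis elements of $V = S_m$. Expanding each $v_k$ in the pure Hodge decomposition coming from (a), both inner products yield $\inner{v_k}{v_k} = k!(m-k)!/m!$ with the $v_k$ pairwise orthogonal; this amounts to the same binomial identities used in (a). Part (c) then follows directly from (b) via a short index-shifting calculation on \eqref{eq:inner-product-sltwo}. The identity $\Hsl^{\dagger} = -\Hsl$ combined with the fact that $\Hsl$ acts as the scalar $k$ on $V_k^{i,j}$ gives $\inner{\Hsl v}{u}_{\sltwo} = \inner{v}{\Hsl u}_{\sltwo}$; the identity $\Ysl^{\dagger} = \Ysl$ combined with the $\SL_2(\CC)$-commutation relation $\Ysl \wsl = -\wsl \Xsl$ (derived from $\wsl \Ysl \wsl^{-1} = -\Xsl$) gives $\inner{\Ysl v}{u}_{\sltwo} = \inner{v}{\Xsl u}_{\sltwo}$ after relabeling the summation indices.

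The main obstacle is the explicit computation on $S_m$ — in particular, identifying the subspaces $e^{\Ysl} F^p \cap e^{-\Ysl} \Fb^q$ and verifying positivity of $Q$ there. One might hope for a more conceptual route via the Lefschetz decomposition $V = \bigoplus_{k \geq 0} \Ysl^{\bullet} (\ker \Xsl \cap V_k)$, reducing to primitive subspaces, but this trades one binomial computation for another of similar difficulty. The structural point that ties everything together, and which should be emphasized, is the self-adjointness identity $\Ysl^{\dagger} = \Ysl$: it simultaneously guarantees that $e^{-\Ysl} \Fb$ is the $Q$-conjugate of $e^{\Ysl} F$ (orthogonality in (a)), that positivity is compatible with the tilted filtration, and that the adjoint relations in (c) have the clean form asserted.
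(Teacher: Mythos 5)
Your proposal is correct, and its first half is exactly the paper's argument: reduce to the irreducible summands via the isotypical decomposition together with \Cref{lem:sltwo-polarizations}, since all three statements are compatible with direct sums and with tensoring by a Hodge structure carrying the trivial $\sltwo(\CC)$-action. Where you diverge is the last step: you verify (a)--(c) on $S_m$ directly, by expanding $e^{\pm\Ysl}$ on the basis $v_k$ of \Cref{par:Sm} and grinding through binomial identities, whereas the paper makes one more reduction -- everything in sight is compatible with symmetric powers, and $S_m = \Sym^m(S_1)$ -- so that the only explicit check needed is on the standard representation $S_1$, where $e^{\Ysl}F^1 = \CC(a+b)$, $e^{-\Ysl}\Fb^1 = \CC(a-b)$, and all three assertions are immediate. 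The computation you correctly identify as ``the main obstacle'' (identifying $e^{\Ysl}F^p \cap e^{-\Ysl}\Fb^q$ in $S_m$ and checking positivity and the Gram matrix there) is precisely what the symmetric-power reduction eliminates; if you adopt it, your $S_m$ verification collapses to a two-line check, and your orthogonality and norm computations for the $v_k$ become the statement that monomials in an orthonormal basis of $S_1$ are orthogonal in $\Sym^m(S_1)$. Two smaller remarks: your observation that $\Ysl^{\dagger} = \Ysl$ gives $(e^{\Ysl})^{\dagger} = e^{\Ysl}$, so that $e^{-\Ysl}\Fb$ is exactly the $Q$-conjugate of $e^{\Ysl}F$, is a nice structural point (the paper only records this fact later, before \Cref{prop:invariants}); and in (b) the appeal to the polarization identity is slightly misstated -- agreement of the two Gram matrices on an orthogonal basis is what you actually check, and that is what suffices, not agreement of the diagonal entries on basis vectors alone.
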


\begin{proof}
	The decomposition in \eqref{eq:decomposition} implies that it is enough to prove
	this for the irreducible representations $S_m$. Furthermore, everything is
	compatible with taking symmetric powers, and so we only need to consider the
	standard representation $S_1$. Here $F^1 = \Fb^1 = \CC a$, and consequently 
	\[
		e^{\Ysl} F^1 = \CC(a+b) \quad \text{and} \quad e^{-\Ysl} \Fb^1 = \CC(a-b).
	\]
	Since $Q(a+b,a-b) = 0$, we obtain a Hodge structure $V = V^{1,0} \oplus V^{0,1}$
	with
	\[
		V^{1,0} = \CC(a+b) \quad \text{and} \quad V^{0,1} = \CC(a-b).
	\]
	Also, $Q$ is a polarization because $Q(a+b,a+b) = 2 = -Q(a-b,a-b)$. The Hodge
	decompositions of the two standard basis vectors are
	\[
		a = \half(a+b) + \half(a-b) \quad \text{and} \quad
		b = \half(a+b) - \half(a-b),
	\]
	from which it follows that $a$ and $b$ form an orthonormal basis with respect to the
	inner product coming from the Hodge structure. Since $\Hsl a = a$ and $\Hsl b = -b$, we
	see that $\Hsl$ is self-adjoint with respect to the inner product; similarly,
	$\Ysl a = b$ and $\Xsl b = a$ imply that $\Ysl^{\ast} = \Xsl$. Recalling the
	definition of the inner product in \eqref{eq:inner-product-sltwo}, we obviously have
	\[
		\inner{a}{a} = \inner{b}{b} = 1 \quad \text{and} \quad
		\inner{a}{b} = 0,
	\]
	and so the two inner products agree.
\end{proof}

\newpar
The analogy with the cohomology of a compact K\"ahler manifold is striking:
$V$ has a hermitian inner product; $\Ysl = \Xsl^{\ast}$ is the adjoint of the operator
$\Xsl$ with respect to this inner product; $\Hsl = [\Xsl, \Xsl^{\ast}]$ is the
commutator of $\Xsl$ and its adjoint.

\newpar
The next result relates the Weil operator in the Hodge structure $e^{\Ysl} F$
to representation theory. Using the total Hodge decomposition
\[
	V = \bigoplus_{i,j} V_{i+j-n}^{i,j},
\]
we define an involution $C \in \GL(V)$ by the rule
\[
	Cv = (-1)^j v \quad \text{for $v \in V_{i+j-n}^{i,j}$.}
\]
This is the analogue of the Weil operator for an $\sltwo$-Hodge structure.

\begin{pprop}
	Let $\Csh \in \End(V)$ be the Weil operator of the Hodge structure whose Hodge
	filtration is $\Fsh = e^{\Ysl} F$. Then $\Csh = \wsl \circ C$.
\end{pprop}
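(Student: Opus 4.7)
The plan is to reduce the identity to the case of the standard representation $S_1$, where it can be verified by a direct two-by-two calculation, by exploiting that each of the three operators $\Csh$, $\wsl$, and $C$ is compatible with direct sums and tensor products of $\sltwo$-Hodge structures.

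First I would verify this compatibility. Direct sums are trivial. For tensor products of $\sltwo$-Hodge structures $V$ and $V'$ of weights $n$ and $n'$, the bigrading on $V \tensor V'$ satisfies $(V \tensor V')^{i,j} = \bigoplus_{a+a'=i, \, b+b'=j} V^{a,b} \tensor V'^{a',b'}$, so $C_{V \tensor V'}$ acts by $(-1)^{j+j'}$, which factors as $C_V \tensor C_{V'}$. Since $\wsl \in \SL_2(\CC)$ is a group element, it acts on the tensor representation as $\wsl \tensor \wsl$. For $\Csh$: the total Hodge filtration of $V \tensor V'$ is the tensor product of the filtrations ($F^p(V \tensor V') = \sum_{r+s=p} F^r V \tensor F^s V'$), and since $\Ysl_V \tensor \id$ commutes with $\id \tensor \Ysl_{V'}$ on $V \tensor V'$, we have $e^{\Ysl_{V \tensor V'}} = e^{\Ysl_V} \tensor e^{\Ysl_{V'}}$. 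Hence $\Fsh(V \tensor V') = \Fsh V \tensor \Fsh V'$, and therefore $\Csh_{V \tensor V'} = \Csh_V \tensor \Csh_{V'}$.

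Next I would invoke the canonical decomposition. By \Cref{lem:sltwo-polarizations}, $V$ decomposes as a direct sum of $\sltwo$-Hodge structures of the form $S_m \tensor W$, where $W$ carries a trivial $\sltwo$-action. On $W$, we have $\Ysl = 0$ and $\wsl = \id$, so $\Fsh W = FW$ and $\Csh_W = C_W$; combined with the tensor-product compatibility, this reduces the claim to the case $V = S_m$. Since $S_m = \Sym^m S_1$ is a direct summand of $S_1^{\tensor m}$ cut out by a symmetrization that commutes with $C$, $\wsl$, and $\Csh$ (all act diagonally via the $\SL_2$-action), it further reduces to $V = S_1$.

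For $V = S_1$, a direct calculation using \parref{par:Sm} finishes the argument: the vectors $a,b$ satisfy $a \in V_1^{1,1}$ and $b \in V_{-1}^{0,0}$, so $Ca = -a$, $Cb = b$, while $\wsl a = -b$ and $\wsl b = a$, giving $\wsl C \cdot a = b$ and $\wsl C \cdot b = a$. On the other hand, $\Fsh^1 = e^{\Ysl} \CC a = \CC(a+b)$ and its conjugate (computed using \eqref{eq:Fb-sltwo}) is $\CC(a-b)$, so $V^{\sharp,1,0} = \CC(a+b)$ and $V^{\sharp,0,1} = \CC(a-b)$. Hence $\Csh(a+b) = a+b$ and $\Csh(a-b) = -(a-b)$, which gives $\Csh a = b$ and $\Csh b = a$, matching $\wsl C$. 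The main obstacle is really only the tensor-product compatibility of $\Csh$, i.e.\ the identity $\Fsh(V \tensor V') = \Fsh V \tensor \Fsh V'$; once this is unwound from the definitions, the rest is bookkeeping and the explicit computation above.
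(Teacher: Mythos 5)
Your proposal is correct and follows essentially the same route as the paper: reduce to the standard representation $S_1$ via the isotypical decomposition and compatibility with (symmetric/tensor) powers, then verify $\Csh a = b$, $\Csh b = a$, $\wsl C a = b$, $\wsl C b = a$ by the explicit two-by-two computation. You merely spell out the compatibility of $C$, $\wsl$, and $\Fsh$ with direct sums and tensor products, which the paper leaves implicit in its phrase ``as before, it suffices to prove this in the case of $S_1$.''
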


\begin{proof}
	As before, it suffices to prove this in the case of the standard $\sltwo$-Hodge
	structure on $S_1$. If we again denote the two basis vectors by $a$ and $b$,
	then
	\begin{align*}
		C a = -a, \quad C b = b, \quad \wsl a = -b, \quad \wsl b = a, \quad
		\Csh a = b, \quad \Csh b = a,
	\end{align*}
	from which it follows that $\Csh = \wsl \circ C$.
\end{proof}

\newpar
For later use, let us also record the Hodge decompositions of the three operators
$\Hsl, \Xsl, \Ysl \in \End(V)$ that constitute the $\sltwo(\CC)$-representation. Let
\[
	\End(V) = \bigoplus_{j \in \ZZ} \End(V)^{j,-j}
\]
denote the Hodge structure of weight $0$ on $\End(V)$, induced by the Hodge structure
$e^{\Ysl} F$ on $V$. For any $A \in \End(V)$, we write the Hodge decomposition in the
form
\[
	A = \sum_{j \in \ZZ} A_j,
\]
with $A_j \in \End(V)^{j,-j}$.

\begin{plem} \label{lem:Hodge-HXY}
	The Hodge decompositions of $\Ysl, \Xsl, \Hsl \in \End(V)$ look like
	\[
		\Ysl = \Ysl_{-1} + \Ysl_0 + \Ysl_1, \quad
		\Xsl = -\Ysl_{-1} + \Ysl_0 - \Ysl_1, \quad
		\Hsl = -2 \Ysl_{-1} + 2 \Ysl_1,
	\]
	and conversely, we have
	\[
		4 \Ysl_{-1} = \Ysl - \Hsl - \Xsl, \quad
		2\Ysl_0 = \Ysl + \Xsl, \quad
		4 \Ysl_1 = \Ysl + \Hsl - \Xsl
	\]
\end{plem}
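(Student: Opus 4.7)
The plan is to reduce the three identities to the case of the standard $\sltwo$-representation $S_1$, where they can be checked by a direct $2 \times 2$ matrix computation.

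First, I would use the isotypic decomposition of polarized $\sltwo$-Hodge structures from \Cref{lem:sltwo-polarizations} to write
\[
	V \cong \bigoplus_m S_m \tensor_{\CC} \Hom_{\CC}(S_m, V)^{\sltwo(\CC)},
\]
where $\sltwo(\CC)$ acts trivially on the second tensor factor. Each of $\Ysl$, $\Xsl$, $\Hsl$ acts as (its restriction to $S_m$)$\tensor \id$ on every summand, and the Hodge structure on $V$ with filtration $\Fsh = e^\Ysl F$ corresponds under this isomorphism to the tensor product of $e^\Ysl F$ on $S_m$ with the given Hodge structure on the multiplicity space. Since the Hodge decomposition of $\End$ of a tensor product of Hodge structures splits bidegree-by-bidegree, the three identities for $V$ reduce to the same identities on each $S_m$.

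Second, I would pass from $S_m = \Sym^m(S_1)$ to $S_1$. The symmetrization embedding $S_m \hookrightarrow S_1^{\tensor m}$ is a morphism of polarized $\sltwo$-Hodge structures (up to an inessential normalization on the polarization), and on $S_1^{\tensor m}$ each of $\Ysl, \Xsl, \Hsl$ acts by the Leibniz rule as a sum $\sum_{i=1}^m \id^{\tensor(i-1)} \tensor \bullet \tensor \id^{\tensor(m-i)}$. Each such summand has its Hodge components determined directly from those of $\bullet$ on $S_1$, so if the three identities hold on $S_1$ they hold on $S_1^{\tensor m}$, and then on the sub-$\sltwo$-Hodge structure $S_m$ by functoriality.

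Third, on $S_1$ with basis $\{a, b\}$, \Cref{thm:sltwo-orbit} identifies $V^{1,0}_{\Fsh} = \CC(a+b)$ and $V^{0,1}_{\Fsh} = \CC(a-b)$. I would write $\Ysl, \Xsl, \Hsl$ as $2 \times 2$ matrices in the basis $\{a+b, a-b\}$, using $\Ysl a = b$, $\Xsl b = a$, $\Hsl a = a$, $\Hsl b = -b$ and zero on the remaining generators. Splitting each matrix into its diagonal part, of Hodge bidegree $(0,0)$, and its two off-diagonal entries, of bidegrees $(\pm 1, \mp 1)$, isolates the components $\Ysl_{-1}, \Ysl_0, \Ysl_1$ and makes the three claimed identities visible by inspection; the inverse formulas follow by solving the resulting invertible linear system.

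The main obstacle is the bookkeeping needed in the first two steps: one has to verify carefully that the Hodge decomposition of $\End$ (induced by $\Fsh$) is compatible with both the isotypic direct sum and the tensor-product structure of $S_m \subseteq S_1^{\tensor m}$. Once this functoriality is in place, the computation on $S_1$ is immediate.
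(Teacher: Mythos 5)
Your proposal is correct and follows essentially the same route as the paper: reduce via the isotypic decomposition and compatibility with symmetric powers to the standard representation $S_1$, and then verify the identities by an explicit $2 \times 2$ computation in the Hodge decomposition $S_1 = \CC(a+b) \oplus \CC(a-b)$. The paper phrases the final step by checking that $\Ysl - \Hsl - \Xsl$, $\Ysl + \Xsl$, and $\Ysl + \Hsl - \Xsl$ have Hodge types $(-1,1)$, $(0,0)$, $(1,-1)$, which is the same computation as your diagonal/off-diagonal splitting in the basis $\{a+b,\,a-b\}$.
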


\begin{proof}
	It is again enough to prove this for the standard $\sltwo$-Hodge structure on
	$S_1$. Denote the two basis vectors by $a$ and $b$; then the Hodge decomposition
	is
	\[
		S_1 = \CC(a+b) \oplus \CC(a-b).
	\]
	With respect to the basis $a,b$, we have
	\[
		\Ysl - \Hsl - \Xsl = \begin{pmatrix} -1 & -1 \\ 1 & 1 \end{pmatrix}, \quad
		\Ysl + \Xsl = \begin{pmatrix} 0 & 1 \\ 1 & 0 \end{pmatrix}, \quad
		\Ysl + \Hsl - \Xsl = \begin{pmatrix} 1 & -1 \\ 1 & -1 \end{pmatrix},
	\]
	and it is easy to see that these three matrices have Hodge type $(-1,1)$, $(0,0)$,
	and $(1,-1)$, respectively. This suffices to conclude the proof.
\end{proof}

\newpar \label{par:Casimir}
One nice consequence of these identities is that the isotypical components of the 
$\sltwo(\CC)$-representation are actually sub-Hodge structures. This is most easily
proved with the help of the \define{Casimir operator}
\[
	\Omega = 2(\Xsl \Ysl + \Ysl \Xsl) + \Hsl^2 + \id 
	= 4 \Xsl \Ysl + (\Hsl + \id)^2 \in \End(V).
\]
By general theory, $\Omega$ commutes with $\Xsl,\Hsl, \Ysl$, and acts on the $m$-th summand
in the decomposition \eqref{eq:decomposition} as multiplication by
$(m+1)^2$. A brief computation using the identities in \Cref{lem:Hodge-HXY} gives
\begin{align*}
	\Omega &= 2(-\Ysl_{-1} + \Ysl_0 - \Ysl_1)(\Ysl_{-1} + \Ysl_0 + \Ysl_1) \\
			 &\hphantom{=2(-\Ysl_{-1}\,\,} + 2(\Ysl_{-1} + \Ysl_0 
	+ \Ysl_1)(-\Ysl_{-1} + \Ysl_0 - \Ysl_1) 
	+ 4(\Ysl_1 - \Ysl_{-1})^2 + \id \\
			 &= (2 \Ysl_0 + \id)^2 - 16 \Ysl_1 \Ysl_{-1}.
\end{align*}
This means that $\Omega$ is an endomorphism of the Hodge structure on $V$, and
therefore that each isotypical component $S_m \tensor_{\CC} \Hom_{\CC}(S_m,
V)^{\sltwo(\CC)}$ in the decomposition \eqref{eq:decomposition} is automatically a
sub-Hodge structure.

\subsection{The associated variation of Hodge structure}
\label{subsec:sltwo-VHS}

\newpar
In fact, a polarized $\sltwo$-Hodge structure of weight $n$ determines a polarized
variation of Hodge structure of weight $n$ on the punctured disk $\dst$. These
``model variations'' are important, both as examples of the general theory, and
because they are used to prove the Hodge norm estimates.

\newpar
Let $V$ be an $\sltwo$-Hodge structure of weight $n$, and let $Q$ be a polarization.
From the previous section, we keep the notation
\[
	F^p = \bigoplus_{i \geq p, j} V_{i+j-n}^{i,j}
\]
for the total Hodge filtration. By construction, $\Hsl(F^p) \subseteq F^p$, 
$\Ysl(F^p) \subseteq F^{p-1}$, and $\Xsl(F^p) \subseteq F^{p+1}$. We now define a
holomorphic mapping
\[
	\Phi \colon \HH \to \Dch, \quad \Phi(z) = e^{-z \Ysl} F.
\]
At the point $z = -1$, the filtration $e^{\Ysl} F$ is the Hodge filtration of a
polarized Hodge structure of weight $n$ (by \Cref{thm:sltwo-orbit}), and 
therefore $\Phi(-1) \in D$. Now write $z = x + iy$, with $x < 0$. From the relation
$[\Hsl, \Ysl] = -2\Ysl$, we get
\[
	e^{-z \Ysl} = e^{-iy \Ysl} e^{\abs{x} \Ysl} 
	= e^{-iy \Ysl} e^{-\half \log \abs{x} \, \Hsl} e^{\Ysl} e^{\half \log \abs{x} \, \Hsl},
\]
and therefore
\begin{equation} \label{eq:Phi-sltwo}
	\Phi(z) = e^{-iy \Ysl} e^{-\half \log \abs{x} \, \Hsl} \cdot e^{\Ysl} F
	= e^{-iy \Ysl} e^{-\half \log \abs{x} \, \Hsl} \cdot \Phi(-1).
\end{equation}
Both exponential factors are elements of the real group $G = \Aut(V,Q)$, because
$\Hsl^{\dagger} = -\Hsl$ and $\Ysl^{\dagger} = \Ysl$, and so we conclude that
$\Phi(z) \in D$ for every $z \in \HH$.

\newpar
We are going to show that $\Phi$ is the period mapping of a polarized variation of Hodge
structure. Let $E = \HH \times V$ be the trivial smooth vector bundle with fiber $V$,
and let $d \colon A^0(\HH, E) \to A^1(\HH, E)$ be the flat connection induced by
differentiation. The Hodge structures $\Phi(z)$ give us a decomposition 
\[
	E = \bigoplus_{p+q=n} E^{p,q}
\]
into smooth subbundles. This can be described concretely as follows. Let
\[
	V = \bigoplus_{p+q=n} V^{p,q}
\]
denote the Hodge decomposition in the Hodge structure $\Phi(-1)$. Due to
\eqref{eq:Phi-sltwo}, the Hodge decomposition in the Hodge structure $\Phi(z)$ is
then equal to
\[
	V = \bigoplus_{p+q=n} V_{\Phi(z)}^{p,q} 
	= \bigoplus_{p+q=n} e^{-iy \Ysl} e^{-\half \log\abs{x} \, \Hsl} V^{p,q},
\]
and so the subbundle $E^{p,q} \subseteq E$ is the image of the smooth mapping
\[
	\HH \times V^{p,q} \to \HH \times V, \quad
	(z, v) \mapsto e^{-iy \Ysl} e^{-\half \log\abs{x} \, \Hsl} v.
\]

\newpar
It is now a simple matter to verify the conditions in \Cref{def:VHS}. Every
smooth section in $A^0(\HH, E^{p,q})$ can be written in the form
\[
	e^{-iy \Ysl} e^{-\half \log\abs{x} \, \Hsl} f,
\]
where $f \colon \HH \to V^{p,q}$ is a smooth function. We have
\begin{align*}
	d \Bigl( e^{-iy \Ysl} &e^{-\half \log\abs{x} \, \Hsl} f \Bigr) \\
	&= e^{-iy \Ysl} e^{-\half \log\abs{x} \, \Hsl} \biggl( 
		-i e^{\half \log\abs{x} \, \Hsl} \Ysl e^{-\half \log\abs{x} \, \Hsl} f \tensor dy
		+ \frac{1}{2 \abs{x}} \Hsl f \tensor dx + df \biggr) \\
	&= e^{-i y \Ysl} e^{-\half \log\abs{x} \, \Hsl} \biggl( 
		- \frac{i}{\abs{x}} \Ysl f \tensor dy
		+ \frac{1}{2 \abs{x}} \Hsl f \tensor dx + df \biggr)
\end{align*}
Remembering the identities (from \Cref{lem:Hodge-HXY})
\[
	\Ysl = \Ysl_{-1} + \Ysl_0 + \Ysl_1 \quad \text{and} \quad
	\Hsl = - 2 \Ysl_{-1} + 2 \Ysl_1, 
\]
we can rewrite the factor in parentheses as
\[
	- \frac{i}{\abs{x}} \Ysl f \tensor dy
	+ \frac{1}{2 \abs{x}} \Hsl f \tensor dx + df
	= df - \frac{1}{\abs{x}} \Ysl_{-1} f \tensor \dz + \frac{1}{\abs{x}} \Ysl_1 f
	\tensor \dzb - \frac{1}{2 \abs{x}} \Ysl_0 f \tensor (\dz - \dzb)
\]
It follows that $d = \partial + \theta + \delb + \thetast$, just as in
\Cref{def:VHS}, with
\begin{align*}
	g^{-1} \partial g &= \frac{\partial}{\partial z} - \frac{1}{2 \abs{x}} \Ysl_0
	\tensor \dz, &
	g^{-1} \theta g &= - \frac{1}{\abs{x}} \Ysl_{-1} \tensor \dz, \\
	g^{-1} \delb g &= \frac{\partial}{\partial \zb} + \frac{1}{2 \abs{x}} \Ysl_0
	\tensor \dzb,  &
	g^{-1} \thetast g &= \frac{1}{\abs{x}} \Ysl_1 \tensor \dz,
\end{align*}
where $g = e^{-iy \Ysl} e^{-\half \log\abs{x} \, \Hsl} \in G$. Evidently, these
operators have the required bidegrees $(1,0) \tensor (0,0)$, $(0,1) \tensor (0,0)$, $(1,0)
\tensor (-1,1)$, and $(0,1) \tensor (1,-1)$, respectively. Therefore $\Phi$ is indeed
the period mapping of a polarized variation of Hodge structure on $\HH$. From the
identity
\[
	\Phi(z + 2 \pi i) = e^{-2 \pi i \Ysl} \Phi(z), 
\]
we see that the variation of Hodge structure is pulled back from the punctured disk
$\dst$; the monodromy operator is the unipotent operator
\[
	T = e^{-2 \pi i \Ysl} \in G,
\]
and in our usual notation, we therefore have $N = -\Ysl$.

\newpar \label{par:model-monodromy}
In fact, we can easily turn this into an example with non-unipotent monodromy as
well. Let $S \in \End(V)$ be an operator with the property that $S^{\dagger} = S$,
and assume that $S$ is an endomorphism of the $\sltwo$-Hodge
structure on $V$. Concretely, this means that $[\Hsl, S] = [\Ysl, S] = 0$ and
that $S(F^p) \subseteq F^p$ for every $p \in \ZZ$. It follows that $S$ is
self-adjoint with respect to the inner product in \eqref{eq:inner-product-sltwo}, and
so $S$ is automatically semisimple with real eigenvalues. We still have
\[
	\Phi(z + 2 \pi i) = e^{2 \pi i (S - \Ysl)} \Phi(z),
\]
and so we can also consider $\Phi$ as the period mapping of a polarized variation of
Hodge structure on $\dst$ with monodromy operator
\[
	T = T_s T_u = e^{2 \pi i S} e^{-2 \pi i \Ysl} \in G.
\]
This is the Jordan decomposition of $T$.

\newpar
As an aside, let us verify the result in \Cref{cor:Higgs-bound} about the
pointwise Hodge norm of the Higgs field in this example. Since $\Phi(z) = g \cdot
\Phi(-1)$, we have
\[
	\bignorm{\theta_{\partial/\partial z}}_{\Phi(z)}^2
	= \bignorm{g^{-1} \theta_{\partial/\partial z} g}_{\Phi(-1)}^2
	= \frac{1}{\abs{x}^2} \bignorm{Y_{-1}}_{\Phi(-1)}^2,
\]
and so the pointwise Hodge norm of the Higgs field is exactly a constant multiple of
$\abs{x}^{-1}$ in this case. (It is a pleasant exercise to check that when $V =
S_m$ is the irreducible representation of dimension $m+1$, and hence $\rk E = m+1$,
one has $\bignorm{Y_{-1}}_{\Phi(-1)}^2 = \frac{1}{4} \binom{m+2}{3}$, and so equality
is achieved in \Cref{cor:Higgs-bound}; this explains what we found during
the proof of \Cref{lem:nilpotent}.)

\newpar
We are now going to derive a simple formula for the Hodge metric in our ``model''
variation of Hodge structure. To simplify the notation, let us write $\norm{v}$ and
$\inner{v}{w}$ for the inner product in \eqref{eq:inner-product-sltwo}; according to
\Cref{thm:sltwo-orbit}, we have
\[
	\norm{v} = \norm{v}_{\Phi(-1)} \quad \text{and} \quad
	\inner{v}{w} = \inner{v}{w}_{\Phi(-1)},
\]
and so this agrees with our usual notation. From \eqref{eq:Phi-sltwo}, we get
\[
	\biginner{v}{w}_{\Phi(z)} 
	= \Bigl\langle e^{\half \log{\abs x} \, \Hsl} e^{iy \Ysl} v, \,
	e^{\half \log \abs{x} \, \Hsl} e^{iy \Ysl} w \Bigr\rangle
\]
and since $\Hsl^{\ast} = \Hsl$ and $\Ysl^{\ast} = \Xsl$ by
\Cref{thm:sltwo-orbit}, we can rewrite this as
\[
	\biginner{v}{w}_{\Phi(z)} 
	= \Bigl\langle e^{\log{\abs x} \, \Hsl} e^{iy \Ysl} v, \, e^{iy \Ysl} w \Bigr\rangle
	= \Bigl\langle e^{-iy \Xsl} e^{\log{\abs x} \, \Hsl} e^{iy \Ysl} v, \, w \Bigr\rangle.
\]

\newpar \label{par:model-VHS}
The conclusion that we can draw from this is that the weight filtration
\[
	W_{\ell} = E_{\ell}(\Hsl) \oplus E_{\ell-1}(\Hsl) \oplus E_{\ell-2}(\Hsl) \oplus \dotsb
\]
controls the rate of growth of the Hodge norm in these examples. Indeed, suppose that
we have a vector $v \in V$ with $\Hsl v = \ell v$. Then
\[
	e^{-iy \Xsl} e^{\log\abs{x} \, \Hsl} e^{iy \Ysl} v 
	= e^{-iy \Xsl} \sum_{k=0}^{\infty} \frac{(iy)^k}{k!} \abs{x}^{\ell-2k} \Ysl^k v
	= \sum_{j,k=0}^{\infty} (-1)^j \frac{(iy)^{j+k}}{j! k!} \abs{x}^{\ell-2k}
	\Xsl^j \Ysl^k v,
\]
and since different weight spaces are orthogonal under the inner product, we get
\[
	\norm{v}_{\Phi(z)}^2 
	= \sum_{k=0}^{\infty} (-1)^k \frac{(iy)^{2k}}{(k!)^2} \abs{x}^{\ell-2k}
	\inner{\Xsl^k \Ysl^k v}{v}
	= \sum_{k=0}^{\infty} \frac{y^{2k}}{(k!)^2} \abs{x}^{\ell-2k}
	\norm{\Ysl^k v}^2.
\]
As long as $y = \Im z$ remains bounded, this expression grows like $\abs{x}^{\ell} =
\abs{\Re z}^{\ell}$; in fact, the leading term is exactly $\abs{\Re z}^{\ell}
\norm{v}^2$.

\subsection{Recognizing polarized $\sltwo$-Hodge structures}

\newpar
We end our discussion of $\sltwo$-Hodge structures by proving a converse to
\Cref{thm:sltwo-orbit}. The result below gives us a way to recognize polarized
$\sltwo$-Hodge structures, and is going to be the key step in the
construction of the limiting mixed Hodge structure. Let $V$ be a finite-dimensional
representation of the Lie algebra $\sltwo(\CC)$, and let $Q \colon V
\tensor_{\CC} \Vb \to \CC$ be a nondegenerate hermitian pairing such that
$\Xsl^{\dagger} = \Xsl$, $\Ysl^{\dagger} = \Ysl$, and $\Hsl^{\dagger} = -\Hsl$, where
the dagger always means the adjoint of an operator with respect to $Q$.

\newpar
Now suppose that we have a decreasing filtration $F$ on the vector
space $V$, with the following two properties:
\begin{aenumerate}
\item One has $\Ysl(F^{\bullet}) \subseteq F^{\bullet-1}$ and $\Hsl(F^{\bullet}) =
	F^{\bullet}$.
\item	The filtration $e^{\Ysl} F$ is the Hodge filtration of a polarized 
	Hodge structure of weight $n$ on $V$, polarized by the pairing $Q$.
\end{aenumerate}
It will be convenient to have a name for this kind of object, so let us agree
to call such a filtration $F$ an \define{$\sltwo$-Hodge filtration} (of weight $n$). 

\begin{pthm} \label{thm:sltwo-filtration}
	If $F$ is an $\sltwo$-Hodge filtration on the vector space $V$, of weight $n$,
	then $V$ has a unique $\sltwo$-Hodge structure of weight $n$ such that
	\[
		F^p = \bigoplus_{i \geq p,j} V_{i+j-n}^{i,j} \quad \text{for all $p \in \ZZ$,}
	\]
	and this $\sltwo$-Hodge structure is polarized by the pairing $Q$.
\end{pthm}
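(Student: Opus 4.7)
The plan is to construct the bigrading of the desired $\sltwo$-Hodge structure directly from $F$ and an appropriate $Q$-dual filtration, using hypothesis (b) as the source of positivity. Writing $V_k = E_k(\Hsl)$, condition (a) gives the decomposition $F^p = \bigoplus_k (F^p \cap V_k)$. I would define the candidate conjugate filtration $\Fb^q = \menge{v \in V}{Q(v,x) = 0 \text{ for all } x \in F^{n-q+1}}$; using $\transp{\Hsl} = -\Hsl$ together with $\Hsl$-invariance of $F$ one checks that $\Hsl \Fb^q \subseteq \Fb^q$ (so $\Fb$ also decomposes by $\Hsl$-eigenspaces), and using $\transp{\Ysl} = \Ysl$ (so $e^{\Ysl}$ is its own $Q$-adjoint) one identifies $\Fb$ with $e^{\Ysl}$ applied to the conjugate Hodge filtration of the polarized Hodge structure $(V, e^{\Ysl}F, Q)$. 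The candidate Hodge bigrading is then $V_k^{i,j} = V_k \cap F^i \cap \Fb^j$ for $i + j = n + k$.

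The heart of the proof is to check that on each weight space $V_k$, the restricted filtrations $F \cap V_k$ and $\Fb \cap V_k$ are opposed of weight $n+k$. I would first perform a Hodge-type analysis in $\End(V)$ with respect to $e^{\Ysl} F$: from $\Ysl F \subseteq F^{\bullet-1}$ one deduces $\Ysl(e^{\Ysl}F) \subseteq (e^{\Ysl}F)^{\bullet-1}$, so the Hodge component $\Ysl_r$ of Hodge type $(r,-r)$ vanishes for $r \geq 2$; combining with $\transp{\Ysl} = \Ysl$, which implies $\Ysl_r = \transp{(\Ysl_{-r})}$, one concludes $\Ysl = \Ysl_{-1} + \Ysl_0 + \Ysl_1$, exactly as in \Cref{lem:Hodge-HXY}. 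The analogous analysis applied to $\Hsl$, using $\Hsl$-invariance of $F$ together with $\transp{\Hsl} = -\Hsl$, yields $\Hsl = \Hsl_{-1} + \Hsl_0 + \Hsl_1$.

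With these Hodge-component bounds in hand, I would use the representation-theoretic fact that $\Ysl^k : V_k \to V_{-k}$ is an isomorphism for $k \geq 0$ in any finite-dimensional representation of $\sltwo(\CC)$, combined with the inclusion $\Ysl^k(F^p \cap V_k) \subseteq F^{p-k} \cap V_{-k}$ from hypothesis (a) and the $Q$-duality formula
\[
	\dim(\Fb^q \cap V_k) = \dim V_k - \dim(F^{n-q+1} \cap V_{-k}),
\]
to establish the opposedness $V_k = (F^p \cap V_k) \oplus (\Fb^{n+k-p+1} \cap V_k)$. The positivity of the polarization $Q \circ (\id \tensor \wsl)$ on each $V_k$ then follows by transporting the positivity of $(V, e^{\Ysl}F, Q)$ through $e^{-\Ysl}$; that $\Ysl$ and $\Xsl$ are morphisms of Hodge structures up to Tate twist is a direct consequence of the bidegree constraints, and uniqueness of the $\sltwo$-Hodge structure is automatic, because the total Hodge filtration $F$ determines the bigrading.

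The hardest step is establishing the key equality $\Ysl^k(F^p \cap V_k) = F^{p-k} \cap V_{-k}$ (rather than just inclusion): this surjectivity at the level of filtered pieces cannot be checked purely within the $\sltwo$-representation, and I expect it will require carefully using the opposedness of the polarized Hodge structure $(V, e^{\Ysl}F, Q)$ to pin down the dimensions $\dim(F^p \cap V_k)$ via the Hodge-type decomposition of $\Ysl^k$ coming from the preceding step.
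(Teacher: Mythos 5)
Your scaffolding is fine and parallels the paper's preliminaries: defining $\Fb^q$ as the $Q$-annihilator of $F^{n-q+1}$, checking $\Hsl(\Fb^{\bullet})\subseteq\Fb^{\bullet}$, identifying $e^{-\Ysl}\Fb$ with the conjugate Hodge filtration of $e^{\Ysl}F$, and deriving the component bounds $\Ysl=\Ysl_{-1}+\Ysl_0+\Ysl_1$ (and the analogue for $\Hsl$) in $\End(V)$. But the proof stops exactly where the theorem's content begins. The equality you defer, $\Ysl^k\bigl(F^p\cap V_k\bigr)=F^{p-k}\cap V_{-k}$ (equivalently $\dim(F^p\cap V_k)=\dim(F^{p-k}\cap V_{-k})$), is not a technical loose end to be "pinned down later": it is essentially the whole theorem, and nothing in your outline produces it — the hypotheses give only the inclusion, hence only one inequality of dimensions. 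Moreover, even granting that equality, your duality formula yields only $\dim(F^p\cap V_k)+\dim(\Fb^{\,n+k+1-p}\cap V_k)=\dim V_k$; opposedness also requires the transversality $F^p\cap\Fb^q\cap V_k=0$ for $p+q>n+k$, which no dimension count can supply. Some positivity input is unavoidable at both points.

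The positivity step as you state it does not work: $e^{-\Ysl}$ is not an isometry of $Q$ (since $\Ysl^{\dagger}=\Ysl$, one has $Q(e^{-\Ysl}v,e^{-\Ysl}w)=Q(v,e^{-2\Ysl}w)$), and it does not preserve the weight spaces $E_k(\Hsl)$, so there is no direct "transport" of the polarized Hodge structure $e^{\Ysl}F$ to Hodge structures on the $V_k$; the interaction between that single Hodge structure and the graded pieces is precisely the hard part of the statement. Likewise, the hypotheses say nothing about $\Xsl(F^{\bullet})$, so "$\Xsl$ is a morphism by bidegree constraints" is not available; its compatibility must be deduced from the Lefschetz decomposition being compatible with the bigrading, which again passes through the unproven key step. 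For comparison, the paper circumvents all of this by first proving the statement on the $\sltwo(\CC)$-invariants (\Cref{prop:invariants}), where positivity of $Q$ on the Hodge pieces of $e^{\Ysl}F$ together with the identity $2\Ysl_0=2[\Ysl_{-1},\Ysl_1]+[\Ysl_0,\Hsl_0]$ of \Cref{lem:identity} forces the lowest Hodge component of an invariant vector to be invariant, and then reducing the general case to the invariant one by applying this to the induced $\sltwo$-Hodge filtration on $\Hom_{\CC}(S_m,V)$ (\Cref{lem:sltwo-filtration-Hom}) and reassembling through the evaluation map. To complete your route you would need an argument of comparable substance for the hard-Lefschetz equality and for transversality/positivity; as written these are asserted or hoped for, not proved, so the proposal has a genuine gap.
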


\newpar
The proof is basically just linear algebra, with a little bit of representation
theory mixed in. We note the following useful corollary.

\begin{pcor} \label{cor:sltwo-endomorphism}
	If $S \in \End(V)$ satisfies $[\Hsl, S] = [\Ysl, S] = 0$ and $S^{\dagger} = S$,
	and if $S(F^{\bullet}) \subseteq F^{\bullet}$, then $S$ is an endomorphism
	of the $\sltwo$-Hodge structure that also respects the polarization.
\end{pcor}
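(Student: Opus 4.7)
The plan is to verify two things: that $S$ is a morphism in the category of $\sltwo$-Hodge structures (meaning $S$ commutes with all three generators $\Hsl, \Xsl, \Ysl$ and preserves the bigrading $V_k^{i,j}$), and that the polarization condition is respected. The hypotheses already provide $[\Hsl, S] = [\Ysl, S] = 0$, $S(F^{\bullet}) \subseteq F^{\bullet}$, and $S^{\dagger} = S$; the last of these directly expresses compatibility of $S$ with the polarization. So the only new facts to establish are that $S$ preserves the conjugate filtration $\Fb$, and that $S$ commutes with $\Xsl$.

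For preservation of $\Fb$, I would apply the characterization $\Fb^q = \{v \in V : Q(v,x) = 0 \text{ for all } x \in F^{n-q+1}\}$ from the Note preceding \Cref{thm:sltwo-orbit}. Given $v \in \Fb^q$ and $x \in F^{n-q+1}$, the self-adjointness of $S$ yields $Q(Sv, x) = Q(v, S^{\dagger} x) = Q(v, Sx)$, which vanishes because $Sx \in F^{n-q+1}$. Hence $Sv \in \Fb^q$. Together with $[\Hsl, S] = 0$ (which gives that $S$ preserves each weight space $V_k$) and the hypothesis on $F$, this shows $S(V_k^{i,j}) \subseteq V_k^{i,j}$ for all $i,j,k$.

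For the commutator $[\Xsl, S] = 0$, the idea is to reduce to the known identity $[\Ysl, S] = 0$ by a formal adjoint argument using the associated polarized Hodge structure. By \Cref{thm:sltwo-orbit}, the filtration $e^{\Ysl} F$ is the Hodge filtration of a polarized Hodge structure of weight $n$ polarized by $Q$, and in the resulting Hodge inner product $\inner{\argbl}{\argbl}$ the adjoint of $\Ysl$ is $\Xsl$. Since $[\Ysl, S] = 0$ implies that $S$ commutes with $e^{\Ysl}$, the operator $S$ preserves $e^{\Ysl} F$; the same self-adjointness argument as in the previous paragraph (applied to this new filtration and its $Q$-conjugate) shows that $S$ preserves the full Hodge decomposition of this polarized Hodge structure. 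Unraveling
\[
\inner{Sv}{w} = \sum_{p+q=n} (-1)^q Q\bigl((Sv)^{p,q}, w^{p,q}\bigr)
\]
and using $S^{\dagger} = S$ together with $(Sv)^{p,q} = S(v^{p,q})$ then gives $\inner{Sv}{w} = \inner{v}{Sw}$, so $S$ is also self-adjoint for $\inner{\argbl}{\argbl}$. Taking the $\inner{\argbl}{\argbl}$-adjoint of the identity $[\Ysl, S] = 0$ and using $\Ysl^{\ast} = \Xsl$ and $S^{\ast} = S$ yields $[\Xsl, S] = 0$, as desired.

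The main obstacle is recognizing that the linear-algebraic manipulation hinges on the precise relationship $\Ysl^{\ast} = \Xsl$ supplied by \Cref{thm:sltwo-orbit}, which in turn relies on passing through the associated polarized Hodge structure with filtration $e^{\Ysl} F$; once this adjoint identity is in hand, everything else is routine bookkeeping. An alternative approach would be to decompose $V$ into $\sltwo$-isotypical components via \Cref{lem:sltwo-polarizations} and argue that $[\Hsl, S] = [\Ysl, S] = 0$ forces $S$ to act via the multiplicity spaces, so that $[\Xsl, S] = 0$ follows on each irreducible summand; but the Hodge-inner-product argument above is cleaner and avoids having to track the isotypical decomposition explicitly.
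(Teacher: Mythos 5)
Your proposal is correct. Note that the paper states this corollary without printing a proof, treating it as an immediate consequence of \Cref{thm:sltwo-filtration}, so there is no official argument to match; what you supply is a legitimate filling-in. The two halves of your argument are exactly the right ones: preservation of $\Fb$ via the characterization $\Fb^q = \menge{v}{Q(v,x)=0 \text{ for } x \in F^{n-q+1}}$ together with $S^{\dagger}=S$, and then $[\Xsl,S]=0$, after which $[\Hsl,S]=0$ and preservation of $F$ and $\Fb$ give $S(V_k^{i,j}) \subseteq V_k^{i,j}$, while $S^{\dagger}=S$ (equivalently, as you check, self-adjointness for the Hodge inner product) is the compatibility with $Q$. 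The only stylistic remark is that your detour through \Cref{thm:sltwo-orbit}(c) and the inner product of the associated Hodge structure, while sound and free of circularity (the theorem is already available at this point), is more than is needed for the commutator: setting $A=[\Xsl,S]$, the hypotheses give $[\Hsl,A]=2A$ and $[\Ysl,A]=[[\Ysl,\Xsl],S]=-[\Hsl,S]=0$, so $A$ is a vector of positive $\ad\Hsl$-weight annihilated by the lowering operator $\ad\Ysl$ in the finite-dimensional representation $\End(V)$, hence $A=0$. This purely representation-theoretic shortcut (which is also what makes your alternative isotypical-decomposition suggestion work) avoids the Hodge-metric bookkeeping entirely; either route is fine.
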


\newpar
The proof of \Cref{thm:sltwo-filtration} uses the decomposition
\[
	V \cong \bigoplus_{m \in \NN} 
	S_m \tensor_{\CC} \Hom_{\CC}(S_m, V)^{\sltwo(\CC)}
\]
into isotypical components, coming from Schur's lemma. Recall that each $S_m$ has a
canonical polarized $\sltwo$-Hodge structure of weight $m$. According to
\Cref{thm:sltwo-orbit}, the total Hodge filtration $F^{\bullet} S_m$ is an
$\sltwo$-Hodge filtration in the above sense. We are going to argue that the
$\sltwo$-Hodge filtrations on $S_m$ and $V$ give each vector space $\Hom_{\CC}(S_m,
V)^{\sltwo(\CC)}$ a polarized Hodge structure of weight $n-m$, in a way that is
compatible with the above decomposition. The result we want will then follow from the
fact that each irreducible representation $S_m$ is a polarized $\sltwo$-Hodge
structure of weight $m$.

\newpar
The first thing to do is to analyze the subspace of $\sltwo(\CC)$-invariants;
everything else is going to follow from this case by functoriality.

\begin{pprop} \label{prop:invariants}
	Let $F$ be an $\sltwo$-Hodge filtration. Then
	\[
		V^{\sltwo(\CC)} = \menge{v \in V}{\Hsl v = \Ysl v = 0}
	\]
	has a Hodge structure of weight $n$, polarized by the restriction of $Q$, whose 
	Hodge filtration is $F \cap V^{\sltwo(\CC)}$. This Hodge structure is
	compatible with the polarized Hodge structure on $V$ defined by the filtration
	$e^{\Ysl} F$. 
\end{pprop}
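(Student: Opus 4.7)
The plan is to show that $V^{\sltwo(\CC)}$ is a sub-Hodge structure of the polarized Hodge structure $(V, \Fsh, Q)$ coming from $\Fsh = e^{\Ysl} F$. Since $\Ysl$ vanishes on $V^{\sltwo(\CC)}$, the operator $e^{\Ysl}$ acts as the identity on this subspace, so that $F^p \cap V^{\sltwo(\CC)} = \Fsh^p \cap V^{\sltwo(\CC)}$. This identifies the Hodge filtration claimed in the statement with the restriction of the ambient Hodge filtration. Once the sub-Hodge structure claim is established, positivity of the restricted $Q$ follows automatically from positivity on $(V, \Fsh, Q)$.

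A first preparatory step is to verify that the $\sltwo(\CC)$-isotypical decomposition $V = \bigoplus_{m \in \NN} V_{(m)}$ is $Q$-orthogonal, by the same argument used in the proof of \Cref{lem:sltwo-polarizations}: the invariance conditions $\Hsl^\dagger = -\Hsl$, $\Xsl^\dagger = \Xsl$, $\Ysl^\dagger = \Ysl$ make $Q$ into an equivariant sesquilinear pairing between two copies of the $\sltwo(\CC)$-representation, so Schur's lemma forces the pairing to vanish between nonisomorphic irreducible components. In particular, $Q$ restricts to a nondegenerate Hermitian form on $V^{\sltwo(\CC)} = V_{(0)}$.

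For the main step I would use the Casimir operator $\Omega = 2(\Xsl\Ysl + \Ysl\Xsl) + \Hsl^2 + \id$, which commutes with the $\sltwo(\CC)$-action, acts as the scalar $(m+1)^2$ on each $V_{(m)}$, and satisfies $\Omega^\dagger = \Omega$ (a direct computation using the invariance conditions). If $\Omega$ preserves the filtration $\Fsh$, then by $Q$-duality it automatically preserves $\bar\Fsh$ as well, and so is a morphism of the Hodge structure $(V, \Fsh)$. Its eigenspaces, the isotypical components $V_{(m)}$, are then sub-Hodge structures; in particular $V^{\sltwo(\CC)} = \ker(\Omega - \id)$ is a sub-Hodge structure, as desired.

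The heart of the proof is therefore to show $\Omega(\Fsh^p) \subseteq \Fsh^p$. From $\Ysl(F^{\bullet}) \subseteq F^{\bullet-1}$ and $\Fsh = e^{\Ysl} F$ one deduces $\Ysl(\Fsh^{\bullet}) \subseteq \Fsh^{\bullet-1}$; combined with $\Ysl^\dagger = \Ysl$, this forces the Hodge-type decomposition $\Ysl = \Ysl_{-1} + \Ysl_0 + \Ysl_1$ with $\Ysl_j$ of type $(j,-j)$ relative to $\Fsh$. The same argument, using $\Hsl(F^\bullet) = F^\bullet$, $\Hsl^\dagger = -\Hsl$, and the identity $e^{-\Ysl} \Hsl e^{\Ysl} = \Hsl - 2\Ysl$, yields $\Hsl = \Hsl_{-1} + \Hsl_0 + \Hsl_1$. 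The main obstacle is to obtain an analogous decomposition for $\Xsl$, since the hypotheses give no direct control over $\Xsl(\Fsh^p)$; my strategy is to combine the commutation relations $[\Xsl, \Ysl] = \Hsl$ and $[\Hsl, \Xsl] = 2\Xsl$ with $\Xsl^\dagger = \Xsl$ to show $\Xsl = \Ysl^{\ast}$, where $\ast$ denotes the Hodge-theoretic adjoint associated to $(\Fsh, Q)$; this is essentially the content of \Cref{lem:Hodge-HXY} in the polarized $\sltwo$-Hodge structure setting. Once it is in place, a short calculation as in \Cref{par:Casimir} reduces the Casimir to $\Omega = (2\Ysl_0 + \id)^2 - 16\Ysl_1 \Ysl_{-1}$, which is manifestly of Hodge type $(0,0)$, completing the argument.
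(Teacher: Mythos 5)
Your overall skeleton (show $V^{\sltwo(\CC)}$ is a sub-Hodge structure of the polarized Hodge structure $e^{\Ysl}F$, note that $e^{\Ysl}$ is the identity on invariants so the two filtrations agree there, and inherit the polarization) matches the paper, and your reduction of everything to the single claim that the Casimir operator $\Omega$ preserves $\Fsh=e^{\Ysl}F$ is a legitimate alternative strategy. But there is a genuine gap exactly where the real work lies: to show $\Omega(\Fsh^{\bullet})\subseteq \Fsh^{\bullet}$ you need control of the Hodge components of $\Xsl$, i.e.\ the statement $\Xsl=\Ysl^{\ast}$ (equivalently $\Xsl=-\Ysl_{-1}+\Ysl_0-\Ysl_1$ and $\Hsl_0=0$), and your justification for it is circular. \Cref{lem:Hodge-HXY} and the computation in \Cref{par:Casimir} are proved for a genuine $\sltwo$-Hodge structure (by reduction to the standard representation with its canonical Hodge structure), whereas at this point the hypotheses give you only an $\sltwo$-Hodge filtration; that $(V,F,Q)$ underlies a polarized $\sltwo$-Hodge structure is precisely what \Cref{thm:sltwo-filtration} is to prove, and \Cref{prop:invariants} is its key step. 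The fallback you sketch --- deriving $\Xsl=\Ysl^{\ast}$ from $[\Xsl,\Ysl]=\Hsl$, $[\Hsl,\Xsl]=2\Xsl$ and $\Xsl^{\dagger}=\Xsl$ --- does not go through as stated: writing $\Ysl^{\ast}=-\Ysl_{-1}+\Ysl_0-\Ysl_1$ and testing $[\Ysl^{\ast},\Ysl]=\Hsl$, one needs identities such as $[\Ysl_{-1},\Ysl_0]=\Ysl_{-1}$, $[\Ysl_0,\Ysl_1]=\Ysl_1$ and $\Hsl_0=0$, none of which follow from the commutation relations and $\dagger$-symmetries alone; decomposing $[\Hsl,\Ysl]=-2\Ysl$ by type only gives, e.g., $[\Ysl_{-1},\Ysl_0]=\Ysl_{-1}+\tfrac12[\Hsl_0,\Ysl_{-1}]$, so everything hinges on killing $\Hsl_0$, and that requires positivity. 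Nothing in your outline invokes the positivity of the polarization at the decisive moment, which is a strong sign the argument cannot close as written.

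The paper's proof avoids $\Xsl$ entirely. It uses only the decompositions $\Ysl=\Ysl_{-1}+\Ysl_0+\Ysl_1$ and $\Hsl=-2\Ysl_{-1}+\Hsl_0+2\Ysl_1$ (with $\Ysl_{-1}^{\dagger}=\Ysl_1$, $\Ysl_0^{\dagger}=\Ysl_0$, $\Hsl_0^{\dagger}=-\Hsl_0$), the bracket identity $2\Ysl_0=2[\Ysl_{-1},\Ysl_1]+[\Ysl_0,\Hsl_0]$ coming from $[\Hsl,\Ysl]=-2\Ysl$, and then a positivity argument: for $v\in V^{\sltwo(\CC)}$ with lowest Hodge component $v_p$, the relations $\Ysl v=\Hsl v=0$ give $\Ysl_{-1}v_p=0$ and $\Hsl_0v_p=-2\Ysl_0v_p$; the opposite $\dagger$-symmetries of $\Ysl_0$ and $\Hsl_0$ force $Q(\Ysl_0v_p,v_p)=0$, and plugging the bracket identity in yields $0=\norm{\Ysl_1v_p}^2+2\norm{\Ysl_0v_p}^2$, whence $v_p\in V^{\sltwo(\CC)}$ and one proceeds by induction on components ($\Hsl_0=0$ is only a posteriori). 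If you want to keep your Casimir route, you would first have to prove $\Hsl_0=0$ (or directly $\Xsl=\Ysl^{\ast}$) from the polarization, and any such proof I can see essentially reproduces this positivity computation, at which point the Casimir is no longer needed.
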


\newpar
The idea of the proof is to analyze the Hodge decompositions of the two operators
$\Hsl, \Ysl \in \End(V)$. Note that $\Hsl$ is ``real'', in the sense that it belongs
to the Lie algebra $\glie = \Lie G$; the other two operators $\Xsl, \Ysl \in i
\glie$ are ``purely imaginary'', due to the fact that the pairing $Q$ is hermitian.
(The same thing happens for the $\sltwo(\CC)$-representation on the
cohomology of a compact K\"ahler manifold $(X, \omega)$, because $\Xsl = 2
\pi i \, L_{\omega}$ and $\Ysl = (2 \pi i)^{-1} \Lambda_{\omega}$ in that case.)

\newpar
To simplify the notation, let us from now on denote by
\[
	V = \bigoplus_{p+q=n} V^{p,q}
\]
the Hodge structure of weight $n$ whose Hodge filtration is $e^{\Ysl} F$. Since $Q$
is a polarization, it is easy to see that the conjugate Hodge filtration is
$e^{-\Ysl} \Fb$, where
\[
	\Fb_q = \menge{v \in V}{\text{$Q(v,x) = 0$ for all $x \in F^{n-q+1}$}}.
\]
The Hodge structure on $V$ induces a Hodge structure of weight $0$ on the vector space
$\End(V)$. Recall that the Lie algebra
\[
	\glie = \menge{A \in \End(V)}{A^{\dagger} = -A}
\]
defines a real structure; the induced polarization is $(A,B) \mapsto \tr(A \circ
B^{\dagger})$, where $B^{\dagger}$ means the adjoint of $B$ relative to the
nondegenerate hermitian pairing $Q$. For an operator $A \in \End(V)$, we denote by
\[
	A = \sum_{j \in \ZZ} A_j
\]
its Hodge decomposition in the Hodge structure on $\End(V)$, with $A_j \in
\End(V)^{j,-j}$. Concretely, this means that $A_j(V^{p,q}) \subseteq V^{p+j,q-j}$ for
all $p,q \in \ZZ$. 

\newpar
If we write the Hodge decompositions of $\Ysl, \Hsl \in \End(V)$ as
\[
	\Ysl = \sum_{j \in \ZZ} \Ysl_j \quad \text{and} \quad
	\Hsl = \sum_{j \in \ZZ} \Hsl_j,
\]
then our assumptions on $\Ysl$ and $\Hsl$ can be expressed as follows.

\begin{plem}
	We have
	\[
		\Ysl = \Ysl_{-1} + \Ysl_0 + \Ysl_1 \quad \text{and} \quad
		\Hsl = -2\Ysl_{-1} + \Hsl_0 + 2\Ysl_1,
	\]
	and these operators satisfy $\Ysl_{-1}^{\dagger} = \Ysl_1$, $\Ysl_0^{\dagger} =
	\Ysl_0$, and $\Hsl_0^{\dagger} = - \Hsl_0$. 
\end{plem}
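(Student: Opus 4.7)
The plan is to extract the Hodge decompositions of $\Ysl$ and $\Hsl$ in the weight-zero Hodge structure on $\End(V)$ determined by the Hodge filtration $G = e^{\Ysl}F$. I will write $\bar G = e^{-\Ysl}\Fb$ for the conjugate filtration, an identity that follows from $(e^{\Ysl})^{\dagger} = e^{\Ysl}$ and the definition of $\Fb^q$ as the $Q$-orthogonal of $F^{n-q+1}$. The main translation tool is the pair of equivalences
\[
A \in G^p\End(V) \iff e^{-\ad \Ysl}(A) \in F^p\End(V), \qquad A \in \bar G^p\End(V) \iff e^{\ad \Ysl}(A) \in \Fb^p\End(V),
\]
where the right-hand sides use the filtrations on $\End(V)$ induced by $F$ and $\Fb$ on $V$.

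For $\Ysl$: since $\Ysl$ commutes with $e^{\Ysl}$, the hypothesis $\Ysl(F^{\bullet}) \subseteq F^{\bullet -1}$ transfers directly to $\Ysl(G^{\bullet}) \subseteq G^{\bullet -1}$, so $\Ysl \in G^{-1}\End(V)$. The adjoint identity $Q(\Ysl v, x) = Q(v, \Ysl x)$ coming from $\Ysl^{\dagger} = \Ysl$ then gives $\Ysl(\Fb^{\bullet}) \subseteq \Fb^{\bullet -1}$, and again by commutation with $e^{-\Ysl}$, $\Ysl(\bar G^{\bullet}) \subseteq \bar G^{\bullet -1}$. These two memberships force $\Ysl_j = 0$ for $\abs{j} \geq 2$.

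For $\Hsl$: the identities $e^{\pm \ad \Ysl}(\Hsl) = \Hsl \pm 2\Ysl$ follow from $[\Ysl,\Hsl] = 2\Ysl$ together with $[\Ysl, 2\Ysl] = 0$. The hypothesis $\Hsl(F^{\bullet}) = F^{\bullet}$ says $\Hsl \in F^0\End(V)$; the translation formula then gives $\Hsl + 2\Ysl = e^{\ad \Ysl}(\Hsl) \in G^0\End(V)$. Likewise, $\Hsl^{\dagger} = -\Hsl$ combined with $\Hsl(F^{\bullet}) = F^{\bullet}$ forces $\Hsl(\Fb^{\bullet}) \subseteq \Fb^{\bullet}$, whence $\Hsl - 2\Ysl = e^{-\ad \Ysl}(\Hsl) \in \bar G^0\End(V)$. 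Reading off Hodge components, $(\Hsl + 2\Ysl)_j = 0$ for $j < 0$ gives $\Hsl_j = 0$ for $j \leq -2$ and $\Hsl_{-1} = -2\Ysl_{-1}$; similarly $(\Hsl - 2\Ysl)_j = 0$ for $j > 0$ gives $\Hsl_j = 0$ for $j \geq 2$ and $\Hsl_1 = 2\Ysl_1$.

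The adjoint relations come from the general fact that complex conjugation on the weight-zero $\RR$-Hodge structure on $\End(V)$, whose real form is $\glie = \{A : A^{\dagger} = -A\}$, is the map $A \mapsto -A^{\dagger}$; Hodge symmetry then exchanges $\End(V)^{j,-j}$ with $\End(V)^{-j,j}$, so $A^{\dagger}$ has Hodge type $(-j, j)$ whenever $A$ has type $(j, -j)$. Decomposing $\Ysl^{\dagger} = \Ysl$ by Hodge type yields $\Ysl_{-1}^{\dagger} = \Ysl_1$ and $\Ysl_0^{\dagger} = \Ysl_0$; decomposing $\Hsl^{\dagger} = -\Hsl$ yields $\Hsl_0^{\dagger} = -\Hsl_0$, with the $\pm 1$ components automatic from the identifications above. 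The only real obstacle is the bookkeeping: keeping straight which filtration lives on which space, and applying $e^{\pm \ad \Ysl}$ in the correct direction at each stage; all other moves are formal.
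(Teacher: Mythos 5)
Your proof is correct and follows essentially the same route as the paper: conjugate the hypotheses on $F$ by $e^{\Ysl}$ (using $e^{\ad \Ysl}(\Hsl) = \Hsl + 2\Ysl$) to kill the components of $\Ysl$ and $\Hsl$ in degrees $\leq -2$ and to read off $\Hsl_{-1} = -2\Ysl_{-1}$, then use the $\dagger$-compatibility of $\Ysl$ and $\Hsl$ with $Q$ for the other half. The only cosmetic difference is that the paper disposes of the positive-degree components directly via the component-wise identities $(\Ysl_j)^{\dagger} = \Ysl_{-j}$ and $(\Hsl_j)^{\dagger} = -\Hsl_{-j}$, whereas you re-run the filtration argument on the conjugate filtration $e^{-\Ysl}\Fb$ — the same information, packaged differently.
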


\begin{proof}
	From the fact that $\Ysl^{\dagger} = \Ysl$, we get $(\Ysl_j)^{\dagger} =
	\Ysl_{-j}$ for every $j \in \ZZ$. The condition $\Ysl(F^p) \subseteq F^{p-1}$
	implies that $\Ysl(e^{\Ysl} F^p) \subseteq e^{\Ysl} F^{p-1}$, which
	means that $\Ysl_j = 0$ for $j \leq -2$.  But then also $\Ysl_j = 0$
	for $j \geq 2$, and so actually 
	\[
		\Ysl = \Ysl_{-1} + \Ysl_0 + \Ysl_1.
	\]
	Similarly, we have $\Hsl^{\dagger} = -\Hsl$, hence $(\Hsl_j)^{\dagger} =
	-\Hsl_{-j}$ for every $j \in \ZZ$. The condition $\Hsl(F^p) \subseteq F^p$
	implies that 
	\[
		(\Hsl + 2 \Ysl) e^{\Ysl} F^p = 
		e^{\Ysl} \Hsl e^{-\Ysl} \cdot e^{\Ysl} F^p 
		= e^{\Ysl} \Hsl(F^p) \subseteq e^{\Ysl} F^p,
	\]
	and so $\Hsl_j + 2\Ysl_j = 0$ for $j \leq -1$. In particular,
	$\Hsl_{-1} = -2\Ysl_{-1}$ and $\Hsl_j = 0$ for all $j \leq -2$. But then
	$\Hsl_j = 0$ for $j \geq 2$, and $\Hsl_1 = -(\Hsl_{-1})^{\dagger} =
	2(\Ysl_{-1})^{\dagger} = 2 \Ysl_1$. 
\end{proof}

\newpar
The relation $[\Hsl, \Ysl] = -2\Ysl$ gives us the following additional identity.

\begin{plem} \label{lem:identity}
	We have $2\Ysl_0 = 2[\Ysl_{-1},\Ysl_1] + [\Ysl_0,\Hsl_0]$.
\end{plem}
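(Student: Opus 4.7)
The plan is to extract the bidegree $(0,0)$ component of the $\sltwo$ relation $[\Hsl, \Ysl] = -2\Ysl$, read inside the induced weight-$0$ Hodge structure on $\End(V)$. By the preceding lemma both $\Hsl$ and $\Ysl$ have Hodge components supported only on the three types $(-1,1)$, $(0,0)$, $(1,-1)$, so the calculation will reduce to a finite bookkeeping problem.

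First, I would substitute the Hodge decompositions $\Ysl = \Ysl_{-1} + \Ysl_0 + \Ysl_1$ and $\Hsl = -2\Ysl_{-1} + \Hsl_0 + 2\Ysl_1$ into the commutator. This expands $[\Hsl, \Ysl]$ into at most nine brackets $[\Hsl_j, \Ysl_k]$, each of pure bidegree $(j+k, -(j+k))$. Because $\End(V)$ is a Hodge structure, the equality $[\Hsl, \Ysl] = -2\Ysl$ holds separately in each pure bidegree; I only need to read off the $(0,0)$ piece. Three brackets contribute to bidegree $(0,0)$: $[\Hsl_{-1}, \Ysl_1]$, $[\Hsl_0, \Ysl_0]$, and $[\Hsl_1, \Ysl_{-1}]$. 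Using the identifications $\Hsl_{-1} = -2\Ysl_{-1}$ and $\Hsl_1 = 2\Ysl_1$ together with the antisymmetry $[\Ysl_1, \Ysl_{-1}] = -[\Ysl_{-1}, \Ysl_1]$, this sum collapses to a linear combination of $[\Ysl_{-1}, \Ysl_1]$ and $[\Hsl_0, \Ysl_0]$. Setting it equal to $-2\Ysl_0$, the bidegree $(0,0)$ component of $-2\Ysl$, and rewriting $[\Hsl_0, \Ysl_0] = -[\Ysl_0, \Hsl_0]$, gives the claimed identity after rearrangement.

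There is no real obstacle; the argument is essentially just type-counting. The only care needed is in the coefficients and signs: one must keep track of the factors of $2$ coming from $\Hsl_{\pm 1}=\pm 2\Ysl_{\pm 1}$ and of the sign produced by swapping the order of the bracket $[\Ysl_1, \Ysl_{-1}]$. No other $\sltwo$-relation and no further Hodge-theoretic input are used, which is consistent with the text's framing of this as a direct consequence of $[\Hsl, \Ysl] = -2\Ysl$.
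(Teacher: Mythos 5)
Your route is exactly the paper's: take the bidegree $(0,0)$ component of the $\sltwo$-relation (the paper writes it as $2\Ysl=[\Ysl,\Hsl]$, you as $[\Hsl,\Ysl]=-2\Ysl$, which is the same thing) and substitute the decompositions $\Ysl=\Ysl_{-1}+\Ysl_0+\Ysl_1$ and $\Hsl=-2\Ysl_{-1}+\Hsl_0+2\Ysl_1$ from the preceding lemma. The one step you leave implicit --- ``this sum collapses to a linear combination \dots\ gives the claimed identity after rearrangement'' --- is precisely where the constants must be watched, and an honest execution does \emph{not} reproduce the identity as printed: the $(0,0)$ component of $[\Hsl,\Ysl]$ is
\[
	[\Hsl_{-1},\Ysl_1]+[\Hsl_0,\Ysl_0]+[\Hsl_1,\Ysl_{-1}]
	= -2[\Ysl_{-1},\Ysl_1]+2[\Ysl_1,\Ysl_{-1}]+[\Hsl_0,\Ysl_0]
	= -4[\Ysl_{-1},\Ysl_1]-[\Ysl_0,\Hsl_0],
\]
and equating this with $-2\Ysl_0$ gives $2\Ysl_0 = 4[\Ysl_{-1},\Ysl_1]+[\Ysl_0,\Hsl_0]$, with coefficient $4$ rather than $2$ on the bracket term. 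This is not a defect of your method: the paper's own proof displays the same three brackets, whose sum is $4[\Ysl_{-1},\Ysl_1]+[\Ysl_0,\Hsl_0]$, and then asserts the stated identity, so the constant in the statement itself appears to carry a factor-of-$2$ slip. A quick confirmation: in the a posteriori situation of \Cref{lem:Hodge-HXY}, where $\Hsl_0=0$, one computes $16[\Ysl_{-1},\Ysl_1]=[\Ysl-\Hsl-\Xsl,\,\Ysl+\Hsl-\Xsl]=4(\Ysl+\Xsl)=8\Ysl_0$, i.e.\ $2\Ysl_0=4[\Ysl_{-1},\Ysl_1]$, which is incompatible with the coefficient $2$ for any nontrivial representation.

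The discrepancy is harmless where the lemma is used, namely in the proof of \Cref{prop:invariants}: there one pairs against $v_p$, uses $\Ysl_{-1}v_p=0$ and $\Hsl_0 v_p=-2\Ysl_0 v_p$, and only needs that the outcome is a common sign times a positive combination of $\norm{\Ysl_1 v_p}^2$ and $\norm{\Ysl_0 v_p}^2$; with coefficient $4$ one gets $0=4\norm{\Ysl_1 v_p}^2+4\norm{\Ysl_0 v_p}^2$ instead of $0=\norm{\Ysl_1 v_p}^2+2\norm{\Ysl_0 v_p}^2$, and the conclusion $\Ysl_0 v_p=\Ysl_1 v_p=0$ is unchanged. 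So: same approach as the paper and correct in method, but do the coefficient bookkeeping explicitly and state the identity with the constant you actually obtain, rather than asserting that the rearrangement yields the lemma as printed.
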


\begin{proof}
	Consider the component of $2\Ysl = [\Ysl,\Hsl]$ in the subspace $\End(V)^{0,0}$.
	From the preceding lemma, we get
	\[
		2 \Ysl_0 = [\Ysl_1, \Hsl_{-1}] + [\Ysl_0, \Hsl_0] + [\Ysl_{-1}, \Hsl_1]
		= -2[\Ysl_1, \Ysl_{-1}] + [\Ysl_0, \Hsl_0] +2 [\Ysl_{-1}, \Ysl_1],
	\]
	which simplifies to the desired identity.
\end{proof}

\newpar
We can now prove \Cref{prop:invariants}. On $V$, we have a polarized Hodge
structure of weight $n$ with Hodge filtration $e^{\Ysl} F$. It is enough to show that
$V^{\sltwo(\CC)}$ is a sub-Hodge structure; the remaining assertions then follow
because $e^{\Ysl} F$ and $F$ induce the same filtration on $V^{\sltwo(\CC)}$, due to
the fact that $\Ysl$ acts trivially. 

\newpar
To say that $V^{\sltwo(\CC)}$ is a sub-Hodge structure means that whenever we take
a vector $v \in V^{\sltwo(\CC)}$, and write its Hodge decomposition as
\[
	v = \sum_p v_p,
\]
with $v_p \in V^{p,q}$, then each $v_p \in V^{\sltwo(\CC)}$. This is trivially
satisfied if $v = 0$. If $v \neq 0$, let $p \in \ZZ$ be the least integer such
that $v_p \neq 0$. It is clearly enough to prove that $v_p \in V^{\sltwo(\CC)}$,
because we can then repeat the same argument for $v-v_p$. Our goal is therefore to
show that $\Ysl v_p = \Hsl v_p = 0$. 

\newpar
From the fact that $\Ysl v = 0$ and the Hodge decomposition, we deduce that
\[
	\Ysl_{-1} v_p = 0 \quad \text{and} \quad
	\Ysl_0 v_p + \Ysl_{-1} v_{p+1} = 0.
\]
From the fact that $\Hsl v = 0$, we get the additional piece of information that
\[
	\Hsl_0 v_p - 2\Ysl_{-1} v_{p+1} = 0.
\]
In particular, $\Hsl_0 v_p = -2 \Ysl_0 v_p$, which is interesting, because $\Ysl_0$
and $\Hsl_0$ behave very differently under taking adjoints. We can use this different
behavior to show that $Q(\Ysl_0 v_p, v_p) = 0$. Since $(\Ysl_0)^{\dagger} = \Ysl_0$ and
$(\Hsl_0)^{\dagger} = -\Hsl_0$, we have
\[
	Q(2\Ysl_0 v_p, v_p) = -Q(\Hsl_0 v_p, v_p) = Q(v_p, \Hsl_0 v_p) 
	= -Q(v_p, 2\Ysl_0 v_p) = -Q(2\Ysl_0 v_p, v_p),
\]
and therefore $Q(2\Ysl_0 v_p, v_p) = 0$. Now we combine this with the identity in
\Cref{lem:identity}. Because we already know that $\Ysl_{-1} v_p = 0$, this gives
\begin{align*}
	0 = Q(2 \Ysl_0 v_p, v_p) &= Q(2 \Ysl_{-1} \Ysl_1 v_p, v_p) 
	+ Q(\Ysl_0 \Hsl_0 v_p, v_p) - Q(\Hsl_0 \Ysl_0 v_p, v_p) \\
	&= 2 Q(\Ysl_1 v_p, \Ysl_1 v_p) + Q(\Hsl_0 v_p, \Ysl_0 v_p) + Q(\Ysl_0 v_p, \Hsl_0 v_p) \\
	&= 2 Q(\Ysl_1 v_p, \Ysl_1 v_p) - 4 Q(\Ysl_0 v_p, \Ysl_0 v_p).
\end{align*}
Here we used the fact that $(\Ysl_{-1})^{\dagger} = \Ysl_1$ and $(\Hsl_0)^{\dagger} =
-\Hsl_0$, and also $\Hsl_0 v_p = -2\Ysl_0 v_p$. Now $\Ysl_0 v_p \in V^{p,q}$, and
since $Q$ is a polarization, 
\[
	Q(\Ysl_0 v_p, \Ysl_0 v_p) = (-1)^q \norm{\Ysl_0 v_p}^2,
\]
where $\norm{\argbl}$ means the Hodge norm in the polarized Hodge structure $e^{\Ysl}
F$. Likewise, $\Ysl_1 v_p \in V^{p+1,q-1}$, and so
\[
	Q(\Ysl_1 v_p, \Ysl_1 v_p) = (-1)^{q-1} \norm{\Ysl_1 v_p}^2.
\]
Putting everything together, we find that 
\[
	0 = \norm{\Ysl_1 v_p}^2 + 2 \norm{\Ysl_0 v_p}^2,
\]
which clearly implies that $\Ysl_1 v_p = 0$ and $\Ysl_0 v_p = 0$. But then also $\Hsl_0
v_p = 0$, and so we have proved that $\Ysl v_p = \Hsl v_p = 0$, hence $v_p \in
V^{\sltwo(\CC)}$. This finishes the proof of \Cref{prop:invariants}.

\newpar
The next step is to establish some basic functoriality. For the sake of clarity, let
us denote the $\sltwo$-Hodge filtrations on $S_m$ and $V$ by the symbols
$F^{\bullet} S_m$ and $F^{\bullet} V$; the resulting Hodge structures have weight $m$
and $n$. We get an induced filtration on $\Hom_{\CC}(S_m, V)$ according to the rule
\[
	F^k \Hom_{\CC}(S_m, V) = \menge{f \colon S_m \to V}
	{\text{$f(F^p S_m) \subseteq F^{p+k} V$ for all $p \in \ZZ$}}.
\]
The induced $\sltwo(\CC)$-representation is easy to describe: for $f \colon
S_m \to V$, one has
\[
	(\Hsl f)(v) = \Hsl f(v) - f(\Hsl v), \quad
	(\Xsl f)(v) = \Xsl f(v) - f(\Xsl v), \quad
	(\Ysl f)(v) = \Ysl f(v) - f(\Ysl v).
\]
Observe that $\sltwo(\CC)$ acts trivially on a linear mapping $f \colon S_m \to V$ exactly
when $f$ is a morphism of $\sltwo(\CC)$-representations.

\newpar
Since it is important, let us check very carefully that $F^{\bullet} \Hom_{\CC}(S_m, V)$
is indeed an $\sltwo$-Hodge filtration (of weight $n-m$).

\begin{plem} \label{lem:sltwo-filtration-Hom}
	Suppose that $F^{\bullet} V$ is an $\sltwo$-Hodge filtration of weight $n$. Then
	the filtration $F^{\bullet} \Hom_{\CC}(S_m, V)$ is an $\sltwo$-Hodge filtration of
	weight $n-m$.
\end{plem}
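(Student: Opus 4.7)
The plan is to check the two defining properties of an $\sltwo$-Hodge filtration for $F^{\bullet}\Hom_{\CC}(S_m,V)$ in turn. Property (a) is a direct unwinding of definitions: given $f \in F^k\Hom_{\CC}(S_m,V)$ and $v \in F^p S_m$, one has
\[
	(\Ysl f)(v) = \Ysl f(v) - f(\Ysl v) \in \Ysl F^{p+k} V + f(F^{p-1} S_m) \subseteq F^{p+k-1} V,
\]
using $\Ysl(F^{\bullet} V) \subseteq F^{\bullet-1} V$ for the first summand and the definition of $F^k\Hom$ for the second; similarly $(\Hsl f)(v) = \Hsl f(v) - f(\Hsl v) \in F^{p+k}V$, using that $\Hsl$ preserves both $F^{\bullet} S_m$ and $F^{\bullet} V$.

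For property (b), the key observation is that the two operators $L_{\Ysl_V}$ (post-composition with $\Ysl$ on $V$) and $R_{\Ysl_{S_m}}$ (pre-composition with $\Ysl$ on $S_m$) commute on $\Hom_{\CC}(S_m,V)$, and the action of $\Ysl$ is their difference. Exponentiation therefore gives
\[
	e^{\Ysl} \cdot f = e^{\Ysl_V} \circ f \circ e^{-\Ysl_{S_m}}.
\]
A short verification shows that
\[
	e^{\Ysl} F^k \Hom_{\CC}(S_m, V)
	= \menge{g \colon S_m \to V}
	{\text{$g\bigl( e^{\Ysl} F^p S_m\bigr) \subseteq e^{\Ysl} F^{p+k} V$ for all $p$}}.
\]
In other words, $e^{\Ysl} F^{\bullet} \Hom$ coincides with the standard Hodge filtration on $\Hom_{\CC}(S_m, V)$ induced by the two polarized Hodge structures whose Hodge filtrations are $e^{\Ysl} F^{\bullet} S_m$ (of weight $m$, by \Cref{thm:sltwo-orbit} applied to the canonical polarized $\sltwo$-Hodge structure on $S_m$) and $e^{\Ysl} F^{\bullet} V$ (of weight $n$, by hypothesis). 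Since the Hodge filtration on a tensor product of polarized Hodge structures gives a polarized Hodge structure of the sum of the weights, and $\Hom_{\CC}(S_m, V) \cong S_m^{\vee} \tensor V$, this automatically yields a polarized Hodge structure of weight $n-m$ on $\Hom_{\CC}(S_m, V)$.

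The polarization that ``comes for free'' from this viewpoint is the trace pairing $(f,g) \mapsto \tr(g^{\dagger} \circ f)/\dim S_m$, whose polarization property is exactly the computation carried out in \Cref{lem:tr-polarization} (and repeated in the proof of \Cref{lem:sltwo-polarizations}). Since $\Ysl$, $\Xsl$, $\Hsl$ are self-adjoint (or anti-self-adjoint in the case of $\Hsl$) with respect to both $Q_{S_m}$ and $Q_V$, the induced operators on $\Hom_{\CC}(S_m,V)$ are self-adjoint (or anti-self-adjoint) with respect to the trace pairing as well, so the hypotheses needed to call $F^{\bullet}\Hom_{\CC}(S_m,V)$ an $\sltwo$-Hodge filtration of weight $n-m$ are all in place. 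The only point that requires a moment of thought is the bookkeeping with the Tate twist coming from $S_m^{\vee}$, but since each irreducible $S_m$ is self-dual up to a twist by $\CC(-m)$, this is absorbed into the weight shift from $n$ to $n - m$; no genuine obstacle arises.
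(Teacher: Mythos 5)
Your argument is correct and follows essentially the same route as the paper's proof: a direct check of the compatibility of $\Hsl$ and $\Ysl$ with the filtration, the identity $(e^{\Ysl}f)(v) = e^{\Ysl} f(e^{-\Ysl}v)$ to identify $e^{\Ysl}F^{\bullet}\Hom_{\CC}(S_m,V)$ with the Hodge filtration induced by the weight-$m$ and weight-$n$ Hodge structures $e^{\Ysl}F^{\bullet}S_m$ and $e^{\Ysl}F^{\bullet}V$, and the modified trace pairing as polarization via the computation of \Cref{lem:tr-polarization}. The packaging through $S_m^{\vee}\tensor V$ and the self-duality of $S_m$ is only a cosmetic difference from the paper's phrasing.
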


\begin{proof}
	We need to check that the filtration on $\Hom_{\CC}(S_m, V)$ satisfies the two
	conditions in the definition. We can again use the modified trace pairing on
	$\Hom_{\CC}(S_m, V)$ as a polarization. Given a linear mapping $f \colon S_m \to
	V$, we denote by $f^{\dagger} \colon V \to S_m$ the adjoint with respect to the
	nondegenerate pairings $Q_{S_m}$ and $Q_V$; to be precise, 
	\[
		Q_V \bigl( f(v), s \bigr) = Q_{S_m} \bigl( v, f^{\dagger}(s) \bigr) \quad
		\text{for all $v \in V$ and $s \in S_m$.}
	\]
	On $\Hom_{\CC}(S_m, V)$, we have the hermitian pairing
	\begin{equation} \label{eq:Hom-trace}
		\Hom_{\CC}(S_m, V) \tensor_{\CC} \overline{\Hom_{\CC}(S_m, V)} \to \CC, \quad
		(f, g) \mapsto \frac{\tr(g^{\dagger} \circ f)}{\dim S_m}.
	\end{equation}
	Let $f \in F^k \Hom_{\CC}(S_m, V)$ be arbitrary. For any $s \in F^p S_m$, we have $f(s)
	\in F^{p+k} V$, and therefore
	\[
		(\Hsl f)(s) = \Hsl f(s) - f(\Hsl s) \in \Hsl(F^{p+k} V) + f(F^p S_m)
		\subseteq F^{p+k} V,
	\]
	which proves that $\Hsl f \in F^k \Hom_{\CC}(S_m, V)$. Similarly, $\Ysl f \in
	F^{k-1} \Hom_{\CC}(S_m, V)$. 

	It remains to show that the filtration $e^{\Ysl} F^{\bullet} \Hom_{\CC}(S_m, V)$
	defines a Hodge structure of weight $n-m$ on $\Hom_{\CC}(S_m, V)$, polarized by
	the pairing in \eqref{eq:Hom-trace}. Since
	\[
		(e^{\Ysl} f)(v) = e^{\Ysl} f \bigl( e^{-\Ysl} v \bigr),
	\]
	it is not hard to see that
	\[
		e^{\Ysl} F^k \Hom_{\CC}(S_m, V) = \menge{f \colon S_m \to V}
		{\text{$f \bigl( e^{\Ysl} F^p S_m \bigr) \subseteq e^{\Ysl} F^{p+k} V$
		for all $p \in \ZZ$}};
	\]
	but the right-hand side is exactly the Hodge filtration of the induced Hodge
	structure of weight $n-m$ on $\Hom_{\CC}(S_m, V)$. The proof that the
	pairing in \eqref{eq:Hom-trace} polarizes this Hodge structure is similar to the
	proof of \Cref{lem:tr-polarization}.
\end{proof}

\newpar
We can now prove \Cref{thm:sltwo-filtration}. Let $F^{\bullet} V$ be an
$\sltwo$-Hodge filtration on $V$, of weight $n$. Our starting point is the
decomposition in \eqref{eq:decomposition}. Fix some $m \in \NN$. According to
\Cref{lem:sltwo-filtration-Hom}, the induced filtration
\[
	F^k \Hom_{\CC}(S_m, V) = \menge{f \colon S_m \to V}
	{\text{$f(F^p S_m) \subseteq F^{p+k} V$ for all $p \in \ZZ$}}
\]
is an $\sltwo$-Hodge filtration of weight $n-m$, and the modified trace pairing
\[
	\Hom_{\CC}(S_m, V) \tensor_{\CC} \overline{\Hom_{\CC}(S_m, V)} \to \CC, \quad
	(f, g) \mapsto \frac{\tr(g^{\dagger} \circ f)}{\dim S_m}
\]
is a polarization. \Cref{prop:invariants} tells us that the subspace
\[
	H_m = \Hom_{\CC}(S_m, V)^{\sltwo(\CC)} \subseteq \Hom_{\CC}(S_m, V)
\]
has a Hodge structure of weight $n-m$, with Hodge filtration
\[
	F^k H_m = F^k \Hom_{\CC}(S_m, V) \cap H_m.
\]
Moreover, we know that this Hodge structure is polarized by the restriction of the
modified trace pairing. But for $f, g \in H_m$, the composition $g^{\dagger} \circ f$
is an endomorphism of $S_m$ as an $\sltwo(\CC)$-representation, hence (by Schur's
lemma) a multiple of the identity.  Thus $g^{\dagger} \circ f = c(f,g) \id$ for some
constant $c(f,g) \in \CC$, and because of how we defined the modified trace pairing,
this says exactly that
\[
	c \colon H_m \tensor_{\CC} \wbar{H_m} \to \CC
\]
polarizes the Hodge structure on $H_m = \Hom_{\CC}(S_m, V)^{\sltwo(\CC)}$.

\begin{plem}
	The evaluation morphism
	\[
		\bigoplus_{m \in \NN} S_m \tensor_{\CC} H_m \to V
	\]
	induces an isomorphism between the $\sltwo$-Hodge filtrations on both sides.
\end{plem}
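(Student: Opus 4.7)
The plan is to establish the statement in four steps: $\phi$ is a vector space isomorphism, it preserves the filtrations in one direction, it respects polarizations, and these three facts combine via self-duality to force equality of filtrations.

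First, Schur's lemma applied to the finite-dimensional semisimple $\sltwo(\CC)$-representation $V$ shows that $\phi$ is a vector space isomorphism; by construction it is $\sltwo(\CC)$-equivariant, hence commutes with $e^{\Ysl}$. Second, the $\sltwo$-Hodge filtration on $S_m \tensor H_m$ is the tensor-product filtration $F^k(S_m \tensor H_m) = \sum_{i+j=k} F^i S_m \tensor F^j H_m$, and the inclusion $\phi(F^k) \subseteq F^k V$ is immediate from the very definition of $F^j H_m \subseteq F^j \Hom_{\CC}(S_m, V)$: for $s \in F^i S_m$ and $f \in F^j H_m$, we have $\phi(s \tensor f) = f(s) \in F^{i+j} V$.

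Third, I check that $\phi$ respects polarizations by repeating the Schur-lemma computation from the proof of \Cref{lem:sltwo-polarizations}. For $f, g \in H_m$, the composition $g^{\dagger} \circ f \in \End_{\sltwo(\CC)}(S_m)$ is a scalar $\mu \, \id_{S_m}$, with $\mu = \tr(g^{\dagger} \circ f)/\dim S_m = Q_{H_m}(f,g)$. Using the adjoint relation,
\[
Q_V\bigl(\phi(s \tensor f), \phi(t \tensor g)\bigr) = Q_V(f(s), g(t)) = Q_{S_m}\bigl((g^{\dagger} \circ f)(s), t\bigr) = Q_{S_m}(s, t) \cdot Q_{H_m}(f, g),
\]
which is precisely the tensor-product polarization on the left-hand side.

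Fourth (the main step), I upgrade the inclusion to an equality via self-duality. By \Cref{thm:sltwo-orbit}, both $\bigoplus_m S_m \tensor H_m$ (as a tensor product of polarized $\sltwo$-Hodge structures) and $V$ (by hypothesis) carry polarized Hodge structures of weight $n$ whose Hodge filtrations are $e^{\Ysl} F^{\bullet}$. Since $\phi$ commutes with $e^{\Ysl}$, the second step gives $\phi(e^{\Ysl} F^k) \subseteq e^{\Ysl} F^k V$. Now $\phi$ preserves the nondegenerate pairings by the third step, so for any subspace $W$ of the left-hand side, $\phi(W^{\perp_Q}) = \phi(W)^{\perp_Q}$. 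Applying this to $W = e^{\Ysl} F^k$ and using the self-duality $(e^{\Ysl}F^k)^{\perp_Q} = e^{\Ysl}F^{n-k+1}$ available on both sides,
\[
\phi\bigl(e^{\Ysl} F^{n-k+1}\bigr) = \phi\bigl(e^{\Ysl}F^k\bigr)^{\perp_Q} \supseteq \bigl(e^{\Ysl}F^k V\bigr)^{\perp_Q} = e^{\Ysl}F^{n-k+1} V.
\]
Combined with the forward inclusion (with $k$ replaced by $n-k+1$), we obtain $\phi(e^{\Ysl}F^k) = e^{\Ysl}F^k V$, and applying $e^{-\Ysl}$ gives $\phi(F^k) = F^k V$ for every $k$. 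The main obstacle is precisely this step: the naive inclusion of step two is not enough by itself, and without preservation of $Q$ there is no a priori dimension match between the filtration steps; the polarization-based self-duality is what bridges the gap.
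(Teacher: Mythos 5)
Steps 1--3 are fine and coincide with the opening of the paper's argument (Schur's lemma for the isomorphism, the trace-pairing computation for compatibility with $Q$). The gap is in step 4: the ``self-duality'' you invoke, $(e^{\Ysl}F^k)^{\perp_Q} = e^{\Ysl}F^{n-k+1}$, is false. For a polarized Hodge structure of weight $n$ the $Q$-orthogonal complement of the Hodge filtration is the \emph{conjugate} Hodge filtration, and here the two Hodge filtrations are $e^{\Ysl}F$ and $e^{-\Ysl}\Fb$ (see \Cref{thm:sltwo-orbit} and \eqref{eq:Fb-sltwo}), so the correct identity is $(e^{\Ysl}F^k)^{\perp_Q} = e^{-\Ysl}\Fb^{\,n-k+1}$. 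Already for $S_1$ one has $e^{\Ysl}F^1 = \CC(a+b)$ while $(e^{\Ysl}F^1)^{\perp_Q} = \CC(a-b)$. Moreover, the error cannot be repaired by just inserting the correct identity: applying $\perp_Q$ to your forward inclusion $\phi(e^{\Ysl}F^k)\subseteq e^{\Ysl}F^kV$ only yields a \emph{reverse} inclusion for the conjugate filtrations, and ``inclusion of Hodge filtrations plus preservation of $Q$'' does not force equality. Concretely, take $V=\CC^2$, $Q=\mathrm{diag}(1,-1)$, weight $2$, $\phi=\id$: the Hodge structure with $h^{1,1}=h^{0,2}=1$ (so $F_1^1$ a $Q$-negative line, $F_1^2=0$) and the one with $h^{2,0}=h^{1,1}=1$ (so $F_2^2$ a $Q$-positive line, $F_2^1=V$) are both polarized by $Q$ and satisfy $F_1^k\subseteq F_2^k$ for all $k$, yet differ. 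So the Hodge numbers of the two sides, or equivalently compatibility with the conjugate filtration, must actually be proven and cannot be extracted formally from $Q$-preservation.

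That missing compatibility is exactly what the paper's proof supplies: by \Cref{prop:invariants}, $H_m$ is a sub-Hodge structure of $\Hom_{\CC}(S_m,V)$, so its Hodge decomposition is given by $H_m^{p,q}=\{f : f(S_m^{j,m-j})\subseteq V^{j+p,\,m-j+q}\}$, and evaluating shows that the evaluation map carries the $(p,n-p)$-component of $\bigoplus_m S_m\tensor_{\CC}H_m$ into $V^{p,n-p}$. Hence it is a \emph{bijective morphism of Hodge structures} for the structures $e^{\Ysl}F$ on both sides, therefore strictly compatible with the Hodge filtrations, and conjugating by $e^{-\Ysl}$ gives the asserted isomorphism of $\sltwo$-Hodge filtrations. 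If you want to keep your duality framework, you would need to prove, by a separate argument of this kind, the forward inclusion for the conjugate filtrations (equivalently, compatibility with the full Hodge decompositions); your step 2 only controls $F$, and that is not enough.
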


\begin{proof}
	We know that the mapping is an isomorphism of $\sltwo(\CC)$-representations. By
	the same argument as in \Cref{lem:sltwo-polarizations}, one shows that the
	isomorphism is compatible with the hermitian pairings on both sides. Let
	\[
		S_m = \bigoplus_{p+q=m} S_m^{p,q} \quad \text{and} \quad
		H_m = \bigoplus_{p+q=n-m} H_m^{p,q}
	\]
	denote the Hodge decompositions in the Hodge structures on $S_m$ and $W$. By
	construction, $H_m$ is a sub-Hodge structure of $\Hom_{\CC}(S_m, V)$, and so
	\[
		H_m^{p,q} = \menge{f \colon S_m \to V}
		{\text{$f(S_m^{j,m-j}) \subseteq V^{j+p,m-j+q}$ for all $j \in \ZZ$}}.
	\]
	We are going to argue that the evaluation morphism is a morphism of Hodge
	structures of weight $n$. Let $p \in \ZZ$ be an integer. As with any tensor
	product, the $(p,n-p)$-subspace in the Hodge decomposition of the left-hand side is
	\[
		\bigoplus_{m \in \NN} \bigoplus_{j \in \ZZ} 
		S_m^{j,m-j} \tensor_{\CC} H_m^{p-j,n-m-p+j};
	\]
	the evaluation morphism takes this into the subspace $V^{p,n-p}$, and is therefore
	a morphism of Hodge structures of weight $n$. Since morphisms of Hodge structures
	strictly respect the Hodge filtration, we get isomorphisms
	\[
		\bigoplus_{m \in \NN} \bigoplus_{j \in \ZZ} 
		e^{\Ysl} F^j S_m \tensor_{\CC} F^{p-j} H_m \cong e^{\Ysl} F^p V
	\]
	between the Hodge filtrations on both sides. After multiplying by $e^{-\Ysl}$,
	this gives us the desired isomorphisms
	\[
		\bigoplus_{m \in \NN} \bigoplus_{j \in \ZZ} 
		F^j S_m \tensor_{\CC} F^{p-j} H_m \cong F^p V. \qedhere
	\]
\end{proof}

\newpar
By construction, the $\sltwo$-Hodge filtration on each $S_m$ comes from a polarized
$\sltwo$-Hodge structure of weight $m$. If we let $\sltwo(\CC)$ act trivially on
the Hodge structure $\Hom_{\CC}(S_m, V)^{\sltwo(\CC)}$, the direct sum
\[
	\bigoplus_{m \in \NN} S_m \tensor_{\CC} \Hom_{\CC}(S_m, V)^{\sltwo(\CC)}
\]
inherits a polarized $\sltwo$-Hodge structure of weight $m+(n-m) = n$. This puts an
$\sltwo$-Hodge structure of weight $n$ on $V$, and the lemma above says that the
$\sltwo$-Hodge filtration $F^{\bullet} V$ agrees with the total Hodge filtration of
this $\sltwo$-Hodge structure, and that the polarizations are compatible.
\Cref{thm:sltwo-filtration} is therefore proved in full.

\begin{note}
	A posteriori, we can deduce from \Cref{lem:Hodge-HXY} that $\Hsl_0 = 0$; but this
	fact does not play a role during the proof.
\end{note}

\newpar
In fact, one can improve \Cref{thm:sltwo-filtration} by taking out the
assumption that $V$ is a representation of $\sltwo(\CC)$. This gives a minimal set of
conditions by which one can recognize a polarized $\sltwo$-Hodge structure.
Interestingly, a version of this result is used in the proof of the decomposition
theorem \cite{dCM}, in order to establish the relative Hard Lefschetz theorem. This
is another place where the global theory (cohomology with
coefficients in a polarized variation of Hodge structure) and the local theory
(degenerating variations of Hodge structure) meet rather unexpectedly.

\newpar
Here is the minimal set of conditions. This time, we only assume that
\[
	V = \bigoplus_{k \in \ZZ} V_k
\]
is a finite-dimensional graded complex vector space; we let $\Hsl \in \End(V)$ be the
semisimple operator that acts as multiplication by $k$ on the subspace $V_k$. Let
$\Ysl \in \End(V)$ be a nilpotent operator such that $[\Hsl, \Ysl] = -2\Ysl$; note
that we are \emph{not} assuming that $\Ysl^k \colon V_k \to V_{-k}$ is an
isomorphism. Further, let $Q \colon V \tensor_{\CC} \Vb \to \CC$ be a 
hermitian pairing such that $\Ysl^{\dagger} = \Ysl$ and $\Hsl^{\dagger} = -\Hsl$,
where the dagger means the adjoint with respect to $Q$. Lastly, suppose that we have
a decreasing filtration $F$ on the vector space $V$, with the following two properties:
\begin{aenumerate}
\item One has $\Ysl(F^{\bullet}) \subseteq F^{\bullet-1}$ and $\Hsl(F^{\bullet}) =
	F^{\bullet}$.
\item	The filtration $e^{\Ysl} F$ is the Hodge filtration of a polarized 
	Hodge structure of weight $n$ on $V$, polarized by the pairing $Q$ (which is
	therefore nondegenerate).
\end{aenumerate}

\begin{pcor} \label{cor:sltwo-filtration}
	Under these assumptions, $F$ is an $\sltwo$-Hodge structure of weight $n$;
	consequently, $V$ is an $\sltwo$-Hodge structure of weight $n$,
	polarized by the hermitian pairing $Q$.
\end{pcor}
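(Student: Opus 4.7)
The plan is to reduce the corollary to \Cref{thm:sltwo-filtration} by constructing an operator $\Xsl \in \End(V)$ that completes $(\Hsl, \Ysl)$ to an $\sltwo(\CC)$-triple, satisfies $\Xsl^\dagger = \Xsl$, and maps $F^{\bullet}$ into $F^{\bullet+1}$. Once such $\Xsl$ is produced, $V$ becomes a representation of $\sltwo(\CC)$ and $F$ becomes an $\sltwo$-Hodge filtration of weight $n$, so \Cref{thm:sltwo-filtration} delivers the desired polarized $\sltwo$-Hodge structure. Following the template of \Cref{thm:sltwo-orbit}(c), the natural candidate for $\Xsl$ is the adjoint of $\Ysl$ under the positive-definite Hodge inner product of the Hodge structure $e^{\Ysl} F$: if $\Csh$ denotes the associated Weil operator, set $\Xsl := \Csh \Ysl \Csh$. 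Decomposing by Hodge type in the induced weight-$0$ Hodge structure on $\End(V)$, exactly as in the proof of \Cref{prop:invariants}, one has
\[
	\Ysl = \Ysl_{-1} + \Ysl_0 + \Ysl_1, \qquad
	\Hsl = -2\Ysl_{-1} + \Hsl_0 + 2\Ysl_1, \qquad
	\Xsl = -\Ysl_{-1} + \Ysl_0 - \Ysl_1,
\]
and $\Xsl^\dagger = \Xsl$ is immediate from the adjoint relations $\Ysl_{-1}^\dagger = \Ysl_1$ and $\Ysl_0^\dagger = \Ysl_0$.

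Decomposing the hypothesis $[\Hsl, \Ysl] = -2\Ysl$ by Hodge type produces three identities that express $[\Hsl_0, \Ysl_j]$ in terms of commutators of the $\Ysl_i$, the middle one being precisely \Cref{lem:identity}. A direct Hodge-type-by-Hodge-type computation then shows that the $\sltwo$-relations $[\Xsl, \Ysl] = \Hsl$ and $[\Hsl, \Xsl] = 2\Xsl$, together with the filtration compatibility $\Xsl(F^{\bullet}) \subseteq F^{\bullet+1}$, all hold if and only if $\Hsl_0 = 0$. Indeed, the $(0,0)$-Hodge component of $[\Xsl, \Ysl]$ is easily seen to vanish identically, forcing $\Hsl_0 = 0$ if the $\sltwo$-relations are to hold. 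Conversely, assuming $\Hsl_0 = 0$, the three Hodge-type identities collapse to $[\Ysl_{-1}, \Ysl_0] = \Ysl_{-1}$, $[\Ysl_{-1}, \Ysl_1] = \tfrac{1}{2}\Ysl_0$, $[\Ysl_0, \Ysl_1] = \Ysl_1$, from which $[\Xsl, \Ysl] = \Hsl$ and $[\Hsl, \Xsl] = 2\Xsl$ follow by direct substitution; moreover $e^{\Ysl} \Xsl e^{-\Ysl} = \Xsl - \Hsl - \Ysl = -4 \Ysl_1$ is of pure Hodge type $(1, -1)$ and thus automatically sends $(e^{\Ysl} F)^p$ into $(e^{\Ysl} F)^{p+1}$, yielding the filtration condition.

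The main obstacle is therefore to prove that $\Hsl_0 = 0$. The positivity argument in the proof of \Cref{prop:invariants} already establishes $\Hsl_0 v = 0$ for every $v \in V_0 \cap \ker \Ysl$, by combining \Cref{lem:identity} with the polarization positivity on the Hodge components of $v$. I would extend this to all of $V$ by induction on $\dim V$: take a nonzero polarized sub-Hodge structure $W \subseteq V$ on which the vanishing of $\Hsl_0$ is already known (which exists in the non-base case, either as $V_0 \cap \ker \Ysl$ if that is nonzero, or, after a suitable iteration of the positivity computation, on a primitive subspace in the lowest $\Hsl$-weight of $V$), pass to its $Q$-orthogonal complement $W^\perp \subseteq V$, which is automatically $\Hsl$-stable and $\Ysl$-stable by the adjoint relations $\Hsl^\dagger = -\Hsl$ and $\Ysl^\dagger = \Ysl$, inherits all the hypotheses of the corollary, and has strictly smaller dimension; then apply the inductive hypothesis. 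Once $\Hsl_0 = 0$ is known on all of $V$, the $\sltwo$-triple structure is in hand, $F$ is an $\sltwo$-Hodge filtration of weight $n$, and \Cref{thm:sltwo-filtration} completes the proof.
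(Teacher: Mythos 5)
Your reduction is sound as far as it goes: with $\Xsl := \Csh \Ysl \Csh = -\Ysl_{-1} + \Ysl_0 - \Ysl_1$, the identities you state are correct (I checked them), and once $\Hsl_0 = 0$ is known, the collapsed commutation relations really do produce an $\sltwo(\CC)$-triple with $\Xsl^{\dagger} = \Xsl$, after which \Cref{thm:sltwo-filtration} finishes the proof. This is a genuinely different route from the paper's, which never constructs $\Xsl$ by hand: it observes that $\Phi(z) = e^{-z\Ysl}F$ is the period mapping of a polarized variation of Hodge structure (the factors $e^{-i \Im z \, \Ysl}$ and $e^{-\half \log \abs{\Re z}\, \Hsl}$ lie in the real group $G$), and then uses the Hodge norm estimates to identify $W_k = V_k \oplus V_{k-1} \oplus \dotsb$ with the monodromy weight filtration of $\Ysl$, which forces $\Ysl^k \colon V_k \to V_{-k}$ to be an isomorphism and hence yields the representation.

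The gap is in your proof that $\Hsl_0 = 0$. The inductive step is fine when $V_0 \cap \ker \Ysl \neq 0$: by (the proof of) \Cref{prop:invariants} that subspace is a polarized sub-Hodge structure of $e^{\Ysl}F$, it is $\Hsl$- and $\Ysl$-stable, $\Hsl_0$ kills it, and its $Q$-orthogonal complement inherits all the hypotheses with smaller dimension. But when $V_0 \cap \ker \Ysl = 0$ (which happens already when the conclusion is $V \cong S_1$), your fallback, ``a primitive subspace in the lowest $\Hsl$-weight,'' cannot serve as $W$: since $\Hsl^{\dagger} = -\Hsl$, the pairing $Q$ couples $V_k$ only with $V_{-k}$, so the lowest weight space $V_{-m}$ (with $m \geq 1$) is $Q$-isotropic; it is not a polarized sub-Hodge structure, it is contained in its own orthogonal complement, and the decomposition $V = W \oplus W^{\perp}$ on which your induction rests simply fails. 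To restart the induction you would need a nonzero $\Hsl$- and $\Ysl$-stable, $Q$-nondegenerate sub-Hodge structure of $e^{\Ysl}F$ on which $\Hsl_0$ is already known to vanish -- morally the ``$\sltwo$-span'' of a primitive piece -- and producing such a subspace without already having $\Xsl$, i.e.\ without already knowing that $\Ysl^k \colon V_k \to V_{-k}$ is surjective, is exactly the hard Lefschetz-type content of the corollary. ``A suitable iteration of the positivity computation'' is not an argument; this is the step that is missing, and as written the proposal does not prove the statement.
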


\begin{proof}
	We are going to argue that $\Ysl^k \colon V_k \to V_{-k}$ is an isomorphism
	for every $k \in \NN$. This implies that $\Hsl$ and $\Ysl$ determine a
	representation of $\sltwo(\CC)$ on $V$, and together with the other conditions
	above, is enough for applying \Cref{thm:sltwo-filtration}. An equivalent
	formulation is that the increasing filtration
	\[
		W_k = V_k \oplus V_{k-1} \oplus V_{k-2} \oplus \dotsb
	\]
	is the monodromy weight filtration of the nilpotent operator $\Ysl$. By
	assumption, $e^{\Ysl} F \in D$. Let us show that $e^{-z \Ysl} F \in D$ for
	every $z \in \HH$. The two operators $e^{-\half \log \abs{\Re z} \, \Hsl}$ and
	$e^{i \Im z \, \Ysl}$ belong to the real Lie group $G = \Aut(V,Q)$, due to the fact
	that $\Hsl^{\dagger} = -\Hsl$ and $\Ysl^{\dagger} = \Ysl$.  From the relation
	$[\Hsl, \Ysl] = -2\Ysl$ and the fact that $\Hsl(F^{\bullet}) \subseteq
	F^{\bullet}$, we thus get
	\[
		e^{-z \Ysl} F = e^{-i \Im z \Ysl} e^{\abs{\Re z} \Ysl} F
		= e^{-i \Im z \, \Ysl} e^{-\half \log \abs{\Re z} \, \Hsl} \cdot e^{\Ysl} F \in D.
	\]
	Because $\Ysl(F^{\bullet}) \subseteq F^{\bullet-1}$, the mapping
	\[
		\Phi \colon \HH \to D, \quad \Phi(z) = e^{-z\Ysl} F,
	\]
	is the period mapping of a polarized variation of Hodge structure of weight $n$,
	with monodromy operator $T = e^{-2 \pi i \Ysl}$. In our usual notation, we
	therefore have $N = -\Ysl$. 

	Now comes the crucial point: because of the Hodge norm estimates, we can recognize the
	monodromy weight filtration of $N$ -- and hence that of $\Ysl$ -- by the order of
	growth of the Hodge norm. Let $v \in V_k$ be any nonzero vector. As long as $\Im z$
	remains bounded, we have
	\[
		\norm{v}_{\Phi(z)}^2 
		= \bignorm{e^{\half \log \abs{\Re z} \, \Hsl} e^{i \Im z \, \Ysl} v}_{e^{\Ysl} F}^2
		\sim \abs{\Re z}^k \cdot \norm{v}_{e^{\Ysl} F}^2,
	\]
	and so according to \Cref{thm:Hodge-norm-intro}, the filtration $W$ we defined
	above must be the monodromy weight filtration of $\Ysl$. By construction, $\gr_k^W
	\cong V_k$, and so $\Ysl^k \colon V_k \to V_{-k}$ is an isomorphism for every $k
	\in \NN$. This concludes the proof.
\end{proof}

\section{The Hodge norm estimates}

\newpar
In this chapter, we use the results about harmonic bundles that we derived in
\Cref{sec:VHS} to prove the Hodge norm estimates. The point of these estimates is to
control the order of growth of the Hodge norm of a multi-valued flat section in terms
of the monodromy weight filtration of the nilpotent operator $N$.

\begin{note}
In fact, exactly the same argument proves the Hodge norm estimates for an arbitrary
harmonic bundle on the punctured disk whose Higgs field $\theta_{\partial/\partial
t}$ is nilpotent.
\end{note}

\subsection{Boundedness of the Hodge norm for flat sections}

\newpar
Let us start by showing that, because of the derivative bound in
\eqref{eq:derivatives-final}, the Hodge norm of any multi-valued flat section can grow
at most like a power of $\abs{\Re z}$.

\begin{pprop} \label{prop:polynomial-bound}
	Let $v \in V$ be a multi-valued flat section. Then
	\begin{align*}
		\norm{v}_{\Phi(z)} &\leq e^{2C_0 \pi} 
		\max \Bigl( \abs{\Re z}^{2C_0}, \abs{\Re z}^{-2C_0} \Bigr)
		\norm{v}_{\Phi(-1)} \\
		\norm{v}_{\Phi(-1)} &\leq e^{2C_0 \pi} 
		\max \Bigl( \abs{\Re z}^{2C_0}, \abs{\Re z}^{-2C_0} \Bigr)
		\norm{v}_{\Phi(z)} 
	\end{align*}
	on the horizontal strip $\abs{\Im z} \leq \pi$, where $C_0 = \frac{1}{2}
	\sqrt{\binom{r+1}{3}}$ and $r = \rk E$.
\end{pprop}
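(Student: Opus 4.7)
The plan is to control the smooth function $\varphi(z) = \log h(v,v)(z) = 2\log\norm{v}_{\Phi(z)}$ by integrating its first partial derivatives along a well-chosen path from $-1$ to $z$, using the universal derivative bound \eqref{eq:derivatives-final}. Since $\partial/\partial x = \partial/\partial z + \partial/\partial \bar{z}$ and $\partial/\partial y = i(\partial/\partial z - \partial/\partial \bar{z})$, the estimate $|\partial\varphi/\partial z| = |\partial\varphi/\partial \bar{z}| \leq 2C_0/|\Re z|$ immediately upgrades to
\[
   \Abs{\pder{\varphi}{x}}, \quad \Abs{\pder{\varphi}{y}} \;\leq\; \frac{4 C_0}{\abs{\Re z}}.
\]

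I would then integrate along the L-shaped path from $-1$ to $z = x+iy$ that first goes vertically from $-1$ to $-1+iy$ and then horizontally from $-1+iy$ to $x+iy$. On the vertical leg the real part is constantly $-1$, so the derivative bound yields $|\partial\varphi/\partial y| \leq 4C_0$; since $\abs{y} \leq \pi$ on our strip, this leg contributes at most $4\pi C_0$. On the horizontal leg the imaginary part is fixed at $y$ and only $|\Re z|$ varies between $1$ and $\abs{x}$, giving a contribution bounded by $\int 4 C_0 / u \, du = 4 C_0 \bigabs{\log \abs{x}}$. Adding the two pieces, we obtain
\[
   \bigabs{\varphi(z) - \varphi(-1)} \;\leq\; 4\pi C_0 + 4 C_0 \bigabs{\log\abs{\Re z}}.
\]
Since $\varphi = 2\log\norm{v}$, exponentiation and division by $2$ yields both of the claimed inequalities simultaneously, the absolute value on the left hand side accounting for the symmetry between them.

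This is a routine calculus argument once \eqref{eq:derivatives-final} is in hand; there is no real obstacle. The only modest design choice is the L-shaped path: a diagonal path would mix the two contributions and force a $1/\abs{\Re z}$ term into the vertical integral, whereas doing the vertical leg first at the favorable value $\Re z = -1$ keeps the potentially singular factor $1/\abs{\Re z}$ confined to the horizontal leg, where its integral produces exactly the logarithmic (hence power-of-$\abs{\Re z}$) contribution that appears in the statement. The strip restriction $\abs{\Im z} \leq \pi$ enters only to keep the vertical contribution bounded by a fixed constant, independent of $z$.
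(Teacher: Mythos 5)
Your proof is correct and follows essentially the same route as the paper: the paper also bounds $\abs{\varphi(x+iy)-\varphi(-1)}$ by splitting into the vertical leg at $\Re z = -1$ (contributing $4C_0\pi$) and the horizontal leg at height $y$ (contributing $4C_0\abs{\log\abs{\Re z}}$), using the derivative bound \eqref{eq:derivatives-final}, and then exponentiating. No discrepancies to report.
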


\begin{proof}
	Assuming that $v \neq 0$, we consider the function $\varphi = \log
	\norm{v}_{\Phi(z)}^2$. Set $z = x + iy$. Then
	\[
		\bigabs{\varphi(x+iy) - \varphi(-1)} \leq
		\bigabs{\varphi(x+iy) - \varphi(-1+iy)} + \bigabs{\varphi(-1+iy) -
		\varphi(-1)}.
	\]
	The derivative bound in \eqref{eq:derivatives-final} implies that
	\[
		\ABS{\frac{\partial \varphi}{\partial x}}
		= \ABS{\frac{\partial \varphi}{\partial z} + \frac{\partial \varphi}{\partial
		\zb}} \leq \frac{4C_0}{\abs{\Re z}},
	\]
	with a similar inequality for $\partial \varphi/\partial y$. Therefore
	\[
		\bigabs{\varphi(x+iy) - \varphi(-1+iy)} 
		\leq \ABS{\int_{-1}^x \frac{4C_0}{\abs{t}} \dt} = 4C_0 \bigabs{\log \abs{x}};
	\]
	in the same manner, we have for $\abs{y} \leq \pi$ the inequality
	\[
		\bigabs{\varphi(-1+iy) - \varphi(-1)} \leq 4C_0 \abs{y} \leq 4C_0 \pi.
	\]
	Putting everything together, we get
	\[
		-4C_0 \pi - 4C_0 \bigabs{\log \abs{\Re z}} \leq
		\log \norm{v}_{\Phi(z)}^2 - \log \norm{v}_{\Phi(-1)}^2 
		\leq 4C_0 \pi + 4C_0 \bigabs{\log \abs{\Re z}},
	\]
	from which the desired inequalities follow by exponentiation.
\end{proof}

\newpar
The following result is a first step towards more precise estimates for Hodge norms
of multi-valued flat sections. The proof is disarmingly simple.

\begin{pprop} \label{prop:kerN}
	Let $v \in V$ be a multi-valued flat section such that $T v = \lambda v$
	for some $\lambda \in \CC$. Then for every $x_0 < 0$, we have the inequality
	\[
		\norm{v}_{\Phi(z)} \leq e^{4C_0/\abs{x_0}} \norm{v}_{\Phi(x_0)}
	\]
	for all $z \in \HH$ with $\Re z \leq x_0$; here $C_0 = \frac{1}{2}
	\sqrt{\binom{r+1}{3}}$ and $r = \rk E$.
\end{pprop}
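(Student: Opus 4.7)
My plan is to set $\varphi(z) = \log \norm{v}_{\Phi(z)}^2$ and exploit three properties already in hand: (i) $\varphi$ is subharmonic on $\HH$, by the earlier lemma showing $\log h(v,v)$ is subharmonic; (ii) its first derivatives satisfy $|\partial_z\varphi| = |\partial_{\bar z}\varphi| \leq 2C_0/|\Re z|$, by combining the derivative bound (8.1) with the basic estimate for the Higgs field; and (iii) the hypothesis $Tv = \lambda v$ together with the monodromy theorem forces $|\lambda| = 1$, so that (8.2) gives $\varphi(z + 2\pi i) = \varphi(z)$. These are exactly the ``three properties'' the introduction alludes to, and I will turn them into the claimed pointwise estimate by passing to a vertical average.

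I would define $\bar\varphi(x) = \frac{1}{2\pi}\int_0^{2\pi}\varphi(x + iy)\, dy$, which depends only on $x$ by periodicity. Subharmonicity of $\varphi$ combined with $\int_0^{2\pi}\partial_y^2\varphi\, dy = 0$ (again by periodicity) gives $\bar\varphi''(x) = \frac{1}{2\pi}\int_0^{2\pi}\Delta\varphi(x+iy)\, dy \geq 0$, so $\bar\varphi$ is convex on $(-\infty, 0)$. Meanwhile the preceding polynomial bound supplies $\varphi(z) \leq 4C_0\log|\Re z| + O(1)$ as $\Re z \to -\infty$, hence the same bound for $\bar\varphi(x)$. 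The key observation is then that a convex function on $(-\infty, 0)$ with at most logarithmic upper growth at $-\infty$ is automatically non-decreasing: if $\bar\varphi'(x_1) < 0$ at some point $x_1$, monotonicity of the derivative would give $\bar\varphi'(x) \leq \bar\varphi'(x_1) < 0$ for all $x \leq x_1$, forcing $\bar\varphi$ to grow linearly in $|x|$ as $x \to -\infty$ and contradicting the logarithmic bound. Therefore $\bar\varphi(x) \leq \bar\varphi(x_0)$ for all $x \leq x_0$.

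To conclude I would pass back from $\bar\varphi$ to $\varphi$ using property (ii). The function $\psi(x,y) = \varphi(x+iy) - \bar\varphi(x)$ has mean zero on each vertical line and satisfies $|\partial_y\psi| \leq 4C_0/|\Re z|$; writing $\psi(y_0) = \frac{1}{2\pi}\int_0^{2\pi}[\psi(y_0) - \psi(y)]\,dy$ and using that two points on a circle of circumference $2\pi$ are at distance at most $\pi$ yields $|\psi(x,y)| \leq O(C_0/|\Re z|)$. Chaining the three inequalities:
\[
\varphi(z) \leq \bar\varphi(\Re z) + O(C_0/|\Re z|) \leq \bar\varphi(x_0) + O(C_0/|\Re z|) \leq \varphi(x_0) + O(C_0/|x_0|)
\]
for $\Re z \leq x_0$, and exponentiating half this inequality produces the claimed bound. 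The one genuinely non-trivial step is the monotonicity of $\bar\varphi$, which depends critically on having the polynomial upper bound of the preceding proposition at hand; everything else is just subharmonicity, the derivative bound, and the averaging trick, all set up in earlier sections precisely for this use, so that extracting the explicit multiplicative constant in the exponent is a careful accounting rather than new analysis.
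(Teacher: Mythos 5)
Your argument is correct and is essentially the paper's proof: same function $\varphi=\log\norm{v}_{\Phi(z)}^2$, same three inputs (subharmonicity, the derivative bound $2C_0/\abs{\Re z}$, periodicity from $\abs{\lambda}=1$), same vertical averaging to get a convex function of $x$, and the same final step comparing $\varphi$ to its vertical average via the bound on $\partial\varphi/\partial y$. The one place you diverge is the monotonicity of the average $\bar\varphi$: you rule out a negative derivative by invoking the logarithmic upper bound from the preceding proposition, whereas the paper gets it more directly from the derivative bound itself --- since $\abs{\bar\varphi'(x)}\leq 4C_0/\abs{x}\to 0$ as $x\to-\infty$ and $\bar\varphi'$ is nondecreasing, $\bar\varphi'\geq 0$ everywhere; your route works but needs the polynomial bound as an extra input, while the paper's is self-contained at this step. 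One small bookkeeping remark: chasing your $O(C_0/\abs{\Re z})$ constants (the circle-distance bound gives roughly $2\pi C_0/\abs{x}$ for $\abs{\varphi-\bar\varphi}$) yields an exponent constant somewhat larger than the stated $4C_0/\abs{x_0}$; this is harmless for all uses of the proposition, and the paper's own averaging step is similarly loose, but as written your chain does not literally reproduce the displayed constant.
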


Using the Jordan decomposition $T = T_s e^{2 \pi i \, N}$, the condition $T v =
\lambda v$ is equivalent to $T_s v = \lambda v$ and $Nv = 0$. The inequality in the
proposition is therefore a special case of the Hodge norm estimates for multi-valued
flat sections. Indeed, $Nv = 0$ implies that $v \in W_0$, and so we expect the Hodge
norm of $v$ to remain bounded as $\Re z \to -\infty$. It turns out that this special
case is all that is needed to prove the Hodge norm estimates in general: the power of
the statement comes from the fact that it applies to \emph{all} polarized variations
of Hodge structure on $\dst$. The universal character of the
inequality also makes it very useful for studying variations of Hodge structure in
several variables, as we plan to show in a sequel to this paper.

\newpar
Now for the proof of the proposition. The semisimple part of the monodromy
transformation $T = T_s e^{2 \pi i N}$ give rise to a decomposition
\[
	V = \bigoplus_{\abs{\lambda} = 1} E_{\lambda}(T_s),
\]
and each eigenspace $E_{\lambda}(T_s)$ is preserved by $N$. Let us consider a fixed nonzero
vector $v \in E_{\lambda}(T_s) \cap \ker N$. As in the proof of the monodromy
theorem (in \Cref{prop:monodromy-theorem}), these conditions imply that the function
\[
	\varphi = \log h(v,v)
\]
is invariant under the substitution $z \mapsto z + 2 \pi i$. In addition, we have
already proved that $\varphi$ is smooth and subharmonic
(\Cref{lem:metric-subharmonic}), and that 
\[
	\ABS{\frac{\partial \varphi}{\partial z}}
	= \ABS{\frac{\partial \varphi}{\partial \zb}}
	\leq \frac{2C_0}{\abs{\Re z}},
\]
Recall that we derived this inequality in \eqref{eq:derivatives-final}; here $C_0 =
\half \sqrt{\binom{r+1}{3}}$ and $r = \rk E$.

\begin{pexa}
To get a feeling for these conditions, let us consider a toy example: a
smooth function $f \colon (-\infty, 0) \to \RR$ with the property that $f'' \geq 0$
and $\abs{f'(x)} \leq C \abs{x}^{-1}$. Since $f'' \geq 0$, the function
$f'$ must be increasing, and therefore
\[
	0 = \lim_{x \to -\infty} f'(x) \leq f'.
\]
But this means that $f$ is itself increasing, and so we get
\[
	f(x) \leq f(x_0)
\]
for every $x \leq x_0 < 0$. In particular, $f$ is bounded above as $x \to
-\infty$. A similar convexity argument also shows up in Mochizuki's work on the
asymptotic behavior of tame harmonic bundles \cite[Lem.~2.23]{Mochizuki}.
\end{pexa}

\newpar
In order to apply the same reasoning as in the example, we need a function that
only depends on $x = \Re z$. We therefore consider the vertical averages
\[
	f(x) = \frac{1}{2 \pi} \int_0^{2 \pi} \varphi(x + iy) \dy.
\]
Differentiation under the integral sign gives
\[
	f''(x) = \frac{1}{2 \pi} \int_0^{2 \pi} \varphi_{x,x}(x + iy) \dy
	\geq - \frac{1}{2 \pi} \int_0^{2 \pi} \varphi_{y,y}(x + iy) \dy,
\]
since $\Delta \varphi = \varphi_{x,x} + \varphi_{y,y} \geq 0$. The integral evaluates to
\[
	\frac{1}{2 \pi} \int_0^{2 \pi} \varphi_{y,y}(x + iy) \dy
	= \frac{1}{2 \pi} \Bigl( \varphi_y(x + 2 \pi i) - \varphi_y(x) \Bigr) = 0,
\]
due to the fact that $\varphi(x+iy)$ is periodic in $y$ of period $2\pi$.
Therefore $f(x)$ is a convex function of $x$. Moreover, we know from
\eqref{eq:derivatives-final} that
\[
	\abs{\varphi_x(x+iy)} \leq \frac{4C_0}{\abs{x}},
\]
where $C_0 = \frac{1}{2} \sqrt{\binom{n+1}{3}}$. Plugging this into the integral from
above gives
\[
	\abs{f'(x)} \leq \frac{1}{2 \pi} \int_0^{2 \pi} \abs{\varphi_x(x + iy)} \dy
	\leq \frac{4C_0}{\abs{x}}.
\]
As in the example, it follows that $f(x) \leq f(x_0)$ for every $x \leq x_0 < 0$.

\newpar
All that is left is to relate the behavior of $\varphi(x+iy)$ to that of $f(x)$.
We have
\[
	-\frac{4C_0}{\abs{x}} \leq \varphi_y(x+iy) \leq \frac{4C_0}{\abs{x}},
\]
which can be integrated to give 
\[
	-\frac{4C_0}{\abs{x}} u \leq \varphi(x+iy+iu) - \varphi(x+iy) \leq
	\frac{4C_0}{\abs{x}} u 
\]
for all $u \geq 0$. If we now average over $u \in [0,2\pi]$, we get
\[
	-\frac{4C_0}{\abs{x}} \leq f(x) - \varphi(x+iy) \leq \frac{4C_0}{\abs{x}}.
\]
For $x \leq x_0 < 0$, we have $f(x) \leq f(x_0)$, and therefore
\[
	\varphi(x+iy) \leq f(x) + \frac{4C_0}{\abs{x}} \leq f(x_0) + \frac{4C_0}{\abs{x}} 
	\leq \varphi(x_0) + \frac{8C_0}{\abs{x_0}}.
\]
This gives the desired upper bound for $\norm{v}_{\Phi(z)}$ after exponentiation.

\subsection{The comparison theorem}

\newpar
One striking consequence of \Cref{prop:kerN} is that the behavior of the
Hodge norm as $t \to 0$ only depends on the underlying flat vector bundle $(E,d)$.
Simpson proves a similar result for tame harmonic bundles, but with conditions that
appear to be much more restrictive \cite[Cor.~4.3]{Simpson}.

\begin{pthm} \label{thm:comparison}
	Let $E_1, E_2$ be polarized variations of Hodge structure over $\dst$. If
	$(E_1, d_1)$ and $(E_2, d_2)$ are isomorphic as smooth vector bundles with
	connection, then for every $0 < r < 1$, there is a constant $C_r > 0$ such that
	\[
		C_r^{-1} \cdot h_{E_1} \leq h_{E_2} \leq C_r \cdot h_{E_1}
	\]
	at all points $t \in \dst$ with $\abs{t} \leq r$.
\end{pthm}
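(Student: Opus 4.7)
The plan is to construct a polarized variation of Hodge structure on $\Hom(E_1,E_2)$, observe that the given flat isomorphism is a monodromy-invariant multi-valued flat section, and apply the special case of the Hodge norm estimates established in \Cref{prop:kerN}.

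First I would package the hypothesis into an object living in a single polarized variation of Hodge structure. Tensor products and duals of polarized variations of Hodge structure are again polarized variations, so $\Hom(E_1,E_2) = E_1^{\vee} \tensor E_2$ carries a canonical polarized variation of Hodge structure on $\dst$, of weight $n_2 - n_1$, and its Hodge metric is the ``Hilbert--Schmidt'' metric built from $h_{E_1}$ and $h_{E_2}$. An isomorphism $\varphi \colon (E_1,d_1) \to (E_2,d_2)$ of flat bundles is by definition a flat global section of $\Hom(E_1,E_2)$ on $\dst$; pulling back to $\HH$, it gives a multi-valued flat section $\varphi \in V_{\Hom(E_1,E_2)}$ that is fixed by the monodromy, i.e.\ $T \varphi = \varphi$. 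The same remark applies to $\varphi^{-1}$, regarded as a flat section of $\Hom(E_2,E_1)$.

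Next I would apply \Cref{prop:kerN} with $\lambda = 1$ to each of $\varphi$ and $\varphi^{-1}$. Fix $0 < r < 1$ and set $x_0 = \log r < 0$. The proposition gives
\[
	\norm{\varphi}_{\Phi(z)} \leq e^{4C_0/\abs{x_0}} \cdot \norm{\varphi}_{\Phi(x_0)}
\]
for every $z \in \HH$ with $\Re z \leq x_0$, and the right-hand side is a well-defined finite constant depending only on $r$, on the Hodge metrics at the single reference point $t = r$, and on the rank of $\Hom(E_1,E_2)$ (through $C_0$). Here I use that since $\varphi$ is single-valued on $\dst$, the Hodge norm $\norm{\varphi}_{\Phi(x_0)}$ depends only on $\abs{e^{x_0}} = r$, not on the chosen lift to $\HH$. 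The analogous bound holds for $\norm{\varphi^{-1}}_{\Phi(z)}$.

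Finally I would convert the bound on the Hodge--Hilbert--Schmidt norm of $\varphi$ into a comparison of the metrics $h_{E_1}$ and $h_{E_2}$. For any $v$ in the fiber of $E_1$ at $t = e^z$, an orthonormal-basis computation (for $E_1$ at $t$) together with the Cauchy--Schwarz inequality yields
\[
	h_{E_2}(\varphi v, \varphi v) \leq h_{\Hom(E_1,E_2)}(\varphi, \varphi) \cdot h_{E_1}(v,v),
\]
and symmetrically for $\varphi^{-1}$. Since $\varphi$ identifies the underlying smooth vector bundles $E_1$ and $E_2$, combining the two inequalities produces a constant $C_r > 0$ with $C_r^{-1} h_{E_1} \leq h_{E_2} \leq C_r h_{E_1}$ on $\menge{t \in \dst}{\abs{t} \leq r}$. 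I do not expect any real obstacle here: the whole argument is a clean application of \Cref{prop:kerN} to a well-chosen auxiliary variation of Hodge structure, and the only mildly delicate point is checking that the natural ``Hilbert--Schmidt'' Hodge metric on $\Hom(E_1,E_2)$ dominates the operator norm, which is a routine linear-algebra computation.
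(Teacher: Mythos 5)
Your proposal is correct and follows essentially the same route as the paper: regard the flat isomorphism (and its inverse) as a monodromy-invariant flat section of the induced polarized variation on $\Hom(E_1,E_2)$, apply \Cref{prop:kerN} with $x_0=\log r$, and use that the Hodge norm dominates the pointwise operator norm. The only difference is that you spell out the operator-norm step via Cauchy--Schwarz, which the paper states without proof.
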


\begin{proof}
	Let $f \colon E_1 \to E_2$ be an isomorphism between the underlying smooth vector
	bundles that preserves the connections. We consider $f$ as a single-valued flat
	section of the bundle $\Hom(E_1, E_2)$. Since this bundle underlies a polarized
	variation of Hodge structure, \Cref{prop:kerN} applies to it. Taking
	$x_0 = \log r$ in the statement, we get a bound for the pointwise Hodge norm of
	$f$, of the form
	\[
		h_{\Hom(E_1, E_2)}(f,f) \leq C_r,
	\]
	valid for every $t \in \dst$ with $\abs{t} \leq r$. Here the constant $C_r$ equals
	$e^{8C_0/\abs{\log r}}$ times the value of the Hodge norm at the point $t = r$.
	Since the Hodge norm is an upper bound for the pointwise operator norm of $f$,
	this gives us the inequality
	\[
		h_{E_2} \leq C_r \cdot h_{E_1}.
	\]
	The reverse inequality follows by applying the same reasoning to the inverse
	isomorphism $f^{-1} \colon E_2 \to E_1$. 
\end{proof}

\subsection{Order of growth and the weight filtration}

\newpar
Let $W = W_{\bullet} V$ be the monodromy weight filtration of the nilpotent operator
$N$. Our next goal is to prove that the weight filtration controls the behavior of
the function $h(v,v)$ as $\Re z \to -\infty$, at least on horizontal strips of
bounded height. The idea is to construct another variation of Hodge structure on the
same underlying flat vector bundle $(E, d)$, using representation theory.
\Cref{thm:comparison} guarantees that the Hodge metrics of the two variations
are equivalent up to a constant, and this implies the Hodge norm estimates.

\newpar \label{par:splitting}
To get started, we need to upgrade the nilpotent operator $N \in \End(V)$
into a representation of the Lie algebra $\sltwo(\CC)$. Let us denote the standard basis
elements by $\Hsl, \Xsl, \Ysl \in \sltwo(\CC)$; then
\[
	[\Hsl,\Xsl] = 2\Xsl, \quad [\Hsl,\Ysl] = -2\Ysl, \quad [\Xsl,\Ysl] = \Hsl.
\]
As $N \in \End(V)$ is nilpotent, one can find a semisimple endomorphism $H \in
\End(V)$ with integral eigenvalues, such that $[H,N] = -2N$. In fact, one can make a
more careful choice of $H$; the additional properties are going to be useful
for us later on.

\begin{pprop} \label{prop:splitting}
	One can find $H \in \End(V)$ with the following properties:
	\begin{aenumerate}
	\item \label{en:splitting-a}
		$H$ is semisimple with integral eigenvalues.
	\item \label{en:splitting-b}
		One has $[H,N] = -2N$ and $W_k = E_k(H) \oplus W_{k-1}$ for every $k \in \ZZ$.
	\item \label{en:splitting-c}
		One has $Q(Hv,w) + Q(v,Hw) = 0$ for every $v,w \in V$.  
	\item \label{en:splitting-d}
		$H$ commutes with $T_s$. 
	\end{aenumerate}
\end{pprop}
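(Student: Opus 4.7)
The idea is to build $H$ via the Jacobson--Morozov theorem inside the real Lie algebra of the centralizer of $T_s$ in $G$. Two preliminary facts. First, because $T = T_s \cdot e^{2\pi i N}$ is the Jordan decomposition in $\GL(V)$ and the Jordan decomposition of an element of a real algebraic group stays inside the group, one has $T_s \in G$; moreover $T_s$ and $T_u = e^{2\pi i N}$ commute, so $[T_s, N] = 0$. Second, the condition $Q(Nv,w) = Q(v,Nw)$ translates into $(iN)^{\dagger} = -iN$, which is to say $iN \in \glie$. Setting
\[
\mathfrak{z} = \{A \in \glie : [A, T_s] = 0\},
\]
we see that $\mathfrak{z}$ is the Lie algebra of the real reductive algebraic group $Z_G(T_s) \subseteq G$; in particular it is itself a reductive real Lie algebra, and it contains the nilpotent element $iN$.

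Now apply the real Jacobson--Morozov theorem in $\mathfrak{z}$ to produce $h, e \in \mathfrak{z}$ forming an $\sltwo$-triple with $iN$, i.e., $[h, iN] = -2iN$, $[h, e] = 2e$, and $[e, iN] = h$. Setting $H := h$ we get $[H, N] = -2N$, and the assignment
\[
\rho(\Hsl) = H, \qquad \rho(\Ysl) = -N, \qquad \rho(\Xsl) = -ie
\]
defines a representation $\rho \colon \sltwo(\CC) \to \End(V)$ (with the sign conventions of \parref{par:sltwo}).

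From this, the four properties of the proposition follow essentially by inspection. Property (a) holds because $\rho(\Hsl) = H$ acts semisimply with integer eigenvalues in any finite-dimensional $\sltwo(\CC)$-representation. Properties (c) and (d) are immediate from $H \in \mathfrak{z} \subseteq \glie$. For (b), introduce the auxiliary filtration $W_k' := \bigoplus_{j \leq k} E_j(H)$. Since $N = -\rho(\Ysl)$ lowers $H$-weights by $2$, one has $N(W_k') \subseteq W_{k-2}'$, and the classification of finite-dimensional representations of $\sltwo(\CC)$ yields an isomorphism $N^k \colon \gr_k^{W'} \xrightarrow{\sim} \gr_{-k}^{W'}$ for every $k \in \NN$. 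The uniqueness of the monodromy weight filtration then forces $W_{\bullet}' = W_{\bullet}$, and the splitting $W_k = E_k(H) \oplus W_{k-1}$ is read off.

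The only non-routine ingredient is the Jacobson--Morozov theorem for real reductive Lie algebras. The standard route is to apply the complex version in $\mathfrak{z}_{\CC}$ to extend $iN$ to a complex $\sltwo$-triple, and then to conjugate it into a triple defined over $\RR$ inside $\mathfrak{z}$ by combining Malcev's conjugacy theorem for $\sltwo$-triples sharing a nilpotent element with a Cartan involution on $\mathfrak{z}$ (for instance the one induced by a reference Hodge structure in $D$). I expect this to be the main technical point in a fully self-contained write-up.
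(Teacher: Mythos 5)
Your proof is correct, but it takes a genuinely different route from the paper. The paper's argument is elementary linear algebra: it first produces some $H_0$ satisfying (a) and (b) from a Jordan basis for $N$, observes that the set $\Sigma(N)$ of all such splittings is parametrized as $e^B H_0 e^{-B}$ with $[B,N]=0$ and $B(W_\bullet)\subseteq W_{\bullet-1}$, then gets (c) by applying this to $-H_0^{\dagger}\in\Sigma(N)$ and conjugating by $e^{B/2}$ (the uniqueness of $B$ forces $B^{\dagger}=B$), and gets (d) by running the whole construction separately on each eigenspace of $T_s$, on which $Q$ remains nondegenerate. You instead apply the Jacobson--Morozov theorem inside the real reductive Lie algebra $\mathfrak{z}=\Lie Z_G(T_s)$, which contains the nilpotent element $iN$; this hands you (c) and (d) for free from the choice of ambient algebra, and (a), (b) then follow from $\sltwo$ representation theory together with the uniqueness of the monodromy weight filtration, exactly as you argue. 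What each approach buys: yours is shorter and conceptually transparent, but it outsources the substance to real Jacobson--Morozov plus the standard facts that $Z_G(T_s)$ is reductive and that $iN$ lies in its semisimple part (worth a sentence in a full write-up); the paper's is self-contained, and its description of $\Sigma(N)$ as a conjugation orbit is the reusable ingredient behind the averaging trick. One small correction to your final paragraph: no Malcev/Cartan-involution detour is needed, since Jacobson--Morozov holds for semisimple Lie algebras over any field of characteristic zero, so the real case can be quoted directly --- or avoided altogether by the paper's $e^{B/2}$-conjugation trick, which is precisely an elementary substitute for it. (The paper itself remarks that the existence of splittings satisfying (a) and (b) is a special case of Jacobson--Morozov, so the two proofs are close cousins; the difference is whether one quotes the theorem in a centralizer or reproves what is needed by hand.)
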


\begin{proof}
	The proof is easy, and so we only give a sketch. Consider an
	arbitrary nilpotent endomorphism $N \in \End(V)$ of a finite-dimensional vector
	space $V$. Denote by $\Sigma(N) \subseteq \End(V)$ the set of all semisimple
	endomorphisms $H$ with integer eigenvalues, such that $[H, N] = -2N$
	and $W_j = E_j(H) \oplus W_{j-1}$ for every $j \in \ZZ$. It is easy to see that
	$\Sigma(N) \neq \emptyset$, for example by choosing a basis that
	puts $N$ into Jordan canonical form. This already shows that splittings satisfying
	(a) and (b) exist, a special case of the Jacobson-Morozov theorem.

	Let us now compare two arbitrary elements $H, H' \in \Sigma(N)$. The difference
	$H'-H$ commutes with $N$ and satisfies $(H'-H)(W_{\bullet}) \subseteq
	W_{\bullet-1}$. Denote by $\ad H$ the semisimple operator on $\End(V)$, defined as
	$(\ad H)(A) = [H, A]$; then $H'-H$ belongs to the direct sum of the eigenspaces
	$E_k(\ad H)$ with $k \leq -1$. From this, one deduces that 
	\[
		H' = e^B H e^{-B}
	\]
	for a unique endomorphism $B \in \End(V)$ such that $[B,N] = 0$ and
	$B(W_{\bullet}) \subseteq W_{\bullet-1}$. Conversely, for any $B \in \End(V)$ with
	these two properties, one has $e^B H e^{-B} \in \Sigma(N)$. 

	Now we describe how to adjust a given splitting $H_0 \in \Sigma(N)$ so that 
	(c) holds. Suppose that $V$ comes with a nondegenerate hermitian pairing 
	$Q \colon V \tensor_{\CC} \Vb \to \CC$ such that $Q(Nv,w) = Q(v,Nw)$ for all $v,w
	\in V$. For $A \in \End(V)$, denote by $A^{\dagger} \in \End(V)$ the adjoint with
	respect to $Q$; thus $N^{\dagger} = N$. It is easy to see that $-H_0^{\dagger} \in
	\Sigma(N)$, and so by the above, one has
	\[
		-(H_0)^{\dagger} = e^B H_0 e^{-B}
	\]
	for a unique $B \in \End(V)$ with $[N,B] = 0$ and $B(W_{\bullet}) \subseteq
	W_{\bullet-1}$; by uniqueness, $B^{\dagger} = B$. Consequently, the new splitting
	\[
		H = e^{\half B} H_0 e^{-\half B} \in \Sigma(N)
	\]
	satisfies $H = -H^{\dagger}$, which is just a different way of writing (c). 
	If we suppose in addition that $S \in \End(V)$ is semisimple, commutes with $N$,
	and satisfies $S^{\dagger} = S$, then we can easily arrange that moreover $[H,S] = 0$
	(by considering each eigenspace of $S$
	separately). This shows that splittings with all four properties exist.
\end{proof}

\newpar \label{par:representation}
We now define a representation 
\[
	\rho \colon \sltwo(\CC) \to \End(V), \quad
		\rho(\Hsl) = H, \, \rho(\Ysl) = -N.
\]
The reason for using $-N$ (instead of the seemingly more natural $N$) has to do with
the sign conventions for $\sltwo$-Hodge structures. With this choice, each eigenspace
$E_{\lambda}(T_s)$ is a representation of the Lie algebra $\sltwo(\CC)$, and 
\[
	V = \bigoplus_{\abs{\lambda} = 1} E_{\lambda}(T_s).
\]
Moreover, the eigenspaces $E_k(H)$ and $E_{\ell}(H)$ are orthogonal with respect to $Q$
unless $k = -\ell$.

\subsection{Proof of the Hodge norm estimates}

\newpar
We can now prove the Hodge norm estimates in general.

\begin{pthm} \label{thm:Hodge-norm-estimates}
	Let $E$ be a polarized variation of Hodge structure on $\dst$.  If $v \in V$ is a 
	multi-valued flat section such that $v \in W_k$ and $v \not\in W_{k-1}$, then
	the function
	\[
		\abs{\Re z}^{-k} \cdot \norm{v}_{\Phi(z)}^2
	\]
	is uniformly bounded on every region of the form $\Re z \leq x_0 < 0$ and $\abs{\Im
	z} \leq y_0$.
\end{pthm}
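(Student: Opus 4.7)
The strategy is to reduce the statement to an explicit computation on a \emph{model} polarized variation of Hodge structure $E_{\mathrm{mod}}$ on $\dst$ whose underlying flat bundle is isomorphic to $(E, d)$. Once such a model is in hand, the comparison theorem (\Cref{thm:comparison}) guarantees that the two Hodge metrics are mutually bounded, up to a multiplicative constant, on any compact subset $\{ \abs{t} \leq e^{x_0} \}$ of $\dst$; pulled back to $\HH$, this constant becomes uniform on every region of the form $\Re z \leq x_0$, $\abs{\Im z} \leq y_0$. It therefore suffices to verify the bound in the model.

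To build the model, I would first choose a splitting $H \in \End(V)$ as in \Cref{prop:splitting} and promote $(H, -N)$ to an $\sltwo(\CC)$-representation on $V$ as in \Cref{par:representation}. Since $[H, T_s] = 0$ and $T_s$ commutes with $N$, each $T_s$-eigenspace is an $\sltwo(\CC)$-sub\-rep\-re\-sen\-ta\-tion; decomposing further into isotypical components $S_m \tensor_{\CC} H_m$ with $H_m = \Hom_{\CC}(S_m, V)^{\sltwo(\CC)}$, I would equip each multiplicity space $H_m$ with an arbitrary Hodge structure of weight $n-m$ polarized by the induced nondegenerate hermitian pairing. This produces a polarized $\sltwo$-Hodge structure of weight $n$ on $V$. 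Combined with the operator $S$ from \Cref{par:model-monodromy}, chosen so that $e^{2 \pi i S} = T_s$, the construction of \cref{subsec:sltwo-VHS} then yields a polarized variation of Hodge structure $E_{\mathrm{mod}}$ on $\dst$ whose monodromy is $T_s \cdot e^{2 \pi i N} = T$. Because a flat bundle on $\dst$ is determined by its monodromy, $(E_{\mathrm{mod}}, d_{\mathrm{mod}}) \cong (E, d)$.

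On $E_{\mathrm{mod}}$, the explicit formula derived in \Cref{par:model-VHS} shows that for any $w \in E_j(H)$ and any $z \in \HH$ with $\abs{\Im z} \leq y_0$,
\[
	\norm{w}_{\Phi_{\mathrm{mod}}(z)}^2
	= \sum_{\ell \geq 0} \frac{(\Im z)^{2\ell}}{(\ell!)^2} \abs{\Re z}^{j - 2\ell} \norm{N^{\ell} w}^2
	= O \bigl( \abs{\Re z}^j \bigr),
\]
uniformly on the chosen strip. Writing $v \in W_k$ as $v = \sum_{j \leq k} v_j$ using $V = \bigoplus_j E_j(H)$ and applying the triangle inequality (together with orthogonality of distinct weight spaces in the Hodge structure at $z = -1$, cf.\ \Cref{thm:sltwo-orbit}) gives $\norm{v}_{\Phi_{\mathrm{mod}}(z)}^2 = O \bigl( \abs{\Re z}^k \bigr)$ on the same strip. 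The comparison theorem then transports this bound to $\norm{v}_{\Phi(z)}^2$.

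The main obstacle is verifying that the model construction works in the generality of arbitrary semisimple $T_s$: one must check that $Q$ restricts to a nondegenerate hermitian pairing on each multiplicity space $H_m$ (so that \emph{some} polarized Hodge structure of weight $n-m$ exists on $H_m$), and that a semisimple $S \in \End(V)$ satisfying $e^{2 \pi i S} = T_s$, $[S, H] = [S, N] = 0$, $S^{\dagger} = S$, and $S(F^{\bullet} V) \subseteq F^{\bullet} V$ (for the chosen total Hodge filtration $F^{\bullet} V$ of the $\sltwo$-Hodge structure) can actually be produced. Once these compatibilities are confirmed, the remainder of the argument is a routine computation together with an appeal to \Cref{thm:comparison}.
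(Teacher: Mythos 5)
Your proposal is correct and follows essentially the same route as the paper's own proof: build a model variation from the $\sltwo(\CC)$-representation attached to a splitting $H$ (decomposing into $T_s$-eigenspaces and isotypical components, placing polarized Hodge structures on the multiplicity spaces, and using \Cref{par:model-monodromy} to match the monodromy $T = T_s e^{2\pi i N}$), then transfer the explicit growth computation of \Cref{par:model-VHS} via the comparison theorem \Cref{thm:comparison}. The compatibilities you flag at the end are indeed the only points needing care, and they follow because distinct $T_s$-eigenspaces are $Q$-orthogonal (so $Q$ is nondegenerate on each) and because $S = \sum_\alpha \alpha P_{e^{2\pi i\alpha}}$ automatically satisfies $S^{\dagger} = S$, commutes with $H$ and $N$, and preserves the total Hodge filtration when the Hodge structures are chosen eigenspace by eigenspace.
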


\begin{proof}
	As in \Cref{par:representation}, we choose a representation of $\sltwo(\CC)$ on the
	vector space
	\[
		V = \bigoplus_{\abs{\lambda} = 1} E_{\lambda}(T_s)
	\]
	that commutes with $T_s$, is compatible with the pairing $Q$, and such that $\Ysl \in
	\sltwo(\CC)$ acts as the nilpotent operator $-N$. The representation is completely
	reducible, hence
	\[
		V \cong \bigoplus_{\abs{\lambda} = 1} \bigoplus_{m \in \NN} 
		S_m \tensor_{\CC} \Hom_{\CC} \bigl( S_m, E_{\lambda}(T_s) \bigr)^{\sltwo(\CC)}.
	\]
	Each $S_m$ has a canonical $\sltwo$-Hodge structure of weight $m$, and by putting
	suitable Hodge structures on the vector spaces in the above
	decomposition, $V$
	becomes an $\sltwo$-Hodge structure of weight $n$, polarized by the pairing $Q$.
	By the construction in \Cref{par:model-monodromy}, we can arrange that the
	monodromy transformation of the resulting variation of Hodge structure on $\dst$
	is equal to $T = T_s e^{2 \pi i N}$. In this way, we obtain another polarized
	variation of Hodge structure on $\dst$ whose underlying flat vector bundle is
	isomorphic to $(E,d)$.  \Cref{thm:comparison} shows that the two Hodge
	metrics are mutually bounded up to a constant. Since we already know the Hodge
	norm estimates for variations of Hodge structure coming from $\sltwo$-Hodge
	structures (by \Cref{par:model-VHS}), this finishes the proof of the Hodge norm
	estimates in general.
\end{proof}

\newpar
One can sharpen the Hodge norm estimates by using the decomposition
\[
	V = \bigoplus_{k \in \ZZ} E_k(H)
\]
coming from the $\sltwo(\CC)$-representation. If $v \in E_k(H)$ is a nonzero in
the $k$-th weight space, then the Hodge norm estimates show that
\[
	\norm{v}_{\Phi(z)}^2 \cdot \abs{\Re z}^{-k}
\]
is bounded from above and below by positive constants (as long as $\Im z$ lies in a
bounded interval). This suggests rescaling by the operator $e^{-\half \log \abs{\Re z} \,
H}$, because
\[
	e^{-\half \log \abs{\Re z} \, H} v = \abs{\Re z}^{-\frac{k}{2}} v,
\]
The point of insisting that $H^{\dagger} = -H$ is that $e^{-\half \log\abs{\Re z} \,
H} \in G$. Furthermore, we can remove the restriction on the imaginary part
by choosing a logarithm of $T_s$: a semisimple operator $S \in \End(V)$, with real
eigenvalues in a fixed half-open interval of length $1$, such that $T_s = e^{2 \pi i
\, S}$ and $[S,N] = 0$. The relation
\[
	\norm{v}_{\Phi(z+2 \pi i)}^2 = \norm{T^{-1}v}_{\Phi(z)}^2
\]
means that the rescaled expression
\begin{equation} \label{eq:rescaling}
	\bigl\lVert e^{\half(z-\zb) (S+N)} e^{-\half \log \abs{\Re z} \, H} v
	\bigr\rVert_{\Phi(z)}^2
\end{equation}
now depends only on $t = e^z$, and therefore descends to a smooth function on $\dst$
that is bounded from above and below by a positive constant as $t \to 0$. In
\Cref{chap:rescaled}, we are going to argue that \eqref{eq:rescaling} actually
converges to a positive definite hermitian inner product on $V$ (as a consequence of
the nilpotent orbit theorem).

\subsection{A more direct proof}

\newpar
We end this chapter by sketching another proof, without representation theory, for
the fact that the monodromy weight filtration governs the rate of growth of the Hodge
norm. In this section only, let us denote the monodromy weight filtration of the
nilpotent operator $N \in \End(V)$ by the symbol $M_{\bullet}$. Let us also define
the growth order filtration
\[
	W_k = \menge{v \in V}{%
	\text{$\norm{v}_{\Phi(z)}^2 = O \bigl( \abs{\Re z}^k \bigr)$ as $\abs{\Re z} \to \infty$}},
\]
again assuming that $\Im z$ stays in a bounded interval. We are going to prove
directly that $W_k = M_k$ for every $k \in \ZZ$. 

\newpar
We know from \Cref{prop:polynomial-bound} that $W_k = 0$ for $k \leq -2C_0$, and $W_k = V$
for $k \geq 2 C_0$. This means that the growth order filtration has finite length,
just like the monodromy weight filtration. Another simple observation is that 
\begin{equation} \label{eq:Q-weights}
	Q(W_k, W_{\ell}) = 0 \quad \text{if $k + \ell \leq -1$.}
\end{equation}
Indeed, for $v \in W_k$ and $w \in W_{\ell}$, we have
\[
	\abs{Q(v,w)}^2 \leq \norm{v}_{\Phi(z)}^2 \norm{w}_{\Phi(z)}^2
	= O \bigl( \abs{\Re z}^{k+\ell} \bigr).
\]
As long as $k + \ell \leq -1$, the right-hand side is going to zero as $\abs{\Re z}
\to \infty$; because $Q(v,w)$ is constant, it follows that $Q(v,w) = 0$. 

\newpar
We can use the basic estimate (in \Cref{cor:Higgs-bound}) to show that
$N(W_{\bullet}) \subseteq W_{\bullet-2}$; recall that this is one of the two
conditions that characterize the monodromy weight filtration.

\begin{plem} \label{lem:N-weight}
	We have $N(W_k) \subseteq W_{k-2}$ for all $k \in \ZZ$.
\end{plem}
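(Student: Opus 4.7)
The plan is to deduce this directly from the bound on the operator norm of $N$ with respect to the Hodge metric that was established in \Cref{prop:N-norm}\ref{en:splitting-a}. Recall that for any $v \in V$ and any $z \in \HH$ with $\Re z \leq -1$, one has
\[
	\norm{Nv}_{\Phi(z)} \leq \frac{C}{\abs{\Re z}} \norm{v}_{\Phi(z)},
\]
where the constant $C > 0$ depends only on $\dim V$ and on the minimal polynomial of $T$. This is the only ingredient I will need, and it is the avatar in this setting of the basic estimate on the Hodge norm of the Higgs field (via the derivative bound \eqref{eq:derivatives-final} and \Cref{lem:unitary-nilpotent}).

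Given this, suppose $v \in W_k$. By the definition of the growth order filtration, there is a constant $C' > 0$ such that $\norm{v}_{\Phi(z)}^2 \leq C' \abs{\Re z}^k$ on the region where $\Im z$ stays in a fixed bounded interval and $\abs{\Re z}$ is sufficiently large. Squaring the inequality from \Cref{prop:N-norm}, I get
\[
	\norm{Nv}_{\Phi(z)}^2 \leq \frac{C^2}{\abs{\Re z}^2} \norm{v}_{\Phi(z)}^2
	\leq C^2 C' \abs{\Re z}^{k-2},
\]
which says precisely that $\norm{Nv}_{\Phi(z)}^2 = O \bigl( \abs{\Re z}^{k-2} \bigr)$ on the same region, i.e.\ $Nv \in W_{k-2}$. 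Since $v \in W_k$ was arbitrary, this gives $N(W_k) \subseteq W_{k-2}$.

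There is essentially no obstacle here; the proof is a one-line consequence of \Cref{prop:N-norm}, and the only thing to double-check is that the growth estimate for $v$ holds on a region of the form required by \Cref{prop:N-norm} (namely $\Re z \leq -1$ with $\Im z$ bounded), which is automatic from the definition of $W_k$.
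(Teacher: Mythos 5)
Your proof is correct and is essentially identical to the paper's: Lemma \ref{lem:N-weight} is deduced in one line from the operator-norm bound $\norm{Nv}_{\Phi(z)} \leq C \abs{\Re z}^{-1} \norm{v}_{\Phi(z)}$ of \Cref{prop:N-norm}, applied to the growth-order definition of $W_k$. One small slip: your citation \ref{en:splitting-a} actually points to item (a) of \Cref{prop:splitting}, not of \Cref{prop:N-norm}, but this does not affect the argument.
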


\begin{proof}
	This is an immediate consequence of \Cref{prop:N-norm}. In fact, we proved that
	there is a constant $C > 0$ such that
	\[
		\norm{Nv}_{\Phi(z)} \leq \frac{C}{\abs{\Re z}} \norm{v}_{\Phi(z)}
		\quad \text{for $v \in V$ and $\Re z \leq -1$,}
	\]
	and so $v \in W_k$ implies that $Nv \in W_{k-2}$.
\end{proof}

\newpar
The monodromy weight filtration has the property that $\ker N^{k+1} \subseteq M_k$.
So far, we only know (from \Cref{prop:kerN}) that $\ker N \subseteq W_0$. The
following lemma generalizes this to arbitrary powers of $N$.

\begin{plem} \label{lem:kerNk}
	We have $\ker N^{k+1} \subseteq W_k$ for $k \geq 0$.
\end{plem}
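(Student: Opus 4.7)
I would argue by induction on $k \geq 0$. The base case $k = 0$ is essentially \Cref{prop:kerN}: any $v \in \ker N$ decomposes as $v = \sum_{\lambda} P_\lambda v$ according to the eigenspaces of $T_s$; each summand satisfies $T(P_\lambda v) = \lambda P_\lambda v$, so \Cref{prop:kerN} yields $P_\lambda v \in W_0$, and the uniform bound in \Cref{prop:N-norm}(b) then gives $v \in W_0$.

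For the inductive step, suppose that $\ker N^m \subseteq W_{m-1}$ for every $1 \leq m \leq k$, and let $v \in \ker N^{k+1}$. Both $\ker N^{k+1}$ and the filtration $W_\bullet$ are $T_s$-stable---the latter because of the uniform bound on the projections $P_\lambda$---so after replacing $v$ by $P_\lambda v$ for each eigenvalue $\lambda$ of $T_s$ in turn, one may assume $T_s v = \lambda v$ with $\abs{\lambda} = 1$. Expanding
\[
  T^{-1} v \;=\; \lambda^{-1} \sum_{j=0}^{k} \frac{(-2\pi i)^j}{j!}\, N^j v
\]
and using the identity $h(v,v)(z+2\pi i) = h(T^{-1} v, T^{-1} v)(z)$ produces a ``quasi-periodicity'' relation
\[
  h(v,v)(z + 2\pi i) \;=\; h(v,v)(z) + R(z),
\]
in which $R(z)$ is a finite sum of cross pairings $h(N^i v, N^j v)(z)$ with $(i,j) \neq (0,0)$. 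The inductive hypothesis together with \Cref{lem:N-weight} gives $N^j v \in W_{k-j}$ for every $1 \leq j \leq k$; Cauchy--Schwarz therefore bounds the ``pure'' terms ($i,j \geq 1$) in $R(z)$ by $O(\abs{\Re z}^{k - (i+j)/2})$, while the ``mixed'' terms $h(N^j v, v)$ with $j \geq 1$ are at most $\norm{v}_{\Phi(z)} \cdot O(\abs{\Re z}^{(k-j)/2})$.

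From here, the plan is to adapt the convexity/averaging strategy from the proof of \Cref{prop:kerN}. Set $\varphi(z) = \log h(v,v)(z)$ and form the horizontal average $f(x) = \frac{1}{2\pi} \int_0^{2\pi} \varphi(x+iy) \, \dy$. By \Cref{lem:metric-subharmonic} the function $\varphi$ is subharmonic, by \eqref{eq:derivatives-final} its first partial derivatives are $O(\abs{x}^{-1})$, and the same periodicity-of-$\varphi_y$ calculation as in \Cref{prop:kerN} shows $f$ is convex. The quasi-periodicity relation, divided by $h(v,v)(z)$ and then averaged in $y$, translates into a differential inequality for $f'(x)$. Starting from the crude polynomial bound $\norm{v}_{\Phi(z)}^2 = O(\abs{x}^{4 C_0})$ of \Cref{prop:polynomial-bound} and repeatedly re-inserting the sharpened bound into the mixed terms $h(N^j v, v)$ of $R$, a bootstrap stabilizes the exponent at $k$, yielding $f(x) \leq k \log \abs{x} + O(1)$, and hence $h(v,v)(z) = O(\abs{\Re z}^k)$ on horizontal strips.

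The main technical obstacle is the bootstrap step: the mixed pairings $h(N^j v, v)$ in $R(z)$ involve the very quantity $\norm{v}_{\Phi(z)}$ we are trying to control, producing a self-referential inequality. One must show that the exponent produced by each iteration is strictly smaller than the previous one until it reaches $k$, and does not stabilize at a larger value; a Gronwall-type estimate combined with the periodic derivative bound should make this precise, but doing so cleanly without appealing to the representation-theoretic model variations of \Cref{par:model-VHS} is the delicate point.
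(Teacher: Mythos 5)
Your setup (reduction to a $T_s$-eigenvector, the quasi-periodicity identity $h(v,v)(z+2\pi i)=h(v,v)(z)+R(z)$, and the bounds on the terms of $R$ coming from the inductive hypothesis and \Cref{lem:N-weight}) is fine, but the heart of the proof is exactly the step you leave open, and as sketched it does not go through. Two concrete problems. First, the averaging/convexity mechanism of \Cref{prop:kerN} uses the \emph{exact} $2\pi$-periodicity of $\varphi=\log h(v,v)$ in $y$ in two places: to make $\int_0^{2\pi}\varphi_{yy}\,\dy$ vanish (so that $f$ is genuinely convex) and to compare $\varphi$ with its vertical average $f$. With only quasi-periodicity you need to control the periodicity defect of $\varphi_y$, not just of $\varphi$; bounding $\varphi_y(x+2\pi i)-\varphi_y(x)$ requires derivative estimates on the cross pairings $h(N^iv,N^jv)$, which your proposal never addresses. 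Second, even granting such control, the resulting inequality is of the form $f''(x)\geq -\eps(x)$ with $\eps$ depending on the unknown growth exponent of $h(v,v)$ through the mixed terms $h(N^jv,v)\leq \norm{N^jv}\,\norm{v}$; integrating twice produces a bound on $\log h(v,v)$ by a \emph{power} of $\abs{x}$, not by $k\log\abs{x}$, and it is not visible from the scheme why the iteration should contract to the exponent $k$ rather than stall (nor, indeed, where the specific exponent $k$ would enter the argument at all, since the only quantitative inputs are the universal derivative bound $4C_0/\abs{x}$ and the inductive bounds on $N^jv$). You flag this yourself and offer only the hope of ``a Gronwall-type estimate''; that is the missing idea, not a routine verification.

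The paper sidesteps all of this with the representation-theoretic device you were trying to avoid: given $v\in\ker N^{k+1}$, define $f\colon S_k\to V$ by $f(v_j)=\tfrac{j!(k-j)!}{k!}(-N)^jv$, so that $f\circ\Ysl=-N\circ f$ and hence $f$ is killed by the nilpotent operator of the induced variation on $\Hom_{\CC}(S_k,V)$ (the model variation of \Cref{subsec:sltwo-VHS} on $S_k$). Then \Cref{prop:kerN}, applied to $f$, bounds the Hodge norm of $f$, hence its operator norm; since the model Hodge norm of $v_0\in S_k$ grows like $\abs{\Re z}^k$ by \Cref{par:model-VHS}, one gets $\norm{v}_{\Phi(z)}^2=\norm{f(v_0)}_{\Phi(z)}^2=O(\abs{\Re z}^k)$ directly, with no induction and no bootstrap. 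If you want a proof along your analytic lines, you would at minimum need a new positivity or comparison input (in the paper this role is played by the explicit model metric on $S_k$); as written, your argument has a genuine gap.
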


\begin{proof}
	Together with the triangle inequality, \Cref{prop:kerN} implies that $\ker N
	\subseteq W_0$. Now consider any vector $v \in \ker N^{k+1}$. As in \Cref{par:Sm}, let
	$S_k$ denote the irreducible representation of $\sltwo(\CC)$ with $\dim S_k =
	k+1$. Define a linear mapping $f \colon S_k \to V$ by setting
	\[
		f(v_j) = \frac{j!(k-j)!}{k!} (-N)^j v \quad \text{for $j = 0, 1, \dotsc, k$;}
	\]
	then $f(v_0) = v$ and $f \circ \Ysl = -N \circ f$. According to
	\Cref{subsec:sltwo-VHS}, $S_k$ is the space of multi-valued flat sections of a
	polarized variation of Hodge structure of weight $k$, whose monodromy operator is $T =
	e^{-2 \pi i \Ysl}$; we also know from \Cref{par:model-VHS} that the Hodge norm of
	any vector in $S_k$ can
	grow at most like $\abs{\Re z}^k$. Consequently, $\Hom_{\CC}(S_k, V)$ is the space
	of multi-valued flat sections of a polarized variation of Hodge structure of weight
	$n-k$, and $f \in \Hom_{\CC}(S_k, V)$ satisfies $N f = 0$. By \Cref{prop:kerN},
	the Hodge norm of $f$ is bounded; since the Hodge norm is an upper bound for the
	operator norm, it follows that $\norm{v}_{\Phi(z)}^2 = O \bigl( \abs{\Re z}^k
	\bigr)$.
\end{proof}

\newpar
We recall two facts about the monodromy weight filtration; as we are only sketching
the proof, we omit the details. First,
\[
	M_k = \sum_{j \in \NN} N^j \bigl( \ker N^{k+2j+1} \bigr)
	\quad \text{for $k \in \ZZ$.}
\]
Second, since $N^{\dag} = N$ is self-adjoint with respect to the hermitian pairing
$Q$, one has
\[
	M_k^{\perp} = \menge{v \in V}{\text{$Q(v,x) = 0$ for all $x \in M_k$}} = M_{-k-1}
	\quad \text{for $k \in \ZZ$.}
\]
We can use this to prove that $W_k = M_k$. By \Cref{lem:kerNk}, we have
\[
	M_k = \sum_{j \in \NN} N^j \bigl( \ker N^{k+2j+1} \bigr)
	\subseteq \sum_{j \in \NN} N^j \bigl( W_{k+2j} \bigr)
	\subseteq W_k,
\]
where the last inclusion is due to \Cref{lem:N-weight}. Dually, we have
\[
	W_k \subseteq W_{-k-1}^{\perp} \subseteq M_{-k-1}^{\perp} = M_k,
\]
where we used \eqref{eq:Q-weights} for the first inclusion. It follows that $W_k =
M_k$ for all $k \in \ZZ$, and so the filtration by order of growth of the Hodge
metric is indeed equal to the monodromy weight filtration.

\section{The nilpotent orbit theorem}
\label{sec:nilpotent}

\newpar
This chapter is devoted to the proof of the nilpotent orbit theorem. The theorem has
several parts, and it is going to take us some time to prove all of them. Our first 
goal is to prove that the ``untwisted period mapping''
\[
	\Psi_S \colon \dst \to \Dch, \quad \Psi_S(e^z) = e^{-z(S+N)} \Phi(z),
\]
extends holomorphically to the entire disk $\Delta$. The general idea is to construct
a holomorphic extension of the Higgs field $\theta_{\partial/\partial t}$ with the
help of H\"ormander's $L^2$-estimates, and then to use this extension to prove that
$\Psi_S$ is meromorphic at $t=0$. This is enough because $\Dch$ is a projective complex
manifold.

\subsection{\boldmath H\"ormander's $L^2$-estimates}
\label{sec:L2-estimates} 

\newpar 
We need only the most basic version of H\"ormander's $L^2$-estimates for vector
bundles, in one complex dimension. The technique, in the special case of the
trivial bundle, is explained very nicely in \cite[Lecture~1]{Berndtsson}.
We include an elementary proof for arbitrary vector bundles here, both in order to
keep the paper self-contained, and to show the reader that the $L^2$-estimates in one
dimension are not difficult -- in fact, most of the argument is just integration by
parts. 

\newpar
Let $X \subseteq \CC$ be a domain in $\CC$, with the usual Euclidean metric $\omega =
\frac{i}{2} \dz \wedge \dzb$. Let $E$ be a smooth vector bundle on $X$ with a
hermitian metric $h$, and suppose that $E$ has the structure of a holomorphic vector
bundle; this corresponds to a connection $d'' \colon A^0(X, E) \to A^{0,1}(X, E)$ of
type $(0,1)$; the integrability condition $(d'')^2 = 0$ is automatically satisfied in
dimension one. One technique for constructing holomorphic sections of $E$ is to solve
the $\dbar$-equation
\[
	d'' u = f \dzb
\]
for a given section $f \in A^0(X, E)$. The general idea is that this can be done,
provided $h$ has positive curvature. Let us write the Chern connection
in the form $\delta' + d''$, where $\delta' \colon A^0(X, E) \to A^{1,0}(X, E)$ is a
connection of type $(1,0)$. Let
\[
	\Theta = (\delta' + d'')^2 \in A^{1,1} \bigl( X, \End(E) \bigr)
\]
be the curvature operator of the metric $h$. The main result is as follows
\cite[VIII.\S6]{Demailly}.

\begin{pthm}[H\"ormander] \label{thm:Hoermander}
	Suppose that there is a positive function $\rho \colon X \to \RR$ such
	that 
	\[
		\int_X h \bigl( \Theta_{\partial/\partial z \wedge \partial/\partial \zb} \,
			\alpha, \alpha \bigr) \dmu \geq 
			\int_X \rho^2 h(\alpha, \alpha) \dmu \quad \text{for all $\alpha \in A_c^0(X, E)$.}
	\]
	Let $f \in A^0(X, E)$ be an arbitrary smooth section. Then the equation $d'' u = f
	\dzb$ has a solution $u \in A^0(X, E)$ that moreover satisfies the $L^2$-estimate
	\[
		\int_X h(u,u) \dmu \leq \int_X \frac{1}{\rho^2} h(f,f) \dmu,
	\]
	provided that the integral on the right-hand side is finite.
\end{pthm}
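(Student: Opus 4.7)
The proof follows Hörmander's classical functional-analytic approach, considerably simplified because we are in one complex dimension. The plan is to view $T = d''$ as a densely defined closed operator $T \colon L^2(X, E) \to L^2_{0,1}(X, E)$, where both Hilbert spaces are built from the metric $h$ and the measure $\dmu$, and to write $T^*$ for its Hilbert space adjoint. By duality (Hahn--Banach, or equivalently the closed range theorem), producing $u$ with $Tu = f\,\dzb$ and $\|u\|^2 \leq \int_X \rho^{-2} h(f,f)\dmu$ reduces to the a priori bound
\[
	\bigl\lvert \bigpair{f\,\dzb}{\alpha}_{L^2_{0,1}} \bigr\rvert^2 \leq \biggl( \int_X \rho^{-2} h(f,f)\dmu \biggr) \cdot \|T^* \alpha\|^2
\]
for every compactly supported smooth $E$-valued $(0,1)$-form $\alpha = \beta\,\dzb$.

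The heart of the argument is then a one-dimensional Bochner--Kodaira--Nakano identity. In complex dimension one there are no $(0,2)$-forms, so $d''\alpha = 0$ automatically, and an integration by parts (using the Chern connection decomposition $\delta' + d''$ together with the curvature formula $\Theta = (\delta' + d'')^2$) yields
\[
	\|T^* \alpha\|^2 = \bignorm{\delta'_{\partial/\partial z}\,\beta}^2 + \int_X h\bigl( \Theta_{\partial/\partial z \wedge \partial/\partial \zb}\,\beta, \beta \bigr) \dmu
\]
on compactly supported $\alpha$. Dropping the nonnegative first term and invoking the curvature hypothesis produces
\[
	\int_X \rho^2 h(\beta, \beta)\dmu \leq \int_X h\bigl(\Theta_{\partial/\partial z \wedge \partial/\partial \zb}\,\beta,\, \beta\bigr) \dmu \leq \|T^* \alpha\|^2.
\]
Two applications of Cauchy--Schwarz, splitting $h(f,\beta) = h(f/\rho,\,\rho\beta)$, then yield the required a priori bound. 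Hahn--Banach produces a weak solution $u \in L^2(X, E)$ with the stated norm estimate; since $f$ is smooth, local elliptic regularity for the Cauchy--Riemann operator in one complex variable upgrades $u$ to a smooth section.

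The main obstacle is proving the Bochner--Kodaira identity cleanly in this setting. While all the ingredients -- integration by parts against the Chern connection, the Leibniz rule for $h$, and the formula $\Theta = (\delta' + d'')^2$ -- are elementary, one must be careful with conventions for the induced metric on $\Omega^{0,1} \otimes E$ and with the signs coming from the pairing between $d''$ and $(d'')^*$; the bookkeeping is routine but tedious. A secondary issue is that $X$ need not be compact, so one should either exhaust $X$ by relatively compact subdomains and pass to a weak-$*$ limit of the resulting solutions, or run the Hahn--Banach argument directly on $L^2(X, E)$, using that the compactly supported test forms form a core for $T^*$ and therefore that the a priori inequality extends to all of $\Dom(T^*)$.
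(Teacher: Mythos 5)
Your proposal is, in substance, the same proof as the paper's: an a priori inequality for compactly supported test sections obtained from integration by parts against the Chern connection plus the curvature hypothesis, then Cauchy--Schwarz, then a Hilbert-space duality argument producing an $L^2$ weak solution, then elliptic regularity for $\dbar$ in one variable. Two remarks. First, the paper deliberately avoids the unbounded-operator formalism: instead of the Hilbert adjoint $T^*$, it defines the functional directly on the image of $\delta'_{\partial/\partial z} \colon A_c^0(X,E) \to L^2(X,E)$ and applies Riesz representation, and it defines ``weak solution'' by testing only against compactly supported sections; since that is exactly the distributional equation, regularity then finishes the proof, so the core/exhaustion issue you worry about (density of $A_c^{0,1}$ in $\Dom(T^*)$, which is genuinely delicate for an incomplete metric on a bounded domain) simply never arises. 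Second, your displayed Bochner identity is garbled: because $T^*(\beta\,\dzb) = -\delta'_{\partial/\partial z}\beta$, the identity as written would force the curvature integral to vanish; the correct statement is that $\bignorm{T^*\alpha}^2$ equals the curvature term plus the nonnegative term $\bignorm{d''_{\partial/\partial \zb}\beta}^2$, equivalently (as in the paper) $\int_X h \bigl( \Theta_{\partial/\partial z \wedge \partial/\partial \zb}\,\beta, \beta \bigr)\dmu \leq \int_X h \bigl( \delta'_{\partial/\partial z}\beta, \delta'_{\partial/\partial z}\beta \bigr)\dmu$. With that correction the rest of your argument goes through as intended.
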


\newpar
The proof uses one nontrivial (but well-known) fact, namely the regularity of the
$\dbar$-equation in one complex dimension. Let $f \colon X \to \CC$ be a smooth
function. Suppose that a measurable function $u \colon X \to \CC$ is a \define{weak
solution} to the equation
\begin{equation} \label{eq:dbar}
	\frac{\partial u}{\partial \zb} = f,
\end{equation}
meaning that for every compactly supported smooth function $\varphi \in
A_c^0(X)$, one has
\[
	\int_X f \varphi \dmu = - \int_X u \frac{\partial \varphi}{\partial \zb} \dmu.
\]
Then after modifying $u$ on a set of measure zero, if necessary, the function $u$ is
smooth and solves \eqref{eq:dbar} in the usual sense.

\newpar
Now we turn to the proof of \Cref{thm:Hoermander}. We start by using
integration by parts to relate the two operators $d''_{\partial/\partial \zb}$ and
$\delta'_{\partial/\partial z}$. 

\begin{plem} \label{lem:by-parts}
	Let $u \in A^0(X, E)$ be a smooth section, and let $\alpha \in A_c^0(X, E)$ be a smooth
	section with compact support. Then
	\[
		\int_X h \bigl( d''_{\partial/\partial \zb} u, \alpha \bigr) \dmu
		= -\int_X h \bigl( u, \delta'_{\partial/\partial z} \alpha \bigr) \dmu.
	\]
\end{plem}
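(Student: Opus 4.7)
The plan is to derive the identity from the fact that the Chern connection $\delta' + d''$ is a metric connection, and then to invoke Stokes' theorem using the compact support of $\alpha$.

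First, recall that by definition of the Chern connection, for any smooth sections $v, w \in A^0(X, E)$ we have $d h(v,w) = h\bigl((\delta'+d'')v, w\bigr) + h\bigl(v, (\delta'+d'')w\bigr)$. Decomposing both sides according to bidegree, the $(0,1)$-component reads
\[
	\dbar \, h(v,w) = h(d'' v, w) + h(v, \delta' w),
\]
where $h(v, \delta' w)$ is indeed a $(0,1)$-form because $h$ is conjugate-linear in its second argument. Applied to the sections $u$ and $\alpha$ and contracted with $\partial/\partial \zb$, this gives the pointwise identity
\[
	\frac{\partial}{\partial \zb} h(u, \alpha) = h\bigl( d''_{\partial/\partial \zb} u, \alpha \bigr) + h\bigl( u, \delta'_{\partial/\partial z} \alpha \bigr).
\]

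Next I would integrate this identity against $\dmu$ over $X$. The function $h(u, \alpha) \colon X \to \CC$ is smooth and has compact support, since $\alpha$ does; in particular, $h(u, \alpha) \cdot \dz$ is a compactly supported smooth $1$-form on $X$. A direct computation gives $d\bigl( h(u,\alpha) \dz \bigr) = -\frac{\partial}{\partial \zb} h(u,\alpha) \cdot \dz \wedge \dzb$, which is a constant multiple of $\frac{\partial}{\partial \zb} h(u,\alpha) \dmu$. By Stokes' theorem, the integral of any exact compactly supported $2$-form on $X$ vanishes, so
\[
	\int_X \frac{\partial}{\partial \zb} h(u, \alpha) \dmu = 0.
\]
Combining the two displayed formulas yields the desired equation.

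There is really no main obstacle here: the identity is a one-line consequence of the metric property of the Chern connection combined with integration by parts in one complex variable. The only point requiring a bit of care is to track the type decomposition correctly (so that the $(0,1)$-component of the metric identity produces $\delta'$ on the right side paired with $d''$ on the left), but this is a formal check.
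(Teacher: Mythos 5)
Your proof is correct and follows the same route as the paper: the $(0,1)$-component of the metric-connection identity for $\delta' + d''$ gives the pointwise formula $\frac{\partial}{\partial \zb} h(u,\alpha) = h(d''_{\partial/\partial \zb} u, \alpha) + h(u, \delta'_{\partial/\partial z} \alpha)$, and integrating over $X$ with Stokes' theorem (using the compact support of $\alpha$) kills the left-hand side. The paper's proof is exactly this, stated more tersely; your extra care with the bidegree decomposition and the exact form $h(u,\alpha)\,\dz$ is a fine expansion of the same argument.
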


\begin{proof}
	Since $\delta' + d''$ is a metric connection, we have
	\[
		\frac{\partial}{\partial \zb} h(u, \alpha) 
		= h \bigl( d''_{\partial/\partial \zb} u, \alpha \bigr)
		+ h \bigl( u, \delta'_{\partial/\partial z} \alpha \bigr).
	\]
	Now integrate over $X$ and use Stokes' theorem to get the result.	
\end{proof}

\newpar
The general idea, which should be familiar from the proof of Hodge's theorem about
harmonic forms, is to construct a ``weak solution'' to the equation by Hilbert space
techniques. We let $L^2(X, E)$ be the Hilbert space of all measurable
sections of the bundle $E$ with finite $L^2$-norm 
\[
	\norm{u}_h^2 = \int_X h(u,u) \dmu.
\]
We say that $u \in L^2(X, E)$ is a \define{weak solution} of the equation $d'' u = f
\dzb$ if
\[
	\int_X h(f, \alpha) \dmu 
	= - \int_X h(u, \delta'_{\partial/\partial z} \alpha) \dmu
\]
for every smooth section $\alpha \in A_c^0(X, E)$ with compact support. The
regularity theory of the $\dbar$-operator implies that weak solutions are
actually smooth.

\begin{pprop} \label{prop:weak-solution}
	Let $f \in A^0(X, E)$. If $u \in L^2(X, E)$ is a weak solution to the equation
	$d'' u = f \dzb$, then $u \in A^0(X, E)$ and $d'' u = f \dzb$ in the usual sense.
\end{pprop}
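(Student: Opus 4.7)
The plan is to reduce the bundle-valued statement to the classical regularity theorem for the scalar $\dbar$-operator via a local holomorphic trivialization of $\shE$. Since the conclusion is local, fix $z_0 \in X$, choose a small open disk $U \subseteq X$ centered at $z_0$ on which $\shE$ admits a holomorphic frame $e_1, \dotsc, e_r$, and write $u = \sum_j u_j e_j$ with $u_j \in L^2_{\mathrm{loc}}(U)$ and $f = \sum_j f_j e_j$ with $f_j \in A^0(U)$. In this frame the metric $h$ is given by a smooth positive-definite hermitian matrix $H$, and the $(1,0)$-part of the Chern connection takes the form $\delta' = \partial + A$ for a matrix $A$ of smooth $(1,0)$-forms built from $H$.

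The first step is to convert the weak identity into a system of scalar weak identities. Testing against sections of the form $\alpha = \varphi e_k$ with $\varphi \in A_c^0(U)$, the pairings $h(f,\alpha)$ and $h(u, \delta'_{\partial/\partial z}\alpha)$ expand as expressions that are linear in $\bar\varphi$ and in $\partial\bar\varphi/\partial z$, with smooth coefficients coming from $H$, $H^{-1}$, and $A$. After moving all $\partial/\partial z$ derivatives onto $\bar\varphi$ via integration by parts against the smooth coefficients, and conjugating, the hypothesis becomes: for each index $k$, the component $u_k \in L^2_{\mathrm{loc}}(U)$ is a weak solution in the standard distributional sense of a scalar equation
\[
	\frac{\partial u_k}{\partial \zb} = g_k,
\]
where $g_k \in A^0(U)$ is smooth.

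The second step is to invoke the classical scalar regularity theorem for $\dbar$. Shrink to a smaller disk $U' \subseteq U$ and pick a cutoff $\chi \in A_c^0(U)$ equal to $1$ on $U'$. The convolution
\[
	v_k(z) = -\frac{1}{\pi} \int_U \frac{\chi(w) g_k(w)}{z-w} \dmu(w)
\]
with the Cauchy kernel is smooth on $\CC$ and satisfies $\partial v_k / \partial \zb = \chi g_k$ classically, hence equals $g_k$ on $U'$. The difference $u_k - v_k \in L^2_{\mathrm{loc}}(U')$ is weakly holomorphic, and Weyl's lemma (equivalently, hypoellipticity of $\dbar$, proved by mollification) implies that it agrees almost everywhere with a genuine holomorphic function. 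Therefore $u_k$, and hence $u$, agrees almost everywhere with a smooth section on $U'$; redefining $u$ on a set of measure zero, $u \in A^0(X, E)$.

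Once $u$ is known to be smooth, applying \Cref{lem:by-parts} to the defining weak identity gives
\[
	\int_X h \bigl( d''_{\partial/\partial \zb} u - f, \alpha \bigr) \dmu = 0
\]
for every $\alpha \in A_c^0(X, E)$, and nondegeneracy of $h$ together with the fundamental lemma of the calculus of variations forces $d'' u = f \, \dzb$ pointwise on $X$. The main obstacle in the argument is the bookkeeping in the first step, where the smooth coefficients introduced by the matrices $H$ and $A$ must be absorbed into the smooth right-hand side $g_k$ rather than leaving behind terms that would obstruct writing the equation in classical scalar $\dbar$-form; everything afterwards is standard one-variable complex analysis.
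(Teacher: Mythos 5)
Your overall strategy is the same as the paper's (localize in a holomorphic frame, reduce to scalar $\dbar$-regularity, then recover $d''u = f\,\dzb$ by integration by parts via \Cref{lem:by-parts}), but the crucial reduction in your first step has a genuine gap. If you test the weak identity against $\alpha = \varphi e_k$, what comes out in the sense of distributions is
\[
	\frac{\partial}{\partial \zb}\, h(u, e_k) \;=\; h(f, e_k) \;+\; h\bigl( u, \delta'_{\partial/\partial z} e_k \bigr),
\]
i.e.\ a coupled system for the functions $w_k = h(u,e_k) = \sum_j u_j h_{j,k}$ whose right-hand side contains zeroth-order terms in the unknown $u$ (coming from the connection matrix $A$). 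These terms are only $L^2_{\mathrm{loc}}$, not smooth, and no integration by parts can remove them: the derivatives already sit on $\varphi$ and on the smooth coefficients, and you cannot integrate by parts against $u$ itself. Consequently the claim that ``$u_k$ is a weak solution of $\partial u_k/\partial \zb = g_k$ with $g_k$ smooth'' is not what your choice of test sections delivers, and the classical regularity fact for the scalar $\dbar$-equation with \emph{smooth} right-hand side cannot be applied in one shot. To make your route work you would have to upgrade the input to $L^2$-Sobolev regularity for $\dbar$ and run an elliptic bootstrap on the system for the $w_k$ --- a heavier tool than the single fact the argument is meant to rest on.

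The paper avoids this by twisting the test sections with the inverse metric: taking $\alpha = \sum_{i,j} \wbar{\varphi}_j h^{j,i} e_i$, so that $\varphi_j = \sum_i h_{j,i} \wbar{\alpha}_i$, one computes $h(u, \delta'_{\partial/\partial z}\alpha) = \sum_j u_j \,\partial \varphi_j/\partial \zb$ and $h(f,\alpha) = \sum_j f_j \varphi_j$; the zeroth-order terms cancel identically, and each coefficient $u_j$ is then literally a weak solution of $\partial u_j/\partial \zb = f_j$ with $f_j$ smooth, to which the scalar regularity statement applies directly. If you replace your test sections $\varphi e_k$ by these metric-dual ones, the rest of your argument goes through: your Cauchy-transform-plus-Weyl's-lemma treatment of the scalar case is a fine (and self-contained) substitute for the regularity fact the paper merely cites, and your concluding step via \Cref{lem:by-parts} is exactly the paper's.
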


\begin{proof}
	This is a local problem; after replacing $X$ by an open neighborhood of a
	given point, we may assume that $E$ is a trivial holomorphic bundle of rank $r$. Let
	$s_1, \dotsc, s_r \in A^0(X, E)$ be a holomorphic frame, and define the smooth
	functions
	\[
		h_{i,j} = h(s_i, s_j).
	\]
	The $r \times r$-matrix with entries $h_{i,j}$ is hermitian and positive definite;
	let $h^{i,j}$ be the entries of the inverse matrix. A simple calculation shows
	that
	\[
		\delta'_{\partial/\partial z} s_i 
		= \sum_{j,k} \frac{\partial h_{i,j}}{\partial z} h^{j,k} s_k
	\]
	Now let $\alpha = \sum_i \alpha_i s_i \in A_c^0(X, E)$ be an arbitrary smooth
	section of $E$ with compact support. Since $\delta'$ is a connection of type
	$(1,0)$, we have
	\begin{align*}
		\delta'_{\partial/\partial z} \alpha &=
		\sum_i \frac{\partial \alpha_i}{\partial z} s_i + \sum_i \alpha_i 
		\delta'_{\partial/\partial z} s_i 
		= \sum_{i,j,k} \Bigl( \frac{\partial \alpha_i}{\partial z} h_{i,j} h^{j,k} + \alpha_i
		\frac{\partial h_{i,j}}{\partial z} h^{j,k} \Bigr) s_k \\
		&= \sum_{i,j,k} \frac{\partial (\alpha_i h_{i,j})}{\partial z} h^{j,k} s_k.
	\end{align*}
	Write the given section $u \in L^2(X, E)$ as $u = \sum_i u_i s_i$, with
	measurable functions $u_i \colon X \to \CC$. Since $h$ is
	conjugate-linear in the second argument, we get
	\[
		h(u, \delta'_{\partial/\partial z} \alpha) = \sum_{i,j,k, \ell} u_{\ell}
		h_{\ell,k} \frac{\partial (\wbar{\alpha}_i h_{j,i})}{\partial \zb} h^{k,j}
		= \sum_{i,j} u_j \frac{\partial(\wbar{\alpha}_i h_{j,i})}{\partial \zb}.
	\]
	Because $u$ is a weak solution to $d'' u = f \dzb$, we therefore have
	\begin{align*}
		\int_X \sum_{i,j} h_{j,i} f_j \wbar{\alpha}_i \dmu 
		&= \int_X h(f, \alpha) \dmu \\
		&= -\int_X h \bigl( u, \delta'_{\partial/\partial z} \alpha \bigr) \dmu 
		= - \int_X \sum_{i,j} u_j \frac{\partial(\wbar{\alpha}_i h_{j,i})}{\partial \zb} \dmu,
	\end{align*}
	where $f = \sum_j f_j s_j$. Now let $\varphi_1, \dotsc, \varphi_r \colon X \to \CC$ be
	arbitrary compactly supported smooth functions, and set
	\[
		\alpha = \sum_i \alpha_i s_i = \sum_{i,j} \wbar{\varphi}_j h^{j,i} s_i;
	\]
	with this choice, we have $\varphi_j = \sum_i h_{j,i} \wbar{\alpha}_i$, and
	so the identity from above becomes
	\[
		\int_X \sum_j f_j \varphi_j \dmu
		= - \int_X \sum_j u_j \frac{\partial \varphi_j}{\partial \zb} \dmu.
	\]
	This shows that each coefficient function $u_j \colon X \to \CC$ by itself is a
	weak solution of the ordinary $\dbar$-equation $\partial u_j / \partial \zb =
	f_j$. By standard regularity theory, we can modify each $u_j$ on a set of measure
	zero and make it smooth; then $u \in A^0(X, E)$. We can now use integration by
	parts, as in \Cref{lem:by-parts}, and deduce that
	\[
		\int_X h(f, \alpha) \dmu 
		= - \int_X h \bigl( u, \delta'_{\partial/\partial z} \alpha \bigr) \dmu
		= \int_X h \bigl( d''_{\partial/\partial \zb} u, \alpha \bigr) \dmu 
	\]
	for every $\alpha \in A_c^0(X, E)$. But this means exactly that $d'' u = f \dzb$.
\end{proof}

\newpar
To find the desired weak solution, we use the Riesz representation theorem. The image
of the linear mapping
\begin{equation} \label{eq:linear-mapping-delta}
	\delta'_{\partial/\partial z} \colon A_c^0(X, E) \to L^2(X, E)
\end{equation}
is a linear subspace of $L^2(X, E)$, in general not closed. On this subspace, we can
define a conjugate-linear functional by the formula
\[
	\delta'_{\partial/\partial z} \alpha \mapsto - \int_X h(f, \alpha) \dmu.
\]
The following lemma shows that this is well-defined and bounded.

\begin{plem} \label{lem:functional}
	For every $\alpha \in A_c^0(X, E)$, we have
	\[
		\left\lvert \int_X h(f, \alpha) \dmu \right\rvert \leq
		\left( \int_X \frac{1}{\rho^2} h(f,f) \dmu \right)^{1/2} 
		\left( \int_X h \bigl( \delta'_{\partial/\partial z} \alpha,
		\delta'_{\partial/\partial z} \alpha \bigr) \dmu \right)^{1/2},
	\]
	where $\rho \colon X \to \RR$ is the positive function from \Cref{thm:Hoermander}.
\end{plem}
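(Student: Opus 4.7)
The plan is to reduce the claimed inequality to a Bochner-type identity by a weighted Cauchy--Schwarz, and then to derive the identity from integration by parts. First I would apply the Cauchy--Schwarz inequality with weight $\rho$ to obtain
\[
\left\lvert \int_X h(f, \alpha) \dmu \right\rvert \leq
\left( \int_X \rho^{-2} h(f,f) \dmu \right)^{1/2}
\left( \int_X \rho^2 h(\alpha, \alpha) \dmu \right)^{1/2}.
\]
This reduces the lemma to the ``basic inequality''
\[
\int_X \rho^2 h(\alpha, \alpha) \dmu \leq
\int_X h(\delta'_{\partial/\partial z} \alpha, \delta'_{\partial/\partial z} \alpha) \dmu
\]
for every $\alpha \in A_c^0(X, E)$; the curvature hypothesis is tailor-made to produce such a bound.

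Next I would establish the Bochner-type identity
\[
\int_X h(\delta'_{\partial/\partial z} \alpha, \delta'_{\partial/\partial z} \alpha) \dmu
= \int_X h(\Theta_{\partial/\partial z \wedge \partial/\partial \zb} \alpha, \alpha) \dmu
+ \int_X h(d''_{\partial/\partial \zb} \alpha, d''_{\partial/\partial \zb} \alpha) \dmu.
\]
The derivation is a direct computation: differentiate $h(\alpha,\alpha)$ once along $\partial/\partial \zb$ and then again along $\partial/\partial z$, using the metric connection identity that underlies \Cref{lem:by-parts}. The resulting four-term expression contains $h(\delta'_{\partial/\partial z} d''_{\partial/\partial \zb} \alpha, \alpha)$, which I would rewrite using the curvature relation
\[
\delta'_{\partial/\partial z} d''_{\partial/\partial \zb} \alpha
= d''_{\partial/\partial \zb} \delta'_{\partial/\partial z} \alpha
+ \Theta_{\partial/\partial z \wedge \partial/\partial \zb} \alpha.
\]
Integrating over $X$, the left-hand side vanishes by Stokes's theorem (since $\alpha$ has compact support), and applying \Cref{lem:by-parts} with $u = \delta'_{\partial/\partial z} \alpha$ converts the mixed terms involving $d''_{\partial/\partial \zb} \delta'_{\partial/\partial z} \alpha$ into $-\norm{\delta'_{\partial/\partial z}\alpha}_h^2$, leaving precisely the asserted identity after rearrangement.

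Once this identity is in hand, dropping the manifestly nonnegative $d''$-term and inserting the curvature hypothesis proves the basic inequality, and combining with the Cauchy--Schwarz step above yields the lemma. There is no serious obstacle; the one place to proceed with care is the bookkeeping of signs in the curvature relation, where the antisymmetry $\dzb \wedge \dz = -\dz \wedge \dzb$ produces the minus sign that allows $\Theta_{\partial/\partial z \wedge \partial/\partial \zb}$ to appear as the commutator $[\delta'_{\partial/\partial z}, d''_{\partial/\partial \zb}]$ rather than as an anticommutator.
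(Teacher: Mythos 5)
Your proposal is correct and follows essentially the same route as the paper: weighted Cauchy--Schwarz to reduce to bounding $\int_X \rho^2 h(\alpha,\alpha)\dmu$, the curvature hypothesis of \Cref{thm:Hoermander}, and the Bochner-type identity obtained from $\Theta_{\partial/\partial z \wedge \partial/\partial \zb} = [\delta'_{\partial/\partial z}, d''_{\partial/\partial \zb}]$ together with \Cref{lem:by-parts}, after which the nonnegative $d''$-term is discarded. The only difference is cosmetic: you derive the identity by integrating $\tfrac{\partial^2}{\partial z \partial \zb} h(\alpha,\alpha)$ and invoking Stokes, while the paper integrates $h(\Theta_{\partial/\partial z \wedge \partial/\partial \zb}\alpha,\alpha)$ directly and integrates each term by parts.
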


\begin{proof}
	The Cauchy-Schwarz inequality gives
	\[
		\left\lvert \int_X h(f, \alpha) \dmu \right\rvert \leq
		\left( \int_X \frac{1}{\rho^2} h(f,f) \dmu \right)^{1/2} 
		\left( \int_X \rho^2 h(\alpha, \alpha) \dmu \right)^{1/2},
	\]
	and in view of the inequality in \Cref{thm:Hoermander}, it is therefore
	enough to prove that
	\[
		\int_X h \bigl( \Theta_{\partial/\partial z \wedge \partial/\partial \zb}
			\, \alpha, \alpha \bigr) \dmu  \leq
		\int_X h \bigl( \delta'_{\partial/\partial z} \alpha,
		\delta'_{\partial/\partial z} \alpha \bigr) \dmu.
	\]
	The curvature operator is $\Theta = \delta' d'' + d'' \delta'$, and so 
	\[
		h \bigl( \Theta_{\partial/\partial z \wedge \partial/\partial \zb}
			\, \alpha, \alpha \bigr)
			= h \bigl( \delta'_{\partial/\partial z} 
				d''_{\partial/\partial \zb} \alpha, \alpha \bigr)
			- h \bigl( d''_{\partial/\partial \zb} 
				\delta'_{\partial/\partial z} \alpha, \alpha \bigr).
	\]
	If we integrate over $X$ and apply \Cref{lem:by-parts}, we get
	\[
		\int_X h \bigl( \Theta_{\partial/\partial z \wedge \partial/\partial \zb}
			\, \alpha, \alpha \bigr) \dmu  
		= \int_X h \bigl( \delta'_{\partial/\partial z} \alpha, 
				\delta'_{\partial/\partial z} \alpha \bigr) \dmu
			- \int_X h \bigl( d''_{\partial/\partial \zb} \alpha, 
				d''_{\partial/\partial \zb} \alpha \bigr) \dmu.
	\]
	Since the second term is negative, this gives us the inequality we need.
\end{proof}

\newpar
The inequality in \Cref{lem:functional} says that the conjugate-linear functional
\[
	\delta'_{\partial/\partial z} \alpha \mapsto -\int_X h(f, \alpha) \dmu
\]
is well-defined, bounded, and of norm at most $\norm{f/\rho}_h$. By continuity, it
extends uniquely to a conjugate-linear functional 
\[
	L^2(X, E) \to \CC
\]
that is identically zero on the orthogonal complement of the image of
\eqref{eq:linear-mapping-delta}; its norm is of course still at most
$\norm{f/\rho}_h$. By the Riesz representation theorem, this conjugate-linear
functional is in turn represented by a unique element $u \in L^2(X, E)$. By
construction, we have
\[
	\int_X h \bigl( u, \delta'_{\partial/\partial z} \alpha \bigr) \dmu
	= -\int_X h(f, \alpha) \dmu
\]
for every $\alpha \in A_c^0(X, E)$, and so $u$ is a weak solution to the equation
$d'' u = f \dzb$. This weak solution also satisfies the $L^2$-estimate because
\[
	\int_X h(u, u) \dmu = \norm{u}_h^2 \leq \norm{f/\rho}_h^2 
	= \int_X \frac{1}{\rho^2} h(f,f) \dmu.
\]
Now an application of \Cref{prop:weak-solution} finishes the proof of
\Cref{thm:Hoermander}.

\newpar
	If $v \in L^2(X, E)$ is orthogonal to the image of
	\eqref{eq:linear-mapping-delta}, then 
	\[
		\int_X h(d_{\partial/\partial \zb}'' v, \alpha) \dmu
		= -\int_X h(v, \delta_{\partial/\partial z}' \alpha) \dmu = 0
	\]
	for every $\alpha \in A_c^0(X, E)$, and so $v \in A^0(X, E)$ is smooth and
	satisfies $d'' v = 0$, hence is holomorphic. This means that the solution $u \in
	A^0(X, E)$ constructed above is orthogonal to the space of holomorphic sections of
	$E$, and so it is the (unique) solution of the equation $d'' u = f \dzb$ with
	minimal $L^2$-norm.

\subsection{Hodge bundles and metrics with positive curvature}

\newpar
Now let $E$ be a polarized variation of Hodge structure on $\dst$. In order to apply
H\"ormander's theory to the Hodge bundles, we need a metric with positive curvature.
According to \Cref{prop:curvature}, the curvature operator of the Hodge
metric $h$ on the holomorphic vector bundle $\shE^{p,q}$ is
\[
	\Theta = -(\theta \thetast + \thetast \theta).
\]
For any smooth section $u \in A^0(\dst, E^{p,q})$, we therefore have
\[
	h \bigl( \Theta_{\partial/\partial t \wedge \partial/\partial \tb} \, u, u \bigr)
	= h \bigl( \theta_{\partial/\partial t} u, \theta_{\partial/\partial t} u \bigr) 
	- h \bigl( \thetast_{\partial/\partial \tb} u, 
		\thetast_{\partial/\partial \tb} u \bigr).
\]
The right-hand side is unfortunately not positive, but at least we know that the
troublesome second term can be no bigger than
\[
	h \bigl( \thetast_{\partial/\partial \tb} u, 
		\thetast_{\partial/\partial \tb} u \bigr)
		\leq h \bigl( \thetast_{\partial/\partial \tb}, \thetast_{\partial/\partial
		\tb} \bigr) h(u, u)
		\leq \frac{1}{4} \binom{r+1}{3} \frac{1}{\abs{t}^2 (\log \abs{t})^2} h(u,u),
\]
using the universal bound for the Higgs field in \Cref{cor:Higgs-bound}. (Here $r = \rk
E$.)

\newpar \label{par:hphi}
We can try to fix this problem by multiplying the Hodge metric by a factor of the
form $e^{-\varphi}$, where $\varphi \colon \dst \to \RR$ is a suitable weight function.
Let us denote the new hermitian metric temporarily by the symbol $\hphi = h
e^{-\varphi}$. Since
\begin{align*}
	\dbar \hphi(v,w)
	&= \dbar h(v,w) \cdot e^{-\varphi} + h(v,w) e^{-\varphi} (-\dbar \varphi)  \\
	&= \hphi(d'' v, w) + \hphi(u, \delta' v) - \hphi(u, \partial \varphi),
\end{align*}
the new Chern connection is $\delta' + d'' - \partial \varphi$, and
so the new curvature operator is
\[
	\Thetaphi = (\delta' + d'' - \partial \varphi)^2 = \Theta + \partial \dbar \varphi.
\]
Our inequality from above now changes into
\[
	\hphi \bigl( \Thetaphi_{\partial/\partial t \wedge \partial/\partial \tb} \, u, u \bigr)
	= \hphi \bigl( \theta_{\partial/\partial t} u, \theta_{\partial/\partial t} u \bigr) 
	- \hphi \bigl( \thetast_{\partial/\partial \tb} u, 
		\thetast_{\partial/\partial \tb} u \bigr)
		+ \frac{\partial^2 \varphi}{\partial t \partial \tb} \cdot \hphi(u, u).
\]
The right-hand side is greater or equal to
\[
	\hphi(u,u) \cdot 
	\left( \frac{\partial^2 \varphi}{\partial t \partial \tb} 
	- \frac{1}{4} \binom{r+1}{3} \frac{1}{\abs{t}^2 (\log \abs{t})^2} \right),
\]
and as long as the term in parentheses is positive, the metric $\hphi$ has positive
curvature. The following lemma gives us a family of suitable weight functions.

\begin{plem} \label{lem:weight-functions}
	For $a \in \RR$ and $b \in \ZZ$, the function $e^{-\varphi} = \abs{t}^a
	(-\log \abs{t})^b$ satisfies
	\[
		\frac{\partial^2 \varphi}{\partial t \partial \tb} 
		= \frac{b}{4 \abs{t}^2 (\log \abs{t})^2}
	\]
\end{plem}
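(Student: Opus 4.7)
The plan is to compute $\varphi$ explicitly and differentiate. Taking logarithms of $e^{-\varphi} = \abs{t}^a(-\log\abs{t})^b$, we get
\[
\varphi = -a \log\abs{t} - b \log(-\log\abs{t}),
\]
so we only need to handle the two terms separately.

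For the first term, the key observation is that $\log\abs{t} = \tfrac{1}{2}\log(t\tb)$ is pluriharmonic on $\dst$; in particular $\partial_t \partial_{\tb} \log\abs{t} = 0$, so the coefficient $a$ drops out entirely. This is consistent with the result, which does not involve $a$.

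For the second term, I set $u = -\log\abs{t}$ and use $\partial_t \log\abs{t} = \tfrac{1}{2t}$, which gives $\partial_t u = -\tfrac{1}{2t}$ and therefore
\[
\frac{\partial}{\partial t} \log u
= \frac{1}{u}\cdot\left(-\frac{1}{2t}\right)
= \frac{1}{2t \log\abs{t}}.
\]
Differentiating once more in $\tb$, only the factor $1/\log\abs{t}$ contributes (since $1/t$ is antiholomorphic in $\tb$), and using $\partial_{\tb}(1/\log\abs{t}) = -(\log\abs{t})^{-2}\cdot \tfrac{1}{2\tb}$ yields
\[
\frac{\partial^2}{\partial t\, \partial \tb}\log(-\log\abs{t})
= -\frac{1}{4\abs{t}^2 (\log\abs{t})^2}.
\]

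Combining the two contributions,
\[
\frac{\partial^2 \varphi}{\partial t\, \partial \tb}
= -a \cdot 0 \;-\; b \cdot \left(-\frac{1}{4\abs{t}^2 (\log\abs{t})^2}\right)
= \frac{b}{4\abs{t}^2 (\log\abs{t})^2},
\]
which is the claimed identity. There is no real obstacle here: the computation rests only on pluriharmonicity of $\log\abs{t}$ on $\dst$ and on the chain rule.
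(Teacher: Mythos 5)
Your computation is correct and is exactly the ``small calculation'' the paper leaves to the reader (the paper only remarks that it can be simplified by pulling back to $\HH$ via $t=e^z$, which you bypass by working directly in the $t$-coordinate). One tiny wording slip: $1/t$ is \emph{holomorphic}, i.e.\ independent of $\tb$, which is why $\partial_{\tb}(1/t)=0$ --- calling it ``antiholomorphic in $\tb$'' is the wrong term, but the step itself is fine.
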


\begin{proof}
	A small calculation, which can be made easier by pulling back to $\HH$.
\end{proof}

\newpar
This gives us a nice family of hermitian metrics on the Hodge bundles to which we can
apply H\"ormander's $L^2$-estimates.

\begin{pcor} \label{cor:Hoermander}
	Let $a \in \RR$ and $b \in \NN$, subject to the condition $b \geq
	\binom{r+1}{3}+1$, where $r = \rk E$. Let $f \in A^0(\dst, E^{p,q})$ be a smooth
	section with the property that
	\[
		\int_{\dst} h(f,f) \, \abs{t}^{a+2} (-\log \abs{t})^{b+2} \dmu < +\infty.
	\]
	Then there is a smooth section $u \in A^0(\dst, E^{p,q})$ that solves the
	$\dbar$-equation $\dbar u = f \dtb$ and also satisfies the $L^2$-estimate
	\[
		\int_{\dst} h(u,u) \, \abs{t}^a (-\log \abs{t})^b \dmu
		\leq 4 \int_{\dst} h(f,f) \, \abs{t}^{a+2} (-\log \abs{t})^{b+2} \dmu.
	\]
\end{pcor}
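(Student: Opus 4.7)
The plan is to apply H\"ormander's $L^2$-estimate (\Cref{thm:Hoermander}) to the bundle $E^{p,q}$ over $X = \dst$, equipped with the modified Hodge metric $\hphi = h \cdot e^{-\varphi}$ where $e^{-\varphi} = \abs{t}^a (-\log \abs{t})^b$. Everything needed to verify the curvature hypothesis has already been set up in the discussion preceding the corollary; the argument amounts to collecting three ingredients into a single pointwise inequality.

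First, I would compute the curvature of $\hphi$ on $\shE^{p,q}$. By \Cref{prop:curvature}(b), the curvature of $h$ itself equals $-(\theta \thetast + \thetast \theta)$, so pairing against a section $u$ gives
\[
	h \bigl( \Theta_{\partial/\partial t \wedge \partial/\partial \tb} u, u \bigr)
	= h(\theta_{\partial/\partial t} u, \theta_{\partial/\partial t} u)
	- h(\thetast_{\partial/\partial \tb} u, \thetast_{\partial/\partial \tb} u).
\]
Discarding the non-negative first term and using the basic estimate (\Cref{thm:basic-estimate}), together with the fact that the Hodge norm of $\thetast$ equals that of $\theta$, bounds the subtracted term by $\frac{1}{4}\binom{r+1}{3} \abs{t}^{-2}(\log \abs{t})^{-2} \cdot h(u,u)$. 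The calculation in \Cref{par:hphi} adds a contribution $(\partial^2 \varphi / \partial t \partial \tb) \cdot \hphi(u,u)$ to the curvature, and \Cref{lem:weight-functions} evaluates this to $b/(4\abs{t}^2 (\log \abs{t})^2) \cdot \hphi(u,u)$. Combining these yields the pointwise inequality
\[
	\hphi \bigl( \Thetaphi_{\partial/\partial t \wedge \partial/\partial \tb} u, u \bigr)
	\geq \frac{b - \binom{r+1}{3}}{4 \abs{t}^2 (\log \abs{t})^2} \cdot \hphi(u,u).
\]
The hypothesis $b \geq \binom{r+1}{3} + 1$ guarantees that the right-hand side dominates $\rho^2 \cdot \hphi(u,u)$ for the positive function $\rho^2 = 1/(4 \abs{t}^2 (\log \abs{t})^2)$.

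With this choice of $\rho$, the $L^2$-hypothesis of \Cref{thm:Hoermander} is exactly the assumption in the corollary, since
\[
	\int_{\dst} \frac{1}{\rho^2} \hphi(f,f) \dmu
	= 4 \int_{\dst} h(f,f) \abs{t}^{a+2} (-\log\abs{t})^{b+2} \dmu < +\infty.
\]
After integrating the pointwise curvature inequality against compactly supported test sections to obtain the integrated hypothesis of \Cref{thm:Hoermander}, the theorem produces a smooth section $u \in A^0(\dst, E^{p,q})$ with $d'' u = f \dtb$ satisfying
\[
	\int_{\dst} h(u,u) \abs{t}^a (-\log\abs{t})^b \dmu
	= \int_{\dst} \hphi(u,u) \dmu
	\leq \int_{\dst} \frac{1}{\rho^2} \hphi(f,f) \dmu,
\]
which is precisely the claimed estimate.

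There is no serious obstacle: the entire proof is an assembly of previously proved inequalities. The only point that requires a moment of care is the interplay between the \emph{negative} contribution from $\thetast$ (controlled by the basic estimate) and the \emph{positive} contribution from the Laplacian of $\varphi$ (controlled by \Cref{lem:weight-functions}); the integrality hypothesis $b \geq \binom{r+1}{3}+1$ is exactly what is needed to make the net positivity constant bounded below by $1$, which in turn produces the factor of $4$ in the final estimate. The exponent $a$ plays no role in the curvature computation because $\abs{t}^a$ is plurisubharmonic-neutral on $\dst$ (its logarithm is pluriharmonic), so it enters only through the $L^2$-norm on the right-hand side.
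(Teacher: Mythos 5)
Your proof is correct and is essentially the paper's own argument: apply \Cref{thm:Hoermander} to $E^{p,q}$ with the weighted metric $\hphi = h\,\abs{t}^a(-\log\abs{t})^b$, combine the curvature formula from \Cref{prop:curvature}, the basic estimate, and \Cref{lem:weight-functions} to get the pointwise lower bound $\hphi(\Thetaphi_{\partial/\partial t \wedge \partial/\partial \tb}u,u) \geq \hphi(u,u)/(4\abs{t}^2(\log\abs{t})^2)$, and take $1/\rho^2 = 4\abs{t}^2(\log\abs{t})^2$. The bookkeeping of the constant (net positivity at least $1$ from $b \geq \binom{r+1}{3}+1$, yielding the factor $4$) matches the paper exactly.
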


\begin{proof}
	We apply \Cref{thm:Hoermander} to the Hodge bundle $E^{p,q}$,
	with the holomorphic structure defined by the operator $\dbar$ and with the
	hermitian metric 
	\[
		\hphi = h \cdot \abs{t}^a (-\log \abs{t})^b
	\]
	For $b \geq \binom{r+1}{3} + 1$, this metric has positive curvature; in fact, we
	now have
	\begin{equation} \label{eq:curvature-inequality}
		\hphi \Bigl( \Thetaphi_{\partial/\partial t \wedge \partial/\partial \tb} \, u,
		u \Bigr) \geq \hphi(u, u) \cdot \frac{1}{4 \abs{t}^2 (\log \abs{t})^2}
	\end{equation}
	for every smooth section $u \in A^0(\dst, E^{p,q})$. The assumptions of the theorem
	are therefore met with $1/\rho^2 = 4 \abs{t}^2 (\log \abs{t})^2$.
\end{proof}

\newpar
In what follows, we are actually going to apply the $L^2$-estimates to the induced
variation of Hodge structure on the bundle $\End(E)$. Because of the better bound in
\Cref{cor:Higgs-bound-End}, the result takes the following form.

\begin{pcor} \label{cor:Hoermander-End}
	Let $a \in \RR$ and $b \in \NN$, subject to the condition $b \geq 2r
	\binom{r+1}{3}+1$. Let $f \in A^0 \bigl( \dst, \End(E)^{p,q} \bigr)$ be a smooth
	section with the property that
	\[
		\int_{\dst} h_{\End(E)}(f,f) \, \abs{t}^{a+2} (-\log \abs{t})^{b+2} \dmu < +\infty.
	\]
	Then there is a smooth section $u \in A^0 \bigl( \dst, \End(E)^{p,q} \bigr)$ that
	solves the equation $\dbar u = f \dtb$ and also satisfies the $L^2$-estimate
	\[
		\int_{\dst} h_{\End(E)}(u,u) \, \abs{t}^a (-\log \abs{t})^b \dmu
		\leq 4 \int_{\dst} h_{\End(E)}(f,f) \, \abs{t}^{a+2} (-\log \abs{t})^{b+2} \dmu.
	\]
\end{pcor}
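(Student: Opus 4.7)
My plan is to mirror the proof of \Cref{cor:Hoermander}, replacing \Cref{cor:Higgs-bound} with the sharper bound from \Cref{cor:Higgs-bound-End}. The induced polarized variation of Hodge structure on $\End(E)$ has rank $r^2$, so a naive application of \Cref{cor:Hoermander} to it would force $b \geq \binom{r^2+1}{3}+1$, a much worse condition than the one claimed. The whole point is that $\End(E)$, viewed as an induced variation, has a Higgs field that is a commutator, and this forces a much smaller universal constant.

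First I would observe, via \Cref{prop:curvature}(b), that the curvature operator of the Hodge metric $h_{\End(E)}$ on the holomorphic vector bundle $\End(\shE)^{p,q}$ is $-(\theta\thetast+\thetast\theta)$. Exactly as in \parref{par:hphi}, I twist this metric by the weight function $e^{-\varphi} = \abs{t}^a(-\log\abs{t})^b$ to obtain a modified metric $h^{\varphi}_{\End(E)}$, whose curvature operator picks up the additional term $\partial\dbar\varphi$. For any smooth section $u\in A^0\bigl(\dst,\End(E)^{p,q}\bigr)$, this gives
\[
	h^{\varphi}_{\End(E)}\bigl(\Theta^{\varphi}_{\partial/\partial t\wedge\partial/\partial\tb}\,u,u\bigr)
	\geq h^{\varphi}_{\End(E)}(u,u)\cdot\left(\frac{\partial^2\varphi}{\partial t\partial\tb}
		-h_{\End(\End(E))}(\thetast_{\partial/\partial\tb},\thetast_{\partial/\partial\tb})\right),
\]
where the bound on the $\thetast$-term is the Hodge norm of the Higgs field of the \emph{doubly induced} variation on $\End(\End(E))$.

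The key step is then to invoke \Cref{cor:Higgs-bound-End}, which, after pulling back from $\HH$ to $\dst$ via $t=e^z$, translates into
\[
	h_{\End(\End(E))}\bigl(\thetast_{\partial/\partial\tb},\thetast_{\partial/\partial\tb}\bigr)
	\leq \frac{2rC_0^2}{\abs{t}^2(\log\abs{t})^2}
	= \frac{r\binom{r+1}{3}}{2\abs{t}^2(\log\abs{t})^2}.
\]
Combined with \Cref{lem:weight-functions}, which computes $\partial^2\varphi/\partial t\partial\tb = b/\bigl(4\abs{t}^2(\log\abs{t})^2\bigr)$, the hypothesis $b\geq 2r\binom{r+1}{3}+1$ yields the clean curvature inequality
\[
	h^{\varphi}_{\End(E)}\bigl(\Theta^{\varphi}_{\partial/\partial t\wedge\partial/\partial\tb}\,u,u\bigr)
	\geq h^{\varphi}_{\End(E)}(u,u)\cdot\frac{1}{4\abs{t}^2(\log\abs{t})^2}.
\]

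Finally, I would apply \Cref{thm:Hoermander} to the holomorphic bundle $\End(E)^{p,q}$, with the twisted metric $h^{\varphi}_{\End(E)}$ and the weight $1/\rho^2 = 4\abs{t}^2(\log\abs{t})^2$. The hypothesis on the integral of $h_{\End(E)}(f,f)\,\abs{t}^{a+2}(-\log\abs{t})^{b+2}\dmu$ is precisely the statement that $\int_{\dst}\rho^{-2}h^{\varphi}_{\End(E)}(f,f)\dmu<\infty$, and the conclusion of \Cref{thm:Hoermander} delivers both a smooth solution $u$ to $\dbar u=f\dtb$ and the desired $L^2$-estimate, after dividing out the common weight factor. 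There is no real obstacle here beyond the bookkeeping: the whole argument is essentially formal once the numerical improvement from \Cref{cor:Higgs-bound-End} is in hand, and the choice of constant $2r$ in the hypothesis was dictated precisely to absorb the factor $2r$ in that estimate.
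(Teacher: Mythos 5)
Your proposal is correct and is exactly the argument the paper has in mind: \Cref{cor:Hoermander-End} is obtained by running the proof of \Cref{cor:Hoermander} for the induced variation on $\End(E)$, with the curvature term controlled by the improved bound of \Cref{cor:Higgs-bound-End} rather than by \Cref{cor:Higgs-bound}, so that $b \geq 2r\binom{r+1}{3}+1$ again yields the curvature lower bound $1/\bigl(4\abs{t}^2(\log\abs{t})^2\bigr)$ needed for \Cref{thm:Hoermander}. Your numerics (the factor $2rC_0^2 = \tfrac{r}{2}\binom{r+1}{3}$ and the resulting constant $4$ in the $L^2$-estimate) match the paper's.
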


\subsection{Constructing a holomorphic lifting of the Higgs field}

\newpar
After these preliminaries, we now turn to the actual proof of the nilpotent orbit
theorem. Fix a half-open interval $I \subseteq \RR$ of length $1$, and let $S \in
\End(V)$ be the unique semisimple operator such that $T_s = e^{2 \pi i S}$ and such
that the eigenvalues of $S$ are real and contained in $I$. If we denote by
$P_{\lambda} \in \End(V)$ the projection to the eigenspace $E_{\lambda}(T_s)$, we can
write $S$ compactly as
\[
	S = \sum_{\alpha \in I} \alpha P_{e^{2 \pi i \alpha}}.
\]
The differential of the holomorphic mapping $z \mapsto \Psi_S(e^z) = e^{-z(S+N)}
\Phi(z)$ at the point $z \in \HH$ is equal to
\[
	e^{-z(S+N)} \theta_{\partial/\partial z} e^{z(S+N)} - (S+N)
	\mod F^0 \End(V)_{\Psi_S(e^z)}.
\]
The differential is holomorphic, modulo the indicated subspace, but the operator on
the left-hand side is \emph{not} holomorphic as a section of the bundle $\End(V)$.
The idea of the proof is to lift this operator to a holomorphic section of the bundle
$\End(V)$ with the help of H\"ormander's $L^2$-estimates, and then to use the
properties of this lifting to show that $\Psi_S$ extends.

\newpar
Let us begin by looking at the Higgs field itself. By construction, the operator
$\theta_{\partial/\partial z} \colon \HH \to \End(V)$ is the pullback of $t
\theta_{\partial/\partial t}$, and as such, it satisfies
\[
	\theta_{\partial/\partial z}(z + 2 \pi i) 
	= T \, \theta_{\partial/\partial z}(z) \, T^{-1}.
\]
Recall that we have an induced variation of Hodge structure of
weight $0$ on the bundle $\End(E)$. The Higgs field $t \theta_{\partial/\partial t}$
is a smooth section of the Hodge bundle $F^{-1} \End(E)$ on $\dst$. Now the
Hodge bundle is a holomorphic vector bundle, whose holomorphic structure is defined
by the operator $d''_{\End(E)} = [d'', \argbl]$, but the Higgs field is not a
holomorphic section.
Instead, because of the identity $\dbar \theta + \theta \dbar = 0$, only its projection
to the quotient bundle
\[
	\End(E)^{-1,1} \cong F^{-1} \End(E) / F^0 \End(E)
\]
is holomorphic, for the holomorphic structure defined by $\dbar_{\End(E)} = [\dbar, \argbl]$. 

\newpar
The $L^2$-estimates allow us to lift $t \theta_{\partial/\partial t}$ to a
holomorphic section of the bundle $F^{-1} \End(E)$ in a controlled manner. We will
make a specific choice of the two parameters $a \in \RR$ and $b \in \ZZ$ a little later.

\begin{pprop} \label{prop:lifting}
	Set $r = \rk E$. For every $a > -2$ and every $b \geq 2r \binom{r+1}{3}+1$, there
	is a holomorphic section $\vartheta$ of the Hodge bundle $F^{-1} \End(E)$ such that
	\[
		\vartheta \equiv t \theta_{\partial/\partial t} \mod F^0 \End(E),
	\]
	and which also satisfies the $L^2$-estimate
	\[
		\int_{\dst} h_{\End(E)}(\vartheta, \vartheta) \, \abs{t}^a (-\log \abs{t})^b \dmu
		\leq C,
	\]
	with a constant $C > 0$ whose exact value only depends on $r$, $a$, and $b$.
\end{pprop}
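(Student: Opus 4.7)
The plan is to build $\vartheta$ by correcting $t\theta_{\partial/\partial t}$ along the Hodge decomposition of $\End(E)$. The identity $\dbar\theta+\theta\dbar=0$ from \Cref{lem:VHS-identities} says that $t\theta_{\partial/\partial t}$, viewed as a section of the Higgs bundle $\End(E)^{-1,1}$, is holomorphic for $\dbar$; however, full holomorphy in $F^{-1}\End(\shE)$ requires annihilation by $d''=\dbar+\thetast$, and the $\thetast$ part sends $t\theta_{\partial/\partial t}$ into $\End(E)^{0,0}\otimes\Omega^{0,1}$. The idea is to cancel this obstruction by adding successive corrections $u_p\in\End(E)^{p,-p}$ for $p\geq 0$. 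Setting $u_{-1}=t\theta_{\partial/\partial t}$ and $\vartheta=\sum_{p\geq -1} u_p$, the condition $d''\vartheta=0$ decomposes by Hodge bidegree into the finite chain of $\dbar$-equations
\[
\dbar u_p = -\thetast u_{p-1} \quad \text{on } \End(E)^{p,-p}, \qquad p=0,1,2,\ldots
\]
(finite because $\End(E)^{p,-p}=0$ for $p\gg 0$). Once these are solved, $\vartheta$ will be a smooth section of $F^{-1}\End(E)$, congruent to $t\theta_{\partial/\partial t}$ modulo $F^0\End(E)$, and $d''$-holomorphic, hence a holomorphic section of $F^{-1}\End(\shE)$.

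The second step is to solve each $\dbar$-equation by H\"ormander's $L^2$-estimates, applying \Cref{cor:Hoermander-End} to the Higgs bundle $\End(E)^{p,-p}$ with the given parameters $(a,b)$. The hypothesis $b\geq 2r\binom{r+1}{3}+1$ is exactly what the corollary demands in order to make the weighted metric $h_{\End(E)}\cdot\abs{t}^a(-\log\abs{t})^b$ positively curved. The procedure is: solve the $p=0$ equation to obtain $u_0$, feed $u_0$ into the source of the $p=1$ equation, solve again, and iterate until the chain terminates.

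The point that will need the most care, and the real technical content of the argument, is that a single pair $(a,b)$ can be used at every iteration. The basic estimate \Cref{cor:Higgs-bound}, after passing from $\partial/\partial z$ to $\partial/\partial \tb$, gives $h_{\End(E)}(\thetast_{\partial/\partial \tb},\thetast_{\partial/\partial \tb})\leq C_0^2/\abs{t}^2(\log\abs{t})^2$, and a standard pointwise commutator bound in a Hodge bundle then yields $h(\thetast u_{p-1},\thetast u_{p-1}) \leq C_r'\,h(u_{p-1},u_{p-1})/\abs{t}^2(\log\abs{t})^2$. The factor $\abs{t}^{-2}(\log\abs{t})^{-2}$ precisely compensates the weight shift from $\abs{t}^a(-\log\abs{t})^b$ on the output side to $\abs{t}^{a+2}(-\log\abs{t})^{b+2}$ on the source side of \Cref{cor:Hoermander-End}, so the $L^2$-bound on $u_{p-1}$ translates immediately into integrability of the next source under the same weights. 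Without this cancellation one would eat two powers of $(-\log\abs{t})$ at every step, and the scheme would fail.

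Only the base step $p=0$ uses the hypothesis $a > -2$: there the ``previous'' term $u_{-1}=t\theta_{\partial/\partial t}$ is controlled not by an $L^2$-bound but by the pointwise basic estimate $h(t\theta_{\partial/\partial t},t\theta_{\partial/\partial t})\leq C_0^2/(\log\abs{t})^2$, so integrability of the source reduces to $\int_{\dst}\abs{t}^a(-\log\abs{t})^{b-2}\dmu$, which in polar coordinates is $2\pi\int_0^1 r^{a+1}(-\log r)^{b-2}\,dr$ and converges precisely when $a>-2$. Summing the orthogonal contributions $h(u_p,u_p)$ then gives the claimed $L^2$-bound on $\vartheta$ with a constant depending only on $r$, $a$, and $b$.
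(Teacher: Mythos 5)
Your proposal is correct and follows essentially the same route as the paper: the same inductive correction scheme $\dbar u_p = -\thetast u_{p-1}$ along the Hodge grading of $\End(E)$ starting from $u_{-1} = t\theta_{\partial/\partial t}$, solved at each stage by \Cref{cor:Hoermander-End} with the fixed weight $\abs{t}^a(-\log\abs{t})^b$, using the pointwise bound on $\thetast$ to absorb the weight shift and the hypothesis $a>-2$ only at the base step. The orthogonal summation of the pieces and the resulting dependence of the constant only on $r$, $a$, $b$ also match the paper's argument.
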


\begin{proof}
	Since the Higgs field is trivial when $r = 1$, we may assume without loss of
	generality that $r \geq 2$. Using the identities in \Cref{lem:VHS-identities},
	we compute that
	\[
		d''_{\End(E)} \bigl( t \theta_{\partial/\partial t} \bigr)
		= \dbar_{\End(E)} \bigl( t \theta_{\partial/\partial t} \bigr) 
		+ \thetast_{\End(E)} \bigl( t \theta_{\partial/\partial t} \bigr)
		= t \lbrack \thetast_{\partial/\partial \tb}, \theta_{\partial/\partial t} \rbrack
		\dtb.
	\]
	Temporarily define $f_0 = t \lbrack \thetast_{\partial/\partial \tb},
	\theta_{\partial/\partial t} \rbrack$; this is a smooth section of the Hodge bundle
	$\End(E)^{0,0}$. The universal bound on the Higgs field in
	\Cref{cor:Higgs-bound} gives
	\[
		h(f_0,f_0) \leq 2 \abs{t}^2 \cdot h \bigl( \theta_{\partial/\partial t},
		\theta_{\partial/\partial t} \bigr)^2
		\leq \frac{2 C_0^4}{\abs{t}^2 (\log \abs{t})^4}.
	\]
	Provided that $a > -2$ and $b \geq 2$, the integral
	\[
		\int_{\dst} h(f_0,f_0) \, \abs{t}^{a+2} (-\log \abs{t})^{b+2} \dmu
		\leq 2C_0^4 \int_{\dst} \abs{t}^a (\log \abs{t})^{b-2} \dmu
	\]
	is finite. Once $b \geq 2r\binom{r+1}{3} + 1$, the version of the $L^2$-estimates
	in \Cref{cor:Hoermander} gives us a solution $u_0 \in A^0 \bigl( \dst,
	\End(E)^{0,0} \bigr)$ to the equation $\dbar_{\End(E)} u_0 + f_0 \dtb =
	0$ that satisfies the $L^2$-estimate
	\[
		\int_{\dst} h(u_0,u_0) \, \abs{t}^a (-\log \abs{t})^b \dmu
		\leq 8C_0^4 \int_{\dst} \abs{t}^a (-\log \abs{t})^{b-2} \dmu
	\]
	Continuing in this way, we produce smooth sections $u_k, f_k \in
	A^0 \bigl( \dst, \End(E)^{k,-k} \bigr)$, indexed by $k \in \NN$, such that
	\[
		f_{k+1} = [\thetast_{\partial/\partial \tb}, u_k] \quad \text{and} \quad
		\dbar_{\End(E)} u_{k+1} + f_{k+1} \dtb = 0,
	\]
	subject to the $L^2$-estimate
	\begin{align*}
		\int_{\dst} h(u_{k+1},u_{k+1}) \, \abs{t}^a (-\log \abs{t})^b \dmu
		&\leq  4 \int_{\dst} h(f_{k+1},f_{k+1}) \, \abs{t}^{a+2} (-\log \abs{t})^{b+2} \dmu \\
		&\leq 8C_0^2 \int_{\dst} h(u_k, u_k) \abs{t}^a (-\log \abs{t})^b \dmu
	\end{align*}
	Let $p \in \NN$ be the biggest integer such that $\End(E)^{p,-p} \neq 0$. By
	construction, 
	\[
		\vartheta = t \theta_{\partial/\partial t} + \sum_{k=0}^p u_k 
		\in A^0 \bigl( \dst, F^{-1} \End(E) \bigr)
	\]
	satisfies $d_{\End(E)}''(\vartheta) = 0$, hence is a holomorphic section
	of the Hodge bundle $F^{-1} \End(E)$, with the same image in $\End(E)^{-1,1} \cong
	F^{-1} \End(E) / F^0 \End(E)$ as the Higgs field $t \theta_{\partial/\partial t}$.
	We also get another valuable piece of information about $\vartheta$ from the
	$L^2$-estimates for the individual solutions $u_k$. On the one hand, 
	\[
		\int_{\dst} h(u_k,u_k) \, \abs{t}^a (-\log \abs{t})^b \dmu
		\leq (8 C_0^2)^{k+1} \cdot C_0^2 \int_{\dst} \abs{t}^a (-\log \abs{t})^{b-2} \dmu
	\]
	by combining the inequalities above. On the other hand, 
	\[
		h \bigl( t \theta_{\partial/\partial t}, t \theta_{\partial/\partial t} \bigr)
		\leq \frac{C_0^2}{(\log \abs{t})^2},
	\]
	and so we have a similar inequality
	\[
		\int_{\dst} h \bigl( t \theta_{\partial/\partial t}, t \theta_{\partial/\partial
		t} \bigr) \, \abs{t}^a (-\log \abs{t})^b \dmu
		\leq C_0^2 \int_{\dst} \abs{t}^a (-\log \abs{t})^{b-2} \dmu.
	\]
	If we put everything together and remember that the Hodge decomposition on $\End(E)$ is
	orthogonal with respect to the Hodge metric, the result is that
	\begin{align*}
		\int_{\dst} h(\vartheta, \vartheta) \, \abs{t}^a (-\log \abs{t})^b \dmu
		&\leq \sum_{k=0}^{p+1} (8 C_0^2)^k \cdot 
		C_0^2 \int_{\dst} \abs{t}^a (-\log \abs{t})^{b-2} \dmu \\
		&\leq C_0^2 \sum_{k=0}^{p+1} (8 C_0^2)^k \cdot \frac{2 \pi (b-2)!}{(a+2)^{b-1}},
	\end{align*}
	after evaluating the integral.
\end{proof}

\newpar
Recall that the pullback of the flat bundle $(E,d)$ along $\exp \colon \HH \to \dst$
has a canonical trivialization by the space $V$ of multi-valued flat sections. Let us
use the same letter
\[
	\vartheta \colon \HH \to \End(V)
\]
also for the pullback of $\vartheta \in A^0 \bigl( \dst, F^{-1} \End(E) \bigr)$ to
the half-plane $\HH$; this is now a holomorphic mapping with the property that
\[
	\vartheta(z) \in F^{-1} \End(V)_{\Phi(z)} \quad \text{and} \quad
	\vartheta(z) - \theta_{\partial/\partial z} \in F^0 \End(V)_{\Phi(z)}
\]
for every $z \in \HH$. As a pullback, it transforms according to the rule
\[
	\vartheta(z + 2 \pi i) = T \vartheta(z) T^{-1}.
\]
Recall that $T = e^{2 \pi i(S + N)}$, where $S \in \End(V)$ is semisimple with real
eigenvalues contained in a half-open interval of length $1$. The expression $e^{-z(S+N)}
\vartheta(z) e^{z(S+N)}$ is again invariant under the substitution $z \mapsto z + 2 \pi i$,
and therefore
\[
	e^{-z(S+N)} \, \vartheta(z) \, e^{z(S+N)} = B(e^z)
\]
for a well-defined holomorphic mapping $B \colon \dst \to \End(V)$.

\newpar
We can use our integral estimate for $\vartheta$ to prove that $B$ extends
holomorphically across the origin. To do that, we first need to know how much
conjugation by the operator $e^{-z(S+N)}$ affects the norm of an endomorphism. Define
\[
	\delta(T) = \min \menge{\delta > 0}{\text{$T$ has two eigenvalues of the form
			$\lambda$ and $\lambda \cdot e^{2 \pi i \delta}$}};
\]
concretely, this is the minimal distance between consecutive eigenvalues of $T$ (on
the unit circle), divided by $2\pi$. In the extreme case where all eigenvalues of
$T$ are equal to each other, we have $\delta(T) = 1$. Also define
\[
	m(N) = \min \menge{m \in \NN}{N^m \neq 0}
\]
as the index of nilpotency of the operator $N = \log T_u$.

\begin{plem} \label{lem:conjugation}
	There is a constant $C > 0$ such that
	\begin{align*}
		\bigl\lVert e^{-z(S+N)} &A e^{z(S+N)} \bigr\rVert_{\Phi(-1)} \\
										&\leq C e^{(1 - \delta(T)) \abs{\Re z}} 
		\max \Bigl( \abs{\Re z}^{m(N) + 2 \sqrt{2r} C_0}, \abs{\Re z}^{-2 \sqrt{2r} C_0} \Bigr)
		\norm{A}_{\Phi(z)} 
	\end{align*}
	for every $A \in \End(V)$, uniformly on the horizontal strip
	\[
		\menge{z \in \HH}{\abs{\Im z} \leq \pi}.
	\]
	Here $C_0 = \frac{1}{2}\sqrt{\binom{r+1}{3}}$, and the exact value of the constant
	$C$ only depends on $r = \rk E$ and on the minimal polynomial of $T \in \GL(V)$.
\end{plem}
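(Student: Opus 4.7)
The plan is to factor the conjugation action into its semisimple and unipotent contributions, bound each separately on the Hodge norm at $\Phi(-1)$, and then convert from $\norm{\cdot}_{\Phi(-1)}$ to $\norm{\cdot}_{\Phi(z)}$ via the polynomial bound of \Cref{prop:polynomial-bound}. Since $[S,N]=0$, writing $z=x+iy$ with $\abs{y}\le\pi$ gives a commuting factorization
\[
  e^{-z(S+N)} = e^{-xS}\cdot e^{-iyS}\cdot e^{-xN}\cdot e^{-iyN}.
\]
The ``imaginary'' factors $e^{-iyS}$ and $e^{-iyN}$ have norms (with respect to the inner product at $\Phi(-1)$) bounded for $\abs{y}\le\pi$ by a constant depending only on $S$, $N$, and $\Phi(-1)$; their contribution is absorbed into the overall $C$. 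The substantive estimates concern the real-parameter factors.

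For the semisimple part: $\ad S$ is semisimple on $\End(V)$, with eigenvalues equal to the differences $\alpha-\beta$ of eigenvalues of $S$. Since the eigenvalues of $S$ lie in the half-open interval $I$ of length $1$, each such difference has absolute value at most the spread $\alphamax - \alphamin$. Unpacking the definition of $\delta(T)$ as the minimum circular gap between eigenvalues of $T=e^{2\pi i(S+N)}$ shows that $\delta(T)$ is the minimum over the consecutive gaps of the eigenvalues of $S$ and the ``wrap-around gap'' $1-(\alphamax - \alphamin)$; in particular $\alphamax-\alphamin \le 1-\delta(T)$. Diagonalizing $\ad S$ and comparing with a $\Phi(-1)$-orthonormal basis yields
\[
  \bignorm{e^{-x\ad S}(A')}_{\Phi(-1)} \le C_1\, e^{(1-\delta(T))\abs{x}}\, \norm{A'}_{\Phi(-1)},
\]
where $C_1$ depends only on the diagonalization data, hence only on the minimal polynomial of $T$ and on $\Phi(-1)$.

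For the nilpotent part: since $\ad N = L_N - R_N$ is a sum of two commuting nilpotents each of index $m(N)$, the operator $\ad N$ has nilpotency index at most $2m(N)-1$, and $e^{-x\ad N}(A'')$ is therefore a polynomial in $x$ of degree at most $2m(N)-2$ whose coefficients are controlled by powers of $\norm{N}_{\Phi(-1)}$, giving a bound of the form $C_2(1+\abs{x}^{2m(N)-2})\norm{A''}_{\Phi(-1)}$. Chaining the three estimates together and then applying \Cref{prop:polynomial-bound} to the induced variation on $\End(E)$, whose Higgs-field bound is provided by \Cref{cor:Higgs-bound-End} with $C_0$ replaced by $\sqrt{2r}\,C_0$, produces the desired inequality. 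The exponent $m(N)+2\sqrt{2r}\,C_0$ absorbs the crude $2(m(N)-1)+2\sqrt{2r}\,C_0$ because $m(N)-2\le 2\sqrt{2r}\,C_0$ for every $r\ge 1$ (the cases $r=1,2$ are trivial since then $m(N)\le 2$, and for $r\ge 3$ one has $2\sqrt{2r}\,C_0 \ge r \ge m(N)$); the negative-exponent branch is controlled by the same polynomial bound.

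The main technical obstacle is the careful identification of the spread of the eigenvalues of $S$ with $1-\delta(T)$ via the circular-distance definition, and, more importantly, the bookkeeping needed to ensure that every constant introduced depends only on $r$, on $\Phi(-1)$, and on the minimal polynomial of $T$. The diagonalization of $\ad S$ is the most delicate step, since one must choose a basis simultaneously adapted to the $\ad S$-eigenspace decomposition and to the $\Phi(-1)$-Hodge inner product, and verify that the resulting change-of-basis constant truly depends only on the claimed data.
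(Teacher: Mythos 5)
Your overall architecture is the same as the paper's: split $e^{-z(S+N)}$ into its semisimple and nilpotent (and imaginary) parts, bound the conjugation on the norm at $\Phi(-1)$ using $\alphamax-\alphamin\le 1-\delta(T)$ for the $S$-part and polynomial growth for the $N$-part, and then pass from $\norm{\cdot}_{\Phi(-1)}$ to $\norm{\cdot}_{\Phi(z)}$ via \Cref{prop:polynomial-bound} applied to $\End(E)$ with the improved constant from \Cref{cor:Higgs-bound-End}. The exponent bookkeeping (your degree $2(m(N)-1)$ being absorbed into $m(N)+2\sqrt{2r}\,C_0$) is fine.

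The genuine gap is in the dependence of the constant. The lemma asserts that $C$ depends \emph{only} on $r$ and on the minimal polynomial of $T$; you repeatedly allow your constants to depend on $\Phi(-1)$ as well (the ``diagonalization data'' constant $C_1$ for $\ad S$, the factor for $e^{-iyS}$, $e^{-iyN}$, and the powers of $\norm{N}_{\Phi(-1)}$), and you even state at the end that every constant depends on ``$r$, on $\Phi(-1)$, and on the minimal polynomial of $T$.'' Dependence on $\Phi(-1)$ is dependence on the particular variation of Hodge structure, which is exactly what the lemma is designed to exclude (this uniformity is what makes the later effective estimates work). The missing ingredient is \Cref{prop:N-norm}: write $e^{-zS}Ae^{zS}=\sum_{\alpha,\beta}e^{(\alpha-\beta)\abs{\Re z}}e^{-i(\alpha-\beta)\Im z}\,P_{e^{2\pi i\alpha}}AP_{e^{2\pi i\beta}}$, so that the semisimple factor is controlled by $e^{(1-\delta(T))\abs{\Re z}}\bigl(\sum_\lambda\norm{P_\lambda}_{\Phi(-1)}\bigr)^2$, and then invoke \Cref{prop:N-norm} (at $z=-1$) to bound $\norm{P_\lambda}_{\Phi(-1)}$ and $\norm{N}_{\Phi(-1)}$ by constants depending only on $\dim V=r$ and the minimal polynomial of $T$. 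Without this step (or some substitute for it), your argument only proves the weaker statement in which $C$ depends on the variation through its Hodge metric at the base point, and the ``change-of-basis constant'' you flag as delicate is in fact \emph{not} controlled by the data the lemma allows.
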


\begin{proof}
	In order to simplify the notation, we denote by $\norm{v} = \norm{v}_{\Phi(-1)}$
	the Hodge norm at the point $-1 \in \HH$. Let us first consider the dominant
	term $e^{-zS}$. Let $P_{\lambda} \colon V \to E_{\lambda}(T_s)$ denote the
	projection to the $\lambda$-eigenspace of $T_s$. Then
	\[
		S = \sum_{\alpha \in I} \alpha P_{e^{2 \pi i \alpha}}.
	\]
	Let $A \in \End(V)$ be an arbitrary endomorphism. We have
	\[
		e^{-zS} A e^{zS}
		= \sum_{\alpha, \beta} e^{(\alpha - \beta) \abs{\Re z}} e^{-i(\alpha - \beta)
		\Im z} P_{e^{2 \pi i \alpha}} A P_{e^{2 \pi i \beta}}
	\]
	and so the triangle inequality and submultiplicativity of the $L^2$-norm give
	\begin{equation} \label{eq:AdS-operator-norm}
		\bigl\lVert e^{-zS} A e^{zS} \bigr\rVert
		\leq e^{(1 - \delta) \abs{\Re z}} 
		\biggl( \sum_{\lambda} \norm{P_{\lambda}} \biggr)^2 \norm{A},
	\end{equation}
	where $\delta = \delta(T)$, due to the fact that $\alphamax - \alphamin \leq 1 -
	\delta$. The term $e^{-zN}$ is easier to analyze because $N$ is nilpotent. Set $m
	= m(N)$. Then
	\[
		e^{-zN} A e^{zN} = \sum_{i,j=0}^m \frac{(-1)^i z^{i+j}}{i! \, j!} N^i A N^j
	\]
	and therefore
	\[
		\bigl\lVert e^{-zN} A e^{zN} \bigr\rVert
		\leq \sum_{i, j=0}^m \frac{\abs{z}^{i+i}}{i! \, j!} \norm{N}^{i+j}
		\norm{A} = \sum_{k=0}^m \frac{\abs{2z}^k}{k!} \norm{N}^k \norm{A}.
	\]
	Now we consider $A \in \End(V)$ as a multi-valued flat section of the induced
	variation of Hodge structure on $\End(E)$. Remembering the improved bound for the
	Higgs field in \Cref{cor:Higgs-bound-End}, we get from
	\Cref{prop:polynomial-bound} the inequality
	\[
		\norm{A} \leq e^{2 \sqrt{2r} C_0 \pi} 
		\max \Bigl( \abs{\Re z}^{2 \sqrt{2r} C_0}, \abs{\Re z}^{-2 \sqrt{2r} C_0} \Bigr)
		\norm{A}_{\Phi(z)} 
	\]
	Putting everything together, we arrive at
	\[
		\bigl\lVert e^{-z(S+N)} A e^{z(S+N)} \bigr\rVert
		\leq C e^{(1 - \delta) \abs{\Re z}} 
		\max \Bigl( \abs{\Re z}^{m+2 \sqrt{2r} C_0}, \abs{\Re z}^{-2 \sqrt{2r} C_0} \Bigr)
		\norm{A}_{\Phi(z)} 
	\]
	with a constant $C > 0$ that only depends on the two integers $r = \rk E$
	and $m = m(N)$, and on the Hodge norms $\norm{P_{\lambda}}_{\Phi(-1)}$ and
	$\norm{N}_{\Phi(-1)}$. The latter quantities are in turn bounded, in
	\Cref{prop:N-norm}, by a constant whose value only depends on
	$r$ and on the minimal polynomial of $T \in \GL(V)$. This gives the desired
	result.
\end{proof}

\newpar
We are now ready to show that $B \colon \dst \to \End(V)$ extends holomorphically
across the origin. We choose
\[
	a = -2 + 2 \delta(T) > -2 \quad \text{and} \quad b = 2r \binom{r+1}{3} + 1,
\]
and then construct $\vartheta$ and $B$ according to the procedure described above.
Since $b = 8rC_0^2 + 1 \geq m(N) + 2 \sqrt{2r} C_0$, \Cref{lem:conjugation} gives
\[
	\bigl\lVert B(e^z) \bigr\rVert_{\Phi(-1)}^2
	\leq C^2 e^{-a \abs{\Re z}} 
	\max \Bigl( \abs{\Re z}^b, \abs{\Re z}^{-b} \Bigr)
	\norm{\vartheta}_{\Phi(z)}^2;
\]
the restriction on $\abs{\Im z}$ becomes irrelevant here, because $B(e^z)$ is
of course invariant under the substitution $z \mapsto z + 2 \pi i$.  Now $\abs{t} =
e^{-\abs{\Re z}}$ and $-\log \abs{t} = \abs{\Re z}$, and so we can rewrite
this inequality in the form
\[
	\min \Bigl( 1, (\log \abs{t})^{2b} \Bigr) \cdot \norm{B(t)}_{\Phi(-1)}^2 
	\leq C^2 h(\vartheta, \vartheta) \abs{t}^a (-\log \abs{t})^b.
\]
The $L^2$-estimate for $\vartheta$ in \Cref{prop:lifting} now gives us
\begin{equation} \label{eq:B-integral-bound}
	\int_{\dst} \min \Bigl( 1, (\log \abs{t})^{2b} \Bigr) 
		\cdot \norm{B(t)}_{\Phi(-1)}^2 \dmu \leq C',
\end{equation}
with a constant $C' > 0$ whose value has the same dependence on parameters as in
\Cref{lem:conjugation}. Since $B \colon \dst
\to \End(V)$ is holomorphic, this is enough to conclude that $B$
extends holomorphically across the origin.

\newpar
As an aside, we note that the bound on the integral also gives us some control over
the pointwise norm of $B(t)$. To simplify the calculations, we use the inequality
\[
	\min \Bigl( 1, (\log \abs{t})^{2b} \Bigr) \geq (1-\abs{t})^{2b},
\]
which follows from the fact that $\abs{\log x} \geq 1-x$ for $0 < x \leq
1$. The following result is a simple modification of the formula
for the Bergman kernel on the unit disk.

\begin{plem} \label{lem:Bergman}
	For any holomorphic function $f \colon \Delta \to \CC$, one has 
	\[
		\abs{f(t)}^2 \leq \frac{2^b (b+1)}{(1-\abs{t}^2)^{b+2}} \cdot 
		\frac{1}{\pi} \int_{\Delta} (1 - \abs{t})^b \abs{f}^2 \dmu.
	\]
\end{plem}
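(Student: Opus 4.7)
The plan is to first prove the standard weighted Bergman estimate
\[
    \abs{f(t)}^2 \leq \frac{b+1}{\pi(1-\abs{t}^2)^{b+2}} \int_\Delta (1-\abs{s}^2)^b \abs{f(s)}^2 \dmu(s),
\]
and then reduce the lemma to it via the elementary pointwise bound $(1-\abs{s}^2)^b = (1-\abs{s})^b(1+\abs{s})^b \leq 2^b (1-\abs{s})^b$. This passage of weights is exactly the ``simple modification'' of the Bergman kernel formula that the authors allude to; the real content sits in the Bergman estimate itself.

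To prove that estimate, I would expand $f(s) = \sum_{n \geq 0} a_n s^n$ on $\Delta$ and exploit the fact that the monomials $\{s^n\}$ are orthogonal under the rotationally invariant measure $(1-\abs{s}^2)^b \dmu$. Polar coordinates together with the substitution $u = r^2$ give $\int_\Delta \abs{s}^{2n}(1-\abs{s}^2)^b \dmu = \pi \beta_n$, where $\beta_n = B(n+1,b+1) = n!\,b!/(n+b+1)!$. Applying Cauchy-Schwarz with weights $\beta_n$ to the Taylor expansion $f(t) = \sum a_n t^n$ yields
\[
    \abs{f(t)}^2 \leq \Bigl(\sum_{n \geq 0} \beta_n \abs{a_n}^2\Bigr) \cdot \Bigl(\sum_{n \geq 0} \frac{\abs{t}^{2n}}{\beta_n}\Bigr).
\]

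The first factor equals $\pi^{-1} \int_\Delta (1-\abs{s}^2)^b \abs{f}^2 \dmu$ by the orthogonality computation. For the second, I would write $1/\beta_n = (b+1)\binom{n+b+1}{b+1}$ and invoke the generalized binomial series $\sum_{n \geq 0} \binom{n+b+1}{b+1} x^n = (1-x)^{-(b+2)}$ at $x = \abs{t}^2 < 1$, obtaining exactly $(b+1)/(1-\abs{t}^2)^{b+2}$. Combining these ingredients produces the Bergman estimate and hence the lemma. No serious obstacle is expected here: the argument is essentially the construction of the reproducing kernel of the weighted Bergman space via the orthonormal basis $\{s^n/\sqrt{\pi \beta_n}\}$, stripped down to Cauchy-Schwarz, a Beta integral, and a binomial generating series. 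If the right-hand integral diverges the lemma is vacuous, so one may freely assume $f$ belongs to the weighted $L^2$-space.
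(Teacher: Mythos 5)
Your proof is correct and is essentially the paper's own argument: both expand $f$ in a power series, apply Cauchy--Schwarz against the monomial moments computed as Beta integrals, and sum a binomial-type series to produce the factor $(b+1)/(1-\abs{t}^2)^{b+2}$. The only real difference is where the $2^b$ is incurred: you lose it up front via the pointwise comparison $(1-\abs{s}^2)^b \leq 2^b (1-\abs{s})^b$ and then invoke the exact weighted Bergman estimate for the weight $(1-\abs{s}^2)^b$, whereas the paper works directly with the weight $(1-\abs{t})^b$ and picks up the $2^b$ when it bounds the dual series $\sum_n \abs{t}^{2n} \, (2n+2+b)!/\bigl(2(2n+1)!\,b!\bigr)$ by replacing odd factors with even ones.
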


\begin{proof}
	Consider the power series expansion 
	\[
		f = \sum_{n=0}^{\infty} a_n t^n.
	\]
	An easy computation shows that
	\begin{align*}
		\frac{1}{\pi} \int_{\Delta} (1 - \abs{t})^b \abs{f}^2 \dmu 
		&= \sum_{n=0}^{\infty} \abs{a_n}^2 
		\frac{1}{\pi} \int_{\Delta} (1 - \abs{t})^b \abs{t}^{2n} \dmu \\
		&= \sum_{n=0}^{\infty} \abs{a_n}^2 \int_0^1 2 (1 - r)^b r^{2n+1} \mathit{dr}
		= \sum_{n=0}^{\infty} \abs{a_n}^2 \frac{2 (2n+1)! b!}{(2n+2+b)!}.
	\end{align*}
	We can now apply the Cauchy-Schwarz inequality to get
	\[
		\abs{f(t)}^2 \leq 
		\sum_{n=0}^{\infty} \abs{t}^{2n} \frac{(2n+2+b)!}{2 (2n+1)! b!} \cdot
		\sum_{n=0}^{\infty} \abs{a_n}^2 \frac{2 (2n+1)! b!}{(2n+2+b)!}.
	\]
	To estimate the first series, we set $x = \abs{t}^2$ and compute as follows:
	\begin{align*}
		\sum_{n=0}^{\infty} \abs{t}^{2n} \frac{(2n+2+b)!}{2 (2n+1)! b!} &= 
		\frac{1}{2 b!} \sum_{n=0}^{\infty} (2n+2)(2n+3) \dotsm (2n+2+b) x^n \\
		&\leq \frac{1}{2b!} \sum_{n=0}^{\infty} (2n+2)(2n+4) \dotsm (2n+2+2b) x^n \\
		&= \frac{2^b}{b!} \sum_{n=0}^{\infty} (n+1) \dotsm (n+1+b) x^n \\
		&= \frac{2^b}{b!} \left( \frac{d}{dx} \right)^{b+1} \sum_{n=0}^{\infty} x^n
		= \frac{2^b (b+1)}{(1-x)^{b+2}} 
		= \frac{2^b (b+1)}{(1-\abs{t}^2)^{b+2}}.
	\end{align*}
	The desired result follows.
\end{proof}

\newpar
If we apply this result to the integral bound for $B(t)$ in
\eqref{eq:B-integral-bound}, for example by choosing an orthonormal basis of
$\End(V)$ relative to the inner product coming from the Hodge structure $\Phi(-1)$,
and then writing $B(t)$ as a matrix, we get
\begin{equation} \label{eq:B-bound}
	\norm{B(t)}_{\Phi(-1)} \leq \frac{C}{(1 - \abs{t}^2)^{2b+2}},
\end{equation}
where $b = 2r \binom{r+1}{3} + 1$, and where $C > 0$ is a constant that again depends
only on $r = \rk E$ and on the minimal polynomial of $T \in \GL(V)$. 
The interesting thing is that $B(t)$ still behaves rather
nicely near the outer boundary of the unit disk, even though all we have is a bound
on its $L^2$-norm.

\subsection{Holomorphic extension of the untwisted period mapping}

\newpar
We return to our analysis of the untwisted period mapping 
\[
	\Psi_S(e^z) = e^{-z(S+N)} \Phi(z). 
\]
Recall that the differential of the mapping $z \mapsto \Psi_S(e^z)$ is given by
\[
	e^{-z(S+N)} \theta_{\partial/\partial z} e^{z(S+N)} - (S+N)
	\equiv B(e^z) - (S+N) \mod F^0 \End(V)_{\Psi_S(e^z)}
\]
Here we are using the fact that $B(e^z) - e^{-z(S+N)} \theta_{\partial/\partial z}
e^{z(S+N)}$ lies in $F^0 \End(V)_{\Psi_S(e^z)}$. The point is that
the operator on the right-hand side is holomorphic. 

\newpar
Now suppose that we have a holomorphic mapping
\[
	g \colon \HH \to \GL(V)
\]
that solves the ordinary differential equation
\begin{equation} \label{eq:ODE}
	g'(z) = \Bigl( B(e^z) - (S+N) \Bigr) g(z),
\end{equation}
subject to the initial condition $g(-1) = \id$.  A short calculation with derivatives
shows that the mapping $g(z)^{-1} \Psi_S(e^z)$ is then constant, which means that
\[
	\Psi_S(e^z) = g(z) \cdot \Psi_S(e^{-1}) = g(z) \cdot e^{(S+N)} \Phi(-1).
\]
The key to the remainder of the proof is that the differential equation in
\eqref{eq:ODE} has a \define{regular singular point} at $t=0$. 

\newpar
Let us first prove that \eqref{eq:ODE} has a solution with the desired
properties. For every $v \in V$, the differential equation
\[
	f'(z) = \Bigl( B(e^z) - (S+N) \Bigr) f(z)
\]
has a unique solution $f \colon \HH \to V$ that is holomorphic and satisfies the
initial condition $f(-1) = v$; this is Cauchy's theorem. Let $v_1, \dotsc, v_r \in V$
be a basis, and let $f_1, \dotsc, f_r \colon \HH \to V$ be the corresponding
solutions. By uniqueness, the values $f_1(z), \dotsc, f_r(z) \in V$ must be linearly
independent for every $z \in \HH$. We can therefore define a holomorphic mapping
\[
	g \colon \HH \to \GL(V)
\]
by requiring that $g(z) v_i = f_i(z)$ for $i = 1, \dotsc, r$. It follows that $g(-1)
= \id$, and so $g$ is the desired solution to \eqref{eq:ODE}. 

\newpar
Clearly, $g(z+2 \pi i) g(-1+2 \pi i)^{-1}$ is also a solution, and so by uniqueness, 
\[
	g(z + 2 \pi i) = g(z) g(-1 + 2 \pi i).
\]
Let $A \in \End(V)$ be the unique operator such that $g(-1 + 2 \pi i) = e^{2 \pi i
A}$ and such that all eigenvalues of $A$ have their real part contained in the
interval $[0,1)$. Then $g(z) e^{-zA}$ is invariant under $z \mapsto z + 2 \pi i$, and
so
\[
	g(z) = M(e^z) e^{zA}
\]
for a unique holomorphic mapping $M \colon \dst \to \GL(V)$. Since \eqref{eq:ODE}
has a regular singular point at $t = 0$, the basic theory of Fuchsian differential
equations implies that $M(t)$ is meromorphic at $t = 0$. 

\newpar
Let us briefly recall how this is proved. We use the shorthand $\norm{v} =
\norm{v}_{\Phi(-1)}$ for the Hodge norm in the Hodge structure $\Phi(-1)$. Since
$B \colon \Delta \to \End(V)$ is holomorphic, there is a constant $C > 0$
such that
\[
	\bignorm{B(e^z)-(S+N)} \leq C \quad \text{for $\Re z \leq -1$.}
\]
Let $z \in \HH$ be any point with $\Re z \leq -1$. Consider the auxiliary function
\[
	\varphi \colon [0,1] \to \RR, \quad
	\varphi(x) = \bignorm{g \bigl( -1+x(z+1) \bigr)}^2.
\]
The derivative of $\varphi$ is given by the formula
\[
	\varphi'(x) = 2 \Re \Bigl\langle (z+1) g' \bigl( -1 + x(z+1) \bigr), \,
		g \bigl( -1 + x(z+1) \bigr) \Bigr\rangle,
\]
and so the differential equation in \eqref{eq:ODE}, together with the
Cauchy-Schwarz inequality and the bound on $B(e^z) - (S+N)$, leads to
the simple inequality
\[
	\abs{\varphi'(x)} \leq 2 C \abs{z+1} \cdot \varphi(x).
\]
This is easily integrated, with the result that 
\[
	\norm{g(z)}^2 \leq \norm{g(-1)}^2 e^{2C \abs{z+1}} \leq r e^{2C \abs{z+1}},
\]
remembering that $g(-1) = \id$ and $r = \dim V$. Since the $L^2$-norm is
submultiplicative, we then get
\[
	\norm{M(e^z)} \leq \norm{g(z)} \cdot \norm{e^{-zA}} 
	\leq \sqrt{r} e^{C \abs{z+1}} \cdot e^{\abs{z} \norm{A}}
	\leq C' e^{d \abs{\Re z}}
\]
for some $C' > 0$ and some $d \in \NN$; note that we may assume that $\abs{\Im z}
\leq \pi$. But this says exactly that $M(t)$ has a pole of order at most $d$ at $t =
0$.

\newpar
We can now finish the proof of the first half of the nilpotent orbit theorem, still
under the assumption that the eigenvalues of $S$ lie in a fixed half-open interval
$I$ of length $1$. By construction, we have
\[
	\Psi_S(e^z) = g(z) \cdot e^{S+N} \Phi(-1) = M(e^z) \cdot e^{zA} \cdot e^{S+N} \Phi(-1)
\]
for every $z \in \HH$. The left-hand side is invariant under the substitution $z
\mapsto z + 2 \pi i$, and so the operator $e^{2 \pi i A}
\in \GL(V)$ has to leave the point $e^{S+N} \Phi(-1) \in \Dch$ fixed. It follows that
$A$ itself must leave the point fixed, and so we can erase the factor $e^{zA}$ from
the identity above. The result is that 
\[
	\Psi_S(t) = M(t) \cdot e^{S+N} \Phi(-1)
\]
for $t \in \dst$. Because $M(t)$ is meromorphic at $t = 0$, and $\Dch$ is projective,
this is enough to conclude that $\Psi_S$ extends holomorphically across the origin.

\newpar
Removing the restriction on the eigenvalues of $S$ is now an easy matter. Suppose
that $S \in \End(V)$ is \emph{any} operator such that $T_s = e^{2 \pi i S}$; of
course, $S$ is then automatically semisimple with real eigenvalues. Let $S(I) \in
\End(V)$ be the choice we were using above, with eigenvalues contained in the given
interval $I \subseteq \RR$. Then $S(I) - S$ has integer eigenvalues, and so 
\[
	\Psi_S(t) = e^{z(S(I) - S)} \Psi_{S(I)}(t) 
	= t^{S(I) - S} \Psi_{S(I)}(t)
\]
is obtained from the holomorphic maping $\Psi_{S(I)}(t)$ by acting by a semisimple
operator whose eigenvalues are powers of $t$. It follows that $\Psi_S(t)$ is also
meromorphic, and therefore extends holomorphically to $\Delta$ for the same reason as
before.

\newpar
In particular, we now have a well-defined limit $\Psi_S(0) \in \Dch$. Let us write
$F^{\bullet} \End(V)_{\Psi_S(0)}$ for the induced filtration on $\End(V)$. It remains
to show that
\[
	S + N \in F^{-1} \End(V)_{\Psi_S(0)}.
\]
Recall that the derivative of the holomorphic mapping $\Psi_S(e^z)$ is equal to
\[
	B(e^z) - (S+N) \mod F^0 \End(V)_{\Psi_S(e^z)},
\]
and that the operator $B(e^z)$ is holomorphic and belongs to $F^{-1}
\End(V)_{\Psi_S(e^z)}$. Since $\Psi_S \colon \Delta \to \Dch$ is holomorphic, the
chain rule shows that the derivative of $\Psi_S(e^z)$ goes to zero like $\abs{t} =
e^{-\abs{\Re z}}$. Passing to the limit, we get
\[
	B(0) - (S+N) \in F^0 \End(V)_{\Psi_S(0)},
\]
and therefore $S+N \in F^{-1} \End(V)_{\Psi_S(0)}$, as claimed. This completes the
proof of \Cref{thm:nilpotent-convergence-intro}, which contained the convergence
assertions in the nilpotent orbit theorem.

\newpar \label{par:canonical-extension}
The nilpotent orbit theorem has the following more geometric interpretation, in terms
of certain bundles on the unit disk. Recall that the operator $S \in \End(V)$ in
$T = e^{2 \pi i(S+N)}$ depended on a choice of half-open interval $I$ of length $1$.
Let $\shE_S$ denote the canonical extension \cite[Prop.~5.2]{Deligne-eq} of the
holomorphic vector bundle $\shE$, such that the connection extends to a logarithmic
connection $\nabla \colon \shE_S \to \Omega_{\Delta}^1(\log 0) \tensor \shE_S$ whose
residue $\Res_0 \nabla$ has eigenvalues in the same interval $I$. The bundle $\shE_S$ has a
unique trivialization
\[
	V \tensor_{\CC} \shO_{\Delta} \cong \shE_S
\]
such that the logarithmic connection becomes $\nabla(v \tensor 1) = (S+N)v \tensor
\dt$; therefore $S+N$ is exactly the Jordan decomposition of the residue $\Res_0
\nabla$. Under this trivialization, the Hodge bundle $F^p \shE$ goes to the subbundle
\[
	\menge{\bigl( t, \Psi_S^p(t) \bigr)}{t \in \dst} \subseteq \dst \times V
\]
of the trivial bundle; \Cref{thm:nilpotent-convergence-intro} is
therefore saying exactly that the Hodge bundles extend to holomorphic subbundles $F^p
\shE_S$ of the canonical extension. 

\newpar
Now if we introduce the quotient bundles
\[
	\shE_S^{p,q} = F^p \shE_S / F^{p+1} \shE_S,
\]
then the operator $B(t)$ that we constructed above gives a holomorphic extension of
the Higgs field to a morphism of holomorphic vector bundles
\[
	\shE_S^{p,q} \to \Omega_{\Delta}^1(\log 0) \tensor \shE_S^{p-1,q+1}
\]
whose residue at $t = 0$ is exactly the operator $S+N \in \End(V)$. This gives us a
sort of canonical extension of the Higgs bundle $\bigoplus \shE^{p,q}$ to a
logarithmic Higgs bundle $\bigoplus \shE_S^{p,q}$, on which the residue $\Res_0
\theta_{\partial/\partial t}$ of the Higgs field has eigenvalues in the interval $I$.
The fiber at $t=0$ is isomorphic to the graded vector space
\[
	\bigoplus_{p \in \ZZ} \Psi_S^p(0) / \Psi_S^{p+1}(0),
\]
and we have $\Res_0 \theta_{\partial/\partial t} = S+N$.

\subsection{The limiting Hodge filtration}

\newpar
From the untwisted period mapping, we get a sort of ``limit'' filtration $\Psi_S(0)
\in \Dch$, but it depends on the choice of $S \in \End(V)$, and therefore has
little intrinsic meaning. In this section, we introduce an additional filtration
$\Flim \in \Dch$, called the ``limiting Hodge filtration'', that only depends on the
period mapping itself. The limiting Hodge filtration is important in two ways: it
allows one to approximate the original period mapping by a nilpotent orbit; and it
shows up in the limiting mixed Hodge structure.

\newpar
For technical reasons, we again have to restrict to the case where the eigenvalues of
$S \in \End(V)$ are contained in a half-open interval of length $1$. 

\begin{pprop} \label{prop:Flim}
	As long as $\abs{\Im z}$ remains bounded, the limit
	\[
		\Flim = \lim_{\abs{\Re z} \to \infty} e^{-zN} \Phi(z)
		= \lim_{x \to \infty} e^{-xS} \Psi_S(0) \in \Dch
	\]
	exists and is called the \define{limiting Hodge filtration}. It satisfies
	\[
		N(\Flim^{\bullet}) \subseteq \Flim^{\bullet-1} \quad \text{and} \quad
		T_s(\Flim^{\bullet}) \subseteq \Flim^{\bullet}.
	\]
\end{pprop}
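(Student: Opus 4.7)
The plan is to reduce the proposition to the nilpotent orbit theorem (\Cref{thm:nilpotent-convergence-intro}) together with the behavior of a one-parameter subgroup acting on $\Dch$. Using $\Psi_S(e^z) = e^{-z(S+N)}\Phi(z)$ and $[S,N]=0$, one obtains the key identity
\[
e^{-zN}\Phi(z) = e^{zS}\Psi_S(e^z).
\]
Writing $z = -r + iy$ with $r = \abs{\Re z} > 0$ and $\abs{\Im z} \leq y_0$, this becomes $e^{-rS}e^{iyS}\Psi_S(e^z)$, and by the nilpotent orbit theorem the factor $\Psi_S(e^z)$ converges holomorphically to $\Psi_S(0)$ as $r \to \infty$, uniformly in $y$.

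I would next establish that $\lim_{r\to\infty} e^{-rS} F$ exists in $\Dch$ for any $F$. This is a standard Bialynicki--Birula fact: $\Dch$ embeds as a closed subvariety of a product of Grassmannians, and in Pl\"ucker coordinates computed in an $S$-eigenbasis the one-parameter subgroup $e^{-rS}$ acts on each coordinate as a scalar $e^{-r\lambda}$, so the coordinate with the smallest $\lambda$ dominates in the limit. The resulting limit $\Flim$ lies in $\Dch$ (closedness) and is automatically a fixed point of $S$, so $S(\Flim^\bullet) \subseteq \Flim^\bullet$; equivalently, $\Flim$ is compatible with the eigenspace decomposition of $T_s$.

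The main step is to identify $\lim_{\abs{\Re z} \to \infty} e^{-zN}\Phi(z)$ with $\Flim = \lim_{r \to \infty} e^{-rS}\Psi_S(0)$. I would choose a holomorphic lift $\Psi_S(e^z) = e^{A(e^z)}\Psi_S(0)$ via the exponential map at $\Psi_S(0) \in \Dch$, with $A \colon \Delta \to \End(V)$ holomorphic and $A(0) = 0$, so that
\[
e^{zS}\Psi_S(e^z) = e^{\Ad(e^{zS})A(e^z)} \cdot e^{zS}\Psi_S(0).
\]
Decomposing the Taylor coefficients $a_n$ of $A(t) = \sum_{n \geq 1} a_n t^n$ according to the spectral projections $P_\alpha \colon V \to E_\alpha(S)$ gives
\[
\Ad(e^{zS})A(e^z) = \sum_{n \geq 1} \sum_{\alpha,\beta} e^{(n+\alpha-\beta)z}\, P_\alpha a_n P_\beta.
\]
This is the technical heart: because the eigenvalues of $S$ lie in a half-open interval of length $1$, one has $\beta - \alpha < 1$ strictly, so $n + \alpha - \beta \geq \delta$ for some fixed $\delta > 0$ and all $n \geq 1$. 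With $z = -r + iy$, this yields $\Ad(e^{zS})A(e^z) = O(e^{-\delta r})$ uniformly in $\abs{y} \leq y_0$. Since $e^{iyS}$ commutes with $e^{-rS}$ and fixes $\Flim$ (as $\Flim$ is $S$-stable), combining the estimate with the convergence $e^{-rS}\Psi_S(0) \to \Flim$ proves $e^{-zN}\Phi(z) \to \Flim$ on every horizontal strip.

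The two horizontality properties of $\Flim$ then follow. Since $\Flim$ is $S$-stable and $T_s = e^{2\pi i S}$, we immediately get $T_s(\Flim^\bullet) \subseteq \Flim^\bullet$. For the $N$-horizontality, the nilpotent orbit theorem gives $(S+N)\Psi_S^\bullet(0) \subseteq \Psi_S^{\bullet-1}(0)$; since $[S,N] = 0$, the operator $S+N$ commutes with $e^{-rS}$, so $(S+N)e^{-rS}\Psi_S^\bullet(0) \subseteq e^{-rS}\Psi_S^{\bullet-1}(0)$, and passing to the limit in $\Dch$ yields $(S+N)\Flim^\bullet \subseteq \Flim^{\bullet-1}$. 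Subtracting the inclusion $S(\Flim^\bullet) \subseteq \Flim^\bullet \subseteq \Flim^{\bullet-1}$ gives $N(\Flim^\bullet) \subseteq \Flim^{\bullet-1}$. The only genuinely nontrivial step is the third one: the half-open interval hypothesis on the spectrum of $S$ is exactly what produces the positive gap $\delta > 0$, without which the identification of the two limits would fail.
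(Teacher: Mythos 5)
Your proof is correct, and strategically it is the same as the paper's: rewrite $e^{-zN}\Phi(z) = e^{zS}\Psi_S(e^z)$, invoke \Cref{thm:nilpotent-convergence-intro}, exploit the spectral gap $1 - (\alphamax - \alphamin) > 0$ forced by the half-open interval of length one, and obtain the $T_s$- and $N$-compatibilities by passing containments to the limit, exactly as the paper does. The differences lie in how the two technical ingredients are implemented. For the existence of $\lim_{x\to\infty} e^{-xS}F$ you use Pl\"ucker coordinates and closedness of $\Dch$; the paper instead proves \Cref{lem:exponential} by an explicit analysis of subspaces, which buys more than existence — a concrete description of the limit (for $\Flim$, the projection of $\Psi_S(0)$ to the subquotients of the filtration by decreasing eigenvalues of $S$) and an effective rate $e^{-\delta x}$, both of which get reused later (see \parref{par:lem-exponential-used}). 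For identifying the two limits, the paper argues metrically, combining $d_{\Dch}\bigl(\Psi_S(t), \Psi_S(0)\bigr) \le C\abs{t}$ with the operator-norm bound on $\Ad e^{zS}$ from \Cref{lem:Ad-S} via \eqref{eq:Dch-translation} to get the quantitative estimate \eqref{eq:distance-Flim-2}; your route via a holomorphic lift $\Psi_S(e^z) = e^{A(e^z)}\Psi_S(0)$ and the spectral decomposition of $\Ad(e^{zS})A(e^z)$ exploits exactly the same gap, and in fact also yields an exponential rate if you track constants (Cauchy estimates on the coefficients $a_n$ are what make the summed bound legitimate for $\abs{\Re z}$ large). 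Two minor points of hygiene: the lift $A$ exists, and is unique, only after fixing a complement of $F^0 \End(V)_{\Psi_S(0)}$ inside $\End(V)$ (the paper later uses the subalgebra $\qlie$ for precisely this purpose), and only on a small disk $\Delta_{\eps}$ around the origin rather than on all of $\Delta$; neither affects your argument, since only the behavior near $t = 0$ matters.
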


\newpar
From the formula for the untwisted period mapping, it is clear that
\[
	e^{-zN} \Phi(z) = e^{zS} \Psi_S(e^z).
\]
Since $\Psi_S \colon \Delta \to \Dch$ is holomorphic, there is a constant $C > 0$
such that 
\begin{equation} \label{eq:distance-Flim-1}
	\dDch \bigl( \Psi_S(t), \Psi_S(0) \bigr) \leq C \abs{t}
\end{equation}
for small values of $\abs{t}$. According to \eqref{eq:Dch-translation}, translation
by the operator $e^{zS} \in \GL(V)$ distorts distances in the compact dual $\Dch$ by
at most the operator norm of $\Ad e^{zS}$. We can estimate this using the following
variant of \Cref{lem:conjugation}. 

\begin{plem} \label{lem:Ad-S}
	Let $S \in \End(V)$ be a semisimple with real eigenvalues. Then
	\[
		\norm{\Ad e^{zS}} \leq C e^{(\alphamax - \alphamin) \abs{\Re z}},
	\]
	where $\alphamin$ and $\alphamax$ are the smallest and largest eigenvalue of $S$.
\end{plem}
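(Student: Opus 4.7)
The plan is to mimic the argument used for the $S$ part in Lemma \ref{lem:conjugation} (see the display \eqref{eq:AdS-operator-norm}), which already does essentially all the work; only now we need to quote it in the cleaner form of the operator norm of $\Ad e^{zS}$ on $\End(V)$ with respect to a fixed inner product, rather than comparing Hodge norms at different points.

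First I would use the fact that $S$ is semisimple with real eigenvalues to write the spectral decomposition
\[
S = \sum_{\alpha} \alpha\, P_{\alpha},
\]
where the sum is over the distinct (real) eigenvalues of $S$ and $P_{\alpha}$ is the projection to $E_{\alpha}(S)$; this decomposition is orthogonal only with respect to $Q$, not with respect to our chosen inner product, so the norms $\norm{P_{\alpha}}$ enter the final constant. Next, for any $A \in \End(V)$, I would expand
\[
(\Ad e^{zS})(A) = e^{zS} A e^{-zS} = \sum_{\alpha, \beta} e^{z(\alpha-\beta)} P_{\alpha} A P_{\beta},
\]
and observe that $\bigl\lvert e^{z(\alpha-\beta)} \bigr\rvert = e^{(\alpha-\beta)\Re z}$, which is bounded by $e^{(\alphamax-\alphamin)\abs{\Re z}}$ irrespective of the sign of $\Re z$, since $\abs{\alpha-\beta} \leq \alphamax-\alphamin$.

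Then the triangle inequality and submultiplicativity of the operator norm give
\[
\norm{(\Ad e^{zS})(A)} \leq e^{(\alphamax-\alphamin)\abs{\Re z}} \biggl( \sum_{\alpha} \norm{P_{\alpha}} \biggr)^{\!2} \norm{A},
\]
so the desired inequality holds with $C = \bigl( \sum_{\alpha} \norm{P_{\alpha}} \bigr)^{2}$. There is no real obstacle here: the argument is a direct transcription of the dominant-term calculation in the proof of Lemma \ref{lem:conjugation}, with the only subtlety being that we must track both signs of $\Re z$ (whereas the earlier lemma only needed $\Re z < 0$), which is handled by the inequality $\abs{\alpha-\beta} \leq \alphamax-\alphamin$.
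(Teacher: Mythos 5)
Your argument is correct and is essentially identical to the paper's proof: both expand $e^{zS} A e^{-zS} = \sum_{\alpha,\beta} e^{z(\alpha-\beta)} P_{\alpha} A P_{\beta}$ via the spectral decomposition of $S$ and bound it by $e^{(\alphamax-\alphamin)\abs{\Re z}} \bigl( \sum_{\alpha} \norm{P_{\alpha}} \bigr)^2 \norm{A}$ using the triangle inequality and submultiplicativity. The only difference is that you track both signs of $\Re z$ while the paper implicitly works with $\Re z < 0$, which is a harmless refinement.
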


\begin{proof}
Let $A \in \End(V)$ be an arbitrary endomorphism. We have
\[
	e^{zS} A e^{-zS} 
	= \sum_{\alpha, \beta} e^{(\beta - \alpha) \abs{\Re z}} e^{i \Im z(\alpha -
	\beta)} P_{\alpha} A P_{\beta},
\]
where $P_{\alpha} \colon V \to E_{\alpha}(S)$ are the projections to the eigenspaces
of $S$. Since the $L^2$-norm is submultiplicative, we get
\[
	\norm{e^{zS} A e^{-zS}} \leq e^{(\alphamax - \alphamin) \abs{\Re z}} 
	\biggl( \sum_{\alpha} \norm{P_{\alpha}} \biggr)^2 \norm{A},
\]
and therefore the desired upper bound on the operator norm of $\Ad e^{zS}$. 
\end{proof}

\newpar
Applying this lemma to \eqref{eq:distance-Flim-1}, we find that
\begin{equation} \label{eq:distance-Flim-2}
	\dDch \bigl( e^{-zN} \Phi(z), e^{zS} \Psi_S(0) \bigr) 
	\leq C' e^{-(1 - \rho) \abs{\Re z}}
\end{equation}
for some constant $C' > 0$, where $\rho = \alphamax - \alphamin < 1$ is the
difference between the largest and smallest eigenvalue of $S$. It remains to
understand the effect of the exponential factor $e^{-\abs{\Re z} S}$ on the
filtration $\Psi_S(0)$. 

\newpar
The following lemma does this in slightly greater generality; we will use it again
(in \Cref{par:lem-exponential-used}) when we prove the convergence of the rescaled
period mapping.

\begin{plem} \label{lem:exponential}
	Let $S \in \End(V)$ be a semisimple operator with real eigenvalues. For any
	filtration $F \in \Dch$, the limit
	\[
		\hat{F} = \lim_{x \to \infty} e^{xS} F
	\]
	exists in $\Dch$, and the filtration $\hat{F}$ is compatible with the eigenspace
	decomposition $V = \bigoplus_{\alpha} E_{\alpha}(S)$. Moreover, there is a constant $C
	\geq 0$ such that
	\[
		\dDch \bigl( \hat{F}, e^{xS} F \bigr) \leq C e^{-\delta x},
	\]
	where $\delta > 0$ is the smallest distance between consecutive eigenvalues of $S$.
\end{plem}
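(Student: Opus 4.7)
The plan is to construct the limit $\hat{F}$ explicitly from the eigenspace decomposition of $S$, and then verify convergence by a direct calculation on each step of the filtration. Order the distinct eigenvalues $\alpha_1 < \alpha_2 < \dotsb < \alpha_k$ of $S$, decompose $V = \bigoplus_{i=1}^k V_i$ with $V_i = E_{\alpha_i}(S)$, and introduce the increasing filtration $V_{\leq i} = V_1 \oplus \dotsb \oplus V_i$. For each $p$, the subspaces $F^p \cap V_{\leq i}$ define an increasing filtration on $F^p$ whose successive quotients $(F^p \cap V_{\leq i})/(F^p \cap V_{\leq i-1})$ inject into $V_{\leq i}/V_{\leq i-1} \cong V_i$ via the natural projection; let $\tilde{F}^p_i \subseteq V_i$ denote the image, and set
\[
	\hat{F}^p = \bigoplus_{i=1}^k \tilde{F}^p_i.
\]
A dimension count gives $\dim \hat{F}^p = \dim F^p$, and the inclusions $F^p \subseteq F^{p-1}$ pass through the tilde construction, so $\hat{F}$ is a decreasing filtration compatible with the decomposition and with the same Hodge numbers as $F$; in particular $\hat{F} \in \Dch$.

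The next step is the quantitative convergence. Fix $p$, and choose a basis $\{v_{i,j}\}$ of $F^p$ adapted to the filtration $F^p \cap V_{\leq \bullet}$: each $v_{i,j}$ lies in $F^p \cap V_{\leq i}$ and its image in $V_i$, as $j$ varies, gives a basis of $\tilde{F}^p_i$. Writing $v_{i,j} = w_{i,j} + r_{i,j}$ with $w_{i,j} \in V_i$ and $r_{i,j} = \sum_{\ell < i} (r_{i,j})_\ell \in V_{\leq i-1}$, a direct calculation gives
\[
	e^{-\alpha_i x} e^{xS} v_{i,j} = w_{i,j} + \sum_{\ell < i} e^{(\alpha_\ell - \alpha_i)x} (r_{i,j})_\ell.
\]
Because $\alpha_\ell - \alpha_i \leq -\delta$ for $\ell < i$, the error term is bounded in any fixed norm on $V$ by a constant multiple of $e^{-\delta x}$. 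The rescaled vectors $e^{-\alpha_i x} e^{xS} v_{i,j}$ span $e^{xS} F^p$ and converge to the basis $\{w_{i,j}\}$ of $\hat{F}^p$, so standard estimates on the Grassmannian (for instance, by choosing a linear complement to $\hat{F}^p$ and comparing $e^{xS} F^p$ with $\hat{F}^p$ as graphs) yield convergence of $e^{xS} F^p$ to $\hat{F}^p$ in the ambient Grassmannian at rate $O(e^{-\delta x})$, with an implied constant uniform in $p$ since the flag has only finitely many steps.

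Finally, I would translate Euclidean convergence in the ambient product of Grassmannians into convergence in $\dDch$. By \Cref{lem:local-coordinates-Dch}, once $e^{xS} F$ lies in the distinguished neighborhood of $\hat{F}$ the hermitian distance function on $\Dch$ is comparable to the Euclidean distance coming from any holomorphic chart, so the preceding estimate gives $\dDch(\hat{F}, e^{xS} F) \leq C_1 e^{-\delta x}$ for $x$ sufficiently large. Since $\Dch$ is compact and $\dDch$ is therefore bounded, we may enlarge $C_1$ to a single constant $C$ for which the inequality holds for every $x \geq 0$.

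The main technical point is the Grassmannian rate estimate, which the adapted basis reduces to the one-line calculation above; the real obstacle is just bookkeeping with the multi-indices $(p,i,j)$. Everything else—existence of the limit, compatibility with the eigenspace decomposition, and the passage from Euclidean to hermitian distance—is formal once the explicit description of $\hat{F}$ is in hand.
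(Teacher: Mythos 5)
Your argument is correct and is essentially the paper's own proof in different clothing: choosing a basis of $F^p$ adapted to the filtration $F^p \cap V_{\leq \bullet}$ is the same as the paper's decomposition of each subspace into pieces adapted to the filtration by increasing eigenvalues of $S$, and in both cases the limit is obtained by extracting the leading eigencomponents, with error $O(e^{-\delta x})$ and the same description of $\hat{F}$ via projection to the subquotients. The final passage from the Grassmannian estimate to $\dDch$ (local comparability of distances near $\hat{F}$ plus compactness of $\Dch$ for small $x$) is the same routine step the paper leaves implicit.
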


\begin{proof}
	Since a filtration is just a collection of subspaces, it suffices to prove that
	for any subspace $W \subseteq V$ of dimension $d$, the limit 
	\[
		\hat{W} = \lim_{x \to \infty} e^{xS} W 
	\]
	exists (in the Grassmannian of $d$-dimensional subspaces of $V$), and satisfies
	\[
		\hat{W} = \bigoplus_{\alpha \in \RR} E_{\alpha}(S) \cap \hat{W}.
	\]
	To make it clear what is going on, let us first do the case where $W = \CC v$ is
	one-dimensional. Write $v = \sum_{\alpha} v_{\alpha}$, where $S v_{\alpha} =
	\alpha v_{\alpha}$. Then
	\[
		e^{xS} v = \sum_{\alpha} e^{\alpha x} v_{\alpha}.
	\]
	Let $\beta \in \RR$ be the largest number such that $v_{\beta} \neq 0$. From
	\[
		e^{xS} (\CC v) = \CC \biggl( v_{\beta} + \sum_{\alpha < \beta} 
			e^{-(\beta - \alpha) x} v_{\alpha} \biggr),
	\]
	we see that $\lim_{x \to \infty} e^{xS}(\CC v)$ exists and equals $\CC v_{\beta}$.
	So the effect of the limit is to extract the component of $v$ for
	the largest possible eigenvalue. Moreover, the rate of convergence is evidently
	$e^{-(\beta - \beta') x}$, where $\beta' < \beta$ is the next largest eigenvalue of $S$.

	To deal with the general case, let $\alpha_1 < \alpha_2 < \dotsb < \alpha_n$ be the
	eigenvalues of $S$ in increasing order, and consider the filtration $G_{\bullet}$
	by increasing eigenvalues, with terms
	\[
		G_j = E_{\alpha_1}(S) \oplus \dotsb \oplus E_{\alpha_j}(S) \subseteq V.
	\]
	Set $W_1 = G_1 \cap W$; choose a subspace $W_2 \subseteq W$ such that
	$W_1 \oplus W_2 = G_2 \cap W$; and so on. Continuing in this way, we obtain a
	collection of subspaces $W_1, \dotsc, W_n \subseteq W$, possibly zero-dimensional,
	with the property that
	\[
		G_j \cap W = W_1 \oplus \dotsb \oplus W_j.
	\]
	By construction, any nonzero vector $v \in W_j$ must have a nontrivial component
	$v_{\alpha_j} \in E_{\alpha_j}(S)$, and as we have seen above, $e^{xS}(\CC v)$
	therefore converges to $\CC v_{\alpha_j}$ at a rate of $e^{-(\alpha_j -
	\alpha_{j-1}) x}$. It follows that the subspace $e^{xS} W_j$ converges, at the
	same rate, to a subspace $\hat{W}_j \subseteq E_{\alpha_j}(S)$, which consists of
	the $E_{\alpha_j}(S)$-components of all the vectors in $W_j$. Putting everything
	together, we find that 
	\[
		\lim_{x \to \infty} e^{xS} W = \bigoplus_{j=1}^n \hat{W}_j.
	\]	
	The rate of convergence is evidently $e^{-\delta x}$, where $\delta > 0$ is the
	smallest distance between consecutive eigenvalues of $S$.
\end{proof}

\newpar
	Here is an equivalent way for describing the limit filtration $\hat{F}$ in terms of
	the filtration $G_{\bullet}$ by increasing eigenvalues of $S$. Projecting $W
	\subseteq V$ to the subquotient $G_j/G_{j-1}$ yields the subspace
	\[
		(G_j \cap W + G_{j-1})/G_{j-1} \subseteq G_j / G_{j-1}.
	\]
	Since $G_j \cap W + G_{j-1} = W_j + G_{j-1}$, the subspace $\hat{W}_j \subseteq
	E_{\alpha}(S)$ that we used during the proof is exactly the preimage of the above
	subspace under the isomorphism $E_{\alpha_j}(S) \cong G_j/G_{j-1}$. So the effect
	of the limit
	\[
		\hat{F} = \lim_{x \to \infty} e^{xS} F
	\]
	is to make the filtration $F$ compatible with the eigenspace decomposition of $S$,
	by projecting to the subquotients of the filtration by increasing eigenvalues.

\newpar
The lemma shows that the limit
\[
	\Flim = \lim_{x \to \infty} e^{-x S} \Psi_S(0) \in \Dch
\]
exists, and that the resulting filtration satisfies $S(\Flim^{\bullet}) \subseteq
\Flim^{\bullet}$. On account of \eqref{eq:distance-Flim-2}, it follows that the
limit
\[
	\lim_{\abs{\Re z} \to \infty} e^{-zN} \Phi(z) 
	= \lim_{\abs{\Re z} \to \infty} e^{i \Im z \, S} e^{-\abs{\Re z} S} \Phi(z)
	= \Flim
\]
also exists, provided that $\abs{\Im z}$ remains bounded in the process. In fact, the
limiting Hodge filtration $\Flim$ is obtained from $\Psi_S(0)$ by projecting to the
subquotients of the filtration by \emph{decreasing} eigenvalues of $S$. This is due
to the minus sign in the exponential factor $e^{-\abs{\Re z} S}$. 

\newpar
We already know that $T_s(\Flim^{\bullet}) \subseteq \Flim^{\bullet}$.
To complete the proof of \Cref{prop:Flim}, we have to analyze how the
nilpotent operator $N$ acts on the limiting Hodge filtration $\Flim$. Recall that we
have 
\[
	(S + N) \Psi_S^p(0) \subseteq \Psi_S^{p-1}(0) \quad \text{for all $p \in \ZZ$.}
\]
From the definition of $\Flim$, we now get
\[
	N \Flim^p = (S+N) \Flim^p  = 
	\lim_{x \to \infty} e^{-xS} (S+N) \Psi_S^p(0)
	\subseteq \lim_{x \to \infty} e^{-xS} \Psi_S^{p-1}(0) = \Flim^{p-1},
\]
as desired.

\newpar
It turns out that there is another way to relate the limiting Hodge
filtration to untwisted period mappings. This is useful for the proof of the
nilpotent orbit theorem in higher dimensions, which we plan to discuss in
a sequel to this paper. The idea, which already occurs in \cite[\S8]{Schmid}, is to
go to a finite covering of $\dst$ of sufficiently high degree.

\newpar
For any $m \geq 1$, we can pull back $E$ along the finite covering space
\[
	\dst \to \dst, \quad t \mapsto t^m. 
\]
The result is another variation of Hodge structure on $\dst$. It has
the same space of multi-valued flat sections, but the monodromy operator changes to
$T^m$, and the period mapping changes to $\Phi(mz)$. This is convenient, because it
means that we do not have to introduce any new notation. Let $I \subseteq \RR$ be any
half-open interval of length $1$. Let $S_m \in \End(V)$ be the unique semisimple
operator with
\[
	T_s^m = e^{2 \pi i m S_m},
\]
such that $mS_m$ has eigenvalues in $I$; the eigenvalues of $S_m$ itself are
contained in $\frac{1}{m} I$. Because $T_s = e^{2 \pi i S}$, the 
eigenvalues of the operator $e^{2 \pi i(S - S_m)}$ are $m$-th roots of unity. 

\begin{plem} \label{lem:roots-of-unity}
	On each eigenspace of $T_s$, the operator $e^{2 \pi i(S - S_m)}$ acts as
	multiplication by an $m$-th root of unity; these roots of unity are distinct if
	$m \delta(T) \geq 1$.
\end{plem}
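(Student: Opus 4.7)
The plan is to reduce everything to a scalar computation on each eigenspace of $T_s$. Both $S$ and $S_m$ are semisimple, commute with $T_s$ (they are logarithms of $T_s$ and $T_s^m$ with prescribed eigenvalues), and therefore preserve the decomposition $V = \bigoplus_{\abs{\lambda} = 1} E_{\lambda}(T_s)$. On a fixed eigenspace $E_{\lambda}(T_s)$, $S$ must act as a scalar $\alpha(\lambda) \in \RR$ with $e^{2\pi i \alpha(\lambda)} = \lambda$, uniquely determined by the requirement that $\alpha(\lambda)$ lie in the chosen half-open interval of length $1$. Likewise $S_m$ acts as a scalar $\beta(\lambda) \in \RR$ with $e^{2 \pi i m \beta(\lambda)} = \lambda^m$ and $m\beta(\lambda) \in I$. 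Then $e^{2\pi i(S-S_m)}$ acts on $E_{\lambda}(T_s)$ as multiplication by $e^{2\pi i(\alpha(\lambda) - \beta(\lambda))}$, and since $e^{2 \pi i m(\alpha(\lambda) - \beta(\lambda))} = \lambda^m \cdot \lambda^{-m} = 1$, the scalar $m(\alpha(\lambda) - \beta(\lambda))$ is an integer, which gives the first assertion.

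For the distinctness assertion, I will take two distinct eigenvalues $\lambda_1, \lambda_2$ of $T_s$ and set $\zeta_i = e^{2\pi i(\alpha(\lambda_i) - \beta(\lambda_i))}$; I need to rule out $\zeta_1 = \zeta_2$, i.e.\ that the real number $\gamma = (\alpha(\lambda_1) - \alpha(\lambda_2)) - (\beta(\lambda_1) - \beta(\lambda_2))$ lies in $\ZZ$. The key observation is that $\alpha(\lambda_1) - \alpha(\lambda_2)$ lies in the open interval $(-1,1)$ (both $\alpha$'s are in the same half-open interval of length $1$) while $\beta(\lambda_1) - \beta(\lambda_2)$ lies in $(-1/m, 1/m)$, so $\gamma \in (-1 - 1/m,\, 1 + 1/m)$, leaving only the possibilities $\gamma \in \{-1, 0, 1\}$.

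The hypothesis $m \delta(T) \ge 1$ will be used to exclude each of these. For $\gamma = 0$, the equality $\alpha(\lambda_1) - \alpha(\lambda_2) = \beta(\lambda_1) - \beta(\lambda_2)$ would force $\abs{\alpha(\lambda_1) - \alpha(\lambda_2)} < 1/m \le \delta(T)$, and since in this regime the ordinary and circular distances coincide, this contradicts the fact that the circular distance between $\alpha(\lambda_1)$ and $\alpha(\lambda_2)$ is $\ge \delta(T)$ by definition of $\delta(T)$. For $\gamma = \pm 1$, the same bound $\abs{\beta(\lambda_1) - \beta(\lambda_2)} < 1/m$ would force $\abs{\alpha(\lambda_1) - \alpha(\lambda_2)} > 1 - 1/m$, so that the circular distance $1 - \abs{\alpha(\lambda_1) - \alpha(\lambda_2)}$ would be strictly less than $1/m \le \delta(T)$, again a contradiction.

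The main obstacle is purely bookkeeping: keeping straight the difference between the linear distance $\abs{\alpha(\lambda_1) - \alpha(\lambda_2)}$ and the circular distance $\norm{\alpha(\lambda_1) - \alpha(\lambda_2)}_{\RR/\ZZ}$ that enters the definition of $\delta(T)$, since these can be very different when the $\alpha$'s land near the two ends of the half-open interval. Once one carefully writes $\delta(T)$ as a circular distance on the unit circle, the three case analysis above becomes essentially automatic.
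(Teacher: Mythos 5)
Your proof is correct and follows essentially the same route as the paper: reduce everything to scalars on each $T_s$-eigenspace and compare the integer offsets between $m\alpha(\lambda)$ and the eigenvalue of $mS_m$, using $m\delta(T)\geq 1$ to separate them. The only organizational difference is in the distinctness step, where the paper lists the eigenvalues $\alpha_1<\dotsb<\alpha_n$ of $S$ and shows the integers $k_j=\lfloor m\alpha_j-\alpha\rfloor$ are strictly increasing with total spread less than $m$, whereas you argue pairwise that the difference of offsets can only be $-1$, $0$, or $1$ and rule out each case by interpreting $\delta(T)$ as a circular distance; this is a harmless reorganization of the same bookkeeping.
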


\begin{proof}
	Suppose that $I = [\alpha, \alpha+1)$; the remaining case is similar. List the
	eigenvalues of $S$ in increasing order as
	\[
		\alpha \leq \alpha_1 < \dotsb < \alpha_n < \alpha+1;
	\]
	the distance between adjacent eigenvalues is at least $\delta(T)$. Write $m
	\alpha_j = k_j + \beta_j$, where $k_j \in \ZZ$ and $\beta_j \in I$; then the
	eigenvalues of $mS_m$ are exactly $\beta_1, \dotsc, \beta_n$. Now
	\[
		k_j = \lfloor m \alpha_j - a \rfloor,
	\]
	and since $m \alpha_{j+1} - m \alpha_j \geq m \delta(T) \geq 1$, it follows that
	$k_{j+1} > k_j$; similarly, the fact that $m \alpha_r - m \alpha_1 < m$ implies
	that $k_n - k_1 < m$. The eigenvalues of $e^{2 \pi i(S - S_m)}$ are therefore the
	complex numbers $e^{2 \pi i k_j/m}$, which are distinct $m$-th roots of unity.
\end{proof}

\newpar
Now we consider a variant of the untwisted period mapping. The expression
\[
	e^{-mz(S_m + N)} \Phi(mz)
\]
is invariant under the substitution $z \mapsto z + 2 \pi i$, and so it again
descends to a holomorphic mapping
\[
	\Psi_m \colon \dst \to \Dch, \quad \Psi_m(e^z) = e^{-mz(S_m + N)} \Phi(mz).
\]
This in of course just the untwisted period mapping $\Psi_{mS_m}$ for the pullback of
$E$ along $t \mapsto t^m$. A brief calculation shows that
\[
	\Psi_m(e^{2 \pi i/m} \cdot t) = e^{2 \pi i(S - S_m)} \cdot \Psi_m(t).
\]
This identity has the following nice consequence.

\begin{pprop} \label{prop:nilpotent-orbit-theorem-covering}
	Let $m \in \NN$ be such that $m \delta(T) \geq 1$, and choose $S_m \in
	\End(V)$ as above. The holomorphic mapping
	\[
		\Psi_m \colon \dst \to \Dch, \quad \Psi_m(e^z) = e^{-mz(S_m + N)} \Phi(mz),
	\]
	extends holomorphically across the origin, and $\Psi_m(0) = \Flim$.
\end{pprop}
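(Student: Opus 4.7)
The plan is to decompose the statement into its extension part and its identification part, and tackle them sequentially. The extension is essentially a corollary of what has already been proved in this chapter: $\Psi_m$ is precisely the untwisted period mapping $\Psi_{mS_m}$ attached to the pullback of $E$ along the $m$-fold covering $t \mapsto t^m$, a polarized variation of Hodge structure with the same space $V$ of multi-valued flat sections, monodromy operator $T^m = e^{2\pi i m(S_m+N)}$, and period mapping $z \mapsto \Phi(mz)$. By construction, $mS_m$ has eigenvalues in the interval $I$ of length $1$, so the convergence half of the nilpotent orbit theorem (\Cref{thm:nilpotent-convergence-intro}) applies directly to this pullback and yields the holomorphic extension of $\Psi_m$ across the origin.

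The substance lies in proving $\Psi_m(0) = \Flim$. My plan is first to establish that $\Psi_m(0)$ is compatible with the eigenspace decomposition of $T_s$. For this I would take the equivariance identity $\Psi_m(e^{2\pi i/m} t) = e^{2\pi i(S-S_m)} \Psi_m(t)$ recorded just before the proposition, pass to the limit $t \to 0$ using the continuity just established, and read off the fixed-point equation $\Psi_m(0) = e^{2\pi i(S-S_m)} \Psi_m(0)$. This is exactly the situation addressed by \Cref{lem:roots-of-unity}: the hypothesis $m \delta(T) \geq 1$ ensures that $e^{2\pi i(S-S_m)}$ acts on distinct $T_s$-eigenspaces as distinct $m$-th roots of unity, so any filtration it fixes must split as a direct sum according to these eigenspaces. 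Hence each subspace $\Psi_m^p(0)$ decomposes as a direct sum of its intersections with the eigenspaces of $T_s$, which coincide with the eigenspaces of $S_m$.

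To identify $\Psi_m(0)$ with $\Flim$, I would combine $[S_m, N] = 0$ with the definition of $\Psi_m$ to rewrite
\[
e^{-m z N} \Phi(m z) = e^{m z S_m} \Psi_m(e^z).
\]
The left-hand side converges to $\Flim$ as $\Re z \to -\infty$ with $\Im z$ bounded, by the definition of the limiting Hodge filtration in \Cref{prop:Flim}. Eigenspace compatibility of $\Psi_m(0)$ gives $e^{m z S_m} \Psi_m(0) = \Psi_m(0)$ for every $z$, so it suffices to control the difference between $e^{m z S_m} \Psi_m(e^z)$ and $e^{m z S_m} \Psi_m(0)$. Combining \eqref{eq:Dch-translation} with \Cref{lem:Ad-S} bounds this difference by a product of order $e^{m\rho \abs{\Re z}} \cdot e^{-\abs{\Re z}}$, where $\rho$ is the spread of the eigenvalues of $S_m$; since $m\rho < 1$ strictly (because $mS_m$ has eigenvalues in a half-open interval of length $1$), the product tends to $0$.

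The main conceptual obstacle, on which both the compatibility step and the quantitative estimate hinge, is the sharpness of the hypothesis $m\delta(T) \geq 1$: it is exactly what forces distinctness of the $m$-th roots of unity in \Cref{lem:roots-of-unity}, and it is exactly what keeps $mS_m$ inside a length-$1$ interval, giving $m\rho < 1$ in the final estimate. Everything else is bookkeeping with the nilpotent orbit theorem and with the commutation $[S_m, N] = 0$, so I expect this single hypothesis to be the place where care is required.
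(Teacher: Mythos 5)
Your proposal is correct and follows essentially the same route as the paper: extension via the nilpotent orbit theorem applied to the pullback, the fixed-point identity $\Psi_m(0)=e^{2\pi i(S-S_m)}\Psi_m(0)$ combined with \Cref{lem:roots-of-unity} to get compatibility with the $T_s$-eigenspaces (hence $e^{zS_m}\Psi_m(0)=\Psi_m(0)$), and then the distortion estimates \eqref{eq:Dch-translation} and \Cref{lem:Ad-S} with $m\rho<1$ to identify the limit with $\Flim$. The only cosmetic differences are the variable normalization ($z$ versus $z/m$) and that you invoke the splitting of an invariant subspace of a semisimple operator where the paper writes out the projection formula for $P_\lambda$.
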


\begin{proof}
We already know from the proof of the nilpotent orbit theorem that $\Psi_m$ extends
holomorphically across the origin. It remains to show that the filtration $\Psi_m(0)
\in \Dch$ agrees with the limiting Hodge filtration $\Flim$. From the identity
\[
	\Psi_m(e^{2 \pi i/m} \cdot t) = e^{2\pi i (S - S_m)} \cdot \Psi_m(0),
\]
we conclude that the filtration $\Psi_m(0) \in \Dch$ is preserved by the operator
$e^{2 \pi i(S - S_m)}$. By \Cref{lem:roots-of-unity}, the eigenvalues of this
operator are $m$-th roots of unity, with a different root of unity $\zeta_{\lambda}$
on each eigenspace $E_{\lambda}(T_s)$. Consequently,
\[
	P_{\lambda} = \frac{1}{m} \sum_{k=0}^{m-1} 
	\bigl( \zeta_{\lambda}^{-1} e^{2 \pi i(S-S_m)} \bigr)^k,
\]
and one deduces easily that $T_s \cdot \Psi_m(0) = \Psi_m(0)$. For $\abs{t}$
sufficiently small, we have
\[
	\dDch \bigl( \Psi_m(t), \Psi_m(0) \bigr) \leq C \abs{t},
\]
which translates into
\[
	\dDch \Bigl( e^{-z(S_m + N)} \Phi(z), \Psi_m(0) \Bigr) \leq C e^{-\frac{1}{m}
	\abs{\Re z}}.
\]
After applying \eqref{eq:Dch-translation} and \eqref{eq:AdS-operator-norm} to control
the distortion caused by translating by the operator $e^{zS_m} \in \GL(V)$, we get
\[
	\dDch \Bigl( e^{-zN} \Phi(z), e^{zS_m} \Psi_m(0) \Bigr) \leq 
	C \biggl( \sum_{\lambda} \norm{P_{\lambda}}_{\Phi(-1)} \biggr)^2 
	e^{-\frac{1-\rho}{m} \abs{\Re z}}.
\]
Since $e^{zS_m} \Psi_m(0) = \Psi_m(0)$, we conclude that
\[
	\Psi_m(0) = \lim_{\abs{\Re z} \to \infty} e^{-zN} \Phi(z) = \Flim
\]
is indeed equal to the limiting Hodge filtration.
\end{proof}

\newpar
Here is a small example that shows the difference between the filration $\Psi_S(0)$ and
the limiting Hodge filtration $\Flim$. 

\begin{pexa} \label{ex:PsiS}
	Let $\alpha > 0$ be a real number. Using the notation from
	\Cref{ex:period-domain}, consider the polarized variation of Hodge structure $\Phi
	\colon \HH \to D$ with
	\[
		V_{\Phi(z)}^{1,0} = \CC \bigl( 1, e^{\alpha z} \bigr) \quad \text{and} \quad
		V_{\Phi(z)}^{0,1} = \CC \bigl( e^{\alpha \zb}, 1 \bigr).
	\]
	Here $N = 0$ and so $\Flim^1 = \CC(1,0)$, which is the Hodge filtration of a
	polarized Hodge structure. On the other hand, if we use 
	\[
		S = \begin{pmatrix} 0 & 0 \\ 0 & \alpha \end{pmatrix}, 
	\]
	then $\Psi_S^1(0) = \CC(1,1)$, and this is \emph{not} the Hodge filtration of a
	polarized Hodge structure.
\end{pexa}

\subsection{Effective estimates for the rate of convergence}

\newpar
What is missing from the above proof of the nilpotent orbit theorem are good
estimates for the rate of convergence of the untwisted period mapping. The purpose of
this section is to obtain such estimates, by using the curvature properties of the
Hodge metric and the maximum principle.

\newpar \label{par:ineffective-estimates}
Let us first see what kind of estimates we can derive from the fact that the
untwisted period mapping $\Psi_S$ extends across the origin.
Recall that the differential of the holomorphic mapping $\Psi_S(e^z)$ is equal to 
\[
	e^{-z(S+N)} \theta_{\partial/\partial z} e^{z(S+N)} - (S+N) \mod
	F^0 \End(V)_{\Psi_S(e^z)}.
\]
At the same time, for $\eps > 0$ sufficiently small, we can certainly find a holomorphic
mapping $g \colon \Delta_r \to \GL(V)$ with $g(0) = \id$ such that $\Psi_S(t) = g(t)
\cdot \Psi_S(0)$. By the chain rule, the differential of $\Psi_S(e^z)$ is therefore
also equal to
\[
	e^z \cdot g'(e^z) g(e^z)^{-1} \mod F^0 \End(V)_{\Psi_S(e^z)},
\]
provided that $\Re z < \log \eps$. Putting both things together, we get
\[
	\theta_{\partial/\partial z} - (S+N) \equiv 
	e^z \cdot e^{z(S+N)} g'(e^z) g(e^z)^{-1} e^{-z(S+N)}
	\mod F^0 \End(V)_{\Phi(z)},
\]
at least when $\Re z < \log \eps$. Arguing as in the proof of
\Cref{lem:conjugation} to estimate the effect of conjugating by $e^{z(S+N)}$, the
Hodge norm of the operator on the right-hand side is bounded, for $\abs{\Re z} \gg
0$, by a constant multiple of 
\[
	\abs{\Re z}^m e^{-\delta(T) \abs{\Re z}},
\]
where $m = m(N) + 2 \sqrt{2r} C_0$; the exact value of the constant depends of course
on $g(t)$ and hence on $\Psi_S(t)$. We remind the reader that $0 < \delta(T) \leq 1$ is 
the minimal distance between consecutive eigenvalues of $T$ on the unit circle,
divided by $2\pi$. If we denote by
\[
	N = \sum_{k \in \ZZ} N^{k,-k} \quad \text{and} \quad 
	S = \sum_{k \in \ZZ} S^{k,-k} \quad \text{and} \quad
	P_{\lambda} = \sum_{k \in \ZZ} P_{\lambda}^{k,-k}
\]
the Hodge decompositions, then it follows that the quantity
\[
	\norm{\theta_{\partial/\partial z} - N^{-1,1} - S^{-1,1}}_{\Phi(z)}^2
	+ \sum_{k \leq -2} \norm{N^{k,-k} + S^{k,-k}}_{\Phi(z)}^2
\]
is bounded, for $\abs{\Re z} \gg 0$, by a constant multiple of $\abs{\Re z}^{2m}
e^{-2\delta(T) \abs{\Re z}}$.

\newpar
We can improve the estimates from above by changing the interval containing the eigenvalues
of $S$. Recall that $T_s = e^{2 \pi i S}$, and that the eigenvalues of $S$ are
supposed to lie in a half-open interval of length $1$. Write the eigenvalues of $T_s$
in the form $\lambda_j = e^{2 \pi i \alpha_j}$, say with $0 \leq \alpha_1 < \dotsb <
\alpha_n < 1$. If we gradually slide the interval $[0,1)$ over to the right, we
obtain the following $n$ choices for the operator $S$, namely
\[
	S = \sum_{j=1}^n \alpha_j P_{\lambda_j} 
	+ (P_{\lambda_1} + \dotsb + P_{\lambda_k}),
\]
for $k = 0, 1, \dotsc, n-1$. If we apply the argument in the preceding paragraph to
each choice of $S$, and then take a suitable linear combination of the resulting
inequalities, we find that there are two constants $C > 0$ and $b \in \NN$, such that
\begin{align*}
	\norm{\theta_{\partial/\partial z} - N^{-1,1}}_{\Phi(z)}^2 + 
	\sum_{k \leq -2} \norm{N^{k,-k}}_{\Phi(z)}^2
	&\leq C \abs{\Re z}^{2m} e^{-2\delta(T) \abs{\Re z}} \\
	\sum_{k \leq -1} \norm{P_{\lambda}^{k,-k}}_{\Phi(z)}^2
	&\leq C \abs{\Re z}^{2m} e^{-2\delta(T) \abs{\Re z}}
\end{align*}
for $\abs{\Re z} \gg 0$. Here $m = m(N) + 2 \sqrt{2r} C_0$, but we cannot say anything
about the value of the constant $C$, or about how big $\abs{\Re z}$ has to be for the
inequality to hold; these are going to depend, in some unspecified way, on the
variation of Hodge structure $E$. The problem is that these estimates are not
\emph{effective}.

\begin{note}
	In fact, the argument above gives a somewhat better estimate for $P_{\lambda}$.
	Instead of $\delta(T)$, the optimal exponent is the minimal distance, on the unit
	circle, from $\lambda$ to the two immediately adjacent eigenvalues of $T$.
\end{note}

\newpar
We can make the above estimates effective -- and independent of the specific
variation of Hodge structure! -- by exploiting once more the curvature properties of
the Hodge metric.  The precise result we are going to prove is the following.

\begin{pthm} \label{thm:effective-estimates}
	Given $x < 0$, there are constants $C > 0$ and $m \in \NN$, such that
	\begin{align*}
		\norm{\theta_{\partial/\partial z} - N^{-1,1}}_{\Phi(z)}^2 + 
		\sum_{k \leq -2} \norm{N^{k,-k}}_{\Phi(z)}^2
		&\leq C \abs{\Re z}^{2b} e^{-2\delta(T) \abs{\Re z}} \\
		\sum_{k \leq -1} \norm{P_{\lambda}^{k,-k}}_{\Phi(z)}^2
		&\leq C \abs{\Re z}^{2b} e^{-2\delta(T) \abs{\Re z}}
	\end{align*}
	for every $z \in \HH$ with $\Re z \leq x$. The exact value of $C$ only
	depends on $x$, on $r = \rk E$, and on the minimal polynomial of $T \in \GL(V)$; the
	exact value of $b$ only depends on $r$.
\end{pthm}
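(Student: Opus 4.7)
The plan is to upgrade the non-effective estimates of \Cref{par:ineffective-estimates} by combining two universal ingredients: the holomorphic lifting $\vartheta \in A^0\bigl(\dst, F^{-1}\End(E)\bigr)$ and its untwisted companion $B \colon \Delta \to \End(V)$ from \Cref{prop:lifting}, whose Bergman-type pointwise bound \eqref{eq:B-bound} depends only on $r$ and the minimal polynomial of $T$; and the subharmonicity of $\log h_{\End(E)}(\sigma, \sigma)$ for a holomorphic section $\sigma$ of a Hodge subquotient of $\End(\shE)$, which follows from \Cref{prop:curvature} (the curvature of $h_{\End(E)}$ on $\End(E)^{p,q}$ is $-(\theta\theta^\ast + \theta^\ast \theta) \leq 0$) together with the Cauchy--Schwarz derivation used in the proof of \Cref{lem:A-inequality}. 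The effective bound will be extracted by combining these with the maximum principle on a vertical strip of width $2\pi$ in $\HH$.

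First I would realize each of the quantities $\theta_{\partial/\partial z} - N^{-1,1}$, $N^{k,-k}$ for $k \leq -2$, and $P_\lambda^{k,-k}$ for $k \leq -1$ as the Hodge projection of a holomorphic section of a specific Hodge subquotient of $\End(\shE)$. The starting point is the holomorphic difference $\vartheta(z) - e^{z(S+N)} \beta(e^z) e^{-z(S+N)}$, where $\beta$ is a holomorphic lift of $B(0)$; iterating the H\"ormander construction of \Cref{prop:lifting} on successive Hodge components produces holomorphic lifts in each $F^{-k}\End(\shE)$. The interval-sliding device of \Cref{par:ineffective-estimates}, which replaces $[0,1)$ by the $n$ translates starting at the eigenvalues of $T_s$, lets the same machine cover the projection estimates for $P_\lambda^{k,-k}$. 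Each such lift $\sigma$ has a universal $L^2$-bound on $\dst$ (inherited from \Cref{prop:lifting}) and, by the pointwise argument in \Cref{lem:Bergman}, a universal pointwise bound near any fixed $|t|$, with constants depending only on $r$ and the minimal polynomial of $T$.

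Next, for each such holomorphic $\sigma$ pulled back to $\HH$, I would consider the auxiliary function
\[
\psi(z) = \log h_{\End(E)}(\sigma, \sigma) + 2\delta(T) \abs{\Re z} - 2b \log \abs{\Re z},
\]
for a fixed exponent $b \in \NN$ that depends only on $r$. By the curvature computation of \Cref{prop:curvature} and the Cauchy--Schwarz argument in \Cref{lem:A-inequality}, the first term is subharmonic; the second term is harmonic, and the third is subharmonic since $-\log|\Re z|$ is subharmonic on $\HH$, so $\psi$ is subharmonic. It is also invariant under $z \mapsto z+2\pi i$ once $\sigma$ has been chosen equivariantly. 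The non-effective estimates of \Cref{par:ineffective-estimates} give $\limsup_{\Re z \to -\infty} \psi(z) \leq 0$. On the horizontal strip $\{\Re z \leq x_0, \ 0 \leq \Im z \leq 2\pi\}$, the maximum principle forces $\psi$ to be bounded by its supremum on the line $\Re z = x_0$, and this supremum is controlled by the universal bound \eqref{eq:B-bound} on $B(e^{x_0})$, by \Cref{prop:N-norm} applied to $N$ and $P_\lambda$, and by the conjugation estimate of \Cref{lem:conjugation}---all with constants depending only on $x_0$, $r$, and the minimal polynomial of $T$. Taking $x_0 = x$ and exponentiating yields the claimed effective bounds.

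The main obstacle is the first step: constructing the equivariant holomorphic lifts of the lower-order Hodge components and verifying that each step preserves universality of the constants. The iterative H\"ormander construction of \Cref{prop:lifting} must be carried out with the weight functions $|t|^a(-\log|t|)^b$ of \Cref{lem:weight-functions} chosen so that each successive $L^2$-estimate remains finite and the accumulated constants are still controlled solely by $r$ and the minimal polynomial of $T$. Once this bookkeeping is in place, the subharmonicity/maximum-principle argument converts the already-established non-effective exponential decay into an effective one valid on every half-plane $\Re z \leq x$, with dependence on $x$ entering only through the reference level used to bound $\psi$ on the right boundary of the strip.
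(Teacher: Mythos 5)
Your overall skeleton --- first the ineffective decay coming from the holomorphic extension of $\Psi_S$ as in \Cref{par:ineffective-estimates}, then a subharmonicity-plus-maximum-principle argument with boundary data controlled by universal bounds --- is the paper's strategy, but two of your steps have genuine gaps. The first concerns subharmonicity. The quantities to be estimated are not norms of holomorphic sections of the graded pieces $\End(E)^{p,q}$ (for which \Cref{prop:curvature} indeed gives curvature $-(\theta\thetast+\thetast\theta)\leq 0$); they are norms of holomorphic sections of the \emph{quotient} bundle $\End(E)/F^0\End(E)$, whose curvature by part (c) of \Cref{prop:curvature} contains the positive second-fundamental-form term $\theta\thetast\pi^{p-1,q+1}$, so $\log h_{\End(E)}(\sigma,\sigma)$ need \emph{not} be subharmonic there. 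Consequently your $\psi$ is not a sum of separately subharmonic terms: the summand $-2b\log\abs{\Re z}$ cannot be treated as an innocuous subharmonic add-on, since adding a subharmonic function to a non-subharmonic one proves nothing. Its actual role is to \emph{compensate} the positive curvature: one twists the metric by $(-\log\abs{t})^{-b}$ and uses the basic estimate for $\End(E)$ (\Cref{cor:Higgs-bound-End}) to check that for $b\geq 4r\binom{r+1}{3}$ the twisted curvature is semi-negative, whence the subharmonicity of $\log\bigl((-\log\abs{t})^{-b}\sum_{k\leq -1}h(u^{k,-k},u^{k,-k})\bigr)$ via \Cref{lem:negative-curvature}; this is exactly \Cref{prop:twisted-subharmonic}, and it is the technical heart of the proof, absent from your write-up. (A minor point: the ineffective estimates only give that $\psi$ is \emph{bounded above} near the puncture, not $\limsup\psi\leq 0$, since the ineffective constant may be large; boundedness is all the removable-singularity/maximum-principle argument needs, so this is harmless.)

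The second gap is the step you yourself flag as the main obstacle: iterating the H\"ormander construction to build equivariant holomorphic lifts of the lower Hodge components. This is both unrealized and unnecessary, and as sketched (via $\vartheta$ and a lift of $B(0)$) it does not produce the objects you need; the individual components $N^{k,-k}$ and $P_\lambda^{k,-k}$ are not holomorphic sections of anything natural. The observation that makes the proof go is that, because the Hodge decomposition is orthogonal and $F^0\End(E)=\bigoplus_{k\geq 0}\End(E)^{k,-k}$, the sums $\sum_{k\leq-1}\norm{P_\lambda^{k,-k}}^2$ and $\norm{\theta_{\partial/\partial z}-N^{-1,1}}^2+\sum_{k\leq-2}\norm{N^{k,-k}}^2$ are precisely the quotient-metric norms of the images of the \emph{already existing} sections $P_\lambda$ and $t\theta_{\partial/\partial t}-N$ in $\End(E)/F^0\End(E)$; these are holomorphic on $\dst$ because flat sections are $d''$-closed and $d''(t\theta_{\partial/\partial t})=t[\thetast_{\partial/\partial \tb},\theta_{\partial/\partial t}]\,d\bar t$ lies in $F^0\End(E)$. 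With these sections, the maximum principle on $\abs{t}\leq R$ with boundary values controlled by \Cref{prop:polynomial-bound}, \Cref{cor:Higgs-bound} and \Cref{prop:N-norm} already yields constants depending only on $x$, $r$ and the minimal polynomial of $T$; no new $L^2$-theory, and none of the universality bookkeeping you worry about, is required.
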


\newpar
We need another basic fact about metrics with negative curvature on
holomorphic vector bundles. Let $E$ be a smooth vector bundle with a hermitian metric
$h$, defined on a domain $X \subseteq \CC$, and suppose that $E$
has the structure of a holomorphic vector bundle, given by a connection $d'' \colon
A^0(X, E) \to A^{0,1}(X, E)$ of type $(0,1)$. As before, we write the Chern
connection as $\delta' + d''$, where $\delta' \colon A^0(X, E) \to A^{1,0}(X, E)$,
and denote by
\[
	\Theta = (\delta' + d'')^2 \in A^{1,1} \bigl( X, \End(E) \bigr)
\]
the curvature operator of the metric.

\begin{plem} \label{lem:negative-curvature}
	Suppose that the hermitian metric $h$ has semi-negative curvature, in the sense
	that for every $u \in A^0(X, E)$, one has
	\[
		h \bigl( \Theta_{\partial/\partial z \wedge \partial/\partial \zb} \, u, u \bigr)
		\leq 0.
	\]
	Then for every nontrivial holomorphic section $u \in A^0(X, E)$ with $d'' u = 0$,
	the function $\log h(u,u)$ is subharmonic on $X$.
\end{plem}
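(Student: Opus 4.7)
The plan is to run the same kind of calculation as in the proof of \Cref{lem:metric-subharmonic}, but in the simpler setting at hand, where the flat connection is replaced by the Chern connection $\delta' + d''$ and the Higgs-type operators $\theta$, $\thetast$ are absent. First I would restrict attention to the open set $U \subseteq X$ on which $u$ is nonvanishing, where $\varphi = \log h(u,u)$ is smooth. Once $\Delta \varphi \geq 0$ is established on $U$, the subharmonicity of $\varphi$ on all of $X$ follows automatically: the zero set of a nontrivial holomorphic section of $E$ on a Riemann surface is discrete, and $\varphi$ extends to an upper semi-continuous function $X \to [-\infty, +\infty)$ by assigning the value $-\infty$ at those points.

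Pick a local holomorphic coordinate $z$ on $X$, and set $\alpha = \delta'_{\partial/\partial z} u \in A^0(X, E)$, so that $\delta' u = \alpha \dz$. Because $d'' u = 0$ and $\delta' + d''$ is a metric connection, one reads off
\[
	\del h(u,u) = h(\alpha, u) \dz \quad \text{and} \quad
	\delb h(u,u) = h(u, \alpha) \dzb.
\]
Applying $\del$ to the second identity and using the metric property once more produces
\[
	\del \delb h(u,u) = \bigl( h(\alpha, \alpha) + h(d''_{\partial/\partial \zb} \alpha, u) \bigr) \dz \wedge \dzb.
\]
The second summand is recognized as a curvature term: since $(\delta')^2 = 0$ and $(d'')^2 = 0$ automatically in one complex dimension, one has $\Theta = \delta' d'' + d'' \delta'$, and therefore $\Theta u = d'' \delta' u = -d''_{\partial/\partial \zb}\alpha \cdot \dz \wedge \dzb$, so that $h(d''_{\partial/\partial \zb}\alpha, u) = -h(\Theta_{\partial/\partial z \wedge \partial/\partial \zb} u, u)$.

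The conclusion now follows from the standard identity
\[
	\frac{\partial^2 \log h(u,u)}{\partial z \partial \zb}
	= \frac{1}{h(u,u)} \cdot \frac{\partial^2 h(u,u)}{\partial z \partial \zb}
		- \frac{1}{h(u,u)^2} \ABS{\frac{\partial h(u,u)}{\partial z}}^2,
\]
combined with the Cauchy--Schwarz inequality $|h(\alpha, u)|^2 \leq h(\alpha, \alpha) h(u,u)$: the latter exactly cancels the $h(\alpha, \alpha)/h(u,u)$ contribution, leaving
\[
	\frac{\partial^2 \log h(u,u)}{\partial z \partial \zb}
	\geq -\frac{h(\Theta_{\partial/\partial z \wedge \partial/\partial \zb} u, u)}{h(u,u)} \geq 0
\]
by the semi-negativity hypothesis. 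There is no substantive obstacle here; the only bookkeeping step that requires care is tracking the sign in $d''(\alpha \dz) = d''_{\partial/\partial \zb}\alpha \cdot \dzb \wedge \dz = -d''_{\partial/\partial \zb}\alpha \cdot \dz \wedge \dzb$, which is exactly what makes the curvature term appear in the formula with the correct sign.
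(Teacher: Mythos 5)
Your proposal is correct and follows essentially the same route as the paper: use that $\delta'+d''$ is a metric connection together with $d''u=0$ to compute $\partial\dbar\, h(u,u)$, identify the second term as $-h\bigl(\Theta_{\partial/\partial z\wedge\partial/\partial\zb}\,u,u\bigr)$, and conclude $\Delta\log h(u,u)\geq 0$ away from the zeros of $u$ by Cauchy--Schwarz, extending across the (discrete) zero set since the function is locally bounded above. The only cosmetic remark is that your $h\bigl(d''_{\partial/\partial\zb}\alpha,u\bigr)$ versus $h\bigl(u,d''_{\partial/\partial\zb}\alpha\bigr)$ agree because the curvature term is real, which is implicit in the hypothesis.
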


\begin{proof}
	Since $\delta' + d''$ is a metric connection, we have $\dbar h(u,u) = h(u, \delta'
	u)$, hence
	\[
		\partial \dbar h(u,u) = h(\delta' u, \delta' u) + h(u, d'' \delta' u)
		= h(\delta' u, \delta' u) + h(u, \Theta u).
	\]
	After evaluating this on $\partial/\partial z \wedge \partial/\partial \zb$, we
	get
	\[
		\frac{\partial^2}{\partial z \partial \zb} h(u,u)
		= h \bigl( \delta'_{\partial/\partial z} u, \delta'_{\partial/\partial z} u
			\bigr) - h \bigl( \Theta_{\partial/\partial z \wedge \partial/\partial \zb} \,
		u, u \bigr) \geq h \bigl( \delta'_{\partial/\partial z} u,
		\delta'_{\partial/\partial z} u \bigr).
	\]
	At all points of $X$ where $h(u,u) > 0$, we now get $\Delta \log h(u,u) \geq 0$
	from the Cauchy-Schwarz inequality, by the same argument as in the proof of
	\Cref{lem:metric-subharmonic}. Since $\log h(u,u)$ is locally bounded from above,
	it follows that $\log h(u,u)$ is a well-defined subharmonic function with values
	in $[-\infty, \infty)$.
\end{proof}

\newpar
We return to our usual setting where $E$ is a polarized variation of Hodge
structure on $\dst$. The result above has the following implication for $\End(E)$.

\begin{pprop} \label{prop:twisted-subharmonic}
	Let $u \in A^0 \bigl( \dst, \End(E) \bigr)$ be a smooth section with the property
	that $d'' u \in A^0 \bigl( \dst, F^0 \End(E) \bigr)$. Write the Hodge
	decomposition of $u$ as
	\[
		u = \sum_{k \in \ZZ} u^{k,-k},
	\]
	where $u^{k,-k} \in A^0 \bigl( \dst, \End(E)^{k,-k} \bigr)$. Then for every $b \geq
	4r \binom{r+1}{3}$, the function
	\[
		\log \left( (-\log t)^{-b} \sum_{k \leq -1} h \bigl( u^{k,-k}, u^{k,-k} \bigr)
		\right)
	\]
	is subharmonic on $\dst$.
\end{pprop}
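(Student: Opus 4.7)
The plan is to apply the negative-curvature subharmonicity criterion of Lemma~\ref{lem:negative-curvature} to the holomorphic quotient bundle $\End(E)/F^0 \End(E)$, equipped with the Hodge metric twisted by the weight $(-\log\abs{t})^{-b}$. Concretely, I would proceed in the following steps.

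\emph{Step 1 (passing to the quotient).} The holomorphic structure on $\End(E)$ is defined by $d'' = \dbar + \thetast$, and this operator preserves $F^0 \End(E)$ because $\thetast$ raises Hodge type by $(1,-1)$. Hence $F^0 \End(E)$ is a holomorphic subbundle, and the hypothesis $d'' u \in A^0(\dst, F^0 \End(E))$ says exactly that the image $\wbar{u}$ of $u$ in the holomorphic quotient bundle $\End(E)/F^0 \End(E)$ is holomorphic. Using the orthogonal smooth trivialization $\End(E)/F^0 \End(E) \cong \bigoplus_{k \leq -1} \End(E)^{k,-k}$ and the orthogonality of the Hodge decomposition under $h_{\End(E)}$, one has
\[
   h_{\End(E)/F^0}(\wbar{u}, \wbar{u}) = \sum_{k \leq -1} h_{\End(E)}\bigl(u^{k,-k}, u^{k,-k}\bigr).
\]

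\emph{Step 2 (twisted metric and curvature).} Put $\varphi = b \log(-\log \abs{t})$, so that the modified metric on the quotient is $h^{\varphi} = h \cdot (-\log \abs{t})^{-b}$. By Lemma~\ref{lem:weight-functions} (with $a=0$ and $b$ replaced by $-b$), one has $\partial^2 \varphi/\partial t \partial\tb = -b/\bigl(4\abs{t}^2 (\log \abs{t})^2\bigr)$, and the curvature of $h^{\varphi}$ is $\Theta^{\varphi} = \Theta + \partial \dbar \varphi \cdot \id$. Applying Proposition~\ref{prop:curvature}\,(c) to the induced polarized VHS of weight $0$ on $\End(E)$, with $p=0$, the curvature operator $\Theta$ on $\End(E)/F^0 \End(E)$ equals $-2(\theta_{\End(E)} \thetast_{\End(E)} + \thetast_{\End(E)} \theta_{\End(E)}) + \theta_{\End(E)} \thetast_{\End(E)} \pi^{-1,1}$, where $\theta_{\End(E)} = \ad \theta$ and $\thetast_{\End(E)} = \ad \thetast$.

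\emph{Step 3 (negativity of the twisted curvature).} Evaluating on $\partial/\partial t \wedge \partial/\partial \tb$ and using Lemma~\ref{lem:VHS-adjoints} together with the orthogonality of the Hodge decomposition of $\End(E)$, a direct computation gives
\[
   h\bigl(\Theta_{\partial/\partial t \wedge \partial/\partial \tb} v, v\bigr)
   = 2\norm{\ad\theta_{\partial/\partial t}\, v}^2 - 2\norm{\ad\thetast_{\partial/\partial \tb}\, v}^2 + \norm{\ad\thetast_{\partial/\partial \tb}\, \pi^{-1,1} v}^2.
\]
Dropping the negative middle term and using Cauchy--Schwarz, this is bounded above by $3 \, h_{\End(\End(E))}\bigl(\theta_{\End(E),\partial/\partial t}, \theta_{\End(E),\partial/\partial t}\bigr) \cdot h(v,v)$. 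The universal bound of Corollary~\ref{cor:Higgs-bound-End}, converted from the $z$-coordinate on $\HH$ to the $t$-coordinate on $\dst$, gives the pointwise estimate $h_{\End(\End(E))}(\theta_{\End(E),\partial/\partial t}, \theta_{\End(E),\partial/\partial t}) \leq 2r C_0^{\,2}/\bigl(\abs{t}^2 (\log \abs{t})^2\bigr)$ with $C_0 = \tfrac{1}{2}\sqrt{\binom{r+1}{3}}$. Combining these estimates with the contribution from $\partial\dbar\varphi$ shows that, once $b$ is at least a fixed multiple of $r\binom{r+1}{3}$ (one can check that the threshold $b \geq 4r\binom{r+1}{3}$ suffices by being a little more careful with the $\pi^{-1,1}$ term, which acts only on the $(-1,1)$-component), one has $h^{\varphi}(\Theta^{\varphi}_{\partial/\partial t \wedge \partial/\partial \tb} v, v) \leq 0$ for every $v$.

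\emph{Step 4 (apply the subharmonicity lemma).} Once the twisted metric $h^{\varphi}$ on $\End(E)/F^0 \End(E)$ has semi-negative curvature, Lemma~\ref{lem:negative-curvature} applied to the holomorphic section $\wbar{u}$ shows that $\log h^{\varphi}(\wbar{u}, \wbar{u})$ is subharmonic on $\dst$, which is precisely the assertion. The main obstacle is the bookkeeping in Step~3: verifying the exact form of the curvature on the quotient and tracking the constants tightly enough to obtain the claimed threshold $b \geq 4r\binom{r+1}{3}$ rather than a slightly larger constant; a looser threshold follows immediately from the bounds above, and the stated one comes from exploiting the fact that the term $\theta\thetast \pi^{-1,1}$ only contributes on the single component $\End(E)^{-1,1}$.
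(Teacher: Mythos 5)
Your proposal is correct and follows essentially the same route as the paper: pass to the holomorphic quotient bundle $\End(E)/F^0 \End(E)$, twist the induced Hodge metric by $(-\log\abs{t})^{-b}$, control the curvature via the quotient-bundle curvature formula together with the improved bound for the Higgs field of $\End(E)$, and apply the negative-curvature subharmonicity lemma to the holomorphic section determined by $u$. The refinement needed to reach the stated threshold $b \geq 4r\binom{r+1}{3}$ is already contained in your displayed identity: since $\norm{\thetast_{\partial/\partial \tb}\,\pi^{-1,1}u} \leq \norm{\thetast_{\partial/\partial \tb}\,u}$ by orthogonality of the Hodge components, the third term is absorbed by the dropped middle term $-2\norm{\thetast_{\partial/\partial \tb}\,u}^2$, leaving the bound $2\,h\bigl(\theta_{\partial/\partial t}u,\theta_{\partial/\partial t}u\bigr)$ and hence the constant $4rC_0^2 = r\binom{r+1}{3}$, exactly as in the paper.
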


\begin{proof}
	We apply the result from above to the quotient bundle $\End(E)/F^0 \End(E)$, with
	the hermitian metric $h$ induced by the Hodge metric on $\End(E)$. According to
	\Cref{prop:curvature}, the curvature tensor of this metric satisfies
	\[
		h \bigl( \Theta_{\partial/\partial t \wedge \partial/\partial \tb} \, u, u \bigr)
		\leq 2h \bigl( \theta_{\partial/\partial t} u, \theta_{\partial/\partial t} u
		\bigr) \leq r \binom{r+1}{3} \frac{1}{\abs{t}^2 (\log \abs{t})^2} h(u, u),
	\]
	using the improved bound for the Higgs field of $\End(E)$ in
	\Cref{cor:Higgs-bound-End}. Just as in \Cref{par:hphi}, we now consider the
	modified hermitian metric
	\[
		\hphi = h \cdot e^{-\varphi} = h \cdot (-\log \abs{t})^{-b},
	\]
	for $b \in \NN$. Its curvature tensor satisfies
	\[
		\hphi \bigl( \Thetaphi_{\partial/\partial t \wedge \partial/\partial \tb} \, u, u \bigr)
		\leq \left( r \binom{r+1}{3} - \frac{b}{4} \right) \cdot 
		\frac{1}{\abs{t}^2 (\log \abs{t})^2} \hphi(u, u) \leq 0, 
	\]
	provided we choose $b \geq 4r \binom{r+1}{3}$. By assumption, $u$ gives a holomorphic
	section of the quotient bundle $\End(E)/F^0 \End(E)$, and so the result now
	follows from \Cref{lem:negative-curvature}.
\end{proof}

\newpar
We can now use the \define{maximum principle} to make the estimates in
\Cref{par:ineffective-estimates} effective. Since the operator $P_{\lambda} \in
\End(V)$ commutes with $T$, its defines a flat section of the bundle $\End(E)$ on
$\dst$. Define 
\[
	f = \sum_{k \leq -1} h \bigl( P_{\lambda}^{k,-k}, P_{\lambda}^{k,-k} \bigr),
\]
which is a smooth function on $\dst$. According to
\Cref{prop:twisted-subharmonic}, the function
\[
	\varphi = \log \bigl( f \cdot (-\log \abs{t})^{-b} \bigr)
\]
is subharmonic on $\dst$, where $b = 4r \binom{r+1}{3}$. We also know from
\Cref{par:ineffective-estimates} that, in a small neighborhood of the origin, $f$
is bounded from above by
\[
	C' (-\log \abs{t})^{2m(N) + 4 \sqrt{2r} C_0} \abs{t}^{2\delta(T)}.
\]
Since $b = 16 r C_0^2 \geq 2m(N) + 4 \sqrt{2r} C_0$, it follows that
\[
	\varphi - 2\delta(T) \log \abs{t} \leq \log C'
\]
for $\abs{t}$ sufficiently small. In particular, this means that the subharmonic
function $\varphi - 2\delta(T) \log \abs{t}$ is bounded from above on every compact
subset of $\Delta$. Now the maximum principle for subharmonic functions implies that
\[
	\varphi - 2\delta(T) \log \abs{t} \leq 
	\max_{\abs{t} = R} \Bigl( \varphi(t) - 2\delta(T) \log R \Bigr)
\]
for every $0 < R < 1$. Since $P_{\lambda} \in \End(V)$ is a flat section, we can
estimate the right-hand side with the help of \Cref{prop:polynomial-bound}.
After exponentiating, the conclusion is that
\[
	\sum_{k \leq -1} h \bigl( P_{\lambda}^{k,-k}, P_{\lambda}^{k,-k} \bigr)
	\leq C (-\log \abs{t})^{2b} \abs{t}^{2\delta(T)} 
	\quad \text{for $\abs{t} \leq R$,}
\]
where $b = 4r \binom{r+1}{3}$ and, assuming without loss of generality that $R >
e^{-1}$, 
\[
	C = R^{-2 \delta(T)} e^{4 \sqrt{2r} C_0 \pi} (-\log R)^{-2(b + 2 \sqrt{2r} C_0)}
	\norm{P_{\lambda}}_{\Phi(-1)}^2.
\]
Since \Cref{prop:N-norm} contains an upper bound on $\norm{P_{\lambda}}_{\Phi(-1)}$,
with a constant that only depends on $r = \dim V$ and on the minimal polynomial of $T
\in \GL(V)$, this gives us the first inequality in \Cref{thm:effective-estimates},
just with different notation. 

\newpar
To prove the other inequality, we apply the same argument to $t
\theta_{\partial/\partial t} - N$, viewed as a holomorphic section of the quotient
bundle $\End(E)/F^0 \End(E)$ on $\dst$. Everything works out because the Hodge norm
of $t \theta_{\partial/\partial t}$ is bounded from above by $C_0(-\log \abs{t})^{-1}$,
according to \Cref{cor:Higgs-bound}, and because $\norm{N}_{\Phi(-1)}$ is bounded by
virtue of \Cref{prop:N-norm}. This finishes the proof of the effective
estimates in \Cref{thm:effective-estimates}.

\subsection{Approximation by nilpotent orbits}

\newpar
In this section, we prove the second half of the nilpotent orbit theorem. Roughly
speaking, the result is that the original period mapping $\Phi(z)$ can be
approximated very well by the nilpotent orbit $\Phinil(z) = e^{zN} \Flim$. We fix an
arbitrary base point $o \in D$, and for simplicity, we denote by $\norm{v} = \norm{v}_o$ the
resulting norm on the vector space $V$. We use this norm to define the metric, and
hence the distance function $\dDch$, on the compact dual $\Dch$. 

\begin{pthm} \label{thm:approximation-nilpotent-orbit}
	There are constants $C > 0$, $x_0 < 0$, and $m \in \NN$ such that 
	\[
		\Phinil(z) = e^{z N} \Flim \in D  \quad \text{and} \quad
		d_D \bigl( \Phi(z), \Phinil(z) \bigr) \leq 
		C \abs{\Re z}^m e^{-\delta(T) \abs{\Re z}}
	\]
	for every $z \in \HH$ with $\Re z \leq x_0$. The constants $C, x_0$ only depend on
	the base point $o \in D$ and on the minimal polynomial of $T \in
	\GL(V)$; the integer $m$ only depends on $r = \rk E$.
\end{pthm}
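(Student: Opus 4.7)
The plan is to construct a path in the compact dual $\Dch$ joining $\Phi(z)$ to $\Phinil(z)$, bound its length using the effective estimates of \Cref{thm:effective-estimates}, and then upgrade the resulting bound on $d_{\Dch}$ to one on $d_D$.

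Define $\gamma \colon [0, \infty) \to \Dch$ by $\gamma(x) = e^{xN}\Phi(z-x)$, so that $\gamma(0) = \Phi(z)$. Since $N$ commutes with $e^{xN}$, we can rewrite $\gamma(x) = e^{zN} \cdot e^{-(z-x)N}\Phi(z-x)$, and then \Cref{prop:Flim} gives $\gamma(x) \to e^{zN} \Flim = \Phinil(z)$ as $x \to \infty$ (note that $\Im(z-x) = \Im z$ stays bounded). Using the chain rule and \Cref{lem:dPhi} for the differential of $\Phi$, together with the fact that $N$ commutes with $e^{xN}$, I find
\[
\gamma'(x) \equiv \Ad(e^{xN})\bigl(N - \theta_{\partial/\partial z}\bigr) \mod F^0 \End(V)_{\gamma(x)},
\]
where $\theta_{\partial/\partial z}$ is evaluated at $z-x$. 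Decomposing $N = \sum_k N^{k,-k}$ according to the Hodge structure at $\Phi(z-x)$, the components with $k \geq 0$ lie in $F^0 \End(V)_{\Phi(z-x)}$, so modulo $F^0$ the relevant operator is $-(\theta_{\partial/\partial z} - N^{-1,1}) - \sum_{k \leq -2} N^{k,-k}$ -- precisely the quantity bounded by \Cref{thm:effective-estimates}.

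To bound the length of $\gamma$ in the hermitian metric on $\Dch$ induced by the base point $o \in D$, I apply \eqref{eq:Dch-translation} to handle the translation by $e^{xN}$: its distortion of tangent-vector norms is at most $\norm{\Ad e^{xN}}_o$, which grows polynomially in $x$ because $N$ is nilpotent of index bounded by $r$. Conversion between the Hodge norm at $\Phi(z-x)$ and the base-point norm on $\End(V)$ costs a further polynomial factor in $\abs{\Re z} + x$, obtained by applying \Cref{prop:polynomial-bound} to the polarized variation on $\End(E)$. Combined with the exponential decay $e^{-\delta(T)(\abs{\Re z}+x)}$ from \Cref{thm:effective-estimates}, the integral
\[
\int_0^\infty (\abs{\Re z}+x)^{m'} e^{-\delta(T)(\abs{\Re z}+x)} \, dx
\]
converges (after substituting $u = \abs{\Re z}+x$) and yields $d_{\Dch}(\Phi(z), \Phinil(z)) \leq C \abs{\Re z}^{m'} e^{-\delta(T)\abs{\Re z}}$, provided $\abs{\Re z}$ is large enough that the effective estimates hold along the whole path.

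The hard part will be converting this $d_{\Dch}$-bound into the claimed $d_D$-bound and simultaneously verifying $\Phinil(z) \in D$. I plan to parameterize points near $\Phi(z)$ by the exponential map from $\mlie_{\Phi(z)}$, writing $\Phinil(z) = e^A \cdot \Phi(z)$ with $A \in \mlie_{\Phi(z)}$ once the points are close enough. The projection of $A$ to $\End(V)/F^0 \End(V)_{\Phi(z)}$ represents the corresponding tangent vector in $\Dch$; since $A$ is real, its positive-weight and negative-weight components under the Hodge decomposition of $\End(V)$ at $\Phi(z)$ are interchanged by $A \mapsto -A^{\dagger}$, giving $\norm{A}_{\Phi(z)} = \sqrt{2}\, \norm{A^{<0}}_{\Phi(z)}$. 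Hence $d_D(\Phi(z), \Phinil(z)) \approx \norm{A^{<0}}_{\Phi(z)}$ (Hodge norm), while $d_{\Dch}(\Phi(z), \Phinil(z)) \approx \norm{A^{<0}}_o$ (base-point norm); the ratio of these two norms on $\End(V)$ is polynomial in $\abs{\Re z}$ by another application of \Cref{prop:polynomial-bound}, and is absorbed by the exponential decay. This same comparison shows $\Phinil(z) \in D$ once $\abs{\Re z}$ is large enough: the $G$-invariant Hodge metric admits a \emph{uniform} neighborhood radius of $\Phi(z)$ inside $D$, and the exponentially small $d_{\Dch}$-distance (times the polynomial conversion factor) eventually undercuts this radius. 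Uniformity of $C$ and $x_0$ in $r$, $o$, and the minimal polynomial of $T$ is inherited from the corresponding uniformity in \Cref{thm:effective-estimates}, \Cref{prop:N-norm}, and \Cref{prop:polynomial-bound}, while the integer $m$ depends only on $r$.
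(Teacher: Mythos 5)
Your proposal is essentially the paper's own proof: the same curve $x \mapsto e^{xN}\Phi(z-x)$, the same use of \Cref{thm:effective-estimates} on its derivative modulo $F^0\End(V)$, the same polynomial control of $\Ad e^{xN}$ and of the conversion between the Hodge norm at $\Phi(z-x)$ and the base-point norm via \Cref{prop:polynomial-bound}, and your final "uniform neighborhood at $\Phi(z)$, distorted only polynomially" step is exactly the content of the paper's \Cref{lem:D-criterion}. The one point to tighten is the claimed dependence of the constants: since \Cref{prop:polynomial-bound} compares Hodge norms to $\Phi(-1)$ rather than to $o$, you should first normalize $\Phi(-1)=o$ by translating the whole period mapping by an element of $G$ (as the paper does at the start), otherwise the comparison constant between $\norm{\cdot}_{\Phi(-1)}$ and $\norm{\cdot}_o$ depends on the particular variation and not just on $o$ and the minimal polynomial of $T$.
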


\newpar
We first note that, due to the $G$-invariance of the metric on $D$, it is
enough to prove the theorem under the additional assumption that $\Phi(-1) = o$.
Here is why. Choose an element $g \in G$ such that $g \cdot \Phi(-1) = o$,
and consider the modified period mapping $\Phi'(z) = g \cdot \Phi(z)$. The monodromy
transformation changes to $T' = g T g^{-1}$, which has the same minimal polynomial;
the limiting Hodge filtration changes to $\Flim' = g \Flim$. If the theorem is known
for the modified period mapping $\Phi'$, then 
\[
	g \cdot e^{zN} \Flim = e^{zN'} \Flim' \in D \quad \text{for $\Re z \leq x_0$},
\]
with a constant $x_0$ that depends on $o \in D$ and on the minimal polynomial of
$T'$; but then clearly $e^{zN} \Flim \in D$ in the same range. In the same
way, the distance estimate for $\Phi'$ implies that for $\Phi$ itself.

\newpar
We assume from now on that $\Phi(-1) = o$.  The following lemma tells us how close a
point of $\Dch$ has to be to $\Phi(z)$ in order to belong to $D$. 

\begin{plem} \label{lem:D-criterion}
	There is a constant $\eps > 0$ such that
	\[
		\dDch \bigl( p, \Phi(z) \bigr) \leq \eps \cdot \abs{\Re z}^{-4C_0}
	\]
	implies both that $p \in D$ and that
	\[
		d_D \bigl( p, \Phi(z) \bigr) \leq 
		2 n e^{4C_0 \pi} \abs{\Re z}^{4C_0} \cdot \dDch \bigl( p, \Phi(z) \bigr).
	\]
	The exact value of $\eps$ only depends on the choice of base point $o \in D$.
\end{plem}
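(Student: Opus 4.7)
The plan is to translate the pair $(p, \Phi(z))$ by an element of $G$ back to a neighborhood of the base point $o$, where the two distance functions $d_D$ and $\dDch$ are comparable via the exponential chart of \Cref{lem:local-coordinates-Dch}, and to control the distortion using the polynomial Hodge-norm bound from \Cref{prop:polynomial-bound}. The key observation is that conjugation by an element moving $o$ to $\Phi(z)$ distorts lengths in $\End(V)$ only polynomially in $\abs{\Re z}$, because the Hodge norm at $\Phi(z)$ and the Hodge norm at $o$ are mutually polynomially bounded.

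First I would choose $g(z) \in G$ with $g(z) \cdot o = \Phi(z)$, so that $\norm{v}_{\Phi(z)} = \norm{g(z)^{-1} v}_o$ for every $v \in V$. Applying \Cref{prop:polynomial-bound} in both directions to any multi-valued flat section, one gets $\norm{v}_o \leq e^{2C_0 \pi} \abs{\Re z}^{2C_0} \norm{v}_{\Phi(z)}$ and likewise with the roles reversed, and therefore, for any operator $A \in \End(V)$,
\[
    \norm{A}_{\Phi(z)} = \norm{g(z)^{-1} A g(z)}_o \leq e^{4C_0 \pi} \abs{\Re z}^{4C_0} \norm{A}_o.
\]
Together with \eqref{eq:Dch-translation}, this yields
\[
    \dDch(g(z)^{-1} p, o) \leq e^{4C_0 \pi} \abs{\Re z}^{4C_0} \cdot \dDch(p, \Phi(z)).
\]

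Next I would invoke the local chart at $o$ provided by \Cref{lem:local-coordinates-Dch}: for some $\delta > 0$ depending only on $o$, every point $q \in \Dch$ with $\dDch(q, o) < \delta$ can be written uniquely as $q = e^A \cdot o$ with $A \in W = \bigoplus_{j<0} \End(V)_o^{j,-j}$ and $\norm{A}_o \leq 2 \dDch(q,o)$. Since $D$ is open in $\Dch$ and contains $o$, after shrinking $\delta$ we may assume the entire ball of $\dDch$-radius $\delta$ around $o$ lies in $D$. Choose $\eps > 0$ so that $e^{4C_0 \pi} \eps < \delta$. Then the hypothesis $\dDch(p, \Phi(z)) \leq \eps \abs{\Re z}^{-4C_0}$ forces $q = g(z)^{-1} p$ to lie in that ball, hence in $D$, and therefore $p = g(z) \cdot q \in D$ since $g(z) \in G$ preserves $D$.

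Finally, $G$-invariance of $d_D$ reduces the distance estimate to $d_D(p, \Phi(z)) = d_D(q, o)$ with $q = e^A \cdot o$ close to $o$. Writing the unique real decomposition compatible with $\mlie_o$ via \eqref{eq:iso-tangent-spaces}, the Riemannian distance $d_D(q,o)$ is bounded by a constant multiple of $\norm{A}_o$ (one only needs the exponential $\mlie_o \to D$ to be a diffeomorphism near the origin), and the dimension count gives a factor of at most $n$. Chaining these estimates gives
\[
    d_D(p, \Phi(z)) \leq n \cdot \norm{A}_o \leq 2n \, \dDch(q,o) \leq 2n \, e^{4C_0 \pi} \abs{\Re z}^{4C_0} \dDch(p, \Phi(z)),
\]
as asserted. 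The main obstacle is bookkeeping: one must verify that the two factors of $2$ and the dimensional constant $n$ from the chart combine into exactly $2n e^{4C_0 \pi}$, and that $\eps$ and $\delta$ truly depend only on the base point $o$ (this is transparent in the argument because $g(z)$ acts only by $G$-equivariance and the chart at $o$ is fixed once $o$ is chosen).
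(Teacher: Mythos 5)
Your argument is essentially the paper's own proof: translate the pair back to the base point by some $g \in G$ with $g \cdot o = \Phi(z)$, bound the distortion of $\Ad g$ on $\End(V)$ by $e^{4C_0\pi}\abs{\Re z}^{4C_0}$ using \Cref{prop:polynomial-bound} (implicitly with $o = \Phi(-1)$, the standing assumption at this point) together with \eqref{eq:Dch-translation}, and then use openness of $D$ and the local comparability of $d_D$ and $\dDch$ near $o$ to conclude. The one loose step is the final ``dimension count'' giving the factor $n$ in $d_D(q,o) \leq n\norm{A}_o$ --- note that $A$ lives in the holomorphic chart $W$, not in $\mlie_o$, so this is not literally the real exponential chart for $D$ --- but the paper handles the same point just by shrinking $\delta$ so that $d_D \leq 2\,\dDch$ on the ball (obtaining $2r$ rather than $2n$), and the precise value of the constant is immaterial for the application.
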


\begin{proof}
	Since $D$ is open in $\Dch$, there is a constant $\delta > 0$ such that 
	\[
		\menge{p \in \Dch}{\dDch(p, o) < \delta} \subseteq D;
	\]
	because $d_D$ and $\dDch$ are continuous, we can arrange moreover that
	\[
		d_D(p, o) \leq 2 \dDch(p, o)
	\]
	on the open ball in question. Since we are using the Hodge norm at the base point
	$o \in D$ to define the metric on $\Dch$, this second condition means that
	$\delta$ depends on the choice of $o \in D$. Fix a point $z \in \HH$ with
	$\Re z \leq -1$, and choose $g \in G$ such that $\Phi(z) = g \cdot \Phi(-1)$. For any $v
	\in V$, we have
	\[
		\norm{v}_{\Phi(z)} = \norm{v}_{g \cdot \Phi(-1)} 
		= \norm{g^{-1} v}_{\Phi(-1)}.
	\]
	According to \Cref{prop:polynomial-bound}, 
	\[
		e^{-2C_0 \pi} \abs{\Re z}^{-2C_0} \norm{v}_{\Phi(-1)} \leq 
		\norm{v}_{\Phi(z)} 
		\leq e^{2C_0 \pi} \abs{\Re z}^{2C_0} \norm{v}_{\Phi(-1)},
	\]
	where $C_0 = \frac{1}{2} \sqrt{\binom{r+1}{3}}$ and $r = \rk E$. Putting both
	things together, we get an upper bound on the operator norms of $g$ and $g^{-1}$,
	and therefore
	\[
		\max \bigl( \norm{g}_{\Phi(-1)}, \norm{g^{-1}}_{\Phi(-1)} \bigr) 
		\leq \sqrt{r} e^{2C_0 \pi} \abs{\Re z}^{2C_0}
	\]
	This gives us a bound on the operator norm of $\Ad g \colon \End(V) \to \End(V)$;
	indeed,
	\[
		\norm{gA g^{-1}}_{\Phi(-1)} \leq 
		\norm{g}_{\Phi(-1)} \norm{A}_{\Phi(-1)} \norm{g^{-1}}_{\Phi(-1)}
		\leq r e^{4 C_0 \pi} \abs{\Re z}^{4C_0} \norm{A}_{\Phi(-1)}.
	\]
	According to the discussion in \Cref{par:Adg}, for any $p \in \Dch$, we have
	\[
		\dDch \bigl( g^{-1} p, o \bigr)
		\leq r e^{4C_0 \pi} \abs{\Re z}^{4C_0} \cdot \dDch \bigl( p, \Phi(z) \bigr)
	\]
	Define $\eps = e^{-4C_0 \pi} \cdot \delta/r$. As long as $\dDch \bigl( p, \Phi(z)
	\bigr) < \eps \cdot \abs{\Re z}^{-4C_0}$, we can conclude from this that $g^{-1} p
	\in D$ and, therefore, $p \in D$; we also get the distance estimate
	\[
		d_D \bigl( p, \Phi(z) \bigr) = d_D \bigl( g^{-1} p, o \bigr) 
		\leq 2 \dDch \bigl( g^{-1} p, o \bigr)
		\leq 2 r e^{4C_0 \pi} \abs{\Re z}^{4C_0} \cdot \dDch \bigl( p, \Phi(z) \bigr).
	\]
	This is what we wanted to show.
\end{proof}

\newpar
Fix a point $z \in \HH$, and consider the curve
\[
	[0, \infty) \to \Dch, \quad x \mapsto e^{xN} \Phi(z-x).
\]
At $x = 0$, this starts at the point $\Phi(z) \in D$; as $x \to \infty$, it converges
to the point 
\[
	\lim_{x \to \infty} e^{x N} \Phi(z-x) = e^{zN} \Flim \in \Dch.
\]
The derivative at a given point $x > 0$ is easily seen to be 
\[
	N - e^{xN} \theta_{\partial/\partial z}(z - x) e^{-xN} \mod 
	F^0 \End(V)_{e^{xN} \Phi(z-x)}.
\]
We know from the effective estimates in \Cref{thm:effective-estimates} that 
\[
	\norm{\theta_{\partial/\partial z} - N^{-1,1}}_{\Phi(z-x)}^2 + 
	\sum_{k \leq -2} \norm{N^{k,-k}}_{\Phi(z-x)}^2
	\leq C^2 \bigl( \abs{\Re z} + x \bigr)^{2b} e^{-2 \delta (\abs{\Re z} + x)}.
\]
where $\delta = \delta(T)$, and where the value of $C$ depends on $r = \rk E$ and
on the minimal polynomial of $T \in \GL(V)$. Arguing as in the proof of
\Cref{lem:conjugation}, it follows that the length of the derivative, measured using
the metric on $\Dch$, is bounded from above by
\[
	C' \bigl( \abs{\Re z} + x \bigr)^{b + 2\sqrt{2r} C_0} e^{-\delta(\abs{\Re z} + x)} 
	\sum_{k=0}^{m(N)} \frac{(2x)^k}{k!} \norm{N}_{\Phi(-1)}^k.
\]
After integrating this expression over the interval $[0, \infty)$, we get a bound
for the distance between $\Phi(z)$ and $e^{zN} \Flim$ that looks like
\[
	\dDch \bigl( \Phi(z), e^{zN} \Flim \bigr)
	\leq C'' \abs{\Re z}^{b + 2\sqrt{2r} C_0} e^{-\delta \abs{\Re z}},
\]
with a constant $C'' > 0$ whose exact value is somewhat complicated, but depends only
on the two integers $m(N)$ and $r = \rk E$ and on the minimal polynomial of $T \in
\GL(V)$. 

\newpar
Now let us choose $x_0 < 0$ in such a way that $\Re z \leq x_0$ implies
\[
	C'' \abs{\Re z}^{b + 2\sqrt{2r} C_0} e^{-\delta \abs{\Re z}} \leq \eps \cdot
	\abs{\Re z}^{-4C_0},
\]
where $\eps$ is the constant from \Cref{lem:D-criterion}. We can then conclude
that $e^{zN} \Flim \in D$; we also get the distance estimate
\[
	d_D \bigl( \Phi(z), e^{z N} \Flim \bigr) \leq 
	2r e^{4 C_0 \pi} \abs{\Re z}^{4 C_0} \cdot 
	C'' \abs{\Re z}^{b + 2\sqrt{2r} C_0} e^{-\delta \abs{\Re z}}.
\]
Since all the constants on the right-hand side have the correct dependence on
parameters, this completes the proof of the second half of the nilpotent orbit theorem.

\newpar
We close this chapter with two remarks about the nilpotent orbit theorem:
\begin{enumerate}
	\item The nilpotent orbit theorem guarantees that $e^{zN} \Flim \in D$ for $\Re z
		\leq x_0$, where the constant $x_0 < 0$ only depends on $\rk E$ and on the
		minimal polynomial of $T$. One may wonder whether this actually holds for every
		$z \in \HH$. We do not know the answer to this question, and we were not able
		to find any relevant examples in the literature.
	\item One can easily prove a variant of the nilpotent orbit theorem for the
		filtration $\Psi_S(0)$. The statement is that there are constants $C >
		0$ and $m \in \NN$ such that
		\[
			e^{z(S+N)} \Psi_S(0) \in D  \quad \text{and} \quad
			d_D \bigl( \Phi(z), e^{z(S+N)} \Psi_S(0) \bigr) \leq 
			C \abs{\Re z}^m e^{-\delta(T) \abs{\Re z}},
		\]
		provided that $\abs{\Re z} \gg 0$. In fact, \Cref{lem:exponential} gives us the
		distance estimate
		\[
			\dDch \bigl( e^{-\abs{\Re z} \, S} \Psi_S(0), \Flim \bigr) \leq C
			e^{-\delta(T) \abs{\Re z}},
		\]
		and so the desired result follows from \Cref{thm:approximation-nilpotent-orbit}
		and its proof. 
\end{enumerate}

\section{Convergence of the rescaled period mapping}
\label{chap:rescaled}

\newpar
In this chapter, we use the nilpotent orbit theorem and its consequences to show
that the \define{rescaled period mapping}
\[
	\PhiSH \colon \HH \to D, \quad 
	\PhiSH(z) = e^{\half \log \abs{\Re z} \, H} e^{-\half(z-\zb) (S+N)} \Phi(z),
\]
has a well-defined limit in $D$ as $\abs{\Re z} \to \infty$. Recall from
\Cref{par:splitting} that the operator $H \in \End(V)$ is a splitting for the
monodromy weight filtration $W_{\bullet}$.

\begin{pthm} \label{thm:convergence}
	Let $H \in \End(V)$ be as in \Cref{prop:splitting}. Then the limit
	\[
		e^{-N} F_H = \lim_{\Re z \to -\infty} \PhiSH(z) \in D
	\]
	exists in the period domain. The resulting filtration $F_H \in \Dch$ satisfies
	\[
		T_s(F_H^{\bullet}) \subseteq F_H^{\bullet}, \quad
		H(F_H^{\bullet}) \subseteq F_H^{\bullet}, \quad 
		N(F_H^{\bullet}) \subseteq F_H^{\bullet-1}.
	\]
\end{pthm}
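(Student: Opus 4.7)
The plan is to follow the outline in \parref{par:convergence-proof}. The first step is to rewrite the rescaled period mapping in a form that exposes two successive limits. Combining $\Phi(z) = e^{z(S+N)} \Psi_S(e^z)$ with $-\half(z-\zb)(S+N) = -i\Im z \cdot (S+N)$ collapses the two exponential factors containing $S+N$ into $e^{-\abs{\Re z}(S+N)} = e^{-\abs{\Re z} S} e^{-\abs{\Re z} N}$, using $[S,N]=0$. Now $[H,S] = 0$ lets me commute $e^{\half \log\abs{\Re z} H}$ past $e^{-\abs{\Re z} S}$, and the identity $e^{sH} N e^{-sH} = e^{-2s} N$ coming from $[H,N]=-2N$ shows that, with $s = \half \log\abs{\Re z}$, the factor $e^{sH} \cdot e^{-\abs{\Re z} N}$ equals $e^{-N} \cdot e^{sH}$ (the $\abs{\Re z}$ in the exponent is absorbed exactly by $e^{-2s} = \abs{\Re z}^{-1}$). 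The net result is
\[
	\PhiSH(z) = e^{-N} \cdot e^{\half \log\abs{\Re z} \, H} \cdot e^{-\abs{\Re z} \, S} \cdot \Psi_S(e^z),
\]
reducing the problem to evaluating two successive limits in the compact dual $\Dch$.

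The second step is to take these limits in turn. The nilpotent orbit theorem (\Cref{thm:nilpotent-convergence-intro}) gives $\Psi_S(e^z) \to \Psi_S(0)$ at rate $O(e^{-\abs{\Re z}})$; since the eigenvalues of $S$ lie in a half-open interval of length $1$, \Cref{lem:Ad-S} bounds the distortion of $e^{-\abs{\Re z} S}$ by $e^{\rho \abs{\Re z}}$ with $\rho < 1$, so the error on $\Psi_S(e^z) - \Psi_S(0)$ is exponentially small. Hence \Cref{lem:exponential} applied to $-S$ produces $\Flim = \lim_{\abs{\Re z} \to \infty} e^{-\abs{\Re z} S} \Psi_S(0)$, and then the same lemma applied to $H/2$ with parameter $\log\abs{\Re z} \to \infty$ yields $F_H = \lim e^{\half \log\abs{\Re z} H} \Flim$ in $\Dch$; the polynomial distortion coming from $\Ad e^{\half \log\abs{\Re z} H}$ is negligible against the exponential decay of the error term. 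Altogether $\PhiSH(z) \to e^{-N} F_H$ in $\Dch$. The three algebraic properties of $F_H$ transfer through these limits: $H(F_H^\bullet) \subseteq F_H^\bullet$ is automatic from \Cref{lem:exponential}; $T_s(F_H^\bullet) \subseteq F_H^\bullet$ follows from $T_s(\Flim^\bullet) \subseteq \Flim^\bullet$ (\Cref{prop:Flim}) together with $[T_s,H]=0$ from Proposition~\ref{prop:splitting}(d); and $N(F_H^\bullet) \subseteq F_H^{\bullet-1}$ follows from $N(\Flim^\bullet) \subseteq \Flim^{\bullet-1}$ together with the observation that conjugation by $e^{sH}$ merely rescales $N$ by $e^{2s}$ and so preserves the filtration shift by $1$.

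The main obstacle will be to upgrade this convergence from $\Dch$ to the open subset $D$. For this I would use the Hodge norm estimates. Because $H \in \glie$ by Proposition~\ref{prop:splitting}(c) and $-i(S+N) \in \glie$ (as $S$ and $N$ are both self-adjoint under $Q$), the element $g_z = e^{\half \log\abs{\Re z} H} e^{-i \Im z (S+N)}$ lies in $G$, so $\PhiSH(z) = g_z \cdot \Phi(z) \in D$ for every $z$. Given a nonzero $v \in E_k(H)$, one has $e^{-\half \log\abs{\Re z} H} v = \abs{\Re z}^{-k/2} v$, while $e^{i \Im z(S+N)} v$ has $E_k(H)$-component of comparable size together with lower-order contributions in $W_{k-2}$; the Hodge norm estimates (\Cref{thm:Hodge-norm-estimates}) applied to the weight-space decomposition associated with $H$ then give $\norm{v}_{\PhiSH(z)}^2 \sim 1$, uniformly on strips $\abs{\Im z} \leq y_0$ and hence everywhere by the $2 \pi i$-periodicity of $\PhiSH$. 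The family of rescaled Hodge inner products on $V$ is therefore uniformly bounded above and below by positive constants; extracting a subsequential limit along any sequence $\Re z_n \to -\infty$ produces a positive definite hermitian form on $V$ that, by continuity, polarizes the limit filtration $e^{-N} F_H$. This forces $e^{-N} F_H \in D$ and completes the proof.
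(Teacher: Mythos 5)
Your reduction $\PhiSH(z) = e^{-N} e^{\half \log\abs{\Re z}\, H} e^{-\abs{\Re z}\, S} \Psi_S(e^z)$, the two-stage limit in $\Dch$ via the nilpotent orbit theorem, \Cref{lem:Ad-S} and \Cref{lem:exponential}, and the derivation of the three relations for $F_H$ coincide with the paper's argument. The problem is the final step, where you pass from convergence in $\Dch$ to membership in $D$.

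The two-sided bound $\norm{v}_{\PhiSH(z)}^2 \sim 1$ that you invoke is only available (and only sketchily justified in your text) for $v$ lying in a \emph{single} weight space $E_k(H)$; from this you jump to ``the family of rescaled Hodge inner products on $V$ is uniformly bounded above and below,'' and that inference fails. For a general $v$ with components in several weight spaces, the rescaling makes all components of comparable size, so the Hodge norm estimates cannot exclude cancellation between them; a subsequential limit of the rescaled forms is therefore a priori only positive \emph{semi}definite, and positive definiteness on each $E_k(H)$ does not imply positive definiteness on $V$ (a hermitian form can be definite on every summand of a direct sum decomposition and still be degenerate on the sum). Note also that the limiting lemma really needs lower bounds for vectors of the moving subspaces $V_{\PhiSH(z)}^{p,q}$, not for flat weight vectors. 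The paper closes exactly this gap with a separate device: it takes only the \emph{upper} bound from the Hodge norm estimates, and for the lower bound forms the Gram matrix $M(z)$ of the rescaled inner products of a fixed basis $v_1, \dotsc, v_r$ and observes that $\det M(z)$ is the Hodge norm of $v_1 \wedge \dotsb \wedge v_r$ in the rank-one variation $\det E$; since $H$ and $N$ kill the top wedge and $S$ contributes only a unimodular factor, this determinant is a nonzero constant, so $M(z)^{-1}$ stays bounded, the two-sided bound becomes uniform in $v$, and a short compactness argument in the Grassmannian then yields $e^{-N} F_H \in D$. (Your route could be repaired differently, by applying \Cref{thm:comparison} at the level of metrics — i.e.\ uniformly in the vector — together with the observation that in the model variation built from $H$, $-N$ and $S$ the rescaled metric is exactly the constant reference inner product; but some genuinely uniform input of this kind is indispensable. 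A smaller point in the same direction: even for $v \in E_k(H)$ your lower bound needs uniformity in $\Im z$, since $e^{i \Im z (S+N)} v$ is a varying vector; the comparison with the model, where $e^{i \Im z\, S}$ acts unitarily, is the cleanest way to handle this too.)
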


We remind the reader that the two operators $H$ and $i(S+N)$ both belong to the Lie
algebra $\glie$ of the real group $G = \Aut(V,Q)$, which means that 
\[
	e^{\half \log \abs{\Re z} \, H} e^{-\half(z-\zb) (S+N)} \in G.
\]
This is why the rescaled period mapping stays in $D$. Also note that the rescaled
period mapping depends both on $S$ and on $H$; we will see during the proof that the
filtration $F_H \in \Dch$ only depends on $H$, justifying the notation.

\subsection{Proof of convergence in the compact dual}

\newpar
Let us first rewrite everything in terms of $\Psi_S$. We have
\begin{align*}
	e^{\half \log \abs{\Re z} \, H} e^{-\half(z-\zb) (S+N)} \Phi(z) 
	&= e^{\half \log \abs{\Re z} \, H} e^{-\half(z-\zb) (S+N)} e^{z(S+N)} \Psi_S(e^z) \\
	&= e^{\half \log \abs{\Re z} \, H} e^{-\abs{\Re z} (S+N)} \Psi_S(e^z).
\end{align*}
The relation $[H,N] = -2N$ implies that 
\begin{equation} \label{eq:AdY-on-N}
	e^{\half \log x \, H} N e^{-\half \log x \, H} = e^{-\log x} \cdot N = \frac{N}{x},
\end{equation}
and therefore gives us the useful identity
\[
	e^{\half \log x \, H} e^{-x N} e^{-\half \log x \, H} = e^{-N}.
\]
Putting everything together, we arrive at
\begin{equation} \label{eq:Phi-rescaled}
	e^{\half \log \abs{\Re z} \, H} e^{-\half(z-\zb) (S+N)} \Phi(z) 
	= e^{-N} e^{\half \log \abs{\Re z} \, H} e^{-\abs{\Re z} \, S} \Psi_S(e^z),
\end{equation}
due to the fact that $[S,N] = [S,H] = 0$ (by \Cref{prop:splitting}).

\newpar
We know from the nilpotent orbit theorem that $\Psi_S(e^z)$ converges to $\Psi_S(0)$
at a rate proportional to $\abs{e^z} = e^{-\Re z}$. The effect of the two exponential
factors is controlled by \Cref{lem:exponential}. For the exponential factor
$e^{\half \log \abs{\Re z} \, H}$, the filtration by increasing eigenvalues of $H$ is
exactly the monodromy weight filtration $W_{\bullet}$;
moreover, the rate of convergence is $e^{-\half \log \abs{\Re z}} = \abs{\Re
z}^{-\half}$, since the eigenvalues of $H$ will generally be consecutive integers. For
the other exponential factor $e^{-\abs{\Re z} \, S} = e^{\abs{\Re z}(-S)}$, the relevant
filtration is by \emph{decreasing} eigenvalues of $S$ (because of the minus sign);
the rate of convergence is $e^{-\delta \abs{\Re z}}$, where $\delta > 0$ is the
minimal distance among consecutive eigenvalues of $S$.

\newpar
Let us now prove the convergence of the rescaled period mapping. Since $\Psi_S \colon
\Delta \to \Dch$ is holomorphic, there is a constant $C > 0$ such that 
\[
	\dDch \bigl( \Psi_S(e^z), \Psi_S(0) \bigr) \leq C e^{-\abs{\Re z}} 
	\quad \text{for $\abs{\Re z} \gg 0$.}
\]
According to \Cref{lem:Ad-S}, the operator norm of $\Ad e^{-\abs{\Re z} \, S}$ is
bounded by a constant multiple of $e^{(\alphamax - \alphamin) \abs{\Re z}}$. From
\eqref{eq:Dch-translation}, we therefore get
\[
	\dDch \Bigl( e^{-\abs{\Re z} \, S} \Psi_S(e^z), \, e^{-\abs{\Re z} \, S} \Psi_S(0) \Bigr)
	\leq C' e^{-\delta(T) \abs{\Re z}},
\]
using the fact that $(1 + \alphamin) - \alphamax \geq \delta(T)$ is greater or
equal to the minimal distance between adjacent eigenvalues of $T$. 

\newpar \label{par:lem-exponential-used}
Recall that the limiting Hodge filtration $\Flim \in \Dch$ satisfies
\[
	\Flim = \lim_{x \to \infty} e^{-x S} \Psi_S(0)
\]
and that \Cref{lem:exponential} gives us a distance estimate of the form
\[
	\dDch \Bigl( e^{-\abs{\Re z} \, S} \Psi_S(0), \Flim \Bigr) \leq C''
	e^{-\delta(T) \abs{\Re z}}.
\]
Because of the triangle inequality, we then get
\begin{equation} \label{eq:nilpotent-orbit-estimate}
	\dDch \Bigl( e^{-\abs{\Re z} \, S} \Psi_S(e^z), \Flim \Bigr)
	\leq (C' + C'') e^{-\delta(T) \abs{\Re z}}.
\end{equation}

\newpar \label{par:FH}
Next, we have to analyze the effect of the second exponential factor $e^{\half \log
\abs{\Re z} \, H}$. On the one hand, we have
\[
	\dDch \Bigl( e^{\half \log \abs{\Re z} \, H} e^{-\abs{\Re z} \, S} \Psi_S(t), \,
	e^{\half \log \abs{\Re z} \, H} \Flim \Bigr) 
	\leq C''' \abs{\Re z}^{2m(N)} e^{-\delta(T) \abs{\Re z}},
\]
due to the fact that the operator norm of $\Ad e^{\half \log \abs{\Re z} \, H}$ is bounded
by a constant multiple of $\abs{\Re z}^{2m}$ by \Cref{lem:Ad-S}; here $m(N)$
is the largest integer such that $N^m \neq 0$. On the other hand, the limit
\[
	F_H = \lim_{\abs{\Re z} \to \infty} e^{\half \log \abs{\Re z} \, H} \Flim \in \Dch
\]
exists by \Cref{lem:exponential}. Putting everything together, we find that the
limit
\[
	\lim_{\abs{\Re z} \to \infty} e^{\half \log \abs{\Re z} \, H} e^{-\abs{\Re z} \,
	S} \Psi_S(e^z) = F_H
\]
exists in $\Dch$. It follows that the Hodge filtrations of the rescaled period
mapping
\[
	\PhiSH(z) = e^{\half \log \abs{\Re z} \, H} e^{-\half(z-\zb) (S+N)} \Phi(z) \in D
\]
converge, in the compact dual $\Dch$, to the filtration $e^{-N} F_H$.

\newpar
The filtration $F_H$ can be described concretely as follows. First, we take the
filtration $\Psi_S(0)$ from the nilpotent orbit theorem and make it compatible with
the eigenspace decomposition of $S$, by projecting to the subquotients of the
filtration by \emph{decreasing} eigenvalues of $S$. By construction,
\[
	T_s \cdot \Flim^{\bullet} = \Flim^{\bullet}.
\]
Similarly, the filtration $F_H$ is obtained by starting from $\Flim$,
and making it compatible with the eigenspace decomposition of $H$ by projecting to
the subquotients of the monodromy weight filtration $W_{\bullet}$ (which is the
filtration by \emph{increasing} eigenvalues of $H$). This gives us the two relations
\[
	T_s \cdot F_H^{\bullet} = F_H^{\bullet} \quad \text{and} \quad
	H \cdot F_H^{\bullet} = F_H^{\bullet}.
\]
We already know from the nilpotent orbit theorem that $N \cdot \Flim^{\bullet} \subseteq
\Flim^{\bullet-1}$. From this, we can easily deduce that
\[
	N \cdot F_H^{\bullet} \subseteq F_H^{\bullet-1}.
\]
Indeed, the filtration $F_H \in \Dch$ was defined in such a way that
\[
	F_H^p = \lim_{x \to \infty} e^{x H} \Flim^p.
\]
From \eqref{eq:AdY-on-N}, we have $e^{-x H} N e^{x H} = e^{2x} N$, which gives
\[
	N \cdot F_H^p 
	= \lim_{x \to \infty} N e^{x H} \Flim^p
	= \lim_{x \to \infty} e^{x H} N \Flim^p \\
	\subseteq \lim_{x \to \infty} e^{x H} \Flim^{p-1} = F_H^{p-1}.
\]
This is what we wanted to prove.

\subsection{Proof that the limit belongs to the period domain}

\newpar
It remains to argue that $e^{-N} F_H \in D$, and hence that 
\[
	\PhiSH(z) = e^{\half \log \abs{\Re z} \, H} e^{-\half(z-\zb) (S+N)} \Phi(z)
\]
actually converges to a polarized Hodge structure on $V$. From the Hodge norm
estimates, we know that the corresponding inner products
\[
	\bigl\langle v, \, w \bigr\rangle_{\PhiSH(z)} =
	\Bigl\langle
		e^{\half(z-\zb) (S+N)} e^{-\half \log \abs{\Re z} \, H} v, \,
		e^{\half(z-\zb) (S+N)} e^{-\half \log \abs{\Re z} \, H} w 
	\Bigr\rangle_{\Phi(z)}
\]
remain \emph{bounded} as $\abs{\Re z} \to \infty$, for every $v,w \in V$.
Equivalently, if we fix a norm $\norm{-}$ on the vector space $V$, we have an upper
bound
\[
	\bigl\lVert v \bigr\rVert_{\PhiSH(z)}^2 = 
	\bigl\lVert e^{\half(z-\zb) (S+N)} e^{-\half \log \abs{\Re z} \, H} v 
	\bigr\rVert_{\Phi(z)}^2 \leq C \norm{v}^2
\]
for some constant $C > 0$. To prove the convergence, we need a lower bound. This can
easily be obtained with the help of the following trick.

\newpar
Fix a basis $v_1, \dotsc, v_r \in V$, and consider the $r \times r$-matrix $M(z)$ with
entries
\[
	\Bigl\langle
		e^{\half(z-\zb) (S+N)} e^{-\half \log \abs{\Re z} \, H} v_i, \,
		e^{\half(z-\zb) (S+N)} e^{-\half \log \abs{\Re z} \, H} v_j 
	\Bigr\rangle_{\Phi(z)}^2
\]
We need to bound the inverse matrix $M(z)^{-1}$, and since we know that all entries
of $M(z)$ stay bounded as $\abs{\Re z} \to \infty$, we only need to control the
function $\det M(z)$. But the determinant can be computed in a different way.
The wedge product
\[
	v_1 \wedge \dotsb \wedge v_r \in \det V
\]
is a multi-valued flat section of the variation of Hodge structure on $\det E$. Since
$\rk(\det E) = 1$, we have, with the appropriate notation,
\begin{align*}
	H(v_1 \wedge \dotsb \wedge v_r) &= N(v_1 \wedge \dotsb \wedge v_r) = 0, \\
	S(v_1 \wedge \dotsb \wedge v_r) &= \alpha \cdot v_1 \wedge \dotsb \wedge v_r)
\end{align*}
for some $\alpha \in \RR$. Therefore the exponential factors act trivially and
\[
	\det M(z) = \norm{v_1 \wedge \dotsb \wedge v_r}_{\Phi(z)}^2.
\]
But now the variation of Hodge structure on $\det E$ can only have a single Hodge
type $(p,q)$, with $p+q = kr$. Therefore the Hodge norm of $v_1 \wedge \dotsb \wedge
v_r$ is equal to
\[
	\norm{v_1 \wedge \dotsb \wedge v_r}_{\Phi(z)}^2 
	= (-1)^q Q(v_1 \wedge \dotsb \wedge v_r, v_1 \wedge \dotsb \wedge v_r),
\]
which is $(-1)^q$ times the determinant of the $r \times r$-matrix with entries
$Q(v_i,v_j)$. Anyway, the conclusion is that $\det M(z)$ is a nonzero constant. This
implies that the inverse matrix $M(z)^{-1}$ is also bounded, and hence that
\[
	\frac{1}{C} \norm{v}^2 \leq 
	\bigl\lVert e^{\half(z-\zb) (S+N)} e^{-\half \log \abs{\Re z} \, H} v 
	\bigr\rVert_{\Phi(z)}^2 \leq C \norm{v}^2
\]
for a suitable constant $C > 0$ and all $v \in V$.

\newpar
Now we get the result that we want from the following lemma. Pretty much the same
argument also appears in \cite[Lem~4.2.1]{Kashiwara}.

\begin{plem}
	Let $f \colon \NN \to D$ be a sequence of points of $D$ such that:
	\begin{enumerate}
		\item The limit $\lim_{m \to \infty} f(m)$ exists in $\Dch$.
		\item There is a constant $C > 0$ such that
			\[
				\frac{1}{C} \norm{v}^2 \leq \norm{v}_{f(m)}^2 \leq C \norm{v}^2
			\]
			for all vectors $v \in V$.
	\end{enumerate}
	Then $\lim_{m \to \infty} f(m) \in D$.
\end{plem}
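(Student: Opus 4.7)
The plan is to translate the uniform norm bound into compactness in the symmetry group $G$ and use that $G$ is closed in $\GL(V)$. First I would fix a base point $o \in D$ and, by equivalence of norms on a finite-dimensional vector space, assume without loss of generality that the reference norm in the hypothesis is $\norm{-}_o$ (at the cost of enlarging the constant $C$). Since $G$ acts transitively on $D$, pick $g_m \in G$ with $f(m) = g_m \cdot o$. The transformation rule $\inner{gv}{gw}_{g \cdot o} = \inner{v}{w}_o$ gives $\norm{v}_{f(m)} = \norm{g_m^{-1} v}_o$, so the hypothesis becomes
\[
	\frac{1}{C} \norm{v}_o^2 \leq \norm{g_m^{-1} v}_o^2 \leq C \norm{v}_o^2
	\quad \text{for all $v \in V$ and all $m$,}
\]
which says that the operator norms of $g_m$ and $g_m^{-1}$ (computed with $\norm{-}_o$) are uniformly bounded by $\sqrt{C}$.

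Next I would apply the Bolzano--Weierstrass theorem in $\End(V)$ to extract a subsequence along which $g_{m_k} \to g_\infty$, and then a further subsequence along which $g_{m_k}^{-1} \to h$ in $\End(V)$. Passing to the limit in $g_{m_k} g_{m_k}^{-1} = \id$ yields $g_\infty h = \id$, so $g_\infty \in \GL(V)$. Because $G \subseteq \GL(V)$ is cut out by the polynomial equations $Q(gv, gw) = Q(v,w)$, it is closed in $\GL(V)$, and hence $g_\infty \in G$. The $\GL(V)$-action on $\Dch$ is continuous, so $f(m_k) = g_{m_k} \cdot o \to g_\infty \cdot o$ in $\Dch$. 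Combined with the hypothesis that the full sequence $f(m)$ converges to some $F_\infty \in \Dch$ and the Hausdorff property of $\Dch$, this forces $F_\infty = g_\infty \cdot o$, which lies in $D$ because $g_\infty \in G$ and $o \in D$.

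There is no serious obstacle here; the argument is essentially a compactness argument in $\End(V)$. The only delicate point is ensuring that the subsequential limit $g_\infty$ is actually invertible, which is precisely why the hypothesis of the lemma contains a \emph{lower} bound on $\norm{v}_{f(m)}^2$ and not just an upper bound: the lower bound is what translates into uniform boundedness of the inverses $g_m^{-1}$, and without it the limit $g_\infty$ could fail to land in $\GL(V)$, which would correspond geometrically to the Hodge decomposition degenerating as $m \to \infty$.
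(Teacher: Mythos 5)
Your argument is correct, but it takes a different route from the paper. You lift the sequence to the group: using transitivity of $G$ on $D$ you write $f(m) = g_m \cdot o$, convert the two-sided norm bound into uniform bounds on the operator norms of $g_m$ and $g_m^{-1}$, extract convergent subsequences in $\End(V)$, and use closedness of $G$ in $\GL(V)$ plus continuity of the action on $\Dch$ to identify the limit as $g_\infty \cdot o \in D$. The paper instead works directly with the Hodge decompositions: it passes to a subsequence so that each subspace $V_{f(m)}^{p,q}$ converges in the Grassmannian to some $W^{p,q}$, uses the \emph{lower} bound $\norm{v}_{f(m)}^2 \geq \frac{1}{C}\norm{v}^2$ to see that $(-1)^q Q$ stays positive definite on $W^{p,q}$ in the limit, and deduces from positivity and $Q$-orthogonality that $V = \bigoplus W^{p,q}$ is again a polarized Hodge structure. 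Your version is shorter and exploits the homogeneous-space structure already established (transitivity of $G$, closedness of $G$); the paper's version is more hands-on linear algebra, makes explicit exactly where positivity is used, and exhibits the limiting Hodge decomposition as the limit of the subspaces $V_{f(m)}^{p,q}$, which is the form of the statement that gets used elsewhere (it is also essentially Kashiwara's argument, which the paper cites). One small slip in your closing remark: with your normalization $\norm{v}_{f(m)} = \norm{g_m^{-1}v}_o$, the \emph{upper} bound is what controls $g_m^{-1}$ and the \emph{lower} bound controls $g_m$, not the other way around; the substantive point — that both bounds are needed to make the subsequential limit invertible, i.e.\ to prevent the Hodge structures from degenerating — is right, and the proof body states the bounds correctly.
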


\begin{proof}
	For each $m \in \NN$, we have the Hodge decomposition
	\[
		V = \bigoplus_{p+q=n} V_{f(m)}^{p,q}.
	\]
	After passing to a subsequence, we can assume that each limit
	\[
		W^{p,q} = \lim_{m \to \infty} V_{f(m)}^{p,q}
	\]
	exists (by compactness of the Grassmannian). We have to prove that
	\[
		V = \bigoplus_{p+q=n} W^{p,q},
	\]
	and that this Hodge structure of weight $n$ is polarized by the pairing $Q$. Since
	the different subspaces $W^{p,q}$ are obviously orthogonal to each other under
	$Q$, it suffices to show that $(-1)^q Q$ is positive definite on $W^{p,q}$. By hypothesis,
	\[
		(-1)^q Q(v,v) = \norm{v}_{f(m)}^2 \geq \frac{1}{C} \norm{v}^2
	\]
	for all $v \in V_{f(m)}^{p,q}$. After passing to the limit, the same is then
	true for $v \in W^{p,q}$, which means that $(-1)^q Q$ is positive definite. It is easy
	to deduce from this that
	\[
		W^{p,q} \cap W^{p',q'} = \{0\}
	\]
	whenever $(p,q) \neq (p',q')$, and so we do get a polarized Hodge structure.
\end{proof}

\newpar
We can use the effective estimates from the proof of the nilpotent orbit theorem to
get a better bound for the distance between 
\[
	\PhiSH(z) 
	= e^{\half \log \abs{\Re z} \, H} e^{-\half(z-\zb) (S+N)} \Phi(z) \in D
\]
and its limit $e^{-N} F_H \in D$. According to
\Cref{thm:approximation-nilpotent-orbit}, we have
\[
	d_D \bigl( \Phi(z), e^{zN} \Flim \bigr) \leq C \abs{\Re z}^m e^{-\delta(T)
	\abs{\Re z}},
\]
for every $z \in \HH$ with $\Re z \leq x_0$, with constants $C > 0$ and $x_0 < 0$
that are basically independent of the period mapping in question. Since the
exponential factors in the definition of $\PhiSH(z)$ belong to the real Lie
group $G$, it follows that
\[
	d_D \Bigl( \PhiSH(z), \, e^{-N} e^{\half \log \abs{\Re z} \, H} \Flim \Bigr)
	\leq C \abs{\Re z}^m e^{-\delta(T) \abs{\Re z}}.
\]
Here we used \eqref{eq:AdY-on-N} to simplify the second argument. Because $H$ has
integer eigenvalues, $e^{\half \log\abs{\Re z} \, H} \Flim$ converges to $F_H$ at a
rate proportional to $\abs{\Re z}^{-\half}$. This means that there is a constant $C' >
0$, whose value depends on $\Flim$, such that
\[
	d_D \bigl( \PhiSH(z), e^{-N} F_H \bigr) \leq C' \abs{\Re z}^{-\half}.
\]
Schmid's $\SL(2)$-orbit theorem gives much more precise information about the
convergence; we are going to prove a weak form of this result later, in
\Cref{chap:asymptotic}.

\section{Results about mixed Hodge structures}

\newpar
In this chapter, we review some basic facts about complex mixed Hodge structures. We
also discuss Deligne's functorial splitting for the weight filtration; we are going
to need it during the proof of the cheap $\SL(2)$-orbit theorem in
\Cref{chap:asymptotic}. Everything in this chapter is just (somewhat tedious) linear
algebra, but the results are important for the study of degenerating variations of
Hodge structure, especially in higher dimensions.

\subsection{Complex mixed Hodge structures}

\newpar
Unlike in the real case, we need \emph{three} filtrations to describe a mixed Hodge
structure, because the Hodge decomposition in a complex Hodge structure is not
determined by the Hodge filtration alone.

\begin{pdfn}
	A \define{mixed Hodge structure} on a finite-dimensional complex vector space $V$
	consists of an increasing filtration $W_{\bullet}$ with $W_n = 0$ for $n \ll 0$
	and $W_n = V$ for $n \gg 0$, and two decreasing filtrations
	$F^{\bullet}$ and $\Fb^{\bullet}$, such that each subquotient
	\[
		\gr_n^W = W_n/W_{n-1}
	\]
	has a Hodge structure of weight $n$, given by the two induced filtrations
	\begin{align*}
		F^{\bullet} \gr_n^W &= (F^{\bullet} \cap W_n + W_{n-1})/W_{n-1} \\
		\Fb^{\bullet} \gr_n^W &= (\Fb^{\bullet} \cap W_n + W_{n-1})/W_{n-1}.
	\end{align*}
	The filtration $W_{\bullet}$ is called the \define{weight filtration}.
\end{pdfn}

\newpar
To save space, we are going to use the shorthand notation
\[
	F^p W_n = F^p \cap W_n \quad \text{and} \quad
	\Fb^q W_n = \Fb^q \cap W_n
\]
from now on. The $(p,q)$-subspace in the Hodge decomposition of $\gr_n^W$ is 
\[
	F^p \gr_n^W \cap \, \Fb^q \gr_n^W 
	= \frac{(F^p W_n + W_{n-1}) \cap (\Fb^q W_n + W_{n-1})}{W_{n-1}}.
\]
In order to have a mixed Hodge structure on $V$, the direct sum of these subspaces
(over $p+q=n$) must equal $\gr_n^W$, which means concretely that
\[
	W_n = \sum_{p+q=n} (F^p W_n + W_{n-1}) \cap (\Fb^q W_n + W_{n-1}) 
\]
and that, whenever $p+q > n$, one has
\[
	(F^p W_n + W_{n-1}) \cap (\Fb^q W_n + W_{n-1}) = W_{n-1}.
\]
From this, one deduces, by induction on the length of the weight filtration, that 
\begin{equation} \label{eq:MHS-vanishing}
	F^p W_n \cap \Bigl( \Fb^q W_n + \Fb^{q-1} W_{n-1} +
	\Fb^{q-2} W_{n-2} + \dotsb \Bigr) = 0
\end{equation}
whenever $p + q > n$. Either way, this is a fairly complicated set of conditions. 

\begin{pexa}
A mixed Hodge structure is called \define{real} if $V = \VR \otimes_{\RR} \CC$ for an
$\RR$-vector space $\VR$, and if $\Fb = \sigma(F)$ and $W = \sigma(W)$, where $\sigma
\in \End_{\RR}(V)$ is the conjugation operator $\sigma(v \tensor z) = v \tensor \zb$.
Since the weight filtration is defined over $\RR$, each subquotient $\gr_n^W$ is a
real Hodge structure of weight $n$.
\end{pexa}

\newpar 
Here are some general operations on mixed Hodge structures:

\begin{enumerate}
	\item 
	Let $V$ be a mixed Hodge structure. For any $k \in \ZZ$, we obtain a new mixed
	Hodge structure $V(k)$ on the same underlying vector space by setting
	\[
		W_n V(k) = W_{n+2k} V, \quad
		F^p V(k) = F^{p-k} V, \quad
		\Fb^q V(k) = \Fb^{q-k} V
	\]
	for $n,p,q \in \ZZ$. This operation is called the \define{Tate twist}.
	\item
	Let $V$ be a mixed Hodge structure. The \define{dual} vector space $\Hom(V,
	\CC)$ inherits a mixed Hodge structure, with
	\begin{align*}
		W_n \Hom(V, \CC) &= \menge{f \colon V \to \CC}{f(W_{-n-1}) = 0}, \\
		F^p \Hom(V, \CC) &= \menge{f \colon V \to \CC}{f(F^{-p+1}) = 0}, \\
		\Fb^q \Hom(V, \CC) &= \menge{f \colon V \to \CC}{f(\Fb^{-q+1}) = 0}.
	\end{align*}
	It is easy to see that $\gr_n^W \Hom(V,\CC) \cong \Hom \bigl( \gr_{-n}^W
	V, \CC \bigr)$ are isomorphic Hodge structures of weight $n$.
	\item 
	Let $V_1$ and $V_2$ be mixed Hodge structures. Their \define{tensor product} $V_1 \tensor
	V_2$ is again a mixed Hodge structure, with
	\begin{align*}
		W_n(V_1 \tensor V_2) &= \sum_{n' + n'' = n} W_{n'} V_1 \tensor W_{n''} V_2, \\
		F^p(V_1 \tensor V_2) &= \sum_{p' + p'' = p} F^{p'} V_1 \tensor F^{p''} V_2, \\
		\Fb^q(V_1 \tensor V_2) &= \sum_{q' + q'' = q} \Fb^{q'} V_1 \tensor \Fb^{q''} V_2.
	\end{align*}
	As expected, one has an isomorphism of Hodge structures
	\[
		\gr_n^W(V_1 \tensor V_2) \cong \bigoplus_{n' + n'' = n}
		\gr_{n'}^W V_1 \otimes \gr_{n''}^W V_2.
	\]
	\item An important special case is the \define{endomorphism algebra} $\End(V)$ of the
	vector space $V$ underlying a mixed Hodge structure. It has a mixed Hodge structure with
	\begin{align*}
		W_n \End(V) &= \menge{f \colon V \to V}{f(W_{\bullet}) \subseteq W_{\bullet + n}}, \\
		F^p \End(V) &= \menge{f \colon V \to V}{f(F^{\bullet}) \subseteq F^{\bullet + p}}, \\
		\Fb^q \End(V) &= \menge{f \colon V \to V}{f(\Fb^{\bullet}) \subseteq \Fb^{\bullet + q}}.
	\end{align*}
	With these definitions, the multiplication map $\End(V) \tensor \End(V) \to
	\End(V)$ becomes a morphism of mixed Hodge structures.
\end{enumerate}

\subsection{Deligne's splitting construction}

\newpar
	A mixed Hodge structure is called \define{split} if it is a direct sum of Hodge
	structures of different weights, with the obvious weight filtration. For example,
	any $\sltwo$-Hodge structure is a split mixed Hodge structure. Being
	split is equivalent to having a decomposition
	\[
		V = \bigoplus_{i,j \in \ZZ} V^{i,j}
	\]
	with the property that
	\[
		W_n = \bigoplus_{i+j\leq n} V^{i,j}, \quad
		F^p = \bigoplus_{i \geq p,j } V^{i,j}, \quad
		\Fb^q = \bigoplus_{j \geq q, i} V^{i,j}.
	\]
	If $V$ is a mixed Hodge structure, then the associated graded
	\[
		\gr_{\bullet}^W = \bigoplus_{n \in \ZZ} \gr_n^W
	\]
	becomes in a natural way a split mixed Hodge structure.

\newpar \label{par:Ipq}
A general mixed Hodge structure is of course not split. Nevertheless, Deligne
\cite[Lem.~1.2.11]{Deligne} showed that there is a functorial decomposition 
\[
	V = \bigoplus_{i,j \in \ZZ} I^{i,j}
\]
into subspaces, with the property that
\begin{equation} \label{eq:Deligne-decomposition}
	W_n = \bigoplus_{i+j \leq n} I^{i,j} \qquad \text{and} \qquad
	F^p = \bigoplus_{i \geq p, j} I^{i,j},
\end{equation}
and such that $I^{i,j}$ maps isomorphically to the $(i,j)$-subspace in the Hodge
decomposition of $\gr_{i+j}^W$, under the projection from $W_{i+j}$ to
$\gr_{i+j}^W$. The subspaces in question are defined by the formula
\begin{equation} \label{eq:Deligne}
	I^{i,j} = F^i \cap W_{i+j} \cap 
	\bigl( \Fb^j \cap W_{i+j} + \Fb^{j-1} \cap W_{i+j-2} + \Fb^{j-2} \cap
	W_{i+j-3} + \dotsb \bigr).
\end{equation}
The decomposition is functorial in the following sense. Suppose that $f \colon V_1 \to
V_2$ is a morphism of mixed Hodge structures, which means that
\[
	f(W_n V_1) \subseteq W_n V_2, \quad f(F^p V_1) \subseteq F^p V_2, \quad
	f(\Fb^q V_1) \subseteq \Fb^q V_2
\]
for all $p,q,n \in \ZZ$. Then with the obvious notation, we have $f(I_1^{i,j})
\subseteq I_2^{i,j}$ for all $i,j \in \ZZ$. This is an immediate consequence of the
formula for $I^{i,j}$. 

\begin{pexa}
	When the mixed Hodge structure is split, one has $I^{i,j} = F^i \cap W_{i+j} \cap
	\Fb^j$ and
	\[
		\Fb^q = \bigoplus_{j \geq q, i} I^{i,j}.
	\]
	In the case of a non-split mixed Hodge structure, there is no simple formula for
	$\Fb$ in terms of the subspaces $I^{i,j}$.
\end{pexa}

\newpar
Let us try to understand Deligne's construction from a slightly different point of
view. To get a decomposition of $V$, all we need is a \define{splitting} $H \in
\End(V)$ for the weight filtration; by this we mean that $H$ is semisimple with integer
eigenvalues, and that $W_{n} = E_{n}(H) \oplus W_{n-1}$ for all $n \in
\ZZ$. Such a splitting induces an isomorphism
\[
	V = \bigoplus_{n \in \ZZ} E_{n}(H) \, \cong \, \bigoplus_{n \in \ZZ} \gr_{n}^W,
\]
and the desired decomposition of $V$ is then simply the image of the Hodge
decomposition in the direct sum of Hodge structures on the right-hand side. 

\newpar
Any splitting preserves the weight filtration, and acts on $\gr_n^W$ as multiplication
by $n$. This condition actually characterizes splittings, as the following lemma
shows.

\begin{plem} \label{lem:splittings}
	Let $V$ be a mixed Hodge structure.
	\begin{aenumerate}
	\item Suppose that $H \in W_0 \End(V)$ acts on each subquotient $\gr_n^W$ as
		multiplication by $n$. Then $H$ is a splitting of the weight filtration.
	\item If $H$ and $H'$ are two splittings of the weight filtration, then there is a
		unique element $g \in \GL(V)$ such that $g - \id \in W_{-1} \End(V)$ and $H' =
		g H g^{-1}$.
	\end{aenumerate}
\end{plem}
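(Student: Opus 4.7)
The plan is to handle the two assertions separately, linked by the single central tool that any splitting $H$ induces a weight decomposition $\End(V) = \bigoplus_{k \in \ZZ} E_k(\ad H)$ compatible with the induced weight filtration, in the sense that $W_j \End(V) = \bigoplus_{k \leq j} E_k(\ad H)$ for every $j$. This identification is what converts both statements into transparent linear algebra.

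For part (a), I would begin from the observation that the two hypotheses ``$H \in W_0 \End(V)$'' and ``$H$ acts as $n \cdot \id$ on $\gr_n^W$'' together give $(H - n \cdot \id)(W_n) \subseteq W_{n-1}$. Iterating, the composition $\prod_{k=m}^{n}(H - k \cdot \id)$ annihilates $W_n$ as soon as $W_{m-1} = 0$. Since its roots are simple integers, this forces $H$ to be semisimple with integer eigenvalues and gives $W_n \subseteq \bigoplus_{k \leq n} E_k(H)$. A short dimension count, using the surjection $E_n(H) \cap W_n \twoheadrightarrow \gr_n^W$ and $\dim V = \sum_n \dim \gr_n^W$, upgrades this to equality, which is exactly the statement $W_n = E_n(H) \oplus W_{n-1}$.

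For part (b), set $K := H' - H$; since both splittings induce multiplication by $n$ on $\gr_n^W$, the difference lies in $W_{-1} \End(V)$. Looking for $g$ in the form $g = \id + u$ with $u \in W_{-1} \End(V)$, the equation $H' g = g H$ rearranges to $[H, u] = -K - Ku$. I would then decompose $u = \sum_{k \leq -1} u_k$ and $K = \sum_{k \leq -1} K_k$ with respect to the $\ad H$-weight decomposition. Since $\ad H$ acts on $E_k(\ad H)$ as multiplication by $k$, the weight-$k$ piece of the equation reads
\[
	k u_k = -K_k - \sum_{j+\ell = k} K_j u_\ell,
\]
which uniquely determines $u_k$ (for $k \leq -1$) in terms of $K$ and the previously-constructed components $u_\ell$ with $\ell > k$. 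The recursion terminates at the index $k = -N$ where $W_{-N} \End(V) = 0$. Uniqueness of $g$ is automatic: if $g_1^{-1} g_2$ fixes $H$ and differs from the identity by an element of $W_{-1} \End(V)$, then that element lies in $E_0(\ad H) \cap W_{-1} \End(V) = 0$.

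The only potentially subtle point is ensuring that the recursively constructed $u$ actually gives an element of $\GL(V)$ and that everything terminates, but both follow immediately from the finite length of the weight filtration: $W_{-1} \End(V)$ is then a nilpotent Lie subalgebra of $\End(V)$, so $g = \id + u$ is unipotent and the sums on the right-hand side of the recursion involve only finitely many nonzero terms.
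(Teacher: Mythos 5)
Your proof is correct, and it splits naturally against the paper's: part (a) is essentially the paper's argument — the observation $(H-n\,\id)(W_n)\subseteq W_{n-1}$ gives the vanishing of $\prod_n (H-n\,\id)$, hence semisimplicity with integer eigenvalues — and your dimension count merely makes explicit the step the paper dispatches with ``it also follows'' (do record the two easy facts you lean on: surjectivity of $E_n(H)\cap W_n \to \gr_n^W$, which follows from $H$-equivariance of the quotient map plus semisimplicity of $H|_{W_n}$). Part (b), however, takes a genuinely different route: the paper defines $g$ directly on each eigenspace as the composite of the isomorphisms $E_n(H)\cong \gr_n^W \cong E_n(H')$, which immediately yields $H'=gHg^{-1}$, $g-\id\in W_{-1}\End(V)$, and uniqueness; you instead solve the conjugation equation $[H,u]=-K-Ku$ (with $K=H'-H$, $g=\id+u$) by recursion through the $\ad H$-weight decomposition $W_j\End(V)=\bigoplus_{k\le j}E_k(\ad H)$. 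Both work: the paper's construction is shorter and exhibits $g$ structurally as the unique map inducing the identity on $\gr_\bullet^W$ while carrying one eigenspace decomposition to the other, whereas your recursion produces explicit formulas for the graded components of $g-\id$ and turns uniqueness into the one-line computation $E_0(\ad H)\cap W_{-1}\End(V)=0$ (for which you should note, as you implicitly do, that $g_1^{-1}g_2-\id\in W_{-1}\End(V)$ because $W_\bullet\End(V)$ is a filtered algebra). Your mechanism is in fact the same one the paper uses when comparing splittings of the monodromy weight filtration in \Cref{prop:splitting}, where the difference of two splittings is analyzed through the eigenspaces of $\ad H$ and written multiplicatively as $H'=e^{B}He^{-B}$; so your approach loses nothing and arrives, by uniqueness, at the very same $g$.
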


\begin{proof}
	We first prove (a). For any $n \in \ZZ$, we have $(H - n \id)(W_n) \subseteq
	W_{n-1}$ because of the assumptions on $H$. If $W_{n_0-1} = 0$ and $W_{n_1} = V$, we get
	\[
		\prod_{n = n_0}^{n_1} (H - n \id) = 0,
	\]
	and so $H$ is semisimple with integer eigenvalues. It also follows that $W_n =
	E_n(H) \oplus W_{n-1}$, and so $H$ is indeed a splitting.

	Next, we prove (b). There is a unique element $g \in \GL(V)$ that maps each
	eigenspace $E_n(H)$ into the eigenspace $E_n(H')$ and makes all the following diagrams
	commute:
	\[
		\begin{tikzcd}
			& E_n(H') \dar{\cong} \\
			E_n(H) \rar{\cong} \urar[bend left=20]{g} & \gr_n^W
		\end{tikzcd}
	\]
	Clearly then $H' = g H g^{-1}$, and also $g - \id \in W_{-1} \End(V)$, as claimed.
\end{proof}

\newpar
Now we return to Deligne's construction.  Using the mixed Hodge structure on
$\End(V)$, we can express the condition in \eqref{eq:Deligne-decomposition} as
\[
	H \in F^0 W_0 \End(V) = F^0 \End(V) \cap W_0 \End(V).
\]
Deligne's result then takes the following form.

\begin{pprop} \label{prop:Deligne}
	Let $V$ be a mixed Hodge structure. There is a unique splitting $H \in
	\End(V)$ for the weight filtration with the property that
	\[
		H \in F^0 W_0 \End(V) \cap \Bigl( \Fb^0 W_0 \End(V) + \Fb^{-1} W_{-2} \End(V) +
		\Fb^{-2} W_{-3} \End(V) + \dotsb \Bigr).
	\]
	Moreover, if $v \in F^p \cap W_{p+q} \cap \Fb^q$, then one has $Hv = (p+q) v$.
\end{pprop}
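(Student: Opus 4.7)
The plan is to construct $H$ directly from Deligne's subspaces $I^{i,j}$ of \eqref{eq:Deligne} and then extract uniqueness from a vanishing in the induced mixed Hodge structure on $\End(V)$.

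First, I would prove (by induction on the length of the weight filtration, using the MHS axioms on $V$ together with the vanishing \eqref{eq:MHS-vanishing}) the direct sum decomposition
\[
	V = \bigoplus_{i,j} I^{i,j}, \qquad W_n = \bigoplus_{i+j \leq n} I^{i,j}, \qquad F^p = \bigoplus_{i \geq p, j} I^{i,j},
\]
with each $I^{i,j}$ projecting isomorphically onto the $(i,j)$-Hodge component of $\gr_{i+j}^W$. Defining $H$ to act as multiplication by $i+j$ on $I^{i,j}$ then produces a semisimple operator with integer eigenvalues acting as $n$ on $\gr_n^W$; Lemma \ref{lem:splittings}(a) identifies it as a splitting of $W$. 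Since each $I^{i,j} \subseteq F^i \cap W_{i+j}$, both filtrations are preserved, and hence $H \in F^0 W_0 \End(V)$.

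The main obstacle is verifying the $\Fb$-clause. By the defining formula, every $v \in I^{i,j}$ admits a decomposition $v = u_0 + \sum_{k \geq 1} u_{-k}$ with $u_0 \in \Fb^j \cap W_{i+j}$ and $u_{-k} \in \Fb^{j-k} \cap W_{i+j-k-1}$. Choosing such decompositions linearly on each $I^{i,j}$ (via a basis) and extending by zero elsewhere yields a candidate expression $H = \sum_{k \geq 0} H_{-k}$ in which the $W$-shift condition $H_{-k} \in W_{-k-1} \End(V)$ (for $k \geq 1$) and $H_0 \in W_0 \End(V)$ is automatic. The delicate point is the $\Fb$-shift condition $H_{-k} \in \Fb^{-k} \End(V)$, because the subspaces $I^{i,j}$ do not split $\Fb$ in general. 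I would handle this by inducting on the length of $W$: the inductive hypothesis applied to $V/W_m$ (with $m$ the smallest nontrivial weight) supplies an operator with the required $\Fb$-decomposition on the quotient, and gluing it with the trivial piece $m \cdot \id$ on the pure part $W_m$ assembles, via a careful lift controlled by the MHS axioms linking $W_m$, $V/W_m$, and $V$, into the desired $H$ on $V$ together with compatible linear lifts.

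For uniqueness, suppose $H$ and $H'$ both satisfy the conditions, and set $A := H' - H$. Then $A \in W_{-1} \End(V)$ (both splittings induce the same map on $\gr_\bullet^W$), $A \in F^0 \End(V)$ (both $H$ and $H'$ lie in $F^0$), and $A \in (\Fb^0 W_0 + \Fb^{-1} W_{-2} + \cdots) \End(V)$, which upon intersecting with $W_{-1}$ becomes $(\Fb^0 W_{-1} + \Fb^{-1} W_{-2} + \cdots) \End(V)$. The crux is then the vanishing
\[
	F^0 \End(V) \cap W_{-1} \End(V) \cap \bigl(\Fb^0 W_{-1} + \Fb^{-1} W_{-2} + \cdots\bigr) \End(V) = 0,
\]
which follows by applying the MHS vanishing \eqref{eq:MHS-vanishing} to the induced mixed Hodge structure on $\End(V)$: an operator that simultaneously respects $F$ at level $0$, lies strictly below weight $0$, and decomposes as a sum of pieces whose $\Fb$- and $W$-coordinates satisfy $q + (n+k+1) \geq 0 + 0$ with $n + k + 1 > 0$ sits in the zero intersection piece of the Deligne decomposition. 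Finally, the last assertion is immediate: any $v \in F^p \cap W_{p+q} \cap \Fb^q$ satisfies the defining inclusion of $I^{p,q}$ in \eqref{eq:Deligne} with the choice $u_0 = v$ and $u_{-k} = 0$ for $k \geq 1$, so $v \in I^{p,q}$ and therefore $Hv = (p+q)v$.
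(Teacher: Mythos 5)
Your uniqueness argument and the final eigenvector claim are fine (the observation that the $\Fb^0 W_0$-component of the difference is forced into $W_{-1}$, so that \eqref{eq:MHS-vanishing} with $p=q=0$, $n=-1$ applies to the mixed Hodge structure on $\End(V)$, is correct, and $F^p \cap W_{p+q} \cap \Fb^q \subseteq I^{p,q}$ is immediate from \eqref{eq:Deligne}). The genuine gap is in the existence part, at exactly the point you flag as delicate: you never actually prove that the operator $H$ defined by letting it act as $i+j$ on $I^{i,j}$ lies in $\Fb^0 W_0 \End(V) + \Fb^{-1} W_{-2}\End(V) + \dotsb$. Your first device --- decomposing each basis vector $v \in I^{i,j}$ as $v = u_0 + \sum u_{-k}$ according to \eqref{eq:Deligne} and defining $H_{-k}$ basis-wise, ``extending by zero'' --- cannot work as stated: membership of $H_{-k}$ in $\Fb^{-k}\End(V)$ (and in $W_{-k-1}\End(V)$) is a condition on how the operator shifts the \emph{entire} filtrations, i.e.\ $H_{-k}(\Fb^q) \subseteq \Fb^{q-k}$ for every $q$, not a condition on where it sends finitely many chosen vectors; since the $I^{i,j}$ do not split $\Fb$, nothing controls $H_{-k}$ on the rest of $\Fb^q$. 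Note also that the three properties of the $I^{i,j}$ you establish first (compatibility with $W$ and $F$, isomorphism onto the Hodge pieces of $\gr^W$) do not by themselves single out Deligne's decomposition --- any splitting $H' \in F^0 W_0 \End(V)$ produces a bigrading with those three properties --- so the $\Fb$-clause really has to be extracted from the defining formula at the operator level.

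Your fallback, induction on the length of $W$ with ``a careful lift controlled by the MHS axioms,'' is precisely where all the work lies and is not supplied: lifts of a splitting of $V/W_m$ form a torsor under $\Hom(V/W_m, W_m)$, and one must show that the lift can be corrected so that the resulting element of $\End(V)$ lands in the stated subspace; carrying this out is essentially the successive-correction argument that the paper uses (start with any lift $H \in F^0 W_0\End(V)$ and a second lift $\Hb_0 \in \Fb^0 W_0\End(V)$ of the grading of $\gr^W_\bullet$, and correct degree by degree along the weight filtration of $\End(V)$, using that $\gr_k^W \End(V)$ is a pure Hodge structure of weight $k$, hence equals $F^0\gr_k^W\End(V) \oplus \Fb^{k+1}\gr_k^W\End(V)$). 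As written, your proposal asserts the conclusion of that step rather than proving it, so the characteristic $\Fb$-membership of $H$ --- the heart of the proposition --- remains unestablished.
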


\begin{proof}
	We first prove existence. Consider the endomorphism $h$ of
	\[
		\gr_{\bullet}^W = \bigoplus_{n \in \ZZ} \gr_{n}^W
	\]
	that acts as multiplication by $n$ on the summand $\gr_{n}^W$. In terms of
	the mixed Hodge structure on $\End(V)$, this endomorphism is an element of
	$\gr_0^W \End(V)$, of Hodge type $(0,0)$. We start by lifting $h$ in an arbitrary
	way to an element $H \in F^0 W_0 \End(V)$; according to \Cref{lem:splittings}, this
	is a splitting of the weight filtration. We are now going to adjust this
	initial choice.

	We can also lift $h$ to an element $\Hb_0 \in \Fb^0 W_0 \End(V)$. Then $H - \Hb_0 \in
	W_{-1} \End(V)$, and we consider its image in $\gr_{-1}^W \End(V)$, which has a
	Hodge structure of weight $-1$. As such,
	\[
		\gr_{-1}^W \End(V) = F^0 \gr_{-1}^W \End(V) \, \oplus \, \Fb^0 \gr_{-1}^W \End(V),
	\]
	and so we can find $H_{-1} \in F^0 W_{-1} \End(V)$ and $\Hb_{-1}
	\in \Fb^0 W_{-1} \End(V)$ such that $H - \Hb_0$ and $H_{-1} + \Hb_{-1}$ have the
	same image in $\gr_{-1}^W \End(V)$. If we replace $H$ by $H - H_{-1}$, and
	$\Hb_0$ by $\Hb_0 + \Hb_{-1}$, we can arrange that $H - \Hb_0 \in W_{-2} \End(V)$.
	Next, we project into 
	\[
		\gr_{-2}^W \End(V) = F^0 \gr_{-2}^W \End(V) \, \oplus \,
		\Fb^{-1} \gr_{-2}^W \End(V).
	\]
	As before, we can find $H_{-2} \in F^0 W_{-2} \End(V)$ and $\Hb_{-2}
	\in \Fb^{-1} W_{-2} \End(V)$ such that $H - \Hb_0$ and $H_{-2} + \Hb_{-2}$ have
	the same image in $\gr_{-2}^W \End(V)$; after replacing $H$ by $H - \Hb_0$, we get
	$H - \Hb_0 - \Hb_{-2} \in W_{-3} \End(V)$. Continuing in this manner, we obtain
	\[
		H = \Hb_0 + \Hb_{-2} + \Hb_{-3} + \dotsb, 
	\]
	where $\Hb_k \in \Fb^{k+1} W_k \End(V)$ for $k \leq -2$. This is the desired
	splitting. It is clear from this argument that the splitting $H$ is optimal among
	all splittings that preserve $F$, in the sense that the subspace of $\End(V)$ in
	the definition is as small as possible.

	Uniqueness of $H$ follows in the same way. If we have two splittings $H,H' \in
	\End(V)$ with the stated properties, then their difference $H - H'$ is an element of
	\[
		F^0 W_{-1} \End(V) \cap \Bigl( \Fb^0 W_0 \End(V) + \Fb^{-1} W_{-2} \End(V) +
		\Fb^{-2} W_{-3} \End(V) + \dotsb \Bigr).
	\]
	But since $\End(V)$ is a mixed Hodge structure, this intersection is trivial by
	\eqref{eq:MHS-vanishing}. 

	Finally, let us suppose that we have a vector $v \in F^p \cap \Fb^q \cap W_n$,
	where $n = p+q$. Then 
	\[
		Hv - n v \in F^p \cap W_{n-1} \cap \Bigl( \Fb^q W_{n-1} + \Fb^{q-1} W_{n-2} +
		\Fb^{q-2} W_{n-3} + \dotsb \Bigr),
	\]
	and as the intersection on the right-hand side is again trivial, it follows that $Hv
	= nv$.
\end{proof}

\newpar
We are going to denote the splitting in \Cref{prop:Deligne} by the symbol $H(W, F,
\Fb)$ if we want to emphasize its dependence on the mixed Hodge structure. 

\newpar
It is now an easy matter to deduce Deligne's formula for the subspaces $I^{i,j}$ in
\eqref{eq:Deligne}. Let $n=p+q$, and suppose that we have a vector of Hodge type
$(p,q)$ in $\gr_n^W$. We can lift it to $v \in F^p W_n$ and also to $\vb \in \Fb^q
W_n$. Let $P_n \colon V \to E_n(H)$ denote the projection to the $n$-eigenspace of
$H$; since $H$ is semisimple with integer eigenvalues, this equals
\[
	P_n = \prod_{k \neq n} \frac{1}{n-k} (H - k \id),
\]
where the product runs over all eigenvalues of $H$ different from $n$. As $v - \vb
\in W_{n-1}$, we have $P_n v = P_n \vb$. Now $v \in F^p \cap W_n$, and therefore
\[
	P_n v \in F^p \cap W_n.
\]
On the other hand, we have $\vb \in \Fb^q \cap W_n$, and therefore
\[
	P_n \vb \in \Fb^q \cap W_n + \Fb^{q-1} \cap W_{n-2} + \Fb^{q-2} \cap W_{n-3} +
	\dotsb,
\]
because of the defining properties of $H$ in \Cref{prop:Deligne}. This proves that
$P_n v \in I^{p,q}$. The conclusion is that, in the decomposition of $V$ induced by
the splitting $H$, each subspace is contained in some $I^{p,q}$. 

\newpar
Conversely, let us show that $H$ acts as multiplication by $i+j$ on the subspace
$I^{i,j}$. Set $n = i+j$. For any vector $v \in I^{i,j}$, we have $v \in
F^i \cap W_n$, hence
\[
	Hv - n v \in F^i \cap W_{n-1}.
\]
At the same time, the formula in \eqref{eq:Deligne} and the properties of $H$ imply
that
\[
	Hv - nv \in \Fb^j W_{n-1} + \Fb^{j-1} W_{n-2} + \Fb^{j-2} W_{n-3} + \dotsb
\]
Since the intersection between these two subspaces is trivial by
\eqref{eq:MHS-vanishing}, we get $Hv = nv$, as claimed. This is enough to conclude
that, under the isomorphism
\[
	V = \bigoplus_{n \in \ZZ} E_n(H) \, \cong \, \bigoplus_{n \in \ZZ} \gr_n^W,
\]
the subspace of type $(i,j)$ in the Hodge decomposition on the right-hand side
corresponds exactly to the subspace $I^{i,j}$.

\newpar
Let us also check that Deligne's construction respects the basic operations on mixed
Hodge structures; this point is not covered very well in the literature
\cite[Prop.~1.9]{Morgan}.

\begin{pprop} \label{prop:Deligne-dual-tensor}
	The splitting in \Cref{prop:Deligne} is functorial; it is also compatible with
	duals and tensor products.
\end{pprop}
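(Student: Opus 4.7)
The whole proof rests on the uniqueness statement in \Cref{prop:Deligne}: an operator $H \in \End(V)$ equals $H(W,F,\Fb)$ if and only if (i) it is a splitting of $W_{\bullet}$, and (ii) it lies in the subspace $F^0 W_0 \End(V) \cap \bigl( \Fb^0 W_0 \End(V) + \Fb^{-1} W_{-2} \End(V) + \dotsb \bigr)$. My plan is, in each of the three cases, to write down a natural candidate operator built from the splittings on the constituent pieces and verify these two conditions; uniqueness then forces the candidate to be the Deligne splitting.

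For functoriality, let $f \colon V_1 \to V_2$ be a morphism of mixed Hodge structures. The text has already noted that $f(I_1^{i,j}) \subseteq I_2^{i,j}$ directly from formula \eqref{eq:Deligne}. Since $H(V_j)$ acts on $I_j^{i,j}$ as multiplication by $i+j$, it follows at once that $H(V_2) \circ f = f \circ H(V_1)$. This is functoriality.

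For tensor products, set
\[
	H = H(V_1) \tensor \id + \id \tensor H(V_2) \in \End(V_1 \tensor V_2).
\]
Using the description of $W_{\bullet}$, $F^{\bullet}$, $\Fb^{\bullet}$ on the tensor product, this operator acts on $\gr_n^W(V_1 \tensor V_2) \cong \bigoplus_{n_1+n_2=n} \gr_{n_1}^W V_1 \tensor \gr_{n_2}^W V_2$ as multiplication by $n$, so \Cref{lem:splittings} identifies it as a splitting. Both summands lie in $F^0 \End(V_1 \tensor V_2)$ because $\id \in F^0 W_0 \End$ on each factor and because tensor filtrations add. For the $\Fb$-condition, decompose $H(V_j) = \Hb_0^{(j)} + \sum_{k \leq -2} \Hb_k^{(j)}$ as in the proof of \Cref{prop:Deligne}; each summand $\Hb_k^{(j)} \tensor \id$ or $\id \tensor \Hb_k^{(j)}$ then sits in $\Fb^{k+1} W_k \End(V_1 \tensor V_2)$, again because $\id$ lies in $\Fb^0 W_0 \End$ of the other factor. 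Thus $H$ satisfies the characterizing property, and uniqueness gives $H(V_1 \tensor V_2) = H(V_1) \tensor \id + \id \tensor H(V_2)$.

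For duals, the candidate is $-H(V)^{\vee} \in \End(V^*)$, where $H^{\vee}$ denotes the transpose, $(H^{\vee} \varphi)(v) = \varphi(Hv)$. A direct eigenspace computation shows that $H^{\vee}$ acts as multiplication by $k$ on the summand $E_k(H)^* \subseteq V^*$; combined with the identity $W_n V^* = \bigoplus_{k \geq -n} E_k(H)^*$, this means $-H^{\vee}$ is a splitting of $W_{\bullet}$ on $V^*$. The filtration conditions follow from the elementary fact that if $A(F^{\bullet} V) \subseteq F^{\bullet+k} V$ then $A^{\vee}(F^{\bullet} V^*) \subseteq F^{\bullet+k} V^*$, and likewise for $\Fb^{\bullet}$ and $W_{\bullet}$; applied to each piece of the decomposition $H(V) = \Hb_0 + \sum_{k \leq -2} \Hb_k$, this places $-H(V)^{\vee}$ in the defining subspace for $H(V^*)$. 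Uniqueness finishes the proof, giving $H(V^*) = -H(V)^{\vee}$.

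The main obstacle is purely bookkeeping — three filtrations interact with the tensor and dual operations and one must track indices carefully — but there is no conceptual difficulty, since the uniqueness in \Cref{prop:Deligne} reduces everything to verifying the two defining conditions for each candidate operator.
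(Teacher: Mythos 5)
Your proof is correct, and for tensor products and duals it is essentially the paper's argument: write down the obvious candidate operator, check that it is a splitting and that it lies in the characterizing subspace $F^0 W_0 \End \cap \bigl( \Fb^0 W_0 \End + \Fb^{-1} W_{-2} \End + \dotsb \bigr)$, and invoke the uniqueness in \Cref{prop:Deligne}. The one place where you take a genuinely different route is functoriality. The paper deduces it \emph{from} the dual and tensor compatibilities: it equips $\Hom(V_1,V_2)$ with the splitting $f \mapsto H_2 \circ f - f \circ H_1$, observes that a morphism of mixed Hodge structures lies in $F^0 \cap \Fb^0 \cap W_0 \Hom(V_1,V_2)$, and then uses the last clause of \Cref{prop:Deligne} (vectors of type $(0,0)$ are killed by the splitting) to conclude $H_2 \circ f = f \circ H_1$. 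You instead argue directly: $f(I_1^{i,j}) \subseteq I_2^{i,j}$ by the formula \eqref{eq:Deligne} (as noted in \cref{par:Ipq}), and $H$ acts as multiplication by $i+j$ on $I^{i,j}$, so the intertwining relation is immediate. Both routes are valid and short; yours avoids setting up the $\Hom$ mixed Hodge structure but leans on the identification of the $H$-eigenspace decomposition with Deligne's $I^{i,j}$ (which the paper establishes right after \Cref{prop:Deligne}), while the paper's route has the small advantage of deriving functoriality, duals, and tensor products all from the single uniqueness mechanism without ever invoking the $I^{i,j}$ explicitly.
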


\begin{proof}
	Let us first consider tensor products. Suppose that $V_1$ and $V_2$ are two mixed
	Hodge structures, and set $V = V_1 \tensor V_2$. Denote the three splittings
	coming from \Cref{prop:Deligne} by $H_1 \in \End(V_1)$, $H_2 \in \End(V_2)$, and
	$H \in \End(V)$. Then the claim is
	that $H = H_1 \tensor \id + \id \tensor H_2$. Now the operator $H_1 \tensor \id +
	\id \tensor H_2 \in \End(V)$ acts on 
	\[
		\gr_n^W V \cong \bigoplus_{n'+n'' = n} \gr_{n'}^W V_1 \tensor \gr_{n''}^W V_2
	\]
	as multiplication by $n$; since it is also semisimple with integer eigenvalues, it
	is a splitting of the weight filtration on $V$. From the definition of the mixed
	Hodge structure on $V = V_1 \tensor V_2$, it is easy to see that both $H_1 \tensor
	\id$ and $\id \tensor H_2$ belong to 
	\[
		F^0 W_0 \End(V) \cap \Bigl( \Fb^0 W_0 \End(V) + \Fb^{-1} W_{-2} \End(V) +
		\Fb^{-2} W_{-3} \End(V) + \dotsb \Bigr).
	\]
	The result we want now follows from the uniqueness statement in
	\Cref{prop:Deligne}.

	The same kind of argument proves that if $V$ is a mixed Hodge structure, and if
	$H \in \End(V)$ denotes the splitting in \Cref{prop:Deligne}, then the splitting
	of the mixed Hodge structure on $\Hom(V, \CC)$ is given by the formula $f \mapsto
	-f \circ H$.

	Lastly, let us prove the functoriality of the splitting. Suppose that $f \colon
	V_1 \to V_2$ is a morphism of mixed Hodge structures. Let $H_1 \in \End(V_1)$ and
	$H_2 \in \End(V_2)$ denote the two splittings. By the above, the splitting of the
	mixed Hodge structure on $\Hom(V_1, V_2)$ is then given by $Hf = H_2 \circ f - f
	\circ H_1$. Since $f \in \Hom(V_1, V_2)$ is a morphism, it satisfies
	\[
		f \in F^0 \Hom(V_1, V_2) \cap \Fb^0 \Hom(V_1, V_2) \cap W_0 \Hom(V_1, V_2).
	\]
	By \Cref{prop:Deligne}, it follows that $Hf = 0$, which translates into $H_2 \circ
	f = f \circ H_1$. 
\end{proof}

\newpar
What about complex conjugation? If we swap the role of the two filtrations $F$ and
$\Fb$, we obtain another splitting $\Hb = H(W, \Fb, F)$, whose characteristic property
is that
\[
	\Hb \in \Fb^0 W_0 \End(V) \cap \Bigl( F^0 W_0 \End(V) + F^{-1} W_{-2} \End(V) +
	F^{-2} W_{-3} \End(V) + \dotsb \Bigr).
\]
The corresponding subspaces in the decomposition of $V$ are then
\[
	\Ib^{i,j} = \Fb^j \cap W_{i+j} \cap 
	\bigl( F^i \cap W_{i+j} + F^{i-1} \cap W_{i+j-2} + F^{i-2} \cap
	W_{i+j-3} + \dotsb \bigr).
\]
We have $H = \Hb$ if and only if $H \in \Fb^0 \End(V)$ if and only if the mixed Hodge
structure is split; we leave the proof of this assertion as an easy exercise.

\newpar
One can also understand the result in \Cref{prop:Deligne} by comparing the two
splittings $H$ and $\Hb$. We have $\Hb - H \in W_{-2} \End(V)$, and therefore
\begin{align*}
	\Hb - H \in & \bigl( F^{-1} W_{-2} \End(V) + F^{-2} W_{-3} \End(V) + \dotsb \bigr)
	\cap \\
	& \bigl( \Fb^{-1} W_{-2} \End(V) + \Fb^{-2} W_{-3} \End(V) + \dotsb \bigr).
\end{align*}
This element functions as a sort of ``extension class'' for the mixed Hodge structure
(because $V$ is split if and only if $\Hb - H = 0$). 

\newpar
Given a mixed Hodge structure, let us consider the subspace
\[
	R(W_{-2}) = \bigl( F^{-1} W_{-2} + F^{-2} W_{-3} + \dotsb \bigr) \cap
	\bigl( \Fb^{-1} W_{-2} + \Fb^{-2} W_{-3} + \dotsb \bigr).
\]
The next proposition describes its properties.

\begin{pprop} \label{prop:MHS-R}
	In any mixed Hodge structure, one has a decomposition
	\[
		W_{-1} = F^0 W_{-1} \oplus \Fb^0 W_{-1} \oplus R(W_{-2}).
	\]
	The subspace $R(W_{-2})$ is a sub-mixed Hodge structure, and
	\[
		R(W_{-2}) 
		= \bigoplus_{i, j \leq -1} I^{i,j}
		= \bigoplus_{i, j \leq -1} \Ib^{i,j}.
	\]
\end{pprop}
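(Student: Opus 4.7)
The plan is to prove the identity $R(W_{-2}) = \bigoplus_{i,j \leq -1} I^{i,j}$ using Deligne's decomposition $V = \bigoplus I^{i,j}$ from \eqref{eq:Deligne-decomposition}--\eqref{eq:Deligne}, and then to deduce the direct sum decomposition and the sub-MHS assertion from it by routine bookkeeping. The conjugate identity $R(W_{-2}) = \bigoplus_{i,j \leq -1} \Ib^{i,j}$ will follow by the same argument applied to $(W, \Fb, F)$. Throughout, write $S_F = \sum_{k \geq 1} F^{-k} \cap W_{-k-1}$ and $S_{\Fb} = \sum_{k \geq 1} \Fb^{-k} \cap W_{-k-1}$, so that $R(W_{-2}) = S_F \cap S_{\Fb}$.

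For the inclusion $\bigoplus_{i,j \leq -1} I^{i,j} \subseteq R(W_{-2})$, fix $i,j \leq -1$ and set $k = \max(1,-i)$. Then $i \geq -k$ and $i+j \leq -k-1$ (a short case check $i=-1$ versus $i \leq -2$), so $I^{i,j} \subseteq F^{-k} \cap W_{-k-1} \subseteq S_F$. Using \eqref{eq:Deligne} to express $I^{i,j}$ as a subset of $\Fb^j W_{i+j} + \Fb^{j-1} W_{i+j-2} + \dotsb$, each summand is in turn contained in some $\Fb^{-\ell} \cap W_{-\ell-1}$ (with $\ell$ chosen to accommodate both the $\Fb$-index and the weight), so $I^{i,j} \subseteq S_{\Fb}$ as well.

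For the reverse inclusion, direct computation in the Deligne decomposition gives $F^{-k} \cap W_{-k-1} = \bigoplus\{I^{i,j} : i \geq -k,\ i+j \leq -k-1\}$, and eliminating $k$ yields
\[
S_F = \bigoplus\{I^{i,j} : j \leq -1,\ i+j \leq -2\}.
\]
The conjugate computation yields $S_{\Fb} = \bigoplus\{\Ib^{i,j} : i \leq -1,\ i+j \leq -2\}$. Given $v \in R(W_{-2})$, expanding in both decompositions forces $v = \sum v^{i,j}$ with $v^{i,j} = 0$ unless $j \leq -1$, and $v = \sum \vb^{i,j}$ with $\vb^{i,j} = 0$ unless $i \leq -1$. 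Let $n \leq -2$ be the largest weight with a nonzero contribution. Both expressions project to the same element of $\gr_n^W V$, and in each expression the summand at $(i,j)$ with $i+j=n$ maps into the Hodge component $H^{i,j}(\gr_n^W V)$; uniqueness of the Hodge decomposition then forces any nonzero Hodge component to satisfy \emph{both} $i \leq -1$ and $j \leq -1$. Subtracting this top-weight piece (which lies in $\bigoplus_{i,j \leq -1} I^{i,j}$, hence in $R(W_{-2})$ by the previous paragraph) leaves a smaller element of $R(W_{-2}) \cap W_{n-1}$, and descending induction on $n$ concludes. The main obstacle will be precisely this reconciliation of the two decompositions, since $I^{i,j}$ and $\Ib^{i,j}$ are identified only on the associated graded.

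The two remaining assertions are then short. Pairwise trivial intersections come from \eqref{eq:MHS-vanishing}: with $(p,q,n) = (0,0,-1)$ it kills both $F^0 W_{-1} \cap \Fb^0 W_{-1}$ and $F^0 W_{-1} \cap R(W_{-2})$, and the $(F,\Fb)$-swapped version kills $\Fb^0 W_{-1} \cap R(W_{-2})$. That the three subspaces span $W_{-1}$ follows from a dimension count: with $F^0 W_{-1} = \bigoplus\{I^{i,j} : i \geq 0,\ i+j \leq -1\}$, $\Fb^0 W_{-1} = \bigoplus\{\Ib^{i,j} : j \geq 0,\ i+j \leq -1\}$, and the identity above, the three dimensions add up to $\dim W_{-1}$, because each Hodge type $(p,q)$ with $p+q = n \leq -1$ falls into exactly one of the index sets $\{p \geq 0\}$, $\{q \geq 0\}$, $\{p,q \leq -1\}$. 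Finally, since $R(W_{-2}) = \bigoplus_{i,j \leq -1} I^{i,j}$ is a direct sum of Deligne components, the three filtrations of $V$ restrict to it; each $\gr_n^W R(W_{-2}) = \bigoplus_{i+j=n,\, i,j \leq -1} H^{i,j}(\gr_n^W V)$ is manifestly a sub-Hodge structure of $\gr_n^W V$, so $R(W_{-2})$ is a sub-mixed Hodge structure of $V$.
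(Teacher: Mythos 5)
Your proof of the central identity $R(W_{-2}) = \bigoplus_{i,j\leq-1} I^{i,j} = \bigoplus_{i,j\leq-1}\Ib^{i,j}$ is correct, and it takes a genuinely different route from the paper. You compute the two sums explicitly inside the respective Deligne decompositions, $S_F = \bigoplus\{I^{i,j}: j\leq-1,\ i+j\leq-2\}$ and $S_{\Fb} = \bigoplus\{\Ib^{i,j}: i\leq-1,\ i+j\leq-2\}$, and then reconcile the two expansions of an element of $S_F\cap S_{\Fb}$ on the associated graded, using that $I^{i,j}$ and $\Ib^{i,j}$ project isomorphically onto the \emph{same} Hodge component of $\gr_{i+j}^W$, plus a descending induction on the weight. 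The paper argues in the opposite order: it first proves the three-term decomposition of $W_{-1}$ (surjectivity onto each $\gr_n^W$, $n\leq-1$, because the types $i\geq0$, $j\geq0$, $i,j\leq-1$ cover everything, and independence via \eqref{eq:MHS-vanishing}), and only then reads off the identity for $R(W_{-2})$ by comparing with the $I$- and $\Ib$-decompositions of $W_{-1}$. Your order is more explicit about how the two decompositions interact; the paper's is shorter because it never intersects them directly.

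There is, however, a genuine slip in your last step. For three subspaces, pairwise trivial intersections together with dimensions adding up to $\dim W_{-1}$ imply neither spanning nor directness (in $\CC^4$ take $\langle e_1,e_2\rangle$, $\langle e_3\rangle$, $\langle e_1+e_3\rangle$), so ``spanning follows from a dimension count'' is not a valid inference from what you have established. The cure is already in the inequality you cite: since $\Fb^0 W_{-1}+R(W_{-2})\subseteq \Fb^0 W_{-1}+\Fb^{-1}W_{-2}+\Fb^{-2}W_{-3}+\dotsb$, the case $(p,q,n)=(0,0,-1)$ of \eqref{eq:MHS-vanishing} gives the stronger statement $F^0 W_{-1}\cap\bigl(\Fb^0 W_{-1}+R(W_{-2})\bigr)=0$, and its $(F,\Fb)$-swap gives $\Fb^0 W_{-1}\cap\bigl(F^0 W_{-1}+R(W_{-2})\bigr)=0$; these make the three-fold sum direct, and then your dimension count does yield spanning. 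This is exactly how the paper deploys \eqref{eq:MHS-vanishing}. A smaller point: for the sub-MHS claim, ``the three filtrations restrict'' is not by itself the issue, since $\Fb$ is not computed by the $I^{i,j}$; you need both of your identities, the $\Ib$-version giving $\Fb^q\cap R(W_{-2})=\bigoplus\{\Ib^{i,j}: j\geq q,\ i,j\leq-1\}$, which then surjects onto the conjugate filtration of each graded piece. You have both identities, so this is only a matter of saying it.
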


\begin{proof}
	The formula in \eqref{eq:Deligne} shows that $I^{i,j} \subseteq R(W_{-2})$ for
	$i,j \leq -1$; by symmetry, it follows that $\Ib^{i,j} \subseteq R(W_{-2})$ as
	well. Now remember that $I^{i,j}$ maps isomorphically to the subspace of type
	$(i,j)$ in the Hodge decomposition of $\gr_{i+j}^W$. It follows readily that
	\[
		W_n = F^0 W_n + \Fb^0 W_n + W_n \cap R(W_{-2})
	\]
	for every $n \leq -1$. Indeed, the subspace on the right-hand side maps onto
	$\gr_n^W$ under the projection $W_n \to \gr_n^W$, because its image contains
	the subspace of type $(i,j)$ in the Hodge decomposition as long as either $i \geq
	0$, or $j \geq 0$, or $i,j \leq -1$. At the same time, it
	is easy to see that the three subspaces are linearly independent. Indeed, if we
	have $v + \vb + w = 0$ with $v \in F^0 W_{-1}$, $\vb \in F^0 W_{-1}$, and $w \in
	R(W_{-2})$, then 
	\[
		v = -(\vb + w) \in F^0 W_{-1} \cap \bigl( \Fb^0 W_{-1} + \Fb^{-1} W_{-2} +
		\Fb^{-2} W_{-3} + \dotsb \bigr),
	\]
	and this intersection is trivial by \eqref{eq:MHS-vanishing}. By symmetry, we get $v =
	\vb = w = 0$. Since
	\[
		W_{-1} = \bigoplus_{i+j \leq -1} I^{i,j} 
		= F^0 W_{-1} \oplus \bigoplus_{\substack{i+j \leq -1 \\ i \leq -1}} I^{i,j}
		= \Fb^0 W_{-1} \oplus \bigoplus_{\substack{i+j \leq -1 \\ i \leq -1}} \Ib^{i,j},
	\]
	this also proves the nice identity
	\[
		R(W_{-2}) 
		= \bigoplus_{i, j \leq -1} I^{i,j}
		= \bigoplus_{i, j \leq -1} \Ib^{i,j}.
	\]
	Because of the formulas for $F$ and $W$ in \eqref{eq:Deligne-decomposition}, and a
	similar formula for $\Fb$ in terms of the subspaces $\Ib^{i,j}$, this is enough to
	conclude that $R(W_{-2})$ is a sub-mixed Hodge structure.
\end{proof}

\newpar
With this notation, we have $\Hb - H \in R \bigl( W_{-2} \End(V) \bigr)$. According
to \Cref{lem:splittings}, there is a unique element $g \in \GL(V)$ such that
\begin{equation} \label{eq:Hb-H}
	\Hb = g H g^{-1} \quad \text{and} \quad
	g - \id \in R \bigl( W_{-2} \End(V) \bigr).
\end{equation}
This gives another way to think about the characteristic property of Deligne's splitting $H$.

\newpar
In a Hodge structure of weight $n$, one has $V = F^0 \oplus \Fb^{n+1}$. It turns
out that a mixed Hodge structure still determines a canonical complement for the
subspace $F^0$. 

\begin{pprop} \label{prop:F0-complement}
	In any mixed Hodge structure, one has
	\[
		V = F^0 \oplus \sum_{n \in \ZZ} \Fb^{n+1} W_n.
	\]
	In terms of Deligne's decomposition, one has
	\[
		\sum_{n \in \ZZ} \Fb^{n+1} W_n = \bigoplus_{i \leq -1, j} I^{i,j}.
	\]
\end{pprop}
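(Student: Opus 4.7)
The plan is to work throughout with Deligne's decomposition $V = \bigoplus_{i,j} I^{i,j}$, and let $U = \bigoplus_{i \leq -1, j} I^{i,j}$. By \eqref{eq:Deligne-decomposition} we already have $V = F^0 \oplus U$, so both parts of the statement reduce to the single identity $U = \sum_{n} \Fb^{n+1} W_n$. I would prove this by showing $U$ is contained in the right-hand side and that the right-hand side meets $F^0$ trivially; then the sandwich $F^0 \oplus U \subseteq F^0 + \sum_n \Fb^{n+1} W_n \subseteq V$ together with $F^0 \cap \sum_n \Fb^{n+1} W_n = 0$ forces equality throughout and gives both (a) and (b) at once.

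For the inclusion $U \subseteq \sum_n \Fb^{n+1} W_n$, it is enough to treat a single $I^{i,j}$ with $i \leq -1$. Any $v \in I^{i,j}$ decomposes according to \eqref{eq:Deligne} as $v = \vb^{(0)} + \vb^{(1)} + \dotsb$ with $\vb^{(0)} \in \Fb^j W_{i+j}$ and $\vb^{(k)} \in \Fb^{j-k} W_{i+j-k-1}$ for $k \geq 1$. A direct comparison of indices shows that for $k = 0$ the summand lies in $\Fb^{m+1} W_m$ with $m = i+j$ precisely because $i \leq -1$, and similarly for each $k \geq 1$ the summand lies in $\Fb^{m+1} W_m$ with $m = i+j-k-1$, again because $i \leq -1$. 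Hence each $\vb^{(k)}$ belongs to some $\Fb^{m+1} W_m$, and therefore so does $v$.

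For the intersection statement $F^0 \cap \sum_n \Fb^{n+1} W_n = 0$, I would run a descending induction on the largest index appearing in a representation $v = \sum_{n \leq N} v_n$ with $v_n \in \Fb^{n+1} W_n$. The image of $v$ in $\gr_N^W$ lies in $F^0 \gr_N^W$ (because $v \in F^0$) and also in $\Fb^{N+1} \gr_N^W$ (because only $v_N$ survives modulo $W_{N-1}$). In a Hodge structure of weight $N$, $F^0 \cap \Fb^{N+1} = \bigoplus_{p \geq 0, q \geq N+1, p+q=N} V^{p,q} = 0$, so the image vanishes, i.e.\ $v_N \in W_{N-1}$. Since $\Fb^{N+1} \subseteq \Fb^N$, this gives $v_N \in \Fb^N W_{N-1}$, and I can absorb $v_N$ into $v_{N-1}$ by setting $v'_{N-1} = v_{N-1} + v_N$, which still lies in $\Fb^{(N-1)+1} W_{N-1}$. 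Iterating lowers the top index until the sum is empty and $v = 0$.

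Combining the two inclusions: $V = F^0 + U \subseteq F^0 + \sum_n \Fb^{n+1} W_n \subseteq V$, and the vanishing of $F^0 \cap \sum_n \Fb^{n+1} W_n$ upgrades the sums to direct sums, proving (a); intersecting with the complement of $F^0$ then gives $\sum_n \Fb^{n+1} W_n = U$, which is (b). The only subtle step is the absorption in the induction, and it hinges on the one-line observation $\Fb^{N+1} \subseteq \Fb^N$, so I do not expect a real obstacle here; the rest is formal manipulation of Deligne's formula.
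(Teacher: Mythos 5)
Your proof is correct and takes essentially the same route as the paper: you show $I^{i,j} \subseteq \sum_{n} \Fb^{n+1} W_n$ for $i \leq -1$ directly from the formula \eqref{eq:Deligne}, invoke $V = F^0 \oplus \bigoplus_{i \leq -1, j} I^{i,j}$ from \eqref{eq:Deligne-decomposition}, and verify $F^0 \cap \sum_n \Fb^{n+1} W_n = 0$ by projecting to the subquotients $\gr_N^W$ in descending order. The only difference is that you write out in full the absorption step that the paper dismisses as ``easily checked,'' and your details are sound.
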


\begin{proof}
	From the formula for $I^{i,j}$ in \eqref{eq:Deligne}, it is clear that $I^{i,j}
	\subseteq \sum_n \Fb^{n+1} W_n$ as long as $i \leq -1$. Since we already know from
	\eqref{eq:Deligne-decomposition} that
	\[
		V = F^0 \oplus \bigoplus_{i \leq -1, j} I^{i,j},
	\]
	it is therefore enough to prove that $F^0 \cap \sum_n \Fb^{n+1} W_n = 0$. This is
	easily checked by projecting to the subquotients $\gr_n^W$ in descending order.
\end{proof}

\subsection{Mixed Hodge structures and hermitian pairings}

\newpar
Let us briefly discuss what happens when the vector space $V$ comes equipped with a
nondegenerate hermitian pairing $Q \colon V \tensor \Vb \to \CC$. The pairing induces
an isomorphism 
\[
	Q \colon V \to \Hom(\Vb, \CC), \quad v \mapsto Q(v, \argbl),
\]
between $V$ and the conjugate dual vector space. We make the assumption that 
\[
	Q \colon V \to \Hom(\Vb, \CC)(-n)
\]
is actually an isomorphism of mixed Hodge structures (for a certain integer $n \in \ZZ$).
Since the role of the two filtrations $F$ and $\Fb$ get swapped in the mixed Hodge
structure on $\Vb$, this amounts concretely to the following two conditions:
\begin{enumerate}
	\item We have $W_k = \menge{v \in V}{\text{$Q(v,x) = 0$ for all $x \in W_{2n-k-1}$}}$.
	\item We have $\Fb^q = \menge{v \in V}{\text{$Q(v,x) = 0$ for all $x \in
		F^{n-q+1}$}}$.
\end{enumerate}
They imply that, for every $k \in \ZZ$, the hermitian pairing $Q$ induces an
isomorphism of Hodge structures between $\gr_k^W$ and the conjugate dual of
$\gr_{2n-k}^W$ (with a Tate twist by $n$).

\newpar
Just as in \Cref{par:glie}, the presence of the pairing turns $\End(V)$ into a real
mixed Hodge structure. The real structure is given by
\[
	\glie = \menge{A \in \End(V)}{A^{\dagger} = -A},
\]
the Lie algebra of the real Lie group $G$. The ``complex conjugate'' of an
endomorphism $A \in \End(V)$ is again $\sigma(A) = -A^{\dagger}$, where the dagger
means the adjoint with respect to $Q$. The two conditions in the previous paragraph
are saying that $\sigma \bigl( W_k \End(V) \bigr) = W_k \End(V)$ and that $\Fb^q
\End(V) = \sigma \bigl( F^q \End(V) \bigr)$, and so $\End(V)$ is indeed a real mixed
Hodge structure.

\newpar
From \Cref{prop:Deligne}, we get a splitting $H \in \End(V)$ of the mixed Hodge
structure. By swapping the role of the two filtrations $F$ and $\Fb$, we get another
splitting $\Hb \in \End(V)$. 

\begin{plem}
	We have $\Hb = 2n \id -H^{\dagger}$, where the dagger means the adjoint with respect to
	$Q$.
\end{plem}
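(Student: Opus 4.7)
The plan is to show that the operator $A := 2n \id - H^{\dagger}$ satisfies the two defining properties of $\Hb = H(W, \Fb, F)$ in Proposition~\ref{prop:Deligne}; uniqueness then forces $\Hb = 2n \id - H^{\dagger}$.

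First I will verify that $A$ splits the weight filtration, using Lemma~\ref{lem:splittings}(a). The hypothesis on $Q$ gives $W_k = (W_{2n-k-1})^{\perp}$, from which $Q(W_a, W_{2n-a-1}) = 0$ for every $a$. Since $H \in W_0 \End(V)$, this forces $H^{\dagger}(W_b) \subseteq W_b$, and hence $H^{\dagger} \in W_0 \End(V)$ and $A \in W_0 \End(V)$. A short computation identifies the action of $H^{\dagger}$ on $\gr_{\ell}^W$: for $v \in W_{2n-\ell}$ and $w \in W_{\ell}$,
\[
	Q(v, H^{\dagger} w) = Q(Hv, w) = (2n-\ell) Q(v,w),
\]
because $Hv - (2n-\ell) v \in W_{2n-\ell-1}$ pairs trivially with $W_{\ell}$. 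Since $W_{\ell-1}$ is exactly the annihilator of $W_{2n-\ell}$ in the second slot, it follows that $H^{\dagger} w \equiv (2n-\ell) w$ modulo $W_{\ell-1}$, so $A = 2n \id - H^{\dagger}$ acts on $\gr_{\ell}^W$ as multiplication by $\ell$. Lemma~\ref{lem:splittings}(a) then guarantees that $A$ is a splitting of $W$.

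Next I verify the characteristic property. The key observation is that the conjugation on $\End(V)$ induced by the real structure $\glie \subseteq \End(V)$ is precisely $\sigma(B) = -B^{\dagger}$; since $\End(V)$ is a \emph{real} mixed Hodge structure, $\sigma$ preserves $W_{\bullet} \End(V)$ and interchanges $F^{\bullet} \End(V)$ with $\Fb^{\bullet} \End(V)$. Applying $\sigma$ to the characterizing membership of $H$ from Proposition~\ref{prop:Deligne} yields
\[
	-H^{\dagger} \in \Fb^0 W_0 \End(V) \cap \Bigl( F^0 W_0 \End(V) + F^{-1} W_{-2} \End(V) + F^{-2} W_{-3} \End(V) + \dotsb \Bigr).
\]
Because $2n \id$ lies in $F^0 \End(V) \cap \Fb^0 \End(V) \cap W_0 \End(V)$, the same membership persists after adding $2n \id$ (absorbing it into the first summand $F^0 W_0 \End(V)$ on the right). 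Hence $A$ satisfies exactly the characterizing condition for $\Hb = H(W, \Fb, F)$, and the uniqueness part of Proposition~\ref{prop:Deligne} gives $\Hb = 2n \id - H^{\dagger}$.

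The only real computation here is the weight-action assertion $H^{\dagger}|_{\gr_{\ell}^W} = (2n-\ell) \id$ in the first paragraph; everything else is formal, driven by the fact that the adjoint with respect to $Q$ realizes complex conjugation on $\End(V)$ while shifting the weight grading by $2n$.
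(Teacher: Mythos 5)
Your proof is correct, and it takes a more hands-on route than the paper. The paper disposes of the lemma in one line by invoking the functoriality of Deligne's splitting together with its compatibility with duals and Tate twists (\Cref{prop:Deligne-dual-tensor}), applied to the isomorphism of mixed Hodge structures $Q \colon V \to \Hom(\Vb,\CC)(-n)$; spelled out, that argument needs to track how the splitting transforms under conjugation, dualization, and the twist, which is exactly where the shift $2n\id$ and the sign in $-H^{\dagger}$ come from. You instead verify directly that the candidate $A = 2n\id - H^{\dagger}$ satisfies the two conditions that characterize $\Hb = H(W,\Fb,F)$ in \Cref{prop:Deligne}: the pairing computation $Q(v,H^{\dagger}w)=Q(Hv,w)$ together with the orthogonality $W_k = (W_{2n-k-1})^{\perp}$ shows $A$ acts as multiplication by $\ell$ on $\gr_\ell^W$ (so \Cref{lem:splittings} makes it a splitting), and applying the real-structure involution $\sigma(B) = -B^{\dagger}$ — which preserves the weight filtration and swaps $F$ and $\Fb$ on $\End(V)$, as established just before the lemma — to the characterizing membership of $H$ gives the membership for $A$ after absorbing $2n\id$ into $F^0 W_0\End(V)$; uniqueness then finishes. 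What your version buys is self-containedness: it uses only \Cref{prop:Deligne} and the already-stated real structure on $\End(V)$, and it makes the origin of the constant $2n$ and the adjoint completely explicit, whereas the paper's version is shorter and emphasizes that the statement is a formal consequence of functoriality, which generalizes immediately to any morphism of mixed Hodge structures compatible with $Q$.
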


\begin{proof}
	Since the splitting in \Cref{prop:Deligne} is functorial, this follows from the
	fact that $Q \colon V \to \Hom(\Vb, \CC)(-n)$ is an isomorphism of mixed Hodge
	structures.
\end{proof}

\newpar
By averaging the two splittings $H$ and $\Hb = 2n \id -H^{\dagger}$, we can create a
splitting that is ``real'', meaning compatible with the hermitian pairing $Q$. 

\begin{pprop} \label{prop:real-splitting}
	The operator $H_{\RR} = \half(H - H^{\dagger}) \in \glie$ is the unique real
	splitting of the shifted weight filtration $W_{n + \bullet}$ with the property
	that
	\[
		H_{\RR} - (H - n \id) \in R \bigl( W_{-2} \End(V) \bigr).
	\]
\end{pprop}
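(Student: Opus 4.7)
The plan has four parts: verify that $H_\RR \in \glie$, show that it is a splitting of the shifted weight filtration $W_{n+\bullet}$, check the characterizing relation, and establish uniqueness.

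First I would verify directly that $H_\RR^\dagger = \half(H^\dagger - H) = -H_\RR$, so $H_\RR$ lies in $\glie$. Then I would rewrite $H_\RR$ in a more useful form using the preceding lemma, which gives $\Hb = 2n\id - H^\dagger$; substituting yields
\[
	H_\RR = \half(H + \Hb) - n\id,
\]
an expression that cleanly separates the splitting data from the shift. Next, to show that $H_\RR$ splits $W_{n+\bullet}$, I would apply \Cref{lem:splittings}(a) to $\half(H + \Hb)$. Both $H$ and $\Hb$ lie in $W_0 \End(V)$ and act on each $\gr_k^W$ as multiplication by $k$, so the same is true of their average; hence $\half(H + \Hb)$ is a splitting of $W_\bullet$, and subtracting $n \id$ shifts the eigenvalue on $\gr_{n+k}^W$ from $n+k$ to $k$, producing a splitting of $W_{n+\bullet}$. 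For the characterizing relation, the rewriting gives $H_\RR - (H - n \id) = \half(\Hb - H)$, which lies in $R(W_{-2} \End(V))$ by the identity established in the discussion immediately preceding the proposition, using the characterizing properties of $H$ and $\Hb$ from \Cref{prop:Deligne}.

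The main obstacle is uniqueness. Given another real splitting $H'_\RR$ satisfying the same relation, set $\eta = H'_\RR - H_\RR$; then $\eta$ lies in $R(W_{-2}\End(V)) \cap \glie$ and acts trivially on $\gr_\bullet^W$ (being a difference of two splittings with matching eigenvalues). To force $\eta = 0$, I would decompose $\eta$ according to the Deligne bi-grading $R(W_{-2}\End(V)) = \bigoplus_{i,j \leq -1} I^{i,j}_{\End(V)}$ supplied by \Cref{prop:MHS-R}, and track how the adjoint $A \mapsto A^\dagger$ interacts with this decomposition. The relevant compatibility comes from the self-duality $Q \colon V \to \Hom(\Vb, \CC)(-n)$: the dagger exchanges the filtrations $F$ and $\Fb$ on $\End(V)$, preserving $R$ but intertwining the two natural decompositions $\bigoplus I^{i,j}$ and $\bigoplus \Ib^{i,j}$ of $R(W_{-2}\End(V))$ via a permutation of the bidegrees. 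The equation $\eta + \eta^\dagger = 0$ thus couples each component $\eta^{i,j} \in I^{i,j}$ with an adjoint component in a different summand, and careful bookkeeping of Hodge types in $\gr_{i+j}^W \End(V)$ should force each component to vanish separately. This type-analysis is the hard part, since it combines the two Deligne decompositions with the hermitian structure and does not reduce to the vanishing in \eqref{eq:MHS-vanishing} that drove the uniqueness in \Cref{prop:Deligne}; I expect the argument to run by induction on the weight $i+j$, using the fact that $\dagger$ is an involution and that both $\eta$ and $\eta^\dagger$ are compatible with both decompositions of $R$.
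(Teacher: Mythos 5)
The existence half of your proposal is correct and is exactly the paper's argument: rewrite $H_{\RR}=\half(H+\Hb)-n\id$ using $\Hb=2n\id-H^{\dagger}$, note that this operator lies in $\glie$, preserves $W_{\bullet}$ and acts on $\gr_{n+k}^W$ as multiplication by $k$ (hence is a splitting of $W_{n+\bullet}$), and quote $\Hb-H\in R\bigl(W_{-2}\End(V)\bigr)$ for the displayed membership. Nothing to add there.

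The gap is in the uniqueness step, and it is not merely the bookkeeping you postpone. After forming $\eta=H'_{\RR}-H_{\RR}$ you retain only two linear facts, namely $\eta\in R\bigl(W_{-2}\End(V)\bigr)$ and $\eta^{\dagger}=-\eta$, and you hope a Hodge-type analysis will force $\eta=0$. It cannot: $R\bigl(W_{-2}\End(V)\bigr)$ is symmetric in $F$ and $\Fb$ and preserved by $W$, hence stable under the conjugation $\sigma(A)=-A^{\dagger}$, so whenever it is nonzero it contains nonzero elements of $\glie$ (take $\xi-\xi^{\dagger}$ for $\xi\in R\bigl(W_{-2}\End(V)\bigr)$). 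Worse, such elements can genuinely occur as differences of competing splittings: for the two-dimensional split mixed Hodge structure $V=I^{0,0}\oplus I^{-1,-1}$ (so $n=-1$), with $Q$ pairing the two lines, adding to $H_{\RR}$ a purely imaginary multiple of the lowering map $I^{0,0}\to I^{-1,-1}$ produces another element of $\glie$, semisimple with eigenvalues $\pm1$, which splits $W_{n+\bullet}$ and still differs from $H-n\id$ by an element of $R\bigl(W_{-2}\End(V)\bigr)$. So no bidegree bookkeeping applied to $\eta$ alone can close the argument; any honest uniqueness proof must use that $H'_{\RR}$ is itself a splitting -- a nonlinear condition that disappears the moment you pass to the difference (by \Cref{lem:splittings} the two splittings are related by $H'_{\RR}=gH_{\RR}g^{-1}$ with $g-\id\in W_{-1}\End(V)$, and it is $g$ that has to be controlled) -- and the small example above indicates that even then one must pin down exactly which additional normalization is intended. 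For comparison, the paper disposes of uniqueness with the single sentence that it is ``the same as in \Cref{prop:Deligne}''; as you correctly observe, the vanishing \eqref{eq:MHS-vanishing} that powers that argument is not available here, and this is precisely the point at which your reduction to properties of $\eta$ breaks down.
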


\begin{proof}
	The operator $\half(H + \Hb) - n \id = \half(H - H^{\dagger}) \in \glie$ preserves
	the weight filtration, and acts on $\gr_{n+k}^W$ as multiplication by $k$; it is
	therefore a splitting of the shifted weight filtration $W_{n+\bullet}$. Recall
	from \Cref{prop:MHS-R} that $\Hb - H \in R \bigl( W_{-2} \End(V) \bigr)$.
	The proof of uniqueness is the same as in \Cref{prop:Deligne}.
\end{proof}

\section{The limiting mixed Hodge structure}

\newpar
The purpose of this chapter is show that the vector space $V$ of multi-valued flat
sections has a mixed Hodge structure on it; following Schmid, we shall call this the
\define{limiting mixed Hodge structure}. Before stating the result, we briefly recall
the definition of a (complex) mixed Hodge structure. 

\subsection{\boldmath The limiting $\sltwo$-Hodge structure}

\newpar
We are going to derive the existence of the limiting mixed Hodge structure from a more
precise result. Recall that, after having chosen a splitting $H \in \End(V)$ for the
monodromy weight filtration, we get a unique representation
\[
	\rho \colon \sltwo(\CC) \to \End(V)
\]
with the property that $\rho(\Hsl) = H$ and $\rho(\Ysl) = -N$. It turns out that this
representation is part of a polarized $\sltwo$-Hodge structure. The ``total'' Hodge
filtration of this $\sltwo$-Hodge structure is obtained from the rescaled period
mapping
\[
	\PhiSH \colon \HH \to D, \quad
	\PhiSH(z) = e^{\half \log \abs{\Re z} H} e^{-\half(z - \zb)(S+N)} \Phi(z).
\]
In \Cref{chap:rescaled}, we showed that $\PhiSH$ converges to a well-defined limit 
\[
	e^{-N} F_H = \lim_{\abs{\Re z} \to \infty} \PhiSH(z) \in D,
\]
which is the Hodge filtration of a polarized Hodge structure of weight $n$ on the
vector space $V$ (with polarization $Q$). The filtration $F_H$ has the property that
\begin{equation} \label{eq:conditions}
	H F_H^{\bullet} \subseteq F_H^{\bullet}, \qquad 
	T_s F_H^{\bullet} \subseteq F_H^{\bullet}, \qquad 
	N F_H^{\bullet} \subseteq F_H^{\bullet-1}.
\end{equation}
Since $e^{\Ysl} F_H$ is the Hodge filtration of a polarized Hodge structure of weight
$n$, the criterion in \Cref{thm:sltwo-filtration} shows that $F_H$ is the
total Hodge filtration of a polarized $\sltwo$-Hodge structure of weight $n$,
polarized by the hermitian pairing $Q$. Moreover, by
\Cref{cor:sltwo-endomorphism}, the semisimple part $T_s$ is necessarily an
endomorphism of this polarized $\sltwo$-Hodge structure.

\newpar
We observed earlier that the conjugate Hodge filtration is $e^{-\Ysl} \Fb_H$, where
\[
	\Fb_H^q = \menge{v \in V}{\text{$Q(v,x) = 0$ for all $x \in F^{n-q+1}$}}.
\]
It follows that the two ``total'' Hodge filtrations of the $\sltwo$-Hodge structure
are $F_H$ and $\Fb_H$, and so the Hodge structure on the weight space $V_k$ is given
by
\[
	V_k^{i,j} = V_k \cap F_H^i \cap \Fb_H^j.
\]
In particular, this says that $\Ysl(V_k^{i,j}) \subseteq V_{k-2}^{i-1,j-1}$ and
$\Xsl(V_k^{i,j}) \subseteq V_{k+2}^{i+1,j+1}$; especially the second inclusion is not
at all obvious from the conditions in \eqref{eq:conditions}.

\subsection{The limiting mixed Hodge structure}

\newpar
In terms of the limiting Hodge filtrations $\Flim$ and $\Fblim$ coming from the
nilpotent orbit theorem, one can describe $F_H$ and its conjugate as
\[
	F_H = \lim_{x \to \infty} e^{xH} \Flim \quad \text{and} \quad
	\Fb_H = \lim_{x \to \infty} e^{xH} \Fblim.
\]
The effect of the limit is that, under the isomorphism $E_k(H) \cong
W_k/W_{k-1}$, one has
\[
	F_H^p \cap E_k(H) \cong 
	\bigl( \Flim^p \cap W_k + W_{k-1} \bigr) / W_{k-1},
\]
and similarly for the conjugate filtration $\Fb_H$. This means that the passage from
$(\Flim, \Fblim)$ to $(F_H, \Fb_H)$ is exactly the passage from a mixed Hodge
structure to its associated graded.

\begin{pcor} \label{cor:limitingMHS}
	The three filtrations $W_{\bullet-n}(N)$, $\Flim$ and $\Fblim$ determine a mixed 
	Hodge structure on the vector space $V$. The associated graded of this mixed Hodge
	structure is a polarized $\sltwo$-Hodge structure of weight $n$. The operators
	\[
		N \colon V \to V(-1) \quad \text{and} \quad
		T_s \colon V \to V
	\]
	are morphisms of mixed Hodge structures.
\end{pcor}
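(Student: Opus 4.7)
The strategy is to transport the polarized $\sltwo$-Hodge structure on $V$ already established via $F_H$ and $\Fb_H$ (using \Cref{thm:sltwo-filtration}) to a mixed Hodge structure on $V$ with weight filtration $W_{\bullet-n}$ and Hodge filtration $\Flim$, by means of the bridging identity
\[
	F_H^p \cap E_k(H) \;\cong\; \bigl( \Flim^p \cap W_k + W_{k-1} \bigr) / W_{k-1}
\]
and its conjugate analogue relating $\Fb_H$ to $\Fblim$. I would first justify this identity, then use it to match the induced filtrations on each $\gr_k^W V$ with the $\sltwo$-Hodge structure, and finally check that $N$ and $T_s$ are morphisms.

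For the bridging identity, recall that $F_H = \lim_{x \to \infty} e^{xH} \Flim$ (\Cref{par:FH}). Applying \Cref{lem:exponential} with $S$ replaced by the splitting $H$ (whose eigenspace decomposition is $V = \bigoplus_k E_k(H)$, with $W_k = \bigoplus_{j \leq k} E_j(H)$), the limit is obtained by projecting $\Flim$ to the subquotients of the filtration by increasing eigenvalues of $H$. Under the isomorphism $E_k(H) \cong \gr_k^W V$, this says precisely that $F_H^p \cap E_k(H)$ corresponds to $(\Flim^p \cap W_k + W_{k-1})/W_{k-1}$. The analogous statement for $\Fb_H$ and $\Fblim$ follows because $\Fb_H^q = \{\,v : Q(v,x) = 0 \text{ for all } x \in F_H^{n-q+1}\,\}$ is $Q$-dual to $F_H$, and $\Fblim$ is $Q$-dual to $\Flim$ in the same way.

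Combining these, I would read off the mixed Hodge structure. The polarized $\sltwo$-Hodge structure on $V$ of weight $n$ gives each $V_k = E_k(H)$ a Hodge structure of weight $n+k$ with decomposition $V_k^{p,q} = V_k \cap F_H^p \cap \Fb_H^q$. Under $V_k \cong \gr_k^W V$, the bridging identity shows this is exactly the Hodge decomposition of $\gr_k^W V$ induced by $\Flim$ and $\Fblim$. Hence $(W_{\bullet-n}, \Flim, \Fblim)$ defines a mixed Hodge structure on $V$, and its associated graded is by construction the polarized $\sltwo$-Hodge structure with $\Ysl = -N$ and polarization $Q$.

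For the morphism properties: $N(W_k) \subseteq W_{k-2}$ is part of the defining property of the monodromy weight filtration, $N(\Flim^\bullet) \subseteq \Flim^{\bullet-1}$ is in \Cref{prop:Flim}, and $N(\Fblim^\bullet) \subseteq \Fblim^{\bullet-1}$ follows by $Q$-duality, since $N^\dagger = N$ implies $Q(Nv,x) = Q(v, Nx)$ and $Nx \in \Flim^{n-q+1}$ whenever $x \in \Flim^{n-q+2}$. Similarly, $T_s(W_\bullet) \subseteq W_\bullet$ because $[T_s, N] = 0$ and $T_s$ therefore preserves each $\ker N^{k+2j+1}$ in the explicit formula for $W_k$; $T_s(\Flim^\bullet) \subseteq \Flim^\bullet$ is in \Cref{prop:Flim}; and $T_s(\Fblim^\bullet) \subseteq \Fblim^\bullet$ follows from $T_s^\dagger = T_s^{-1}$ by the same duality argument. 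The only substantive step is the bridging identity in the first paragraph, but this is essentially immediate from \Cref{lem:exponential} and the construction of $F_H$, so the proof reduces to a bookkeeping exercise assembling results already in place.
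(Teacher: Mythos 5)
Your proposal is correct and follows essentially the same route as the paper: the identity $F_H=\lim_{x\to\infty}e^{xH}\Flim$ (together with its conjugate analogue) identifies the filtrations induced on each $\gr_k^W$ by $\Flim$ and $\Fblim$ with the restrictions of $F_H$ and $\Fb_H$ to $E_k(H)$, so the limiting polarized $\sltwo$-Hodge structure is precisely the associated graded, and the morphism claims follow from the known compatibilities of $N$ and $T_s$ with $W_{\bullet}$ and $\Flim$. Your explicit $Q$-duality check of the $\Fblim$-compatibilities is a detail the paper leaves implicit, but the argument is the same.
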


\begin{proof}
	Set $W_k = W_k(N)$. The first assertion is clear because each weight space
	\[
		V_k = E_k(H) \cong W_k/W_{k-1}
	\]
	has a Hodge structure of weight $n+k$. The second assertion follows because we
	have $N(W_k) \subseteq W_{k-2}$ and $T_s(W_k) \subseteq W_k$ for all $k \in \ZZ$,
	and also $N(\Flim^p) \subseteq \Flim^{p-1}$ and $T_s(\Flim^p) \subseteq \Flim^p$
	for all $p \in \ZZ$.
\end{proof}

\subsection{A formula for the period mapping}

\newpar
It is sometimes useful to have a more explicit description of the period mapping. At
least on a small neighborhood of the origin, we can write down a fairly concrete
formula for $\Phi(z)$ with the help of the limiting mixed Hodge structure. For
variations of Hodge structure with unipotent monodromy, this result can be found in
\cite[(2.5)]{CK}.

\newpar
The nilpotent orbit theorem (in \Cref{thm:nilpotent-convergence-intro}) tell us that
\[
	\Psi_S(e^z) = e^{-z(S+N)} \Phi(z)
\]
converges to $\Psi_S(0) \in \Dch$ as $\abs{\Re z} \to \infty$. Since $\Psi_S(t)$ is
holomorphic, we should therefore be able to express $\Psi_S(t)$ in terms of
$\Psi_S(0)$, using functions that are holomorphic on a neighborhood of the origin. To
do this, recall the formula for the holomorphic tangent space
\[
	T_{\Psi_S(0)}^{1,0} \Dch \cong \End(V) / F^0 \End(V)_{\Psi_S(0)}.
\]
We can use the limiting mixed Hodge structure to find a subalgebra $\qlie
\subseteq \End(V)$ such that
\begin{equation} \label{eq:qlie-complement}
	\End(V) = \qlie \oplus F^0 \End(V)_{\Psi_S(0)}.
\end{equation}
Recall that $V$ has a mixed Hodge structure with Hodge filtration $\Flim$ and weight
filtration $W_{\bullet-n}$, and that the semisimple operator $T_s$ is an endomorphism
of this mixed Hodge structure. Consider the induced mixed Hodge structure on
$\End(V)$. According to the general result in \Cref{prop:F0-complement}, the subspace
\[
	\qlie = \sum_{n \in \ZZ} \Fblim^{n+1} W_n \End(V)
\]
is a vector space complement of $\Flim^0 \End(V)$, and hence
\begin{equation} \label{eq:qlie-complement-Flim}
	\End(V) = \qlie \oplus \Flim^0 \End(V).
\end{equation}
It is also clearly a subalgebra, and we have $N \in \qlie$ because $N \colon V \to
V(-1)$ is a morphism of mixed Hodge structures. Moreover, since $S$ is an
endomorphism of the mixed Hodge structure, the operator $\ad S$ preserves $\qlie$;
this gives us a decomposition
\[
	\qlie = \bigoplus_{\alpha \in \RR} \qlie_{\alpha},
\]
where $\qlie_{\alpha} = \qlie \cap E_{\alpha}(\ad S)$. This decomposition is
compatible with the Lie algebra structure on $\End(V)$, in the sense that
$\lbrack \qlie_{\alpha}, \qlie_{\beta} \rbrack \subseteq \qlie_{\alpha + \beta}$ by
the Jacobi identity.

\newpar
Now \eqref{eq:qlie-complement} follows from \eqref{eq:qlie-complement-Flim} by a
formal argument. Let $\alpha_1 > \dotsb > \alpha_m$ be the distinct
eigenvalues of $\ad S$ on $\End(V)$, in decreasing order. Setting $\lambda_k = e^{2
\pi i \alpha_k}$, we have 
\[
	\frac{E_{\alpha_1}(\ad S) \oplus \dotsb \oplus E_{\alpha_{k-1}}(\ad S) \oplus
	E_{\alpha_k}(\ad S)}{E_{\alpha_1}(\ad S) \oplus \dotsb \oplus E_{\alpha_{k-1}}(\ad S)}
	\cong E_{\lambda_k}(\Ad T_s),
\]
and under this isomorphism, the filtration induced by $\Psi_S(0)$ on the left-hand
side goes to the limiting Hodge filtration $\Flim$. We can then derive
\eqref{eq:qlie-complement} by induction on $k=1, \dotsc, m$.

\newpar
Since $T_{\Psi_S(0)}^{1,0} \Dch \cong \qlie$, the exponential mapping
\[
	\exp \colon \qlie \to \Dch, \quad A \mapsto e^A \cdot \Psi_S(0),
\]
restricts to a biholomorphism between a small neighborhood of $0 \in \qlie$ and a
small neighborhood of the point $\Psi_S(0) \in \Dch$. Consequently, there is a
constant $\eps > 0$, whose exact value depends on the period mapping under
consideration, and a unique holomorphic mapping
\[
	\Gammat \colon \Delta_{\eps} \to \qlie
\]
such that $\Gammat(0) = 0$ and $\Psi_S(t) = e^{\Gammat(t)} \cdot \Psi_S(0)$ for
$\abs{t} < \eps$. As long as $\Re z < \log \eps$, this gives us a (preliminary)
formula for the period mapping:
\begin{equation} \label{eq:Phi-formula}
	\Phi(z) = e^{z(S+N)} e^{\Gammat(e^z)} \cdot \Psi_S(0)
\end{equation}
Note that, unlike in \cite[(2.5)]{CK}, the relevant filtration here is \emph{not} the
limiting Hodge filtration $\Flim$, but the filtration $\Psi_S(0)$ coming from the
nilpotent orbit theorem. 

\newpar
Now we would like to derive from \eqref{eq:Phi-formula} a formula for $\Phi(z)$
that only involves the limiting Hodge filtration $\Flim$. The key is the following lemma.

\begin{plem}
	One has $\Psi_S(0) = e^A \Flim$ for a unique element $A \in \qlie$.
	Decomposing $A = \sum_{\alpha} A_{\alpha}$, with $A_{\alpha} \in \qlie_{\alpha}$,
	one has $A_{\alpha} = 0$ for $\alpha \leq 0$.
\end{plem}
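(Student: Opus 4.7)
My plan is to prove uniqueness via the nilpotent group structure of $\exp(\qlie)$, then existence by combining the local inverse function theorem with the convergence $\Flim = \lim_{x\to\infty} e^{-xS}\Psi_S(0)$, and finally to read off the eigenvalue constraint from the fact that $A_x\to 0$.

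First I would observe that $\qlie$ is a \emph{nilpotent} Lie subalgebra of $\End(V)$: since $[\Fblim^{a+1}W_a\End(V),\Fblim^{b+1}W_b\End(V)]\subseteq\Fblim^{a+b+2}W_{a+b}\End(V)$, iterated brackets drive the $\Fblim$-index to infinity, eventually landing in $\Fblim^m\End(V) = 0$ for $m$ large. Therefore $\exp\colon\qlie\to\exp(\qlie)\subseteq\GL(V)$ is a diffeomorphism onto a unipotent subgroup. Since $\qlie$ is complementary to the isotropy Lie algebra $F^0\End(V)_\Flim$, the intersection of $\exp(\qlie)$ with the stabilizer of $\Flim$ has trivial Lie algebra, and nilpotency then forces this intersection to reduce to $\{\id\}$. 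Uniqueness follows immediately: if $e^A\Flim = e^B\Flim$ with $A,B\in\qlie$, then $e^{-B}e^A\in\exp(\qlie)$ stabilizes $\Flim$, hence equals $\id$, so $A=B$.

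For existence, the orbit map $\psi\colon\qlie\to\Dch$, $A\mapsto e^A\Flim$, is a local biholomorphism at the origin because its differential is the isomorphism $\qlie\to\End(V)/F^0\End(V)_\Flim\cong T_\Flim^{1,0}\Dch$ coming from the complementarity. Since $e^{-xS}\Psi_S(0)\to\Flim$ as $x\to\infty$, for $x$ large enough I would write $e^{-xS}\Psi_S(0) = e^{A_x}\Flim$ for a unique small $A_x\in\qlie$, and $A_x\to 0$ by continuity of $\psi^{-1}$. The operator $S$ is an endomorphism of the limiting mixed Hodge structure, so $S\Flim^\bullet\subseteq\Flim^\bullet$ and hence $e^{xS}\Flim=\Flim$; moreover $\qlie$ is $\ad S$-invariant, as already noted in the paper. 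Consequently
\[
	\Psi_S(0) = e^{xS}e^{A_x}\Flim = \exp\bigl(\Ad(e^{xS})A_x\bigr)\cdot\Flim,
\]
and setting $A = \Ad(e^{xS})A_x\in\qlie$ produces the desired element, which is independent of $x$ by the uniqueness just established.

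Finally, decomposing $A = \sum_\alpha A_\alpha$ along $\qlie=\bigoplus_\alpha\qlie_\alpha$, the identity $A_x = \Ad(e^{-xS})A = \sum_\alpha e^{-x\alpha}A_\alpha$ together with $A_x\to 0$ forces $e^{-x\alpha}A_\alpha\to 0$ for each $\alpha$, and hence $A_\alpha = 0$ whenever $\alpha\leq 0$. The main technical obstacle is verifying that $\qlie$ is nilpotent and that $\exp(\qlie)\cap\operatorname{Stab}(\Flim)=\{\id\}$; once this is in hand, everything else is a formal consequence of the inverse function theorem, the $\ad S$-equivariance of the orbit map, and \Cref{lem:exponential}.
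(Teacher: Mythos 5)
Your proof is correct, and its core is the same as the paper's: the orbit map $A \mapsto e^A \cdot \Flim$ is a biholomorphism near $0 \in \qlie$, so $e^{-xS}\Psi_S(0) = e^{A_x}\Flim$ with $A_x \in \qlie$ small for $x \gg 0$; conjugating by $e^{xS}$ (using $S(\Flim^{\bullet}) \subseteq \Flim^{\bullet}$ and the $\ad S$-invariance of $\qlie$) gives $\Psi_S(0) = e^A \Flim$, and $A_x = \sum_{\alpha} e^{-x\alpha} A_{\alpha} \to 0$ kills $A_{\alpha}$ for $\alpha \leq 0$. The genuine difference is uniqueness: the paper leaves it implicit (it only asserts that $A = e^{x \ad S}B(x)$ is ``necessarily constant''), while you prove global uniqueness from the fact that $\qlie$ is a nilpotent Lie algebra of nilpotent endomorphisms, so that $\exp(\qlie)$ is a unipotent group meeting the stabilizer of $\Flim$ trivially, the intersection having Lie algebra $\qlie \cap \Flim^0 \End(V) = 0$; this buys you, in particular, the $x$-independence of $A$ without any separate argument. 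Two details worth making explicit: your bracket inclusion $[\Fblim^{a+1}W_a \End(V), \Fblim^{b+1}W_b \End(V)] \subseteq \Fblim^{a+b+2}W_{a+b}\End(V)$ only yields nilpotency together with the fact that $W_m \End(V) = 0$ for $m \ll 0$ (a $k$-fold bracket lies in $\Fblim^{m+k}W_m\End(V)$ with $m$ bounded below wherever this space is nonzero, hence vanishes for $k \gg 0$); and the step from ``trivial Lie algebra plus unipotence'' to ``trivial intersection'' deserves a word --- either invoke the standard fact that a Zariski-closed subgroup of a unipotent group in characteristic zero with trivial Lie algebra is trivial, or argue directly that if $e^A$ fixes $\Flim$ with $A \in \qlie$, then $e^{tA}$ fixes $\Flim$ for all $t \in \CC$ (the condition is polynomial in $t$ and holds at all integers), whence $A \in \qlie \cap \Flim^0\End(V) = 0$.
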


\begin{proof}
	As before, the exponential mapping $A \mapsto e^A \cdot \Flim$ gives a biholomorphism
	between a neighborhood of $0 \in \qlie$ and a neighborhood of the point
	$\Flim \in \Dch$. Recall that $\Flim = \lim_{x \to \infty} e^{-xS} \Psi_S(0)$. For
	$x \gg 0$, it follows that there is an element $B(x) \in \qlie$
	such that $e^{-x S} \Psi_S(0) = e^{B(x)} \Flim$. Since $S$ preserves the filtration
	$\Flim$, we can rewrite this as
	\[
		\Psi_S(0) = e^{xS} e^{B(x)} e^{-xS} \cdot \Flim = e^A \cdot \Flim,
	\]
	where $A = e^{x \ad(S)} B(x) \in \qlie$ is necessarily constant. Now we decompose
	this constant endomorphism as $A = \sum_{\alpha} A_{\alpha}$, with $A_{\alpha} \in
	\qlie \cap E_{\alpha}(\ad S)$. Then
	\[
		e^{-xS} \Psi_S(0) = e^{-xS} e^A e^{xS} \cdot \Flim
		= \exp \left( \sum_{\alpha} e^{-\alpha x} A_{\alpha} \right) \cdot \Flim.
	\]
	As $x \to \infty$, this can only converge to $\Flim$ if $A_{\alpha} = 0$ for
	$\alpha \leq 0$.
\end{proof}

\newpar
Putting the lemma and \eqref{eq:Phi-formula} together, we find that
\[
	\Phi(z) = e^{z(S+N)} e^{\Gammat(e^z)} e^A \cdot \Flim
	= e^{z(S+N)} e^{\Gamma(e^z)} \cdot \Flim,
\]
where $\Gamma \colon \Delta_{\eps} \to \qlie$ is holomorphic and satisfies $\Gamma(0)
= A$. We can do even better by decomposing $\Gamma(t) = \sum_{\alpha}
\Gamma_{\alpha}(t)$; each individual function $\Gamma_{\alpha} \colon \Delta_{\eps}
\to \qlie_{\alpha}$ is holomorphic, and $\Gamma_{\alpha}(0) = 0$ for $\alpha \leq 0$.
Since $S$ preserves the filtration $\Flim$, we finally get
\[
	\Phi(z) = e^{zN} \cdot e^{zS} e^{\Gamma(e^z)} e^{-zS} \cdot \Flim
	= e^{zN} \cdot 
	\exp \left( \sum_{\alpha} e^{\alpha z} \, \Gamma_{\alpha}(e^z) \right) \cdot \Flim.
\]
To summarize, we have proved the following result.

\begin{pprop}
	There is a small positive number $\eps > 0$, and a collection of holomorphic
	functions $\Gamma_{\alpha} \colon \Delta_{\eps} \to \qlie_{\alpha}$ with
	$\Gamma_{\alpha}(0) = 0$ for $\alpha \leq 0$, such that the formula
	\[
		\Phi(z) = e^{zN} \cdot 
		\exp \left( \sum_{\alpha} e^{\alpha z} \, \Gamma_{\alpha}(e^z) \right) \cdot \Flim
	\]
	describes the period mapping on the half-plane $\Re z < \log \eps$. 
\end{pprop}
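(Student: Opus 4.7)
The plan is to combine the nilpotent orbit theorem with the linear-algebraic description of the complement $\qlie$ of $F^0 \End(V)_{\Psi_S(0)}$ via the limiting mixed Hodge structure, and then track how the role of the filtration $\Psi_S(0)$ can be replaced by that of $\Flim$.

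First I would invoke the nilpotent orbit theorem to extend the untwisted period mapping $\Psi_S$ holomorphically across the origin. Since $\qlie$ is a vector-space complement of $F^0 \End(V)_{\Psi_S(0)}$ and is identified with the holomorphic tangent space $T^{1,0}_{\Psi_S(0)} \Dch$, the exponential map $A \mapsto e^A \cdot \Psi_S(0)$ is a local biholomorphism from a neighborhood of $0 \in \qlie$ onto a neighborhood of $\Psi_S(0) \in \Dch$. Choosing $\eps > 0$ small enough that $\Psi_S(\Delta_\eps)$ sits inside this neighborhood produces a unique holomorphic $\Gammat \colon \Delta_\eps \to \qlie$ with $\Gammat(0) = 0$ and $\Psi_S(t) = e^{\Gammat(t)} \cdot \Psi_S(0)$. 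Substituting into $\Phi(z) = e^{z(S+N)} \Psi_S(e^z)$ gives the preliminary formula
\[
\Phi(z) = e^{z(S+N)} e^{\Gammat(e^z)} \cdot \Psi_S(0), \qquad \Re z < \log \eps.
\]

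Next I would replace $\Psi_S(0)$ by $\Flim$ using the lemma proved just above the proposition: there is a unique $A \in \qlie$ with $\Psi_S(0) = e^A \Flim$, and the $\ad S$-eigenspace decomposition $A = \sum_\alpha A_\alpha$ satisfies $A_\alpha = 0$ for $\alpha \leq 0$. Writing $\Gamma(t) = \Gammat(t) \ast A$ using the Baker–Campbell–Hausdorff expansion (which converges for $t$ in a possibly smaller disk $\Delta_\eps$) gives a holomorphic function $\Gamma \colon \Delta_\eps \to \qlie$ with $e^{\Gamma(t)} = e^{\Gammat(t)} e^A$ and $\Gamma(0) = A$; since $A \in \bigoplus_{\alpha > 0} \qlie_\alpha$ and $\qlie$ is $\ad S$-invariant, one has
\[
\Phi(z) = e^{z(S+N)} e^{\Gamma(e^z)} \cdot \Flim.
\]

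Finally I would decompose $\Gamma(t) = \sum_\alpha \Gamma_\alpha(t)$ with $\Gamma_\alpha \colon \Delta_\eps \to \qlie_\alpha$ holomorphic. Because $S$ preserves $\Flim$, we may move $e^{zS}$ past $\Flim$ and conjugate:
\[
e^{zS} \Gamma(e^z) e^{-zS} = \sum_\alpha e^{\alpha z}\, \Gamma_\alpha(e^z),
\]
so that
\[
\Phi(z) = e^{zN} \cdot \exp\!\left(\sum_\alpha e^{\alpha z}\, \Gamma_\alpha(e^z)\right) \cdot \Flim.
\]
The normalization $\Gamma_\alpha(0) = 0$ for $\alpha \leq 0$ is inherited from $A_\alpha = 0$ for $\alpha \leq 0$, while for $\alpha > 0$ the factor $e^{\alpha z} \to 0$ as $\Re z \to -\infty$ so no vanishing is required. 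The only real subtlety—mild, but the main thing to verify—is that $\Gammat(t)$ and $A$ both lie close enough to $0$ in $\qlie$ for BCH to converge and stay inside the domain of the local chart on $\Dch$; this is handled by shrinking $\eps$ further if necessary. Everything else is bookkeeping with the $\ad S$-grading on $\qlie$.
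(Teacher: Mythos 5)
Your argument is correct and follows the paper's proof essentially step for step: the local chart $A \mapsto e^{A}\cdot\Psi_S(0)$ built from the complement $\qlie$ gives $\Psi_S(t)=e^{\Gammat(t)}\cdot\Psi_S(0)$, then the lemma $\Psi_S(0)=e^{A}\Flim$ with $A_\alpha=0$ for $\alpha\le 0$ is fed in, and finally $e^{zS}$ is conjugated through using the $\ad S$-grading of $\qlie$ and the fact that $S$ preserves $\Flim$. The only point worth tightening is the Baker--Campbell--Hausdorff remark: shrinking $\eps$ makes $\Gammat(t)$ small but not $A$, so the generic convergence statement is not quite the right justification; instead note that $\qlie$ is a nilpotent subalgebra (its Deligne components sit in $F$-level $\le -1$ of $\End(V)$), so $\Gammat(t)\ast A$ is a finite polynomial expression and $\Gamma$ is automatically holomorphic with $\Gamma(0)=A$ --- which is exactly what the paper asserts without comment.
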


Note that each term in the sum vanishes as $\abs{\Re z} \to \infty$. The rate at
which this happens is controlled by the smallest positive eigenvalue of $\ad S$; but
this is the smallest distance among consecutive eigenvalues of $S$, which we had
earlier denoted by $\delta(T)$. This shows one more time that $e^{-zN} \Phi(z)$ approaches
its limit $\Flim$ at a rate proportional to $e^{-\delta(T) \abs{\Re z}}$.

\section{\boldmath Asymptotic behavior of the Hodge metric and the
\texorpdfstring{$\SL(2)$}{SL(2)}-orbit theorem}
\label{chap:asymptotic}

\newpar
In this chapter, we study the rate of convergence of the rescaled period mapping, and
prove a cheap version of Schmid's famous $\SL(2)$-orbit theorem
\cite[Thm.~5.13]{Schmid}. 

\newpar
On the vector space $V$, we have the limiting Hodge filtration $\Flim$, as well as
the conjugate filtration $\Fblim$, defined by 
\[
	\Fblim^q = \menge{v \in V}{\text{$Q(v,w) = 0$ for all $w \in \Flim^{n+1-q}$}}.
\]
These put a mixed Hodge structure on $V$, with weight filtration
$W_{\bullet-n} = W_{\bullet-n}(N)$; moreover, $N \colon V \to V(-1)$ is a morphism of
mixed Hodge structures. At the same time, $V$ also has a polarized $\sltwo$-Hodge
structure of weight $n$, whose two Hodge filtrations are $F_H$ and $\Fb_H$,
where
\[
	\Fb_H^q = \menge{v \in V}{\text{$Q(v,w) = 0$ for all $w \in F_H^{n+1-q}$}}.
\]

\newpar
We begin by comparing the two filtrations $\Flim$ and $F_H$.

\begin{plem} \label{lem:Flim-FH}
	There is an element $h \in \GL(V)$ with $h - \id \in W_{-1} \End(V)$ and $h N
	h^{-1} = N$, such that $\Flim = h F_H$.
\end{plem}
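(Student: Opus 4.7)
The strategy is to define $h$ by comparing two natural bigradings of $V$ that both refine the monodromy weight filtration $W_\bullet$: on the one hand, the Hodge bigrading $V = \bigoplus V_k^{p,q}$ of the polarized $\sltwo$-Hodge structure of weight $n$, where $V_k^{p,q} \subseteq V_k = E_k(H) \subseteq W_k$ has Hodge type $(p,q)$ with $p+q = n+k$; on the other hand, the Deligne decomposition $V = \bigoplus I^{p,q}$ of the limiting mixed Hodge structure, where $I^{p,q} \subseteq W_{p+q-n}$ projects isomorphically onto the $(p,q)$-component of $\gr_{p+q-n}^W$. The key observation, recorded in \Cref{cor:limitingMHS} and its surrounding discussion, is that these two bigradings induce the \emph{same} Hodge structure on each graded piece $\gr_k^W$. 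This is because $F_H$ was defined precisely as the limit $\lim_{x \to \infty} e^{xH} \Flim$, which is the operation of passing to the Hodge filtration on the associated graded under the identification $V_k \cong \gr_k^W$.

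Given this matching, I would define $h \in \GL(V)$ as the unique linear automorphism that sends $V_{p+q-n}^{p,q}$ bijectively onto $I^{p,q}$ for each pair $(p,q)$, via the common projection onto the $(p,q)$-component of $\gr_{p+q-n}^W$. Concretely, for $v \in V_{p+q-n}^{p,q}$, the vector $h(v)$ is the unique element of $I^{p,q}$ with $h(v) \equiv v \mod W_{p+q-n-1}$. By construction $(h - \id)(W_k) \subseteq W_{k-1}$ for all $k$, hence $h - \id \in W_{-1} \End(V)$. The equality $\Flim = h F_H$ is then immediate from the two formulas
\[
	F_H^p = \bigoplus_{i \geq p,\,j} V_{i+j-n}^{i,j} \qquad \text{and} \qquad
	\Flim^p = \bigoplus_{i \geq p,\,j} I^{i,j},
\]
the first being the definition of the total Hodge filtration of an $\sltwo$-Hodge structure, the second following from \eqref{eq:Deligne-decomposition} applied to the limiting mixed Hodge structure.

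The main content of the lemma is the commutation $hNh^{-1} = N$, which is where I would invest the argument. The crucial input is that $N \colon V \to V(-1)$ is a morphism of \emph{both} structures: of the limiting mixed Hodge structure by \Cref{cor:limitingMHS}, and of the $\sltwo$-Hodge structure because $\rho(\Ysl) = -N$. By functoriality of Deligne's decomposition (\Cref{prop:Deligne-dual-tensor}) and the elementary identity $I^{p,q}(V(-1)) = I^{p-1,q-1}(V)$, the first property gives $N(I^{p,q}) \subseteq I^{p-1,q-1}$, while the second gives $N(V_{p+q-n}^{p,q}) \subseteq V_{p+q-n-2}^{p-1,q-1}$. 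Now fix $v \in V_{p+q-n}^{p,q}$. Both $Nh(v)$ and $h(Nv)$ lie in $I^{p-1,q-1}$; and since $h(v) - v \in W_{p+q-n-1}$ forces $Nh(v) - Nv \in W_{p+q-n-3}$, these two elements of $I^{p-1,q-1}$ agree modulo $W_{p+q-n-3}$. But $I^{p-1,q-1} \cap W_{p+q-n-3} = 0$, since $I^{p-1,q-1}$ projects isomorphically onto its image in $\gr_{p+q-n-2}^W$, and therefore $Nh(v) = h(Nv)$. I do not anticipate any genuine obstacle beyond careful bookkeeping with weight indices; all the substantive content is already packaged in the functoriality of Deligne's construction and in the uniqueness of lifts from the associated graded.
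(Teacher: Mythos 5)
Your proposal is correct and is essentially the paper's own proof: the paper likewise defines $h$ piecewise as the unique isomorphism $V_{i+j-n}^{i,j} \to \Ilim^{i,j}$ compatible with the common projection onto the $(i,j)$-component of $\gr_{i+j-n}^W$, deduces $h-\id \in W_{-1}\End(V)$ and $\Flim = hF_H$ from the two decompositions, and obtains $hN = Nh$ from the fact that $N$ is a morphism of both the limiting mixed Hodge structure and the $\sltwo$-Hodge structure. Your explicit uniqueness argument via $I^{p-1,q-1} \cap W_{p+q-n-3} = 0$ just spells out the commutativity of the diagram the paper asserts.
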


\begin{proof}
	Let $H(W_{\bullet-n}, \Flim, \Fblim)$ denote Deligne's splitting of
	the limiting mixed Hodge structure (as in \Cref{prop:Deligne}). The corresponding
	subspaces
	\[
		\Ilim^{i,j} = \Flim^i \cap W_{i+j-n} \cap 
		\bigl( \Fblim^j \cap W_{i+j-n} + \Fblim^{j-1} \cap W_{i+j-2-n} + \Fblim^{j-2} \cap
		W_{i+j-3-n} + \dotsb \bigr)
	\]
	decompose $V$ into a direct sum, in such a way that
	\[
		W_k = \bigoplus_{i+j \leq n+k} \Ilim^{i,j} \qquad \text{and} \qquad
		\Flim^p = \bigoplus_{i \geq p, j} \Ilim^{i,j};
	\]
	moreover, one has $N(\Ilim^{i,j}) \subseteq \Ilim^{i-1,j-1}$, due to the fact that
	$N \colon V \to V(-1)$ is a morphism of mixed Hodge structures. In terms of the
	splitting, $[H(W_{\bullet-n}, \Flim, \Fblim), N] = -2N$.

	Recall that in an $\sltwo$-Hodge structure of weight $n$, one has
	\[
		V_k^{i,j} = F_H^i \cap V_k \cap \Fb_H^j,
	\]
	whenever $i+j=n+k$. Under the projection from $W_{i+j-n}$ to $\gr_{i+j-n}^W$, the
	two subspaces $\Ilim^{i,j}$ and $V_{i+j-n}^{i,j}$ both map isomorphically to the
	$(i,j)$-subspace in the Hodge decomposition of $\gr_{i+j-n}^W$, and so there is a
	unique isomorphism $h^{i,j}$ making the following diagram commute:
	\[
		\begin{tikzcd}
			& V_{i+j-n}^{i,j} \dar{\cong} \dlar[swap,bend right=20]{h^{i,j}} \\
			\Ilim^{i,j} \rar{\cong} & \bigl( \gr_{i+j-n}^W \bigr)^{i,j}
		\end{tikzcd}
	\]
	Consequently, there is a unique automorphism $h \in \GL(V)$ such that $h
	\restr{V_{i+j-n}^{i,j}} = h^{i,j}$ for all $i,j \in \ZZ$. It follows that 
	\[
		h(F_H^p) = \bigoplus_{i \geq p} h \bigl( V_{i+j-n}^{i,j} \bigr) 
		= \bigoplus_{i \geq p} \Ilim^{i,j} = \Flim^p.
	\]
	By construction, $h$ preserves the weight filtration $W$ and acts as the identity
	on each $\gr_k^W$; therefore $h - \id \in W_{-1} \End(V)$. Moreover, the diagram
	\[
		\begin{tikzcd}[column sep=large]
			V_{i+j-n}^{i,j} \dar{N} \rar{h^{i,j}} & \Ilim^{i,j} \dar{N} \\
			V_{i+j-2-n}^{i-1,j-1} \rar{h^{i-1,j-1}} & \Ilim^{i-1,j-1}
		\end{tikzcd}
	\]
	is commutative, and this implies that $hN = Nh$. In terms of the two splittings,
	we have
	\[
		H(W_{\bullet-n}, \Fb, \Fblim) = h \bigl( H + n \id \bigr) h^{-1},
	\]
	which is consistent with \Cref{lem:splittings}.
\end{proof}

\newpar
Note that, in the lemma above, the splitting $H$ was arbitrary, subject only to the
conditions in \Cref{prop:splitting}. If we pick a splitting that is adapted to the
limiting mixed Hodge structure, we can get a much better result. Indeed, suppose we
use instead the splitting $H_{\RR}(W_{\bullet-n}, \Flim, \Fblim) \in \glie$
for the filtration $W_{\bullet}$, constructed in \Cref{prop:real-splitting}, which
we got by averaging the two splittings $H(W_{\bullet-n}, \Flim, \Fblim)$ and
$H(W_{\bullet-n}, \Fblim, \Flim)$. This also has all the properties required by
\Cref{prop:splitting}. Then
\[
	H_{\RR}(W_{\bullet-n}, \Flim, \Fblim) 
	- \bigl( H(W_{\bullet-n}, \Flim, \Fblim) - n \id \bigr) 
	\in R \bigl( W_{-2} \End(V) \bigr),
\]
and so the element $h \in \GL(V)$ in \Cref{lem:Flim-FH} even satisfies $h - \id \in
W_{-2} \End(V)$. 

\newpar
We return to our investigation of the two filtrations $\Flim$ and $F_H$.  Let us
write the element $h \in \GL(V)$ as a finite sum
\[
	h = \id + h_{-1} + h_{-2} + \dotsb,
\]
where each $h_{-k} \in E_{-k}(\ad H) \cap \ker(\ad N)$. Recall from \Cref{par:FH}
that
\[
	F_H = \lim_{x \to \infty} e^{\half \log x \, H} \Flim.
\]
We can analyze the convergence more precisely as follows. For $x > 0$, one has
\[
	e^{\half \log x \, H} \Flim
	= e^{\half \log x \, H} h F_H
	= e^{\half \log x \, H} h e^{-\half \log x \, H} F_H,
\]
due to the fact that $H$ preserves the filtration $F_H$. Now
\[
	e^{\half \log x \, H} h e^{-\half \log x \, H}
	= \id + \sum_{k=1}^{\infty} x^{-k/2} h_{-k},
\]
and since $[N,h_{-k}] = 0$, we finally obtain
\[
	e^{\half \log x \, H} e^{-x N} \Flim
	= e^{-N} e^{\half \log x \, H} \Flim
	= \left( \id + \sum_{k=1}^{\infty} x^{-k/2} h_{-k} \right) \cdot \Fsh,
\]
where $\Fsh = e^{-N} F_H \in D$. Note that the filtration on the right-hand side
belongs to $D$ when $x \gg 0$.  With the substitution $u = x^{-1/2}$, we can rewrite
this as
\[
	\left( \id + \sum_{k=1}^{\infty} u^k h_{-k} \right) \cdot \Fsh,
\]
as long as $u$ is sufficiently close to $0$. 

\newpar
To understand the behavior of the Hodge metric, we have to express this in terms of
the real group $G$. For simplicity, let us denote by
\[
	V = \bigoplus_{p+q=n} V^{p,q}
\]
the Hodge structure with Hodge filtration $\Fsh = e^{-N} F_H$, and by
\[
	\End(V) = \bigoplus_{j \in \ZZ} \End(V)^{j,-j}
\]
the induced Hodge structure of weight $0$ on $\End(V)$. Recall that this is an $\RR$-Hodge
structure; the Lie algebra $\glie$ of the real group $G$ gives the real structure. Setting
\[
	\mlie = \glie \cap \bigoplus_{j \neq 0} \End(V)^{j,-j}
\]
we then have $\End(V) = \mlie \oplus F^0 \End(V)$; compare
\eqref{eq:iso-tangent-spaces}. In concrete terms, this decomposition works as
follows. For an endomorphism $A \in \End(V)$, denote by $A_j \in \End(V)^{j,-j}$ the
components in the Hodge decomposition; then
\[
	\sum_{j < 0} \bigl( A_j - (A_j)^{\dagger} \bigr) \in \mlie
	\quad \text{and} \quad
	\sum_{j \geq 0} A_j + \sum_{j < 0} (A_j)^{\dagger} \in F^0 \End(V)
\]

\newpar
Recall that the subspace $\mlie \subseteq \End(V)$ maps isomorphically to the tangent
space of $D$ at the point $\Fsh$. The exponential mapping
\[
	\End(V) \to D, \quad A \mapsto e^A \cdot \Fsh,
\]
is real analytic, and therefore restricts to a real-analytic diffeomorphism between a
neighborhood of the origin in $\mlie$ and a neighborhood of the point $\Fsh \in D$,
it follows that there is a small positive number $\eps > 0$ and two unique real
analytic functions
\[
	B \colon (-\eps, \eps) \to \mlie \quad \text{and} \quad
	C \colon (-\eps, \eps) \to F^0 \End(V)
\]
with the property that $B(0) = C(0) = 0$ and 
\[
	\left( \id + \sum_{k=1}^{\infty} u^k h_{-k} \right) e^{C(u)} = e^{B(u)}.
\]
Let us expand $B$ and $C$ into convergent power series of the form
\[
	B(u) = \sum_{k=1}^{\infty} u^k B_{-k} \quad \text{and} \quad
	C(u) = \sum_{k=1}^{\infty} u^k C_{-k},
\]
where $B_{-k} \in \mlie$ and $C_{-k} \in F^0 \End(V)$. 

\newpar
From the power series expansion of
\[
	e^{B(u)} = \id + u B_{-1} + u^2 \Bigl( B_{-2} + \half B_{-1}^2 \Bigr) + \dotsb,
\]
we see that there are universal non-commutative polynomials $P_{-k}$ in the variables
$B_{-1}, B_{-2}, \dotsc$, such that
\[
	e^{B(u)} = \id + \sum_{k=1}^{\infty} u^k P_{-k}(B_{-1}, \dotsc, B_{-k}).
\]
The first few terms are easily computed to be
\begin{align*}
	P_{-1}(B_{-1}) &= B_{-1}, \\
	P_{-2}(B_{-1},B_{-2}) &= B_{-2} + \half B_{-1}^2 \\
	P_{-3}(B_{-1}, B_{-2}, B_{-3}) &= B_{-3} + \half B_{-2} B_{-1} + \half B_{-1}
	B_{-2} + \frac{1}{6} B_{-1}^3.
\end{align*}
In general, the coefficients of $k! P_{-k}$ are positive integers; moreover, $P_{-k}$
is homogeneous of degree $k$, if we declare that the degree of $B_{-j}$ is $j$. It
is easy to see that the variable $B_{-k}$ always appears in $P_{-k}$ with coefficient $1$.

\newpar
From the $\sltwo$-Hodge structure on $V$, the vector space $\End(V)$ inherits an
$\sltwo$-Hodge structure of weight $0$. In particular, the Lie algebra
$\sltwo(\CC)$ acts on $\End(V)$, and therefore induces a decomposition
\[
	\End(V) \cong \bigoplus_{m \in \NN} S_m \tensor_{\CC}
	\Hom_{\CC} \bigl( S_m, \End(V) \bigr)^{\sltwo(\CC)}.
\]
The summand with $S_m$ is called the \define{$\boldsymbol{S_m}$-isotypical
component}. According
to the discussion in \Cref{par:Casimir}, the isotypical components are sub-Hodge
structures of the Hodge structure on $\End(V)$. They are also stable under taking
adjoints; therefore the decomposition
\[
	\End(V) = \mlie \oplus F^0 \End(V)
\]
respects the isotypical components.

\newpar
Since each operator $h_{-k} \in E_{-k}(\ad H) \cap \ker(\ad N)$ is primitive, it
obviously belongs to the $S_k$-isotypical component of the representation. This has
the following consequence for the operators $B_{-k}, C_{-k} \in \End(V)$.

\begin{pprop} \label{prop:isotypical}
	For each $k \geq 1$, both $B_{-k}$ and $C_{-k}$ belong to the sum of the
	$S_j$-isotypical components with $j = k, k-2, k-4, \dotsc$.
\end{pprop}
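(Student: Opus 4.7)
The plan is to proceed by induction on $k$, extracting coefficients of $u^k$ from the defining identity
\[
	\id + \sum_{j=1}^\infty u^j h_{-j} = e^{B(u)} e^{-C(u)},
\]
and exploiting the fact (noted in \Cref{par:Casimir}) that the $S_m$-isotypical decomposition of $\End(V)$ is compatible both with its Hodge structure of weight $0$ and, via adjoints, with the direct sum decomposition $\End(V) = \mlie \oplus F^0 \End(V)$. Expanding $e^{B(u)} e^{-C(u)}$ as a formal power series in $u$, the coefficient of $u^k$ is of the form $B_{-k} - C_{-k} + R_{-k}$, where $R_{-k}$ is a universal non-commutative polynomial in the variables $B_{-1}, \dotsc, B_{-k+1}$ and $C_{-1}, \dotsc, C_{-k+1}$ whose total degree, when we declare $\deg B_{-j} = \deg C_{-j} = j$, equals $k$. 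Therefore the identity above gives
\[
	B_{-k} - C_{-k} = h_{-k} - R_{-k}.
\]

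First I would check that $h_{-k}$ itself lies in the $S_k$-isotypical component. By assumption $h_{-k} \in E_{-k}(\ad H)$, i.e.\ it has weight $-k$ under the $\sltwo(\CC)$-action on $\End(V)$, and it belongs to $\ker(\ad N) = \ker(\ad(-\Ysl))$; but a weight $-k$ vector killed by $\Ysl$ is precisely a lowest-weight vector of an irreducible summand isomorphic to $S_k$. Next, using the Clebsch--Gordan formula
\[
	S_i \tensor_{\CC} S_j \cong \bigoplus_{l = 0}^{\min(i,j)} S_{i+j-2l},
\]
a product of two operators belonging respectively to the $S_i$- and $S_j$-isotypical components lies in the sum of the $S_l$-isotypical components with $l \leq i+j$ and $l \equiv i+j \pmod{2}$. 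By the inductive hypothesis applied to each $B_{-j}, C_{-j}$ with $j < k$, every monomial appearing in $R_{-k}$ therefore belongs to the sum of the $S_l$-isotypical components with $l \leq k$ and $l \equiv k \pmod 2$. Combined with the observation about $h_{-k}$, this shows that the right-hand side $h_{-k} - R_{-k}$ lies in this same sum of isotypical components.

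It remains to separate $B_{-k}$ from $C_{-k}$. The key point is that each $S_j$-isotypical component is a sub-Hodge structure stable under the adjoint $A \mapsto -A^{\dagger}$, so it decomposes as the direct sum of its intersection with $\mlie$ and its intersection with $F^0 \End(V)$; consequently the partial sum $W = \bigoplus_{j \leq k,\, j \equiv k (2)} \End(V)_{[j]}$ also satisfies $W = (W \cap \mlie) \oplus (W \cap F^0 \End(V))$, and the complementary isotypicals split in the same way. Since $B_{-k} \in \mlie$, $C_{-k} \in F^0 \End(V)$, and $B_{-k} - C_{-k} \in W$, projecting onto the complement of $W$ forces both $B_{-k}$ and $C_{-k}$ to lie in $W$, completing the induction.

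The main obstacle is really bookkeeping: one has to make sure that when expanding the Baker--Campbell--Hausdorff-style product $e^{B(u)} e^{-C(u)}$, every term of degree $k$ in $u$ other than $B_{-k}$ and $-C_{-k}$ involves only $B_{-j}, C_{-j}$ with $j < k$, so that the induction engages. This is immediate from the power-series expansion, but it is what makes the argument purely formal once one has the Clebsch--Gordan decomposition and the compatibility of the isotypical decomposition with the splitting $\End(V) = \mlie \oplus F^0 \End(V)$.
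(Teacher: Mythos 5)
Your argument is correct and is essentially the paper's own proof: you expand the same exponential identity, induct on $k$, note that $h_{-k}$ is a lowest-weight (primitive) vector and hence $S_k$-isotypical, control the lower-order products via the Clebsch--Gordan decomposition $S_k \tensor S_{\ell} \cong \bigoplus_i S_{k+\ell-2i}$, and finally separate $B_{-k}$ from $C_{-k}$ using that the isotypical components are sub-Hodge structures stable under adjoints, so that the splitting $\End(V) = \mlie \oplus F^0 \End(V)$ respects them. The only cosmetic difference is that you work with $h(u) = e^{B(u)}e^{-C(u)}$ rather than $h(u)e^{C(u)} = e^{B(u)}$, which changes nothing.
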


\begin{proof}
	Expanding the relation 
	\begin{align*}
		\left( \id + \sum_{k=1}^{\infty} u^k P_{-k}(B_{-1}, \dotsc, B_{-k}) \right) & \\
		 = \left( \id + \sum_{k=1}^{\infty} u^k h_{-k} \right) 
		 &\left( \id + \sum_{k=1}^{\infty} u^k P_{-k}(C_{-1}, \dotsc, C_{-k}) \right) 
	\end{align*}
	from above now gives us the following series of identities:
	\[
		P_{-k}(B_{-1}, \dotsc, B_{-k}) 
		= h_{-k} + P_{-k}(C_{-1}, \dotsc, C_{-k}) + 
		\sum_{j=1}^{k-1} h_{-k+j} \cdot P_{-j}(C_{-1}, \dotsc, C_{-j})
	\]
	For $k = 1$, this reads $B_{-1} = h_{-1} + C_{-1}$; here $B_{-1} \in \mlie$ and
	$C_{-1} \in F^0 \End(V)$. Remembering that $\End(V) = \mlie \oplus F^0 \End(V)$,
	we see again that this equation uniquely determines $B_{-1}$ and $C_{-1}$. Since
	$h_{-1}$ belongs to the $S_1$-isotypical
	component of the representation, and since this is a sub-Hodge structure, it
	follows that $B_{-1}$ and $C_{-1}$ also belong to the $S_1$-isotypical component.

	We can now prove the assertion by induction on $k \geq 1$. Rearranging the
	identity from above, we obtain an identity of the form
	\[
		B_{-k} - C_{-k} = P(B_{-1}, \dotsc, B_{-k+1}, C_{-1}, \dotsc, C_{-k+1}, h_{-1},
		\dotsc, h_{-k})
	\]
	for a certain non-commutative polynomial $P$, homogeneous of degree $k$. 
	Now each term on the right-hand side is a product of operators of the form $B_{-j}$,
	$C_{-j}$ or $h_{-j}$, and by induction, each such operator is in the direct
	sum of the $S_{j-2i}$-isotypical components for $i \geq 0$. Since
	\[
		S_k \tensor S_{\ell} \cong \bigoplus_{i=0}^{\min(k,\ell)} S_{k+\ell-2i},
	\]
	and since the degree of each term is $-k$, this gives the result.
\end{proof}

\newpar
This has the following nice consequence for the nilpotent orbit $e^{zN} \Flim$. The
result is of course less precise than the $\SL(2)$-orbit theorem
\cite[Thm.~5.13]{Schmid}, but in return, the proof is \emph{much} easier, and we
believe that the version below is actually sufficient for all practical purposes.

\begin{pthm} \label{thm:SL2-cheap}
	There is a constant $\eps > 0$, whose value depends on $\Flim$ and $N$, and a
	real-analytic function $B \colon (-\eps, \eps) \to \glie$, such that
	\[
		\Phinil(x+iy) = e^{(x + iy) N} \Flim = e^{i y N} e^{-\half \log \abs{x} \, H}
		e^{B(\abs{x}^{-1/2})} \cdot e^{-N} F_H
	\]
	as long as $\abs{x} > 1/\eps^2$. In the power series expansion
	\[
		B(u) = \sum_{k=1}^{\infty} u^k B_{-k},
	\]
	each coefficient $B_{-k} \in \glie$ belongs to the direct sum of the
	$S_m$-isotypical components of the $\sltwo(\CC)$-representation with $m = k, k-2,
	k-4, \dotsc$. 
\end{pthm}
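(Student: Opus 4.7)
The plan is to assemble the identity directly from the computation carried out in the paragraph preceding the statement, combined with \Cref{prop:isotypical} for the isotypical content. First, I would recall that \Cref{lem:Flim-FH} gives us an element $h = \id + h_{-1} + h_{-2} + \dotsb \in \GL(V)$ with $\Flim = h F_H$, whose components satisfy $h_{-k} \in E_{-k}(\ad H) \cap \ker(\ad N)$, and that the calculation immediately before the theorem yields
\[
	e^{\half \log \abs{x} \, H} e^{-\abs{x} N} \Flim
	= e^{-N} e^{\half \log \abs{x} \, H} \Flim
	= \left( \id + \sum_{k=1}^{\infty} u^k h_{-k} \right) \cdot \Fsh,
\]
with $u = \abs{x}^{-1/2}$ and $\Fsh = e^{-N} F_H \in D$. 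This already captures the nilpotent orbit on the left; it only remains to re-express the finite product on the right by an element of the real group $G$.

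Second, I would invoke the direct sum decomposition $\End(V) = \mlie \oplus F^0 \End(V)_{\Fsh}$ coming from the polarized Hodge structure with filtration $\Fsh$. Since the multiplication map
\[
	\mlie \oplus F^0 \End(V)_{\Fsh} \to \GL(V), \quad (A, C) \mapsto e^A e^C,
\]
has differential equal to the identity at the origin, the inverse function theorem produces unique real-analytic maps $B \colon (-\eps, \eps) \to \mlie$ and $C \colon (-\eps, \eps) \to F^0 \End(V)_{\Fsh}$, vanishing at $u = 0$, such that
\[
	\left( \id + \sum_{k=1}^{\infty} u^k h_{-k} \right) e^{C(u)} = e^{B(u)}.
\]
Because $e^{C(u)}$ fixes the filtration $\Fsh$, this identity gives $(\id + \sum u^k h_{-k}) \cdot \Fsh = e^{B(u)} \cdot \Fsh$ inside the compact dual $\Dch$; combining with the first step (and using $[H,N] = -2N$ together with $x < 0$, so $x = -\abs{x}$) produces
\[
	e^{xN} \Flim = e^{-\abs{x} N} \Flim = e^{-\half \log \abs{x} \, H} e^{B(\abs{x}^{-1/2})} \cdot e^{-N} F_H
\]
for $\abs{x} > 1/\eps^2$. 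Multiplying on the left by $e^{iyN}$ yields the formula for $\Phinil(x+iy)$ claimed in the theorem, and since $B(u) \in \mlie \subseteq \glie$, the rescaled orbit indeed stays inside $D$.

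Third, the statement about the $S_m$-isotypical content of the coefficients $B_{-k}$ is essentially immediate from \Cref{prop:isotypical}, which was proved by induction on $k$ using the formula $S_k \tensor S_\ell \cong \bigoplus_{i=0}^{\min(k,\ell)} S_{k+\ell-2i}$; indeed, each $h_{-k}$ is a primitive vector of weight $-k$ in $\End(V)$ under the $\sltwo(\CC)$-action and so generates an $S_k$-subrepresentation, which is precisely the input hypothesis of that proposition.

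The only genuine point that requires care is the value of $\eps$. The finite product $\id + \sum u^k h_{-k}$ is entire in $u$, so the constraint on $u$ comes solely from the domain of the local inverse of $(A, C) \mapsto e^A e^C$ near the origin; this domain depends only on the norm of the vector $h - \id \in W_{-1} \End(V)$, which in turn is controlled by $\Flim$ (through Deligne's decomposition of the limiting mixed Hodge structure) and by $N$ (through the chosen splitting $H$). I do not expect any serious obstacle: once sign conventions and the swap $x \leftrightarrow -\abs{x}$ are handled, the proof is a short formal manipulation of results already established.
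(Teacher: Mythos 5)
Your proposal is correct and follows essentially the same route as the paper: the identity is assembled from \Cref{lem:Flim-FH} and the computation $e^{-N}e^{\half\log\abs{x}\,H}\Flim = \bigl(\id + \sum_k u^k h_{-k}\bigr)\cdot\Fsh$, the pair $(B,C)$ is produced from the decomposition $\End(V) = \mlie \oplus F^0\End(V)$ via the real-analytic exponential/inverse-function argument, and the isotypical statement is exactly \Cref{prop:isotypical}. No gaps.
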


The point is that all the operators on the right-hand side of this formula belong
to the real group $G$. This makes it possible to describe the asymptotic behavior of
the Hodge metric or the Weil operator in any nilpotent orbit.

\begin{note}
Instead of an arbitrary splitting $H \in \End(V)$, we can use the splitting
\[
	H_{\RR}(W_{\bullet-n}, \Flim, \Fblim) \in \glie
\]
given by \Cref{prop:real-splitting}. Then $h - \id \in W_{-2} \End(V)$, and an
analysis of the proof shows that $B_{-1} = 0$. With this choice, the statement of
\Cref{thm:SL2-cheap} becomes closer to the full $\SL(2)$-orbit theorem in
\cite[Thm.~5.13]{Schmid}. 
\end{note}

\newpar
We can use \Cref{thm:SL2-cheap} to get more precise information about the
asymptotic behavior of the Hodge metric. The Hodge norm estimates tell us the exact
rate of growth of $\norm{v}_{\Phi(z)}^2$ for any $v \in V$, but so far, we have not
said anything about the behavior of the inner products $\inner{v}{w}_{\Phi(z)}$
when $v,w \in V$ are two different multi-valued flat sections.

\newpar
Let us denote by $\inner{v}{w}$ the inner product coming from the polarized Hodge
structure $\Fsh = e^{-N} F_H$. Set $z = x + iy$. According to
\Cref{thm:approximation-nilpotent-orbit}, we have
\[
	d_D \bigl( \Phi(x+iy), e^{(x+iy)N} \Flim \bigr) \leq 
	C \abs{x}^m e^{-\delta(T) \abs{x}}
\]
as long as $x \leq x_0$, with constants $C > 0$ and $x_0 < 0$
that are basically independent of the period mapping in question. Since the
exponential factors in the definition of $\PhiSH$ belong to the real Lie
group $G$, it follows that
\[
	d_D \Bigl( \PhiSH(x+iy), \, e^{\half \log \abs{x} \, H} e^{xN} \Flim \Bigr)
	\leq C \abs{x}^m e^{-\delta(T) \abs{x}}.
\]
We now restrict our attention to $\abs{x} > 1/\eps^2$, and set $u = \abs{x}^{-\half}$. Then
\[
	e^{\half \log \abs{x} \, H} e^{xN} \Flim = e^{B(u)} \Fsh,
\]
and so our distance estimate becomes
\[
	d_D \Bigl( \PhiSH(x+iy), \, e^{B(u)} \Fsh \Bigr)
	\leq C \abs{x}^m e^{-\delta(T) \abs{x}}.
\]
\Cref{lem:distance-inner-products} allows us to conclude that, as long as
$\abs{x}$ is sufficiently large,
\[
	\Bigl\lvert \inner{v}{w}_{\PhiSH(x+iy)} - \inner{v}{w}_{e^{B(u)} \Fsh}
	\Bigr\rvert \leq C' \norm{v}_{e^{B(u)} \Fsh} \norm{w}_{e^{B(u)} \Fsh} 
	\cdot \abs{x}^m e^{-\delta(T) \abs{x}}
\]
for some constant $C' > 0$ that is independent of $v,w \in V$. 

\newpar
Now suppose that $v \in E_k(H)$ and $w \in E_{\ell}(H)$ belong to possibly different weight
spaces, say with $k \leq \ell$. The behavior of the inner product
\[
	\inner{v}{w}_{e^{B(u)} \Fsh} = \inner{e^{-B(u)} v}{e^{-B(u)} w}
\]
is very easy to understand, given what we know about the coefficients of the series
$B(u)$. The whole expression is a convergent power series in $u$. Since different
eigenspaces of $H$ are orthogonal under the inner product $\inner{v}{w}$, the leading
term in this series is divisible by $u^{\ell-k}$. It follows that there is a constant
$C > 0$ such that for $\abs{\Re z} \gg 0$, one has
\begin{equation} \label{eq:inner-PhiSH}
	\bigl\lvert \inner{v}{w}_{\PhiSH(z)} \bigr\rvert \leq 
	C \norm{v} \norm{w} \cdot \abs{\Re z}^{-(\ell-k)/2}.
\end{equation}
From the definition of the rescaled period mapping in \eqref{eq:PhiSH-intro}, we get
\[
	\bigl\langle v, \, w \bigr\rangle_{\PhiSH(z)} =
	\abs{\Re z}^{-(k+\ell)/2} \cdot \bigl\langle 
		e^{\half(z-\zb) (S+N)} v, \, e^{\half(z-\zb) (S+N)} w \bigr\rangle_{\Phi(z)},
\]
and as long as $\Im z$ remains bounded, it follows that 
\begin{equation} \label{eq:inner-Phi}
	\bigl\lvert \inner{v}{w}_{\Phi(z)} \bigr\rvert \leq 
	C' \norm{v} \norm{w} \cdot \abs{\Re z}^k
\end{equation}
for a different constant $C' > 0$ and $\abs{\Re z} \gg 0$.

\newpar
We know from the Hodge norm estimates that $\norm{v}_{\Phi(z)}^2$ and
$\norm{w}_{\Phi(z)}^2$ are of order $\abs{\Re z}^k$ and $\abs{\Re z}^{\ell}$,
respectively. Consequently, \eqref{eq:inner-Phi} is equivalent to the statement that,
for $\abs{\Re z} \gg 0$, one has
\[
	\bigl\lvert \inner{v}{w}_{\Phi(z)} \bigr\rvert \leq 
	C'' \min \bigl( \norm{v}_{\Phi(z)}^2, \norm{w}_{\Phi(z)}^2 \bigr).
\]
Because of the triangle inequality, we deduce that for any two multi-valued flat
sections $v,w \in V$, there is a constant $C(v,w) > 0$ such that
\[
	\bigl\lvert \inner{v}{w}_{\Phi(z)} \bigr\rvert 
	\leq C(v,w) \cdot \min \Bigl( \norm{v}_{\Phi(z)}^2, \norm{w}_{\Phi(z)}^2 \Bigr)
\]
for $\abs{\Re z} \gg 0$. This finishes the proof of \Cref{thm:asymptotic-intro} from
the introduction.

%%%%%%%%%%%%%%%%%%%

\cleardoublepage
\phantomsection
\addcontentsline{toc}{chapter}{References}

\bibliographystyle{amsalpha}
\bibliography{bibliography}

\providecommand{\bysame}{\leavevmode\hbox to3em{\hrulefill}\thinspace}
\providecommand{\MR}{\relax\ifhmode\unskip\space\fi MR }
% \MRhref is called by the amsart/book/proc definition of \MR.
\providecommand{\MRhref}[2]{%
  \href{http://www.ams.org/mathscinet-getitem?mr=#1}{#2}
}
\providecommand{\href}[2]{#2}
\begin{thebibliography}{dCM05}

\bibitem[Ahl38]{Ahlfors:SchwarzLemma}
Lars~V. Ahlfors, \emph{An extension of {S}chwarz's lemma}, Trans. Amer. Math.
  Soc. \textbf{43} (1938), no.~3, 359--364. \MR{1501949}

\bibitem[Ber10]{Berndtsson:Things}
Bo~Berndtsson, \emph{An introduction to things {$\overline\partial$}}, Analytic
  and algebraic geometry, IAS/Park City Math. Ser., vol.~17, Amer. Math. Soc.,
  Providence, RI, 2010, pp.~7--76. \MR{2743815}

\bibitem[Cat08]{Cattani:MixedLefschetzTheorems}
Eduardo Cattani, \emph{Mixed {L}efschetz theorems and {H}odge-{R}iemann
  bilinear relations}, Int. Math. Res. Not. IMRN (2008), no.~10, Art. ID
  rnn025, 20. \MR{2429243}

\bibitem[CG75]{Cornalba+Griffiths:AnalyticCycles}
Maurizio Cornalba and Phillip Griffiths, \emph{Analytic cycles and vector
  bundles on non-compact algebraic varieties}, Invent. Math. \textbf{28}
  (1975), 1--106. \MR{367263}

\bibitem[CK89]{Cattani+Kaplan:Luminy}
Eduardo Cattani and Aroldo Kaplan, \emph{Degenerating variations of {H}odge
  structure}, Ast\'erisque (1989), no.~179-180, 9, 67--96, Actes du Colloque de
  Th{\'e}orie de Hodge (Luminy, 1987). \MR{1042802 (91k:32019)}

\bibitem[CKS86]{Cattani+Kaplan+Schmid:Degeneration}
Eduardo Cattani, Aroldo Kaplan, and Wilfried Schmid, \emph{Degeneration of
  {H}odge structures}, Ann. of Math. (2) \textbf{123} (1986), no.~3, 457--535.
  \MR{840721 (88a:32029)}

\bibitem[dCM05]{deCataldo+Migliorini:HodgeTheoryMaps}
Mark de~Cataldo and Luca Migliorini, \emph{The {H}odge theory of algebraic
  maps}, Ann. Sci. \'Ecole Norm. Sup. (4) \textbf{38} (2005), no.~5, 693--750.
  \MR{2195257 (2007a:14016)}

\bibitem[Del70]{Deligne:EquationsDifferentielles}
Pierre Deligne, \emph{\'{E}quations diff\'erentielles \`a points singuliers
  r\'eguliers}, Lecture Notes in Mathematics, Vol. 163, Springer-Verlag,
  Berlin, 1970. \MR{0417174 (54 \#5232)}

\bibitem[Del71]{Deligne:HodgeII}
\bysame, \emph{Th\'eorie de {H}odge. {II}}, Inst. Hautes \'Etudes Sci. Publ.
  Math. (1971), no.~40, 5--57. \MR{0498551 (58 \#16653a)}

\bibitem[Del84]{Deligne:PositiviteSignes}
\bysame, \emph{Positivit\'e: signes}, Personal note, dated Feb.~11, 1984.

\bibitem[Dem12]{Demailly:ComplexGeometry}
Jean-Pierre Demailly, \emph{Complex analytic and differential geometry}, 2012.

\bibitem[Den22]{Deng:NilpotentOrbit}
Ya~Deng, \emph{On the nilpotent orbit theorem of complex variation of {H}odge
  structures}, 2022, preprint
  \href{https://arxiv.org/abs/2203.04266}{\texttt{arXiv:2203.04266}}.

\bibitem[GS75]{Griffiths+Schmid:RecentDevelopments}
Phillip Griffiths and Wilfried Schmid, \emph{Recent developments in {H}odge
  theory: a discussion of techniques and results}, Discrete subgroups of {L}ie
  groups and applicatons to moduli ({I}nternat. {C}olloq., {B}ombay, 1973),
  1975, pp.~31--127. \MR{0419850}

\bibitem[Kas85]{Kashiwara:AsymptoticBehavior}
Masaki Kashiwara, \emph{The asymptotic behavior of a variation of polarized
  {H}odge structure}, Publ. Res. Inst. Math. Sci. \textbf{21} (1985), no.~4,
  853--875. \MR{817170 (87h:32049)}

\bibitem[Moc02]{Mochizuki:Nilpotent}
Takuro Mochizuki, \emph{Asymptotic behaviour of tame nilpotent harmonic bundles
  with trivial parabolic structure}, J. Differential Geom. \textbf{62} (2002),
  no.~3, 351--559. \MR{2005295}

\bibitem[Moc07]{Mochizuki:AsymptoticBehaviorI}
\bysame, \emph{Asymptotic behaviour of tame harmonic bundles and an application
  to pure twistor {$D$}-modules. {I}}, Mem. Amer. Math. Soc. \textbf{185}
  (2007), no.~869, xii+324. \MR{2281877}

\bibitem[Mor78]{Morgan:AlgebraicTopology}
John~W. Morgan, \emph{The algebraic topology of smooth algebraic varieties},
  Inst. Hautes \'{E}tudes Sci. Publ. Math. (1978), no.~48, 137--204.
  \MR{516917}

\bibitem[Sch73]{Schmid:VHS}
Wilfried Schmid, \emph{Variation of {H}odge structure: the singularities of the
  period mapping}, Invent. Math. \textbf{22} (1973), 211--319. \MR{0382272 (52
  \#3157)}

\bibitem[Sim88]{Simpson:ConstructingVHS}
Carlos~T. Simpson, \emph{Constructing variations of {H}odge structure using
  {Y}ang-{M}ills theory and applications to uniformization}, J. Amer. Math.
  Soc. \textbf{1} (1988), no.~4, 867--918. \MR{944577}

\bibitem[Sim90]{Simpson:HarmonicBundlesCurves}
\bysame, \emph{Harmonic bundles on noncompact curves}, J. Amer. Math. Soc.
  \textbf{3} (1990), no.~3, 713--770. \MR{1040197}

\bibitem[Zuc79]{Zucker:DegeneratingCoefficients}
Steven Zucker, \emph{Hodge theory with degenerating coefficients. {$L_{2}$}
  cohomology in the {P}oincar\'e metric}, Ann. of Math. (2) \textbf{109}
  (1979), no.~3, 415--476. \MR{534758 (81a:14002)}

\end{thebibliography}

\end{document}